\documentclass[12pt]{amsart}

%%%
\usepackage{contour}
\contourlength{1pt}
\usepackage{blindtext} 
\usepackage[linktocpage, pdfborder={0 0 .3}]{hyperref}
% package

\usepackage{verbatim} % hide some sections

\usepackage[dvipsnames]{xcolor}
\usepackage{fancybox, stmaryrd}
\usepackage{amsmath}
\usepackage{amssymb}
\usepackage{hyperref}
\usepackage{array, color}
\usepackage{overpic, float}
\usepackage{pict2e} % for arrow in overpic
\usepackage{setspace} 
\usepackage{cleveref}

%%

%\usepackage[demo]{graphicx}
%\usepackage{caption}

%squiggly underline
\usepackage[normalem]{ulem}
\usepackage{xcolor}
\makeatletter
\def\squigred{\bgroup \markoverwith{\textcolor{red}{\lower3.5\p@\hbox{\sixly \char58}}}\ULon}
\makeatother

\makeatletter
\def\squigblue{\bgroup \markoverwith{\textcolor{blue}{\lower3.5\p@\hbox{\sixly \char58}}}\ULon}
\makeatother

% for squiggly underline
\usepackage{xcolor}
\makeatletter
\def\uwave{\bgroup \markoverwith{\lower3.5\p@\hbox{\sixly \textcolor{red}{\char58}}}\ULon}
\font\sixly=lasy6 %
\makeatother

 %\definecolor{bostonuniversityred}{rgb}{0.8, 0.0, 0.0}
 \definecolor{calpolypomonagreen}{rgb}{0.12, 0.3, 0.17}
 
 \definecolor{darkbyzantium}{rgb}{0.6, 0.2, 0.3}
 %\definecolor{antiquefuchsia}{rgb}{0.57, 0.36, 0.51}
 \definecolor{azure}{rgb}{0.0, 0.5, 1.0}
 \definecolor{cittingcolor}{cmyk}{60,0,10,0}
  
 \hypersetup{pdfborder=0 0 .3}
	\hypersetup{
    colorlinks=true,
   % linkcolor = azure, 
   linkcolor=darkbyzantium,
    filecolor=magenta,      
    urlcolor=NavyBlue,
    citecolor = calpolypomonagreen,
   %citecolor = azure,
  % citecolor = cittingcolor,
    pdftitle={Overleaf Example},
    pdfpagemode=FullScreen,
    }

\usepackage{mathtools}

%TIKX
\usepackage{tikz-cd}
\usepackage{tikz}
\usetikzlibrary{shapes.geometric, arrows}
\usetikzlibrary{fit,calc,positioning}

%% 7.2.21 
\usetikzlibrary{decorations.pathmorphing}
%%

%%%  Adjust %%%%%

\setlength{\parindent}{.25in}\setlength{\textwidth}{6.5in}
\setlength{\oddsidemargin}{.0in}
\setlength{\evensidemargin}{.0in}
\setlength{\textheight}{9.1in}
\setlength{\headheight}{1in}
\setlength{\headsep}{.3in}
\setlength{\topmargin}{-.4in}
\setlength{\footskip}{20pt}
\setlength{\parskip}{\smallskipamount}
\setlength{\marginparwidth}{.6in}

    \usepackage{lipsum} %for \rule
\usepackage[inner=2cm,outer=2cm,bottom=2cm]{geometry}

\setstretch{1.15}

%
% newthe

\newtheorem{lemma}{Lemma}[section]
\newtheorem{proposition}[lemma]{Proposition}
\newtheorem{theorem}[lemma]{Theorem}
\newtheorem{claim}[lemma]{Claim}

\newtheorem{question}[lemma]{Question}

\newtheorem{deflemma}[lemma]{Definition-Lemma}

\newtheorem{corollary}[lemma]{Corollary}
\newtheorem{definition}[lemma]{Definition}

% Thm ABC
\newtheorem{thm}{Theorem}

%% mathbb
\newcommand{\R}{\mathbb{R}}
\newcommand{\C}{\mathbb{C}}

\renewcommand{\H}{\mathbb{H}}
\newcommand{\Z}{\mathbb{Z}}
\newcommand{\s}{\mathbb{S}}

%% Mathsf 
\newcommand{\PP}{\mathsf{P}} 
\newcommand{\TT}{\mathsf{T}} 
\newcommand{\BB}{\mathsf{B}} 
 
\newcommand{\QD}{\mathsf{QD}} 
\newcommand{\QF}{\mathsf{QF}} 
\newcommand{\CC}{\mathsf{C}} 
\renewcommand{\AA}{\mathsf{A}}

%% Mathbf
\newcommand{\hH}{\mathbf{H}} 
 
\newcommand{\tT}{\mathbf{T}}

\newcommand{\cc}{\mathbf{c}}

%mathcal
\newcommand{\BBB}{\mathcal{B}} 
\newcommand{\LLL}{\mathcal{L}} 
\newcommand{\TTT}{\mathcal{T}} 
\newcommand{\RRR}{\mathcal{R}} 
\newcommand{\PPP}{\mathcal{P}} 
\newcommand{\QQQ}{\mathcal{Q}} 
\newcommand{\AAA}{\mathcal{A}} 
\newcommand{\VVV}{\mathcal{V}} 
\newcommand{\CCC}{\mathcal{C}} 
\newcommand{\FFF}{\mathcal{F}} 

%overline 

% laminations foliations
\newcommand{\ML}{\mathsf{ML}}

\newcommand{\SL}{\mathrm{SL}}
\newcommand{\PML}{\mathsf{PML}}

\newcommand{\PSL}{{\rm PSL}}
\newcommand{\Isom}{{\rm Isom}}
\newcommand{\CP}{{\mathbb{C}{\rm  P}}}

\newcommand{\Hol}{\operatorname{Hol}}
\newcommand{\Gr}{\operatorname{Gr}}
\newcommand{\Ep}{\operatorname{Ep}}
\newcommand{\Area}{\operatorname{Area}}

\newcommand{\st}{\operatorname{st}}

\newcommand{\infi}{\infty}

\renewcommand{\Im}{{\rm Im}}
\newcommand{\Int}{{\rm int}}
\renewcommand{\Area}{{\rm Area}}

\renewcommand{\Gr}{\operatorname{Gr}}
\newcommand{\dev}{\operatorname{dev}}

%\DeclareMathOperator{\deg}{\mathrm{deg}}
% mathbf 

% bf
\renewcommand{\cc}{{\bf c}}
\renewcommand{\tt}{{\bf t}}
% Greek letter

\newcommand{\del}{\delta}
\newcommand{\ep}{\epsilon}

\newcommand{\gam}{\gamma}
\newcommand{\kap}{\kappa}

\newcommand{\Gam}{\Gamma}

%%%
\DeclareRobustCommand{\rchi}{{\mathpalette\irchi\relax}}
\newcommand{\irchi}[2]{\raisebox{\depth}{$#1\chi$}}
%%%

%setminus 
\newcommand{\minus}{\setminus}

% subset 
\newcommand{\sub}{\subset}

% colon 
\newcommand{\col}{\colon}
\newcommand{\bdr}{\partial}

% tilde
\newcommand{\ti}{\tilde}
%check
\newcommand{\ch}[1]{\check{#1}}

%overline
\newcommand{\ol}{\overline}

% operatorname
\newcommand{\length}{\operatorname{length}}

\newcommand{\Supp}{\operatorname{Supp}}

%Label
 \newcommand{\Label}[1]{\label{#1}\textcolor{green}{#1} }
 \renewcommand{\Label}[1]{\label{#1}}

%Qed
\newcommand{\Qed}[1]{\nopagebreak[4]{\tiny \hfill
\fbox{\ref{#1}} \linebreak }\pagebreak[2]}

\usepackage{fancybox}%Box

% textcolor

\definecolor{dblue}{cmyk}{1,.5, 0,.1}

%\definecolor{aoo}{rgb}{0.0, 0.5, 0.0}
%\definecolor{ao}{rgb}{0.0, 0.0, 1.0}
\definecolor{bluep}{rgb}{0.2, 0.2, 0.6}

\newcommand{\tcr}{\textcolor{red}}

% restriction
\newcommand\restr[2]{{% we make the whole thing an ordinary symbol
  \left.\kern-\nulldelimiterspace % automatically resize the bar with \right
  #1 % the function
  \vphantom{\big|} % pretend it's a little taller at normal size
  \right|_{#2} % this is the delimiter
  }}

%%%% 

\newcommand{\hide}[1]{}
%\renewcommand{\hide}[1]{#1}
%%%%% commebnt 
\newcommand{\tcb}[1]{\textcolor{blue}{#1}}
%\renewcommand{\tcb}[1]{}

%note

%\renewcommand{\note}[1]{}

% box

\newenvironment{boxedlaw}[1]
  {\begin{Sbox}\begin{minipage}{#1}\centering}
  {\end{minipage}\end{Sbox}\begin{center}\fbox{\TheSbox}\end{center}}

\renewenvironment{boxedlaw}[1]{}

%header
\usepackage{fancyhdr}
\pagestyle{fancy}
\fancyhf{}
 % change the thickness of the line below the header

\lhead{\small S.Baba}
\rhead{\small \thepage}
%\rfoot{\tiny \today}
 \usepackage[many]{tcolorbox}    
\newtcolorbox{mybox}[1]{%
    tikznode boxed title,
    enhanced,
    arc=0mm,
    interior style={white},
    attach boxed title to top center= {yshift=-\tcboxedtitleheight/2},
    fonttitle=\bfseries,
    colbacktitle=white,coltitle=black,
    boxed title style={size=normal,colframe=white,boxrule=0pt},
    title={#1}}

%%%

\newcommand\redsout{\bgroup\markoverwith{\textcolor{red}{\rule[0.5ex]{2pt}{0.4pt}}}\ULon}

%%%%%%

\begin{document}
 \title[\today]{Bers' simultaneous uniformization  and the intersection of  Poincar\'e holonomy varieties}

\author{Shinpei Baba}
\address{Osaka University }
\setcounter{tocdepth}{1}

\email{baba@math.sci.osaka-u.ac.jp}
\maketitle

\begin{center}
{\it Dedicated to Misha Kapovich on the occasion of his 60th birthday
}
\end{center}
\begin{abstract}
We consider the space of ordered pairs of distinct $\CP^1$-structures on Riemann surfaces (of any orientations) which have identical holonomy,
so that the quasi-Fuchsian space is identified with a connected component of this space.
This space holomorphically maps to the product of the Teichmüller spaces minus its diagonal. 

In this paper, we prove that this mapping is a complete local branched covering map.
As a corollary,  we reprove Bers' simultaneous uniformization theorem without any quasi-conformal deformation theory. 
Our main theorem is that the intersection of arbitrary two Poincaré holonomy varieties ($\operatorname{SL}_2\C$-opers) is a non-empty discrete set,  which is closely related to the mapping.
\end{abstract}

\tableofcontents

\section{Introduction}\Label{sIntro}
In 1960, Bers established a bijection between pairs of Riemann surface structures of opposite orientations and typical discrete and faithful representations of a surface group into $\PSL(2, \C)$ up to conjugacy (\cite{Bers60}).
It is called  {\it Bers' simultaneous uniformization theorem}, and it gave a foundation for the later evolutional development of the hyperbolic three-manifold theory by Thurston (\cite{Thurston-78}) and many others. 
In this paper, we partially generalize Bers' theorem, in a certain sense, to generic surface representations into $\PSL(2, \C)$, which are not necessarily discrete.

Throughout this paper, let $S$ be a closed orientable surface of genus $g > 1$. 
Given a {\it quasi-Fuchsian} representation $\rho\col \pi_1(S) \to \PSL(2, \C)$,  the domain of discontinuity is the union of disjoint topological open disks $\Omega^+, \Omega^-$ in $\CP^1$. 
Then, their quotients $\Omega^+/\Im \rho, \Omega^-/\Im \rho$ have marked Riemann surface structures with opposite orientations. 

Let $S^+, S^-$ be $S$ with opposite orientations.
Then Bers' simultaneous uniformization theorem asserts that this correspondence gives a biholomorphism \begin{eqnarray}
\mathsf{QF} &\to&  \TT \times \TT^\ast (= \R^{6g-6} \times\R^{6g-6} ) \Label{eqSimultaneous}
\end{eqnarray}
where $\QF$ is space of the quasi-Fuchsian representations  $\rho\col \pi_1(S) \to \PSL(2, \C)$ up to conjugation,   $\TT$ is the Teichm\"uller space of $S^+$  and $\TT^\ast$ is the Teichmüller space of $S^-$; see \cite{Hubbard06} \cite{EarleKra06} for the analyticity. 
 (Note that $\TT^\ast$ is indeed anti-holomorphic to $\TT$; see \cite{Wolpert10}. )

 The {\sf $\PSL(2, \C)$-character variety} of $S$ is the space of homomorphisms $\pi_1(S) \to \PSL(2, \C)$, roughly, up to conjugation, and it has two connected components  (\cite{Goldman-88t}). 
 Let $\rchi$ denote the component consisting of representations $\pi_1(S) \to \PSL(2,\C)$ which lift to $\pi_1(S) \to \SL(2, \C)$; then $\rchi$ strictly contains the (Euclidean) closure of $\QF$.

A {\sf $\CP^1$-structure} on $S$ is a locally homogeneous structure modeled on $\CP^1$, and its holonomy is in $\rchi$. 
The quotients $\Omega^+/\Im\rho$ and $\Omega^-/\Im \rho$ discussed above have not only Riemann surfaces structures but also  $\CP^1$-structures on $S^+$ and $S^-$, respectively. 
In fact, almost every representation in $\rchi$ is the holonomy of some $\CP^1$-structure on $S$ \cite{Gallo-Kapovich-Marden}; see \S \ref{sProjectiveStructures} for details. 

In fact, {each $\CP^1$-structure on $S$ corresponds to a holomorphic quadratic differential on a Riemann surface structure on $S$} (\S\ref{sSchwarzianParemeterization}).  
Let $\PP$ be the space all (marked) $\CP^1$-structures on $S^+$ with the fixed orientation, which is identified with the cotangent bundle of $\TT$. 
Similarly, let $\PP^\ast$ be the space of all marked $\CP^1$ on $S^-$, identified with the cotangent bundle of $\TT^\ast$.

By sending each quasi-Fuchsian representation $\rho\col \pi_1(S) \to \PSL(2, \C)$ to the $\CP^1$-structures $\Omega^+/\Im\rho$ and $\Omega^-/\Im \rho$, the quasi-Fuchsian space $\QF$ holomorphically embeds into $\PP \times \PP^\ast$ as a closed half-dimensional submanifold.
 The {\sf holonomy map}  $$\Hol \col \PP \sqcup \PP^\ast \to \rchi$$ takes each $\CP^1$-structure to its holonomy representation. 
Now we introduce the space of  all ordered pairs of distinct $\CP^1$-structures  sharing holonomy 
$$ \big\{(C, D) \in  (\PP \sqcup \PP^\ast )^2 \big\vert \Hol (C) = \Hol (D), C \neq D \big\}.$$  
Let us denote this space by $\BB$ for appreciation of the work of Bers.
Since $\Hol$ is locally biholomorphic,  $\BB$ is also a half-dimensional closed holomorphic submanifold. 
The map switching the order of $C$ and $D$ is a fixed-point-free biholomorphic involution of $\BB$.
Then,  the quasi-Fuchsian space $\QF$ is biholomorphically identified with two connected components of $\BB$, which are identified by this involution (Lemma \ref{OpenAndClosed}).
Every connected component of $ (\PP \sqcup \PP^\ast )^2$ contains at least one component of $\BB$ which does not correspond to $\QF$ (see Lemma \ref{ComponentsOfProjectiveQuasifuchsianSpace}).

Let $\psi\col \PP \sqcup \PP^\ast \to  \TT \sqcup \TT^\ast$ be the projection from the space of all $\CP^1$-structures on $S^+$ and $S^-$ to the space of all Riemann surface structures on $S^+$ and $S^-$. 
Define $\Psi \col \BB \to (\TT \sqcup \TT^\ast)^2 \minus \Delta$ by $\Psi(C, D) = (\psi(C), \psi(D))$, where $\Delta$ is the diagonal $\{ (X, X)  \mid X \in \TT \sqcup \TT^\ast\}$ (which can not intersect $\Psi(B)$).

It is a natural question to ask to what extent connected components of $\BB$ resemble the quasi-Fuchsian space $\QF$.
In this paper, we prove a local and a global property of the holomorphic map $\Psi$: 
\begin{boxedlaw}{13cm}
\begin{thm}\Label{GeneralizedQF}
The map $\Psi$ is a complete local branched covering map. 
\end{thm}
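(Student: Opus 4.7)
The plan is to begin by replacing $\BB$, near a chosen point $(C_0, D_0)$, with a neighborhood of the common holonomy $\rho_0 := \Hol(C_0) = \Hol(D_0)$ in $\rchi$. Because $\Hol$ is locally biholomorphic on $\PP \sqcup \PP^\ast$, there are local inverses $\sigma_C, \sigma_D$ on a common neighborhood $U$ of $\rho_0$ with $\sigma_C(\rho_0) = C_0$ and $\sigma_D(\rho_0) = D_0$, and the map $\rho \mapsto (\sigma_C(\rho), \sigma_D(\rho))$ biholomorphically identifies $U$ with a neighborhood of $(C_0, D_0)$ in $\BB$. Under this identification, $\Psi$ becomes $\rho \mapsto (\psi \circ \sigma_C(\rho), \psi \circ \sigma_D(\rho))$, a holomorphic map between complex manifolds of the same dimension $6g-6$.

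\textbf{Local branched covering.} To upgrade this to a local branched covering, it suffices to check generic maximal rank and discreteness of fibers. The kernel of $d\Psi$ at $\rho_0$ is $(d\sigma_C)^{-1}(V_{C_0}) \cap (d\sigma_D)^{-1}(V_{D_0}) \subset T_{\rho_0}\rchi$, the intersection of the pullbacks of the two vertical subspaces of $\psi$, each half-dimensional. Bers' identification \eqref{eqSimultaneous} shows that on the quasi-Fuchsian component $\Psi$ is a biholomorphism, hence this intersection is trivial at any quasi-Fuchsian point. Since the kernel-rank condition is semicontinuous, degeneracy of $d\Psi$ is then confined to a proper analytic subvariety of each connected component of $\BB$. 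Fibers of $\Psi$ are discrete, because for a given Riemann surface $X$ the set of $\CP^1$-structures on $X$ with prescribed monodromy is discrete in the space of quadratic differentials on $X$. Holomorphicity, equidimensionality, generic maximal rank, and discrete fibers together give the standard local branched-cover normal form.

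\textbf{Completeness.} Given a continuous path $\gamma(t) = (X_t, Y_t)$ in the target and an initial lift $(C_0, D_0)$, let $T^\ast$ be the supremum of times $T$ to which the lift extends continuously. Suppose $T^\ast < 1$. The partial lift yields a path $\rho_t := \Hol(C_t) = \Hol(D_t)$ in $\rchi$; if $\rho_t$ admits a subsequential limit $\rho^\ast \in \rchi$ as $t \to T^\ast$, then local biholomorphicity of $\Hol$ combined with $\psi(C_t) \to X_{T^\ast}$ and $\psi(D_t) \to Y_{T^\ast}$ forces convergence $C_t \to C^\ast$, $D_t \to D^\ast$ with $\Hol(C^\ast) = \Hol(D^\ast) = \rho^\ast$, so the lift extends past $T^\ast$, a contradiction. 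Hence one must rule out the alternative that $\rho_t$ escapes every compact subset of $\rchi$ while $(X_t, Y_t)$ stays in a compact subset of the target. Via the Thurston grafting parameterization, divergence of $\rho_t$ on bounded Riemann surfaces $X_t, Y_t$ forces the Thurston pleating laminations of both $C_t$ and $D_t$ to diverge in a coordinated way; a compactness argument for $\CP^1$-structures with shared holonomy should then produce a contradiction, since an unbounded lamination constrains holonomy in a direction that cannot be simultaneously compatible with the behavior of the other member of the pair.

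\textbf{Main obstacle.} Everything up to and including the local branched cover structure is essentially formal after the holonomy reparametrization, since transversality needs only be verified at a single point and Bers supplies one. The genuine content lies in the completeness step: because $\Hol$ is not proper on individual components of $\PP$, the blow-up of $\rho_t$ must be ruled out by using, simultaneously, the constraints imposed by the two partial lifts. I expect this to occupy the technical core of the proof, relying on the compactness theory of Thurston laminations of $\CP^1$-structures sharing holonomy.
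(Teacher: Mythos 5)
The local reparametrization of $\BB$ through $\Hol$ is correct and matches the paper's setup. However, the argument for generic maximal rank has a genuine gap: semicontinuity of rank only propagates \emph{within} a connected component, so knowing $\Psi$ is a biholomorphism at quasi-Fuchsian points says nothing about the components of $\BB$ that contain no Fuchsian or quasi-Fuchsian point (and Lemma~\ref{ComponentsOfProjectiveQuasifuchsianSpace} guarantees such components exist). Your stated reason for discrete fibers is also insufficient: injectivity of $\Hol|_{\PP_X}$ (Theorem~\ref{HolonomyVariety}) gives that for a \emph{fixed} $\rho$ there is a unique $\CP^1$-structure on $X$ with that holonomy, but the fiber of $\Psi$ over $(X,Y)$ is parametrized by $\rho$ ranging over the full analytic set $\rchi_X\cap\rchi_Y$, which a priori could be positive-dimensional. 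Establishing discreteness of $\rchi_X\cap\rchi_Y$ is precisely Theorem~\ref{DiscreteIntersection}, and it is not formal: the paper proves each connected component of the intersection is bounded (Theorem~\ref{Bounded-Intersection}), then invokes Proposition~\ref{BoundedAnalyticSet} to get discreteness. The boundedness is where all the work lies.

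That missing ingredient is the same one you flag as the core of the completeness step, and the paper supplies it with specific machinery you do not have: from the compatible circular train-track decompositions of $C_{X,\rho}$ and $C_{Y,\rho}$ (Sections~\ref{sCompatibleTraintrackDecompositions}--\ref{sCircularTraintracks}), it builds a $\Z$-valued grafting cocycle $[\Gamma_\rho]\colon\CC\to\Z$ (Theorem~\ref{WeightedTraintrack}) that (a) is continuous, hence locally constant, along paths in $\rchi_X\cap\rchi_Y$ leaving every compact, and (b) is quasi-isometric to the difference $V_{X,\rho}-V_{Y,\rho}$ (or the sum, in the opposite-orientation case) of vertical foliations (Theorem~\ref{VerticalFoliationsAndGraftingFunction}). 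When $X\neq Y$ and $\rho_t$ escapes to infinity, Theorem~\ref{HorizontalMeasuredLaminations} forces the horizontal foliations to agree asymptotically, so the vertical foliations must diverge from one another; this contradicts local constancy of $[\Gamma_\rho]$. Your proposal correctly identifies the shape of the argument (local biholomorphic reparametrization $+$ discrete fibers $+$ openness $+$ path lifting) and correctly localizes where the hard content must be, but the two places you wave at --- discreteness of $\rchi_X\cap\rchi_Y$ and the escape-to-infinity contradiction --- are both the same theorem and neither is obtainable by the soft arguments you propose.
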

\end{boxedlaw}
(For the definition of complete local branched covering maps, see \S \ref{sComplexGeometry}.)
In particular, $\Psi$ is open, and its fibers are discrete subsets of $\BB$.
Thus its ramification locus is a nowhere-dense analytic subset,  which may possibly be the empty set. 
 (The completeness of Theorem \ref{GeneralizedQF} is given by Theorem \ref{Surjectivity}, and  the local property   by Theorem \ref{Locally a branched covering} below.)

Note that, by the completeness in \Cref{GeneralizedQF}, for every connected component  $Q$ of $\BB$, the restriction $\Psi | Q$ is surjective onto its corresponding component of $(\TT \sqcup \TT^\ast)^2 \minus \Delta$.
We also show that, towards the diagonal $\Delta$, the holonomy of $\CP^1$-structures leaves every compact set in $\rchi$  (see Proposition \ref{NearDiagonal}).  

The deformation theory of hyperbolic cone manifolds is developed, especially, by Hodgson, Kerckhoff and Bromberg  \cite{HodgsonKerckhoff98RigidityOfHyperbolicConeManifolds, HodgsonKerckhoff05UniversalBoundsForDehnSurgery, HodgsonKerckhoff08, Bromberg04HyperbolicConeManifoldsShortGeodesics, Bromberg04RigidityofHyperbolicConeManifolds}).
If cone angles exceed $2\pi$,   their deformation theory is established only under the assumption that the cone singularity is short and, thus, the tube radius is large.
More generally, a conjecture of McMullen  (\cite[Conjecture 8.1]{McMullen-98}) asserts that the deformation space of geometrically-finite hyperbolic cone-manifolds is parametrized by using the cone angles and the conformal structures on the ideal boundary.
Theorem  \ref{GeneralizedQF} provides some additional evidence for the conjecture, when the cone angles are $2\pi$-multiples (c.f. \cite{Bromberg-07}).

 Bers' simultaneous uniformization theorem is a consequence of the measurable Riemann mapping theorem. 
 It thus is important that the domain $\Omega^+ \sqcup \Omega^-$ is a (full measure) subset of $\CP^1$. 
However, in general,  developing maps of $\CP^1$-structures are not embeddings,  and Bers' proof does not apply to the other components of $\BB$. 
In fact, Theorem \ref{GeneralizedQF} implies the simultaneous uniformization theorem  (\S \ref{SimultaneousUniformization}). 
Thus we reprove Bers' theorem genuinely from a complex analytic viewpoint, 
without any quasi-conformal deformation theory. 

Next we describe the local property in Theorem \ref{GeneralizedQF}.
Since $\Hol$ is locally biholomorphic, for every $(C, D) \in \BB$, 
if an open neighborhood $V$ of $(C, D)$ in $\BB$ is sufficiently small, then $\Hol$ embeds $V$ onto a neighborhood $U$ of  $\Hol (C) = \Hol(D)$ in $\rchi$. 
Let $\TT_C$ and $\TT_D$  be $\TT$ or $\TT^\ast$ so that $\psi(C) \in \TT_C$ and $\psi(D) \in \TT_D$, and  define a holomorphic map  $\Psi_{C, D} \col U \to \TT_C \times \TT_D$ by the restriction of $\Psi$ to $V$ and the identification $V \cong U$.
The following gives a finite-to-one  ``parametrization'' of $U$  by pairs of Riemann surface structures associated with $V$. 

\begin{boxedlaw}{13cm}
\begin{thm}\Label{Locally a branched covering}
Let $(C, D) \in \BB$. 
Then, there is a neighborhood $V$ of $(C, D)$ in $\BB$, such that 
 $\Hol$ embeds $V$ into $\rchi$, and 
 the restriction of $\Psi$ to $V$  is a branched covering map onto its image in $T_C \times T_D$ 
 (Theorem  \ref{LocalUniformization}.)
 \end{thm}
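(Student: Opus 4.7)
The plan is to reduce the statement to one about a holomorphic map between equidimensional complex manifolds and then apply standard complex-analytic criteria for branched coverings.

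First, I would appeal to the classical local biholomorphism of the holonomy map $\Hol \colon \PP \sqcup \PP^\ast \to \rchi$ (Hejhal) to choose a neighborhood $V$ of $(C,D)$ small enough that $\Hol|V$ is a biholomorphism onto an open subset $U \subset \rchi$. Write $\sigma_C, \sigma_D \colon U \to \PP \sqcup \PP^\ast$ for the two holomorphic local sections of $\Hol$ obtained by inverting the two projections $V \to \PP \sqcup \PP^\ast$. Transporting $\Psi|V$ through the identification $V \cong U$ gives the holomorphic map
\[
\Psi_{C,D} \;=\; (\psi \circ \sigma_C,\ \psi \circ \sigma_D) \colon U \longrightarrow \TT_C \times \TT_D.
\]
A dimension count yields $\dim_\C U = 6g-6 = \dim_\C(\TT_C \times \TT_D)$, so $\Psi_{C,D}$ is a holomorphic map between connected complex manifolds of equal dimension.

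Second, by standard facts about finite holomorphic mappings, to conclude that $\Psi|V$ is a branched covering onto its image it suffices to verify that the fibers of $\Psi_{C,D}$ are discrete. Using the Schwarzian parametrization $\PP \cong T^\ast \TT$, under which $\psi$ becomes the cotangent-bundle projection, each $\psi \circ \sigma_i$ is a holomorphic submersion whose kernel at $\rho' \in U$ is the $(3g-3)$-dimensional subspace $K_i(\rho') := d\Hol_{\sigma_i(\rho')}\bigl(\{0\} \oplus H^0(\psi(\sigma_i(\rho')),K^2)\bigr) \subset T_{\rho'} U$. Discreteness of fibers is equivalent to $\det d\Psi_{C,D}$ not vanishing identically on $U$, and in turn to the existence of even one $\rho' \in U$ with $K_C(\rho') \cap K_D(\rho') = \{0\}$. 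Once this is in place, analyticity (the vanishing locus of the Jacobian is a proper analytic subset of $U$) together with one further shrinking of $V$ yields the required proper, finite, surjective holomorphic map onto its image.

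The main obstacle is to rule out identical vanishing of the Jacobian. One natural strategy uses the holomorphic symplectic (Goldman) form on $\rchi$, with respect to which each ``quadratic-differential fiber'' $\Hol(\psi^{-1}(X'))$ is a complex Lagrangian; two distinct complex Lagrangian germs cannot be tangent along an open set, which would rule out a persistent nontrivial intersection $K_C(\rho') \cap K_D(\rho')$ once one checks that the two Lagrangians through $\rho$ are genuinely distinct (here $C \neq D$ enters). A more hands-on alternative is to exhibit transversality directly by deforming $(C,D)$: using the Thurston/grafting description of pairs in $\BB$, one should be able to perform a first-order Schwarzian computation producing infinitesimal deformations of $\rho'$ that move $\psi(\sigma_C(\rho'))$ while preserving $\psi(\sigma_D(\rho'))$, and vice versa. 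Either way, once transversality at a single point of $U$ is established, the branched-covering conclusion follows from the standard theory.
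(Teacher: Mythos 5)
Your reduction to a holomorphic map $\Psi_{C,D}\colon U \to \TT_C \times \TT_D$ between equidimensional complex manifolds, via Hejhal's local biholomorphism of $\Hol$, is exactly the paper's setup, and you correctly observe that the entire statement hinges on showing the fibers of $\Psi_{C,D}$ are discrete. The gap is in the step where you assert that ``discreteness of fibers is equivalent to $\det d\Psi_{C,D}$ not vanishing identically on $U$.'' This equivalence is false for holomorphic maps between equidimensional domains: the map $f(z,w) = (z, zw)$ on $\C^2$ has Jacobian determinant $z \not\equiv 0$, yet $f^{-1}(0,0) = \{z = 0\}$ is one-dimensional, and shrinking the domain around the origin does not cure this. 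Consequently, both of your proposed strategies --- the Goldman-form/complex-Lagrangian tangency argument and the first-order Schwarzian deformation --- only rule out \emph{identical} degeneracy of the Jacobian, which yields generic local finiteness but says nothing about the fiber over a particular pair $(X',Y')$, which is precisely the intersection $\rchi_{X'} \cap \rchi_{Y'} \cap U$. Two distinct complex Lagrangian submanifolds may well share a positive-dimensional isotropic intersection locus, so no purely infinitesimal or symplectic-geometric argument of the kind you sketch can close this gap.

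The paper's route is qualitatively different and genuinely global. It does not argue via the Jacobian at all. Instead, the discreteness of $\rchi_X \cap \rchi_Y$ (Theorem \ref{DiscreteIntersection} / Corollary \ref{DicreteIntersection}) is established by showing that every connected component of $\rchi_X \cap \rchi_Y$ is a \emph{bounded} subset of the character variety (Theorem \ref{Bounded-Intersection}). This is the hard analytic content of the paper and is where the Epstein-surface estimates, compatible staircase/circular train-track decompositions, and the integer-valued grafting cocycle $[\Gamma_\rho]$ of \S\ref{sGraftingCocycle} come in: the cocycle is locally constant along any path in $\rchi_X \cap \rchi_Y$ escaping to infinity, yet is forced to diverge by the asymptotic comparison with the difference of vertical foliations (Theorem \ref{VerticalFoliationsAndGraftingFunction}), a contradiction. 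Boundedness plus the fact that a bounded connected analytic set in $\C^n$ is a point (Proposition \ref{BoundedAnalyticSet}) then gives discreteness, and Theorem \ref{DiscreteFiberThenOpen} upgrades that to openness of $\Psi_{C,D}$. To repair your argument you would need to either import this global boundedness theorem or find an independent way to exclude positive-dimensional intersection components; the infinitesimal considerations you propose cannot do so.
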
  
\end{boxedlaw}

By the simultaneous uniformization theorem,
for every $X \in\TT^\ast$ and $Y \in \TT$,  
 the slices $\TT \times \{Y\}$ and $\{X\} \times \TT^\ast$, called the {\sf Bers' slices}, intersect transversally in the point in $\QF$ corresponding to $(X, Y)$ by (\ref{eqSimultaneous}).
The Teichmüller spaces $\TT$ and $\TT^\ast$ are, as complex manifolds, open bounded pseudo-convex domains in $\C^{3g-3}$, where $g$ is the genus of $S$.
In order to prove Theorem \ref{GeneralizedQF} and Theorem \ref{Locally a branched covering}, we consider the analytic extensions of   $\TT \times \{Y\}$ and $\{X\} \times \TT^\ast$  in the character variety $\rchi$ and analyze their intersection.

For each $X \in \TT \sqcup \TT^\ast$, let $\PP_X$ be the space of all $\CP^1$-structures on $X$. 
Then $\PP_X$ is an affine space of holomorphic quadratic differentials on $X$, and thus $\PP_X \cong \C^{3g-3}$. 
Although the restrictions of the holonomy map $\Hol$ to $\PP$ and $\PP^\ast$ are  non-proper and non-injective,  the restriction of $\Hol$ to $\PP_X$ is a  proper embedding  (\cite{Poincare884, Gallo-Kapovich-Marden}, see also \cite{Tanigawa99,  Kapovich-95,Dumas18HolonomyLimit}).
Let $\rchi_X = \Hol(\PP_X)$, which we shall call the {\sf Poincar\'e holonomy variety} of $X$ as its injectivity is due to Poincar\'e.
Note that, if $X \in \TT$, then $\rchi_X$  contains $\{X\} \times \TT^\ast$ as a bounded pseudo-convex subset, and similarly, if $Y \in \TT^\ast$, then $\rchi_Y$ contains $\TT \times \{Y\}$ as a bounded open subset. 

The intersection theory of subvarieties and submanifolds in the character variety $\rchi$ has been important (\cite{Dumas15, Dumas-Wolf08} \cite[Theorem 12]{Faltings83}). 
Since   $\dim \rchi_X$ is half of  $\dim \rchi$,   it is a basic question to ask what the intersection of such smooth subvarieties looks like. 

\begin{boxedlaw}{13cm}
\begin{thm}\Label{DiscreteIntersection}
For all distinct $X, Y$ in $\TT \sqcup \TT^\ast$,
the intersection of  $\rchi_X$  and $\rchi_Y$ is a non-empty discrete set. 
\end{thm}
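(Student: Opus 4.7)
The plan is to establish non-emptiness and discreteness of $\rchi_X \cap \rchi_Y$ separately. By the Poincar\'e/Gallo--Kapovich--Marden theorem, $\Hol|_{\PP_X}$ is a proper holomorphic embedding for each $X$, so $\rchi_X$ and $\rchi_Y$ are closed complex submanifolds of $\rchi$ of dimension $3g-3$, each biholomorphic to $\C^{3g-3}$. A point $\rho \in \rchi_X \cap \rchi_Y$ has unique preimages $C \in \PP_X$ and $D \in \PP_Y$, putting the intersection in bijection with the $\Psi$-fiber over $(X, Y)$ in $\BB$.

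For non-emptiness, I would split by relative orientation. When $X \in \TT$ and $Y \in \TT^\ast$, the classical Bers' simultaneous uniformization produces a quasi-Fuchsian representation lying in both $\rchi_X$ and $\rchi_Y$, since $\{X\} \times \TT^\ast \subset \rchi_X$ and $\TT \times \{Y\} \subset \rchi_Y$ as recalled in the paper. When $X, Y$ have the same orientation, no classical analogue is available; I would proceed by a deformation through the Fuchsian locus. Starting from the Fuchsian uniformization $\rho_X$ of $X$, Goldman's theorem gives $\CP^1$-structures with holonomy $\rho_X$ on the $2\pi$-graftings of $X$ along admissible multicurves, and one can deform $\rho_X$ continuously in the Fuchsian locus. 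An open-and-closed argument, with openness from the local biholomorphicity of $\Hol$ and closedness from the properness of $\Hol|_{\PP_Y}$, should show that every same-orientation target $Y$ is realized.

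For discreteness, suppose toward a contradiction there were a non-constant analytic arc $\gamma \col (\D, 0) \to \rchi_X \cap \rchi_Y$ with $\gamma(0) = \rho$. Lifting via the local inverses of $\Hol$ at $C$ and $D$ yields non-constant analytic arcs $\gamma_X \subset \PP_X$ and $\gamma_Y \subset \PP_Y$, so differentiation at $t=0$ produces nonzero $v \in Q(X)$ and $w \in Q(Y)$ with $d\Hol_C(v) = d\Hol_D(w)$ in $T_\rho \rchi$. I would rule this out (when $X \neq Y$) via the Schwarzian/developing-map formula for the variation of holonomy. The key claim is that the tangent subspaces $d\Hol_C(Q(X))$ and $d\Hol_D(Q(Y))$ of $T_\rho \rchi$, which are Lagrangian with respect to Goldman's symplectic form, intersect trivially when $X \neq Y$. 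The hardest step is likely the non-emptiness in the same-orientation case, which has no classical analogue and requires working with generically non-discrete common holonomies.
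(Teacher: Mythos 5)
Your approach to discreteness has a genuine gap. You propose to rule out a non-constant arc in $\rchi_X \cap \rchi_Y$ by showing that the tangent spaces $d\Hol_C(Q(X))$ and $d\Hol_D(Q(Y))$ intersect trivially, i.e.\ that $\rchi_X$ and $\rchi_Y$ meet transversally. This is strictly stronger than discreteness (a non-transversal intersection can still be zero-dimensional, as for the curves $y=x^2$ and $y=0$ in $\C^2$), and in fact the paper pointedly does \emph{not} claim transversality: after Theorem \ref{GeneralizedQF} it states that the ramification locus of $\Psi$ is a nowhere-dense analytic set ``and moreover it may possibly be empty'' --- leaving the question open. A branch point of $\Psi$ at $(C,D)$ corresponds exactly to a non-transversal point of $\rchi_X\cap\rchi_Y$, so your ``key claim'' would settle that open question for free. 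You would need an actual argument for it, and the one sign you cite --- that both subspaces are Lagrangian for Goldman's symplectic form --- cuts the other way: two Lagrangian subspaces of the same symplectic space can coincide, so the Lagrangian structure by itself gives no obstruction at all. The paper's route is global, not infinitesimal: Theorem \ref{Bounded-Intersection} shows every connected component of $\rchi_X\cap\rchi_Y$ is \emph{bounded} in $\rchi$, by constructing the $\Z$-valued grafting cocycle $[\Gamma_\rho]$ from compatible circular train-track decompositions of $C_{X,\rho}$ and $C_{Y,\rho}$ (Theorems \ref{WeightedTraintrack}, \ref{VerticalFoliationsAndGraftingFunction}); the cocycle is continuous hence locally constant, yet approximates $V_{X,\rho}-V_{Y,\rho}$, which diverges if $\rho$ escapes compacta with $X\neq Y$. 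Discreteness then follows because a bounded connected analytic set is a point (Proposition \ref{BoundedAnalyticSet}).

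Your non-emptiness sketch in the same-orientation case has the same hidden dependence: openness is indeed cheap from local biholomorphicity of $\Hol$, but closedness of the set of realized targets $Y$ requires showing that as $Y_n\to Y$ the corresponding $\rho_n\in\rchi_X\cap\rchi_{Y_n}$ do not escape to infinity in $\rchi$. Properness of $\Hol|_{\PP_Y}$ controls the preimage of a compact set; it does not keep $\rho_n$ in a compact set. This is precisely what the grafting cocycle argument provides (see the proof of Theorem \ref{Surjectivity}, where the same cocycle is used to bound $\rho_t$ along a path), so without it the open-and-closed argument doesn't close. The opposite-orientation case via Bers is fine as stated, though note the paper deliberately avoids invoking Bers there so as to reprove it in \S\ref{SimultaneousUniformization}.
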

\end{boxedlaw}
More precisely, we will show that $\rchi_X \cap \rchi_Y$ contains at least one point if the orientations of $X$ and $Y$ are the same, and at least two points if the orientations are opposite (Corollary \ref{Cardinality}). 
Such a  global understanding of $\rchi_X \cap \rchi_Y$ in Theorem \ref{DiscreteIntersection} is completely new. 
In fact, much of this paper is devoted to proving the discreteness of $\rchi_X \cap \rchi_Y$.

    The deformation spaces, $\PP$ and $\PP^\ast$, of $\CP^1$-structures have two distinguished parametrizations: namely,  {\it Schwarzian parametrization} (\S \ref{sSchwarzianParemeterization}) and {\it Thurston parametrization} (\S \ref{sThurstonParameters}). 
In order to understand points in $\rchi_X \cap \rchi_Y$, we give a comparison theorem between those two parametrizations.

Let $C$ be a $\CP^1$-structure on a Riemann surface $X$.
Then the quadratic differential of its Schwarzian parameters gives a vertical measured (singular) foliation $V$ on $X$. 
The Thurston parametrization of $C$ gives the measured geodesic lamination $L$ on the hyperbolic surface. 
Dumas showed that $V$ and $L$ {\it projectively} coincide in the limit as $C$ leaves every compact set in $P_X$ (\cite{Dumas06, Dumas07}), see also  \cite{OttSwobodaWendworthWolf(20)}.)
 
The measured geodesic lamination $L$ of the Thurston parameter is also realized as a  circular measured lamination $\LLL$ on $C$, so that  $\LLL$ and $L$ are the same measured lamination on $S$ (\S \ref{sCollapsing}).
  In this paper, we prove a more explicit asymptotic relation between the Thurston lamination $\LLL$ and the vertical foliation $V$, without projectivization.
For a quadratic differential $q = \phi\, dz^2$ on a Riemann surface $X$,  let  $\| q \| = \int_X |\phi|\, dx\,dy$, the $L^1$-norm. 
Then we have the following.
\begin{boxedlaw}{13cm}
\begin{thm}\Label{FoliationLaminationComparison}
Let $X \in \TT \sqcup \TT^\ast$. 
For every $\ep > 0$, there is $r > 0$, such that, if the holomorphic quadratic differential $q $ on $X$ satisfies $\| q \| > r$, then, letting $C$ be the $\CP^1$-structure on $X$ given by  $q$,  the vertical foliation $V$ of $q$ is $(1 + \ep, \ep)$-quasi-isometric to $\sqrt{2}$ times the Thurston lamination $\LLL$ on $C$, up to an isotopy of $X$ supported on the $\ep$-neighborhood of the zero set of $q$ in the uniformizing hyperbolic metric on $X$. 
(Theorem \ref{ThurstonLaminationAndVerticalFoliationOnDisks}.)
\end{thm}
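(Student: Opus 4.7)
The plan is to reduce the global comparison to a local model computation in natural coordinates of $q$, where the Schwarzian ODE becomes constant-coefficient and the developing map of $C$ admits an explicit exponential form. Away from the zero set $Z(q)$, I would choose natural coordinates $w$ with $q = dw^2$; here $V$ is the foliation by vertical lines $\{\operatorname{Re}(w) = \text{const}\}$ with transverse measure $|d\operatorname{Re}(w)|$, and the Schwarzian equation $S(f) = q$ becomes constant-coefficient. Any local solution is M\"obius-equivalent to the model $f(w) = \exp(i\sqrt{2}\, w)$, whose image covers $\CP^1 \setminus \{0, \infty\}$.

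Next I would identify $\LLL$ in this model. The circular leaves of $\LLL$ are, by definition, preimages of round circles through the two fixed points of the local holonomy; in the model these are $\{0, \infty\}$, the round circles through them are straight rays in $\C^{\ast}$, and their preimages in $w$-coordinates are vertical lines $\{\operatorname{Re}(w) = \text{const}\}$---the very leaves of $V$. Thus $\LLL$ and $V$ locally share the same underlying foliation, and the comparison reduces to a transverse-measure calculation: the bending angle swept in $\CP^1$ along a transverse arc, compared with the $w$-length of the arc, produces the $\sqrt{2}$ factor, with error terms controlled by WKB-type stability of the Schwarzian ODE on the thick part of $(X, |q|)$.

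To globalize, I would cover $X$ minus an $\ep$-neighborhood (in the uniformizing hyperbolic metric) of $Z(q)$ by finitely many natural-coordinate patches. Since the transition maps between natural coordinates are translations in $w$, the model exponential developing maps on overlapping patches agree after a common M\"obius post-composition, and the local identifications of $V$ with $\sqrt{2}\,\LLL$ glue into a global $(1+\ep,\ep)$-quasi-isometry on the complement of the neighborhood. The remaining discrepancy inside the $\ep$-neighborhood of $Z(q)$ is then absorbed into a supported isotopy; this is prepared by the projective convergence of Dumas (\cite{Dumas06, Dumas07}), which already matches the singular strata of $V$ and the prong structure of $\LLL$ outside shrinking neighborhoods of $Z(q)$ as $\|q\| \to \infty$.

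The main obstacle is uniform control near zeros of $q$. There the Schwarzian equation is no longer a slow perturbation of a constant-coefficient one, and leaves of $\LLL$ may traverse the $\ep$-neighborhood of $Z(q)$ in ways that differ combinatorially from leaves of $V$. A separate turning-point analysis of the ODE is required to show that the $\LLL$-transverse measure supported in the hyperbolic $\ep$-neighborhood of $Z(q)$ is negligible uniformly in $\|q\|$, and that the supported isotopy can be chosen to reconcile the local Whitehead structures of $V$ and $\LLL$ within the $(1+\ep, \ep)$ tolerance. Upgrading Dumas's projective asymptotic to a precise metric one with the explicit constant $\sqrt{2}$ is the principal technical hurdle the proof must clear.
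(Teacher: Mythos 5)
Your overall strategy — pass to natural coordinates, model the developing map by an exponential, identify $V$ and $\LLL$ in the model, and globalize with a supported isotopy near $Z(q)$ — is the same route the paper takes through Theorems \ref{ThurstonLaminationAndVerticalFoliationOnDisks} and \ref{ThurstonLaminationAndVerticalFoliationOnPolygons}, and the developing-map approximation you invoke is indeed Lemma \ref{ApproximationByExponentialMap}, derived from the estimates of \cite{Dumas18HolonomyLimit}. Two places where the proposal needs tightening:

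First, the identification of $\LLL$ in the model. You assert that the leaves of $\LLL$ are ``by definition'' preimages of round circles through the two fixed points. That is not the definition: $\LLL$ consists of cores of maximal disks (\S\ref{sCollapsing}), and it is a \emph{computation} in the exponential model that those cores develop onto geodesics from $0$ to $\infty$ in the supporting hemispheres, which are arcs of circles through the fixed points. More consequentially, your argument treats this identification as if it transfers for free from the exact model to the actual $C$; but since $\LLL$ is defined through a global maximality property, one needs a stability statement that maximal disks of $C$ converge to those of the exponential model on the disk in question. This is exactly what the paper supplies by invoking the convergence of canonical neighborhoods from \cite{Baba-17}, and it is not a formality — it is where the local model is promoted to a statement about $\LLL$. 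Your proposal leaves this step implicit.

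Second, the behavior near $Z(q)$. You flag a ``turning-point analysis'' of the Schwarzian ODE as the principal technical hurdle, aiming to show that the $\LLL$-mass inside the $\ep$-neighborhood of $Z(q)$ is negligible. The paper avoids this entirely: in the proof of Theorem \ref{ThurstonLaminationAndVerticalFoliationOnPolygons} it observes that $V$ and $\LLL$ restricted to $P \cap N_{r/2}Z$ are \emph{determined up to isotopy} by their restrictions to $P \setminus N_{r/2}Z$, so that once the comparison is established outside the neighborhood (via the disk version plus Corollary \ref{VerticalSegmentAlmostZeroMeasure} for transversal measures of vertical segments), the discrepancy inside is absorbed by the supported isotopy for free, with no ODE analysis at the zeros. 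Note also that the Schwarzian equation does not literally ``become constant-coefficient'' in natural coordinates — the uniformizing term persists — but it becomes asymptotically so on $|q|$-bounded disks far from $Z(q)$ as $\|q\| \to \infty$, which is precisely the quantitative content of the Dumas estimates you cite. These adjustments would align the proposal with what is actually required and substantially reduce the work you estimated for the near-zero region.
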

\end{boxedlaw}
(See \ref{sComoparingMeasuredFoliations} for the definition of being quasi-isometric, and see \S \ref{sCollapsing} for the Thurston lamination on a $\CP^1$-surface.)
\Cref{FoliationLaminationComparison} is reminiscent of the (refined) estimates of high energy harmonic maps between hyperbolic Riemann surfaces by Wolf (\cite{Wolf91}).

  Last we address that,  in our setting, a variation of McMullen's conjecture can be stated in a global manner:
\begin{question}
For every (or even some) non-quasi-Fuchsian component $Q$ of $\BB$,  is the restriction of $\Phi$ to $Q$ a biholomorphic mapping onto its corresponding component of $(\TT \sqcup \TT^\ast)^2$?
\end{question}

\subsection{Outline of this paper}
In \S \ref{sEpsteinSurfaces}, we analyze the geometry of Epstein-Schwarz surfaces corresponding to  $\CP^1$-structures, using \cite{Dumas18HolonomyLimit} and \cite{Baba-15}.
In \S\ref{sComoparingMeasuredFoliations}, we analyze the horizontal foliations of $\CP^1$-structures on $X$ and $Y$ corresponding to the intersection points of $\rchi_X \cap \rchi_Y$ in Theorem \ref{DiscreteIntersection}. 
In fact, we show that such horizontal projectivized measured foliations projectively coincide towards infinity of $\rchi_X \cap  \rchi_Y$ (Theorem \ref{HorizontalFoliationsCoinside}).  

A (fat) train-track is a surface obtained by identifying edges of rectangles in a certain manner. 
In \S \ref{sEuclideanTrainTracks}, we introduce more general train-tracks whose branches are not necessarily rectangles but more general polygons,  cylinders, and even surfaces with staircase boundary ({\it surface train tracks}).  
In \S \ref{sCompatibleTraintrackDecompositions}, given a certain pair of flat surfaces, we decompose them into the surface train tracks in a compatible manner.  

In \S \ref{sLaminationAndFoliations}, we prove Theorem \ref{FoliationLaminationComparison}. 
In \S \ref{sCircularTraintracks}, for every holonomy $\rho$  in  $\rchi_X \cap \rchi_Y$ outside a large compact subset $K$ of $\rchi$,   we construct certain surface train-track decompositions of $\CP^1$-structures on $X$ and $Y$ corresponding to $\rho$ in a compatible manner, using the decomposition of flat surfaces.
 In \S \ref{sGraftingCocycle},  from the compatible decompositions of the $\CP^1$-structures,  we construct an integer-valued cocycle which changes continuously in $\rho  \in \rchi_X \cap \rchi_Y \minus K$. 
In \S\ref{sDiscreteness}, by this cocycle and some complex geometry,  we prove the discreteness in Theorem \ref{DiscreteIntersection}. 
 In \S \ref{sCompleteness},  the completeness of Theorem \ref{DiscreteIntersection} is proven. 
In \S \ref{sOppositeOrientation}, we discuss the case when the orientations of $X$ and $Y$ are opposite. 
In \S \ref{SimultaneousUniformization}, we give a new proof of Bers' theorem.

 \subsection{Acknowledgements}
I thank Ken Bromberg, David Dumas, Misha Kapovich, and Shinnosuke Okawa for their helpful conversations. 
I appreciate the anonymous referee for many valuable comments and for pointing out some important inaccuracies in the original version.

The author was partially supported by the Grant-in-Aid for Scientific Research (20K03610).

\section{Preliminaries}\Label{sPreliminaries}

\subsection{$\CP^1$-structures} (General references are \cite{Dumas-08}, \cite[\S 7]{Kapovich-01}.)\Label{sProjectiveStructures}
Let $F$ be a connected orientable surface.  
A  {\sf $\CP^1$-structure} on $F$ is a $(\CP^1, \PSL(2, \C))$-structure. 
That is, a maximal atlas of charts embedding open sets of $F$ into $\CP^1$ with transition maps in $\PSL(2, \C)$. 
Let $\ti{F}$ be the universal cover of $F$. 
Then, equivalently, a $\CP^1$-structure is a pair of
\begin{itemize}
\item a local homeomorphism $f\col \ti{F} \to \CP^1$ and
\item  a homomorphism  $\rho\col \pi_1(S) \to \PSL(2,\C)$
\end{itemize}
such that $f$ is $\rho$-equivariant (\cite{Thurston-97}).
It is defined up to an isotopy of the surface and an element $\alpha$ of $\PSL(2, \C)$, i.e. $(f, \rho) \sim ( \alpha f,  \alpha^{-1}\rho \alpha )$. 
The local homeomorphism $f$ is called the {\sf developing map} and the homomorphism $\rho$ is called the {\sf holonomy representation} of a $\CP^1$-structure. 
We also write the developing map of $C$ by {\sf $\dev C$}.

\subsubsection{The holonomy map}

The $\PSL(2, \C)$-{\sf character variety} of $S$ is the space of the equivalence classes homomorphisms
$$\{ \pi_1(S) \to \PSL(2, \C))\}\sslash\PSL(2, \C),$$
where the quotient is the GIT-quotient (see \cite{Newstead(06)} for example).
 For the holonomy representations of $\CP^1$-structures on $S$, the quotient is exactly given by the conjugation by $\PSL(2, \C)$. 
Then, the character variety has exactly two connected components, distinguished by the lifting property to $\SL(2, \C)$; see \cite{Goldman-88t}. 
 Let $\rchi$ be the component consisting of representations which lift to $\pi_1(S) \to \SL(2, \C)$, and
let $\PP$ be the space of marked $\CP^1$-structures on $S$. 
Then the {\sf holonomy map} $$\Hol \col \PP  \to \rchi$$
takes each $\CP^1$-structure to its holonomy representation.
Then $\Hol$ is a locally biholomorphic map, but not a covering map onto its image (\cite{Hejhal-75, Hubbard-81, Earle-81}).
By Gallo, Kapovich, and Marden (\cite{Gallo-Kapovich-Marden}), $\rho \in \Im \Hol$ if and only if $\rho$ is non-elementary and $\rho$ has a lift to $\pi_1(S) \to \SL(2,\C)$.
In particular, $\Hol$ is almost onto $\rchi$.

\subsubsection{The Schwarzian parametrization}\Label{sSchwarzianParemeterization} (See \cite{Dumas-08} \cite{Lehto87}.)
Let $X$ be a Riemann surface structure on $S$. 
Then, the hyperbolic structure $\tau_X$ uniformizing $X$ is, in particular, a $\CP^1$-structure on $X$. 
For an arbitrary $\CP^1$-structure $C$ on $X$, the Schwarzian derivative gives a holomorphic quadratic differential on $X$ by comparing with $\tau_X$, so that $\tau_X$ corresponds to the zero differential. 
Then $(X, q)$ is the {\sf Schwarzian parameters} of $C$.
Let $\QD(X)$ be the space of the holomorphic quadratic differentials on $X$, which is a complex vector space of dimension $3g-3$.
 Thus, the space $\PP_X$ of all $\CP^1$ structures on $X$ is identified with $\QD(X)$.

 \begin{theorem}[\cite{Poincare884, Kapovich-95}, see also \cite{Tanigawa99, Dumas18HolonomyLimit}]\Label{HolonomyVariety}
 For every Riemann surface structure $X$ on $S$, 
the set $\PP_X$ of projective structures on $X$ is property embedded in $\rchi$ by $\Hol$.
\end{theorem}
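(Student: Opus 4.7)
The plan has two ingredients: injectivity and properness. The fact that $\Hol|_{\PP_X}$ is an immersion is free, since $\Hol\colon \PP \to \rchi$ is already stated to be locally biholomorphic.

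\textbf{Injectivity (after Poincaré).} Suppose $C_1, C_2 \in \PP_X$ have a common holonomy $\rho$, with equivariant developing maps $f_i\colon \ti{X} \to \CP^1$. Locally on the image of $f_1$, the composition $g := f_2 \circ f_1^{-1}$ is a single-valued holomorphic map between open subsets of $\CP^1$, and a direct computation from $f_i \circ \gamma = \rho(\gamma)\circ f_i$ yields $g \circ \rho(\gamma) = \rho(\gamma) \circ g$ wherever both sides are defined. For non-elementary $\rho$, pick two hyperbolic elements $\rho(\gamma_1), \rho(\gamma_2)$ with pairwise disjoint fixed-point sets; analytically continuing $g$ to neighborhoods of these four fixed points and using the commutation relation with each $\rho(\gamma_i)$ forces $g$ to fix them, hence $g = \mathrm{id}$. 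Thus $f_1 = f_2$ as equivariant maps, and taking Schwarzians relative to $\tau_X$ gives $q_1 = q_2$. The elementary case is excluded by Gallo--Kapovich--Marden, or handled directly by a separate argument on holonomy type.

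\textbf{Properness.} Let $q_n \in \QD(X)$ with $\|q_n\| \to \infty$ and let $C_n \in \PP_X$ be the corresponding $\CP^1$-structures. I want to show $\Hol(C_n)$ eventually leaves every compact subset of $\rchi$. Switch to Thurston coordinates to write $C_n = \Gr_{L_n}(\tau_n)$ for a pair $(\tau_n, L_n)$ of hyperbolic structure and measured lamination. An asymptotic comparison on the fixed $X$ between the vertical foliation of $q_n$ and the Thurston lamination $L_n$ (in the spirit of Dumas's projective coincidence, and refined quantitatively in \Cref{FoliationLaminationComparison}) shows that the transverse measure of $L_n$ against some fixed curve system on $S$ diverges. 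Grafting along a lamination of large transverse measure produces long Euclidean cylinders in $C_n$; the holonomy along a core curve of such a cylinder has translation length at least the height of the cylinder, so for some fixed $\gamma \in \pi_1(S)$ the trace of $\Hol(C_n)(\gamma)$ is unbounded, pushing $\Hol(C_n)$ out of every compact subset of $\rchi$.

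\textbf{Main obstacle.} Injectivity, once the commuting-Möbius rigidity is invoked, is a short analytic continuation argument. The substantive step is properness: one needs a quantitative bridge from the $L^1$-norm of a Schwarzian differential on the \emph{fixed} Riemann surface $X$ to divergence of the resulting representations in $\rchi$. This bridge is provided by Thurston's parametrization coupled with the asymptotic results of Tanigawa, Kapovich, and Dumas; it is precisely the geometry of long grafting cylinders that converts unbounded Schwarzian data into unbounded holonomy. The subtle point is that the comparison between $q_n$ and $L_n$ in the literature is often only projective, whereas properness requires actual (non-projective) divergence of the Thurston measure, which is why sharper control such as \Cref{FoliationLaminationComparison} is relevant when one wants a self-contained argument.
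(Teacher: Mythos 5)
The paper does not prove this statement; it is quoted with attributions to Poincar\'e, Kapovich, Tanigawa, and Dumas, so your proposal should be weighed against those arguments rather than against anything internal to this paper. Your injectivity sketch is in the right spirit but elides two real issues: the locally defined map $g = f_2 \circ f_1^{-1}$ need not analytically continue to neighborhoods of the fixed points of the loxodromics (those lie in the limit set, which the image of a developing map need not approach), and even if $g$ did fix four points, that only forces $g = \mathrm{id}$ if $g$ is already known to be a M\"obius transformation --- which one cannot assume, since the Schwarzian of $g$ computes precisely the difference $q_2 - q_1$ whose vanishing is the goal. The classical argument sidesteps $g$ and instead compares the two second-order linear ODEs $u'' + \tfrac{1}{2} q_i u = 0$ via a Wronskian-type quantity, using the assumed coincidence of monodromies together with the compactness of $X$.

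The more serious problem is in properness. The step ``the holonomy along a core curve of such a cylinder has translation length at least the height of the cylinder'' is false: $2\pi k$-grafting a Fuchsian structure along a simple closed curve $\gamma$ inserts a Euclidean cylinder of arbitrarily large height while leaving the holonomy representation \emph{unchanged} --- the core curve's complex translation length has imaginary part equal to the height $\bmod\ 2\pi$, and its real part stays equal to the fixed hyperbolic length of $\gamma$. This is exactly why $\Hol$ is \emph{not} proper on all of $\PP$, and any properness argument restricted to $\PP_X$ must genuinely use that the underlying conformal structure is pinned down; a direct cylinder-to-trace estimate does not do that. The actual proofs in the cited references proceed differently: Kapovich and Tanigawa use pleated surfaces and compactness, while Dumas identifies the Morgan--Shalen limit of $\Hol(C_n)$ with the $\R$-tree dual to the limiting horizontal foliation via the asymptotic geometry of Epstein surfaces (the machinery recalled in \S\ref{sEpsteinSurfaces} of this paper). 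It is that identification --- not a translation-length estimate from grafting cylinders --- that forces $\Hol(C_n)$ out of every compact set.
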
 
  For $X \in \TT \sqcup \TT^\ast$, let $\rchi_X$ denote the smooth analytic subvariety $\Hol(\rchi_X)$. 
    Pick any metric $d$ on $\TT$ and $\TT^\ast$ compatible with their topology (for example, the Teichmüller metric or the Weil-Peterson metric). 
\begin{lemma}\Label{LocalProductStructure}
Let $B$ be an arbitrary bounded subset of either $\TT$ or $\TT^\ast$.
For every compact subset $K$ in $\rchi$, there is $\ep > 0$, such that, if distinct $X, Y \in B$ satisfy $d(X, Y) < \ep$, then $\rchi_X \cap \rchi_Y \cap K = \emptyset$.
\end{lemma}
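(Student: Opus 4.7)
The plan is to argue by contradiction. If the conclusion failed, there would be a compact $K \subset \rchi$ together with sequences $X_n, Y_n \in B$ satisfying $X_n \neq Y_n$, $d(X_n, Y_n) \to 0$, and $\rho_n \in \rchi_{X_n} \cap \rchi_{Y_n} \cap K$. First I would extract a subsequence so that $\rho_n \to \rho_\infty \in K$. Since $B$ is bounded and, for the chosen metric $d$, closed bounded subsets of $\TT$ are compact (e.g.\ for the Teichm\"uller metric this is the classical properness of the Teichm\"uller distance), I may pass to a further subsequence so that $X_n \to X_\infty \in \TT$; the condition $d(X_n, Y_n) \to 0$ then forces $Y_n \to X_\infty$ as well. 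For each $n$ choose $C_n \in \PP_{X_n}$ and $D_n \in \PP_{Y_n}$ with $\Hol(C_n) = \Hol(D_n) = \rho_n$.

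The central step is to show that along a further subsequence $C_n \to C_\infty$ and $D_n \to D_\infty$ in $\PP$. Theorem \ref{HolonomyVariety} gives properness of $\Hol|_{\PP_X}$ for each fixed $X$; what is required here is a uniform analogue as $X$ varies in a neighborhood of $X_\infty$. I would argue by contradiction: if the Schwarzian quadratic differentials $q_n$ of $C_n$ had unbounded $L^1$-norm on $X_n$, then by Dumas' holonomy limit theorem \cite{Dumas18HolonomyLimit} (or equivalently by the Thurston-parameter/grafting asymptotics to be developed in \S\ref{sLaminationAndFoliations}), the holonomies $\Hol(C_n)$ would leave every compact subset of $\rchi$, contradicting $\rho_n \to \rho_\infty$. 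Hence $q_n$ stays bounded in the holomorphic vector bundle $\QD \to \TT$ over a compact neighborhood of $X_\infty$, giving the desired convergence; the same reasoning applies to $D_n$. By construction $\psi(C_\infty) = \psi(D_\infty) = X_\infty$.

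By continuity of $\Hol$ one has $\Hol(C_\infty) = \Hol(D_\infty) = \rho_\infty$, and by the injectivity of $\Hol|_{\PP_{X_\infty}}$ (Theorem \ref{HolonomyVariety}) we conclude $C_\infty = D_\infty$. Since $\Hol$ is a local biholomorphism, there is a neighborhood $U$ of this common limit in $\PP$ on which $\Hol$ is injective. For $n$ large, both $C_n$ and $D_n$ lie in $U$ and share image $\rho_n$, so $C_n = D_n$; but then $X_n = \psi(C_n) = \psi(D_n) = Y_n$, contradicting $X_n \neq Y_n$.

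The main obstacle I anticipate is the uniform fiberwise properness used in the second paragraph: upgrading the properness of each $\Hol|_{\PP_X}$ to joint sequential compactness as the base point varies in a neighborhood of $X_\infty$. I expect it to come from a lower bound on the escape rate of the holonomy in terms of the Schwarzian norm, uniform on compact subsets of $\TT$, which is essentially the content of Dumas' holonomy limit theorem and of the comparison between the Schwarzian and Thurston parametrizations developed later in the paper (Theorem \ref{FoliationLaminationComparison} and \S\ref{sEpsteinSurfaces}).
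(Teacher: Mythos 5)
Your proof is correct and rests on the same two ingredients as the paper's: the properness of $\Hol|_{\PP_X}$ (Theorem \ref{HolonomyVariety}) and the fact that $\Hol$ is a local biholomorphism. You package it as a sequential-compactness contradiction, whereas the paper directly chooses a radius $r$ and a neighborhood $U$ of $X$ so that $\Hol$ embeds the disc bundle $D_r(U)$ and $\PP_U\cap\Hol^{-1}(K)\subset D_r(U)$; your version simply makes explicit the uniform-properness and $\overline{B}$-compactness steps that the paper leaves terse.
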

\begin{proof}
For each $X \in  \TT \sqcup \TT^\ast$,   by Theorem \ref{HolonomyVariety}, ~$\PP_X$ is properly embedded in $\rchi$. 
For a neighborhood $U$ of $X$, let $D_r(U)$ denote the set of all holomorphic quadratic differentials $q$ on Riemann surfaces $Y$ in $ U$ such that  the $L^1$-norm $\| q \|$ is  less than $r$. 
Since $\Hol$ is a local biholomorphism,  for every $X \in \TT \sqcup \TT^\ast$ and $r \in \R_{>0}$, there is a neighborhood $U$ of $X$,  $\Hol$ embeds $D_r(U)$ into $\rchi$. 
Let $\PP_U$ be the space of all $\CP^1$-structures whose complex structures are in $U$. 
Then, if $r > 0$ is sufficiently large, we can, in addition, assume that $K \cap \Hol (\PP_U) = K \cap \Hol (D_r(U))$. 
Therefore, for all $Y, W \in U$, we have $\rchi_Y \cap \rchi_W \cap K = \emptyset$.
\end{proof}

  \subsubsection{Singular Euclidean structures}(See \cite{Strebel84QuadraticDifferentials}, \cite{FarbMarglit12}.)
 Let $q = \phi \,dz^2$ be a quadratic differential on a Riemann surface $X$.  
 Then $q$ induces a singular Euclidean structure $E$ on $S$ from the Euclidean structure on $\C$: Namely, for  each non-singular point $z \in X$, we can identify a neighborhood $U_z$ of $z$ with an open subset of $\C \cong \mathbb{E}^2$  by the  integral
$$\eta(w) = \int_z^w \sqrt{\phi}\, dz$$ along a path connecting $z$ and $w$, where  $w \in U_z$ is a  fixed base point
 (for details, see \cite{Strebel84QuadraticDifferentials}). 
 Then the zeros of $q$ correspond to the singular points of $E$. 
Note that, for $r > 0$,  if the differential $q$ is scaled by $r$, then the Euclidean metric $E$ is scaled by $\sqrt{r}$. 
Let $E^1$ denote the normalization $\frac{E}{ \Area E}$ of $E$ by the area. 
   
 The complex plane $\C$ is foliated by horizontal lines and, by the identification $\C = \mathbb{E}^2$, the vertical length $dy$ gives a canonical transversal measure to the foliation.  
 Similarly, $\C$ is also foliated by the vertical lines, and the horizontal length $d x$ gives a canonical transversal measure to the foliation.  
Then, those vertical and horizontal foliations on $\C$ induce vertical and horizontal singular foliations on  $E$ which meet orthogonally. 

In this paper, a {\sf flat surface} is the singular Euclidean structure obtained by a quadratic differential on a Riemann surface, which has vertical and horizontal foliations.

    \subsubsection{Measured laminations} ( See \cite{Thurston-78, Epstein-Marden-87} for details)
 Let $\sigma$ be a hyperbolic structure on the closed surface $S$. 
    A {\it geodesic lamination} on $\sigma$ is a set of disjoint geodesics whose union is a closed subset of $S$. 
    A {\it measured (geodesic) lamination} $L$ on $\sigma$ is a pair of a geodesic lamination and its transversal measure. 
    In this paper, for an arc $\alpha$ on $\sigma$ transversal to $L$, we denote, by $L(\alpha)$, the transversal measure of $\alpha$ given by $L$. 
   If we take a different hyperbolic structure $\sigma'$ on $S$, there is a unique geodesic representative on $L$ on $\sigma'$. 
   We thus can define measured laminations without fixing a specific hyperbolic structure on $S$. 

\subsubsection{Bending a geodesic in the hyperbolic three-space}
The following well-known lemma describes a closeness of a geodesic and a piecewise geodesic in $\H^3$ with a small amount of bending.
\begin{lemma}( \cite[Theorem I.4.2.10]{CanaryEpsteinGreen84} )\Label{ABitBentGeodesic}
Let $c\col [0,\ell] \to \H^3$ be a piecewise geodesic parametrized by arc length. 
Let $s(t)$ be the geodesic segment in $\H^3$ connecting $c(0)$ to $c(t)$.
Let $\theta(t)$ be the angle between the forward tangent vector of $c$ at $t$ and the forward tangent vector of $s(t)$ at $c(t)$. 

For every $\ep > 0$ and $r > 0$, there is $\del > 0$ such that, if each smooth geodesic segment of $c$ has length at least $r$ and the exterior angle of $c$ at every singular point of $c$ is less than $\del$, then $\theta(t) < \ep$ for all $t \in [0, \ell]$.
\end{lemma}

\subsubsection{Thurston's parameterization}\Label{sThurstonParameters}
By the uniformization theorem of Riemann surfaces, 
the space of all marked hyperbolic structures on $S$ is identified with the space  $\TT$ of all marked Riemann surface structures. 
Let $\ML$ be the space of measured laminations on $S$.  
Note that $\CP^1$ is the ideal boundary of $\H^3$, so that $\operatorname{Aut} \CP^1 = \Isom^+ \H^3$.
In fact, Thurston gave a parameterization of $\PP$ using the three-dimensional hyperbolic geometry.
\begin{theorem}[Thurston, see \cite{Kullkani-Pinkall-94, Kamishima-Tan-92}]\Label{ThurstonParametrization}
There is a natural (tangential) homeomorphism
$$\PP \to \TT \times \ML.$$
\end{theorem}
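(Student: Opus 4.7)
The plan is to construct a canonical forward map $\Phi \col \PP \to \TT \times \ML$ together with an explicit inverse given by grafting, and then verify both are continuous.

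For the forward map, given a $\CP^1$-structure $C$ with developing map $f \col \ti S \to \CP^1$, I would invoke the Kulkarni--Pinkall canonical stratification by maximal round disks: for each $x \in \ti S$, let $B_x \subset \CP^1$ be the maximal open round disk that arises as the image of an open neighborhood of $x$ under $f$. The hyperbolic metric on $B_x$ pulls back to give a canonical hyperbolic metric on that neighborhood, and patching these over all $x$ produces a $1$-Lipschitz collapsing map $\kappa \col C \to \tau$ onto a hyperbolic structure $\tau$ on $S$. The boundary circles of the maximal disks descend to a geodesic measured lamination $L$ on $\tau$, with transverse measure encoding the Euclidean ``thickness'' of the Euclidean strata collapsed by $\kappa$. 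Set $\Phi(C) = (\tau, L)$, noting that equivariance of the whole construction under $\pi_1(S)$ (acting by the holonomy $\rho$) ensures that $(\tau, L)$ descends from $\ti S$ to $S$.

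For the inverse map, I would first define grafting along weighted simple closed geodesics: given $\tau \in \TT$ and $w \gamma$, cut $\tau$ along $\gamma$ and insert a flat cylinder $\gamma \times [0, w]$ to obtain $\Gr_{w\gamma} \tau$. This surface carries a natural $\CP^1$-structure because each hyperbolic piece embeds as a Poincar\'e disk in $\CP^1$, the inserted cylinder embeds as a quotient of an annular neighborhood of a round circle, and all gluings are projective. The construction extends additively to weighted multicurves; for a general $L \in \ML$, approximate $L$ by weighted multicurves $L_n \to L$ and define $\Gr_L \tau \in \PP$ as the limit of $\Gr_{L_n} \tau$.

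The main obstacle is showing that the grafting map extends continuously to all of $\TT \times \ML$ and that the two maps are mutually inverse. For the continuous extension, one must produce uniform local estimates showing that developing maps and holonomy representations of $\Gr_{L_n} \tau$ converge, uniformly on compacts of $\ti S$, to a limit independent of the approximating sequence $L_n$; this uses density of weighted multicurves in $\ML$ and a careful analysis of how grafting along short transverse arcs perturbs the developing map. For the mutual inverse property, I would verify $\Phi \circ \Gr = \mathrm{id}$ directly on the dense subset of multicurve graftings by checking that the Kulkarni--Pinkall stratification of $\Gr_{w \gamma} \tau$ recovers $\gamma$ as its unique non-trivial leaf with weight $w$, and then conclude on all of $\TT \times \ML$ by continuity; injectivity of $\Phi$ follows from the uniqueness of the maximal-disk stratification, and surjectivity follows once the image is shown to be both open (via $\Hol$ being a local biholomorphism) and closed (via compactness of $\ML$ on bounded subsets).
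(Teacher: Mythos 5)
The paper does not prove this theorem; it is stated as Thurston's result with citations to Kulkarni--Pinkall and Kamishima--Tan, so there is no internal argument to compare against. Your outline does follow the standard route of those references: the Kulkarni--Pinkall maximal-disk stratification yields the collapsing map for the forward direction, and grafting provides the inverse, with density of weighted multicurves in $\ML$ used for the extension. Two points, however, need repair before this would constitute a proof.

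First, in the forward construction you place the maximal disk $B_x$ in $\CP^1$ as ``the image of an open neighborhood of $x$ under $f$,'' but $f$ is in general a non-injective local homeomorphism, so a maximal disk has to be taken as a disk in the domain $\ti{S}$ whose development is a round disk in $\CP^1$ (the paper's \S\ref{sCollapsing} is careful about exactly this). More importantly, a given point $x$ typically lies in a one-parameter family of maximal disks; what is unique is the maximal disk whose \emph{core} contains $x$. Without passing to cores the assignment $x \mapsto B_x$ is not single-valued, and the patching that is supposed to produce the Thurston metric and the collapsing map $\kappa$ does not get off the ground.

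Second, the inverse direction as written begs the central question. Declaring $\Gr_L\tau$ to be ``the limit of $\Gr_{L_n}\tau$'' and promising ``uniform local estimates'' showing the limit exists and is independent of $L_n\to L$ simply names the hard part of the theorem rather than proving it; the same remark applies to checking $\Phi\circ\Gr=\mathrm{id}$ by continuity after establishing it only on multicurves. The references avoid this by not constructing $\Gr_L$ as a limit at all: one shows instead that $\Phi$ is well-defined, continuous, injective, and proper between manifolds of the same dimension, and then concludes it is a homeomorphism by invariance of domain, with grafting defined a posteriori as $\Phi^{-1}$. Your closing argument that surjectivity follows from the image being open and closed is both redundant (surjectivity already follows once $\Phi\circ\Gr=\mathrm{id}$ is known) and unjustified as stated, since $\Hol$ being a local biholomorphism does not directly give openness of the image of $\Phi$ in $\TT\times\ML$.
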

Suppose that, by this homeomorphism,  $C = (f, \rho) \in \PP$ corresponds to  a pair $(\sigma, L) \in \TT \times \ML$.
Let $\ti{L}$ be the $\pi_1(S)$-invariant measured lamination on $\H^2$ obtained by lifting $L$.
Then $(\sigma, L)$ yields a $\rho$-equivariant pleated surface $\beta\col \H^2 \to \H^3$, obtained by bending $\H^2$ along  $\ti{L}$ by the angles given by its transversal measure. 
The map $\beta$ is called a {\bf bending map}, and it is unique up to post-composing with  $\PSL(2, \C)$.

\subsubsection{Collapsing maps}\Label{sCollapsing}(\cite{Kullkani-Pinkall-94}; see also \cite{Baba20ThurstonParameter}.)
Let $C \cong (\tau, L)$ be a $\CP^1$-structure expressed in Thurston parameters. 
Let $\ti{C}$ be the universal cover of $C$. 
Then $\ti{C}$ can be regarded as the domain of $f$, so that $\ti{C}$ is holomorphically immersed in $\CP^1$.
  A {\sf round disk} is a topological open disk whose development is a round disk in $\CP^1$, and a {\sf maximal disk} is a round disk which is {\it not} contained in a strictly bigger round disk. 
 In fact,  for all $z \in \ti{C}$,  there is a unique maximal disk $D_z$ whose core contains $z$.
 Then there is a measured lamination $\LLL$ on $C$ obtained from the cores of maximal disks in the universal cover $\ti{C}$, such that $\LLL$ is equivalent to $L$ in $\ML$. 
This lamination is the {\sf Thurston lamination} on $C$. 
In addition,  there is an associated continuous map 
$\kap\col C \to \tau$ which takes $\LLL$ to $L$, called {\sf the collapsing map}.\

Then,  the bending map and the developing of $C$ are related by the collapsing map $\kap$ and appropriate nearest point projections in $\H^3$: 
Let $\ti\kap\col \ti{C} \to \H^2$ be the lift of $\kap$ to a map between the universal covers.
Let $H_z$ be the hyperbolic plane in $\H^3$ bounded by the boundary circle of $D_z$.
There is a unique nearest point projection from $D_z$ to $H_z$.
Then $\beta \circ \ti\kap(z)$ is the nearest point projection of $f(z)$ to $H_z$.

\subsection{Bers' space}
Recall, from \S \ref{sIntro}, that  $\BB$ is the space of ordered pairs of $\CP^1$-structures on $S$  with identical holonomy, which may have different orientations.

\begin{lemma}\Label{ComponentsOfProjectiveQuasifuchsianSpace} Every component of $(\PP \sqcup \PP^\ast)^2$
 contains, at least, one connected component of $\BB$ which is not identified with the quasi-Fuchsian space.
\end{lemma}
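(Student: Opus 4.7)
The plan is to use the orientation decomposition of $\PP \sqcup \PP^\ast$ together with the grafting operation to populate each of the four ambient pieces with a non-QF component of $\BB$.

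First I would observe that $(\PP \sqcup \PP^\ast)^2$ has exactly four connected components, one for each ordered pair of orientations: $\PP \times \PP$, $\PP \times \PP^\ast$, $\PP^\ast \times \PP$, and $\PP^\ast \times \PP^\ast$. By Lemma \ref{ProjectiveQuasiFuchsianManifold}, the two QF components of $\BB$ are contained in the two mixed ambient pieces $\PP \times \PP^\ast$ and $\PP^\ast \times \PP$ (they swap under the switching involution). Consequently, any component of $\BB$ contained in the same-orientation pieces $\PP \times \PP$ or $\PP^\ast \times \PP^\ast$ is automatically non-QF. So the lemma reduces to two tasks: (i) produce a point of $\BB$ in each of the four ambient components, and (ii) in the two mixed pieces, verify that the produced point is not in the QF component.

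To realize (i), I would fix a quasi-Fuchsian representation $\rho \in \QF$ and let $(C_+, C_-) \in \PP \times \PP^\ast$ be its Bers uniformization pair, so $\Hol(C_+) = \Hol(C_-) = \rho$. Then I would apply $2\pi$-grafting along an arbitrary simple closed curve $\gamma \subset S$, which is admissible since $\rho(\gamma)$ is loxodromic, to obtain $C_+' = \Gr_\gamma C_+ \in \PP$ and $C_-' = \Gr_\gamma C_- \in \PP^\ast$; both still have holonomy $\rho$, and are distinct from $C_+, C_-$ because grafting enlarges the underlying Riemann surface by inserting an annulus. The four pairs $(C_+, C_+')$, $(C_+', C_-)$, $(C_-, C_+')$, and $(C_-, C_-')$ then lie in $\BB$ and respectively hit the four ambient components.

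For (ii), the content lies in checking that $(C_+', C_-)$ is not in $\BB_{QF} \subset \PP \times \PP^\ast$ (the symmetric statement for $(C_-, C_+')$ follows by switching). Here I would invoke Lemma \ref{ProjectiveQuasiFuchsianManifold}, which says that $\Hol$ restricts to a biholomorphism from the QF component of $\BB$ onto $\QF$; in particular the unique preimage of $\rho$ in $\BB_{QF}$ is the Bers pair $(C_+, C_-)$. Since $C_+' \neq C_+$, the pair $(C_+', C_-)$ must lie in a different component of $\BB \cap (\PP \times \PP^\ast)$. The only step where one has to be careful is the existence claim underlying (i), namely that $2\pi$-grafting along an admissible loop genuinely produces a new $\CP^1$-structure sharing the same holonomy; this is a standard fact about grafting and I would simply cite it rather than reprove it, and I do not anticipate a serious obstacle anywhere in the argument.
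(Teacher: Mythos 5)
Your proof is correct, but it proceeds along a genuinely different route from the paper. The paper's proof is very short: it simply invokes Gallo--Kapovich--Marden to assert that any non-elementary $\rho$ lifting to $\SL(2,\C)$ is the holonomy of infinitely many $\CP^1$-structures on $S^+$ with non-embedding developing maps, and likewise on $S^-$; pairing any two such structures of the appropriate orientations produces a point of $\BB$ in each of the four ambient components, and since its developing maps are not embeddings it cannot lie in a quasi-Fuchsian component. You instead start from a specific quasi-Fuchsian $\rho$ and manufacture the needed structures explicitly by $2\pi$-grafting the Bers pair $(C_+,C_-)$ along a simple closed curve. Both arguments work. What each buys: the paper's approach is leaner in its dependencies (it needs only the GKM surjectivity theorem and the fact, recalled just before the lemma, that a QF pair has both developing maps embeddings); your approach is more constructive, but it quietly leans on Bers' theorem itself (existence of the Bers pair $(C_+,C_-)$ and injectivity of the QF parametrization). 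Since the paper explicitly advertises a re-proof of Bers' theorem in \S\ref{SimultaneousUniformization}, one should check there is no circularity --- there isn't, as Lemma \ref{ComponentsOfProjectiveQuasifuchsianSpace} is not used in that section, but this is worth flagging. Two smaller remarks: your citation of Lemma \ref{ProjectiveQuasiFuchsianManifold} for the biholomorphism $\BB_{QF}\cong\QF$ is slightly off --- that lemma only establishes $\BB$ is a closed submanifold, and the biholomorphism statement lives in the surrounding discussion --- and your step (ii) would be more direct if you simply noted that $\dev(C_+')$ is not an embedding (a graft never is), which immediately disqualifies $(C_+',C_-)$ from the QF component without invoking uniqueness of Bers pairs.
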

\begin{proof}
By \cite{Gallo-Kapovich-Marden}, 
every non-elementary representation $\rho\col \pi_1(S) \to \SL(2, \C)$ is the holonomy representation of 
infinitely many $\CP^1$-structures on $S^+$ whose developing maps are not embedding, and also of
infinitely many $\CP^1$-structures of $S^-$ whose developing maps are not embedding.  
Therefore,
since a quasi-Fuchsian component of $\BB$ consists of pairs of $\CP^1$-structures whose developing maps are embedding, 
every component of $(\PP \sqcup \PP^\ast)^2$ contains at least one connected component of $\BB$, which is not a quasi-Fuchsian component. 
\end{proof}

\begin{lemma}\Label{ProjectiveQuasiFuchsianManifold}
$\BB$ is a closed analytic submanifold of $\PP \sqcup \PP^\ast$ of complex dimension $6g -6$. 
\end{lemma}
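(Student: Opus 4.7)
The plan is to realize $\BB$ as (essentially) the pullback of the diagonal under a locally biholomorphic map and then verify the two pieces of the statement—submanifold structure and closedness—separately, with the delicate step being non-accumulation on the diagonal of $(\PP \sqcup \PP^\ast)^2$.

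To set up: $\PP$ and $\PP^\ast$ are each the holomorphic cotangent bundle of a $(3g-3)$-dimensional Teichmüller space, so each has complex dimension $6g-6$; the character variety $\rchi$ also has complex dimension $6g-6$; and the holonomy map $\Hol \col \PP \sqcup \PP^\ast \to \rchi$ is a local biholomorphism, as recalled in \S\ref{sProjectiveStructures}. Therefore the product
\[
\Phi \col (\PP \sqcup \PP^\ast)^2 \to \rchi \times \rchi, \qquad \Phi(C,D) = (\Hol(C),\Hol(D)),
\]
is also a local biholomorphism, and the diagonal $\Delta_\rchi \subset \rchi \times \rchi$ is a closed complex submanifold of codimension $6g-6$. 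By definition $\BB = \Phi^{-1}(\Delta_\rchi) \setminus \Delta$, where $\Delta \subset (\PP \sqcup \PP^\ast)^2$ is the diagonal.

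For the submanifold structure, fix any $(C_0,D_0) \in \BB$; then $C_0 \neq D_0$. Choose disjoint open neighborhoods $W_1 \ni C_0$ and $W_2 \ni D_0$ small enough that $\Hol$ restricts to a biholomorphism from each $W_i$ onto an open set $U_i \subset \rchi$ containing the common holonomy $\Hol(C_0) = \Hol(D_0)$. Then $\Phi|_{W_1 \times W_2}$ is a biholomorphism onto $U_1 \times U_2$, and
\[
\BB \cap (W_1 \times W_2) \;=\; (\Phi|_{W_1 \times W_2})^{-1}\bigl(\Delta_\rchi \cap (U_1 \times U_2)\bigr),
\]
which is biholomorphic to the graph of the identity on $U_1 \cap U_2$. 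This locally exhibits $\BB$ as a closed complex submanifold of complex dimension $6g-6$.

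To upgrade to closedness in all of $(\PP \sqcup \PP^\ast)^2$, the only issue is whether $\BB$ could accumulate on the diagonal $\Delta$; this is the one step requiring an argument. Given $(C_0,C_0) \in \Delta$, shrink a neighborhood $W$ of $C_0$ until $\Hol|_W$ is injective, which is possible because $\Hol$ is a local biholomorphism. Then for any $(C,D) \in W \times W$ with $\Hol(C) = \Hol(D)$, injectivity forces $C = D$, so $\BB \cap (W \times W) = \emptyset$. Hence $\BB$ stays a positive distance from $\Delta$ near any diagonal point, and combined with the local submanifold description this proves $\BB$ is a closed analytic submanifold of $(\PP \sqcup \PP^\ast)^2$ of complex dimension $6g-6$. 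I do not anticipate a serious obstacle; the whole argument is a routine transverse-pullback exercise, with local injectivity of $\Hol$ providing the only genuinely non-formal input.
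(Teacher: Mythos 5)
Your proof is correct and takes the same basic route as the paper --- both hinge on the fact that $\Hol$ is a local biholomorphism onto an open subset of the $(6g-6)$-dimensional variety $\rchi$, so that $\BB$ is locally a transverse pullback of the diagonal $\Delta_\rchi \subset \rchi \times \rchi$. The paper's proof is considerably terser: it asserts the submanifold structure in one line and, for closedness, merely observes that $\Hol C_i = \Hol D_i$ passes to the limit by continuity. That sequential argument shows the limit $(C,D)$ satisfies $\Hol C = \Hol D$, but it never checks $C \neq D$, which is also part of the definition of $\BB$; as stated, it leaves open the possibility that $\BB$ accumulates on the diagonal $\Delta$ of $(\PP \sqcup \PP^\ast)^2$. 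You correctly flag this as ``the one step requiring an argument'' and close it cleanly: near a diagonal point $(C_0, C_0)$, local injectivity of $\Hol$ on a neighborhood $W$ of $C_0$ forces $\BB \cap (W \times W) = \emptyset$. This is a genuine improvement in rigor over the paper's version of the argument, not merely extra verbosity. One small remark: the statement of the lemma as printed says $\BB$ is a submanifold of $\PP \sqcup \PP^\ast$; this is a typo for $(\PP \sqcup \PP^\ast)^2$, as your setup (and the paper's own proof) make clear.
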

\begin{proof}
It is a holomorphic submanifold, since $\Hol\col \PP \sqcup \PP^\ast \to \rchi$ is a local biholomorphism.
As $\dim_\C \rchi = 6g -6$, the complex dimension of $\BB$ is also $6g -6$. 
Let $(C_i, D_i)$ be a sequence in $\BB$ converging to $(C, D)$ in $(\PP \sqcup \PP^\ast)^2$.
Then, since $\Hol C_i  = \Hol D_i$, by the continuity of $\Hol$, $\Hol (C) = \Hol (D)$. 
Therefore $\BB$ is closed. 
\end{proof}

 \subsection{Angles between laminations}\Label{sAngles}
 Let $F$ be a surface with a hyperbolic or singular Euclidean metric. 
 Let $\ell_1, \ell_2$ be (non-oriented) geodesics on $F$ with non-empty intersection. 
 Then, for $p \in \ell_1 \cap \ell_2$, let $\angle_p(\ell_1, \ell_2) \in [0, \pi/2]$ denote the {\sf angle} between $\ell_1$ and $\ell_2$ at $p$.
 
Let $L_1$ $L_2$ be geodesic laminations or foliations on $F$.
Then 
 $\angle(L_1, L_2)$ be the infimum of  $\angle_p(\ell_1, \ell_2) \in [0, \pi/2]$ over all $p\in L_1 \cap L_2$ where $\ell_1$ and $\ell_2$ are leaves of $L_1$ and $L_2$, respectively, containing $p$.
By convention, if $L_1 \cap L_2 = \emptyset$, then  $\angle(L_1, L_2) = 0$. 
 We say that $L_1$ and $L_2$ are {\sf $\ep$-parallel}, if $\angle(L_1, L_2) < \epsilon$.

\subsection{The Morgan-Shalen compactification}(See  \cite{Culler-Shalen-83, MorganShalen84}, see also \cite[\S 10.3]{Kapovich-01}.)
The Morgan-Shalen compactification  is a compactification of $\PSL(2, \C)$-character variety, introduced in \cite{Culler-Shalen-83, MorganShalen84}.
For our $\rchi$, each boundary point corresponds to a minimal action of  $\pi_1(S)$ on a $\R$-tree, $\pi_1(S) \curvearrowright T$.

Every holonomy $\rho\col \pi_1(S) \to \PSL(2, \C)$ induces a translation length function $\rho^\ast\col\pi_1(S) \to \R_{\geq 0}$, and a minimal action $\pi_1(S)$ on a $\R$-tree also induces a translation length function. 
Then $\rho_i \in \rchi$ converges to a boundary point $\pi_1(S) \curvearrowright T$ if the length function  $\rho^\ast_i$ projectively converges to the projective class of the translation function of $\pi_1(S) \curvearrowright T$ as $i \to \infty$.

\subsection{Complex geometry}\Label{sComplexGeometry}
We recall some basic complex geometry used in this paper. 
Let $U, W$ be complex manifolds of the same dimension. 
A holomorphic map $\phi \col U \to W$ is a {\sf (finite) branched covering map} if 
\begin{itemize}
\item there are closed analytic subsets $U’, W'$ of dimensions strictly smaller than $\dim U = \dim W$, such that the restriction of $\phi$ to $U \minus U'$ is a covering map onto  $W \minus W'$, and
\item its covering degree is finite. (See \cite[p227]{Fritzsche_Hans_02_HolomorphicFunactionsComplexManifolds}.)
\end{itemize}
A holomorphic map $\phi\col U \to W$ is a {\sf local branched covering map} if, for every $z \in U$, there is a neighborhood $V$ of $z$ in $U$ such that the restriction $\phi | V$ is a  branched covering map onto its image. 
A holomorphic map $U \to W$  is {\sf complete} if it has the (not necessarily unique) path lifting property (\cite{AhlforsSario60RiemannSurfaces}).
%%%%

 Let $U$ be an open subset of $\C^n$. 
Then a subset $V$ of $U$ is {\it analytic} if it is locally an intersection of zeros of finitely many holomorphic functions. 
\begin{proposition}[Proposition 6.1 in \cite{Fritzsche_Hans_02_HolomorphicFunactionsComplexManifolds}]\Label{BoundedAnalyticSet}
Every connected bounded analytic set in $\C^n$ is a  discrete set. 
\end{proposition}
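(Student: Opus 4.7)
The statement is a standard consequence of the maximum modulus principle for holomorphic functions, applied to the coordinate projections restricted to the analytic set. Here is the plan.

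First, I would observe that an analytic subset $V$ of $\C^n$ is by definition closed in $\C^n$, being locally the common zero set of finitely many holomorphic functions. If $V$ is bounded, it is therefore compact. The goal is to show that $V$ has complex dimension zero at each of its points; the discreteness then follows from the local structure of zero-dimensional analytic sets, and a connected discrete set is a single point (so the statement is non-vacuous only in this trivial form).

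Next, I would fix a point $p \in V$ and an irreducible analytic component $W$ of $V$ containing $p$. The regular locus $W^{\mathrm{reg}}$ is a connected complex submanifold of $\C^n$, dense in $W$, and $W$ is compact since it is closed in compact $V$. For each coordinate function $z_i$, the restriction $z_i|_W$ is continuous on the compact set $W$, so $|z_i|$ attains its maximum at some $q \in W$. By density of $W^{\mathrm{reg}}$ and upper semicontinuity, we may perturb and assume $q \in W^{\mathrm{reg}}$ (or use the maximum principle for analytic sets directly: any holomorphic function on an irreducible compact analytic subset of $\C^n$ is constant, since on a neighborhood of a smooth point where $|z_i|$ is maximal the function must be constant, and the set of constancy is open and closed in the irreducible $W$). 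Hence $z_i|_W$ is constant for every $i$, so $W$ is a single point.

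Applying this to every irreducible component of $V$, each component is a point of $\C^n$. The irreducible decomposition of an analytic set is locally finite, and by compactness $V$ has only finitely many components. Thus $V$ is a finite set of points, in particular discrete. If in addition $V$ is connected, it reduces to one point, which is the extremal case of the proposition. The only delicate step is the application of the maximum principle on a possibly singular analytic set; this is handled by passing to the smooth dense locus of an irreducible component and invoking the ordinary maximum principle for holomorphic functions on a connected complex manifold, which is the standard route taken in Fritzsche--Grauert.
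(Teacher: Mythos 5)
The paper does not prove this proposition; it is cited verbatim from Fritzsche--Grauert (their Proposition 6.1), so there is no internal argument to compare against. Your proof is the standard route to this fact: an analytic subset of $\C^n$ is closed, hence a bounded one is compact, and restricting the coordinate functions $z_i$ to each irreducible component $W$ and invoking the maximum modulus principle forces every $z_i$ to be constant on $W$, so each component is a point; a connected such set is therefore a singleton, which is what the paper's phrase ``discrete'' amounts to here. That is the right idea, and you correctly flag the one delicate point.

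That delicate point is, however, a genuine gap as written. You cannot ``perturb and assume $q \in W^{\mathrm{reg}}$'': the maximum of $|z_i|$ on the compact set $W$ could \emph{a priori} be attained only on $W^{\mathrm{sing}}$, and density of the regular locus does not let you move a maximizer. The parenthetical fallback is circular in its current form, because ``the set of constancy is open and closed in the irreducible $W$'' is only useful once you already know that set is non-empty, i.e.\ once you already have a smooth point near which $z_i$ is locally constant --- which is exactly what is in question. A clean way to close the gap is to induct on dimension: $W^{\mathrm{sing}}$ is itself a compact analytic set of strictly smaller dimension, hence finite by the inductive hypothesis; if $z_i$ were non-constant on the connected complex manifold $W^{\mathrm{reg}}$, its image would be a non-empty bounded open subset of $\C$ whose boundary lies in the finite set $z_i(W^{\mathrm{sing}})$, and no such open set exists (its complement would then also be open, contradicting connectedness of $\C$ minus finitely many points). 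With that patch, your argument is complete and matches the expected proof in the cited reference.
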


\begin{theorem}[p107 in \cite{Grauert-Remmert_Coherent-analytic-sheaves}, Theorem 7.9  in \cite{Hu_Chung-Chun_Differentiable-and-complex-dynamics}]\Label{DiscreteFiberThenOpen}
Let $U \sub   \C^n$ be a region. 
Suppose that $f\col U  \to \C^n$ is a holomorphic map with discrete fibers. 
Then it is an open map. 
\end{theorem}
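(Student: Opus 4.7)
The plan is to prove openness pointwise. Fix $z_0 \in U$ with $w_0 := f(z_0)$; I will show that $f(U)$ contains a neighborhood of $w_0$. The engine is Remmert's proper mapping theorem: the image of a proper holomorphic map between complex manifolds is an analytic set. Combined with the elementary fact that a pure $n$-dimensional analytic subset of a connected open set in $\C^n$ must be the whole set, openness at $z_0$ will follow.

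First I would localize. Since the fiber $f^{-1}(w_0)$ is discrete, there is a closed ball $\overline{B} \subset U$ centered at $z_0$ with $f^{-1}(w_0) \cap \overline{B} = \{z_0\}$. Then $f(\partial B)$ is compact in $\C^n$ and avoids $w_0$, so I can pick an open connected neighborhood $V$ of $w_0$ in $\C^n$ with $V \cap f(\partial B) = \emptyset$. Next, set $W := B \cap f^{-1}(V)$ and consider $g := f|_W \colon W \to V$. I would verify that $g$ is proper: for any compact $K \Subset V$, the preimage $g^{-1}(K)$ is closed in $B$, bounded, and—because $f(\partial B) \cap V = \emptyset$—stays away from $\partial B$, so it is compact in $U$.

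Thus $g$ is a proper holomorphic map between equidimensional complex manifolds with discrete (hence finite) fibers. By Remmert's proper mapping theorem, $g(W)$ is an analytic subset of $V$; a rank argument at any point of $W$ where $f$ has discrete fiber shows this image has pure dimension $n$. Connectedness of $V$ then forces $g(W) = V$, so $V \subset f(B) \subset f(U)$, and $f$ is open at $z_0$. The statement is a short consequence of Remmert's theorem, so there is no genuine obstacle; the only subtle point is the dimension count that promotes the analytic image $g(W)$ from "pure $n$-dimensional analytic subset" to "all of $V$," which is where connectedness of $V$ is crucial.
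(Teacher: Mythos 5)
The paper cites this theorem (from Grauert--Remmert and from Hu's text) without giving its own proof, so there is no internal argument to compare against; I will simply assess your proposal on its own. The argument is correct. The localization is exactly right: a closed ball $\overline{B}\subset U$ on which $z_0$ is the only preimage of $w_0$, a connected open $V\ni w_0$ missing the compact set $f(\partial B)$, and $g=f|_W\colon W\to V$ with $W=B\cap f^{-1}(V)$, which is proper because a sequence in $g^{-1}(K)$ cannot accumulate on $\partial B$ (as $f(\partial B)\cap V=\emptyset$), so $g^{-1}(K)$ is closed and bounded in $\C^n$, hence compact. Remmert's proper mapping theorem then makes $g(W)$ an analytic subset of $V$, and the rest is the standard dimension-plus-connectedness argument.

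Two small points. Your ``rank argument'' deserves to be spelled out: if the holomorphic Jacobian of $g$ were identically zero on a component of $W$, the constant-rank theorem would force positive-dimensional fibers there, contradicting the discreteness hypothesis; so the Jacobian is generically nonvanishing, $g$ is a local biholomorphism near such a point, $g(W)$ has nonempty interior in $V$, and an analytic subset of a connected open set in $\C^n$ with nonempty interior is the whole set (its interior is open, and it is also closed in $V$ by the identity theorem applied to the local defining functions). Second, Remmert's proper mapping theorem is heavier machinery than this statement really requires: the cited references prove openness of finite holomorphic maps directly via the Weierstrass preparation theorem and local parametrization, and the finite case of Remmert's theorem is itself usually established within that same circle of ideas, so if one unwinds the definitions there is a whiff of circularity. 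Taken as an independent black box, however, your argument is sound.
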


\section{Approximations of Epstein-Schwarz surfaces}\Label{sEpsteinSurfaces}
\subsection{Epstein surfaces} (See  Epstein \cite{Epstein84}, and also Dumas \cite{Dumas18HolonomyLimit}.)
Let $C$ be a $\CP^1$-structure on $S$.  
Fix  a developing pair $(f, \rho)$ of $C$, where $f \col \ti{C} \to \CP^1$ is the developing map  and $\rho\col \pi_1(S) \to \PSL(2,\C)$ is the holonomy representation, which is unique up to $\PSL(2,\C)$.
For $z \in \H^3$, by normalizing the ball model of $\H^3$ so that $z$ is the center, we obtain a spherical metric $\nu_{\s^2}(z)$ on $\bdr_\infi \H^3 = \CP^1$.

 Given a conformal metric $\mu$ on $C$, there is a unique map $\Ep\col\ti{C} \to \H^3$ such that, for each $x \in \ti{C}$, the pull back of $\nu_{\s^2} \Ep(z)$ coincides with $\ti\mu$ at $z$.
This map is $\rho$-equivariant, and called the {\sf Epstein surface}.  
\subsection{Approximation}

Let $C = (X, q)$ be a $\CP^1$-structure on $S$ expressed in Schwarzian coordinates, where $q$ is a holomorphic quadratic differential on a Riemann surface $X$. 
Then $q$ yields a flat surface structure $E$ on $S$. 
Moreover $q$  gives a vertical measured foliation $V$ and a horizontal measured foliation $H$ on $E$. 

Let $\Ep\col \ti{S} \to \H^3$ be the Epstein surface of $C$ with the conformal metric given by $E$.  
Then, let $\Ep^\ast\col T\ti{S} \to T \H^3$ be the derivative of $\Ep$, where  $T\ti{S}$ and $T \H^3$ denote the tangent bundles.
Let $d\col \ti{E} \to \R_{\geq 0}$ be the distance function from the singular set $\ti{Z}_q$ with respect to the singular Euclidean metric of $\ti{E}$.

Let $v'(z)$ be the vertical unit tangent vector of $\ti{E}$ at a smooth point $z$.
Similarly, let $h'(z)$ be the horizontal unit tangent vector at a smooth point $z$ of $\ti{E}$. 

\begin{lemma}[\cite{Epstein84},  Lemma 2.6 and Lemma 3.4 in \cite{Dumas18HolonomyLimit}]\Label{Dumas}
~
\begin{enumerate}
\item $\| \Ep^\ast h'(z)\|  <  \frac{6}{d(z)^2}$; \Label{iHoriozontalDerivative}
\item $\sqrt{2} < \| \Ep^\ast v'(z)\| <  \sqrt{2} + \frac{6}{d(z)^2}$; \Label{iVerticalDerivative}
\item $h'(z), v'(z)$ are principal directions of $\Ep$ at $z$;
\item $k_v < \frac{6}{d(z)^2}$, where $k_v$ is the principal curvature of $\Ep$ in the vertical direction. \Label{iVerticalCurvature}
\end{enumerate}
\end{lemma}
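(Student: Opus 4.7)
The plan is to derive the four estimates by combining Epstein's explicit formula for the derivative and shape operator of an Epstein surface with the specific choice of the flat conformal metric coming from $q$, and then to bound the error terms using the Ahlfors--Schwarz comparison between the flat metric and the uniformizing hyperbolic metric on $X$.

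First, I would recall Epstein's setup: for a conformal metric $e^{2\eta}|dz|^2$ on a domain in $\CP^1$, the derivative $\Ep^{\ast}$ and the shape operator of the associated Epstein surface admit explicit expressions as $2\times 2$ matrices whose entries are polynomial in $\eta_z$, $\eta_{\bar z}$, and $\eta_{z\bar z}$. In the present setting the conformal metric is the flat metric $|q|$, so one works in a natural coordinate $w$ for $q$ where $q = dw^2$ and the density $\eta$ vanishes identically. In such a coordinate the horizontal and vertical directions of $q$ are precisely the real and imaginary coordinate axes, and Epstein's formula shows that these are principal directions at every non-singular point. This yields item (3), and reduces (1), (2), (4) to estimating the two eigenvalues of $\Ep^{\ast}$ and of the shape operator in natural coordinates.

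Second, I would carry out those eigenvalue computations. With $\eta \equiv 0$ the ``leading'' part of Epstein's formula is a fixed constant matrix: it gives $\|\Ep^{\ast} v'(z)\| = \sqrt{2}$ and $\|\Ep^{\ast} h'(z)\| = 0$, together with vanishing horizontal principal curvature and vertical principal curvature equal to $0$ in the flat model; this is the content of the $\eta$-free part of Dumas's Lemma 2.6. The actual Epstein surface is the one associated to $q$ as a Schwarzian derivative on the \emph{hyperbolic} Riemann surface $X$, so the true derivative differs from this leading piece by a correction term whose magnitude is controlled by the ratio $\rho_X^2/|q|$ of the hyperbolic metric density to the flat density at $z$. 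Plugging this into Epstein's formula yields the estimates (1), (2), and (4), provided one has a pointwise bound on $\rho_X^2/|q|$.

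Third, the Ahlfors--Schwarz lemma supplies exactly such a bound. Comparing the uniformizing hyperbolic metric on $X$ with the hyperbolic metric on the punctured flat surface $\ti{E} \smallsetminus \ti{Z}_q$ (whose density on $\C\setminus\{0\}$ is the explicit horodisk metric), one obtains $\rho_X^2/|q| \leq C/d(z)^2$ at every smooth point $z$ at flat distance $d(z)$ from the zero set. Tracking the constants through Epstein's formula, as in Dumas's Lemma~3.4, pins down the universal constant $6$ appearing on the right-hand sides.

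The main obstacle, and indeed the only non-bookkeeping step, is this last constant-tracking: producing the explicit value $6$ requires matching normalizations between Epstein's formula, the density $|q|^{1/2}$, and the Ahlfors--Schwarz bound. Since the lemma is attributed to Epstein and Dumas, the paper's proof can reasonably be presented as a direct translation of Dumas's Lemmas 2.6 and 3.4 to the notation of this paper, once it is verified that the Epstein surface here is built using the flat conformal metric $E = |q|$ rather than a different choice such as the hyperbolic metric on $X$.
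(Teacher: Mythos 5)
This lemma is quoted in the paper without proof, citing Epstein \cite{Epstein84} and Lemmas 2.6 and 3.4 of \cite{Dumas18HolonomyLimit}; there is no argument in the present paper to compare against, so I will assess your sketch against the cited source. Your overall structure matches: Epstein's closed-form expression for $\Ep^{\ast}$ and the shape operator, specialization to the natural coordinate $w$ where $q = dw^{2}$ so the conformal factor of the flat metric is identically $1$, and a leading constant term ($\sqrt{2}$, $0$) plus a controlled correction.

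The correction estimate, however, is mis-sourced, and this is a genuine gap. What enters Epstein's formula in natural coordinates is the Schwarzian derivative $S(f,w)$ of the developing map; the Schwarzian cocycle rule together with the Schwarzian parametrization $S(f,z)=q(z)$ (in the uniformizing coordinate $z$ of $X$) gives $S(f,w) = 1 + S(z,w)$, where $S(z,w)$ is the Schwarzian of the coordinate change $w \mapsto z$. A non-singular point at flat distance $d$ from the zero set has a zero-free natural-coordinate disk of radius $d$ about it, on which the coordinate change is univalent, and Nehari's theorem then gives $|S(z,w)| \le 6/d^{2}$ --- that is exactly where the universal constant $6$ in all three error bounds comes from. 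The Ahlfors--Schwarz / Schwarz--Pick comparison you invoke instead bounds the zeroth-order ratio $\rho_X^{2}/|q|$ by $4/d^{2}$; a pointwise bound on the metric ratio does not, by itself, control the second-order Schwarzian correction on which the Epstein derivative and shape operator actually depend. The missing key lemma is Nehari's theorem on Schwarzian derivatives of univalent maps, not the Ahlfors--Schwarz lemma.
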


Consider the Euclidean metric on  $\C \cong \mathbb{E}^2$.  
By the exponential map  $\exp\col \C \to \C^\ast$, we push forward a complete Euclidean metric to $\C^\ast$, which is invariant under the action of $\C^\ast$. 
If a simply connected region $Q$ in the flat surface $E$ contains no singular points, then $Q$ is immersed into $\C$ locally isometrically preserving horizontal and vertical directions. 
Using Lemma \ref{Dumas}  and the definition of Epstein surfaces, one obtains the following.
\begin{lemma}(\cite[Lemma 12.15]{Baba-15}.)\Label{ApproximationByExponentialMap}
For every $\ep > 0$, there is $r > 0$, such that if $Q$ is a region in $E$ satisfying
\begin{itemize}
\item  $Q$ has $E$-diameter less than $r$, and
\item the distance from the singular set of $E$ is more than $r$. 
\end{itemize}
then $\exp\col \C \to \C^\ast$ and the developing map are $\ep$-close pointwise with respect to the complete Euclidean metrics. 
\end{lemma}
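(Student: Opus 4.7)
The strategy is to compare the developing map $\dev_C$ on $Q$ with an appropriate normalization of $\exp\colon \C \to \C^\ast$, using the Epstein-surface estimates of Lemma \ref{Dumas} and Proposition \ref{HorizontalCurvatureEstimate} to control the deviation. The model is the $\CP^1$-structure whose developing map is $\exp$ globally: its Epstein-Schwarz surface for the pulled-back cylindrical Euclidean metric on $\C^\ast$ saturates the leading-order data in the cited estimates, with vertical stretch factor $\sqrt{2}$, vanishing horizontal stretch factor, vanishing vertical principal curvature, and horizontal principal curvature satisfying $k_h \|\Ep^\ast h'\| = \sqrt{2}$.

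First, I would fix a basepoint $z_0 \in Q$ and, using the $\PSL(2,\C)$-ambiguity in the developing pair of $C$, arrange that $\Ep_C(z_0)$, its outward unit normal, and its principal frame at $z_0$ match those of the model at the reference point $\exp(z_0)$; simultaneously, I fix the isometric immersion $Q \hookrightarrow \C$ (well-defined up to a Euclidean motion) so that $z_0$ maps to the chosen preimage. By Lemma \ref{Dumas} the first fundamental form and the vertical data of $\Ep_C$ differ from those of the model at each $z \in Q$ by $O(d(z)^{-2}) \le O(r^{-2})$, and by Proposition \ref{HorizontalCurvatureEstimate} the horizontal curvature differs by $O(d(z)^{-1}) \le O(r^{-1})$. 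Integrating the frame ODE along a path in $Q$ of length less than $r$ from $z_0$ to a general $z$, a Gronwall comparison propagates these pointwise discrepancies into a total error in both $\Ep_C(z)$ and its unit normal. Projecting along the unit normal from $\Ep_C(z)$ to $\bdr_\infty \H^3 \cong \CP^1$ recovers $\dev_C(z)$, and the analogous projection for the model recovers $\exp(z)$; a uniform Lipschitz bound for this ideal-boundary projection in the cylindrical Euclidean metric on $\C^\ast$ converts the frame-level error into a pointwise bound on $|\dev_C(z) - \exp(z)|$ that tends to $0$ as $r \to \infty$, and so is less than $\ep$ once $r$ is large enough.

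The main obstacle is the $O(r^{-1})$ horizontal-curvature estimate, which at first glance would integrate to an $O(1)$ cumulative error across a region of $E$-diameter up to $r$. What saves the argument is the exact matching of the leading $\sqrt{2}$ in $k_h \|\Ep^\ast h'\|$ between $\Ep_C$ and the model, so that only the $O(r^{-1})$ relative deviation contributes to the differential rotation of the frame; combined with the uniform Lipschitz control of the boundary projection over the relevant compact portion of $\C^\ast$, this forces the total discrepancy to vanish as $r \to \infty$.
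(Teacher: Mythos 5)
The paper does not actually prove this lemma: it cites it verbatim from \cite[Lemma~12.15]{Baba-15}, so there is no in-text argument to compare against, and your attempt is really a reconstruction of the reference's proof. The overall framework you set up --- normalize so that the Epstein surface of $C$ and of the model $\exp$-structure agree (position, normal, and principal frame) at a basepoint $z_0 \in Q$, feed the principal-data estimates of Lemma~\ref{Dumas} and Proposition~\ref{HorizontalCurvatureEstimate} into a Gronwall comparison of the two frame ODEs, and then transfer the resulting $C^1$ closeness of the Epstein surfaces to closeness of $\dev C$ and $\exp$ via the ideal-boundary projection --- is the correct and, almost certainly, the intended one.

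But there is a genuine gap, and it sits exactly at the spot you flag and then wave away. The $O(d(z)^{-1}) \le O(r^{-1})$ deviation in $k_h \|\Ep^\ast h'\|$, integrated along a path of $E$-length comparable to $r$, yields an $O(1)$ cumulative frame error, not an $o(1)$ one; your claim that ``the exact matching of the leading $\sqrt 2$'' rescues this is not a mechanism --- subtracting the leading order is precisely what produces the $O(r^{-1})$ remainder in the first place, and the Lipschitz control of the boundary projection only multiplies an $O(1)$ frame error by a constant, it cannot collapse it to $o(1)$. The way out is to notice that the hypothesis as printed in this paper is almost certainly a mis-statement of the cited lemma: the diameter bound on $Q$ should be a fixed constant $R$, chosen before $r$, with $r = r(\ep, R)$. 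This is exactly how the lemma is actually invoked later (in Theorem~\ref{ThurstonLaminationAndVerticalFoliationOnDisks}, $D$ has radius $\le 1/\ep$ while only the distance to the singular set is sent to $\infty$). With a fixed diameter bound $R$, every path from $z_0$ to $z \in Q$ has length $\le R$, so the horizontal contribution integrates to $O(R/r) \to 0$ and the $O(r^{-2})$ contributions to $O(R/r^2) \to 0$, and your Gronwall argument closes. Restated with separate parameters $R$ and $r$, your proof is sound; taken literally against the hypothesis ``diameter $< r$'' as written, the integration step fails.
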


We shall further analyze vertical curves on Epstein surfaces.
Let $v\col [0,\ell] \to \ti{E}$ be a path in a vertical leaf, such that $v$ contains no singular point and has a constant speed $\frac{1}{\sqrt{2}}$ in the Euclidean metric.  
Let $\Ep^\perp(z)$ be the unit normal vector of the Epstein surface $\Ep$ at each smooth point $z \in \ti{E}$. 
Let $s_t$ be the geodesic segment in $\H^3$ connecting $\Ep v(0)$ to $\Ep v(t)$\,; see  \Cref{fAlmostGeodesic}.
\begin{figure}
\begin{overpic}[scale=.15,% grid,tics=10
] {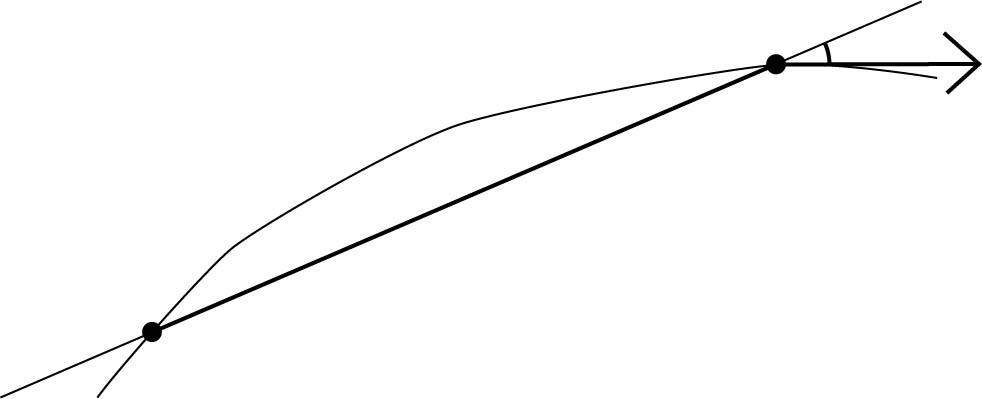} % figure file
  \put(80 ,24 ){\textcolor{black}{\small $\Ep^\ast v'(t)$}}  
  \put(42 ,13 ){\textcolor{black}{$s_t$}}  
  \put(17 , 1 ){\textcolor{black}{\small $\Ep v(0)$}}  
 %   \put( , ){\textcolor{}{$$}}  
 %   \put( , ){}  
      \end{overpic}
\caption{}\label{fAlmostGeodesic}then the total curvature along 
\end{figure}

The following lemma is an analogue of \Cref{ABitBentGeodesic} regarding piecewise geodesic curves for smooth curves. 
\begin{lemma}\Label{AlmostGeodesic}
For every $\ep > 0$, there is (large) $\omega > 0$ only depending on $\ep$, such that, w.r.t. the $E$-metric, if the distance of the vertical segment $v$ from the zeros $Z_q$ of $q$ is more than $\omega$,
then the angle between $\Ep^\ast v'(t)$ and the geodesic containing $s_t$ is less than $\ep$ for all $t$. (Figure \ref{fAlmostGeodesic}.)
\end{lemma}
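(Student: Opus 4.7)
The plan is to realize $\gamma(t) := \Ep(v(t))$ as a nearly unit-speed curve in $\H^3$ with small extrinsic curvature, and then deduce the tangent-versus-chord angle bound from a standard Jacobi-field comparison. For the speed: by \Cref{Dumas}(\ref{iVerticalDerivative}) and the hypothesis that $v$ has Euclidean speed $1/\sqrt{2}$, one has
\[
\|\gamma'(t)\|_{\H^3} \;=\; \tfrac{1}{\sqrt 2}\,\|\Ep^\ast v'(v(t))\| \;=\; 1 + O(\omega^{-2}),
\]
uniformly in $t$, so $\gamma$ is essentially parametrized by hyperbolic arc length.

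Next I would bound the extrinsic curvature of $\gamma$ in $\H^3$ by splitting it into its normal and in-surface components, $\kappa^2 = k_{\mathrm{nor}}^2 + k_{\mathrm{geo}}^2$. Since $v'(z)$ is a principal direction of $\Ep$ by item (iii) of \Cref{Dumas}, the normal component coincides with the principal curvature $k_v$, so $|k_{\mathrm{nor}}| < 6/\omega^2$ by \Cref{Dumas}(\ref{iVerticalCurvature}). To control the in-surface geodesic curvature $k_{\mathrm{geo}}$, I would invoke \Cref{ApproximationByExponentialMap} to compare $\gamma$ with its counterpart in the pure exponential model $\exp\colon \mathbb{C} \to \mathbb{C}^\ast$. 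In the model, the translation action $z \mapsto z + it$ on $\mathbb{C}$ descends via $\exp$ to a rotation of $\mathbb{C}^\ast$ that extends to a $1$-parameter subgroup of $\PSL(2, \mathbb{C})$-isometries of $\H^3$ preserving the Epstein surface; vertical flat-leaves are the orbits of this action and thus are intrinsic geodesics of the resulting rotationally-symmetric surface (equivalently, straight lines in a flat unfolding of the cone-like Epstein surface of the model), giving $k_{\mathrm{geo}} = 0$ in the model. The $C^0$-closeness of \Cref{ApproximationByExponentialMap}, combined with a $C^1$-upgrade obtained by differentiating the defining equations of $\Ep$ and feeding in the derivative bounds of \Cref{Dumas} and \Cref{HorizontalCurvatureEstimate}, then gives $k_{\mathrm{geo}} \to 0$ as $\omega \to \infty$.

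Combining the two estimates shows that the extrinsic curvature of $\gamma$ in $\H^3$ tends to zero as $\omega \to \infty$. A standard Jacobi-field comparison in constant curvature $-1$ then implies that a nearly unit-speed curve of sufficiently small extrinsic curvature has its tangent direction within any prescribed angle of its chord at every interior point, and taking $\omega = \omega(\varepsilon)$ sufficiently large concludes the proof.

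The principal obstacle is the $C^1$-upgrade of \Cref{ApproximationByExponentialMap}: the geodesic-curvature estimate requires first-order control of the developing map, and hence of the Epstein surface, rather than merely pointwise closeness of the maps. A secondary concern is the accumulation of angular error along potentially long vertical segments, which forces $\omega$ to dominate length-dependent constants; this is handled either by restricting to short subarcs and concatenating, or by strengthening the exponential-model comparison to give uniform control across the full segment.
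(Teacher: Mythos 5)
Your overall strategy is the same as the paper's: show $\gamma = \Ep\circ v$ is nearly unit speed with small curvature, then invoke the standard ``small curvature implies near-geodesic'' principle (the paper cites \cite[Theorem~I.4.2.10]{CanaryEpsteinGreen84} and \cite[Lemma~5.3]{Baba-10} for exactly this). Your speed estimate and the bound $|k_{\mathrm{nor}}|=|k_v|<6/\omega^2$ via \Cref{Dumas}(\ref{iVerticalDerivative}),(\ref{iVerticalCurvature}) are correct and match the paper's cited ingredients.

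However, the treatment of the in-surface geodesic curvature $k_{\mathrm{geo}}$ has a genuine gap, which you yourself flag as ``the principal obstacle'' without resolving it. Two specific issues. First, the justification ``orbits of a $1$-parameter group of isometries preserving the surface are intrinsic geodesics'' is false in general (small circles on a round sphere are counterexamples); the correct reason the latitude circles of the model have $k_{\mathrm{geo}}=0$ is that the equidistant tube around a geodesic in $\H^3$ is intrinsically flat, which you only mention parenthetically. More seriously, writing $k_{\mathrm{geo}} = |b_x|/(2b\sqrt{a})$ with $a = \|\Ep^\ast h'\|^2$ and $b = \|\Ep^\ast v'\|^2$, the upper bound $\sqrt a < 6/\omega^2$ from \Cref{Dumas}(\ref{iHoriozontalDerivative}) makes the \emph{denominator} small, so a pointwise bound on $b$ is useless: one needs a quantitative bound on $b_x/\sqrt a$, i.e.\ second-order ($C^2$) control of $\Ep$. \Cref{ApproximationByExponentialMap} is only a $C^0$ statement, and ``differentiating the defining equations of $\Ep$'' is not a proof---the paper's route is to go back to Dumas's explicit formulas for the Epstein frame rather than to bootstrap a $C^0$ approximation. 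So the step you identify as the obstacle is indeed where the argument remains open.

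Finally, your ``secondary concern'' about accumulation of angular error along long vertical segments is misplaced: in $\H^3$, a curve with uniformly small \emph{pointwise} curvature stays uniformly close in tangent direction to its chord regardless of length---that is precisely the content of the CEG lemma the paper invokes---so no subdivision-and-concatenation is needed once the pointwise curvature bound is in hand.
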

\begin{proof}
   In fact, the proof of this lemma is essentially reduced to the analogous lemma (\Cref{ABitBentGeodesic}) for piecewise geodesic curves as follows.
   
Fix a Riemannian metric on the tangent bundle of $\H^3$ which is invariant under the isometries of $\H^3$. 
Then,  by  Lemma \ref{Dumas} (\ref{iVerticalDerivative}) and (\ref{iVerticalCurvature}), for every $\ep_1 > 0$, there is sufficiently large $\omega > 0$ such that, if a vertical segment $v\col [0, \ell] \to  \ti{E}$ of unit speed has length less than $\frac{1}{\ep}$ and distance from $Z_p$ at least $\omega$, then the smooth curve $\Ep \circ \,v$ is $\ep_1$-close to the geodesic segment connecting the  endpoints of $\Ep \circ \,v$ in the $C^1$-topology with respect to the invariant metric. 
Therefore,  the lemma holds true under the additional assumption that the length of $v$ is uniformly bounded from above. 

Now, without any upper bound on the length, let $v\col [0, \ell] \to  \ti{E}$ be a vertical segment of unit speed which has distance at least $\omega$ from $Z_p$.  
Let $\ep_1 > 0$ be a constant. 
Then we decompose $v$ into $n$ segments $v_1, v_2, \dots, v_n$ so that the first $n -1$ segments $v_1, v_2, \dots, v_{n-1}$ have length exactly $\frac{1}{\ep_1}$ and the last segment $v_n$ has length at most $\frac{1}{\ep_1}$ . 
For all $i = 1, 2, \dots, n$, let $u_i$ be the geodesic segment  connecting the endpoints of $\Ep \circ v_i$. 
Then, by the argument above, for every $\ep_2 > 0$, if $\ep_1 > 0$ is sufficiently small, then the piecewise geodesic curve $\cup_{i = 1}^n u_i$ is $\ep_2$-close to $\Ep \circ v$ in $C^1$-topology. 
We can, in addition,  assume that the exterior angle at the common endpoint of $u_i$ and $u_{i + 1}$ is less than $\ep_2$ for all $i = 1, 2, \dots, n_1$. 
Therefore, by \Cref{ABitBentGeodesic}, for every $\ep_2 > 0$, if $\ep_1 > 0$ is sufficiently small, then the piecewise geodesic curve $\cup_{i = 1}^n u_i$  is $\ep_2$-close to the geodesic segment connecting the endpoints of $\Ep \circ v$ in $C^1$-topology. 
(See \Cref{fAlmostGeodesicVerticalSegment}.)

Therefore, for every $\ep >  0$,  if $\ep_1 > 0$ is sufficiently small, then $\Ep \circ v$ is $\ep$-close to the geodesic segment connecting its endpoints.
 Then the lemma immediately follows. 
\begin{figure}[H]
\begin{overpic}[scale=.06% , grid,tics=10
] {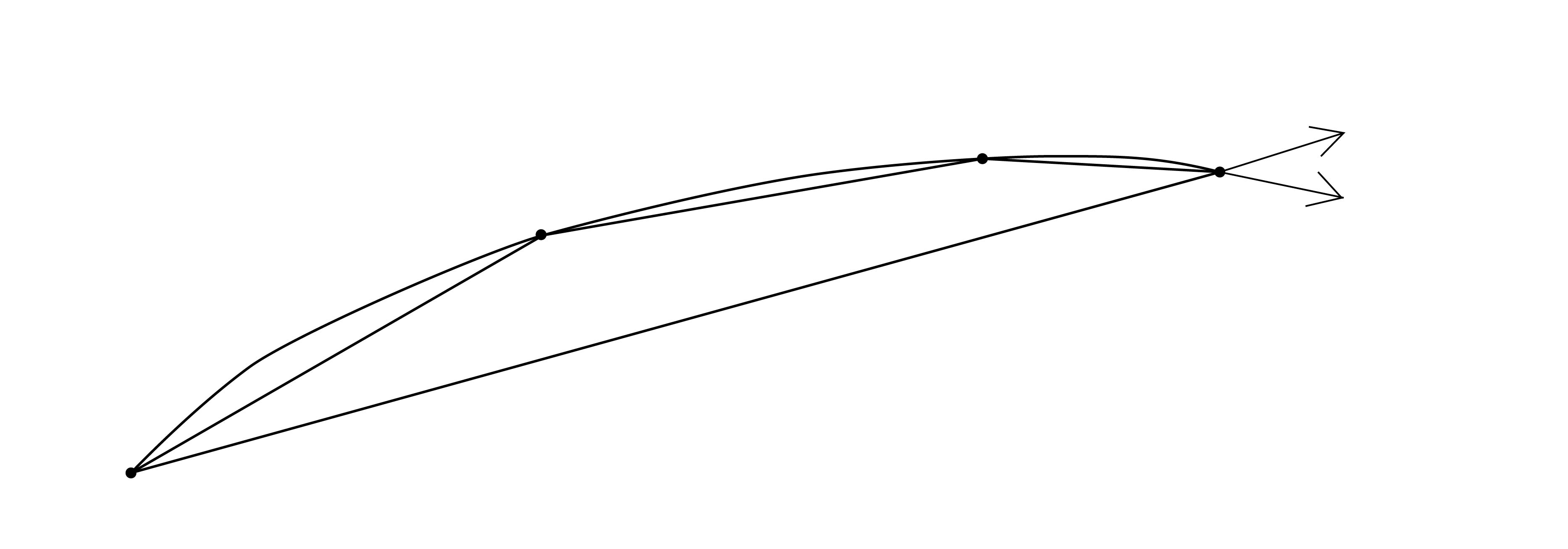} % figure file
 \put(25 , 12 ){\small $u_1$}  
 \put(45 , 19 ){\small $u_2$}  
  \put(68 , 21.5 ){\contour{white}{\small $u_3$}}  
 %   \put( , ){\textcolor{}{$$}}  
 %   \put( , ){}  
      \end{overpic}
\caption{A piecewise geodesic curve $u_1 \cup u_2 \cup u_3$ which is  $C^1$-close to both the smooth curve $\Ep \circ v$ and the geodesic segment connecting the endpoints of $\Ep \circ v$  (in the case of $n = 3$).}\label{fAlmostGeodesicVerticalSegment}
\end{figure}
\end{proof}

Define $\theta \col [0,\ell] \to T_{\Ep v(0)}$ by the parallel transport of $\Ep^\perp (t)$ along $s_t$ to the starting point $\Ep(v(0))$.
Let $H$ be the (totally geodesic) hyperbolic plane in $\H^3$ orthogonal to the tangent vector $\Ep^\ast v'(0)$, so that $H$ contains $\Ep^\perp v(0)$.   
Then, Lemma \ref{AlmostGeodesic}, implies
\begin{corollary}
For every $\ep > 0$, there is (large) $\omega > 0$ only depending on $\ep$ such that, if 
  the Hausdorff distance between $v$ and the zeros $Z_q$ of $q$ is more than $\omega$  w.r.t. the $E$-metric, 
then
$\angle_{v(0)}(\theta(t), H) < \ep$ for all $t \in [0,\ell]$. 
\end{corollary}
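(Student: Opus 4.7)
\emph{Proof proposal.} The strategy is to combine Lemma~\ref{AlmostGeodesic}, which says that the Epstein image of $v$ is nearly a geodesic of $\H^3$, with the elementary fact that parallel transport along a geodesic of $\H^3$ preserves the angle with the geodesic's tangent vector. Observe at the outset that, since $H$ is the $2$-plane in $T_{\Ep v(0)}\H^3$ normal to $\Ep^\ast v'(0)$, the angle $\angle_{v(0)}(\theta(t), H)$ is exactly the deviation from $\pi/2$ of the angle between $\theta(t)$ and $\Ep^\ast v'(0)$; so it suffices to bound this deviation by $\ep$.

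First, apply Lemma~\ref{AlmostGeodesic} with $\ep/3$ in place of $\ep$ to obtain a constant $\omega > 0$, depending only on $\ep$, such that whenever $v$ stays at $E$-distance more than $\omega$ from $Z_q$, the angle at $\Ep v(t)$ between $\Ep^\ast v'(t)$ and the tangent of the geodesic containing $s_t$ is less than $\ep/3$ for every $t \in [0,\ell]$. For each fixed $t$, apply the same lemma to the reversed subpath $\tilde v \col [0,t] \to \ti{E}$ defined by $\tilde v(s) = v(t-s)$; its image coincides with $v([0,t])$ and is therefore also at $E$-distance more than $\omega$ from $Z_q$. Evaluating the conclusion of the lemma at $s = t$ produces the companion bound: the angle at $\Ep v(0)$ between $\Ep^\ast v'(0)$ and the tangent of $s_t$ is likewise less than $\ep/3$.

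Next, by definition $\Ep^\perp v(t)$ is orthogonal to the tangent plane of the Epstein surface at $\Ep v(t)$, hence in particular orthogonal to $\Ep^\ast v'(t)$. Combined with the first bound, $\Ep^\perp v(t)$ makes angle within $\ep/3$ of $\pi/2$ with the tangent of $s_t$ at $\Ep v(t)$. Since $s_t$ is a geodesic of $\H^3$, parallel transport along $s_t$ preserves the angle with its tangent field, so $\theta(t)$ makes angle within $\ep/3$ of $\pi/2$ with the tangent of $s_t$ at $\Ep v(0)$ as well. Applying the triangle inequality for angles at $\Ep v(0)$ with the companion bound, the angle between $\theta(t)$ and $\Ep^\ast v'(0)$ lies within $2\ep/3$ of $\pi/2$, which by the opening observation gives $\angle_{v(0)}(\theta(t), H) < 2\ep/3 < \ep$, as required.

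The main subtlety is the first step: Lemma~\ref{AlmostGeodesic}'s conclusion is only about the moving endpoint $\Ep v(t)$ of $s_t$, but the argument also needs the analogous estimate at the fixed endpoint $\Ep v(0)$. The time-reversal trick applied to each subpath $v|_{[0,t]}$ supplies this companion bound uniformly in $t$ without any additional analytic input.
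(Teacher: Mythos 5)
Your argument is correct, and it supplies a careful justification for a step the paper simply asserts as an immediate consequence of Lemma~\ref{AlmostGeodesic}. The chain parallel transport preserves angles with the geodesic's tangent field, $\Ep^\perp v(t)\perp \Ep^\ast v'(t)$, the spherical triangle inequality, and the reduction of $\angle(\theta(t),H)$ to the deviation of $\angle(\theta(t),\Ep^\ast v'(0))$ from $\pi/2$ is exactly the right bookkeeping. Your identification of the real gap is the worthwhile part: Lemma~\ref{AlmostGeodesic} as stated controls the angle only at the moving endpoint $\Ep v(t)$, whereas transporting $\Ep^\perp v(t)$ back to $\Ep v(0)$ requires knowing that the tangent of $s_t$ at $\Ep v(0)$ is also close to $\Ep^\ast v'(0)$. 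The time-reversal trick, applying the lemma to $\tilde v(s)=v(t-s)$ on $[0,t]$ and evaluating at $s=t$, is a clean and self-contained way to extract this companion estimate without reopening the proof of Lemma~\ref{AlmostGeodesic}; the reversed path trivially satisfies the same speed, vertical-leaf, and distance-to-$Z_q$ hypotheses, so the same $\omega(\ep)$ works, and $\Ep^\ast\tilde v'(t)=-\Ep^\ast v'(0)$ gives the bound at the fixed endpoint. This is precisely the uniformity in $t$ that the corollary needs and that the lemma's phrasing leaves implicit.
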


Recall that the $\PSL_2\C$-character variety $\chi$ of the surface $S$ is an affine algebraic variety. 
Then we say a {\sf compact subset} $K$ in the character variety $\chi$ or the holonomy variety $\chi_X$ for $X \in \ol\TT$ is {\sf sufficiently large},  if $K$ contains a sufficiently large ball in the ambient affine space centered at the origin.

\begin{proposition}[Total curvature bound in the vertical direction]\Label{TotalCurvatureBound}
For all $X \in \TT\cup \TT^\ast$ and all $\ep > 0$, there is a bounded subset $K = K(X, \ep)$ in $\rchi_X$, such that, for $\rho \in \rchi_X \minus K$, 
if a vertical segment $v$ has normalized length less than $\frac{1}{\ep}$ and has normalized Euclidean distance from the zeros of $q_{X, \rho}$ at least $\ep$, then 
the total curvature along $v$ is less than $\ep$. 
\end{proposition}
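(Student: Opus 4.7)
The plan is to directly integrate the pointwise curvature bound from Lemma \ref{Dumas}(\ref{iVerticalCurvature}) along the vertical segment, using the derivative bound in Lemma \ref{Dumas}(\ref{iVerticalDerivative}) to convert between arc length on the flat surface and arc length on the Epstein surface, and then to observe that the normalization by area makes the resulting bound tend to $0$ as the quadratic differential becomes large.

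First, I would set up the notation. Write $C = (X, q)$, $E$ for the flat surface given by $q$, $N = \Area(E) = \|q\|$, and $E^1 = E/N$ for the normalized flat metric. By Theorem \ref{HolonomyVariety}, the holonomy map $\Hol \colon \PP_X \to \rchi_X$ is a proper embedding, so leaving a sufficiently large compact $K$ in $\rchi_X$ is equivalent to requiring $N$ to be larger than any prescribed constant $N_0$; it suffices to show that for every $\ep > 0$ one may choose $N_0 = N_0(\ep)$ so large that the curvature inequality holds whenever $N \geq N_0$.

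Next, I would parameterize the vertical segment $v$ by un-normalized Euclidean arc length on $E$, giving $v\colon [0, \ell] \to \tilde E$ with $\ell \leq N^{1/2}/\ep$ (this is the translation of the normalized length hypothesis). The normalized distance hypothesis becomes $d(v(t)) \geq \ep\, N^{1/2}$ for all $t$, where $d$ is the un-normalized Euclidean distance to $Z_q$. Since $v'(t)$ is a principal direction of $\Ep$, the geodesic curvature of the image curve $\Ep \circ v$ on the Epstein surface equals the principal curvature $k_v$, so the total curvature is
\[
\int_0^\ell k_v(v(t)) \, \bigl\|\Ep^\ast v'(t)\bigr\|\, dt.
\]
Applying Lemma \ref{Dumas}(\ref{iVerticalDerivative}) and (\ref{iVerticalCurvature}) gives the integrand bound $\frac{6}{d^2}\bigl(\sqrt 2 + \frac{6}{d^2}\bigr) \leq \frac{6\sqrt 2}{d^2} + \frac{36}{d^4}$, and inserting $d \geq \ep N^{1/2}$ and $\ell \leq N^{1/2}/\ep$ yields
\[
\int_0^\ell k_v \|\Ep^\ast v'\|\, dt \;\leq\; \frac{1}{\ep}\cdot N^{1/2}\left(\frac{6\sqrt 2}{\ep^2 N} + \frac{36}{\ep^4 N^2}\right) \;=\; \frac{6\sqrt 2}{\ep^3 N^{1/2}} + \frac{36}{\ep^5 N^{3/2}}.
\]

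Finally, I would observe that the right-hand side tends to $0$ as $N \to \infty$, so by choosing $N_0(\ep)$ large enough this expression is less than $\ep$. Taking $K = K(X, \ep)$ to be the compact subset of $\rchi_X$ corresponding, via the proper embedding $\Hol | \PP_X$, to $\{\,q \in \QD(X) : \|q\| \leq N_0(\ep)\,\}$ completes the argument. The only mildly delicate point is the correct bookkeeping between normalized and un-normalized quantities; no deeper obstacle is present, since the curvature and derivative estimates on the Epstein surface are provided by Lemma \ref{Dumas}.
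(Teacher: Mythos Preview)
Your proposal is correct and follows the same approach as the paper, which simply says the result ``immediately follows from Dumas' estimate Lemma \ref{Dumas} (\ref{iVerticalCurvature}).'' You have spelled out the details the paper omits: the bookkeeping between normalized and un-normalized lengths, the use of (\ref{iVerticalDerivative}) to convert to hyperbolic arc length, and the explicit integration showing the bound decays like $N^{-1/2}$. One minor terminological slip: what you call ``the geodesic curvature of the image curve on the Epstein surface'' is really the normal (principal) curvature of the surface along that curve, not the intrinsic geodesic curvature; but since the paper itself treats $k_v$ as the relevant curvature quantity to integrate, your computation matches what is intended.
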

\begin{proof}
For every $r > 0$, if $K$ is sufficiently large, then, if a $\CP^1$-structure $C = (X, q)$ on $X$ has holonomy outside $K$, then the distance from $Z_q$ to $v$ is at least $r$. 
Then the proposition immediately follows from Dumas' estimate in Lemma \ref{Dumas} (\ref{iVerticalCurvature}).
\end{proof}

Consider the projection $\hat\theta(t)$ of $\theta(t) \in T^1_{v(0)} \H^3$ to the unit tangent vector in
 $H$ at $v_0$. 
Let $\eta\col [0, \ell] \to \R$ be the continuous function of the total increase of $\hat\theta(t) \col [0, \ell] \to \R$, so that
 $\eta(0) = 0$ and  $\eta'(t) = \hat\theta'(t)$.

\begin{proposition}\Label{TotalTorsionBound}
Let $X \in \TT \sqcup \TT^\ast$.
For every $\ep > 0$, there is a bounded subset $K = K(X, \ep) > 0$ in $\rchi_X$, such that, if  
\begin{itemize}
\item  $C \in \PP_X$ has holonomy in $\rchi_X \minus K$;
\item a vertical segment $v$ of the normalized flat surface $E_C^1$ has the length less than $\frac{1}{\ep}$;
\item the normalized distance of  $v$ from the singular set $Z_C$ of $E_C^1$ is more than $\ep$, 
\end{itemize}
then, $|\eta'(t)| < \ep$ for $t \in [0,\ell]$ and  $\int_0^\ell |\eta'(t) | < \ep$.
In particular, $|\eta(t)| < \ep$ for all $t \in [0,\ell]$.
\end{proposition}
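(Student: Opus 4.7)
The plan is to derive both bounds on $\eta'$ as quantitative consequences of Dumas' pointwise estimates (Lemma \ref{Dumas}) combined with the near-geodesicness of vertical Epstein curves (Lemma \ref{AlmostGeodesic}), in direct analogy with the derivation of Proposition \ref{TotalCurvatureBound}. First I would exploit the properness of $\Hol\col\PP_X \hookrightarrow \rchi$ from Theorem \ref{HolonomyVariety}: by enlarging $K$, the $L^1$-norm $R = \|q_{X,\rho}\|$ of the Schwarzian differential of $C$ may be made arbitrarily large. The rescaling $E_C^1 = E_C/R$ then translates the hypotheses into the statements that the flat length of $v$ is at most $\sqrt{R}/\ep$ and the flat distance from $Z_C$ along $v$ is at least $\ep\sqrt{R}$, so in particular $d(v(t)) \to \infty$ uniformly with $R$.

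Next I would analyze $\eta'(t)$ using the Darboux-type frame along $\Ep\circ v$. By Lemma \ref{Dumas}, $v'(t)$ is a principal direction of $\Ep$ with $\|\Ep^\ast v'(t)\| = \sqrt{2} + O(1/d^2)$ and principal curvature $k_v = O(1/d^2)$; the shape-operator identity then gives $\nabla_{\Ep^\ast v'}\Ep^\perp = -k_v\,\Ep^\ast v'$, so $\Ep^\perp$ evolves along $\Ep v$ essentially parallel to the tangent direction and at a small rate. Parallel-transporting $\Ep^\perp(v(t))$ along the geodesic $s_t$ back to $T^1_{\Ep v(0)}\H^3$ produces $\theta(t)$, and its projection $\hat\theta(t)$ to the plane $H \perp \Ep^\ast v'(0)$ has derivative determined by two contributions: first, the parallel transport of $\nabla_{\Ep^\ast v'}\Ep^\perp$ itself, which by Lemma \ref{AlmostGeodesic} is nearly aligned with $\Ep^\ast v'(0)$ and hence has $H$-component of norm at most $k_v\|\Ep^\ast v'\|\sin\ep'$, where $\ep'$ is the angular deviation of $\Ep^\ast v'(t)$ from the direction of $s_t$; and second, the discrepancy between parallel transport along $\Ep v$ and along $s_t$, controlled by the $\H^3$-holonomy around the thin bigon $\Ep v|_{[0,t]}\cup s_t^{-1}$, whose transverse width is $O(\ell\,\ep')$ by Lemma \ref{AlmostGeodesic}.

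The point is that $\ep'$ may itself be made as small as desired by enlarging $R$, since the quantitative form of Lemma \ref{AlmostGeodesic} together with the integrated curvature bound of Proposition \ref{TotalCurvatureBound} yields $\ep' = O(1/(\ep^3 \sqrt{R}))$. Combining these estimates gives the pointwise bound $|\eta'(t)| = O(\ep'/d^2)$ plus an area-type contribution of the same order; since $d \geq \ep\sqrt{R}$, this is eventually smaller than $\ep$ for $R$ large. Integrating over the $\H^3$-arclength of $\Ep v$, which is at most $\sqrt{2R}/\ep$ by Lemma \ref{Dumas}(\ref{iVerticalDerivative}), gives the integrated bound $\int_0^\ell |\eta'(t)|\,dt = O(\ep'/(\ep^3\sqrt{R}))$, again less than $\ep$ for $R$ sufficiently large. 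The pointwise bound $|\eta(t)| < \ep$ then follows from $\eta(0) = 0$ by the fundamental theorem of calculus.

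The main obstacle I foresee is the careful bookkeeping among three distinct parallel transports: the evolution of $\Ep^\perp$ along $\Ep v$ on the Epstein surface (governed by the shape operator), the parallel transport along the geodesics $s_t$ in $\H^3$ defining $\theta$, and the discrepancy between the two arising from hyperbolic holonomy around a thin bigon. The principal-direction hypothesis is essential here: it forces $\nabla_{\Ep^\ast v'}\Ep^\perp$ to be parallel to $\Ep^\ast v'$, hence nearly orthogonal to $H$ after parallel transport, which is precisely what lets the $H$-projection absorb an extra factor of $\ep'$ beyond the pointwise smallness of $k_v$. Translating this into clean quantitative bounds in the curved space $\H^3$, and in particular controlling the area-type contribution without losing the desired decay in $R$, is the step that will demand the most care.
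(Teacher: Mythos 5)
Your overall strategy matches the paper's: you decompose the variation of the transported normal into a shape-operator contribution (controlled by $k_v$ being a principal curvature) and a variation-of-parallel-transport / holonomy contribution around the region bounded by the Epstein curve and its secant chords, and you control both using Dumas' estimates (Lemma \ref{Dumas}), the near-geodesicness of $\Ep v$ (Lemma \ref{AlmostGeodesic}), and the total-curvature bound (Proposition \ref{TotalCurvatureBound}). This is the same decomposition the paper uses (the fan $\cup_{u\le t}s_u$ is precisely your bigon), and your observation that the $H$-projection of the shape-operator term picks up an extra $\sin\ep'$ factor is fine though not needed.

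The gap is in how you estimate the holonomy/area term. You propose to bound the bigon's area via a ``transverse width $O(\ell\,\ep')$,'' which would give $\mathrm{Area}=O(\ell^2\ep')$. Tracking the scalings you set up ($\ell \approx \sqrt{R}/\ep$, $\ep'=O(1/(\ep^3\sqrt{R}))$), that quantity is $O(\sqrt{R}/\ep^5)$, which diverges rather than going to zero as $R\to\infty$; and the corresponding pointwise claim $A'(t)=O(\ep'/d^2)$ is likewise not supported by a length-times-width argument. The paper instead applies Gauss--Bonnet directly to the fan: since the bigon has one geodesic side ($s_t$) and one near-geodesic side ($\Ep v$) whose total geodesic curvature is bounded by Proposition \ref{TotalCurvatureBound}, and since the two interior angles are small by Lemma \ref{AlmostGeodesic}, Gauss--Bonnet gives $A(\ell) \lesssim \int|k_g| + \alpha_1 + \alpha_2$, which is small \emph{independently} of how long $v$ is. That is the essential step your proposal is missing; once the area is controlled this way, the rest of your argument (the pointwise and integrated bounds on $\eta'$, and the conclusion $|\eta(t)|<\ep$ by integrating from $\eta(0)=0$) goes through.
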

\begin{proof}
   The absolute value of $\theta'(t)$ is bounded from above by the curvature of $\Ep \circ v\col [0, \ell] \to \H^3$ at $t$. 
Therefore  $|\eta'(t)|$ is bounded from above the curvature.  
 Thus, for every $\ep > 0$, if $K$ is sufficiently large, then by \Cref{Dumas} (\ref{iVerticalCurvature}), then $|\eta'(t)| < \ep$ for all $t \in [0, \ell]$, regardless of the choice of  the vertical segment $v$.
  Therefore, by \Cref{TotalCurvatureBound}, if $K$ is sufficiently large, $\int_0^\ell |\eta'(t) | < \ep$ holds. 
\begin{figure}
\begin{overpic}[scale=.05, %grid,tics=10
] {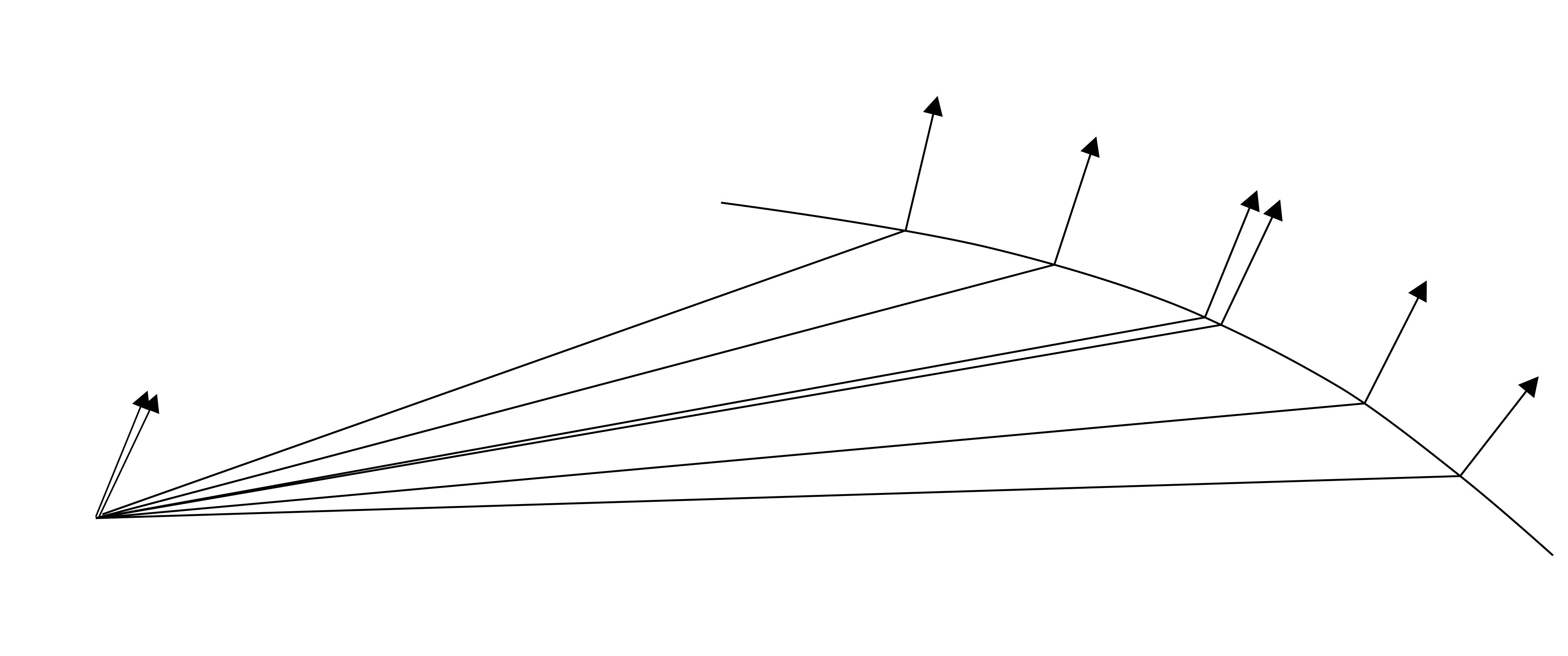} % figure file
 %   \put( , ){\textcolor{}{$$}}  
 %   \put( , ){}  
 \put(83 , 25){$\Ep^\perp (t)$}  
      \put(50 ,14 ){$s_t$}  
          \put(1 , 19 ){$\theta(t)$}  
      \end{overpic}
\caption{}\label{fTheta}
\end{figure}
\end{proof}

Let $\alpha$ be the bi-infinite geodesic in $\H^3$ through $\Ep(v(0))$ and $\Ep(v(\ell))$. 
Let $p_1, p_2$ denote the endpoints of $\alpha$ in $\CP^1$. 
If a hyperbolic plane in $\H^3$  is orthogonal to $\alpha$, then its ideal boundary is a round circle in $\CP^1 \minus \{p_1, p_2\}$.
Moreover $\CP^1 \minus \{p_1, p_2\}$ is foliated by round circles which bound hyperbolic planes orthogonal to $\alpha$. 

If a hyperbolic plane in $\H^3$ contains the geodesic $\alpha$, then its ideal boundary is a round circle containing $p_1$ and $p_2$.
Then, by considering all such hyperbolic planes,  we obtain another foliation $\mathcal{V}$ of $\CP^1 \minus \{p_1, p_2\}$ by circular arcs connecting $p_1$ and $p_2$.
Then $\mathcal{V}$ is orthogonal to the foliation by round circles. 
Note that $\mathcal{V}$ has a natural transversal measure given by the angles between the circular arcs at $p_1$ (and $p_2$). 
Then the transversal measure is invariant under the rotations of $\H^3$ about $\alpha$, and its total measure is $2\pi$.
Given a smooth curve $c$ on $\CP^1 \minus \{p_1, p_2\}$ such that $c$ decomposes into finitely many segments $c_1, c_2, \dots c_n$ which are transversal to $\mathcal{V}$, possibly, except at their endpoints. 
Let $\mathcal{V} (c)$ denote the ``total'' transversal measure of $c$ given by $\mathcal{V}$, the sum of the transversal measures of $c_1, c_2, \dots, c_n$.
Then, \Cref{TotalTorsionBound} implies the following. 
\begin{corollary}\Label{VerticalSegmentAlmostZeroMeasure}
For every $\ep > 0$, there is a bounded subset $K \sub \rchi_X$, such that, if 
\begin{itemize}
\item  $C \in \PP_X$ has holonomy in $\rchi_X \minus K$,  
\item a vertical segment $v$ of $E_C$ has the normalized length less than $\frac{1}{\ep}$, and
\item the normalized distance of  $v$ from the zeros $Z_C$ is more than $\ep$, 
\end{itemize}
then, 
the curve $f | v\col [0, \ell] \to \CP^1$ intersects $\mathcal{V}$ at angles less than $\ep$, and the total $\mathcal{V}$-transversal measure of the curve is less than $\ep$. 
\end{corollary}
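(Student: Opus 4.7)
The plan is to translate the torsion bound from \Cref{TotalTorsionBound} into a statement about how the developed curve $f \circ v$ sits relative to the foliation $\mathcal{V}$ of $\CP^1 \minus \{p_1, p_2\}$. The key geometric fact is that $\mathcal{V}$ is precisely the foliation by orbits of the rotation group of $\H^3$ about the axis $\alpha$, so its transversal coordinate is an $S^1$-valued angular function $\phi$ on $\CP^1 \minus \{p_1, p_2\}$ that can be read off from the unit tangent circle of any hyperbolic plane orthogonal to $\alpha$; the $\mathcal{V}$-transversal measure of a curve equals the total variation of $\phi$ along it.

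Next I would identify $\phi \circ f \circ v$ with $\eta$ up to $O(\ep)$-error. Since $f(v(t))$ is the ideal endpoint of the geodesic ray from $\Ep(v(t))$ in the direction $\Ep^\perp(v(t))$, the value $\phi(f(v(t)))$ is determined by the angular position of $\Ep^\perp(v(t))$ in the plane perpendicular to $\alpha$ at $\Ep(v(t))$. By \Cref{AlmostGeodesic}, $\alpha$ makes angle at most $\ep$ with $\Ep^\ast v'(t)$ all along $v$, so the plane $H$ at $\Ep(v(0))$ orthogonal to $\Ep^\ast v'(0)$ is within angle $\ep$ of the plane orthogonal to $\alpha$ at $\Ep(v(0))$. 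Parallel-transporting $\Ep^\perp(v(t))$ back to $\Ep(v(0))$ along $s_t$ yields $\theta(t)$, and its projection $\hat\theta(t)$ therefore agrees with $\phi(f(v(t)))$ up to a fixed rotation of $S^1$ and an additive error of $O(\ep)$, so that $|(\phi \circ f \circ v)'(t)| = |\eta'(t)| + O(\ep)$.

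The total-measure assertion now follows immediately: $\mathcal{V}(f \circ v)$ is the total variation of $\phi \circ f \circ v$, which is at most $\int_0^\ell |\eta'(t)|\, dt + O(\ep) < \ep + O(\ep)$ by \Cref{TotalTorsionBound}, after absorbing constants into a rescaling of $\ep$. For the angle assertion, at each $t$ the tangent vector of $f \circ v$ decomposes into a component along the local $\mathcal{V}$-leaf (i.e.\ along the orthogonal round-circle foliation) and a transverse component in the direction of $\nabla \phi$ whose size is proportional to $|\eta'(t)| + O(\ep)$; the along-leaf component is bounded below, since the vertical speed $\|\Ep^\ast v'\|$ stays near $\sqrt{2}$ by \Cref{Dumas}\,(\ref{iVerticalDerivative}) and this motion develops essentially along a $\mathcal{V}$-leaf once $\Ep^\perp(v(t))$ is nearly perpendicular to $\alpha$. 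Taking the ratio, the angle between $f \circ v$ and $\mathcal{V}$ is bounded by $O(\ep)$, and a final rescaling gives the claimed bound $\ep$.

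The main obstacle is carrying out the identification of $\hat\theta(t)$ with $\phi(f(v(t)))$ cleanly: one must justify that the base-change via parallel transport along $s_t$ (rather than along $\alpha$) only contributes $O(\ep)$-error, and verify that the along-leaf speed of $f \circ v$ is bounded below uniformly in $t$. Both reductions rely on making $K$ large enough that \Cref{AlmostGeodesic}, the vertical derivative bound of \Cref{Dumas}, and \Cref{TotalTorsionBound} all hold with arbitrarily small constants, at which point the first-order picture---an Epstein surface tracking $\alpha$ at unit vertical speed with near-orthogonal normal field---is valid throughout $v$.
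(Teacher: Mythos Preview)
Your approach is the one the paper intends: in the paper this corollary is stated with no proof beyond the sentence ``Then \Cref{TotalTorsionBound} implies the following,'' so you are simply supplying the geometric translation between the torsion quantity $\eta$ and the $\mathcal{V}$-transversal measure on $\CP^1$. That translation, together with the pointwise bound $|\eta'(t)| < \ep$ and the integral bound $\int_0^\ell |\eta'| < \ep$ from \Cref{TotalTorsionBound}, is exactly the intended argument.

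One correction: your first paragraph misidentifies $\mathcal{V}$. The leaves of $\mathcal{V}$ are the circular arcs joining $p_1$ to $p_2$ (ideal boundaries of hyperbolic planes \emph{containing} $\alpha$), not the rotation orbits about $\alpha$; the rotation orbits are the round circles of the \emph{other} foliation (boundaries of planes orthogonal to $\alpha$). What is true, and what you actually use, is that each rotation about $\alpha$ permutes the leaves of $\mathcal{V}$ and the transversal measure is exactly the rotation angle, so the transversal coordinate $\phi$ is the $S^1$-valued angular parameter of that rotation action. Once you fix this description, the rest of your argument---identifying $\phi \circ f \circ v$ with $\hat\theta$ up to $O(\ep)$ via \Cref{AlmostGeodesic} and the Epstein normal-ray description of $f$, then reading off both the total-measure and angle bounds---goes through as written.
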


\begin{definition}
Let $v$ be a unit tangent vector of $\H^3$ at $p \in \H^3$. 
Let $H$ be a totally geodesic hyperbolic plane in $\H^3$. 
For $\ep > 0$,  $v$ is $\ep$-almost orthogonal to $H$ if $dist_{\H^3}(H, p) < \ep$ and the angle between the geodesic $g$ tangent to $v$ at $p$ and $H$ is $\ep$-close to $\pi/2$. 
\end{definition}

Fix a metric on the unit tangent space $T^1 \H^3$ invariant under $\PSL(2, \C)$. 
For $\ep > 0$, let $N_\ep Z_{X, \rho}^1$ denote the $\ep$-neighborhood of the singular set $Z_{X, \rho}^1$ of the normalized flat surface $E^1_{X, \rho}$.
\begin{theorem}\Label{HorizontalAndVerticalEstimate} 
Fix arbitrary $X \in \TT \sqcup \TT^\ast$. 
For every $\ep > 0$, if a bounded subset $K_\ep \sub \rchi_X$ is sufficiently large, then, for all $\rho \in \rchi_X \minus K_\ep$, 

\begin{enumerate}
\item  if a vertical segment $v$ in $E_{X, \rho}^1 \minus N_\ep Z_{X, \rho}^1$ has length less than $\frac{1}{\ep}$, then  the total curvature of $\Ep_{X, \rho} | v$ is less than $\ep$, \Label{iTotallCurvature} and
\item  if a horizontal segment $h$ in $E_{X, \rho}^1 \minus N_\ep Z_{X, \rho}^1$ has length less than $\frac{1}{\ep}$, then
 for the vertical tangent vectors $w$ along the horizontal segment $h$, their images $\Ep_{X, \rho}^\ast(w)$ are $\ep$-close in the unit tangent bundle of $\H^3$. \Label{iCloseVerticalTangentVectors}
\end{enumerate}
\end{theorem}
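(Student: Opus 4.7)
The plan is to handle the two parts separately. Part (\ref{iTotallCurvature}) is essentially a restatement of Proposition \ref{TotalCurvatureBound}, whose hypothesis and conclusion match verbatim, so I would simply take $K_\ep$ to contain the compact set $K(X,\ep)$ produced there.

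For part (\ref{iCloseVerticalTangentVectors}), I would begin by showing that the whole image curve $\Ep_{X,\rho}(h)$ lies in an arbitrarily small $\H^3$-ball once $\rho$ leaves compacts. By Theorem \ref{HolonomyVariety}, $\rho\to\infty$ in $\rchi_X$ is equivalent to $\|q_{X,\rho}\|\to\infty$, and hence $\Area E_{X,\rho}\to\infty$; the normalized hypotheses then translate to actual $E$-distance at least $\ep\sqrt{\Area E_{X,\rho}}$ from the singular set and actual $E$-length of $h$ at most $(1/\ep)\sqrt{\Area E_{X,\rho}}$. Lemma \ref{Dumas}(\ref{iHoriozontalDerivative}) then bounds the $\H^3$-length of $\Ep\circ h$ by $6/(\ep^3\sqrt{\Area E_{X,\rho}})$, which tends to $0$. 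So the base points of the vectors $\Ep^\ast v'(h(t))$ all cluster in a tiny $\H^3$-ball, and what remains is to show that their directions in the unit tangent bundle of $\H^3$ agree up to $\ep$.

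For this direction control, my plan is to invoke Lemma \ref{ApproximationByExponentialMap}: cover $h$ by finitely many sub-arcs of $E$-diameter at most a suitable scale $r_0$, on each of which the developing map of $C$ is uniformly $C^0$-close, and hence (by Cauchy estimates for holomorphic functions) also $C^1$-close, to the exponential model $\dev(z)=e^z$. In that model, horizontal and vertical $\C$-translations of the domain are covered by hyperbolic axial translations along a fixed geodesic $\alpha\subset\H^3$ and rotations about $\alpha$, respectively, making the model Epstein surface a geodesic tube around $\alpha$ that is invariant under both group actions. Consequently, the vertical tangent vectors along a horizontal segment in the model are images of one another under axial isometries whose $\H^3$-displacement equals the $\H^3$-length of the corresponding horizontal image curve, which is already $o(1)$. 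Transferring this to the actual Epstein surface via the $C^1$-approximation and chaining over the sub-arcs should then yield the required $\ep$-closeness.

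The hard part will be the chaining: the number of sub-arcs needed to cover $h$ may grow like $\sqrt{\Area E_{X,\rho}}/r_0$, so the per-sub-arc error from Lemma \ref{ApproximationByExponentialMap} must beat this growth rate. Handling this calls for a quantitative refinement of the approximation adapted to the large-distance regime $d \geq \ep\sqrt{\Area E}$, ultimately leaning on the principal-direction curvature bound of Proposition \ref{HorizontalCurvatureEstimate}. A technically cleaner alternative, avoiding chaining altogether, is to compute $\nabla^{\H^3}_{\Ep^\ast h'}\Ep^\ast v'$ directly: since $h'$ and $v'$ are principal by Lemma \ref{Dumas}(iii), its normal component via the second fundamental form vanishes, reducing the problem to bounding the Christoffel symbols of the induced metric $\rho_h^2\,dx^2+\rho_v^2\,dy^2$ (with $\rho_h=\|\Ep^\ast h'\|$, $\rho_v=\|\Ep^\ast v'\|$), which are in turn controlled by the refined Dumas estimates of \cite{Dumas18HolonomyLimit} and \cite{Baba-15}.
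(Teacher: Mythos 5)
Part~(\ref{iTotallCurvature}) of your proof coincides with the paper's (which simply invokes Proposition~\ref{TotalCurvatureBound}), so there is nothing to add there. For part~(\ref{iCloseVerticalTangentVectors}) the paper's proof is a single citation to \cite[Proposition 4.7]{Baba-15}; you are instead attempting a from-scratch argument, so the comparison is really between your two sketches and the black box.

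Your bound on the $\H^3$-length of $\Ep\circ h$ via Lemma~\ref{Dumas}(\ref{iHoriozontalDerivative}) is correct and cleanly handles the base-point part of ``$\ep$-close in $T^1\H^3$.'' The remaining direction control is where both of your routes stop short of a proof. Your chaining route has the accumulation problem you name, and while a back-of-the-envelope calculation suggests the error of Lemma~\ref{ApproximationByExponentialMap} decays fast enough in $d\ge\ep\sqrt{\Area E}$ to beat the $O(\sqrt{\Area E})$ number of subarcs, Lemma~\ref{ApproximationByExponentialMap} as stated in the paper is purely qualitative, so this would require importing a quantitative version from \cite{Dumas18HolonomyLimit} rather than citing what is written here. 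Your covariant-derivative route is closer to complete, and your observation that $II(h',v')=0$ by principality is exactly right. If you carry out the Christoffel computation for the induced metric $\rho_h^2\,dx^2+\rho_v^2\,dy^2$ and normalize, you find $\nabla^{\H^3}_{\Ep^\ast h'}\bigl(\rho_v^{-1}\Ep^\ast v'\bigr) = \frac{\partial_y\rho_h}{\rho_h\rho_v}\,\Ep^\ast h'$, so the total angular variation of the unit vertical vector along $h$ is $\int_0^{L_E}\frac{|\partial_y\rho_h|}{\rho_v}\,dt$, with $L_E\sim\sqrt{\Area E}/\ep$. With $\rho_v\ge\sqrt 2$, what you actually need is a pointwise bound $|\partial_y\rho_h|\lesssim d^{-3}$ (or any decay faster than $d^{-1}L_E^{-1}$). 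This is not a consequence of $\rho_h<6/d^2$ alone --- a bound on a function does not bound its derivative --- and it is not stated anywhere in the paper. You assert it is ``controlled by the refined Dumas estimates,'' and indeed such derivative estimates for the Epstein metric exist in \cite{Dumas18HolonomyLimit} and \cite{Baba-15}, but this is precisely the content of \cite[Proposition~4.7]{Baba-15} that the paper elects to cite rather than reprove. So your plan is sound and points in the right direction, but the one concrete thing it leaves unverified --- the decay of $\partial_y\|\Ep^\ast h'\|$ --- is not a formality; it is the substance of the external result, and you should either prove it or cite it as the paper does.
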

\proof
(\ref{iTotallCurvature}) is already by
Proposition \ref{TotalCurvatureBound}.
 By \cite[Proposition 4.7]{Baba-15}, we have  (\ref{iCloseVerticalTangentVectors}). 
 \Qed{HorizontalAndVerticalEstimate}

\section{Comparing measured foliations}\Label{sComoparingMeasuredFoliations}
\subsection{Thurston laminations and vertical foliations}
Let $L_1, L_2$ be measured laminations or foliations on a surface $F$. 
Then $L_1$ and $L_2$ each define a pseudo-metric almost everywhere on $F$: for all $x, y \in F$  not contained in a leaf of $L_i$ with atomic measure,  consider the minimal transversal measure of all arcs connecting $x$ to $y$.
We say that,  for $\ep > 0$, $L_1$ is  $(1 + \ep, \ep)$-{\sf quasi-isometric} to $L_2$, if for almost all  $x, y \in F$, 
 $$(1 + \ep)^{-1} d_{L_1}(x, y)  - \ep< d_{L_2}(x, y) < (1 + \ep) d_{L_1}(x, y) + \ep.$$

We shall compare a measured lamination of the Thurston parametrization and a measured foliation from the Schwarzian parametrization of a $\CP^1$-surface. 
\begin{theorem}\Label{ThurstonLaminationAndVerticalFoliationOnDisks}
For every $\ep > 0$, there is $r > 0$ with the following property: 
 For every $C \in \PP \sqcup \PP^\ast$, then, letting $(E, V)$ be its associated flat surface, if 
  disk $D$ in $E$ has radius less than  $\frac{1}{\ep}$  and the distance between $D$ and the singular set $Z$ of $E$ is more than $r$, then the vertical foliation $V$ of $C$  is $(1 + \ep, \ep)$-quasi isometric to $\sqrt{2}$ times the Thurston lamination $L$ of $C$ on $D$.
\end{theorem}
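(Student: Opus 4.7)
The plan is to reduce the comparison to an explicit exponential model via the approximation of the developing map, compute the Thurston lamination of that model directly, and transfer the estimate back to $\dev_C$ using the Epstein surface estimates from \S\ref{sEpsteinSurfaces}.

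First, on the disk $D \subset E$ I would introduce a natural coordinate $w$, so that $q = dw^2$ on $D$ and the vertical foliation $V$ consists of the leaves $\{\Re w = \text{const}\}$ with transverse measure $|d\Re w|$. Because $D$ has $E$-diameter at most $1/\ep$ and is at distance at least $r$ from the singular set $Z$, Lemma~\ref{ApproximationByExponentialMap} provides a Möbius transformation $A \in \PSL(2,\C)$ such that $A \circ \dev_C|_{\ti D}$ is pointwise $\ep(r)$-close, in the complete Euclidean metric on $\C^\ast$, to the model $f_0(w) = \exp(i\sqrt{2}\,w)$, with $\ep(r) \to 0$ as $r \to \infty$. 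The factor $i\sqrt{2}$ is forced by the requirement that the Schwarzian of $f_0$ with respect to $w$ equal $1 \cdot dw^2$, which asymptotically matches $q$ written in natural coordinates.

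Second, I would compute the Thurston lamination $\LLL_0$ of $f_0$ directly. At $w_0 = u_0 + iv_0$ the point $f_0(w_0) = e^{-\sqrt{2} v_0} e^{i\sqrt{2} u_0}$ lies in $\C^\ast = \CP^1 \minus \{0,\infty\}$, and the unique maximal round disk in $\CP^1 \minus \{0,\infty\}$ whose core contains $f_0(w_0)$ is the half-plane through the origin bounded by the line perpendicular to $e^{i\sqrt{2}\,u_0}$. Pulling back under $f_0$, the component of the preimage containing $w_0$ is a vertical $w$-strip of width $\pi/\sqrt{2}$, and the preimage of the core (a ray from the origin in $\CP^1$) is the vertical line $\{\Re w = u_0\}$. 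Hence $\LLL_0$ agrees with $V$ as an unmeasured foliation; its transverse measure per unit $|d\Re w|$ is given by the dihedral rotation rate of the supporting hyperbolic planes $H_{w_0} \subset \H^3$ as $\Re w$ varies, and a computation with the bending-angle convention for Thurston measure yields the factor relating $V$ and $\sqrt{2}\,\LLL_0$.

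Finally, to transfer the model comparison to the actual $\dev_C$, I would use Theorem~\ref{HorizontalAndVerticalEstimate} and Proposition~\ref{TotalTorsionBound} to control how the hyperbolic plane $H_z$ supporting the maximal disk at each $z \in \ti D$ varies: these give that $H_z$ is $\ep$-close to its model counterpart and that its dihedral rotation rate along horizontal $w$-arcs matches the model value to within a factor $1 + O(\ep)$. Combined with the pointwise closeness of $A \circ \dev_C$ and $f_0$, this forces the cores of the actual maximal disks on $D$ to lie $\ep$-close to the vertical $w$-lines, so that $V$ and $\sqrt{2}\,\LLL$ are $(1+\ep, \ep)$-quasi-isometric on $D$. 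The hard part will be verifying that the round disks identified from the model are genuinely maximal for the full $\dev_C$, not merely contained in some strictly larger round disk whose interior extends beyond $\ti D$: since maximality is a global condition on $\ti C$, ruling this out requires using that the model maximal disks are hemispheres whose boundaries pass through the two ``poles'' $\{0, \infty\}$, which correspond to the asymptotic endpoints of vertical $q$-trajectories in $E$ and continue to obstruct enlargement for $\dev_C$ once $\operatorname{dist}(D, Z) > r$ is sufficiently large.
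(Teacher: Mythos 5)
Your plan follows the paper's proof in essentially every respect: work in natural $q$-coordinates, invoke Lemma~\ref{ApproximationByExponentialMap} to approximate $\dev_C$ on $D$ by the scaled exponential model, compute the Thurston lamination of that exact model (identifying it with the Euclidean vertical foliation with the stated transverse measure), and transfer the quasi-isometry back to $C$ by showing each maximal disk of $\ti{C}$ over $D$ is uniformly close to the corresponding model disk. The maximality step you correctly flag as the crux is precisely what the paper delegates to \cite[Theorem~11.1, Proposition~3.6]{Baba-17} on convergence of canonical neighborhoods, and your sketch of it is in the same spirit, with the caveat that the support planes $H_z$ of maximal disks are not the tangent planes of the Epstein surface, so invoking Theorem~\ref{HorizontalAndVerticalEstimate} and Proposition~\ref{TotalTorsionBound} directly would require an extra step linking the two.
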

\proof[Proof of Theorem \ref{ThurstonLaminationAndVerticalFoliationOnDisks}] 
It suffices to show the assertion when $D$ is a unit disk. 
Since $D$ contains no singular point,  we can regard $D$ as a disk in $\C$ by the natural coordinates given by the quadratic differential. 
The scaled exponential map 
$$ \exp  (\sqrt{2}\, \ast ) \col \C \to \C^\ast\col z \mapsto \exp(\sqrt{2} z).$$ 
 is a good approximation of the developing map sufficiently away from zero (\Cref{ApproximationByExponentialMap}), which was proved using Dumas' work \cite{Dumas18HolonomyLimit}).
Let $C_0$ be the $\CP^1$-structure on $\C$ whose developing map is $\exp  (\sqrt{2}\, \ast)$. 
The next lemma immediately follows from the construction of Thurston coordinates. 
\begin{lemma}
The Thurston lamination on $C_0$ is the vertical foliation of $\C$ with a transversal measure given by the horizontal Euclidean distance. 
\end{lemma}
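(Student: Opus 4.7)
The plan is to carry out Thurston's maximal-disk construction explicitly for $C_0 = (\C, \exp\sqrt{2})$, read off the cores, and deduce the transverse measure from the rotation of the associated half-planes in $\H^3$.

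First I would use that $\exp\sqrt{2} \colon \C \to \C^\ast = \CP^1 \setminus \{0, \infty\}$ is a universal covering map, so an embedded round disk in the universal cover $\ti C_0 = \C$ is exactly a component of the $\dev$-preimage of a round disk in $\C^\ast$. A round disk in $\C^\ast$ is maximal iff any enlargement would capture $0$ or $\infty$; these are precisely the half-planes $H_\theta \subset \C$ bounded by a line through the origin (which in $\CP^1$ passes through both omitted points). Each such $H_\theta$ lifts by $\exp\sqrt{2}$ to countably many horizontal strips of width $\pi/\sqrt{2}$, each mapped biholomorphically onto $H_\theta$.

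Next I would identify the Kulkarni--Pinkall core. Pulling the Poincar\'e metric of $H_\theta$ back through $\exp\sqrt{2}$, the strip centered at $\{y = c\}$ inherits the hyperbolic metric $\tfrac{2(dx^2 + dy^2)}{\cos^2(\sqrt{2}(y - c))}$, whose density at $z_0 = x_0 + i y_0$ equals $\sqrt{2}/\cos(\sqrt{2}(y_0 - c))$. This density is minimized uniquely at $c = y_0$, so the canonical maximal disk at $z_0$ is the strip whose midline passes through $z_0$, and its core is that midline $\{y = y_0\}$. Sweeping $c$ over $\R$ produces the foliation of $\C$ by horizontal lines, and the Schwarzian computation $\{\exp\sqrt{2}, z\} = -1$ identifies this with the vertical foliation of $q = -dz^2$ in natural coordinates $\zeta = iz$ (where vertical leaves are $\{\Re\zeta = -y = \text{const}\}$).

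Finally, for the transverse measure I would use that the strip centered at $\{y = c\}$ corresponds to $H_{\theta(c)}$ with $\theta(c) = \sqrt{2}c - \pi/2 \pmod{2\pi}$, and the hyperbolic plane in $\H^3$ bounded by $H_{\theta(c)}$ rotates about the common axis $[0,\infty]$ as $c$ varies. Since the bending measure of the pleated surface is by definition this rotation angle, crossing the cores from $y = c_1$ to $y = c_2$ accumulates $\LLL$-measure proportional to $|c_2 - c_1|$, i.e.\ to the horizontal Euclidean distance in the natural coordinates of $q$. The main obstacle is the core identification: although the lemma calls it immediate from Thurston's construction, the explicit Poincar\'e-density computation above is what makes the midline selection unambiguous among the continuum of maximal disks through a given point; fixing the normalizing constant in the transverse measure is then a routine tracking of the exponent $\sqrt{2}$ through $\theta(c)$.
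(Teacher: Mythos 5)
Your structural approach is sound and more explicit than what the paper offers, which is only ``immediately follows from the construction of Thurston coordinates.'' Identifying the maximal disks as components of $\exp\sqrt{2}$-preimages of half-planes through the origin is correct, and invoking the Kulkarni--Pinkall characterization of the canonical disk as the minimizer of the Poincar\'e density at a point is a legitimate route to the cores; your density formula $\sqrt{2}/\cos(\sqrt{2}(y_0-c))$ for the strip is right, and the minimizer at $c = y_0$ gives the midlines. By the rotational symmetry of the half-plane about its bisecting ray, this agrees with the nearest-point-projection/convex-hull definition of the core used in the paper, so nothing is lost by arguing via the density.

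The step you cannot leave as ``routine tracking'' is the normalization of the transverse measure, and in the normalization you chose it does not close. With natural coordinate $\zeta = iz$ (so $q = -dz^2$, and the horizontal transverse distance across the vertical leaves $\{y = \mathrm{const}\}$ is $|dy| = |d\Re\zeta|$), you correctly get $\theta(c) = \sqrt{2}\,c - \pi/2$. Hence the supporting hyperbolic planes rotate at rate $d\theta/dc = \sqrt{2}$, so crossing from $y = c_1$ to $y = c_2$ accumulates bending measure $\sqrt{2}\,|c_2 - c_1|$, which is $\sqrt{2}$ times the horizontal Euclidean distance, not equal to it. Note also that this runs in the opposite sense of the $\sqrt{2}$ the surrounding theorem needs: that statement asserts $V$ is asymptotically $\sqrt{2}\,\LLL$, whereas your computation produces $\LLL = \sqrt{2}\,V$. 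To make the lemma's constant come out to $1$ you would need the quadratic differential to be $q = -2\,dz^2$ (so that $\zeta = i\sqrt{2}\,z$ and the horizontal distance becomes $\sqrt{2}\,|dy|$, matching the bending), which means committing to a definite convention relating $q$ to the Schwarzian of the developing map and rechecking compatibility with the $\sqrt{2}$ vertical-stretch estimate for the Epstein surface. This constant is the single load-bearing numerical step of the lemma, and it needs to be carried through rather than deferred.
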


Let $D_x$ be the maximal disk in $\ti{C}$ centered at $x$.
Let $D_{0,x}$ be the maximal disk in $C_0$ centered at $x$ by the inclusion $D \sub \C$. 
When $\CP^1$ is identified with $\s^2$ so that the center $O$ of the disk $D$ map to the north pole and the maximal disk in $\ti{C}$ centered at $O$ maps to the upper hemisphere. 
If $r > 0$ is sufficiently large, then the $\dev | D$ is close to  $\exp  (\sqrt{2}\, \ast)$. 
Then, for every $x \in D$, its maximal disk $D_x$ in $\ti{C}$ is $\ep$-close to the maximal disk $D_{0,x}$ in $C_0$,  and the ideal point $\bdr_\infi D_x$ is $\ep$-Hausdorff close to the idea boundary $\bdr_\infi D_{0,x}$ on $\s^2$. 

Therefore,  by \cite[Theorem 11.1, Proposition 3.6]{Baba-17}, the convergence of canonical neighborhoods implies the assertion. 
\Qed{ThurstonLaminationAndVerticalFoliationOnDisks}

A staircase polygon is a polygon in a flat surface whose edges are horizontal or vertical (see Definition \ref{dStaircase}). 
\begin{theorem}\Label{ThurstonLaminationAndVerticalFoliationOnPolygons}
For every $X \in \TT \sqcup \TT^\ast$ and every $\ep > 0$,  there is a constant $r > 0$ with the following property: 
Suppose that $C$ is a $\CP^1$-structure on $X$ and $C$ contains a staircase polygon $P$ w.r.t. its flat surface structure $(E, V)$, such that   the ($E$-)distance from $\bdr P$ to the singular set $Z$ of $E$ is more than $r$. 
Then, letting $\LLL$ denote the Thurston lamination  of $C$,
the restriction of  $\LLL$ of $C$  to  $P$ with its transversal measure scaled by $\sqrt{2}$ is
  $(1 + \ep, \ep)$-quasi-isometric to the vertical foliation $V$ on $P$ up to a diffeomorphism supported on the $r/2$-neighborhood of the singular set in $P$. \end{theorem}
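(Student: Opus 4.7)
The plan is to reduce this theorem to the disk case \Cref{ThurstonLaminationAndVerticalFoliationOnDisks} by covering $P \setminus N_{r/2}(Z)$ with disks on which that result applies, patching the local quasi-isometries into a global one via a chaining argument along pseudo-metric-realizing arcs, and finally accounting for the neighborhoods of cone points of $E$ by constructing a diffeomorphism supported in $N_{r/2}(Z) \cap P$.

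Concretely, for the given $\ep > 0$ I would choose an auxiliary $\ep_0 > 0$ sufficiently small (in a way depending on the normalized area of $P$, and hence on $X$), let $r_0 = r_0(\ep_0)$ be the constant supplied by \Cref{ThurstonLaminationAndVerticalFoliationOnDisks} applied with precision $\ep_0$ on disks of radius $1/\ep_0$, and set $r > 2 r_0 + 2/\ep_0$. Then I would cover $P \setminus N_{r/2}(Z)$ by open disks of radius $1/\ep_0$ centered at points of $E$-distance at least $r/2$ from $Z$. Each such disk is at distance at least $r/2 - 1/\ep_0 > r_0$ from $Z$, so the disk theorem applies and yields a local $(1 + \ep_0, \ep_0)$-quasi-isometry between $V$ and $\sqrt{2}\LLL$ on that disk.

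For the patching step, given generic $x, y \in P \setminus N_{r/2}(Z)$, a near-optimal arc $\alpha$ for $d_V(x,y)$ can be decomposed into sub-arcs $\alpha_1, \dots, \alpha_n$, each contained in one of the cover disks, with $n$ controlled by the normalized area of $P$ divided by $(1/\ep_0)^2$. Summing the local $(1+\ep_0, \ep_0)$-inequalities yields the global $(1 + \ep, \ep)$-estimate on $P \setminus N_{r/2}(Z)$, provided $\ep_0$ is chosen small relative to $1/n$, and the reverse inequality follows symmetrically by exchanging the roles of $V$ and $\sqrt{2}\LLL$. To extend the comparison across $N_{r/2}(Z) \cap P$, I would construct a diffeomorphism $\phi \col P \to P$ supported in $N_{r/2}(Z) \cap P$ that equals the identity outside this neighborhood: near a zero $z \in Z \cap P$ of order $k$, both the vertical foliation $V$ and the Thurston lamination $\LLL$ carry a pronged structure with $k + 2$ prongs emanating from $z$, and the exponential approximation \Cref{ApproximationByExponentialMap} adapted to a pronged neighborhood identifies these two structures up to $\ep_0$-error. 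The diffeomorphism $\phi$ is then assembled sector by sector to align the prongs of $\sqrt{2}\LLL$ with those of $V$, with a smooth interpolation to the identity on the boundary sphere $\partial N_{r/2}(Z)$.

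The main obstacle is the construction of $\phi$ inside $N_{r/2}(Z)$: the exponential approximation used in the proof of \Cref{ThurstonLaminationAndVerticalFoliationOnDisks} degenerates at zeros of $q$, so the combinatorial description of the Thurston lamination on a flat neighborhood of a higher-order cone point requires a prong-by-prong analysis. The required input is a local version of the canonical-neighborhood convergence (from \cite{Baba-17}, as used in the proof of the disk theorem) extended to the pronged setting, so that on each sector between consecutive vertical prongs the disk estimates apply, and the sectors can be joined via a controlled isotopy fixing $\partial N_{r/2}(Z)$ pointwise.
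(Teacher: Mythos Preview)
Your overall strategy of reducing to the disk case \Cref{ThurstonLaminationAndVerticalFoliationOnDisks} matches the paper's, but the chaining step has a genuine gap: your bound on $n$ is not valid. The statement is in the \emph{unnormalized} flat metric $E$, and as $\|q\|\to\infty$ the polygon $P$ can have arbitrarily long vertical edges; a $d_V$-optimal arc between two points of $P\setminus N_{r/2}(Z)$ at the same horizontal coordinate has $d_V=0$ but may have vertical Euclidean length $L$ as large as you like. Covering that arc by disks of radius $1/\ep_0$ requires $n\sim L\ep_0$ pieces, so the accumulated additive error $n\ep_0\sim L\ep_0^2$ is unbounded. (In the horizontal direction your chaining \emph{does} work, because there $n$ is proportional to $d_V(x,y)$ and the extra $n\ep_0$ is absorbed into a slightly worse multiplicative constant; it is precisely the vertical direction that kills the argument.) Your appeal to ``normalized area of $P$ divided by $(1/\ep_0)^2$'' conflates the two metrics: in $E^1$ the disks become arbitrarily small, while in $E$ the area of $P$ is not bounded by anything normalized.

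The paper closes exactly this gap with a separate global estimate: a lemma (derived from \Cref{VerticalSegmentAlmostZeroMeasure}, i.e.\ the torsion bound for the Epstein surface) saying that any vertical segment of $V|P$ at distance $>r/2$ from $Z$ has $\LLL$-measure $<\ep$, regardless of its length. With that lemma in hand, one never chains disks in the vertical direction at all; one uses the disk estimate only for the horizontal contribution and the lemma for the vertical contribution. This is the missing idea in your proposal.

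On the zero neighborhoods, your sector-by-sector construction is overkill and rests on an extension of the canonical-neighborhood convergence to pronged models that you would have to prove. The paper instead observes that $V$ and $\LLL$ on $P\cap N_{r/2}(Z)$ are determined, up to isotopy, by their restrictions to $P\setminus N_{r/2}(Z)$ (a pronged foliation on a disk is determined rel boundary by its boundary trace), so the diffeomorphism supported in $N_{r/2}(Z)$ comes for free once the comparison is established on the complement.
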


\proof
Let $N_{r/2} Z$ denote the $r/2$-neighborhood of $Z$. 
If $r > 0$ is sufficiently large, then, 
for each disk $D$ of radius  $\frac{r}{4}$ centered at a point on $E  \minus N_{r/2}(Z)$, the assertion holds by Theorem \ref{ThurstonLaminationAndVerticalFoliationOnDisks}.

Since $\bdr P \cap N_{r/2} Z =\emptyset$,
there is an upper bound for lengths of edges of such staircase polygons $P$  with respect to the normalized Euclidean metric $E^1$.  \begin{lemma}\Label{VerticalLeafInFoliationAndLamination}
For every $\ep > 0$, if $r > 0$ is sufficiently large, then 
for every vertical segment $v$ of $V | P$ whose distance from the singular set $Z$ is more than $r/2$,  we have $\LLL(v) < \ep$. 
\end{lemma}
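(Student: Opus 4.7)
The plan is to cover the vertical segment $v$ by finitely many small flat disks, invoke \Cref{ThurstonLaminationAndVerticalFoliationOnDisks} on each, and sum the resulting small contributions to $\LLL(v)$. Since $\bdr P \cap N_{r/2}Z = \emptyset$, the paragraph preceding the lemma already provides a uniform upper bound $L_0 = L_0(X)$ on the $E^1$-length of every vertical segment of $V | P$, so $v$ itself has $E^1$-length at most $L_0$.

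Given $\ep > 0$, I would fix auxiliary constants $\ep', \rho > 0$, small depending on $\ep$ and $L_0$, and then choose $r$ large enough so that every flat disk $D$ of $E^1$-radius $\rho$ whose $E^1$-distance to $Z$ is at least $r/2 - \rho$ satisfies the conclusion of \Cref{ThurstonLaminationAndVerticalFoliationOnDisks} with parameter $\ep'$; that is, $V$ and $\sqrt{2}\LLL$ are $(1+\ep', \ep')$-quasi-isometric as pseudo-metrics on $D$. Cover $v$ by such disks $D_1, \dots, D_k$ centered along $v$, with $k \leq \lceil L_0 / \rho \rceil + 1$. On each $D_i$, the sub-arc $v_i := v \cap D_i$ lies in a single leaf of $V$, so $d_V \equiv 0$ on $v_i$; conversely, any two points on the same leaf of $\LLL$ inside $D_i$ have $V$-pseudo-distance (i.e.\ horizontal Euclidean separation) at most $\ep'$ by the quasi-isometry, so every leaf of $\LLL$ restricted to $D_i$ is confined to a vertical strip of horizontal width at most $\ep'$.

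Combining this strip confinement with the approximation of the developing map by $\exp(\sqrt{2} z)$ from \Cref{ApproximationByExponentialMap} and the maximal-disk convergence that underlies the proof of \Cref{ThurstonLaminationAndVerticalFoliationOnDisks}, I would show that for $\rho$ sufficiently small the leaves of $\LLL$ inside $D_i$ are graphs over the horizontal direction in the flat chart; the vertical arc $v_i$ then meets each such leaf at most once, and $\LLL(v_i) = O(\ep')$. Summing over the $k$ disks gives $\LLL(v) \leq \sum_{i=1}^{k} \LLL(v_i) = O(k \ep')$, which is less than $\ep$ once $\ep'$ is chosen small enough relative to $\ep / k$. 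The main obstacle is this last step: the pseudo-metric quasi-isometry from \Cref{ThurstonLaminationAndVerticalFoliationOnDisks} does not directly bound the $\LLL$-transversal mass of a specific arc between two nearby points, because a measured lamination can assign large mass to a non-monotone arc joining them. Overcoming this requires quantitative monotonicity of $v$ with respect to $\LLL$, which I expect to extract from the Hausdorff closeness of maximal disks in $\ti C$ to those of the model $C_0$ whose developing map is $\exp(\sqrt{2} z)$ and whose Thurston leaves are exactly the vertical Euclidean lines.
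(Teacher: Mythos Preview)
Your approach differs from the paper's, which cites Corollary~\ref{VerticalSegmentAlmostZeroMeasure} in a single line. That corollary, derived from the Epstein-surface curvature and torsion estimates (Propositions~\ref{TotalCurvatureBound} and~\ref{TotalTorsionBound}), shows that along a vertical segment $v$ far from the zeros the normal direction to the Epstein surface rotates by a total angle less than $\ep$; via the relation between the Epstein map, the collapsing map, and the bending map, this rotation is exactly what controls $\LLL(v)$. The paper thus works directly in $\H^3$ through the Epstein map and never returns to the disk-level quasi-isometry of \Cref{ThurstonLaminationAndVerticalFoliationOnDisks}.

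The gap you flag in your own route is genuine, and your proposed repair contains an error in its key geometric claim. In the model $C_0$ with developing map $\exp(\sqrt{2}z)$, the Thurston leaves are the \emph{vertical} Euclidean lines $x=\text{const}$ (this is the lemma inside the proof of \Cref{ThurstonLaminationAndVerticalFoliationOnDisks}). Hence as $r\to\infty$ the leaves of $\LLL$ in each $D_i$ approach vertical lines: they become graphs over the \emph{vertical} coordinate, of the form $x=g(y)$, not over the horizontal one. Your monotonicity step then collapses, because a curve $x=g(y)$ with $g$ $C^0$-close to a constant can meet the vertical segment $x=x_0$ arbitrarily many times, so ``$v_i$ meets each such leaf at most once'' simply does not follow from Hausdorff closeness of maximal disks. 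The pseudo-metric quasi-isometry of \Cref{ThurstonLaminationAndVerticalFoliationOnDisks} only bounds the infimum of transversal mass over arcs joining the endpoints of $v_i$, not $\LLL(v_i)$ itself; upgrading it to a cocycle-level statement on $D_i$ would require essentially the same Epstein-surface input that Corollary~\ref{VerticalSegmentAlmostZeroMeasure} already packages.
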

\begin{proof}
This follows from Corollary \ref{VerticalSegmentAlmostZeroMeasure}.
\end{proof}
By Theorem \ref{ThurstonLaminationAndVerticalFoliationOnDisks} and Lemma \ref{VerticalLeafInFoliationAndLamination}, $V$ and $\LLL$ are $(1 + \ep, \ep)$-quasi-isometric on $P$ minus $N_{r/2} Z$. 
Note that $V$ and $\LLL$ in $P \cap N_{r/2}$ are determined by $V$ and $\LLL$ in $P \minus N_{r/2}$ up to an isotopy, respectively. 
Therefore, as desired,  $V$ and $\LLL$ are $(1 + \ep, \ep)$-quasi-isometric on $P$, up to a diffeomorphism supported on $N_{r/2} Z$. 
\Qed{ThurstonLaminationAndVerticalFoliationOnPolygons}

\subsection{Horizontal foliations asymptotically coincide}
Let $X, Y \in \TT \sqcup \TT^\ast$ with $X \neq Y$.
Let $\rchi_X = \Hol \PP_X$ and let $\rchi_Y = \Hol \PP_Y$, the holonomy varieties of $X$ and $Y$, respectively.
Suppose that $\rho_i$ is a sequence in $\rchi_X \cap \rchi_Y$ which leaves every compact set in $\rchi$.
Then,  let  $C_{X, i}$ and $C_{Y, i}$ be the $\CP^1$-structures on $X$ and $Y$, respectively, with holonomy $\rho_i$. 
Similarly, let $H_{X, i}$ and $H_{Y, i}$ denote the horizontal measured foliations of $C_{X, i}$ and $C_{Y, i}$.
Then, up to a subsequence, we may assume that 
 $\rho_i$ converges to a $\pi_1(S)$-tree $T$ in the Morgan-Shalen boundary of $\rchi$, and that the projective horizontal foliations $[H_{X, i}]$ and $[H_{Y, i}]$ converge to $[H_X]$ and $[H_Y] \in \PML(S)$, respectively, as $i \to \infi$.
Let $\zeta\col \pi_1(S) \to \Isom\, T$ denote the representation given by the isometric action in the limit, where $\Isom \,T$ is the group of isometries of $T$.

Let $\ti{H}_X$ be the total lift of the horizontal foliation $H_X$ to the universal cover of $X$, which is a $\pi_1(S)$-invariant measured foliation. 
Then, collapsing each leaf of $\ti{H}_X$ to a point, we obtain a $\R$-tree $T_X$, where the metric is induced by the transversal measure (dual tree of $\ti{H}$).  
Let $\phi_X\col \ti{S} \to T_X$ be the quotient collapsing map, which commutes with the $\pi_1(S)$-action. 
By Dumas  (\cite[Theorem A, \S 6]{Dumas18HolonomyLimit}), there is  a unique {\sf straight map}   $\psi_X\col T_X \to T$  such that 
 $\psi_X$ is also $\pi_1(S)$-equivariant, and that every non-singular vertical leaf of $\ti{V}|_X$ maps to a geodesic in $T$.
 
Similarly,
let $\phi_Y\col \ti{S} \to T_Y$ be the map which collapses each leaf of $\ti{H}_Y$ to a point.
Let $\psi_Y\col T_Y \to T$ be the straight map. 
\begin{figure}
\begin{tikzcd}
 & \ti{S} \arrow{dr}{\phi_Y} \arrow{dl}{\phi_X}\\
T_X \arrow{dr}{\psi_X} && T_Y \arrow{dl}{\psi_Y}\\
 & T \end{tikzcd}
\caption{}\Label{fCollapsingAndHoldingDiagram}
\end{figure}
Let $d_T$ be the induced metric on $T$. 

%%%%
Next we show the horizontal foliations coincide in the limit as projective laminations. 
\begin{theorem}\Label{HorizontalFoliationsCoinside}
$[H_X] = [H_Y]$ in $\PML$. 
\end{theorem}
\proof
Pick a diffeomorphism $X \to Y$ preserving the marking. 
Let  $\xi \col X \to Y$ be a piecewise linear homeomorphism which is a good approximation of $\xi$ with respect to the limit singular Euclidean structures $E_X, E_Y$ on $X$ and $Y$; let $E_X = \cup_{j = 1}^p \sigma_j$ be the piecewise linear decomposition of $E_X$ for $\xi$, where $\sigma_1, \dots, \sigma_p$ are convex polygons in $E_X$ with disjoint interiors.
We diffeomorphically identify $X, Y$ with the base surface $S$, so that the identifications induce $\xi$.
Let $\ti\xi \col \ti{X} \to \ti{Y}$ be the lift of $\xi\col X \to Y$ to a $\pi_1(S)$-equivariant map between the universal covers $\ti{X}, \ti{Y}$. 

Recall that the dual tree $T$ is a geodesic metric space. 
Therefore, the $\zeta$-equivariant maps $\psi_X \circ \phi_X\col \ti{X} \to T$ and $\psi_Y \circ \phi_X\col \ti{Y} \to T$ are  $\zeta$-equivariantly homotopic when identifying their domains by $\ti\xi$.
Namely, for each $x \in \ti{S}$, for $t \in [0,1]$, let $\eta_t(p)$ be the point dividing the geodesic  segment from $\psi_X \circ \phi_X (p)$ to $\psi_Y \circ \phi_Y (p) $ in the ratio $t \colon 1-t$. 
By subdividing the piecewise linear decomposition  $E_X = \cup_{j = 1}^p \sigma_j$ if necessary, we may assume that for each $j = 1, \dots, p$, $\psi_X\circ \phi_X (\ti\sigma_j)$  and $\psi_Y\circ \phi_Y (\ti\sigma_j)$  are the geodesic segments in $T$  contained in a common geodesic in $T$ for all lifts $\ti\sigma_j$ of linear pieces $\sigma_j\, (j =1, \dots, p)$, where $\ti\sigma_j$ is a lift of $\sigma_j$ to the universal cover $\ti{E}_X$. 
Note that $\eta_t(\ti\sigma_j)$ may be a single point in $T$ for $t \in (0,1)$\,;  however this degeneration may happen only at most a single time point $t  \in [0,1]$ for each $j$.
Let $0 < t_1 < t_2 < \dots t_m < 0$ be the time points such that $\eta_{t_i}$ takes some piece $\ti\sigma_j$ to a single point in $T$. 

Suppose $\eta_t (\ti\sigma_j)$ is a segment in $T$ for $t \in [0,1]$.
Then the fibers of $\eta_t$ yield a foliation on $\sigma_j$. 
Moreover the pullback of the distance in $T$ gives the transversal measure on the foliation. 
That is,  if an arc in $\sigma$ is transversal to the foliation, its transversal measure is the distance in $T$ between the images of the endpoints of the arc.
Therefore, if $t \neq t_1, t_2, \dots, t_m$, $\eta_t$ gives a singular measured foliation $H_t$ on $S$, where singular points are contained in the boundary of the linear pieces. 
Then, recalling that we have fixed a metric on $T$ in its projective class,  we have $H_0 = H_X$  and $H_1 = H_Y$ as $\psi_X$ and $\psi_Y$ are straight maps, up to scaling of $H_X$ and $H_Y$. 

At time $t_i$, the  $\eta_{t_i}$-image of $\ti\sigma_j$ is a single point in $T$ for some $j$.
Then, since all points on $\ti\sigma_j$ map to the same point on $T$, the pull-back of the distance on $T$ by $\eta_{t_i}$ can be regarded as the empty lamination on $\sigma_j$. 
Thus,  we obtain a measured lamination $H_{t_i}$ on $S$, pulling back the distance by $\eta_{t_i}$.
Therefore, we obtain a measured lamination $H_t$ on $S$ for all $t \in [0,1]$. 
Moreover,  as  the $\zeta$-equivariant homotopy $\eta_t\col \ti{S} \to T$ changes continuously in $t$, $H_t$ changes continuously on $t \in (0,1)$. 

For each $j = 1, \dots, p$, let $U_j$ be a small piecewise linear neighborhood of $\sigma_j$ homeomorphic to a disk in $E_X$. 
Then, for every $\ep > 0$, we can approximate the homotopy $\eta_t$ ($0 \leq t \leq 1$) by $\xi_t$ such that
\begin{itemize}
\item $\eta_0 = \xi_0$ and $\eta_1 = \xi_1$;
\item $\eta_t$ is piecewise linear;
\item $\eta_t$ is $\ep$-close to $\xi_t$ in  $C^0$-topology; 
\item  there is a sequence $0 = u_0 < u_1 <  u_2 < \dots < u_m =1$, such that, for each $i = 0,1, \dots, m-1$, the homotopy $\xi_t$ is supported on the neighborhood $U_j$ of some $\sigma_j$ for $u_i \leq t \leq u_{i + 1}$.
\end{itemize}

 For each $t \in [0,1]$,   similarly $\xi_t$ induces a measured lamination $W_t$ on $S$ so that, in each linear piece, the fibers of $\xi_t$ yield strata of the lamination and the distance $T$ the transversal measure. 
Then, when $\ep > 0$ is small,  $W_t$ is a good approximation of $W_t$. 
By the continuity of $\xi$, the measured lamination $W_t$ changes continuously in $t \in [0,1]$. 

We shall modify the measured lamination $W_t$ by certain homotopy, removing the ``loose part'' of $W_t$ in order to make $\eta_t$ ``tight". 
By tightening, with respect to the pull-back of the metric of $T$,  the minimal measure of the homotopy class of every closed curve does not increase. 
Thus this tightening operation removes an obviously unnecessary part of the pull-back measure.  
See \Cref{fTightening} for some examples.
\begin{figure}
\begin{overpic}[scale=.1%, grid,tics=10
] {Figure/tightening} % figure file
  \put(10 , 40){$W_t$ on $S$}  
  \put(60 , 50){$T_t$}  
  \put(55 , 21){$T_t$}  
  \put(90 , 30){tighten}  
  \put(90 , 49){$T$}  
  \put(90 , 10){$T$}  
  \put(82 , 20){$\Im \psi_t$}  
    \put(70 , 22){$\psi_t$}  
   \put(73 , 34){$\psi_t$}  
    %   \put( , ){\textcolor{}{$$}}  
 %   \put( , ){}  
      \end{overpic}
\caption{Local pictures of basic examples of tightening. The segments of $T_t$ correspond to shaded regions on $S$, by $\psi_t$ map to an edge with a degree one end which collapses to a single point by tightening; thus those shaded regions are strata of $W_t'$, and thus $W_t'$ do not give any measure to arcs in the regions. The dotted lines in $T$ indicated $T \minus \Im \psi_t.$   }\label{fTightening}
\end{figure}

Let $\ti{W}_t$ be the $\pi_1(S)$-invariant measured lamination on $\ti{S}$ obtained by lifting $W_t$.
Let $T_t$ be the dual tree of $\ti{W}_t$. 
Then let $\phi_t \col \ti{S} \to T_t$ denote the collapsing map. 
Let $\psi_t\col T_t \to T$ denote the folding map so that $\eta_t = \psi_t\circ\phi_t$. 
Suppose that there is a bounded connected subtree $\gam$ of $T_t$ such that 
\begin{itemize}
\item $\gam$ is a closed subset of $T_t$; 
\item  The boundary of $\gam$ maps to a single point $z_\gam$ on $T$ by $\psi_t$, and the interior of $\gam$ maps into a single component of $z_\gam \minus z_\gam$;
\item  for every $\alpha \in \pi_1(S)$, ${\rm int}\, (\alpha \gam)$ is  disjoint from   ${\rm int} \,\gam$.  
 \end{itemize}
 We call such a subtree {\sf loose}. 
For a technical reason, we allow $\gam$ to be a single point on $T_t$. 
However, we will later identify a single point subtree of $T$ with the empty set when we consider deformations of such subtrees. 

For $t \in (0,1)$, fix a loose subtree $\gam$ of $T_t$.   
Then let $\psi_t' \col T_t \to T$ be the $\zeta$-equivariant continuous ``collapsing'' map, such that $\psi_t' (\gam)$ is the point $\psi_t(\bdr \gam)$, ~ $\psi_t' (\alpha \gam)$ is the point $\psi_t(\alpha \gam)$ for each $\gam \in \pi_1(S)$, and $\psi_t'(x) = \psi_t(x)$ for all $x \in T$ {\it not} contained in the union of $\pi_1(S)$-orbits of $\gam$.
Notice that $\psi_t(\gam)$ is a subtree of $T$, and $\psi_t(\bdr \gam)$ is an endpoint of the subtree.
Therefore, there is a $\zeta$-equivariant homotopy from $\psi_t$ to $\psi_t'$.
Thus we call $\psi_t'$ a {\sf tightening} of $\psi_t$ w.r.t. $\gam$.
Notice that $\phi_t^{-1}(\gam)$ is a closed simply connected region in $\ti{S}$ bounded by some strata of $\ti{W}_t$ which all map to the same point $z_\gam$ on $T$ by $\psi_t \circ \phi_t$.

More generally, suppose that there are finitely many loose subtrees $\gam_1, \gam_2, \dots, \gam_n$ of $T_t$, such that $\pi_1(S)$-orbits of their interiors ${\rm int} \gam_1, {\rm int} \gam_2, \dots, {\rm int} \gam_n$ are all disjoint. 
Then we can homotopy the holding map $\psi_t\col T_t \to T$, simultaneously tightening all loose subtrees $\gam_1, \gam_2, \dots, \gam_n$.

Pick a maximal collection of such loose subtrees  $\gam_1, \gam_2, \dots, \gam_n$ of $T_t$, so that we can {\it not} enlarge any of those loose subtrees or add another one. 
Then let $\psi'_t\col T_t \to T$ be the tightening of $\psi_t$ w.r.t. $\gam_1, \gam_2, \dots, \gam_n$ (maximal tightening).
  Let $W_t'$ be the (singular) measured lamination on $S$ given by the tightened holding map $\psi_t'\col T_t \to T$, where strata are connected components of fibers and the transversal measure is given by the pull-back metric. 
  In addition, let $R_t$ be the collection $\{\phi_t^{-1}(\gam_i)\}_{i = 1}^{n_t}$ of the closed simply connected regions $\phi_t^{-1}(\gam_i)$ in $\ti{S}$. 

As the homotopy $\xi_t\col \ti{S} \to T$ changes continuously in $t \in [0,1]$, 
we can show that the collection of maximal loose subtrees  $\gam_{t, 1}, \gam_{t, 2}, \dots, \gam_{t, n_t}$ of $T_t$  continuously in $t \in [0,1]$, so that the collection $R_t$ changes continuously in $t$. 
To be precise, by continuity, we mean that the subsets
$\phi_t^{-1} (\gam_1) \cup \dots \cup \phi_t^{-1} (\gam_{n_t})$ and $\phi_t^{-1} (\bdr \gam_1) \cup \dots \cup \phi_t^{-1} (\bdr \gam_{n_t})$ of $\ti{S}$ change continuously in the Gromov-Hausdorff topology on the subsets of $\ti{S}$, except that, if $\gam_{t,i}$ maps to a single point in $T$ for some $t \in [0,1]$, we identify the collection $\gam_{t, 1}, \gam_{t, 2}, \dots, \gam_{t, n_t}$ with  the collection minus  $\gam_{t,i}$. 

Therefore, by the continuity of the maximal loose subtrees, $\phi_t$ changes continuously in $t \in [0,1]$, and thus $W'_t$ changes continuously in $t$. 
Since,  in each interval $[u_i, u_{i + 1}]$,  the homotopy $\xi_t$ is supported on  the topological disk $U_j$,  the change of $W_t$ is also supported in $U_j$. 
Therefore one can show moreover, for all $t \in [u_i, u_{i+ 1}]$:
    \begin{itemize}
\item for every arc $c$ in $U_j$ with endpoints on the boundary of $U_j$, the minimum $W'_t$-measure of the homotopy class of $c$ remains the same when the endpoints of $c$ are fixed; 
\item for every arc $c$ in $S \minus U_j$ with endpoints on the boundary of $U_j$, the $W_t'$-measure of  $c$ remains the same;
\item for every loop $\ell$ in $S \minus U_j$ with endpoints on the boundary of $U_j$, the $W_t'$-measure of $\ell$ remains the same.  
\end{itemize}
Therefore, for every loop $\ell$ on  $S$, the tightened measure $W_t'$ gives a constant measure to the homology class of $\ell$,  for all $t \in [u_i,u_{i + 1}]$.
Hence, $W_t'$ on homotopy classes of loops stays constant for all $t \in [0, 1]$.

Since $\psi_X$ is a straight map, $T_X$ contains no loose subtree. 
Therefore $H_X = W_0$. 
Similarly, $H_Y = W_1$.
Hence $H_X = H_Y$ with respect to the normalization above, and thus $[H_X] = [H_Y]$ in $\PML$.
(As $\ep > 0$ is arbitrary, we can also show that $H_t$ is a constant foliation after collapsing.)
\Qed{HorizontalFoliationsCoinside}

 Recall that the translation lengths of loops given by $\zeta\col \pi_1(S) \to \Isom\,T$ is the scaled limit on the translation lengths of $\rho_i\col\pi_1(S) \to \PSL(2, \C)$ as $i \to \infty$.
Since $\psi_X\circ\phi_X\col \ti{S} \to T$ and $\psi_Y\circ \phi_Y\col\ti{S} \to T$ both $\zeta$-equivariant and the translation lengths of $\rho_i$ in the (asymptotically) same scale when $i$ is very large, \Cref{HorizontalFoliationsCoinside} implies the following.
\begin{corollary}\Label{HorizontalMeasuredLaminations}
There are sequences of positive real numbers $k_i$ and $k_i'$, such that 
$\lim_{i \to \infi}\frac{k_i}{k_i'} = 1$ and
$\lim_{i \to \infi} k_i H_{X, \rho_i} = \lim_{i \to \infi} k_i'H_{Y, \rho_i}$ in $\ML$.  
\end{corollary}

\section{Train tracks}\Label{sEuclideanTrainTracks}

\subsection{Train-track graphs}

A train track graph  is a {\sf $C^1$-smooth} graph $\Gamma$ embedded in a smooth surface in the following sense:
\begin{itemize}
\item Each edge of $\Gamma$ is $C^1$-smoothly embedded in the surface.   
\item  At every vertex $v$ of $\Gamma$, the unit vectors at $v$ tangent to the edges starting from $v$ are unique up to a sign, and the opposite unit tangent vectors are both realized by the edges. 
\end{itemize}
A {\sf weight system} $w$ of a train-track graph is an assignment of a non-negative real number $w(e)$ to each edge $e$ of $\Gamma$,  such that at every vertex $v$ of $\Gamma$, letting $e_1, \dots, e_n$ be the edges from one direction and $e_1', e_2', \dots, e_m'$ the opposite direction, the equation $w(e_1) + w(e_2) + \dots + w(e_n) = w(e_1') + w(e_2') + \dots + w(e_n')$ holds.

\subsection{Singular Euclidean surfaces}
{\it A singular Euclidean structure} on a surface is given by a  Euclidean metric with a discrete set of cone points.
In this paper,   all cone angles of singular Euclidean structures are $\pi$-multiples, as we consider singular Euclidean structures induced by holomorphic quadratic differentials.
In addition, by  a {\sf singular Euclidean polygon}, we mean a polygon with geodesic edges and a discrete set of singular points whose cone angles are $\pi$-multiples.
A polygon is {\sf right-angled} if 
the interior angles are $\pi/2$ or $\pi/3$ at all vertices.
A {\sf Euclidean cylinder} is a non-singular Euclidean structure on a cylinder with geodesic boundary. 
By
{a {\sf flat surface}, we mean a singular Euclidean surface with (singular) vertical and horizontal foliations, which intersect orthogonally. }

\begin{definition}\Label{dStaircase}
Let $E$ be a flat surface. 
A curve $\ell$ on $E$  is a {\sf staircase}, if 
 $\ell$ contains no singular point and 
$\ell$ is piecewise vertical or horizontal.
Then, a staircase curve is {\sf monotone} if the angles at the vertices alternate between $\pi/2$ and $3\pi/2$ along the curve, so that it is a geodesic in the $L^\infi$-metric
 (the infinitesimal length is the maximum of the infinitesimal horizontal length and the infinitesimal vertical length).
A staircase curve is {\sf vertically geodesic}, if for every horizontal segment, the angle at one endpoint is $\pi/2$ and the angle at the other endpoint is $3\pi/2$; see Figure \ref{fVertucalGeodesic}.
\begin{figure}
\centering
\begin{minipage}{.5\textwidth}
  \centering
\begin{overpic}[scale=.05%, grid,tics=10
] {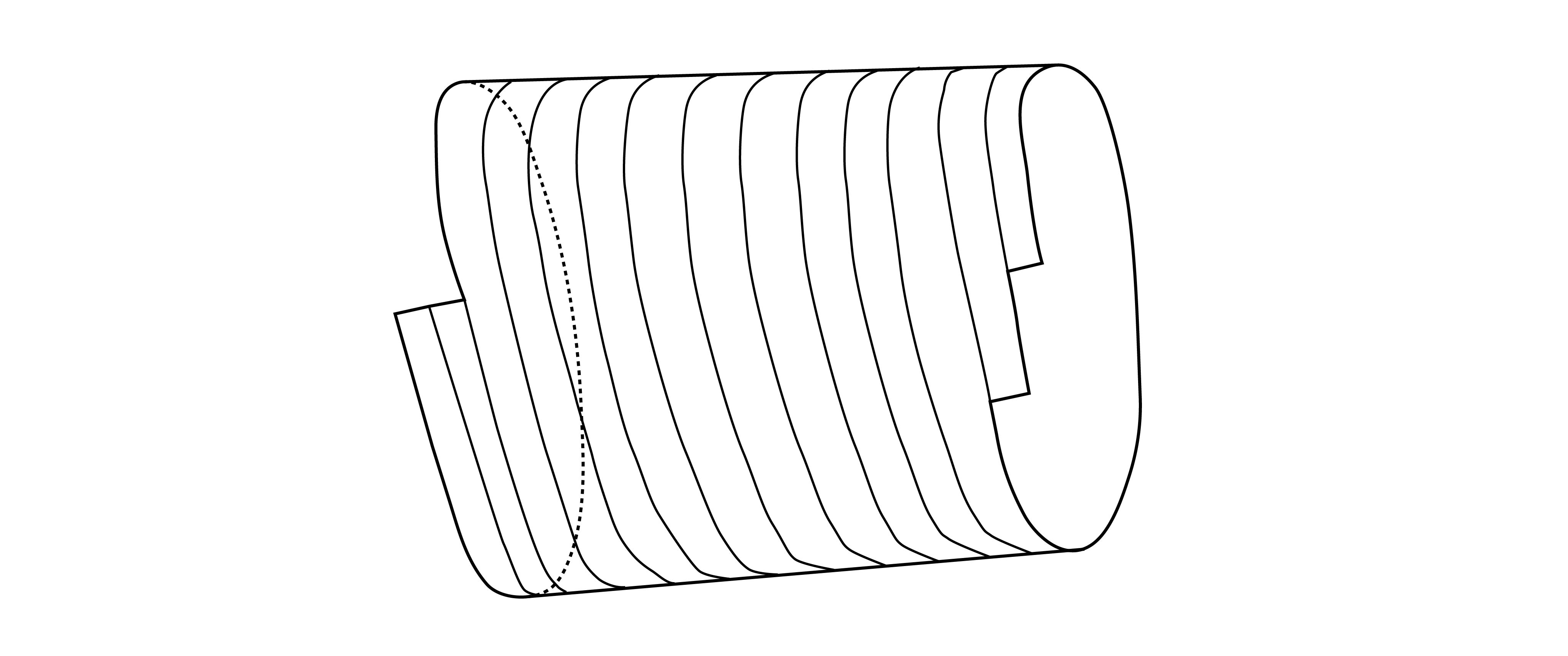} % figure file
 %   \put( , ){\textcolor{}{$$}}  
 %   \put( , ){}  
      \end{overpic}
\caption{A spiral cylinder  decomposed into rectangles. }\label{fSpiralCylinder}
\end{minipage}%
\begin{minipage}{.5\textwidth}
  \centering
\begin{overpic}[scale=.12%, grid,tics=10
] {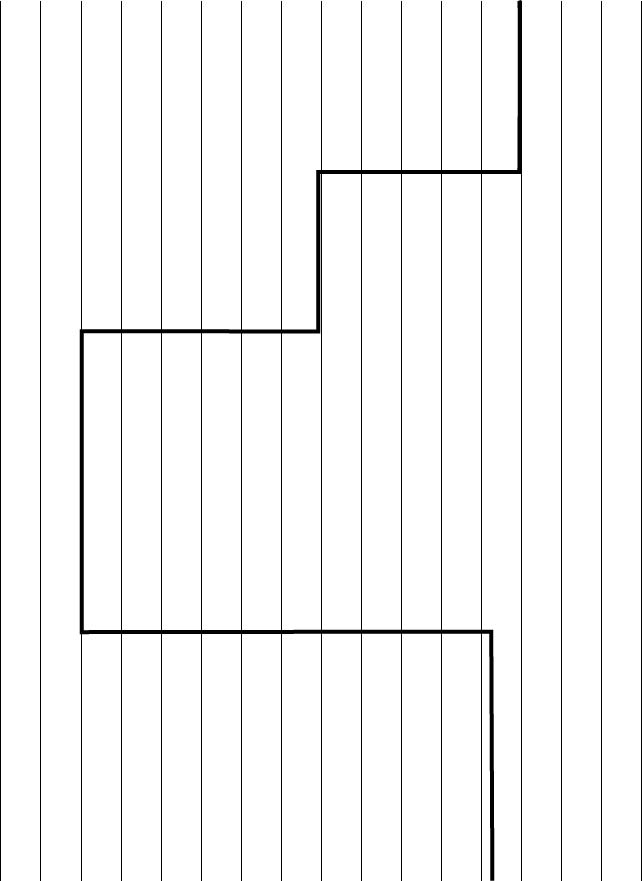} % figure file
 %   \put( , ){\textcolor{}{$$}}  
 %   \put( , ){}  
      \end{overpic}
\caption{An example of a vertically-geodesic staircase curve on the Euclidean plane with the vertical foliation.}\label{fVertucalGeodesic}
\end{minipage}
\end{figure}
%%%%%%

A {\sf staircase surface} is a  flat surface whose boundary components are staircase curves.  
In particular, a staircase polygon is a flat surface homeomorphic to a disk bounded by a staircase curve.
A  ($L^\infty$-){\sf convex staircase polygon} $P$ is a staircase polygon, such that, if $p_1, p_2$  are adjacent vertices of $P$, then at least, one of the interior angles at $p_1$ and $p_2$ is $\pi/2$.
A staircase cylinder $A$ embedded in a flat surface $E$ is a {\sf  spiral cylinder}, if 
 $A$ contains no singular point and
 each boundary component is a monotone staircase loop
(see Figure \ref{fSpiralCylinder}).
\end{definition}

Clearly,  we have the following decomposition. 
\begin{lemma}\Label{SpiralCylinderIntoARectangle}
Every spiral cylinder decomposes into finitely many rectangles when cut along some horizontal segments each starting from a vertex of a boundary component.  (Figure \ref{fSpiralCylinder}.)
\end{lemma}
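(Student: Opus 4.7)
The plan is to reduce the assertion to a straightforward picture after identifying the correct cut locus. Lift $A$ to its universal cover $\widetilde A$, an infinite flat strip bounded by two monotone staircase curves. Call a vertex $v$ of a boundary component of $A$ \emph{concave} if the interior angle of $A$ at $v$ equals $3\pi/2$. Because each boundary is a monotone staircase loop, its angles alternate between $\pi/2$ and $3\pi/2$, and since $A$ is a compact staircase cylinder embedded in the flat surface $E$, each boundary component has only finitely many vertices, hence finitely many concave ones.

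At each concave vertex $v$, exactly one of the two horizontal directions points into the interior of $A$ (the one opposite to the horizontal boundary edge incident to $v$). I would extend a horizontal geodesic ray from $v$ in this direction. Since $A$ contains no singular points, this ray is an honest non-singular flat geodesic and it cannot terminate in the interior of $A$; moreover, it cannot close up to a horizontal loop inside $A$ (a concave vertex sits on the boundary, so such a loop would be forced to reach $\bdr A$ again). Therefore the ray first meets $\bdr A$ after finite length, either crossing a vertical edge of the opposite boundary orthogonally, or meeting a vertex, or coinciding with a horizontal cut emanating from a concave vertex of the opposite boundary. In any case we stop at the first such contact and take the resulting segment as our cut.

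Performing the cuts at all concave vertices of both boundary components produces finitely many pieces. At each concave vertex the cut splits the $3\pi/2$ interior angle into a $\pi/2$ angle on one side of the segment and a straight $\pi$ angle on the other, which merges two collinear horizontal edges into a single horizontal edge of the adjacent piece. After all cuts, every piece has boundary composed only of horizontal and vertical geodesic segments, every interior angle is $\pi/2$, and no singular point lies in the interior. Because at least one cut travels from one boundary component of $A$ to the other (so cutting $A$ open yields a simply connected strip), each resulting piece is simply connected. A simply connected non-singular flat polygon with all right angles and only horizontal and vertical edges is isometric to a Euclidean rectangle, yielding the decomposition.

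The main subtlety I expect is the bookkeeping when a horizontal cut from one boundary meets another cut, or terminates at a vertex, before reaching an edge of the opposite boundary; one must check that, in the universal cover, the monotonicity of both staircase boundaries forces the cuts from the two sides to interlock consistently so that the resulting complementary regions all close up as embedded rectangles, rather than leaving some region with a leftover concave corner. This \emph{is} forced by monotonicity: a monotone staircase loop, read along its horizontal edges, advances in one consistent direction, so the horizontal levels of concave vertices on the two boundaries appear in compatible order and the horizontal segments they emit pair up correctly.
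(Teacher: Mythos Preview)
Your argument is correct and captures exactly the decomposition the paper has in mind; the paper itself gives no proof at all, declaring the lemma ``clear'' and pointing to Figure~\ref{fSpiralCylinder}. Your horizontal cuts from the concave (interior angle $3\pi/2$) vertices are precisely the ``horizontal segments starting from vertices of the boundary components'' in the statement, and the monotonicity of both boundary staircases is what guarantees the pieces are rectangles. If anything, you have written considerably more than the paper deems necessary; the only place you hedge (whether every cut reaches the opposite boundary, and whether the pieces are simply connected) is handled by the observation that in the universal cover a monotone staircase is strictly monotone in the vertical coordinate, so a horizontal ray from a concave vertex cannot return to the same boundary component and must terminate on a vertical edge of the other one.
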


\subsection{Surface train tracks}\Label{sPolygonalTrainTrack}
Let $F$ be a compact surface with boundary,  such that each boundary component of $F$ is either a smooth loop or a loop with an even number of corner points. 
Then a {\sf (boundary-)marking} of $F$ is an assignment of ``horizontal'' or ``vertical'' to every smooth boundary segment, such that 
 every smooth boundary component is  horizontal and, 
 along every non-smooth boundary component, horizontal edges and vertical edges alternate. 
From the second condition, every boundary component with at least one corner point has an even number of corner points. 

For example,
a marking of a rectangle is an assignment of horizontal edges to one pair of opposite edges and vertical edges to the other pair, and a marking of a $2n$-gon is an assignment of horizontal and vertical edges, such that the horizontal and vertical edges alternate along the boundary. 
Clearly, there are exactly two ways to give a $2n$-gon a marking.
A marking of a flat cylinder is the unique assignment of horizontal components to both boundary components. 

Recall that a {\sf (fat) train track} $T$ is a surface with boundary and corners obtained by gluing marked rectangles $R_i$  along their horizontal edges, in such a way that the identification is given by
 subdividing every horizontal edge into finitely many segments,  pairing up all edge segments, and identifying the paired segments by a diffeomorphism; see for example  \cite[\S 11]{Kapovich-01}.
     
In this paper,  we may allow any marked surfaces as branches. 
\begin{definition}
A {\sf  surface train track} $T$ is a surface having boundary with corners, obtained by gluing marked surfaces $F_i$ in such a way that the identification is given by
(possibly) subdividing each horizontal edge and horizontal boundary circle of $F_i$ into finite segments,  pairing up all segments, and identifying each pair of segments by a diffeomorphism. 

Given a surface train track $T = \cup F_i$, if all branches $F_i$ are cylinders with smooth boundary and polygons, then we call $T$  a {\sf polygonal train track. }
\end{definition}

Suppose that a surface $F$ is decomposed into marked surfaces with disjoint interiors so that the horizontal edges of marked surfaces overlap only with other horizontal edges, and vertical edges overlap with other vertical edges (except at corner points); we call this a {\sf surface train-track decomposition} of $F$. 
Given a train-track decomposition of a surface $F$, the union of the boundaries of its branches is a finite graph on $F$, and we call it {\sf the edge graph}.

Let $F = \cup F_i$ be a train-track decomposition of a surface $F$. 
Clearly the interior of a branch is embedded in $F$, but the boundary of a branch may intersect itself. 
The closure of a branch $F_i$ in $F$ is called the {\sf support} of the branch, and denoted by $\rvert F_i \lvert$, which may not be homotopy equivalent to $F_i$ on $F$.

Next, we consider geometric train-track decompositions of flat surfaces.
Let $E$ be a flat surface, and let  $V$ and $H$ be it vertical and horizontal foliations, respectively. 
Then, when we say that a staircase surface $F$ is on $E$,  we always assume that horizontal edges of $F$ are contained in leaves of $H$  and vertical edges in leaves of $V$. 
Note that a marked rectangle $R$ on $E$ may self-intersect in its horizontal edges, so that it forms a spiral cylinder.
Then
a {\sf staircase train-track decomposition} of a flat surface $E$ is a decomposition of  $E$ into finitely many staircase surfaces on $E$, such that we obtain a surface train-track by gluing those staircase surfaces back only along horizontal edges.
(Note that, in the context of $\CP^1$-structures, the vertical direction is regarded as the stable or stretching direction (see \Cref{Dumas}) and the vertical foliation is carried by this surface train-track.)

More generally,
a {\sf trapezoidal train-track decomposition} of $E$ is a surface train-track decomposition, such that each vertical edge is contained in a vertical leaf and each horizontal edge is a non-vertical line segment disjoint from the singular set of $E$. 

Given a flat surface, we shall construct a canonical staircase train-track decomposition. 
Let $q$ be a holomorphic quadratic differential on a Riemann surface $X$ homeomorphic to $S$. 
Let $E$ be the flat surface given by $q$, which is homeomorphic to $S$.
 As above, let $V, H$ be the vertical and horizontal foliations of $E$. 
Let $E^1$ be the unit-area normalization of $E$, so that $E^1 = \frac{E}{\Area\, E}$. 

Let   $z_1, z_2 \dots z_p$ be the zeros of $q$, which are the singular points of $E$. 
For each $i = 1, \dots, p$, let $\ell_i$ be the singular leaf of $V$ containing $z_i$.
For $r > 0$, let $n_i$ be the closed $r$-neighborhood of $z_i$ in $\ell_i$ with respect to the path metric of $\ell_i$ induced by $E^1$ ({\sf vertical $r$-neighborhood}). 
Let $N_r$ be their union $n_1 \cup \dots \cup n_p$ in $E$, which may not be a disjoint union as a singular leaf may contain multiple singular points.  
If $r > 0$ is sufficiently small, then each (connected) component of $N_r$ is contractible. 
Let $\QD^1(X)$ denote the set of all unit area quadratic differentials on $X$.
Since the set of unit area differentials on $X$ is a sphere, by its compactness, we have the following. 
\begin{lemma}\Label{SmallZeroNbhd}
For every $X$ in $\TT^+ \cup \TT^-$, if $r > 0$ is sufficiently small, then, for all $q \in \QD^1(X)$,  each component of $N_{r}$ is a simplicial tree (i.e. contractible).
\end{lemma}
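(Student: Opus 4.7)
The plan is to combine the Gauss--Bonnet formula for singular flat surfaces with the compactness of $\QD^1(X)$ in order to produce a single $r$ that works for every $q$. A component of $N_r$ fails to be contractible exactly when the $1$-complex $N_r$ contains a cycle, i.e.\ a simple closed loop $\gamma$ in $E$ built from vertical saddle connections, each of $E^1$-length at most $2r$ (so that the two $r$-neighborhoods at its endpoints cover it). My strategy is to exclude two kinds of cycles: those null-homotopic in $S$ (by Gauss--Bonnet) and those essential in $S$ (by a systole bound and an edge-count).

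First I rule out cycles that are null-homotopic in $S$. Such a $\gamma$ bounds an embedded disk $D \subset E$. At a zero $z_i$ of order $k_i \geq 1$, the $k_i + 2$ vertical prongs divide the cone angle $(k_i + 2)\pi$ into sectors of angle exactly $\pi$, so every interior angle $\alpha_i$ of $D$ at a boundary vertex is a positive integer multiple of $\pi$, and every interior cone angle is $(k_j + 2)\pi \geq 3\pi$. Gauss--Bonnet for $D$ then reads
\[
\sum_{i} (\pi - \alpha_i) + \sum_{j} \bigl(2\pi - (k_j + 2)\pi\bigr) = 2\pi \chi(D) = 2\pi,
\]
but every summand on the left is non-positive, contradicting the positive right side. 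Hence every cycle $\gamma$ in $N_r$ represents an essential loop in $S$, and so $\length_{E^1_q}(\gamma) \geq s(E^1_q)$, where $s(\cdot)$ denotes the systole of the unit-area singular flat surface.

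Next I observe that $q \mapsto s(E^1_q)$ admits a positive uniform lower bound. The singular Euclidean metric $E^1_q$ depends continuously on $q \in \QD^1(X)$, and the length of any free homotopy class is continuous in the metric (and on a fixed $S$ only finitely many classes have length bounded by any given constant), so $s$ is continuous on $\QD^1(X)$; since $\QD^1(X)$ is a compact sphere, $s_X := \min_{q} s(E^1_q) > 0$.

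Finally I bound the number of edges of $N_r$. At each zero $z_i$ there are $k_i + 2$ vertical prongs, and along each prong at most one saddle connection of length $\leq 2r$ is incident to $z_i$. Counting incidences (each edge contributing two) gives at most $\tfrac{1}{2}\sum_i (k_i + 2) = 2g - 2 + p \leq 6g - 6$ zero-to-zero edges in $N_r$. Hence the total $E^1$-length of any cycle is at most $(12g - 12)r$, which is strictly less than $s_X$ as soon as $r < s_X/(12g - 12)$. Combined with the Gauss--Bonnet exclusion of null-homotopic cycles, this forces $N_r$ to be acyclic for every $q \in \QD^1(X)$, so each of its components is a simplicial tree. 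The only technical point is the continuity (or at least lower semi-continuity) of the systole on $\QD^1(X)$, which rests on the continuous dependence of the singular flat metric $E^1_q$ on $q$.
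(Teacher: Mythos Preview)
Your argument is correct. The paper, by contrast, gives essentially no proof: it simply remarks that for a \emph{fixed} $q$ each component of $N_r$ is contractible once $r$ is small, and then invokes the compactness of the sphere $\QD^1(X)$ to make $r$ uniform. Your proof unpacks what that compactness is really buying. You separate the two ways a cycle could occur --- null-homotopic versus essential --- and eliminate the first by Gauss--Bonnet (valid for any $r$, not just small ones) and the second by combining the uniform systole bound $s_X>0$ on the compact set $\QD^1(X)$ with the combinatorial edge count. This yields an explicit threshold $r < s_X/(12g-12)$, whereas the paper's bare compactness argument leaves the uniformity step implicit. (Incidentally, since a simple cycle visits each zero at most once, you could also bound its length by $2r\cdot p \le (8g-8)r$ directly, without the prong count; either estimate suffices.)
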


Fix $X$ in  $\TT^+ \cup \TT^-$, and
let $r > 0$ be the small value given by  Lemma \ref{SmallZeroNbhd}. 
Let $p$ be an endpoint of a component of $N_r$. 
Then $p$ is contained in horizontal geodesic segments, in $E$, of finite length, such that their interiors intersect $N$ only in $p$. 
Let $h_p$ be a maximal horizontal geodesic segment or a horizontal geodesic loop, such that the interior of $h_p$  intersects $N_r$ only in $p$. 
If $h_p$ is a geodesic segment, then the endpoints of $h_p$ are also on $N_r$. 
If $h_p$ is a geodesic loop,  $h_p$ intersects $N_r$ only in $p$. 

Consider the union $\cup_p h_p$ over all endpoints $p$ of $N_r$. 
Then $N_r \cup (\cup_p h_p)$  decomposes $E$ into staircase rectangles and, possibly, flat cylinders.
 Thus we obtain a staircase train track decomposition whose branches are all rectangles.

Next, we construct a polygonal train-track structure of $E$ so that the singular points are contained in the interior of the branches. 
Let $b_i \in \Z_{\geq 3}$ be the {\it balance} of the singular leaf $\ell_i$ at the zero $z_i$, i.e. the number of the segments in $\ell_i$ meeting at the singular point $z_i$. 

We constructed the vertical $r$-neighborhood $n_i$ of the zero $z_i$.
Let $P_i^r$ be the set of points on $E$ whose horizontal distance from $n_i$ is at most  $\sqrt[4]{r}$ ({\sf horizontal neighborhood}).
Then, as $E$ is fixed, if $r > 0$ is sufficiently small, then $P_i^r$ is a convex staircase $2 b_i$-gon whose interior contains $z_i$.  
We say that $P_i^r$ is the {\sf $ (r, \sqrt[4]{r})$-neighborhood} of $z_i$.
Such $(r, \sqrt[4]{r})$-neighborhoods will be used in the proof of Lemma \ref{HomotopyToBeCarried}.

When we vary $q \in \QD^1(X)$, fixing $r$, the convex polygons for different zeros may intersect. 
Nonetheless, by compactness, we have the following.
\begin{lemma}\Label{PolygonalZeroNbhd}
Let $X \in \TT^+ \cup \TT^-$. 
If $r > 0$ is sufficiently small, then,
for every $q \in \QD^1(X)$, 
 each connected component of $P_1^r \cup P_2^r \cup \dots \cup P_{n_q}^r$ is a staircase polygon. 
\end{lemma}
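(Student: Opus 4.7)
The plan is to combine the compactness of $\QD^{1}(X) \subset \QD(X) \cong \C^{3g-3}$ with a local analysis of horizontal thickenings of vertical singular trees, building on \Cref{SmallZeroNbhd}. I would proceed in two stages, mirroring the two-step structure of that previous lemma.

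First, I would show that for each individual zero $z_i$ of $q \in \QD^{1}(X)$ and for $r$ sufficiently small, $P_i^r$ is a convex staircase $2(m_i+2)$-gon containing $z_i$ in its interior, where $m_i$ is the order of the zero. This follows from the standard normal form $\phi = w^{m_i}\, dw^{2}$ near $z_i$, which presents $n_i$ as a star of $m_i+2$ vertical prongs of length $r$; the horizontal $\sqrt[4]{r}$-thickening then produces the claimed polygon, with horizontal sides coming from prong endpoints and vertical sides parallel to the prongs. A single threshold $r_1 > 0$ works uniformly in $q \in \QD^1(X)$ by compactness of $\QD^{1}(X)$, by exactly the same compactness argument that establishes \Cref{SmallZeroNbhd}.

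Second, I would analyze a connected component $C$ of $\bigcup_{i} P_{i}^{r}$. Its boundary is automatically a union of horizontal and vertical segments inherited from the $\partial P_i^r$'s, so $C$ is a staircase flat surface; what must be proven is that $C$ is a topological disk. Since each $P_i^r$ deformation retracts along horizontal leaves onto $n_i$, the component $C$ retracts onto the sub-forest $F_C = \bigcup_{z_i \in C} n_i$, possibly augmented by horizontal ``bridges'' corresponding to overlaps between horizontal neighborhoods of distinct components of $N_r$. By \Cref{SmallZeroNbhd} each component of $N_r$ is already a tree, so $C$ is a disk provided that $F_C$ lies in a single component of $N_r$ (equivalently, provided that no such bridges are needed).

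The main obstacle is therefore to show that, uniformly in $q$, distinct components of $N_r$ do not have overlapping horizontal $\sqrt[4]{r}$-neighborhoods, once $r$ is small enough. Concretely I would establish a uniform $\epsilon_0 > 0$, independent of $q \in \QD^1(X)$, such that any two distinct components of $N_r$ have horizontal separation at least $\epsilon_0$, for all small enough $r$. At a fixed $q$, once $r$ is small enough that each component of $N_r$ is a single-zero star, this horizontal separation is a strictly positive quantity, and it varies semicontinuously in $q$ across the strata of $\QD^{1}(X)$ determined by the combinatorial type of the configuration of zeros; the only way it could degenerate to $0$ along a sequence $q_n \to q_\infty$ is that two zeros coalesce in the limit, in which case they are forced into the \emph{same} component of $N_r$ for $r$ small in a neighborhood of $q_\infty$, contradicting their being in distinct components. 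Compactness of $\QD^{1}(X)$ then extracts the uniform bound $\epsilon_0$, and choosing $r$ with $2\sqrt[4]{r} < \epsilon_0$ forces each connected component of $\bigcup_i P_i^r$ to be the horizontal neighborhood of a single tree-component of $N_r$, hence a staircase polygon by the retraction argument.
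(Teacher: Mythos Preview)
Your overall strategy---invoking compactness of $\QD^1(X)$---is exactly what the paper uses; indeed the paper offers only the phrase ``by compactness'' before the lemma and gives no further argument. Your attempt to make this precise is therefore far more detailed than the paper, and the retraction idea is natural. However, the key intermediate claim fails. You assert that distinct components of $N_r$ have horizontal separation bounded below by a uniform $\epsilon_0 > 0$, arguing that if the separation degenerates along $q_n \to q_\infty$ then two zeros coalesce and are ``forced into the same component of $N_r$.'' But two zeros can collide along a \emph{horizontal} saddle connection. Near any $q_0 \in \QD^1(X)$ with a double zero one can find $q_\delta \in \QD^1(X)$ with two simple zeros $z_1, z_2$ joined by a horizontal saddle connection of length $\delta \to 0$; since $n_1, n_2$ lie in vertical singular leaves and are not joined by any vertical path, they sit in \emph{different} components of $N_r(q_\delta)$ for every $r$, yet their horizontal distance is at most $\delta$. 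So the horizontal bridge you hoped to exclude does occur, and your reduction to \Cref{SmallZeroNbhd} breaks down.

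The lemma should nevertheless hold: when $P_1^r$ and $P_2^r$ overlap via such a bridge, one expects $P_1^r \cap P_2^r$ to be connected and hence the union simply connected, but establishing this---and handling clusters of several zeros with saddle connections in mixed directions---requires an argument you have not supplied. A cleaner route is to argue locally: for each $q_0 \in \QD^1(X)$, use the local model near each cluster of zeros of $q_0$ to find a neighborhood $U \ni q_0$ and $r_0 > 0$ on which the conclusion holds for all $(q, r) \in U \times (0, r_0]$, then extract a finite subcover. The paper does not carry this out either, so you are not behind it in rigor, but your specific mechanism is incorrect.
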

Then, let $r > 0$ and $P^r (= P^r_q)$ be  $P_1^r \cup P_2^r \cup \dots \cup P_{p_q}^r$ as in Lemma \ref{PolygonalZeroNbhd}.
Then, similarly, for each horizontal edge $h$ of $P^r$, let $\hat{h}$ be a maximal horizontal geodesic segment or a horizontal geodesic loop on $E$, such that the interior point of $\hat{h}$ intersects $P^r$ exactly in $h$. 
Then, either 
\begin{itemize} 
\item  $\hat{h}$ is a horizontal geodesic segment whose endpoints are on the boundary of $P^r$, or
\item  $\hat{h}$ is a horizontal geodesic loop intersecting $P^r$ exactly in $h$.
\end{itemize}

Consider the union $\cup_h \hat{h}$ over all horizontal edges $h$ of $P^r$.
Then the union decomposes $E \minus P^r$ into finitely many staircase rectangles and, possibly,  flat cylinders. 
Thus we have a staircase train-track structure, whose branches are polygons and flat cylinders.
Note that the singular points are all contained in the interiors of polygonal branches.

For later use, we modify the train track to eliminate  {\it thin} rectangular branches, i.e. they have short horizontal edges.
Note that each vertical edge of a rectangle is contained in a vertical edge of a polygonal branch. 
Thus, if a rectangular branch $R$ has horizontal length less than $\sqrt[4]{r}$, then naturally glue $R$ with both adjacent polygonal branches along the vertical edges of $R$. 
After applying such gluing for all thin rectangles, we obtain a  train-track structure $t^r$ of $E$. 

\begin{lemma}\Label{PolygonalTraintrackX}
For every $X \in \TT^+ \cup \TT^-$, if $r > 0$ is sufficiently small, then, for every  $q \in \QD^1(X)$,  the branches of the train-track structure $t^r$ on $E$ are staircase polygons and staircase flat cylinder,  and every rectangular branch of $t^r$ has width at least $\sqrt[4]{r}$. 
\end{lemma}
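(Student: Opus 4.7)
My plan is to verify each clause of the statement in sequence by tracking the construction, using compactness of $\QD^1(X)$ to produce a single threshold $r_0 > 0$ that works uniformly over all $q \in \QD^1(X)$.

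First, I invoke Lemmas \ref{SmallZeroNbhd} and \ref{PolygonalZeroNbhd}, together with compactness of $\QD^1(X)$, to fix $r_0 > 0$ so that for every $r \in (0, r_0]$ and every $q \in \QD^1(X)$ the vertical $r$-neighborhoods $N_r$ have only contractible (tree) components and each component of the horizontal $\sqrt[4]{r}$-thickening $P^r$ is a staircase polygon whose interior contains the relevant zeros of $q$. The compactness is essential to convert the pointwise statements of those two lemmas into a uniform one over $\QD^1(X)$.

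Second, I verify that, for such $r$, the maximal horizontal extensions $\hat{h}$ attached to the horizontal edges of $\partial P^r$ decompose $E \setminus P^r$ into staircase rectangles (when $\hat{h}$ is a segment whose endpoints lie on $\partial P^r$) and non-singular flat Euclidean cylinders (when $\hat{h}$ is a closed horizontal loop disjoint from $P^r$). This uses that both foliations are non-singular on $E \setminus P^r$, since $P^r$ contains an open neighborhood of every zero of $q$, and that the horizontal leaves of a non-singular foliation on a compact surface either close up or are transverse arcs that terminate on the removed part.

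Third, I handle the merging step: each rectangular branch $R$ with horizontal width $< \sqrt[4]{r}$ has its two vertical edges contained in vertical edges of polygonal branches of $P^r$, so gluing $R$ to the two adjacent polygonal branches along these vertical edges replaces them by a single region whose boundary components still alternate between horizontal and vertical segments (since $R$ contributes its two horizontal sides and we delete its two vertical sides). If the two adjacent polygons are distinct, the merged region is again a staircase polygon; if both vertical edges of $R$ abut the same polygon, the merged region is a staircase cylinder, which is also permitted by the conclusion. After all thin rectangles have been merged, every remaining rectangular branch has horizontal width at least $\sqrt[4]{r}$ by the merging rule.

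The main obstacle I anticipate is the third step, specifically verifying that the iterated merging terminates with staircase polygons and cylinders rather than higher-genus surfaces with corners, and that the rule remains unambiguous when several thin rectangles simultaneously meet a single polygonal branch. I plan to control this by observing that each individual merge strictly decreases the total number of branches (so the process terminates), and by shrinking $r_0$ further via one more compactness argument on $\QD^1(X)$ to ensure the number of thin rectangles adjacent to any given polygonal branch is uniformly bounded, precluding pathological nesting configurations.
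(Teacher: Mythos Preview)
Your approach is essentially the paper's: the lemma is stated in the paper immediately after the construction with no separate proof, so the intended argument is exactly to read off the conclusion from the construction of $P^r$, the horizontal extensions $\hat h$, and the thin-rectangle gluing, together with the compactness of $\QD^1(X)$ already invoked in Lemmas~\ref{SmallZeroNbhd} and~\ref{PolygonalZeroNbhd}. Your three steps track this construction faithfully.

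One remark on your third step. You worry that iterated merging of thin rectangles could in principle produce pieces of negative Euler characteristic, and propose a further compactness argument to rule this out. The paper does not address this at all; it simply asserts the outcome. Your instinct to shrink $r$ further is the right one: as $r\to 0$ the components of $P^r$ Hausdorff-converge to the finite zero set, so by compactness of $\QD^1(X)$ there is $r_0>0$ such that for $r<r_0$ every component of $P^r$ has diameter much smaller than the systole of the flat metric and the minimal horizontal saddle-connection length; this forces any thin rectangle (horizontal width $<\sqrt[4]{r}$) to join distinct vertical edges of distinct polygonal components, so each merge is a connected sum of two disks along an arc and the result stays a disk. This is a genuine detail you are supplying that the paper leaves implicit.
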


\begin{definition}
Let $E$ be a flat surface. 
A train-track structure $T_1$ is a {\sf refinement} of another train-track structure $T_2$ of $E$, if the $T_1$ is a subdivision of $T_2$ (which includes the case that $T_1 = T_2$).

Let $E_i$ be a sequence of flat surfaces converging to a flat surface $E$.  
Let $T$ be a  train-track structure on a flat surface $E$, and let $T_i$ be a sequence of train-track structures on a flat surface $E_i$ for each $i$.   
Then $T_i$ {\sf converges} to $T$ as $i \to \infi$ if the edge graph of  $T_{k_i}$ converging to the edge graph of $T_\infi$ on $E$ in the Hausdorff topology.
Then $T_i$ {\sf semi-converges} to $T$ as $i \to \infi$ if 
every subsequence $T_{k_i}$ of $T_i$ subconverges to a train-track structure $T'$ on $E$,  such that $T$ is a refinement of  $T'$.
\end{definition}

\begin{lemma}
$t^r_q$ is {\sf semi-continuous} in the Riemann surface $X$ and the quadratic differential $q$ on $X$, and the (small) train-track parameter $r > 0$ given by Lemma \ref{PolygonalTraintrackX}.   
That is, if $r_i \to r$ and $q_i \to q$, then $t^{r_i}_{q_i}$ semi-converges to $t^r_q$ as $i \to \infty$.
\end{lemma}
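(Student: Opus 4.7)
The plan is to verify semi-continuity by following the construction of $t^r_q$ step by step and showing that each step depends continuously on the data $(q,r)$, except for a finite collection of degenerate configurations at which passage to a subsequence produces a genuine refinement of $t^r_q$ rather than $t^r_q$ itself. Throughout I work with the normalized flat metric $E^1$, so that both the vertical neighborhoods $n_i$ and the horizontal neighborhoods $P_i^{r}$ are metric constructions depending continuously on the flat-surface data away from the singular set.

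First I would pass to a subsequence so that the combinatorics of the zeros stabilizes. Since $q_i \to q$ in $\QD^1(X)$, the zero divisor of $q_i$ converges to that of $q$ counted with multiplicity (Hurwitz's theorem in local holomorphic coordinates). After extraction, one may label the zeros of $q_i$ as $z_{j,k}^i$ so that, for each fixed $j$, $z_{j,k}^i$ converges to a zero $z_j$ of $q$, with possibly several zeros of $q_i$ coalescing into a single $z_j$. The singular vertical leaves $\ell_{j,k}^i$ depend continuously on $q_i$ off the singular set, and $E^1$-lengths along them are continuous. Hence the vertical neighborhoods $n_{j,k}(q_i, r_i)$ Hausdorff-converge into $n_j(q, r)$, and the convex staircase polygons $P_{j,k}^{r_i}(q_i)$ Hausdorff-converge into $P_j^r(q)$. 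Coalescence of zeros may merge several pre-limit polygons into a single limit polygon, but in every case the boundary graph of the pre-limit $P^{r_i}_{q_i}$ lies in an arbitrarily small neighborhood of $\partial P^r_q$ and subconverges into it.

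Second, the maximal horizontal extensions $\hat h$ from the horizontal edges of $P^r$ depend continuously on $(q,r)$ except when the terminating endpoint of $\hat h$ is a corner of $P^r$; this follows from continuity of the horizontal foliation on $E \setminus Z$. After a further subsequence so that every termination event is either strictly transversal or stably a corner, the collection of horizontal extensions in $t^{r_i}_{q_i}$ Hausdorff-converges to a family of horizontal arcs contained in the edge graph of $t^r_q$. Finally, the thin-rectangle merging rule is continuous for rectangles whose horizontal width is not exactly $\sqrt[4]{r}$; after a subsequence we may assume that the width of every rectangular branch is either stably less than $\sqrt[4]{r_i}$ or stably at least $\sqrt[4]{r_i}$, so the set of merged rectangles stabilizes. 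The subsequential limit of the edge graph of $t^{r_i}_{q_i}$ is therefore a finite graph on $E$ that (i) is contained in the edge graph of $t^r_q$, because every limit arc comes from the above steps, and (ii) contains the edge graph of $t^r_q$, because the edges of $t^r_q$ arise from horizontal extensions and polygon boundaries that persist under small perturbation. Thus the limit is a polygonal train-track structure refining $t^r_q$, which is precisely semi-convergence.

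The main obstacle is the bookkeeping of degenerate configurations: coalescing zeros, extensions $\hat h$ terminating exactly at corners of $P^r$, and rectangles of borderline width $\sqrt[4]{r}$. Each is handled by a subsequence argument exploiting that the combinatorial complexity of $t^r_q$ is bounded in terms of the genus and the number of zeros, so only finitely many combinatorial types appear along $(q_i, r_i)$; the failure set for strict continuity is therefore finite and compactness in $\QD^1(X)$ together with Hausdorff closedness of the space of finite graphs on $E$ guarantees the extraction of the required limits. It is exactly these finitely many degenerate cases that prevent full continuity and force the weaker semi-continuity in the statement.
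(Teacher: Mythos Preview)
Your approach is the same as the paper's, which gives only a three-line sketch: the flat surface $E$ varies continuously in $q$, hence $P^r$ varies continuously in the Hausdorff topology, and semi-continuity then follows from the construction. You have filled in the details of that sketch, correctly identifying the degenerate configurations (coalescing zeros, horizontal extensions terminating at corners, rectangles of borderline width) as the places where strict continuity fails and only semi-continuity survives.

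One slip: your claim (i), that the subsequential limit of the edge graph is \emph{contained in} the edge graph of $t^r_q$, is false in general and also unnecessary. In the borderline cases you yourself describe --- e.g.\ when two $P^{r_i}$-polygons are disjoint along the sequence but their limits overlap in $P^r_q$, or when a rectangle is merged in the sequence but not in the limit --- the limit edge graph carries extra arcs not present in $t^r_q$. That is precisely what makes the limit a \emph{proper} refinement rather than $t^r_q$ itself. Only your claim (ii), that the limit edge graph \emph{contains} the edge graph of $t^r_q$, is needed for the conclusion, and your argument for (ii) is sound. Drop (i) and the proof goes through.
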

\begin{proof}
Clearly, the flat surface $E$ changes continuously in $q$. 
Accordingly $P^r$ changes continuously in the Hausdorff topology in $q$ and $r$. 
Then the semi-continuity easily follows from the construction of $t^r_q$.
(Note that $t^r_q$ is{\it not} necessarily continuous since a branch of $t^{r_i}_{q_i}$ may, in the limit,  be decomposed some branches including a rectangular branch of horizontal  length $\sqrt[4]{r}$.)
\end{proof}

\subsection{Straightening foliations on flat surfaces}\Label{sStrighteningFoliations}
Let $E$ be the flat surface homeomorphic to $S$, and let $V$ be its vertical foliation.
Let $V'$ be another measured foliation on $S$. 
 
For each smooth leaf $\ell$ of $V'$, consider its geodesic representative $[\ell]$ in $E$.
If $\ell$ is non-periodic,  the geodesic representative is unique.
Suppose that $\ell$ is periodic.
Then, if $[\ell]$ is not unique, then the set of its geodesic representatives foliates a flat cylinder in $E$. 

Consider all geodesic representatives,  in $E$, of smooth leaves $\ell$ of $V'$, and
 let $[V']$ be the set of such geodesic representatives and the limits of those geodesics. 
 We still call the geodesics of $[V']$ {\sf leaves}.  
 We can regard $[V']$ as a map from a lamination $[V']$ on $S$ to $E$ which is a leaf-wise embedding. 
\section{Compatible surface train track decompositions }\Label{sCompatibleTraintrackDecompositions}

Let $X, Y \in \TT \sqcup \TT^\ast$ with $X \neq Y$.  
Clearly, for each $\rho \in \rchi_X \cap \rchi_Y$,  there are unique $\CP^1$-structures $C_X$ and $C_Y$ on $X$ and $Y$, respectively, with holonomy $\rho$. 
Set  $C_X = (X, q_X)$ and  $C_Y = (Y, q_Y)$, in Schwarzian coordinates, where $q_X \in \QD(X)$ and $q_Y \in \QD(Y)$. 
Then, define $\eta\col \rchi_X \cap \rchi_Y \to \PML \times \PML$ to  be the map taking $ \rho \in \rchi_X \cap \rchi_Y$ to the ordered pair of the projectivized horizontal foliations of  $q_{X, \rho}$ and $q_{Y, \rho}$. 
Let $\Lambda_\infi \sub \PML \times \PML$ be the set of the accumulation points of $\eta$ towards the infinity of $\rchi$ ---
namely, $(H_X, H_Y) \in \Lambda_\infi$ if and only if there is a sequence $\rho_i$ in $\rchi_X \cap \rchi_Y$ which leaves every compact set in $\rchi$ such that $\eta(\rho_i)$ converges to $(H_X, H_Y)$ as $i \to \infty$. 

Let $\Delta \sub \PML \times \PML$ be the diagonal set. 
Then, by Theorem \ref{HorizontalFoliationsCoinside}, $\Lambda_\infi$ is contained in $\Delta$. 
Given a Riemann surface $X$  and a projective measured foliation $H$, by Hubbard and Masur \cite{HubbardMasur79}, there is a unique holomorphic quadratic differential on $X$ such that its horizontal foliation coincides with the measured foliation.  
Let $E_{X, H} = E_{X, H}^1$ denote the unit-area flat surface induced by the differential. 
Given $H_X \in \PML$, let $V_X$ be the vertical measured foliation realized by $(X, H_X)$, and let $V_Y$ be the vertical foliation of $(Y, H_Y)$.

 Noting that a smooth leaf of a (singular) foliation may be contained in a singular leaf of another foliation, 
we let 
$\Delta^\ast$ be 
the set of all $(H_X, H_Y) \in \PML \times \PML$ which satisfies either
\begin{itemize}
\item there is a leaf of $H_X$  contained in a leaf of $V_Y$;
\item there is a leaf of $V_Y$ contained in a  leaf of $H_X$;
\item there is a leaf of $H_Y$  contained in a leaf of $V_X$; or
\item there is a leaf of $V_X$ contained in a leaf of $H_X$.
\end{itemize}
  Then $\Delta^\ast$ is a closed measure-zero subset of $\PML \times \PML$, and disjoint from the diagonal $\Delta$.
 (For the proof of our theorems, we will only consider a sufficiently small neighborhood of $\Delta$, which is disjoint from $\Delta^\ast$.)

\subsection{Straightening maps}\Label{sStraighteningMap}
Fix a transversal pair $(H_X, H_Y) \in (\PML \times \PML) \minus \Delta^\ast$.
Let $p$ be a smooth point in $E_{Y, H_Y}$, and let $\ti{p}$ be a lift of $p$ to the universal cover $\ti{E}_{Y, H_Y}$.

Let $v$ be the leaf of the vertical foliation $\ti{V}_Y$ on $\ti{E}_{Y, H_Y}$ which contains $\ti{p}$, and let $h$ be the leaf of the horizontal foliation $\ti{H}_Y$ on the universal cover which contains $\ti{p}$. 
Then, let $[v]_X$ denote the geodesic representative of $v$ in $\ti{E}_{X, H_X}$, and let $[h]_X$ denote the geodesic representative of $h$ in $\ti{E}_{X, H_X}$.
Since $\ti{E}_{X, H_X}$ is a non-positively curved space, $[v]_X \cap [h]_X$  is a point or a segment of a finite length in $\ti{E}_{X, H_X}$;  let $\st(p)$ be the subset of $E_{X, H_X}$ obtained by projecting the point or a finite segment.  
\subsection{Non-transversal graphs}

Let $E$ be a flat surface with horizontal foliation $H$. 
Let $\ell\col \R \to E$ be a (non-constant) geodesic on $E$ parametrized by arc length. 
A {\sf horizontal segment of $\ell$} is a maximal segment of $\ell$ which is tangent to the horizontal foliation $H$. 
Note that a horizontal segment is, in general, only immersed in $E$. 

Let  $X, Y \in \TT \sqcup \TT^\ast$ with $X \neq Y$, and let  $(H_X, H_Y) \in (\PML \times \PML) \minus \Delta^\ast$. 
For a smooth leaf $\ell_Y$ of $V_Y$,  let $[\ell_Y]_X$ denote the geodesic representative of $\ell_Y$ on the flat surface $E_{X, H_X}$.
The geodesic $[\ell_Y]_X$ is not necessarily embedding and should be regarded as an immersion $\R \to E_{X, H_X}$.
\begin{lemma}\Label{FinitenessOfNon-transversalSegment}
Every horizontal segment $v$ of  $[\ell_Y]_X$ is a segment (i.e. finite length) connecting singular points of $E$.   
 \end{lemma}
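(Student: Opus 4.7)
The plan is to argue by contradiction: suppose some horizontal segment $v$ of $[\ell_Y]_X$ either extends to infinite length, or is finite but has an endpoint that is not a singular point of $E_{X, H_X}$.

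First I would invoke the local geometry of geodesics on flat surfaces: between any two consecutive singularities, a geodesic on $E_{X, H_X}$ is a straight Euclidean segment at a fixed angle, so its direction cannot change at a smooth point. Consequently a maximal horizontal subsegment of $[\ell_Y]_X$ can only terminate at a singular point of $E_{X, H_X}$, or else it must run out to infinity along a leaf of $H_X$ (noting that the smooth leaf $\ell_Y$ is either bi-infinite or closed, so $[\ell_Y]_X$ is correspondingly complete). This reduces the lemma to ruling out infinite horizontal subsegments.

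Next I would treat the infinite case by lifting to universal covers. If $v$ is infinite in some direction, then $v$ is a horizontal ray contained in a leaf $m$ of $H_X$. Taking a lift $\ti v$ gives a ray inside $[\ti \ell_Y]_X$ and inside a lift $\ti m$ of $m$. Since $[\ti \ell_Y]_X$ is the geodesic representative of some lift $\ti \ell_Y$ of $\ell_Y$, it lies within bounded Hausdorff distance of $\ti \ell_Y$; combined with $\ti v \sub \ti m$, this forces $\ti \ell_Y$ and $\ti m$ to be asymptotic, sharing an end in a suitable compactification of $\ti S$.

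The final step is to convert this asymptotic coincidence into the topological containment condition defining $\Delta^\ast$. If $m$ is closed in $E_{X, H_X}$, then $\ti m$ is stabilized by a cyclic subgroup of $\pi_1(S)$ fixing the shared end; $\ell_Y$ then spirals into (or equals) $m$ on $S$, producing a leaf of $V_Y$ contained in a leaf of $H_X$. If $m$ is non-closed, a parallel argument using the structure of the minimal component of $H_X$ carrying $m$ yields the same forbidden containment. Either way $(H_X, H_Y) \in \Delta^\ast$, contradicting the standing hypothesis. The main obstacle I expect is making this ``asymptotic implies leafwise-contained'' passage fully rigorous: it will require analysing the ends of leaves via the Gromov/Busemann compactification of $\ti E_{X, H_X}$, and separately treating the periodic strips and the recurrent minimal subsurfaces of the horizontal foliation $H_X$.
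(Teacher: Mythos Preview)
Your strategy matches the paper's: derive a contradiction with $(H_X,H_Y)\notin\Delta^\ast$. The paper's proof, however, is only two sentences: it asserts directly that an infinite horizontal segment forces $\ell_Y$ to lie in a leaf of $H_X$ (the paper writes $H_Y$, almost certainly a slip for $H_X$), and then invokes the $\Delta^\ast$ hypothesis. It does not treat the singular-endpoint claim separately (this is immediate from maximality plus local flat-surface geometry, exactly as you say) and it does not pass to universal covers or discuss asymptotics at all.

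One correction to your Step~3: you write that when $m$ is closed, ``$\ell_Y$ then spirals into (or equals) $m$ on $S$, producing a leaf of $V_Y$ contained in a leaf of $H_X$.'' Spiraling does not give containment, so this inference fails as stated. The repair is to recall that a smooth leaf of a holomorphic-quadratic-differential foliation is either closed or recurrent in a minimal component; it never spirals onto a closed curve. Hence if a lift $\ti\ell_Y$ shared an end with the axis of some $\gamma\in\pi_1(S)$, then $\ell_Y$ would itself have to be closed, and $[\ell_Y]_X$ would be a closed geodesic with no infinite segment --- already a contradiction, without invoking $\Delta^\ast$. The case you flag as the main obstacle (both $\ell_Y$ and $m$ non-closed, so that one must pass from a shared end at infinity to genuine leafwise containment) is indeed the delicate one, and the paper's one-line assertion does not address it either.
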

\begin{proof}
If $h$ has infinite length, then $\ell_Y$ must be contained in a leaf of $H_Y$.
This contradicts $(H_X, H_Y) \in (\PML \times \PML) \minus \Delta^\ast$.  
\end{proof}
Let $[V_Y]_X$  denote the set of all geodesic representatives of smooth leaves of $V_Y$ on $E_{X, H}$. 
 Let $G_Y \sub E_{X, H_X}$ be the union of (the images of) all horizontal segments of   $[V_Y]_X$.
 Then it follows that $G_Y$ is a finite graph, such that
 \begin{itemize}
\item every connected component of $G_Y$ is contained in a horizontal leaf of $H_X$, and
\item every vertex of $G_Y$ is a singular point of $E_{X, H}$. 
\end{itemize} 
 
\begin{proposition}\Label{DependentSegmentLengthBound}
For all distinct $X, Y \in \TT \sqcup \TT^\ast$ and all $(H_X, H_Y) \in \PML \times \PML \minus \Delta^\ast$, there is $B > 0$, such that,  for all leaves $\ell_Y$ of $V_Y$, every horizontal segment of the geodesic representative $[\ell_Y]_X$ is bounded  by $B$ from above. 
 \end{proposition}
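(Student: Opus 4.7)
My plan is a compactness and limit argument by contradiction. First, observe that $G_Y$ is a finite graph: every horizontal saddle connection of $E_{X, H_X}$ begins at one of the finitely many singular points in one of the finitely many horizontal separatrix directions, so the set of horizontal saddle connections on $E_{X, H_X}$ is finite. By \Cref{FinitenessOfNon-transversalSegment}, each horizontal segment of $[\ell_Y]_X$ is a finite concatenation of horizontal saddle connections in $G_Y$. Thus the desired bound fails only if, for some sequence of leaves $\ell_n$ of $V_Y$, horizontal segments of $[\ell_n]_X$ traverse the saddle connections of $G_Y$ with unbounded multiplicity.

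Suppose no uniform $B$ exists; I would pick leaves $\ell_n$ of $V_Y$ and horizontal segments $s_n$ of $[\ell_n]_X$ with $\length(s_n) \to \infty$. Let $p_n$ be the midpoint of $s_n$ and $\ti p_n$ a lift to $\ti E_{X, H_X}$. Using the $\pi_1(S)$-action, I arrange that $\ti p_n$ lies in a fixed compact fundamental domain, and on a subsequence $\ti p_n \to \ti p_\infty$. Letting $\ti \ell_n$ be the lift of $\ell_n$ through $\ti p_n$, the lifted horizontal segment $\ti s_n \subset [\ti \ell_n]_X$ extends at least $\length(s_n)/2$ in each direction from $\ti p_n$, so both halves leave every compact neighborhood of $\ti p_\infty$. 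On a further subsequence, the geodesics $[\ti \ell_n]_X$ converge on compact sets to a bi-infinite geodesic $\gamma$ of $\ti E_{X, H_X}$ through $\ti p_\infty$.

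Because $\ti s_n$ are horizontal subsegments growing in both directions, $\gamma$ contains an infinite horizontal ray on each side of $\ti p_\infty$. Since a geodesic of a flat structure is locally straight, $\gamma$ is forced to coincide entirely with the horizontal leaf of $\ti H_X$ through $\ti p_\infty$. On the $V_Y$ side, the leaves $\ti \ell_n$ sub-converge in the Hausdorff topology to a leaf $\ti \ell_\infty$ of $\ti V_Y$, because $V_Y$ is a closed foliation on the compact surface $S$; its geodesic representative on $\ti E_{X, H_X}$ is $\gamma$. Hence $\ti \ell_\infty$ is contained in a leaf of $\ti H_X$, so on $S$ a leaf of $V_Y$ is contained in a leaf of $H_X$, contradicting $(H_X, H_Y) \notin \Delta^\ast$.

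The main obstacle will be making the convergence claims precise: that $[\ti \ell_n]_X$ converges on compacts to the geodesic representative of the limit leaf $\ti \ell_\infty$, and that $\ti \ell_\infty$ is genuinely a leaf of $\ti V_Y$ (possibly a singular leaf) rather than merely a member of its closure. Both should follow from the stability of geodesic straightening under Hausdorff limits of measured foliations, together with the closedness of the leaf set of a measured foliation on a compact surface, but they warrant care.
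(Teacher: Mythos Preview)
Your approach is correct (the concerns you flag can indeed be resolved with standard CAT(0)/Gromov-hyperbolic arguments), but it is genuinely different from the paper's proof.

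The paper argues entirely on the closed surface and never passes to the universal cover or takes a geometric limit of leaves. Its proof runs as follows: by \Cref{FinitenessOfNon-transversalSegment} every horizontal segment has endpoints at singular points, so each $s_i$ lies in the finite non-transversal graph $G_Y$. If the lengths were unbounded, one would obtain a sequence of horizontal segments $s_i$ with $\length s_i\to\infty$; since the $s_i$ arise as pieces of leaves of the foliation $V_Y$, the disjoint union $\sqcup_i s_i$ can be realized, after a small homotopy, as a family of pairwise disjoint properly embedded arcs in a small regular neighborhood $N$ of $G_Y$ with endpoints on $\partial N$. The paper then observes that a small neighborhood of a \emph{finite} graph cannot accommodate such a family of disjoint arcs with lengths tending to infinity, giving the contradiction.

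In contrast, your argument is a compactness upgrade of \Cref{FinitenessOfNon-transversalSegment} itself: you extract a limiting bi-infinite horizontal geodesic $\gamma$ in $\ti E_{X,H_X}$ and identify it with $[\ti\ell_\infty]_X$ for a limiting leaf $\ti\ell_\infty$ of $\ti V_Y$, directly producing a leaf of $V_Y$ contained in a leaf of $H_X$ and contradicting $(H_X,H_Y)\notin\Delta^\ast$. To close the gaps you flagged: since $\pi_1(S)$ is word-hyperbolic, both $\ti E_{X,H_X}$ and $\ti E_{Y,H_Y}$ are Gromov hyperbolic; the Morse lemma then guarantees that the lifts $\ti\ell_n$ pass through a fixed compact set in $\ti E_{Y,H_Y}$ (because $[\ti\ell_n]_X$ passes near $\ti p_\infty$), so a subsequence converges to a leaf $\ti\ell_\infty$ of $\ti V_Y$; moreover the endpoints at infinity of $\ti\ell_n$ converge simultaneously to those of $\gamma$ and of $\ti\ell_\infty$, so $[\ti\ell_\infty]_X$ is parallel to $\gamma$ and hence also horizontal (parallel geodesics in a CAT(0) flat strip share direction).

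The paper's route is shorter and stays on the compact surface, exploiting the foliated structure of $V_Y$ to get disjointness in $N$. Your route is more geometric and arguably more robust, since it isolates exactly the phenomenon (a leaf of $V_Y$ straightening into a horizontal leaf) that the definition of $\Delta^\ast$ is designed to exclude, at the cost of invoking hyperbolicity of the universal cover.
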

 \begin{proof}
By Lemma \ref{FinitenessOfNon-transversalSegment}, each horizontal segment has a finite length and its endpoints are at singular points of $E_{X, H}$. 
(Although the number of embedded horizontal segments is clearly bounded,  a horizontal segment is in general immersed in $E_{X, H}$.)

    Consider all horizontal segments $s_i\, (i \in I)$ of $[V_Y]_X$. 
    Each $s_i$ is a mapping of a segment of a leaf of $V_Y$ into a leaf of $H_X$.
    Thus, by identifying $s_i$ with the segment of $V_Y$, we regard $\sqcup_{i \in I} s_i$  as a subset of $E_{Y, H}$.
Since $s_i$ is immersed into $G_Y$ for each $i \in I$, we have a mapping from $\bigsqcup_{i \in I} s_i$ to $G_Y$. 
 Therefore, there are a small regular neighborhood $N$ of $G_Y$ and a small homotopy of the mapping $\bigsqcup s_i \to G_Y$, such that  $\bigsqcup s_i$ is, after the homotopy, embedded in $N$ and the endpoints of $s_i$ are on the boundary of $N$ (close to the vertices of $G_Y$ where they map initially). 
Since $G_Y$ is a finite graph and endpoints of $s_i$ map to vertices of $G_Y$, there are only finitely many combinatorial types of horizontal segment  $s_i \to G_Y$.
In particular, the lengths of $s_i$ are bounded from above. 
 \end{proof}

By continuity and the compactness of $\PML$, the uniformness follows: 
\begin{corollary}\Label{UpperBoundForHorizontalSegments}
The upper bound $B$ can be taken uniformly in $H \in \PML$. 
\end{corollary}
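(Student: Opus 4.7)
The plan is to upgrade the pointwise bound of Proposition~\ref{DependentSegmentLengthBound} to a uniform bound via an upper semi-continuity plus compactness argument on $(\PML \times \PML) \setminus \Delta^\ast$. First, I would verify that the assignment
\[
(H_X, H_Y) \mapsto \bigl( E_{X,H_X},\, E_{Y,H_Y} \bigr)
\]
of unit-area flat surfaces depends continuously on the projective measured foliations, using the Hubbard--Masur theorem so that the flat metrics, together with their vertical and horizontal singular foliations, vary continuously. In particular, the singular sets and the horizontal leaves of $E_{X,H_X}$ vary continuously.

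Next, I would show that the graph $G_Y \subset E_{X,H_X}$, assembled from the horizontal segments of the geodesic realization $[V_Y]_X$, depends upper semi-continuously on the pair $(H_X, H_Y)$. Concretely, if $(H_X^n, H_Y^n) \to (H_X, H_Y)$ in $(\PML \times \PML) \setminus \Delta^\ast$, then any Hausdorff subsequential limit of the graphs $G_Y^n$ is contained in $G_Y$: a sequence of horizontal segments of $[V_Y^n]_X$ whose total length does not vanish in the limit would produce an arc in the limit that is simultaneously a leaf segment of $H_X$ and of the geodesic realization of some leaf of $V_Y$, forcing $(H_X, H_Y) \in \Delta^\ast$, a contradiction. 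It follows that the function
\[
B(H_X, H_Y) \;=\; \sup \bigl\{ \length(s) : s \text{ is a horizontal segment of } [\ell_Y]_X,\; \ell_Y \text{ a leaf of } V_Y \bigr\}
\]
is upper semi-continuous on $(\PML \times \PML) \setminus \Delta^\ast$, with $B < \infty$ pointwise by Proposition~\ref{DependentSegmentLengthBound}.

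Finally, since $\PML \times \PML$ is compact and $\Delta^\ast$ is closed, every relatively compact subset $K \subset (\PML \times \PML) \setminus \Delta^\ast$ carries a uniform bound: an upper semi-continuous real-valued function on a compact set attains its supremum. This gives a uniform constant $B = B(X, Y, K)$, as claimed. In applications one uses this on the compact subsets of $\PML \times \PML$ relevant to the asymptotic analysis of $\rchi_X \cap \rchi_Y$.

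The main obstacle is the upper semi-continuity step. One must rule out the scenario in which horizontal segments of $[V_Y^n]_X$ coalesce and escape to infinite length as $n \to \infty$ even though the limiting pair lies outside $\Delta^\ast$. This requires a careful local argument near the (continuously varying) zeros of $q_X$ and $q_Y$: away from $\Delta^\ast$ the vertical leaves of $V_Y$ cross the horizontal leaves of $H_X$ transversely in the limit, so long horizontal fellow-traveling must already be detected in $G_Y$, preventing new arbitrarily long components from appearing at finite stages.
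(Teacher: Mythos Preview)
Your approach is essentially the same as the paper's --- a continuity-plus-compactness argument --- only spelled out in far more detail; the paper's own proof is the single sentence ``By continuity and the compactness of $\PML$, the uniformness follows.'' You have made explicit the ingredients (Hubbard--Masur for continuous dependence of the flat structures, upper semi-continuity of the maximal horizontal-segment length) that the paper leaves implicit, and you correctly flag the non-compactness of $(\PML \times \PML)\setminus\Delta^\ast$ by restricting to compact subsets, which is all that is needed downstream.
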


 Let $V'_Y$ denote $ [V_Y]_X \minus G_Y$, the set of geodesic representatives $[\ell_Y]_X$ minus their horizontal segments, for all leaves $\ell_Y$ of $V_Y$.
 Then  $V_Y'$ is transversal to $H_X$ at every point, and 
 the angle between them (\S \ref{sAngles}) is uniformly bounded away from zero:
 \begin{lemma}\Label{Semitransversailty}
For every $(H_X, H_Y)  \in \PML \times \PML \minus \Delta^\ast$,
$\angle_{E_{X, H}} ( V'_Y, H_X) > 0$.
\end{lemma}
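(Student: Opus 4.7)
My plan is to argue by contradiction using compactness of $E_{X, H_X}$, discreteness of its singular set, and closedness of the lamination $[V_Y]_X$. Assume $\inf_{p} \angle_p(V'_Y, H_X) = 0$, and choose a sequence of maximal straight non-horizontal sub-segments $s_n$ of leaves of $[V_Y]_X$ whose angles $\theta_n$ with $H_X$ tend to zero. A leaf of $[V_Y]_X$ is a geodesic on the flat surface $E_{X, H_X}$, so in any Euclidean chart it has constant direction and can only change direction at singular points. Combined with Lemma \ref{FinitenessOfNon-transversalSegment} (applied to the horizontal parts bounding our segments), it follows that each finite-length $s_n$ has both endpoints at singular points of $E_{X, H_X}$. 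I split the proof into the cases where the Euclidean lengths $L_n$ of $s_n$ are bounded or unbounded.

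In the bounded case, after passing to a subsequence and using the discreteness of singular points, the two endpoints of $s_n$ stabilize at fixed singular points $a, b$. Lifting to the universal cover $\ti{E}_{X, H_X}$, only finitely many lifts of $b$ lie within the closed ball of radius $\sup L_n$ around any fixed lift of $a$, so there are only finitely many straight segments from $a$ to $b$ of length at most $\sup L_n$ and hence only finitely many possible directions for $s_n$. Then $\theta_n \to 0$ forces one of these directions to be exactly horizontal; the corresponding $s_n$ would then be a horizontal segment in $G_Y$, contradicting $s_n \subset V'_Y$.

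In the unbounded case $L_n \to \infty$, take a Hausdorff-convergent subsequence $s_n \to S^\infty \subset E_{X, H_X}$. For a smooth limit point $q = \lim q_n$ with $q_n \in s_n$, parametrize $s_n$ by arc length so that $s_n(0) = q_n$; the point $s_n(t)$ has vertical displacement $t \sin\theta_n \to 0$ and horizontal displacement $t \cos\theta_n \to t$ from $q_n$, so it converges to the time-$t$ horizontal flow of $q$. Choosing $q_n$ near the midpoint of $s_n$ and using $L_n \to \infty$ yields both forward and backward horizontal half-orbits of $q$ in the closed set $S^\infty$, so the full horizontal leaf $h_q$ of $H_X$ lies in $S^\infty$. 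Since $[V_Y]_X$ is a closed geodesic lamination containing every $s_n$, we have $S^\infty \subset [V_Y]_X$, so $h_q$ is a leaf of $[V_Y]_X$. Equivalently, the leaf of $V_Y$ whose geodesic straightening is $h_q$ is isotopic on $S$ to a leaf of $H_X$, which is exactly one of the conditions defining $\Delta^\ast$, contradicting $(H_X, H_Y) \notin \Delta^\ast$.

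The main obstacle I expect is the unbounded case. Specifically, one must justify that the Hausdorff limit of long nearly-horizontal geodesic segments is saturated by entire horizontal leaves of $H_X$ and remains inside the lamination $[V_Y]_X$. The technical care lies in choosing the base points $q_n$ along $s_n$ so that both time directions survive in the limit, using closedness of $S^\infty$ to extend half-orbits to their closures, and using closedness of the lamination to preserve the inclusion $S^\infty \subset [V_Y]_X$ throughout the limiting process.
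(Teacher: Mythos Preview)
Your proof works but takes a considerably longer route than the paper's. The paper's argument rests on a single structural fact you never invoke: $[V_Y]_X$ is a geodesic lamination, so in any small Euclidean chart around a smooth point of $E_{X,H_X}$ its leaves appear as \emph{parallel} chords sharing one common angle with $H_X$. Passing to a subsequence with $x_i\to x$, the segments of the $\ell_i$ near $x$ are therefore parallel with a single common angle; since $\angle_{x_i}\to 0$ that angle is $0$, so the segments are horizontal and lie in $G_Y$, contradicting $x_i\in V_Y'$. Your approach instead avoids the non-crossing property entirely, substituting (i) finiteness of saddle connections of bounded length for the bounded case and (ii) a Hausdorff-limit argument producing a horizontal leaf inside $[V_Y]_X$ for the unbounded case, followed by an appeal to the definition of $\Delta^\ast$. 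Step (ii) carries exactly the technical weight you anticipate---one needs further subsequences each time the horizontal flow from $q$ meets a singularity, and one must check that a horizontal \emph{limit} leaf of $[V_Y]_X$ (not a priori the straightening of any single smooth leaf of $V_Y$) still triggers a $\Delta^\ast$ condition---whereas the paper's parallelism observation dispatches the whole lemma in one line, at the cost of assuming the reader sees why leaves of $[V_Y]_X$ cannot cross in a flat chart.
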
 
\begin{proof}
Suppose, to the contrary, that there is a sequence of distinct points $x_i$ in $V_Y' $ such that $(0 <) \angle_{x_i} (V_Y', H_X) \to 0$ as $i \to \infty$. 
We may, in addition, assume that $x_i$ are smooth points of $E_{X, H_X}$. 
For each $i = 1, 2, \dots$, let $\ell_i$ be a leaf of $[V_Y]_X$ containing $x_i$,  so that $\angle_{x_i}(\ell_i, H_X) \to 0$ as $i \to \infty$.
 Let $s_i$ be a segment of $\ell_i$ containing $x_i$ but disjoint from the singular set of $E_X$.
Clearly,  the angle $\angle_{x_i}(\ell_i, H_X)$ remains the same when $x_i$ moves in $s_i$. 
By the discreteness of the singular set of $E_X$, we may assume that the length of $s_i$ diverges to infinity as $i \to \infty$. 
Then, for sufficiently large $i$,  the segments $s_i$ are all disjoint, and thus $s_i$ must be all parallel as $x_i$ converges to a smooth point.  
This can not happen as $\angle_{x_i}(\ell_i, H_X) \to 0$.  
Therefore $\angle_{E_{X, H}} ( V'_Y, H_X) > 0$.
\end{proof}

\subsection{Train-track decompositions for diagonal horizontal foliations}\Label{sTrainTracksForDiagonal}
\begin{definition}
Let $(E, V)$ be a flat surface. 
Let $T$ be a train track decomposition of $E$. 
A curve $\R \to E$ is {\sf carried by} $T$, if 
 $B$ is a  branch of $T$, then, for every component $s$ of $\ell \cap int B$, 
 both endpoints of $s$ are on different horizontal edges of $B$,
 
 A (topological) lamination on $E$ is {\sf carried by} $T$ if every leaf is carried by $T$. 
\end{definition}

In this paper, a train-track may have ``bigon regions'' which correspond to vertical edges of $T$. 
Thus a measured lamination may be carried by a train track in essentially different ways.  
As a lamination is usually defined up to an isotopy on the entire surface, when a measured lamination is carried by a train-track, we call it a {\sf realization} of the measured lamination.

\begin{definition}
Let $(E, V)$ be a flat surface. 
Let $T$ be a train track decomposition of $E$. 
A geodesic $\ell$ on $E$ is {\sf essentially carried by $T$}, if, for every rectangular branch $B$ of $T$ and every component $s$ of $\ell \cap int B$,

\begin{itemize}
\item both endpoints of $s$ are (different) horizontal edges of $B$, or 
\item the endpoints of $s$ are on adjacent (horizontal and vertical edges) of $B$.
\end{itemize}

The measured foliation $V$ on $E$ is {\sf essentially carried by $T$} if every smooth leaf of $V$ is essentially carried by $T$.
\end{definition}

Because of the horizontal segment, $[V_Y]_X$ is not necessarily carried by $t^r_{X, H}$ even if the train-track parameter $r > 0$ is very small. 
Let $\tt_{X, H}^r$ be the train-track decomposition of $E_{X, H}$ obtained by, for each component of the horizontal graph $G_Y$, taking the union of the branches intersecting the component.
A branch of $\tt_{X, H}^r$ is {\sf transversal} if it is disjoint from $G_Y$, and  {\sf non-transversal} if it contains a component of $G_Y$. 
\begin{lemma}\Label{HomotopyToBeCarried}
For every $(H_X, H_Y) \in (\PML \times \PML) \minus \Delta^\ast$, if $r > 0$ sufficiently small, then 
\begin{enumerate}
\item  $[V_Y]_X$ is essentially carried by $\tt^r_{X, H_X} \eqqcolon \tt_{X, H_X}$, and  \Label{iEssentiallyCarried}
\item  $[V_Y]_X$ can be homotoped along leaves of $H_X$ to a measured lamination $W_Y$ carried by $\tt_{X, H_X}$ so that, by the homotopy, every point of $[V_Y]_X$ either stays in the same branch or moves to the adjacent branch across a vertical edge. \Label{iSmallHomotopy}
\end{enumerate}
Moreover, these properties hold in a small neighborhood of $(H_X, H_Y) \in (\PML \times \PML) \minus \Delta^\ast$.
\end{lemma}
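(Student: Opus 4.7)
The plan is to combine the uniform bounds from the preceding lemmas with the metric shape of the branches of $t^r_{X,H_X}$, which forces geodesic representatives of leaves of $V_Y$ to behave almost vertically inside transversal rectangular branches once $r$ is small.

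First, I would fix the two universal constants that govern the problem: by Corollary~\ref{UpperBoundForHorizontalSegments} there is $B > 0$ bounding the length of every horizontal segment of every $[\ell_Y]_X$, and by Lemma~\ref{Semitransversailty} there is $\theta_0 > 0$ with $\angle_{E_{X,H_X}}(V'_Y, H_X) \geq \theta_0$. Both constants depend continuously on $(H_X,H_Y)$ in a neighborhood of the fixed pair inside $(\PML \times \PML)\minus\Delta^\ast$, so the forthcoming estimates will be uniform on such a neighborhood.

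Next I would inspect the two types of branches of $\tt^r_{X,H_X}$. Every non-transversal branch by construction contains a connected component of $G_Y$, hence an entire maximal horizontal piece of $[V_Y]_X$; since horizontal segments have length at most $B$, once $r$ is small enough they are swallowed into the interior of the merged non-transversal branch together with a collar of size $\gtrsim r^{1/4}$. A transversal rectangular branch $B$ of $\tt^r_{X,H_X}$ inherits from $t^r_{X,H_X}$ the geometric constraint that its vertical edges have length $O(r)$ (they lie on the vertical boundary of some $P^r_i$) while its horizontal edges have length at least $r^{1/4}$ (thin rectangles were already amalgamated). Consider any component $s$ of $[V_Y]_X \cap \mathrm{int}\, B$. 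Since $B$ is disjoint from $G_Y$, the segment $s$ belongs to $V'_Y$ and makes angle at least $\theta_0$ with $H_X$. A geodesic segment in $B$ joining the two opposite vertical edges must satisfy $\mathrm{height} \geq (\tan\theta_0)\cdot \mathrm{width} \geq (\tan\theta_0)\, r^{1/4}$, which exceeds the available height $O(r)$ as soon as $r^{3/4} < \tan\theta_0 / (\text{const})$. Thus the two endpoints of $s$ lie on horizontal edges, or on adjacent edges of $B$, proving~\eqref{iEssentiallyCarried}.

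For~\eqref{iSmallHomotopy}, I would remove the remaining ``adjacent-edge'' crossings by a small horizontal push. Fix a transversal rectangular branch $B$ and a component $s \subset B$ with one endpoint $p$ on a vertical edge $e$; the opposite side of $e$ lies in an adjacent branch $B'$. Because $\angle(s,H_X) \geq \theta_0$ and $e$ is vertical, a translation of $s$ along $H_X$ of length $O(r)$ moves $p$ across $e$ into the interior of $B'$, while the other endpoint, already on a horizontal edge, simply slides along that edge. Doing this locally near every vertical edge and then patching via a $\pi_1$-equivariant partition-of-unity argument along $H_X$ produces a single $H_X$-leafwise isotopy of $E_{X,H_X}$ supported in an $O(r^{1/4})$-neighborhood of the edge graph; each individual $[V_Y]_X$ point either stays in its branch or crosses one vertical edge, which is precisely the required smallness condition. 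The resulting $W_Y$ is then carried by $\tt_{X,H_X}$ in the strict sense.

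The only delicate step is the global construction of the homotopy in~\eqref{iSmallHomotopy}: individual segments are trivial to push, but one needs a single continuous $H_X$-leafwise map straightening all of $[V_Y]_X$ simultaneously and equivariantly, without creating new intersections between homotoped leaves. This is where I expect to spend the effort, using that the set of vertical-edge crossings of $[V_Y]_X$ is a compact subset of each vertical edge and varies continuously with $(H_X,H_Y)$, which finally also delivers the neighborhood statement by the uniformity of $B$ and $\theta_0$ established in the first step.
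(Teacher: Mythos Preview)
Your argument for part~(\ref{iEssentiallyCarried}) is correct and matches the paper: both use that rectangular branches have vertical edges of length $O(r)$ and horizontal edges of length at least $r^{1/4}$, together with the uniform angle bound from Lemma~\ref{Semitransversailty}, to exclude components of $[V_Y]_X$ joining opposite vertical edges.

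For part~(\ref{iSmallHomotopy}) the paper takes a different and more concrete route than your partition-of-unity sketch. Rather than pushing individual corner segments and patching, the paper first applies a Teichm\"uller-type rescaling (horizontal by $r^{1/4}$, vertical by $r^{-1/4}$) to obtain a new flat surface $E'_{X,H_X}$ on which the geodesic representative $[V_Y]_X'$ becomes $\upsilon$-almost parallel to the vertical foliation $V_X$ for $r$ small. In this picture, for each vertical edge one selects a single representative leaf segment $s_i$ passing near it, approximates $s_i$ by an explicit staircase curve $c_i$ built from vertical edges of the train track, and then homotopes all of $[V_Y]_X'$ inside the thin triangular region bounded by $s_i$ and $c_i$. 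Because after rescaling all leaves near a given vertical edge are nearly vertical and hence nearly parallel to one another, this single triangular sweep handles every leaf crossing that edge coherently, with no risk of creating new intersections; the ``at most one adjacent branch'' condition then falls out from the $r^{3/4}$-closeness of $c_i$ to $s_i$. This rescaling trick is what replaces your partition-of-unity step and resolves exactly the coherence issue you flagged as delicate: it turns the problem of organizing many local pushes into a finite collection of disjoint triangular homotopies, one per vertical slit. Your direct approach could likely be made to work, but you would still need to argue that near each vertical edge all crossing leaf segments can be pushed to the \emph{same} side without collision, which is precisely what the near-verticality after rescaling provides for free.
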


We call $W_Y$ a {\sf realization} of $[V_Y]_X$ on $\tt_{X, H_X}$.

\proof
By the construction of the train track, 
each vertical edge of a rectangular branch has length less than $2 r$, and each horizontal edge has length at least $\sqrt[4]{r}$. 
Then, (\ref{iEssentiallyCarried}) follows from Lemma \ref{Semitransversailty}.

We shall show (\ref{iSmallHomotopy}).
Recall from \S \ref{sPolygonalTrainTrack}, that the construction of $T_{X, H_X}$ started with taking an $r$-neighborhood of the zeros in the vertical direction and then taking points $\sqrt[4]{r}$-close to the neighborhood in the horizontal direction. 
 Therefore, each vertical edge of $\tt_{X,H_X}$ has length at least $\sqrt[4]{r}$ and each horizontal edge has length less than $r$.

Similarly to a Teichm\"uller mapping,
we rescale the Euclidean structure of $E_{X, H_X}$ with area one by scaling the horizontal distance by $\sqrt[4]{r}$ and the vertical distance by $\frac{1}{\sqrt[4]{r}}$, its reciprocal.
Then, by this mapping, the flat surface $E_{X, H}$ is transformed to another flat surface $E'_{X, H_X}$ and the train-track structure $\tt_{X, H_X}$ is transformed to $\tt'_{X, H_X}$. 
Then, the horizontal edges of rectangular branches of  $\tt'_{X, H_X}$ have horizontal length at least $\sqrt{r}$, and the vertical edges have length less than $2 r^{\frac{3}{4}}$. 
Thus, as the train track parameter $r > 0$ is sufficiently small,  the vertical edge is still much shorter than the horizontal edge. 
Note that,  the foliations $V_X$ and $H_X$ persist by the map, except the transversal measures are scaled.

As $r > 0$ is sufficiently small, the geodesic representative $[V_Y]_X'$ of $V_Y$ on $E'_{X, H_X}$ is almost parallel to $V_X$.
Since $N$ is a compact subset of $(\PML \times \PML) \minus \Delta^\ast$,  by Lemma \ref{Semitransversailty},
$\angle_{E_X}(H_X, [V_Y)]_X$ is bounded from below by a positive number uniformly in $\hH  = (H_X, H_Y)\in N$. 
Then, indeed, for every $\upsilon > 0$, if $r > 0$ is sufficiently small, then $\angle_{E_X'}(V_X, [V_Y]'_X) < \upsilon$.

Then, let $\ell$ be a leaf of $V_Y$.  
Let $\ell_X$ be the geodesic representative of $\ell$ in $E_X'$. 
Consider the set $N^v_{\sqrt{r}}$ of points on $E'_X$ whose horizontal distance to the set of the vertical edges of $\tt'_{X, H_X}$ is less than $\sqrt{r}$.
Let $s$ be a maximal segment of $\ell_X$, such that 
 $s$ is contained in $N^h_{\sqrt{r}}$ and
that each endpoint of $s$ is connected to a vertex of $\tt'_{X, H_X}$ by a horizontal segment (which may not be contained in a horizontal edge of $\tt'_{X, H_X}$).
Clearly, if $r > 0$ is sufficiently small,  $s$ does not intersect the same vertical edge twice nor the same branch twice. 
\begin{claim}\Label{ApproximaiteStaircase}
There is a staircase curve $c$ on $E'_{X, H_X}$, such that 
\begin{itemize}
\item $c$ is $r^{\frac{1}{2}}$-close to $s$ in the horizontal direction,
\item each vertical segment of $c$ is a vertical edge of $\tt_{X, H_X}'$, and 
\item each horizontal segment of $c$ contains no vertex of $\tt_{X, H_X}'$ in its interior.
\end{itemize}
(See Figure \ref{fApproximatingVerticalEdges}.)
\end{claim}   

Pick finitely many segments $s_1, \dots, s_n$ in leaves of $[V_Y]_X'$ as above, such that 
if a vertical edge $v$ of $\tt_{X, H_X}'$ intersects $[V_Y]_X'$, then there is exactly one $s_i$ which is $r^{\frac{3}{4}}$-Hausdorff close to $v$. 
Let $c_1, \dots, c_n$ be their corresponding staircase curves on $E'_X$. 

Then, we can homotope $[V_Y]_X$ in a small neighborhood of the region $R_i$ bounded by $s_i$ and $c_i$, such that, while homotoping, the leaves do not intersect $s_i$, and that
the homotopy moves each point horizontally (Figure \ref{fHomotopyToBeCarried}).

Each point on $[V_Y]_X'$ is homotoped at most to an adjacent branch. 
Then, after this homotopy, $[V_Y]_X'$ is carried by $\tt'_{X, H_X}$. 
This homotopy induces a desired homotopy of  $[V_Y]_X$.

\begin{figure}
\centering
\begin{minipage}{.5\textwidth}
  \centering
\begin{overpic}[scale=.2%, grid,tics=10
] {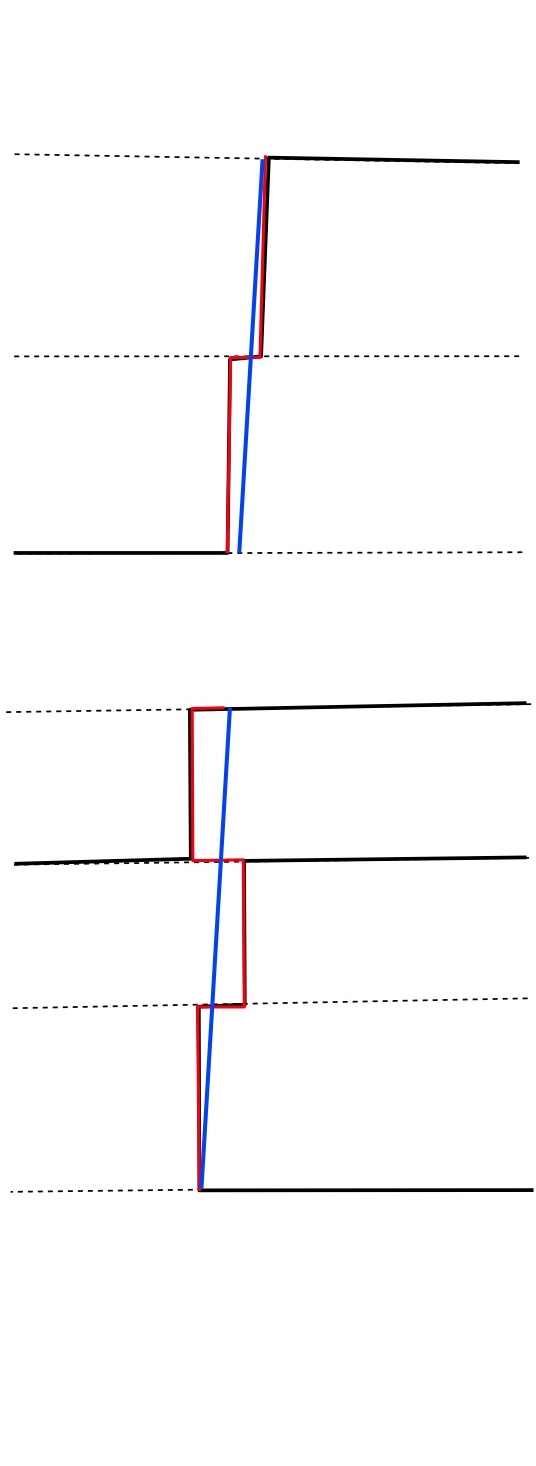} % figure file
 %   \put( , ){\textcolor{}{$$}}  
 %   \put( , ){}  
      \end{overpic}
\caption{Examples of staircase curves given by Claim \ref{ApproximaiteStaircase}.}\label{fApproximatingVerticalEdges}
\end{minipage}%
\begin{minipage}{.5\textwidth}
  \centering
\begin{overpic}[scale=.05%, grid,tics=10
] {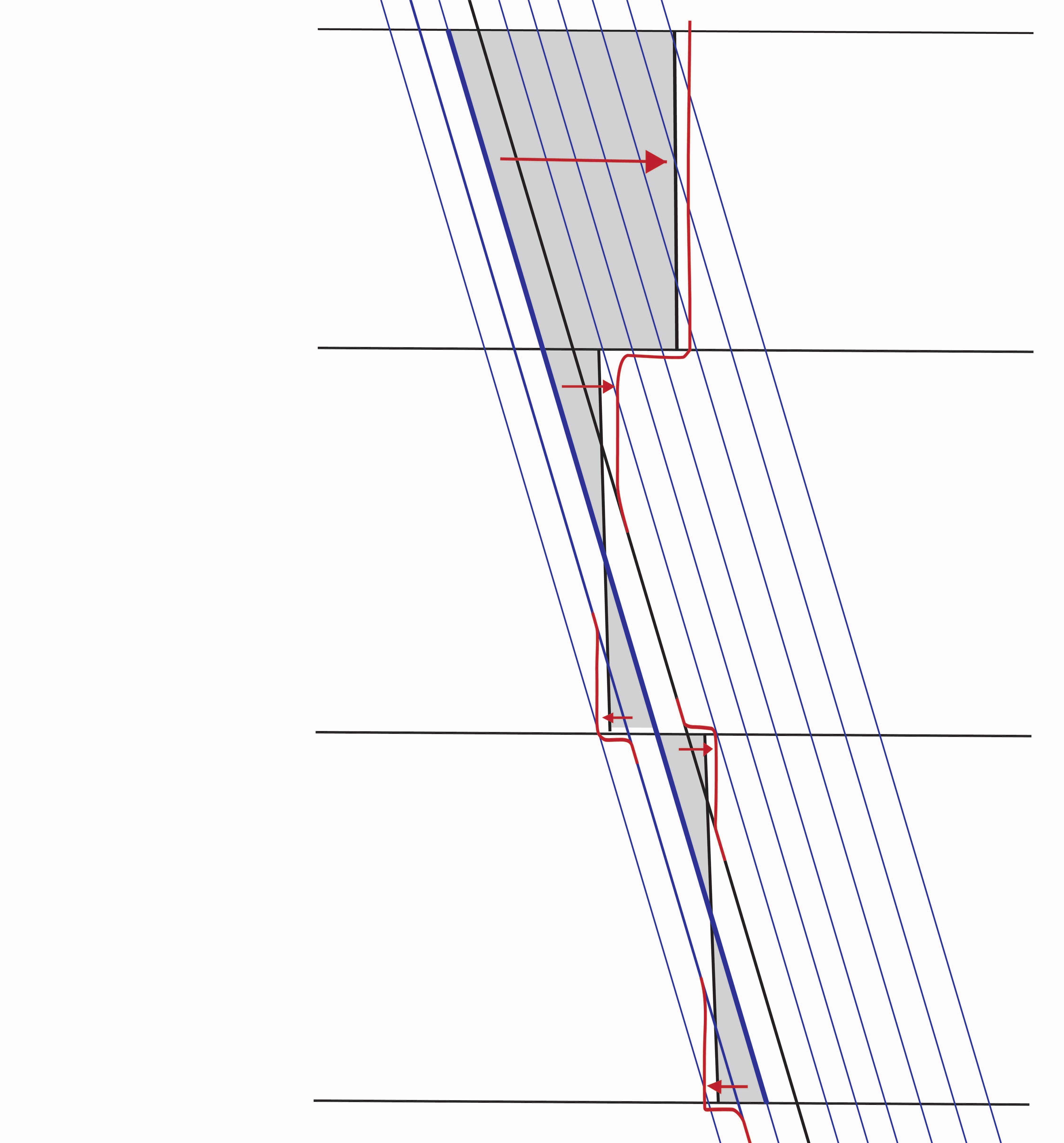} % figure file
 %   \put( , ){\textcolor{}{$$}}  
 %   \put( , ){}  
      \end{overpic}
\caption{A homotopy to push $[V_Y]_X$ out of the region $R_i$.}\label{fHomotopyToBeCarried}
\end{minipage}
\end{figure}
\Qed{HomotopyToBeCarried}  

%%%%%
\begin{figure}
\begin{minipage}{.5\textwidth}
\centering
\begin{overpic}[scale=.2%, grid,tics=10
] {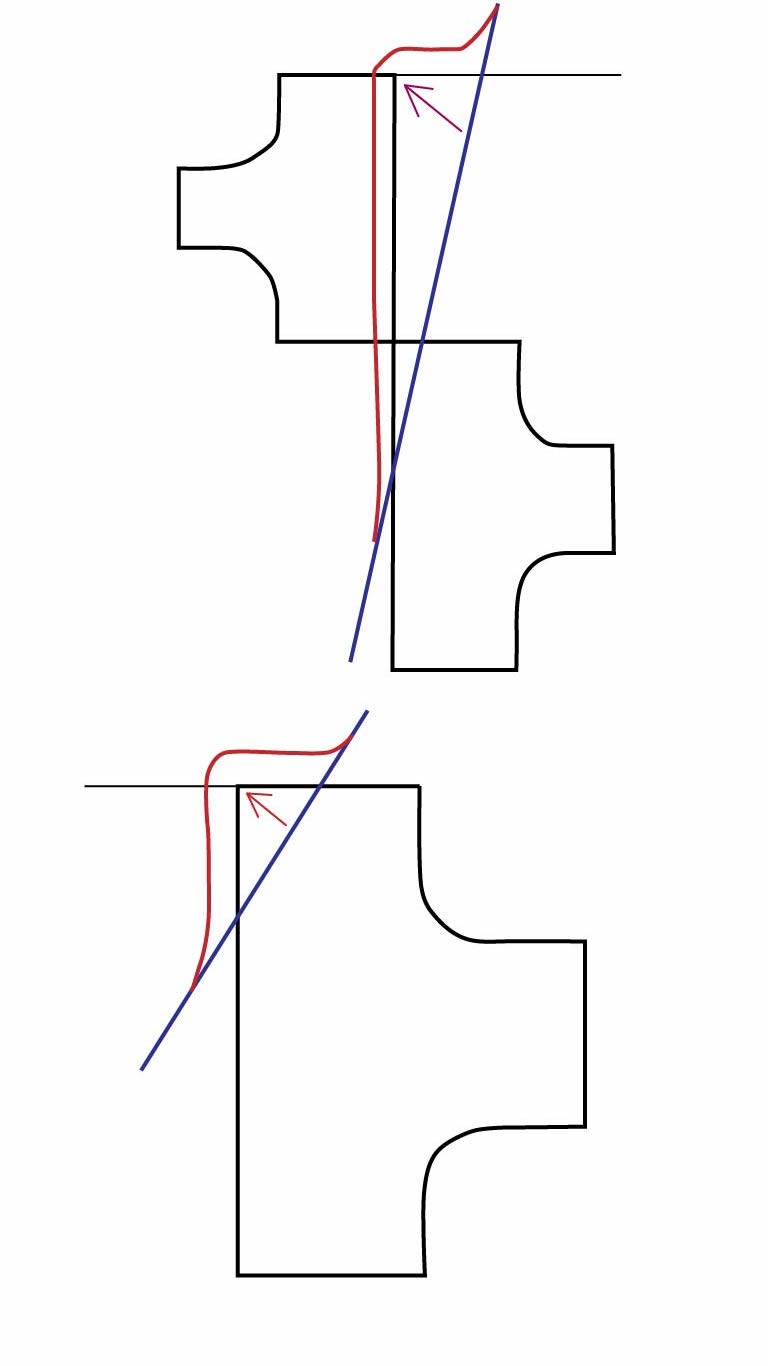} % figure file
 %   \put( , ){\textcolor{}{$$}}  
 %   \put( , ){}  
      \end{overpic}
\caption{}\label{fHomotopyToBeCarriedTwo}
\end{minipage}%
\begin{minipage}{.5\textwidth}
  \centering
\begin{overpic}[scale=.15%, grid,tics=10
] {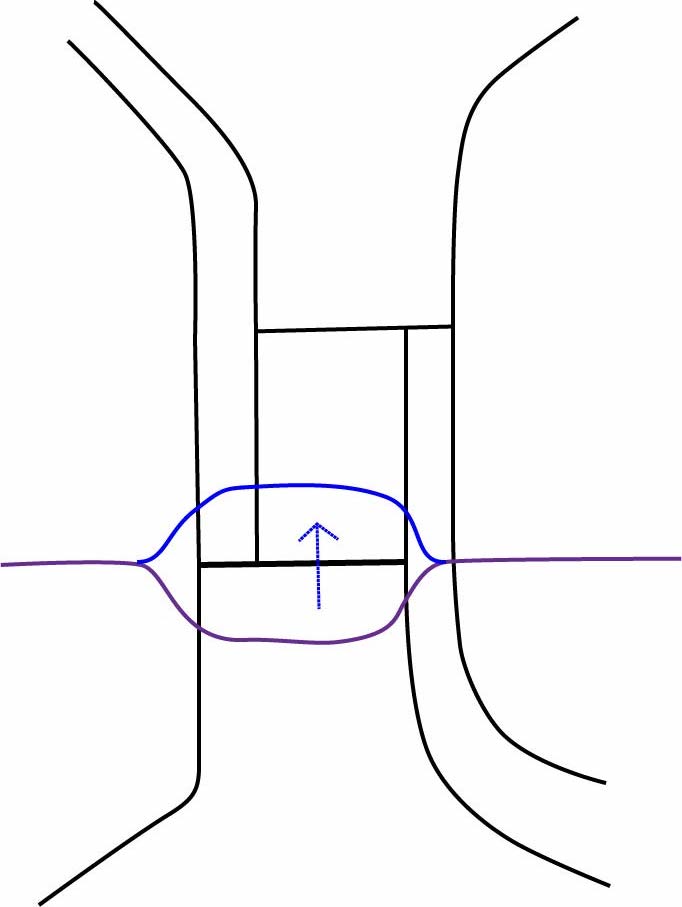} % figure file
 %   \put( , ){\textcolor{}{$$}}  
 %   \put( , ){}  
      \end{overpic}
\caption{A shifting across a vertical slit.}\label{fShifting}
\end{minipage}
\end{figure}
%%%%%

   Let $\hH = (H_X, H_Y) \in \PML \times \PML \minus \Delta^\ast$ and 
    $W_Y$  denote the realization of $[V_Y]_X$ on $\tt_{X, H_X}$ given by Lemma \ref{HomotopyToBeCarried}. 

A measured lamination in $\PML$ is defined up to an isotopy of the surface. 
The union of the vertical edges of $\tt_{X, H_X}$ consists of disjoint vertical segments. 
Each vertical segment of the union is called a {\sf (vertical) slit}. 
Then, a measured lamination can be carried by a train track in many different ways by homotopy across slits: 
\begin{definition}[Shifting]
Suppose that $T$ is a train-track structure of a flat surface $E$, and let $L_1$ be a realization of $L \in \ML$ on $T$. 
For a vertical slit $v$ of $T$, consider the branches on $T$ whose boundary intersects $v$ in a segment. 
A {\sf shifting} of $L_1$ across $v$ is a homotopy of   $L_1$ on $E$ to another realization $L_2$ of $L$ which reduces the weights of the branches on one side of $v$ by some amount and increases the weights of the branches on the other side of $v$ by the same amount (Figure \ref{fShifting}). 
Two realizations of $L$ on $T$ are {\sf related by shifting} if they are related by simultaneous shifts across some vertical slits of $T$. 
\end{definition}

    The homotopy of $[V_Y]_X$ in \Cref{HomotopyToBeCarried} moves points at most to adjacent branches in the horizontal direction.
Thus we have the following.
    \begin{lemma}\Label{UniqueUpToShifting}
In Lemma \ref{HomotopyToBeCarried}, the realizations given by different choices of $s_i$  are related by shifting. 
\end{lemma}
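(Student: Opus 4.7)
The plan is to analyze the construction of $W_Y$ in the proof of \Cref{HomotopyToBeCarried} locally at each vertical slit of $\tt_{X, H_X}$, and show that the freedom in choosing the segments $s_i$ corresponds exactly to the freedom of shifting across that slit.

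First, I would recall that for each vertical edge $v$ of the rescaled train track $\tt'_{X, H_X}$ meeting $[V_Y]'_X$, the construction selects exactly one segment $s_i$ in a leaf of $[V_Y]'_X$ that is $r^{3/4}$-close to $v$ in the horizontal direction. The associated staircase curve $c_i$ contains $v$ as one of its vertical segments, and the homotopy from $[V_Y]_X'$ to the realization pushes the portion of $[V_Y]'_X$ lying in the thin triangular region between $s_i$ and $c_i$ horizontally onto the union of branches of $\tt'_{X, H_X}$. Since the construction at different slits is local and independent, it suffices to compare two choices $\{s_i\}$ and $\{s_i'\}$ that differ only at one single slit $v$.

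Next, I would analyze what happens at such a slit $v$ when the chosen segment is changed from $s_i$ to $s_i'$. Both $s_i$ and $s_i'$ are leaf-segments of $[V_Y]'_X$ that are horizontally $r^{3/4}$-close to $v$ and, by \Cref{Semitransversailty}, they are nearly vertical. The horizontal strip between $s_i$ and $s_i'$ is foliated by a subfamily of leaves of $[V_Y]'_X$ carrying some transverse measure $m_v \geq 0$. Under the first choice, the homotopy pushes this sub-strip to one side of $v$; under the second choice, it is pushed to the other side. The total transverse measure carried past $v$ (i.e.\ the sum over the branches adjacent to $v$) is, in both cases, the transverse measure that $V_Y$ assigns to $v$, since both realizations realize the same measured lamination $[V_Y]_X$. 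Thus the difference between the two local realizations at $v$ is the movement of the weight $m_v$ from the branches on one side of $v$ to the branches on the other, which is precisely a shifting across $v$ in the sense of the definition preceding the lemma.

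Finally, I would combine these local comparisons. Given two arbitrary admissible choices $\{s_i\}$ and $\{s_i'\}$ producing realizations $W_Y$ and $W_Y'$, the symmetric difference of the two choices is supported on a finite collection of slits $v_1, \ldots, v_k$, and at each $v_j$ the two realizations differ by a shift of some measure $m_{v_j}$, as above. Performing these shifts simultaneously across $v_1, \ldots, v_k$ transforms $W_Y$ into $W_Y'$, establishing the assertion. The only delicate point is verifying that the various local pieces can be reconciled coherently on the entire surface — this follows because the shifts are carried out on disjoint slits and each affects only the two adjacent branches, and because the underlying measured lamination $[V_Y]_X$ is common to both realizations, so no global obstruction can arise.
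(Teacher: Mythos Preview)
Your proof is correct and takes essentially the same approach as the paper. The paper's justification is the single sentence preceding the lemma, namely that the homotopy in \Cref{HomotopyToBeCarried} moves each point at most into an adjacent branch across a vertical edge; your argument is a careful local unpacking of exactly this observation at each slit.
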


\begin{proposition}\Label{UniqueRealizationUpToShifting}
 Let $\hH_i = (H_{X, i}, H_{Y, i})$ be a  sequence in $\PML \times \PML \minus \Delta^\ast$ converging to  $\hH = (H_X, H_Y)$ in  $\PML \times \PML \minus \Delta^\ast$.  
 Let $W_i$ be a realization of $[V_{Y_i}]_{X_i}$ on $\tt_{X, H_{X, i}}$,  and let $W$ be a realization of $[V_Y]_X$ on $\tt_{X, H_X}$ given by Lemma \ref{HomotopyToBeCarried}.
 Then, 
 a limit of the realization $W_i$ and the realization $W$ are related by shifting across vertical slits. 
\end{proposition}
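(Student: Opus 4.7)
The plan is to reduce the statement to a pointwise comparison of realizations on a single train track and then apply \Cref{UniqueUpToShifting}. First, I would establish the relevant continuity properties of all the geometric data. By the Hubbard--Masur theorem, the assignment $(X,H) \mapsto E_{X,H}^1$ is continuous, so $E_{X, H_{X,i}} \to E_{X, H_X}$ and $E_{Y, H_{Y,i}} \to E_{Y, H_Y}$ as flat surfaces. Since $\hH \notin \Delta^\ast$, the same holds for a neighborhood, and the non-transversal graphs $G_{Y_i} \subset E_{X, H_{X,i}}$ have uniformly bounded horizontal segments by \Cref{UpperBoundForHorizontalSegments}; by \Cref{Semitransversailty}, the transversality angle between $V'_{Y_i}$ and $H_{X,i}$ is uniformly bounded below. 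Using the semi-continuity of the polygonal train-track structure $\tt^r$ in $q$, after passing to a subsequence I may assume that $\tt_{X, H_{X,i}}$ semi-converges to a refinement $\tt^\ast$ of $\tt_{X, H_X}$.

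Next, I would pass to a subsequential limit of the realizations. The measured laminations $W_i$ are carried by $\tt_{X, H_{X,i}}$ and, being obtained by the small horizontal homotopy of \Cref{HomotopyToBeCarried}, they represent (as measured laminations on $S$) the geodesic foliations $[V_{Y_i}]_{X_i}$, which converge to $[V_Y]_X$ in $\ML$. By compactness of the space of weighted train-track carryings on the semi-limit $\tt^\ast$, a subsequence of $W_i$ converges to a realization $W^\ast$ of $[V_Y]_X$ on $\tt^\ast$. Collapsing the refinement $\tt^\ast \to \tt_{X, H_X}$ (which only subdivides certain branches and slits) produces a realization $W$ of $[V_Y]_X$ on $\tt_{X, H_X}$; by construction the horizontal homotopies defining $W_i$ move each point at most to an adjacent branch across a vertical edge, and these displacements pass to the limit, so $W$ is obtained from $[V_Y]_X$ by a homotopy of the type in \Cref{HomotopyToBeCarried} (\ref{iSmallHomotopy}).

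Finally, both $W$ and $W_\infty$ are realizations of the same measured lamination $[V_Y]_X$ on $\tt_{X, H_X}$ produced by the procedure of \Cref{HomotopyToBeCarried}, differing only in the choice of approximating staircase curves $s_i$ (or equivalently, in the side of each relevant vertical slit on which the weight is placed). Hence by \Cref{UniqueUpToShifting} they are related by simultaneous shifts across vertical slits of $\tt_{X, H_X}$, and the conclusion holds for the chosen subsequence; since the argument applies to every subsequence, it holds in the limit.

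The main obstacle I expect is the semi-convergence of the train tracks: vertical slits in the limit track $\tt_{X, H_X}$ can split into or be created from several slits in $\tt_{X, H_{X,i}}$, and leaves of $[V_{Y_i}]_{X_i}$ can thread these slits on either side. The shifting operation is precisely what absorbs this ambiguity, so the delicate step is verifying that the collapse from $\tt^\ast$ down to $\tt_{X, H_X}$ turns differences at the finer slits into shifts at the coarser ones, rather than introducing any global discrepancy.
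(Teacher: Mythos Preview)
Your approach is essentially the same as the paper's: use semi-continuity of the train tracks to pass to a refinement, take a subsequential limit of the realizations, and then invoke \Cref{UniqueUpToShifting}. The paper's proof is a bit more direct on the middle step: rather than taking limits of the weight systems $W_i$ and then arguing that the resulting realization is still of the type produced by \Cref{HomotopyToBeCarried}, the paper tracks the defining segments $s_{i,1},\dots,s_{i,k_i}$ from the construction in \Cref{HomotopyToBeCarried} and passes to a subsequence so that these segments converge. Since the segments determine the realization, their limit is automatically a choice of the data in \Cref{HomotopyToBeCarried} for $\hH$, and \Cref{UniqueUpToShifting} applies immediately. This sidesteps the ``main obstacle'' you identify at the end, namely verifying that collapsing a limit of weight systems from $\tt^\ast$ to $\tt_{X,H_X}$ yields a realization of the specific homotopy type.
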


\begin{proof}
By the semi-continuity of $\tt_{X, H_X}$ in $H_X$ (\Cref{EuclideanTraintarck}), the limit of the train tracks $\tt_{X, H_{X, i}}$ is a subdivision of $\tt_{X, H_X}$.   
Let $s_{i, 1}, \dots, s_{i, k_i}$ be the segments from the proof of  Lemma \ref{HomotopyToBeCarried} which determine the realization $W_i$.
The segment $s_{j, i}$ converges up to a subsequence. 
Then, the assertion follows from Lemma \ref{UniqueUpToShifting}. 
\end{proof}

In summary, we have obtained the following.
\begin{boxedlaw}{13cm}

\begin{proposition}[Staircase train tracks]\Label{EuclideanTraintarck}
For all distinct $X, Y \in \TT \sqcup \TT^\ast$ and a compact neighborhood ${\bf N}_\infi$ of $\Lambda_\infi$ in $(\PML \times \PML) \minus \Delta^\ast$, 
if the train-track parameter $r  > 0$ is sufficiently small, then, for every $\mathbf{H}= (H_X, H_Y)$  of ${\bf N}_\infi$,
the staircase train track $\tt^r_{X, V_X}$  satisfies the following: 
 \begin{enumerate}
\item  $\tt^r_{X, H_X}$ changes semi-continuously in $\hH \in N_\infi$.   \Label{iContinuityOfVYinTX}
\item  $V_Y$ is essentially carried by $\tt^r_{X, H_X}$, and its realization on $\tt^r_{X, H_X}$ changes continuously $\hH \in N_\infi$, up to shifting across vertical slits. \Label{iRealzationUpToShifting} 
\end{enumerate}
\end{proposition}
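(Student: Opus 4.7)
The plan is to assemble the lemmas already established in this section. First I would fix distinct $X, Y \in \TT \sqcup \TT^\ast$ and a compact neighborhood ${\bf N}_\infi$ of $\Lambda_\infi$ in $(\PML \times \PML) \minus \Delta^\ast$. The argument rests on three inputs: (a) the Hubbard--Masur theorem, which identifies $\PML$ with the unit-area quadratic differentials $\QD^1(X)$ via the horizontal foliation and is continuous in both directions; (b) Lemma \ref{PolygonalTraintrackX} together with the unnamed semi-continuity lemma stated immediately after it; and (c) Lemma \ref{HomotopyToBeCarried} together with Proposition \ref{UniqueRealizationUpToShifting}.

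\textbf{Step 1: Uniform choice of $r$.} As $\hH = (H_X, H_Y)$ ranges over the compact set ${\bf N}_\infi$, the differentials $q_{X, H_X} \in \QD^1(X)$ and $q_{Y, H_Y} \in \QD^1(Y)$ trace out compact subsets, by Hubbard--Masur. Consequently the smallness thresholds on $r$ needed to apply Lemmas \ref{SmallZeroNbhd}, \ref{PolygonalZeroNbhd}, and \ref{PolygonalTraintrackX} can be chosen uniformly. Moreover, by Corollary \ref{UpperBoundForHorizontalSegments} the lengths of horizontal segments of $[V_Y]_X$ are uniformly bounded, and by Lemma \ref{Semitransversailty} the angle $\angle_{E_{X, H_X}}(V'_Y, H_X)$ is bounded away from zero, both uniformly in $\hH \in {\bf N}_\infi$. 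Pick a single $r > 0$ small enough that all of these hypotheses are simultaneously satisfied.

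\textbf{Step 2: Semi-continuity, part (\ref{iContinuityOfVYinTX}).} Suppose $\hH_i \to \hH$ in ${\bf N}_\infi$. Continuity of the Hubbard--Masur map gives $q_{X, H_{X,i}} \to q_{X, H_X}$ in $\QD^1(X)$. The semi-continuity lemma stated after Lemma \ref{PolygonalTraintrackX} then yields $\tt^r_{X, H_{X,i}} = t^r_{q_{X, H_{X,i}}}$ semi-converging to $\tt^r_{X, H_X} = t^r_{q_{X, H_X}}$. This gives (\ref{iContinuityOfVYinTX}).

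\textbf{Step 3: Carriage and shifting, part (\ref{iRealzationUpToShifting}).} Essential carriage of $[V_Y]_X$ by $\tt^r_{X, H_X}$ is precisely Lemma \ref{HomotopyToBeCarried}(\ref{iEssentiallyCarried}), and the existence of a genuine realization obtained by small horizontal homotopy across vertical slits is Lemma \ref{HomotopyToBeCarried}(\ref{iSmallHomotopy}). Applying this for each $\hH_i$ gives realizations $W_{Y, i}$; then Proposition \ref{UniqueRealizationUpToShifting} shows that any limit of $W_{Y, i}$ and the realization $W_Y$ at the limit $\hH$ differ only by shifting across vertical slits of $\tt^r_{X, H_X}$. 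This is precisely the continuity up to shifting claimed in (\ref{iRealzationUpToShifting}).

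I expect the main (and essentially only) obstacle to be the uniform choice of $r$ in Step~1: one must check that the various thresholds --- for contractibility of vertical neighborhoods, for the zero polygons $P_i^r$ to remain disjoint, for the horizontal segment lengths and transversality angles --- can be taken uniformly over the compact set ${\bf N}_\infi$. Each of these follows either from a direct compactness argument or from the continuity of the Hubbard--Masur correspondence, so no genuinely new idea beyond those already developed in \S\ref{sEuclideanTrainTracks} and \S\ref{sCompatibleTraintrackDecompositions} is required.
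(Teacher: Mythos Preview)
Your proposal is correct and matches the paper's treatment: the proposition is introduced in the paper with the phrase ``In summary, we have obtained the following,'' and is not given a separate proof; it simply packages the semi-continuity lemma following Lemma~\ref{PolygonalTraintrackX}, Lemma~\ref{HomotopyToBeCarried}, and Proposition~\ref{UniqueRealizationUpToShifting} exactly as you describe. Your explicit Step~1 on the uniform choice of $r$ over the compact set ${\bf N}_\infi$ makes transparent what the paper leaves implicit in the ``Moreover, these properties hold in a small neighborhood'' clause of Lemma~\ref{HomotopyToBeCarried}.
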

\end{boxedlaw}

\subsection{An induced train-track structure for diagonal horizontal foliations}\Label{sInducedTraintrack}
We first consider the diagonal case when $H_X = H_Y \eqqcolon H \in \PML$.  
We have constructed a staircase train track decomposition $\tt_{X, H}$ of $E_{X, H}$.
Moreover, the geodesic representative $[V_Y]_X$ is essentially carried by $\tt_{X, H}$. 
Thus, we homotope $[V_Y]_X$ along leaves of $H_X$, so that it is carried by the  train track $\tt_{X, H}$ (\Cref{HomotopyToBeCarried}).
Let $W_Y$ denote this topological lamination being carried on $\tt_{X, H}$ which is homotopic to $[V_Y]_X$.
 
From the realization $W_Y$ on $\tt_{X, H}$, we shall construct a polygonal train-track structure on $E_{Y, H_Y}$.  
The flat surfaces $E_{X, H}$ and $E_{Y, H}$  have the same horizontal foliation, and the homotopy of $[V_Y]_X$ to $W_Y$ is along the horizontal foliation. 
Therefore,  for each rectangular branch $R_X$ of $\tt_{X, H}$, if the weight of $W_Y$ is positive, by taking the inverse-image of the straightening map $\st \col E_{Y, H} \to E_{X, H}$ in \S \ref{sStraighteningMap}, we obtain a corresponding rectangle $R_Y$ on $E_{Y, H}$ whose vertical length is the same as $R_X$ and horizontal length is the weight. 
 Note that an edge of $R_Y$ may contain a singular point of $E_{Y, H_Y}$. 

Next let $P_X$ be a polygonal branch of $\tt_{X, H}$. 
Similarly, let $P_Y$ be the inverse-image of $P_X$ by the straighten map.
Note that $P_Y$ is not necessarily homeomorphic to $P_X$. 
In particular, $P_Y$ can be the empty set, a staircase polygon which may have a smaller number of vertices than  $P_X$.
Moreover,  $P_Y$  may be disconnected (\Cref{fBranchWithDisjointInterior}). 
Then, we have a (staircase) polygonal train-track decomposition $\tt_{Y, H}$ of $E_{Y, H}$.
By convention, non-empty $P_Y$, as above, is called a {\sf branch} of $\tt_{Y, H}$ corresponding to $P_X$ (which may be disconnected). 
In comparison to $\tt_{X, H}$, the one-skeleton of $\tt_{Y, H}$ may contain some singular points of $E_{Y, H_Y}$. 
Since $\tt_{Y, H}$ changes continuously in the realization  $W_Y$ of $[V_Y]_X$ on $\tt_{X, H}$, 
the semi-continuity of $\tt_{X, H}$ (\Cref{EuclideanTraintarck} (\ref{iContinuityOfVYinTX})) gives a semi-continuity of $\tt_{Y, H}$.
\begin{lemma}
$\tt_{Y, H}$ changes semi-continuously in the horizontal foliation $H$ in $\PML$ and the realization $W_Y$ of $[V_Y]_X$ on $\tt_{X, H}$.
\end{lemma}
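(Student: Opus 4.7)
My plan is to reduce the semi-continuity of $\tt_{Y,H}$ to that of $\tt_{X,H}$ from \Cref{EuclideanTraintarck}, combined with the continuity of the auxiliary data used in the construction of \S \ref{sInducedTraintrack}: the flat surfaces $E_{X,H}$ and $E_{Y,H}$, the straightening map $\st \col E_{Y,H} \to E_{X,H}$, and the realization $W_Y$ (up to shifting). The key observation is that each branch of $\tt_{Y,H}$ is the $\st$-preimage of a branch of $\tt_{X,H}$, so the semi-continuity of the latter should transfer to the former through a continuous family of preimages.

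First I would record the relevant continuity inputs. By Hubbard--Masur, the flat surfaces $E_{X,H_i}$ and $E_{Y,H_i}$ converge to $E_{X,H}$ and $E_{Y,H}$ together with their vertical and horizontal foliations, whenever $H_i \to H$ in $\PML$. Because the straightening map from \S \ref{sStraighteningMap} is built from geodesic representatives of leaves of $\ti{V}_Y$ and $\ti{H}_Y$ on the non-positively curved universal cover of $E_{X,H}$, and since geodesic representatives on a flat surface depend continuously on the underlying flat structure and on the leaf, the maps $\st_i \col E_{Y,H_i} \to E_{X,H_i}$ converge to $\st \col E_{Y,H} \to E_{X,H}$ uniformly on compact sets away from the singular locus of the target.

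Now fix a sequence $(H_i, W_{Y,i}) \to (H, W_Y)$, where $W_{Y,i}$ converges to $W_Y$ up to shifting across vertical slits in the sense of \Cref{UniqueRealizationUpToShifting}. Passing to a subsequence, \Cref{EuclideanTraintarck}(\ref{iContinuityOfVYinTX}) yields a limit $T'_X$ of $\tt_{X,H_i}$ that refines $\tt_{X,H}$, and, after a further subsequence, the shifts relating the $W_{Y,i}$ stabilize so that the collection of rectangular branches carrying positive $W_{Y,i}$-weight is eventually constant and refines the corresponding collection for $W_Y$. By the construction of \S \ref{sInducedTraintrack}, each branch of $\tt_{Y,H_i}$ is the $\st_i$-preimage of such a rectangular (positive-weight) or polygonal branch of $\tt_{X,H_i}$. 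Taking the Hausdorff limit of these preimages and using $\st_i \to \st$, I obtain a train-track structure on $E_{Y,H}$ whose branches are the $\st$-preimages of the branches of $T'_X$. Since $T'_X$ refines $\tt_{X,H}$, this limit refines $\tt_{Y,H}$, giving the desired semi-continuity.

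The main obstacle is combinatorial bookkeeping. A branch $P_Y$ can be disconnected or even empty, and edges of $\tt_{Y,H}$ may pass through singular points of $E_{Y,H}$. In the limit, two rectangles in $\tt_{X,H_i}$ that merge can cause formerly distinct branches of $\tt_{Y,H_i}$ to coalesce along newly identified edges, and singular points of $E_{Y,H_i}$ can accumulate onto the interior of limiting edges or branches, introducing extra vertices. Both phenomena only add identifications or vertices to the one-skeleton, which is precisely what the \emph{refinement} clause in the definition of semi-convergence is designed to accommodate. Once these are tracked carefully through the $\st_i$-preimage construction, the argument closes.
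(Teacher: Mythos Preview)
Your proposal is correct and follows essentially the same approach as the paper. The paper's own argument consists of the single sentence preceding the lemma, ``The semi-continuity of $\tt_{X, H}$ (\Cref{EuclideanTraintarck}(\ref{iContinuityOfVYinTX})) gives a semi-continuity of $\tt_{Y, H}$,'' and your proof is a detailed unpacking of exactly this: you feed the semi-continuity of $\tt_{X,H}$, the continuity of the flat structures and the straightening map, and the stability of the realization $W_Y$ through the branch-by-branch preimage construction of \S\ref{sInducedTraintrack}.
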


    \subsection{Filling properties}
\begin{lemma}\Label{Filling}
Let $X \neq Y \in \TT \sqcup \TT^\ast$.
For every diagonal $H_X = H_Y$, every component of $H_{X, H} \minus [V_Y]_X$ is contractible, i.e. a tree.  
\end{lemma}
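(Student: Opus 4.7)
The plan is to exploit the fact that when $H_X = H_Y = H$, the pair $(H, V_Y)$ is a filling pair of measured foliations on $S$: since $V_Y$ and $H$ are the vertical and horizontal foliations of the holomorphic quadratic differential $q_Y$ on the closed surface $Y$ of genus $> 1$, every essential simple closed curve $\gamma$ on $S$ satisfies $i(\gamma, H) + i(\gamma, V_Y) > 0$. Measured-lamination intersection is a topological invariant, so this filling property is available on $E_{X,H}$ as well, where $H$ is realized by $H_X$ and $V_Y$ is realized by the geodesic representative $[V_Y]_X$.

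The next step is to record the local structure of a component $R$ of the complement of $[V_Y]_X$. By \Cref{FinitenessOfNon-transversalSegment} the non-transverse part $G_Y$ is a finite graph whose vertices are zeros of $E_{X,H}$, so the singularities of $H_X$ are all on $\partial R$; hence $H_X$ restricts to a nonsingular foliation on $R$. Using \Cref{Semitransversailty} and \Cref{UpperBoundForHorizontalSegments}, every horizontal leaf of $H_X$ entering $R$ must eventually hit the transverse part of $[V_Y]_X$, so each leaf of $H_X|_R$ is an arc with both endpoints on $\partial R$. I would then pass to the universal cover: each component $\tilde R$ of $\tilde E_{X,H} \setminus [\tilde V_Y]_X$ is bounded by lifts of vertical leaves (and finitely many horizontal arcs), and the horizontal foliation $\tilde H_X|_{\tilde R}$ has no holonomy, so its leaf space is a simply connected $1$-complex — a tree. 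Pushing back to $R = \tilde R / \mathrm{Stab}(\tilde R)$, contractibility of $R$ amounts to triviality of $\mathrm{Stab}(\tilde R)$ in $\pi_1(S)$.

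The contradiction argument closes the loop: if $R$ were not contractible it would contain an essential simple closed curve $\gamma$ on $S$. Since $\gamma$ is disjoint from $[V_Y]_X$ we have $i(\gamma, V_Y)=0$, so by the filling property $i(\gamma, H) > 0$. But $\gamma$ lies in $R$, and transversality to $H_X$ means $\gamma$ is carried by a nontrivial loop in the leaf space of $\tilde H_X|_{\tilde R}$, which we just showed is a tree — a contradiction. Equivalently, a nontrivial deck translation preserving $\tilde R$ would act on a tree with positive translation length while preserving the horizontal leaf space structure, which forces $\gamma$ to have positive intersection with one of the boundary vertical leaves of $\tilde R$, contradicting $\gamma \subset R$.

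The main obstacle I expect is the second step: legitimately identifying the leaf-space quotient of $\tilde H_X|_{\tilde R}$ with a tree, which requires ruling out closed horizontal leaves inside $R$ (a closed horizontal leaf in $R$ would itself be an essential curve disjoint from $[V_Y]_X$ and the filling argument handles this), and handling the finitely many points where $G_Y \subset [V_Y]_X$ meets $\partial R$ so that the leaf space is still a genuine tree rather than a tree with identifications. Once that local combinatorial picture is in place, the contradiction argument via the filling pair is essentially immediate.
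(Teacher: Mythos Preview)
Your core idea --- that $H$ and $V_Y$ form a filling pair because they are the horizontal and vertical foliations of the flat structure $E_{Y,H_Y}$ --- is exactly right and is the entire content of the paper's proof. The paper's argument is one sentence: recall that $H_Y$ and $V_Y$ are the horizontal and vertical foliations of $E_{Y,H_Y}$; since $H_X = H_Y$, the lemma follows. Nothing more.

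You have, however, likely misread the statement and then built far more machinery than needed. The phrase ``i.e.\ a tree'' and the corollary that follows (about horizontal \emph{graphs} of $H_X$) indicate that the lemma concerns the one-dimensional components of horizontal leaves of $H_X$ minus $[V_Y]_X$, not two-dimensional complementary regions $R\subset E_{X,H}$. For the one-dimensional statement your filling observation finishes the job immediately: a cycle $\gamma$ inside a horizontal leaf has $i(\gamma,H)=0$, and if it misses $[V_Y]_X$ then $i(\gamma,V_Y)=0$; filling forces $\gamma$ to be inessential, but any closed cycle in a singular horizontal leaf on a genus $\geq 2$ flat surface is essential (Poincar\'e--Hopf index). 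Done.

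Everything else in your proposal --- passing to $\tilde R$, identifying the leaf space of $\tilde H_X|_{\tilde R}$ with a tree, analyzing where $G_Y$ meets $\partial R$, and the final step claiming that a hyperbolic action on the leaf-space tree ``forces $\gamma$ to have positive intersection with one of the boundary vertical leaves'' --- is unnecessary for the actual statement, and that last implication is not justified as written (an element can act hyperbolically on a tree without the conclusion you draw). So: keep the filling sentence, drop the rest.
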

 \begin{proof}
 Recall that $H_Y$ and $V_Y$ are the horizontal and vertical foliations of the flat surface $E_{Y, H_Y}$.
  Then, since $H_X = H_Y$, the lemma follows. 
\end{proof}  

A {\sf horizontal graph} is a connected graph embedded in a horizontal leaf (whose endpoints may not be at singular points). 
  Then, Lemma \ref{Filling} implies the following.
\begin{corollary}\Label{SmallTransversalMeasureThenContractible}
Let $X \neq Y \in \TT \sqcup \TT^\ast$.
For every diagonal pair $H_X = H_Y$, let $r> 0$ be the train-track parameter given by Lemma \ref{HomotopyToBeCarried}. 
Then, for sufficiently small $\ep > 0$, if a horizontal graph $h$ of  $H_X$ has total transversal measure less than $\ep$ induced by the realization $W_Y$, then $h$ is contractible. 
\end{corollary}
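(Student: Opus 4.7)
The plan is to combine Lemma \ref{Filling} with the finiteness of the staircase train-track $\tt_{X,H}$ to obtain a uniform positive lower bound on the $W_Y$-transversal measure of any non-contractible horizontal graph. First I would observe that if a horizontal graph $h$ is disjoint from $[V_Y]_X$, then $h$ lies in a single connected component of $H_{X,H} \minus [V_Y]_X$, which by Lemma \ref{Filling} is a tree; hence $h$ is contractible. So non-contractibility of $h$ forces $h$ to meet $[V_Y]_X$, equivalently to have strictly positive $W_Y$-transversal measure. The substance of the corollary is to show that this positive measure is bounded away from zero, uniformly in $h$.

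For the lower bound I would take a combinatorial route. A non-contractible connected graph $h$ contains an essential cycle $\gamma$. Since horizontal leaves of $H_X$ admit no singularity-free disk bound (by a Poincar\'e--Hopf count on the horizontal foliation), $\gamma$ cannot lie in a single polygonal branch of $\tt_{X,H}$, because each such branch is a contractible staircase polygon. Hence $\gamma$ either wraps around a horizontal cylindrical branch or fully traverses at least one rectangular branch from one vertical edge to the opposite vertical edge. In either case, the contribution to $W_Y(\gamma)$ is bounded below by the $W_Y$-weight of that branch. The train-track $\tt_{X,H}$ has only finitely many branches, so the minimum positive weight $w_{\min}$ of $W_Y$ on any branch in its support is a well-defined positive real. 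Consequently $W_Y(h) \geq W_Y(\gamma) \geq w_{\min}$, and setting any $\ep < w_{\min}$ gives the corollary.

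The main obstacle is justifying that an essential cycle in a horizontal leaf must fully traverse some rectangular branch or loop around a horizontal cylindrical branch. The subtle case is a cycle that bounces between polygonal branches via singular points of the horizontal foliation without ever crossing a full rectangle. One rules this out using that polygonal branches are contractible (so admit no essential cycle on their own) together with the fact that the construction of $\tt_{X,H}$ in \S\ref{sPolygonalTrainTrack} ensures that consecutive polygonal branches along a horizontal leaf are separated either by a rectangular branch or by a cylindrical branch. As a back-up plan, one could instead run a compactness argument: if the corollary failed, a sequence $h_n$ of non-contractible horizontal graphs of fixed combinatorial type with $W_Y$-measures tending to zero would Hausdorff-subconverge to a non-contractible horizontal graph $h_\infty$ whose intersection with $[V_Y]_X$ has zero $W_Y$-measure, so (by lower semicontinuity) disjoint from $[V_Y]_X$, contradicting Lemma \ref{Filling}.
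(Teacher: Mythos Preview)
Your overall strategy---deduce the corollary from Lemma \ref{Filling} by showing that every non-contractible horizontal graph carries a uniformly positive amount of $W_Y$-measure---is exactly the paper's, which simply records the statement as a consequence of Lemma \ref{Filling} without further detail.

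The combinatorial route via $w_{\min}$ has a real gap, though. You take $w_{\min}$ to be the minimum \emph{positive} branch weight, but you never rule out that the essential cycle $\gamma$ traverses only rectangular branches of zero $W_Y$-weight: the horizontal homotopy from $[V_Y]_X$ to $W_Y$ in Lemma \ref{HomotopyToBeCarried} can slide all intersections out of a given rectangle, so a priori $\gamma$ could pick up no measure at all from your bound. Your compactness backup is morally correct but imprecisely stated (nothing fixes the combinatorial type, and horizontal graphs have no a priori diameter bound, so Hausdorff subconvergence needs justification). Both issues are bypassed by a more direct argument: a non-contractible horizontal graph contains an essential closed curve $\gamma$ lying in a horizontal leaf; such $\gamma$ is either a smooth periodic horizontal leaf or a cycle of horizontal saddle connections, and there are only finitely many of either on the fixed flat surface $E_{X,H}$. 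For each one, $W_Y(\gamma) \geq i(V_Y, [\gamma]) > 0$, the strict inequality holding because $i(H_Y, [\gamma]) = 0$ while $V_Y$ and $H_Y = H_X$ jointly fill $S$. Taking $\ep$ below the minimum of these finitely many intersection numbers completes the proof.
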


By continuity, 
\begin{proposition}\Label{ContractibleHorizontalArcs}
  There is a neighborhood $N$ of the diagonal $\Delta$ in $\PML \times \PML$ and $\ep > 0$ such that, if the train-track parameter $r > 0$ is sufficiently small, then for every $(H_X, H_Y) \in N$, if a horizontal graph $h$  of $H_X$ has total transversal measure less than $\del$ induced by $W_Y$, then $h$ is contractible. 
\end{proposition}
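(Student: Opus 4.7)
The plan is to deduce \Cref{ContractibleHorizontalArcs} from the diagonal case \Cref{SmallTransversalMeasureThenContractible} by a compactness argument, using the semi-continuous dependence of the train-track data from \Cref{EuclideanTraintarck} and \Cref{UniqueRealizationUpToShifting}.

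First, I would argue by contradiction. If the statement fails, there are sequences $(H_{X,n},H_{Y,n}) \to \Delta$ in $\PML\times\PML$, positive reals $\ep_n \to 0$ and $r_n \to 0$, and non-contractible horizontal graphs $h_n$ of $H_{X,n}$ in $E_{X,H_{X,n}}$ satisfying $W_{Y,n}(h_n) < \ep_n$ on the train-track $\tt^{r_n}_{X,H_{X,n}}$. By compactness of $\PML\times\PML$ a subsequence has $(H_{X,n},H_{Y,n}) \to (H,H)$ with $H \in \PML$; since the diagonal $\Delta$ is disjoint from the closed set $\Delta^\ast$, the tail of the sequence lies in $(\PML\times\PML)\setminus\Delta^\ast$, so \Cref{EuclideanTraintarck} applies uniformly. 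I fix a single small train-track parameter $r>0$ valid at $(H,H)$ and along the tail, so that the semi-convergence of $\tt^{r}_{X,H_{X,n}}$ to a refinement of $\tt^{r}_{X,H}$ and the continuity-up-to-shifting of the realizations $W_{Y,n}$ are both available.

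Second, I would extract a geometric limit of the $h_n$. Non-contractibility of $h_n$ means $h_n$ contains an embedded cycle $\gamma_n$ lying in a horizontal leaf of $H_{X,n}$ on $E_{X,H_{X,n}}$; such a cycle is either the core of a horizontal cylinder or a closed cycle in a singular horizontal graph. In either case, the uniform upper bound on horizontal segment lengths from \Cref{UpperBoundForHorizontalSegments} on nearby $E_{X,H_X}$, combined with the vanishing of $W_{Y,n}(h_n)$ and the fact that $W_{Y,n}$ stays transverse to horizontal leaves with near-vertical direction (because $(H_{X,n},H_{Y,n})$ approaches the diagonal), confines $\gamma_n$ into a fixed compact region of $E_{X,H_{X,n}}$ and uniformly bounds their lengths. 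After passing to a further subsequence, $h_n$ Hausdorff-converges on $X$ to a connected horizontal graph $h \subset E_{X,H}$ still containing a nontrivial cycle.

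Third, I would transfer the measure estimate. Since shifting across vertical slits is a horizontal homotopy that preserves the total transverse measure of a horizontal graph, \Cref{UniqueRealizationUpToShifting} implies that $W_Y(h) \le \liminf_n W_{Y,n}(h_n) = 0$ for a suitable limit realization $W_Y$ of $[V_Y]_X$ on $\tt^{r}_{X,H}$. Applying \Cref{SmallTransversalMeasureThenContractible} at the diagonal point $(H,H)$ then forces $h$ to be contractible, contradicting the surviving cycle. The main obstacle is step two, namely passing non-contractibility to a Hausdorff limit of graphs $h_n$ living on varying flat surfaces with only semi-continuously varying train-track decompositions; the key inputs are \Cref{UpperBoundForHorizontalSegments}, the near-verticality of $W_{Y,n}$ near the diagonal, and a compactness argument controlling the geometry of cycles in horizontal graphs on $E_{X,H_{X,n}}$.
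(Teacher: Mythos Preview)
Your approach is essentially the paper's: the paper's entire proof of \Cref{ContractibleHorizontalArcs} is the phrase ``By continuity,'' preceding the statement, deducing it from \Cref{SmallTransversalMeasureThenContractible}. Your compactness-and-contradiction argument is a reasonable way to make that continuity precise.

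One concrete error: your invocation of \Cref{UpperBoundForHorizontalSegments} in step two is misplaced. That corollary bounds the horizontal segments of the geodesic representatives $[\ell_Y]_X$ of leaves of $V_Y$, not the horizontal graphs $h_n$ sitting inside leaves of $H_{X,n}$; these are different objects. What you actually need in step two is much lighter: a non-contractible horizontal graph contains a closed horizontal cycle $\gamma_n$, and any such cycle is a closed geodesic on the unit-area flat surface $E^1_{X,H_{X,n}}$. Since $H_{X,n}\to H$ in the compact space $\PML$, the flat structures converge and one can pass to a Hausdorff-limit cycle $\gamma$ in a horizontal leaf of $E_{X,H}$ (or argue directly that the $W_Y$-measure of any such cycle is bounded below by a positive constant near the diagonal, using \Cref{Filling}). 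Also, your contradiction setup lets $r_n\to 0$ but then fixes a single $r$; it is cleaner to fix $r$ from the start via \Cref{EuclideanTraintarck} (which gives one $r$ valid on a compact neighborhood of $\Delta$) and then run the contradiction only over $(H_{X,n},H_{Y,n})$ and $\ep_n$.
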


\subsection{Semi-diffeomorphic surface train-track decompositions}\Label{sCompatibleEuclideanTraintracks}
\subsubsection{Semi-diffeomorphic train tracks for diagonal foliation pairs}\Label{sOnTheDiagonal}
\begin{definition}
Let $F_1$ and $F_2$ be surfaces with staircase boundary. 
Then $F_1$ is {\sf semi-diffeomorphic} to $F_2$, if there is a homotopy equivalence $\phi\col F_1 \to F_2$ which collapses some horizontal edges of $F_1$ to points: 
To be more precise, 
\begin{itemize}
\item the restriction of $\phi$ to the interior $\Int F_1$ is a diffeomorphism onto the interior $\Int F_2$; 
\item $\phi$ takes  $\bdr F_1$ to $\bdr F_2$, and  $\Int F_1$ to $\Int F_2$;
\item for every vertical edge $v$ of $F_1$, the map $\phi$ takes $v$ diffeomorphically onto a vertical edge or a segment of a vertical edge in $F_2$;
\item for every horizontal edge $h$ of $F_1$, the map $\phi$ takes $h$ diffeomorphically onto a horizontal edge of $F_2$ or collapses $h$ to a single point on a vertical edge of $F_2$.
\end{itemize}

Let $T$ and $T'$ be train-track structures of flat surfaces $E$ and $E'$, respectively, on $S$. 
Then $T$ is {\sf semi-diffeomorphic} to $T'$, if there is a marking preserving continuous map $\phi \col E \to E'$, such that, 
\begin{itemize}
\item $T$ and $T'$ are homotopy equivalent by $\phi$ (i.e. their 1-skeletons are homotopy equivalent), and 
\item for each branch $B$ of $T$, there is a corresponding branch $B'$ of $T'$ such that $\phi | B$ is a semi-diffeomorphism onto $B'$.
\end{itemize}
\end{definition}
In \S \ref{sInducedTraintrack}, for every $H \in \PML$, we constructed a staircase train-track structure $\tt_{Y, H}$ of the flat surface $E_{Y, H}$ with staircase boundary from a realization $W_Y$ of $[V_Y]_X$ on the train-track  structure  $\tt_{X, H}$ of $E_{X, H}$. 
However, when a branch $B_X$ of $\tt_{X, H}$ corresponds to a branch $B_Y$ of $\tt_{Y, H}$, in fact, $B_Y$ might not be connected, and in particular not semi-diffeomorphic to $B_X$ (\Cref{fBranchWithDisjointInterior}, Left).
In this section, we modify $\tt_{X, H}$ and $\tt_{Y, H}$ by gluing some branches in a corresponding manner, so that corresponding branches are semi-diffeomorphic after a small perturbation.

\begin{figure}
\begin{overpic}[scale=.059,% grid,tics=10
] {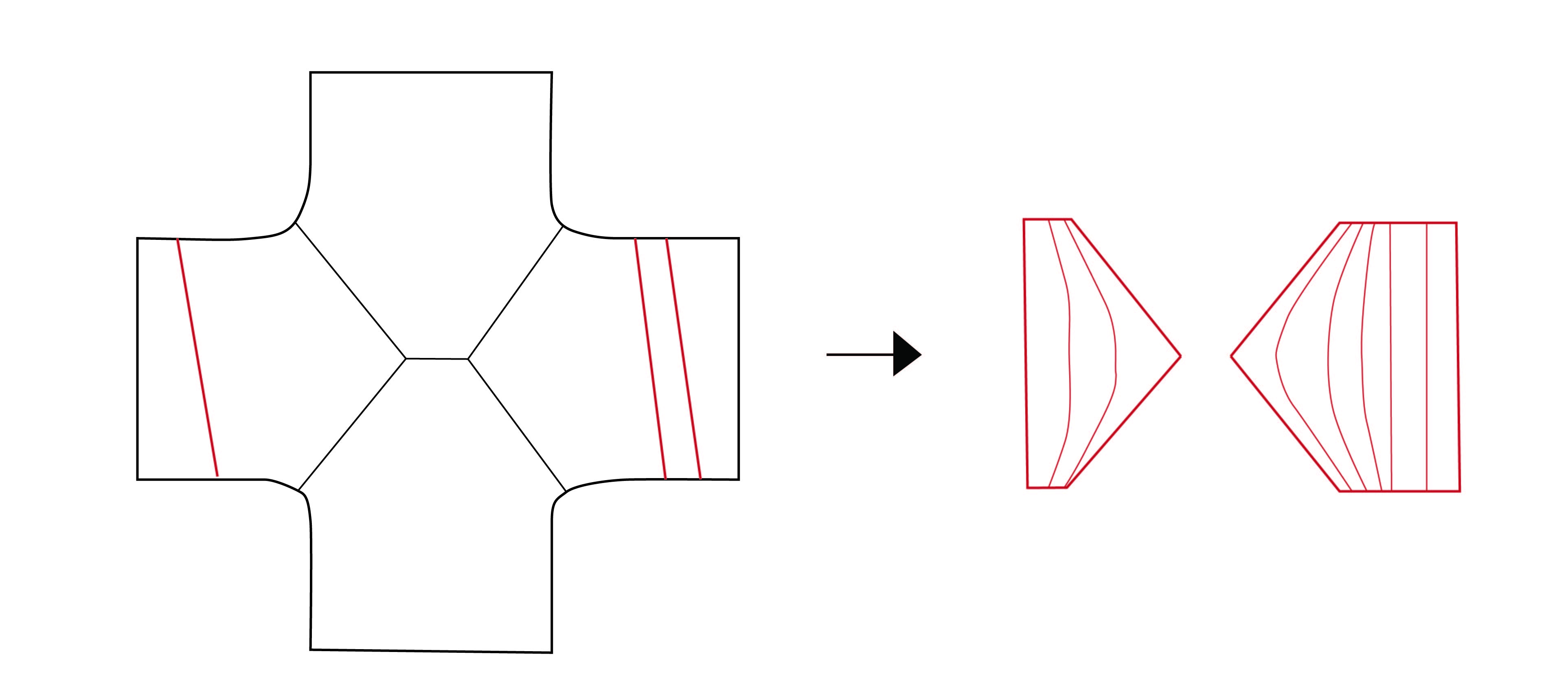} % figure file
  \put(25 ,28 ){\textcolor{black}{$P_X$}}  
 \put(73, 27 ){\textcolor{RedOrange}{$P_Y$}}  
 %   \put( , ){\textcolor{}{$$}}  
 %   \put( , ){}  
      \end{overpic}
\begin{overpic}[scale=.059%   , grid,tics=10
] {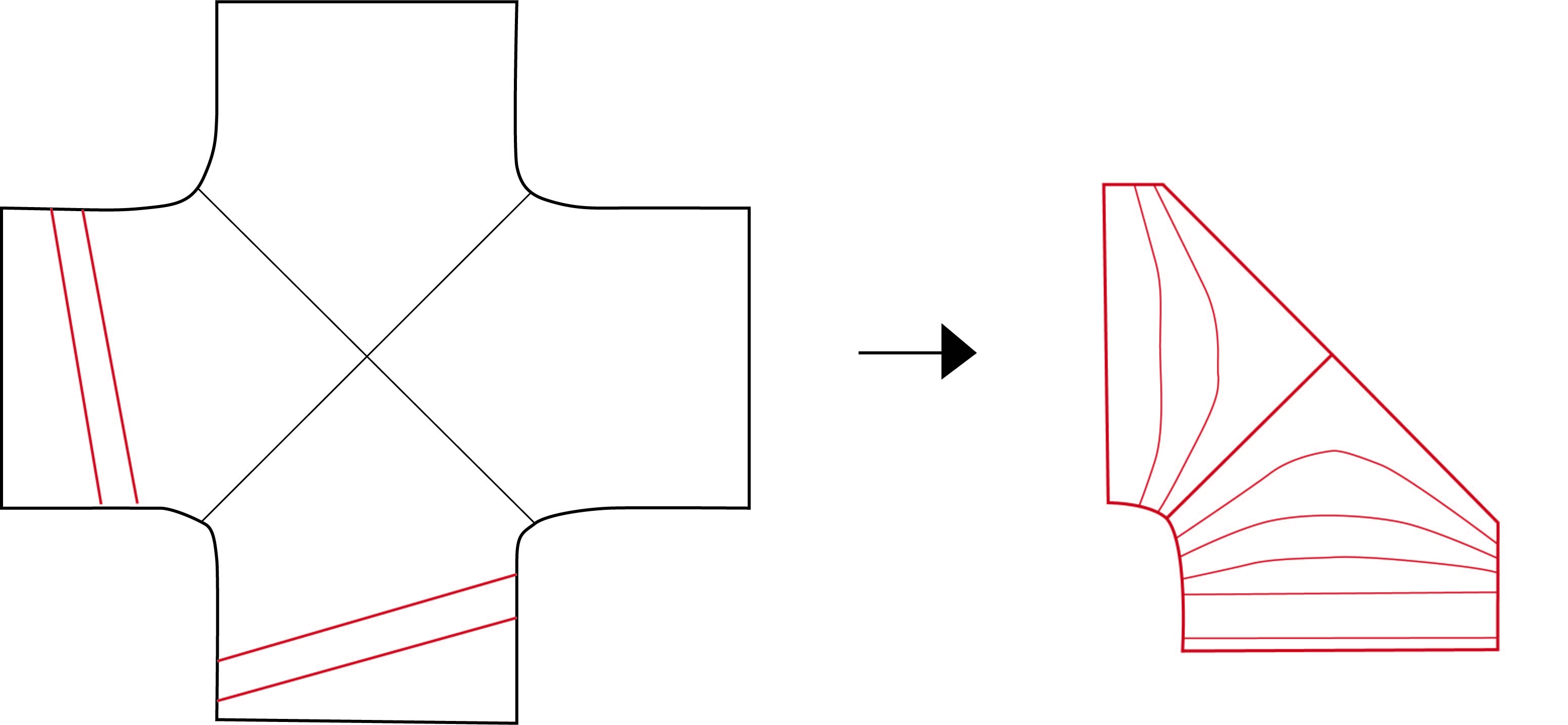} % figure file
  \put(21 ,32 ){\textcolor{black}{$P_X$}}  
 \put(82, 28 ){\textcolor{RedOrange}{$P_Y$}}  
 %   \put( , ){\textcolor{}{$$}}  
 %   \put( , ){}  
      \end{overpic}
\caption{Some non-diffeomorphic correspondences of branches.}\Label{fBranchWithDisjointInterior} 
\end{figure}
  Let $v$ be a {\sf (minimal) vertical edge} of $\tt_{X, H}$, i.e. a vertical edge not containing a vertex in its interior.  
 Let $B_X$ be a branch of $\tt_{X, H}$ whose boundary contains $v$. 
 Suppose that $\alpha$ is an arc in $B_X$ connecting different horizontal edges of $B_X$. 
 Then, we say that $v$ and $\alpha$ are {\sf vertically parallel 
} in $B_X$ if
\begin{itemize}
\item $\alpha$ is homotopic in $B_X$ to an arc $\alpha'$ transversal to the horizontal foliation $H | B_X$, keeping its endpoints on the horizontal edges,  and
\item $v$ diffeomorphically projects into $\alpha'$ along the horizontal leaves $H_X | B_X$ (see Figure \ref{fHorizontallyParallel}).  
\end{itemize} 
   The {\sf $W_Y$-weight} of $v$ in $B_X$ is the total weight of the leaves of $W_Y | B_X$ which are vertically parallel 
 to $v$. 
   \begin{figure}
\begin{overpic}[scale=.15%, grid,tics=10
] {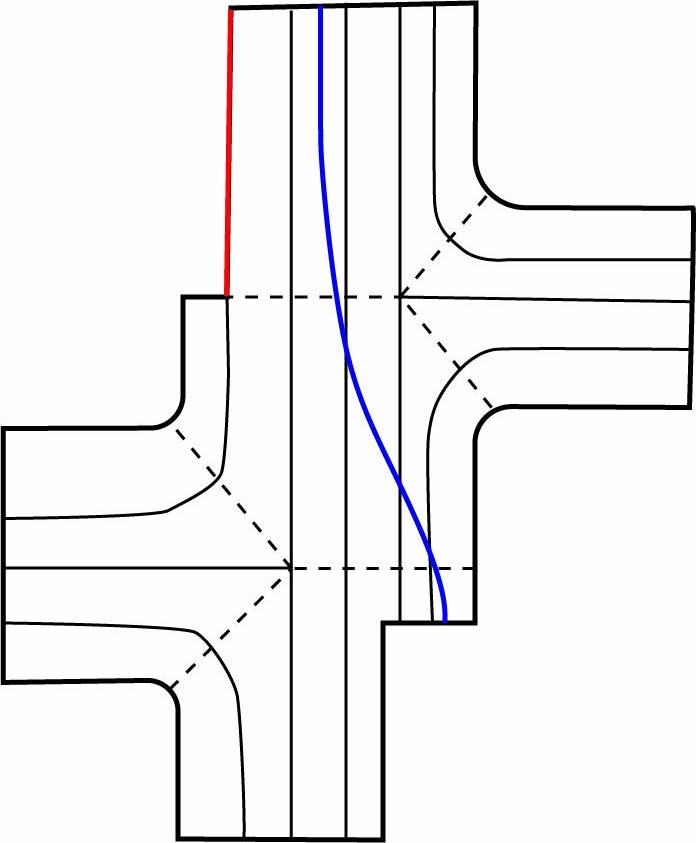} % figure file
\put(20, 80 ){$\tcr{v}$}  
\put(35 ,40 ){\contour{white}{$\tcb{\alpha'}$}} 
 %   \put( , ){\textcolor{}{$$}}  
 %   \put( , ){}  
      \end{overpic}
\caption{The curve $\alpha'$ is vertically parallel 
 to $v$.}\label{fHorizontallyParallel}
\end{figure}

Let $w$ be the $W_Y$-weight of $v$ in $B_X$.
Then, there is a staircase rectangle in $B_Y$ such that a vertical edge corresponds to $v$ and the horizontal length is $w$.

  Consider a horizontal arc $\alpha_h$ in $B$ connecting a point on $v$ to a point on another vertical edge of $B$;  clearly, the transversal measure of $W_Y$ of $\alpha_h$ is a non-negative number. 
  Then, the {\sf $W_Y$-weight} of  $v$ in $B$ is the minimum of the $W_Y$-transversal measures of all such horizontal arcs $\alpha_h$ starting from $v$. 

Fix $0 < \del < r$ to be a sufficiently small positive number.  
 We now consider both branches  $B_1, B_2$  of $\tt_{X, H}$ whose boundary contains $v$. 
 Suppose that, the $W_Y$-weight of $v$ is less than $\del$ in $B_i$ for both  $i = 1,2$; then, glue $B_1$ and $B_2$ along $v$, so that $B_1$ and $B_2$ form a single branch. 
Let $T_{X, H}^{r, \del}$, or simply $T_{X, H}$, denote the train-track structure of $E_{X, H}$ obtained by applying such gluing, simultaneously,  branches of $\tt_{X, H}$ along all minimal vertical edges satisfying the condition. 
Then, since $\tt_{X, H}$ is a refinement of $T_{X, H}$, the realization $W_Y$ of $[V_Y]_X$ on $\tt_{X, H}$ is also a realization on  $T_{X, H}$. 
Similarly, let $T_{Y, H}^{r ,\del}$, or simply $T_{Y, H}$,  be the train-track structure of $E_{Y, H}$ obtained by the realization $W_Y$ on $T_{X, H}$; then $\tt_{Y, H}$ is a refinement of $T_{Y, H}$. 
   \Cref{Filling} implies the following. 
\begin{lemma}\Label{TransversalBranches}
Every transversal branch of $T_{X, H}$ has a non-negative Euler characteristic. 
\end{lemma}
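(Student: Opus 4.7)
The plan is a Gauss--Bonnet / Euler--Poincar\'e argument for foliated surfaces with corners, combined with a corner-count inequality derived from Lemma \ref{Filling}.

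Since $B$ is transversal, $B \cap G_Y = \emptyset$, and the singular leaves of the geodesic lamination $[V_Y]_X$ concentrate on the vertices of $G_Y$; therefore the realization $W_Y$ restricts to $B$ as a measured foliation with no interior singularities, whose leaves are transverse to the horizontal edges of $\partial B$ and tangent to its vertical edges. Applying the Euler--Poincar\'e formula to a surface with staircase boundary carrying such a foliation, the interior-singularity contributions vanish and one is left with only the corner contributions
\[
\chi(B) \;=\; \frac{c_+ - c_-}{4},
\]
where $c_+$ counts convex corners (interior angle $\pi/2$) and $c_-$ counts concave corners (interior angle $3\pi/2$) of $\partial B$.

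To conclude $\chi(B) \geq 0$ it therefore suffices to show $c_+ \geq c_-$. The plan is to argue by contradiction: if $c_- > c_+$, then following a horizontal leaf of $H_X$ close to $\partial B$ and recording its displacement at each concave corner (where the boundary turns inward, pushing the leaf back into $B$) produces a closed horizontal graph inside $B$, whose $W_Y$-transversal measure can be made arbitrarily small by choosing the train-track parameters $r, \delta$ small enough, since the only contributions come from the glued vertical edges of $T_{X,H}$, each of $W_Y$-weight less than $\delta$. Such a non-contractible component of $H_X \setminus [V_Y]_X$ contradicts Lemma \ref{Filling}, which forces every component to be a tree (and contradicts the contractibility statement of Proposition \ref{ContractibleHorizontalArcs} in the uniform form).

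The main obstacle will be making the last paragraph precise: associating to each concave corner of $\partial B$ a ``horizontal spur'' inside $B$ that points inward, and showing that an excess of concave over convex corners allows these spurs to be concatenated along horizontal leaves into a genuine non-contractible closed horizontal graph. The bookkeeping involves tracking how horizontal leaves enter, turn, and exit $B$ through the alternating corners of the staircase boundary, and exploiting the small $W_Y$-weight condition on the vertical edges glued in the construction of $T_{X,H}$ from $\tt_{X,H}$; an induction on the number of excess concave corners, together with a local model for how a horizontal leaf behaves near a concave corner bounded by low-weight vertical edges, appears to be the natural route.
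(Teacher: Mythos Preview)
Your overall instinct—that the lemma should come down to Lemma~\ref{Filling} (equivalently Corollary~\ref{SmallTransversalMeasureThenContractible})—matches the paper, whose entire proof is the one line ``\Cref{Filling} implies the following.'' The contradiction you sketch in your final paragraphs, producing a non-contractible horizontal graph of small $W_Y$-measure from an excess of gluings, is exactly the mechanism the paper has in mind.

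The gap is in your first step. You assert that on a transversal branch $B$ the realization $W_Y$ restricts to a measured foliation \emph{with no interior singularities}, and from this you derive $\chi(B)=(c_+-c_-)/4$. Neither claim is justified. First, $W_Y$ is a lamination carried by $\tt_{X,H}$: by the definition of ``carried'' its leaves enter and exit each $\tt$-branch through horizontal edges only, so after gluing they still avoid the interior vertical edges of $B$ and do not fill $B$ as a foliation. Second, even if one homotopes $W_Y$ to be carried by $B$ as a single branch, nothing prevents the singularities of $V_Y$ (the zeros of $q_Y$) from landing in $B$ under the straightening map, and nothing prevents $[V_Y]_X$ from branching at a zero of $q_X$ lying in $B$; transversality ($B\cap G_Y=\emptyset$) only says that $[V_Y]_X$ has no \emph{horizontal} segments in $B$, not that it has no prong singularities there. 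Any such singularity contributes a negative index term, so $\chi(B)\le (c_+-c_-)/4$ rather than equality, and establishing $c_+\ge c_-$ would no longer suffice.

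The cleaner route—and what the paper intends—is to drop the Euler--Poincar\'e reformulation entirely and argue directly: $B$ is assembled from disk branches of $\tt_{X,H}$ glued along vertical edges each of $W_Y$-weight $<\delta$, so $\chi(B)$ equals the Euler characteristic of the gluing graph; if this is negative, that graph has at least two independent cycles, and chaining the low-weight horizontal arcs that witness the gluing condition around these cycles yields a non-contractible horizontal graph with $W_Y$-measure bounded by a fixed multiple of $\delta$, contradicting Corollary~\ref{SmallTransversalMeasureThenContractible} once $\delta$ is small. Your ``main obstacle'' paragraph is essentially this argument; it just does not need the corner-counting detour in front of it.
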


Let $B$ be a branch of $T _{Y, H}$, and let $v$ be a minimal vertical edge of $T_{Y, H}$ contained in the boundary of $B$. 
 Let $B'$ be the branch of $T_{Y, H}$ adjacent to $B$ across $v$. 
Suppose that the $W_Y$-weight of $v$ is less than $\del$ in $B$. 
 Then, it follows from the construction of $T_{X, H}$, that there is a staircase rectangle $R_v$ in $B'$, such that the horizontal length of $R_v$ is $\del/3$ and that $v$ is a vertical edge of $R_v$. 
Let $v$ be a vertical edge of $B$. 
Then we enlarge $B$ by gluing the rectangle $R_v$ along $v$, and we remove $R_v$ from $B'$ (\Cref{fCutAndPasteRectangle})---  this cut-and-paste operation transforms $T_{Y, H}$ by pushing the vertical edge $v$ by $\del/3$ into $B'$ in the horizontal direction.
%%%
\begin{figure}
\begin{overpic}[scale=.2%, grid,tics=10
]{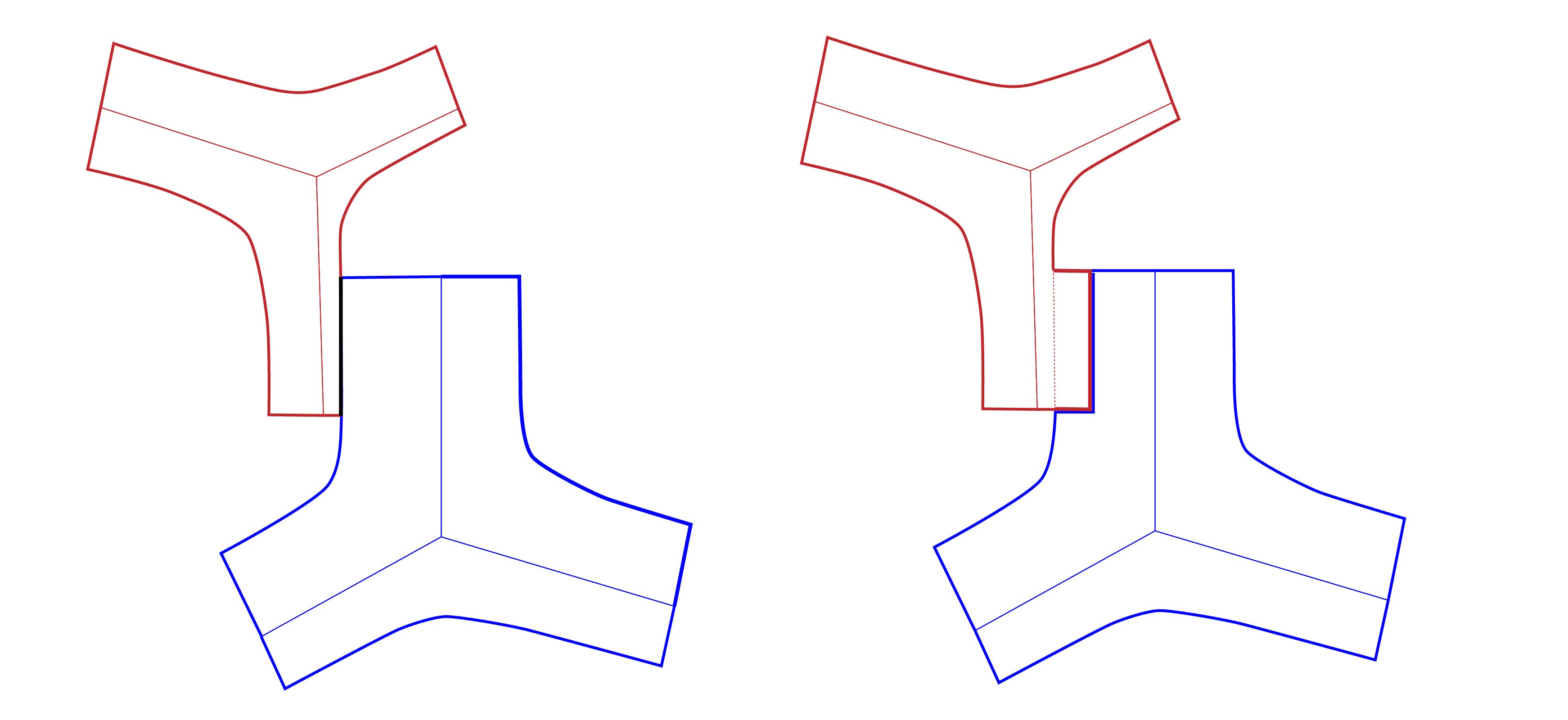} % figure file 
\linethickness{.3pt}
\put(70,30){\color{Red}\vector(-1,-1.8){1.7}}
\put(69 ,30.5 ){$\textcolor{red}{R_v}$}  
\put(23 ,22 ){$v$}  
\linethickness{1pt}
\put(45,20){\color{Black}\vector(1,0){10}}
\put(15 , 35){\contour{white}{$\textcolor{red}{B}$}}  
\put(26 , 15 ){\contour{white}{$\textcolor{blue}{B'}$}}  
%\put(78 ,20 ){\contour{white}{$\textcolor{red}{R_V}$}}  
   %   \put( , ){\textcolor{}{$$}}  
 %   \put( , ){}  
      \end{overpic}
      \caption{A rectangle exchange across a vertical edge $v$.}\label{fCutAndPasteRectangle}
\end{figure}      
%%%
For all minimal vertical edges $v$ of $T_{Y, H}$ whose $W$-weights are less than $\del$ as above, we apply such modifications simultaneously and obtain 
 a train-track structure $T'_{Y, H}$  of $E_{Y, H}$ homotopic to $T_{Y, H}$. 
 (We push weight only $\del/ 3$ across a vertical edge, since, if another  $\del/3$ is pushed out across the opposite vertical edge,  at least $\del/ 3$-weight remains left.)  

\begin{lemma}\Label{TforY}~
\begin{itemize}
\item  The edge graph of  $T_{Y, H}'$ is, at least, $\frac{\del}{ 3}$ away from the singular set of $E^1_{Y, H}$;
\item $T_{Y, H}'$ is $\del$-Hausdorff close to $T_{Y, H}$ in  $E^1_{Y, H}$;
\item $T_{X, H}$ is semi-diffeomorphic to $T_{Y, H}'$;
\item $T_{X, H}$ changes semi-continuously in $H$;
\item $T_{Y, H}$ changes semi-continuously in $H$, and the realization of $W$ on $T_{X,  H}$.
\end{itemize}
\end{lemma}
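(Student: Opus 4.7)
The plan is to verify each of the five bullets by tracking how the cut-and-paste operation of \S\ref{sCompatibleEuclideanTraintracks} moves pieces of the edge graph, using the polygonal-branch geometry of \Cref{PolygonalTraintrackX} together with the semi-continuity of $\tt_{X,H}$ and $\tt_{Y,H}$ already established. The relevant data are the horizontal distance $\sqrt[4]{r}$ separating the singular set of $E^1_{Y,H}$ from the edge graph of $\tt_{Y,H}$, the $W_Y$-weights on minimal vertical edges, and the threshold $\delta\ll r$ fixed just before the lemma.

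For the first two bullets I would observe that each step of the construction pushes one minimal vertical edge of $T_{Y,H}$ horizontally by exactly $\delta/3$ into an adjacent branch, transplanting with it a staircase rectangle $R_v$ of horizontal width $\delta/3$. By \Cref{PolygonalTraintrackX}, every singular point of $E^1_{Y,H}$ lies in the interior of a polygonal branch of $\tt_{Y,H}$ at horizontal distance at least $\sqrt[4]{r}$ from the boundary, and since $\delta/3<r\ll\sqrt[4]{r}$, the pushed edge graph remains at horizontal distance at least $\sqrt[4]{r}-\delta/3>\delta/3$ from every singular point, giving the first bullet. Because each point of the edge graph moves by at most $\delta/3$ and the transplanted rectangles have width $\delta/3$, the $\delta$-Hausdorff closeness in the second bullet is immediate.

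The third bullet is the crux. As noted in \S\ref{sInducedTraintrack}, the branch $P_Y$ of $\tt_{Y,H}$ pulled back from a polygonal branch $P_X$ of $\tt_{X,H}$ can be disconnected or have fewer vertices than $P_X$, as in \Cref{fBranchWithDisjointInterior}. I would show that every such pathology occurs across a minimal vertical edge $v$ of $\tt_{X,H}$ whose $W_Y$-weight in $P_X$ is strictly less than $\delta$: these are precisely the edges along which the preimage of $P_X$ under the straightening map pinches or splits. The gluing rule producing $T_{X,H}$ absorbs each such $v$ into a single enlarged branch, while the dual operation producing $T'_{Y,H}$ transplants a rectangle of horizontal width $\delta/3$ across $v$ on the $E_{Y,H}$-side, restoring a bridge between the separated pieces of $P_Y$. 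One can then build the semi-diffeomorphism $\phi\colon E_{X,H}\to E_{Y,H}$ branch-by-branch: the straightening map realizes $\phi$ as a diffeomorphism on every branch interior and on every non-glued rectangular branch, while each horizontal edge of $T_{X,H}$ whose counterpart in $T'_{Y,H}$ has been pinched by the construction is collapsed to a point, exactly as required by the definition of a semi-diffeomorphism.

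For the last two bullets the semi-continuity of $T_{X,H}$ in $H$ and of $T_{Y,H}$ in $(H,W_Y)$ reduces to the semi-continuity of $\tt_{X,H}$ from \Cref{EuclideanTraintarck} and of $\tt_{Y,H}$ from the lemma in \S\ref{sInducedTraintrack}, together with the continuity of the $W_Y$-weights on minimal vertical edges which trigger the gluing rule. The only delicate point is that this rule is controlled by a strict inequality against $\delta$; to avoid discontinuities I would fix a generic threshold $\delta$ at which no minimal vertical edge of the limiting train tracks has $W_Y$-weight exactly $\delta$, so that the gluing pattern is locally constant and semi-continuity descends from $\tt$ to $T$ and $T'$. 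The principal obstacle throughout is the third bullet: verifying that the rectangle transplants $R_v$ precisely repair every combinatorial pathology of \Cref{fBranchWithDisjointInterior} that the straightening map can produce, in particular the disconnected-interior case on the left and the missing-vertex case on the right.
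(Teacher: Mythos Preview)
Your overall strategy matches the paper's own (very terse) proof, which simply says the first three bullets ``follow from the construction'' and the last two from the already-established semi-continuity of $\tt_{X,H}$ and $\tt_{Y,H}$. So you are filling in details the paper omits, which is fine.

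There is, however, a genuine error in your argument for the first bullet. You invoke \Cref{PolygonalTraintrackX} to claim that every singular point of $E^1_{Y,H}$ lies in the interior of a polygonal branch of $\tt_{Y,H}$ at horizontal distance $\geq\sqrt[4]{r}$ from the boundary. But \Cref{PolygonalTraintrackX} concerns the train track $t^r$ built on $E_{X,H}$ from $(r,\sqrt[4]{r})$-neighborhoods of zeros; the induced track $\tt_{Y,H}$ on $E_{Y,H}$ is constructed by an entirely different procedure (pulling back the realization $W_Y$ via the straightening map), and the paper explicitly notes in \S\ref{sInducedTraintrack} that ``the one-skeleton of $\tt_{Y,H}$ may contain some singular points of $E_{Y,H_Y}$.'' So the premise of your distance estimate is false. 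The correct reason the first bullet holds is that whenever a singular point of $E_{Y,H}$ lands on (or dangerously near) a vertical edge $v$ of $T_{Y,H}$, this forces the $W_Y$-weight of $v$ in the adjacent branch to be small (below $\delta$), which is exactly the trigger for the $\delta/3$-push; the transplanted rectangle $R_v$ then separates the new edge from the singular point by $\delta/3$.

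Your worry about the strict threshold $\delta$ in the last two bullets is overstated. Semi-continuity only requires that any subsequential limit of $T_{X,H_i}$ be a \emph{refinement} of $T_{X,H}$; since the gluing rule producing $T_{X,H}$ from $\tt_{X,H}$ can only merge branches, an un-glued limit is automatically a refinement of a glued one, so no genericity assumption on $\delta$ is needed.
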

\begin{proof}

The first three assertions follow from the construction of $T_{X, H}$ and $T_{Y, H}$.
The semi-continuity of $T_{X, H}$ is given by its construction and the semi-continuity of $\tt_{X, H}$ (\Cref{EuclideanTraintarck}).
Similarly, the semi-continuity of $T_{Y, H}$ follows from its construction and the semi-continuity of $\tt_{Y, H}$.
\end{proof}

\begin{figure}
\begin{overpic}[scale=.06, %grid,tics=10
] {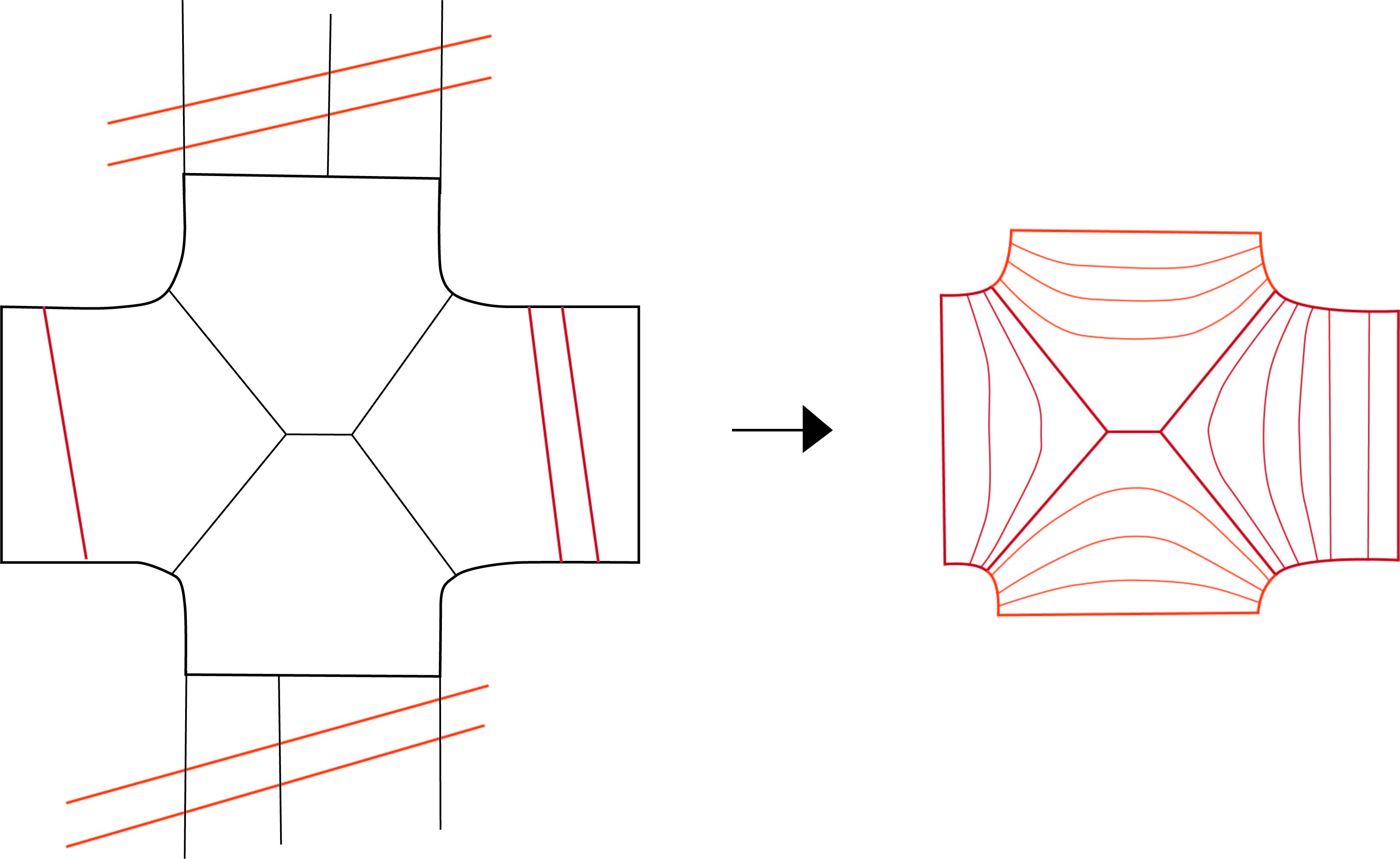} % figure file
    \put(77 , 11){\textcolor{orange}{$B'$}}  
 %   \put( , ){}  
      \end{overpic}
\caption{}\label{fEnlargedYBranch}
\end{figure}

%%%  
\subsubsection{Semi-diffeomorphic train-tracks for almost  diagonal horizontal foliations}\Label{sSeimidiffeoTraintrackNearDiagonal}
In this section, we extend the construction form \S\ref{sOnTheDiagonal} to the neighborhood of the diagonal $(\PML \times \PML) \minus \Delta^\ast$.
By Lemma \ref{Semitransversailty},
for every compact neighborhood $N$ of the diagonal $\Delta$ in $(\PML \times \PML) \minus \Delta^\ast$,
there is $\del > 0$, such that 
$$\angle_{E_{X, H_X}} ([V_Y]_X, H_X) > \del$$ 
for all $(H_X, H_Y) \in N$, where $V_Y$ is the vertical measured foliation of the flat surface structure on $Y$ with the horizontal foliation $H_Y$. 
Let $\tt_{X, H_X}^r  (= \tt_{X, H_X})$ be the train-track decomposition of $E_{X, H_X}$ obtained in \S \ref{sTrainTracksForDiagonal}. 
\Cref{HomotopyToBeCarried} clearly implies the following.
\begin{proposition}\Label{EssentiallyCarriedAlmostDiagonal}
For a compact subset $N$ in $(\PML \times \PML) \minus \Delta^\ast$, if the train-track parameter $r > 0$ is sufficiently small, then 
 for all $(H_X, H_Y) \in N$, 
$\tt_{X, H_X}^r$ essentially carries $[V_Y]_X$. 
\end{proposition}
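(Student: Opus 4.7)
The plan is to derive this proposition from Lemma \ref{HomotopyToBeCarried} by a straightforward compactness argument, together with the monotonicity of the ``essentially carried'' property in the train-track parameter.

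First I would unpack what Lemma \ref{HomotopyToBeCarried} gives: for each single point $\hH_0 = (H_{X,0}, H_{Y,0}) \in (\PML \times \PML) \minus \Delta^\ast$ there exist a threshold $r_{\hH_0} > 0$ and an open neighborhood $U_{\hH_0}$ of $\hH_0$ in $(\PML \times \PML) \minus \Delta^\ast$ such that, for every $0 < r \le r_{\hH_0}$ and every $\hH = (H_X, H_Y) \in U_{\hH_0}$, the train-track $\tt^r_{X, H_X}$ essentially carries $[V_Y]_X$. The monotonicity in $r$ is important here: shrinking $r$ only refines the train-track structure (cf.\ Lemma \ref{PolygonalTraintrackX}) while the angle lower bound $\angle_{E_{X, H_X}}([V_Y]_X, H_X) > 0$ used via Lemma \ref{Semitransversailty} is unaffected, so the essential carrying condition of Lemma \ref{HomotopyToBeCarried}(\ref{iEssentiallyCarried}) persists.

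Next I would invoke compactness of $N$. The family $\{ U_{\hH_0} \}_{\hH_0 \in N}$ is an open cover of $N$ in $(\PML \times \PML) \minus \Delta^\ast$, so by compactness it admits a finite subcover $U_{\hH_1}, \dots, U_{\hH_k}$. Setting
\[
r_N \;=\; \min\bigl\{ r_{\hH_1}, \dots, r_{\hH_k} \bigr\} \;>\; 0,
\]
one then sees that for any $0 < r \le r_N$ and any $\hH = (H_X, H_Y) \in N$, we have $\hH \in U_{\hH_i}$ for some $i$, and since $r \le r_N \le r_{\hH_i}$, the local statement applies to give that $\tt^r_{X, H_X}$ essentially carries $[V_Y]_X$.

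I do not expect a genuine obstacle in the argument; the technical content is entirely absorbed into Lemma \ref{HomotopyToBeCarried} and the uniform transversality furnished by Lemma \ref{Semitransversailty}. The only thing to verify with some care is the monotonicity claim in the previous paragraph --- namely, that the conclusion of Lemma \ref{HomotopyToBeCarried} remains valid when $r$ is decreased below the threshold at which it was originally established. This is a quick check from the construction of $\tt^r_{X, H_X}$ in \S\ref{sPolygonalTrainTrack}: a smaller value of $r$ only subdivides branches, and a geodesic that essentially traverses a coarser branch from one horizontal edge to another (or to an adjacent vertical edge) still does so through each of its subdivisions once $r$ is small enough to resolve the zeros.
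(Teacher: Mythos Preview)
Your argument is correct and follows the same approach as the paper, which simply states that the proposition is an immediate consequence of Lemma~\ref{HomotopyToBeCarried}; you have just spelled out the compactness argument that the paper leaves implicit. One small inaccuracy: shrinking $r$ does not literally make $\tt^r_{X,H_X}$ a \emph{subdivision} of the coarser train track, but this does not matter here---the phrasing ``if $r>0$ is sufficiently small'' in Lemma~\ref{HomotopyToBeCarried} already delivers the conclusion for \emph{all} $r$ below the threshold (the relevant estimate is the aspect ratio $2r/\sqrt[4]{r}\to 0$ of rectangular branches combined with the uniform angle bound of Lemma~\ref{Semitransversailty}), so no separate monotonicity step is needed.
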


Let $W_Y$ be a realization of $[V_Y]_X$ on $\tt_{X, H_X}$ by a homotopy along horizontal leaf $H_X$ (\S\ref{sTrainTracksForDiagonal}).  
For every branch $B_X$ of $\tt_{X, H_X}$, consider the subset of $E_{Y, H_Y}$ which maps to $W_Y | B_X$ by the straightening map $\st\col E_{Y, H_Y} \to E_{X, H_X}$ (\S \ref{sStraighteningMap}) and the horizontal homotopy. 
Then, the boundary of the subset consists of straight segments in the vertical foliation  $V_Y$ and curves topologically transversal to $V_Y$ (Figure \ref{fStraightenBranches} for the case when $B_X$ is a rectangle).
\begin{figure}
\begin{overpic}[scale=.2%, grid,tics=10
] {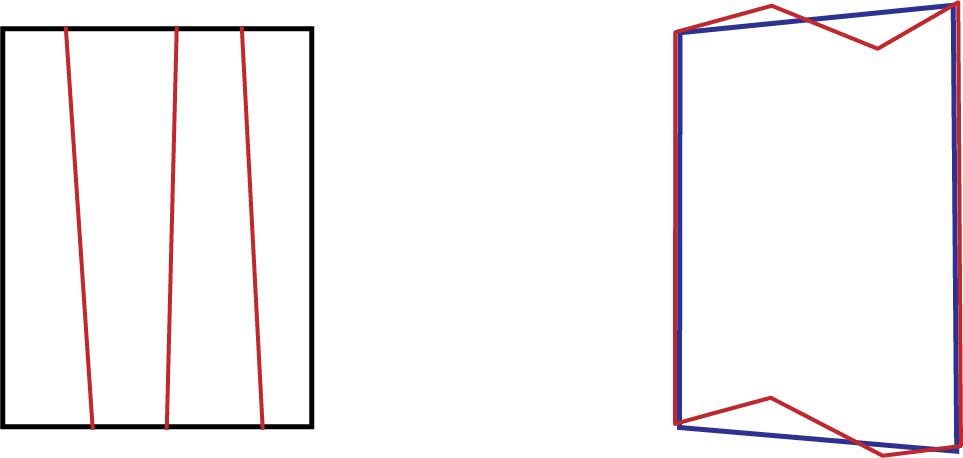} % figure file
\put(-10 ,33 ){ \contour{white}{$B_X$}}  
 \put(27 ,21 ){ \contour{white}{\textcolor{Red}{{$W_Y | B_Y$}}}}  
  \put(72, 18){\contour{white}{\textcolor{Red}{\small $\st^{-1} (W_Y | B_Y)$}}}  
   \put(60 , 32){\textcolor{Blue}{$B_Y$}}  
 %   \put( , ){\textcolor{}{$$}}  
 %   \put( , ){}  
      \end{overpic}
\caption{}\label{fStraightenBranches}
\end{figure}
We straighten each non-vertical boundary curve of the subset keeping its endpoints.
let $B_Y$ be the region in $E_{Y, H_Y}$ after straightening all non-vertical curves, so that the boundary of  $B_Y$  consists of segments parallel to $V_Y$ and segments transversal to $V_Y$. 
Then, for different branches $B_X$ of $\tt_{X, H_X}$, corresponding regions $B_Y$ have disjoint interiors; thus the regions $B_Y$ yield a trapezoidal surface train-track decomposition of $E_{Y, H_Y}$.
   
  Let $E$ be a flat surface, and let $H$ be its horizontal foliation. 
Then, for $\ep > 0$, a piecewise-smooth curve $c$ on  $E$ is {\sf $\ep$-almost horizontal}, if $\angle_E(H, c) < \ep$, i.e. the angles between the tangent vectors along $c$ and the foliation $H$ are less than $\ep$. 
More generally, $c$ is {\sf $\ep$-quasi horizontal} if $c$ is $\ep$-Hausdorff close to a geodesic segment which is $\ep$-almost horizontal to the horizontal foliation $H$. 
(In particular,  the length of $c$ is very short, then it is $\ep$-quasi horizontal.)
\begin{definition}
Let $E$ be a flat surface. 
For $\ep > 0$,  an {\sf $\ep$-quasi-staircase} train-track structure of $E$ is a trapezoidal train-track structure of $E$ such that its horizontal edges are all $\ep$-quasi horizontal straight segments. 
\end{definition}

If $H_X = H_Y$, then $\tt_{Y, H}$ is a staircase train-track, by continuity, we have the following.
\begin{lemma}
Let $r >0$ be a train-track parameter given by \Cref{EssentiallyCarriedAlmostDiagonal}.
Then, for every $\ep > 0$, if  the neighborhood $N$ of the diagonal in $\PML \times \PML$ is sufficiently small, then, for all $(H_X, H_Y) \in N$, the trapezoidal train-track decomposition $\tt_{Y, H_Y}^r$ of $E_{Y, H_Y}$ is $\ep$-quasi staircase.  
\end{lemma}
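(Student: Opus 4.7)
The plan is to argue by continuity and compactness, exploiting the fact that on the diagonal $H_X = H_Y$ the trapezoidal decomposition $\tt_{Y, H_Y}^r$ is already exactly a staircase.

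First I would verify the diagonal case. When $H_X = H_Y = H$, the flat surfaces $E_{X, H}$ and $E_{Y, H}$ carry the same horizontal measured foliation $H$, and by construction the straightening map $\st\col E_{Y, H} \to E_{X, H}$ from \S\ref{sStraighteningMap} sends horizontal leaves of $H_Y$ to horizontal leaves of $H_X$ (leafwise isometrically, up to the transversal identification). Therefore, for every branch $B_X$ of $\tt_{X, H}^r$, the non-vertical boundary arcs of $\st^{-1}(W_Y | B_X)$ already lie in leaves of $H_Y$, so the straightening of non-vertical boundary curves in the construction of $B_Y$ from \S\ref{sSeimidiffeoTraintrackNearDiagonal} is trivial, and $\tt_{Y, H}^r$ is a (staircase) train track of $E_{Y, H}$ with horizontal and vertical edges.

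Next I would establish continuity in $(H_X, H_Y)$. By Hubbard--Masur \cite{HubbardMasur79} the flat surface $E_{X, H_X}^1$ depends continuously on $H_X \in \PML$, and similarly for $E_{Y, H_Y}^1$; the straightening map, defined via geodesic representatives on non-positively curved surfaces, varies continuously as well. By Proposition \ref{EuclideanTraintarck} (\ref{iContinuityOfVYinTX}) and (\ref{iRealzationUpToShifting}), the staircase train-track $\tt_{X, H_X}^r$ and the realization $W_Y$ on it both vary semi-continuously on a neighborhood of the diagonal, limits being refinements (up to shifting across vertical slits) of the diagonal objects. Consequently the trapezoidal branches $B_Y$ of $\tt_{Y, H_Y}^r$ depend semi-continuously (in the Hausdorff sense on $E_{Y, H_Y}^1$) on $(H_X, H_Y)$, and their non-vertical boundary arcs, which are straight segments, converge as $(H_X, H_Y) \to (H, H)$ to the horizontal boundary arcs produced by the diagonal construction.

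I would then conclude by contradiction using the compactness of the closure of a candidate neighborhood. If the lemma failed, there would be $\ep > 0$ and a sequence $(H_{X,i}, H_{Y,i}) \to (H, H)$ together with non-vertical boundary segments $\alpha_i$ of $\tt_{Y, H_{Y,i}}^r$ that are not $\ep$-quasi-horizontal on $E_{Y, H_{Y,i}}^1$. Passing to a subsequence using the compactness of $\PML \times \PML$ and the continuity above, the $\alpha_i$ Hausdorff-converge to a non-vertical boundary segment $\alpha$ of a branch of $\tt_{Y, H}^r$, which is genuinely horizontal on $E_{Y, H}^1$ by the first step. Since each $\alpha_i$ is a straight segment on $E_{Y, H_{Y,i}}^1$ and the $E_{Y, H_{Y,i}}^1$ Hausdorff-converge to $E_{Y, H}^1$ together with their horizontal foliations, the $\alpha_i$ are $\ep$-quasi-horizontal for all sufficiently large $i$, a contradiction.

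The main obstacle is the merely semi-continuous behaviour of $\tt_{X, H_X}^r$ and of the realization $W_Y$: in the limit a branch may subdivide, and the combinatorial structure of $\tt_{Y, H_Y}^r$ may jump across parameter values. Nevertheless, a refinement of $\tt_{X, H}^r$ only introduces new edges inside old branches, and the shifting moves of \S\ref{sTrainTracksForDiagonal} displace points only along leaves of $H_X$; so the non-vertical edges of $\tt_{Y, H_{Y,i}}^r$ still remain Hausdorff-close to horizontal segments of the diagonal train-track $\tt_{Y, H}^r$. This is exactly what the notion of $\ep$-quasi horizontal requires, so the combinatorial jumps do not obstruct the estimate.
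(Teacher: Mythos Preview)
Your proposal is correct and follows the same approach as the paper. The paper's entire justification is the single sentence preceding the lemma, ``If $H_X = H_Y$, then $\tt_{Y, H}$ is a staircase train track, by continuity, we have the following''; you have simply filled in the details of that continuity argument, including a careful treatment of the semi-continuity issue that the paper leaves implicit.
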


Next, similarly to \S \ref{sOnTheDiagonal}, we modify $\tt_{X, H_X}$ and $\tt_{Y, H_Y}$ by gluing some branches, so that corresponding branches have small diffeomorphic neighborhoods. 
Let $W_Y$ be a realization of $[V_Y]_X$ in $\tt_{X, H_X}$. 
Fix small $\del > 0$.
Let $v$ be a vertical edge $v$ of $\tt_{X, H_X}$, and let $B_1, B_2$ be the branches of $\tt_{X, H_X}$ whose boundary contains $v$. 
We glue $B_1$ and $B_2$ along $v$, if  the $W_Y$-measure of $v$ in $B_i$ is less than $\del$ for both $i = 1, 2$.
By applying such gluing for all vertical edges satisfying the condition, we obtain a staircase train-track $T_{X, H_X}^{r, \del} = T_{X, H_X}$, so that $\tt_{X, H_X}$ is a refinement of $T_{X, H_X}$. 

Then, $W_Y$ is still carried by $T_{X, H_X}$.
Therefore,  let $T_{Y, H_X}$ be the trapezoidal train-track decomposition of $E_{Y, H_Y}$ obtained by this realization, so that $\tt_{Y, H_Y}$ is its refinement.

Let $v$ be a vertical edge of $T_{X, H_X}$. 
Let $B_X$ be a branch of $T_{X, H_X}$ whose boundary contains $v$. 
Let $B_X'$ be the branch of $T_{X, H_X}$ adjacent to $B_X$ across $v$.
Let $B_Y$ and $B_Y'$ be the branches of $T_{Y, H_Y}$ corresponding to $B_X$ and $B_X'$, respectively. 
Then, there is a vertical edge  $w$ of $T_{Y, H_Y}$corresponding to $v$, contained in the boundary of both $B_Y$ and $B_Y'$.

If the $W_Y$-weight of $v$ in $B_X$ is less than $\del$, then the $W_Y$-weight of $V$ in $B_X'$ is at least $\del$, by the construction of $T_{X, H_X}$. 
Therefore, $B_Y'$ contains an $\ep$-quasi-staircase trapezoid $R_Y$, such that $w$ is a vertical edge of $R_Y$ and the horizontal length between the vertical edges is $\del/3$. 
(c.f. Figure \ref{fCutAndPasteRectangle}.)

Then, we can modify the train track $T_{Y, H_Y}$ by removing $R_Y$ from $B_Y'$ and gluing $R_Y$ with $B_Y$ along $w$ --- this modified $T_{Y, H_Y}$ by a homotopy. 
By simultaneously applying this modification for all vertical edges $v$ of $T_{X, H_X}$ satisfying the condition, we obtain a trapezoidal train-track decomposition $T_{Y, H_Y}'$. 
\
  
\begin{proposition}\Label{ModifiedTTForY}
For an arbitrary compact neighborhood $N$ of the diagonal $\Delta$ in $(\PML \times \PML) \minus \Delta^\ast$, fix
a sufficiently small train-track parameter $r > 0$ obtained by \Cref{EssentiallyCarriedAlmostDiagonal}.
 Then, if the parameter $\del > 0$ is sufficiently small, then
 for every $(H_X, H_Y) \in N$, \begin{itemize}
\item $T_{X, H_X}$ is semi-diffeomorphic to $T_{Y, H_Y}'$; 
\item $T'_{Y, H_Y}$ is $\del$-Hausdorff close to $T_{Y, H_Y}$ in the normalized metric $E^1_{Y, H_Y}$;
\item  the open $\del/4$-neighborhood of the singular set is disjoint from the one-skeleton of $T'_{Y, H_Y}$.
\end{itemize}
\end{proposition}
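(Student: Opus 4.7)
The plan is to run the construction from Lemma~\ref{TforY} essentially verbatim, now using the trapezoidal train-track decompositions $T_{X,H_X}$ and $T_{Y,H_Y}$ built in \S\ref{sSeimidiffeoTraintrackNearDiagonal} in place of the staircase ones, and then to verify the three bullets in turn. Throughout, I fix a compact neighborhood $N$ of the diagonal $\Delta$ in $(\PML\times\PML)\minus\Delta^\ast$, choose $r>0$ small enough that \Cref{EssentiallyCarriedAlmostDiagonal} applies uniformly on $N$, and then take $\del>0$ small relative to $r$.

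First I would verify the semi-diffeomorphism $T_{X,H_X}\to T'_{Y,H_Y}$. For every branch $B_X$ of $T_{X,H_X}$, the straightening map $\st$ together with the horizontal homotopy yields a corresponding region $B_Y$ of $T_{Y,H_Y}$. By construction of $T_{X,H_X}$ from $\tt_{X,H_X}$, each remaining vertical slit of $T_{X,H_X}$ carries $W_Y$-weight at least $\del$ on at least one side; the compensating cut-and-paste that produces $T'_{Y,H_Y}$ then guarantees that the weight threshold is met on both sides after the modification. Consequently the corresponding branch $B'_Y$ of $T'_{Y,H_Y}$ is a single connected trapezoidal region with the same combinatorial pattern of vertical and (quasi-)horizontal edges as $B_X$, so the required semi-diffeomorphism $B_X\to B'_Y$ is produced exactly as in Lemma~\ref{TforY}, collapsing the appropriate horizontal edges to points.

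For the second bullet, each cut-and-paste step moves a single vertical slit of $T_{Y,H_Y}$ horizontally by $\del/3$ across a quasi-staircase trapezoid of horizontal width $\del/3$ inside $E^1_{Y,H_Y}$. The angle lower bound from \Cref{Semitransversailty} and the $\ep$-quasi horizontality of the horizontal edges of $T_{Y,H_Y}$ (\S\ref{sSeimidiffeoTraintrackNearDiagonal}) control the distortion so that the total displacement stays below $\del$, giving Hausdorff distance less than $\del$ in $E^1_{Y,H_Y}$. For the third bullet, by the $(r,\sqrt[4]{r})$-construction of \S\ref{sPolygonalTrainTrack} every singular point of $E^1_{Y,H_Y}$ lies in the interior of a polygonal branch of $\tt_{Y,H_Y}$ at horizontal distance at least $\sqrt[4]{r}$ from its one-skeleton; since the cut-and-paste moves vertical edges horizontally by only $\del/3$ and $\del\ll\sqrt[4]{r}$, the one-skeleton of $T'_{Y,H_Y}$ remains well outside the $\del/4$-neighborhood of the singular set.

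The main obstacle I anticipate is the first step. Off the diagonal, the set $\st^{-1}(B_X)$ can a priori be disconnected or have fewer vertices than $B_X$ (as illustrated in \Cref{fBranchWithDisjointInterior}). The combined effect of the gluing that produces $T_{X,H_X}$ from $\tt_{X,H_X}$ and the cut-and-paste that produces $T'_{Y,H_Y}$ from $T_{Y,H_Y}$ is precisely designed to absorb these pathologies, using the small-transversal-measure contractibility of \Cref{ContractibleHorizontalArcs} to rule out nontrivial topology in the low-weight regions. Carrying the argument out uniformly over $(H_X,H_Y)\in N$ requires the two semi-continuity statements of \Cref{EuclideanTraintarck}, together with a careful enumeration of how vertical slits on the $X$ side correspond to vertical slits on the $Y$ side after both modifications; this combinatorial matching, rather than any geometric estimate, is where the real care is needed.
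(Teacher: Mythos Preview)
Your overall strategy---rerunning the \Cref{TforY} argument with the trapezoidal near-diagonal train tracks of \S\ref{sSeimidiffeoTraintrackNearDiagonal}---is exactly what the paper does (the paper states \Cref{ModifiedTTForY} without an explicit proof, leaving it to the preceding construction), and your treatment of the first two bullets is fine.

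There is, however, a real gap in your argument for the third bullet. You write that ``by the $(r,\sqrt[4]{r})$-construction of \S\ref{sPolygonalTrainTrack} every singular point of $E^1_{Y,H_Y}$ lies in the interior of a polygonal branch of $\tt_{Y,H_Y}$ at horizontal distance at least $\sqrt[4]{r}$ from its one-skeleton.'' This is false: the $(r,\sqrt[4]{r})$-construction is carried out on $E_{X,H_X}$, not on $E_{Y,H_Y}$. The train track $\tt_{Y,H_Y}$ is built as the preimage under the straightening map, and the paper explicitly warns in \S\ref{sInducedTraintrack} that ``the one-skeleton of $\tt_{Y,H}$ may contain some singular points of $E_{Y,H_Y}$.'' So before the modification, singular points of $E_{Y,H_Y}$ can sit right on the one-skeleton of $T_{Y,H_Y}$; your argument that the $\del/3$-push merely preserves an existing $\sqrt[4]{r}$-clearance does not apply.

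The correct mechanism is the opposite one: the cut-and-paste $T_{Y,H_Y}\to T'_{Y,H_Y}$ is what \emph{creates} the clearance. A vertical edge $v$ of $T_{X,H_X}$ has $W_Y$-weight at least $\del$ on at least one side, say in $B'$; on the $Y$-side this means the corresponding branch $B'_Y$ contains a genuine trapezoid of horizontal width $\geq\del$ adjacent to the image of $v$, foliated by smooth leaves of $V_Y$ and hence free of singular points. Pushing the vertical edge of $T_{Y,H_Y}$ a distance $\del/3$ into this trapezoid therefore places the new edge at horizontal distance at least $\del/3$ from any singular point of $E_{Y,H_Y}$. This is why \Cref{TforY} records the bound $\del/3$ (and \Cref{ModifiedTTForY} the weaker $\del/4$), rather than anything involving $\sqrt[4]{r}$.
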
 

%%%%
\begin{figure}
\centering
\begin{minipage}{.5\textwidth}
  \centering
\begin{tikzcd}
T_{X, H_X}   \arrow[d, hook, "subdivide"] \arrow{r}{\scriptsize\parbox{1.2cm}{semi-\\ diffeo}}&  T'_{Y, H_Y}   \arrow[r, "\frac{\delta}{3}-close",leftrightsquigarrow] & T_{Y, H_Y}   \arrow[dl, hook, "subdivide"]  \\
\tt_{X, H_X}   \arrow[r, rightsquigarrow, "W_Y"]     &  \tt_{Y, H_Y} 
 \end{tikzcd}
 \caption{Relations between constructed train tracks. }\label{}
\end{minipage}%
\begin{minipage}{.5\textwidth}
  \centering
\begin{overpic}[scale=.15%, grid,tics=10
] {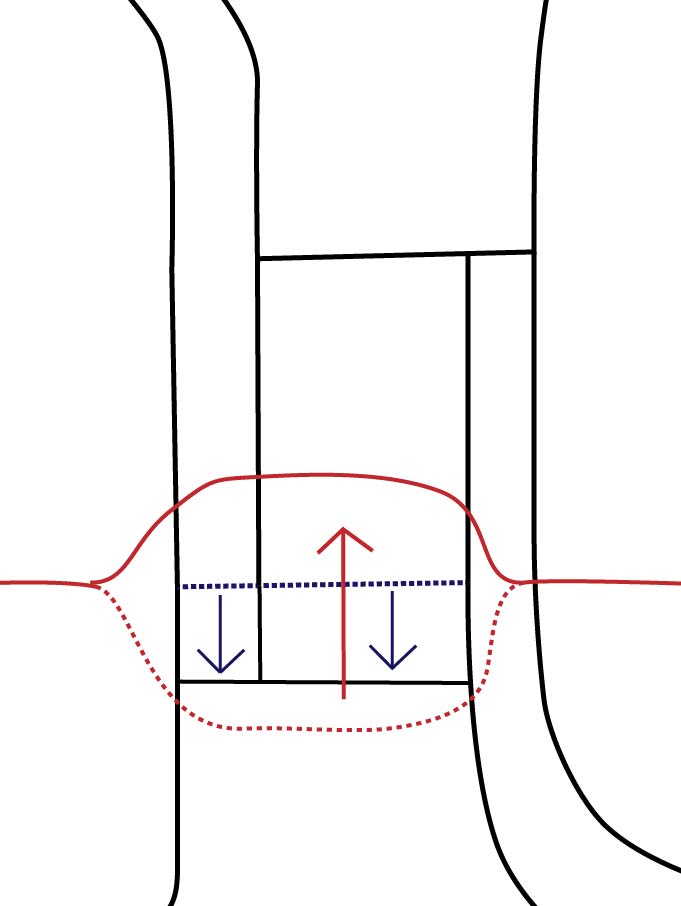} % figure file
 %   \put( , ){\textcolor{}{$$}}  
 %   \put( , ){}  
      \end{overpic}
\caption{A slide corresponding a shift in \Cref{fShifting}. }\label{fShiftingAndSliding}

\end{minipage}
\end{figure}

%%%%

{\sf A sliding} is an operation of a train-track moving some vertical edges in the horizontal direction without changing the homotopy type of the train-track structure.
If we change the realization $W_Y$ on $\tt_{X, H}$ by shifting across a vertical slit, the induced train-track $\tt_{Y, H}$ changes by sliding its corresponding vertical segment (Figure \ref{fShiftingAndSliding}).

As before, 
a branch of $T_{X, H_X}$ disjoint from the non-transversal graph $G_Y$ is called a {\sf transversal branch.} 
A branch of $T_{X, H_X}$ containing a component of  $G_Y$ is called the {\sf non-transversal branch}.
   By the semi-continuity of $T_{Y, H_Y}$ in \Cref{TforY} and the construction of $T'_{Y, H_Y}$, we obtain a semi-continuity of $T_{Y, H_Y}$ up to sliding. 
    \begin{lemma}
Let $\hH_i = (H_{X, i}, H_{Y, i})$ be a sequence converging to $\hH = (H_X, H_Y)$.
Then, up to a subsequence, $T_{Y, H_{Y, i}}$ semi-converges to a train track structure  $T_{Y, H_Y}''$ of $E_{Y, H_Y}$,  such that either $T_{Y, H_Y}'' = T_{Y, H_Y}'$  or $T_{Y, H_Y}''$ can be transformed to a refinement of $T_{Y, H_Y}'$ by sliding some vertical edges by $\del /3$. 
\end{lemma}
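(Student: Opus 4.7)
The plan is to trace the semi-continuity through the chain of constructions $\tt_{X, H_X} \rightsquigarrow T_{X, H_X} \rightsquigarrow T_{Y, H_Y} \rightsquigarrow T'_{Y, H_Y}$, extracting a subsequence at each stage. First I would invoke \Cref{EuclideanTraintarck} to obtain, up to a subsequence, a semi-limit of $\tt_{X, H_{X, i}}$ refining $\tt_{X, H_X}$ on $E_{X, H_X}$, and use \Cref{UniqueRealizationUpToShifting} to ensure that the realizations $W_{Y, i}$ converge to a realization of $[V_Y]_X$ on $\tt_{X, H_X}$ up to shifting across vertical slits. The construction of $T_{X, H_X}$ glues two branches of $\tt_{X, H_X}$ across a minimal vertical edge $v$ precisely when the $W_Y$-weight of $v$ in each adjacent branch is strictly less than $\del$, so by choosing $\del$ outside an at most countable set of critical values I may assume that no minimal vertical edge of the limit has $W_Y$-weight exactly equal to $\del$. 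Under this genericity, the gluing rule is stable along the sequence and $T_{X, H_{X, i}}$ semi-converges to a refinement of $T_{X, H_X}$.

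Next I would transport to the $Y$-side. By Hubbard--Masur, $E_{Y, H_{Y, i}}$ varies continuously in $H_{Y, i}$; the straightening map $\st$ of \S\ref{sStraighteningMap} varies accordingly, and $T_{Y, H_{Y, i}}$ is obtained from $T_{X, H_{X, i}}$ by pulling back branches through $\st$ and straightening the non-vertical boundary arcs. Hence, up to a subsequence, $T_{Y, H_{Y, i}}$ semi-converges on $E_{Y, H_Y}$ to a trapezoidal train track refining $T_{Y, H_Y}$. The essential observation is that a shift of $W_{Y, i}$ across a vertical slit of $\tt_{X, H_{X, i}}$ reallocates weight between adjacent branches, and via $\st$ this translates to sliding the corresponding vertical edge of $T_{Y, H_{Y, i}}$ in the horizontal direction without altering the homotopy type of the train track, as illustrated in \Cref{fShiftingAndSliding}. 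Thus the ambiguity of the limit is precisely a sliding of vertical edges of a magnitude controlled by the shift.

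Finally I would compare the semi-limit with $T'_{Y, H_Y}$. The cut-and-paste modification of \Cref{ModifiedTTForY} horizontally pushes every vertical edge of small $W_Y$-weight by exactly $\del/3$ into the adjacent branch using an $\ep$-quasi-staircase trapezoid. Because the push magnitude is a fixed constant, applying the same rule uniformly to each $T_{Y, H_{Y, i}}$ produces $T'_{Y, H_{Y, i}}$, and the semi-limit is either $T'_{Y, H_Y}$ itself or a refinement of $T'_{Y, H_Y}$ obtained by slides of magnitude $\del/3$. The main difficulty will be the bookkeeping needed to verify that every residual discrepancy between the limit and $T'_{Y, H_Y}$ is expressible as such a $\del/3$-slide: one must track four classes of vertical edges of $\tt_{X, H_{X, i}}$ through the limit --- those surviving as vertical edges of $\tt_{X, H_X}$, those absorbed into gluings defining $T_{X, H_X}$, those subject to the $\del/3$-push in $T'_{Y, H_{Y, i}}$, and those affected by shifting of $W_{Y, i}$ --- and check that each residual ambiguity is either a shift (hence a slide) or a change of gluing status (hence absorbed into a refinement). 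Once this bookkeeping is set up, the semi-continuity assertions at each previous step feed directly into the required conclusion.
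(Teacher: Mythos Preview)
Your approach is essentially the same as the paper's: the paper's entire justification is the single sentence preceding the lemma, which invokes the semi-continuity of $T_{Y,H_Y}$ from \Cref{TforY} together with the construction of $T'_{Y,H_Y}$, and your proposal is a detailed unpacking of exactly this chain $\tt_{X,H_X}\rightsquigarrow T_{X,H_X}\rightsquigarrow T_{Y,H_Y}\rightsquigarrow T'_{Y,H_Y}$ via \Cref{EuclideanTraintarck}, \Cref{UniqueRealizationUpToShifting}, and the shift-to-slide correspondence of \Cref{fShiftingAndSliding}.

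One minor point: your genericity step (``choosing $\del$ outside an at most countable set of critical values'') is not something the paper does, and it is slightly at odds with the setup, since $\del$ is a fixed parameter chosen once in \S\ref{sOnTheDiagonal} and \S\ref{sSeimidiffeoTraintrackNearDiagonal} before any sequence $\hH_i$ is considered. The paper instead absorbs the instability of the gluing rule (when a $W_Y$-weight crosses the threshold $\del$) directly into the conclusion: this is precisely why the statement allows for a $\del/3$-slide rather than asserting a clean semi-limit. So rather than perturbing $\del$, you should argue that any edge whose gluing status differs between the limit of $T_{Y,H_{Y,i}}$ and the construction of $T'_{Y,H_Y}$ contributes exactly one $\del/3$-push discrepancy, which is the slide in the conclusion. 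With that adjustment your bookkeeping in the last paragraph goes through as written.
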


%%%%%%
\subsection{Bounded polygonal train tracks  for  the Riemann surface $X$}\Label{sBoundedEuclideanTraintrackForX}

The train tracks we constructed may so far have rectangular branches with very long horizontal edges. 
In this section, we further modify the train-track structures $T_{X, H_X}$ and $T_{Y, H_Y}$ from \S \ref{sCompatibleEuclideanTraintracks} by reshaping those long rectangles into spiral cylinders. 

Given a rectangular branch of a train track,  although its interior is embedded in a flat surface, its boundary may intersect itself.
  Let $T$ be a train-track structure of a flat surface $E$. 
The {\sf diameter} of a branch $B$ of $T$ is the diameter of the interior of $B$ with the path metric in $B$. 
The {\sf diameter} of a train track $T$ is the maximum of the diameters of the branches of $T$.

Recall that we have fixed a compact neighborhood $N \sub (\PML \times \PML) \minus \Delta^\ast$ of the diagonal.
Recall that, for $(H_X, H_Y) \in N$,  $E_{X, H_X}^1$ and $E_{Y, H_Y}^1$ are the unit-area flat structures realizing $(X, H_X)$ and $(Y, H_Y)$, respectively.
 Pick a small $r > 0$ given by Proposition \ref{EuclideanTraintarck}, so that, for every $(H_X, H_Y) \in N$, there are train-track structures $T_{X, H_X}$ of $E_{X, H_X}^1$ and $T_{Y, H_Y}$ of $E_{Y, H_Y}^1$ from \S \ref{sSeimidiffeoTraintrackNearDiagonal}.
\begin{lemma}\Label{LimitOfLongBranches}
\begin{enumerate}
\item \Label{iRectangleConveringToFlatyCylinder}
Let $\hH_i = (H_{X,  i},H_{Y, i}) \in N$ be a sequence converging to $\hH = (H_X,  H_Y) \in N$.
Suppose that $T_{X, H_{X, i}} \eqqcolon T_{X, i}$ contains a rectangular branch $R_i$ for every $i$, such that the horizontal length of $R_i$ diverges to infinity as $i \to \infi$. 
Then, up to a subsequence, the support $|R_i| \sub E_{X, H_{X, i}}^1 \eqqcolon E_{X, i}$ converges to either  
\begin{itemize}
\item a flat cylinder which is a branch of  $T_{X, H_X}$  or 
\item a closed leaf of $H_X$ which is contained in the union of the horizontal edges of $T_{X, H_X}$.
\end{itemize}

\item \Label{iSpiralCylinderCloseToFlatCylinder}
Let $A$ be the limit flat cylinder or a loop in (\ref{iRectangleConveringToFlatyCylinder}).
For sufficiently large $i > 0$, let $R_{i, 1}, \dots, R_{i, n_i}$ be the set of all rectangular branches of $T_{X, i}$ which converge to $A$ as $i \to \infi$ in the Hausdorff metric.
Then, the union  $R_{i, 1} \cup \dots \cup R_{i, n_i} \sub E_{X, H_i}^1$ is a spiral cylinder for all sufficiently large $i$. (See \Cref{fSpiralCylinder}.) 
\end{enumerate}

\end{lemma}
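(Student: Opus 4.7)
The plan is a compactness-plus-dichotomy argument based on the area of the Hausdorff limit of $|R_i|$, together with the semicontinuity of the train-track structure established in \Cref{TforY}. Since the interior of each rectangular branch $R_i$ is embedded in the unit-area flat surface $E_{X,i}$, its area equals $\ell_i w_i$, where $\ell_i$ is the horizontal length and $w_i$ the vertical width; as $\ell_i \to \infty$ and $\ell_i w_i \le 1$, we have $w_i \to 0$ after passing to a subsequence. Because $H_{X,i} \to H_X$ in $\PML$, the Hubbard--Masur flat structures satisfy $E_{X,i} \to E_X$, and by compactness of the hyperspace of closed subsets of $E_X$ we further pass to a subsequence along which $|R_i|$ converges in the Hausdorff metric to a compact set $A \subset E_X$.

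The behavior of $A$ is then governed by $\liminf_i \ell_i w_i$. If this liminf is zero, then $A$ has two-dimensional measure zero; because the two horizontal edges of $R_i$ are leaves of $H_{X,i}$ at transverse distance $w_i \to 0$, the limit $A$ is a single closed leaf of $H_X$. If the liminf is positive, then $A$ has positive area, contains no singular point of $E_X$ (the interior of each rectangular $R_i$ avoids the singular set), and is foliated by closed leaves of $H_X$; hence $A$ is a flat cylinder of $E_X$. To locate $A$ inside $T_{X, H_X}$, I invoke the semicontinuity in \Cref{TforY}: the horizontal edges of $R_i$ lie in the edge graph of $T_{X,i}$, whose Hausdorff limit refines the edge graph of $T_{X,H_X}$. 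In the closed-leaf case this shows $A$ lies in the horizontal edges of $T_{X,H_X}$; in the cylinder case, the construction in \Cref{PolygonalTraintrackX} produces a cylindrical branch at every maximal flat annulus foliated by non-singular closed horizontal leaves, so $A$ coincides with such a branch.

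For part~(2), fix the limit $A$ and let $R_{i,1}, \dots, R_{i,n_i}$ denote all rectangular branches of $T_{X,i}$ whose supports Hausdorff-converge to $A$. For large $i$ they all lie in an arbitrarily thin annular neighborhood $N_i$ of $A$ in $E_{X,i}$, which is a topological cylinder free of singular points. The interiors of the $R_{i,j}$ are pairwise disjoint subsets of $N_i$ whose combined area approaches that of $A$, so their union $U_i = \bigcup_j R_{i,j}$ is an embedded staircase subsurface of $N_i$ whose complement in $N_i$ has area tending to zero. Combinatorially, adjacent rectangles in $U_i$ must be glued across shared vertical edges (otherwise their supports would fall into separate components of the limit, contradicting convergence to the single connected set $A$), so $U_i$ is a connected staircase cylinder. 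At each boundary vertex of $U_i$, a vertical edge of one $R_{i,j}$ meets a horizontal edge of the neighbor, producing interior angles alternating between $\pi/2$ and $3\pi/2$; the two boundary components are therefore monotone staircase loops, and $U_i$ satisfies \Cref{dStaircase} for a spiral cylinder.

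The main obstacle is ruling out exotic configurations of the union, such as two disjoint annuli both converging to $A$, or a tiling in which consecutive rectangles fail to spiral. This is handled by combining the bounded-geometry parameters $r$ and $\sqrt[4]{r}$ from \Cref{PolygonalTraintrackX} with the absence of singular points in $A$: polygonal branches of $T_{X,i}$ appear only in the $(r,\sqrt[4]{r})$-horizontal neighborhoods of zeros, so in a sufficiently thin tube around the singular-point-free set $A$ every branch of $T_{X,i}$ is rectangular; the only available identifications across vertical edges are between consecutive rectangles, forcing $U_i$ to be a genuine embedded annulus tiled in a spiral pattern.
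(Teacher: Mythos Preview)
Your overall plan (pass to Hausdorff limits in the unit-area surfaces and identify the limit as a closed leaf or a flat cylinder) is the same as the paper's, but the execution of part~(1) has a real gap. The dichotomy by $\liminf_i \ell_i w_i$ does not do what you claim: Hausdorff limits do not respect area from above, so $\liminf_i \ell_i w_i = 0$ does not force $A$ to have measure zero. Concretely, a long thin embedded rectangle of area tending to $0$ whose horizontal edges lie in a minimal component of $H_X$ equidistributes, and its support Hausdorff-converges to the entire component, not to a single leaf. Your sentence ``the limit $A$ is a single closed leaf of $H_X$'' is precisely the assertion to be proved, and $w_i\to 0$ alone does not give it. In the positive-liminf case you do get $\operatorname{area}(A)>0$ (area is upper semicontinuous under Hausdorff convergence), but the further claim that $A$ is ``foliated by closed leaves'' is again not justified by the bare remark that the interior of each $R_i$ avoids the singular set.

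The ingredient you are missing in part~(1), and which you only bring in for part~(2), is the \emph{uniform} lower bound on the distance from each rectangular branch $R_i$ to the zero set of $E_{X,i}$: by the construction in \Cref{PolygonalTraintrackX} the $(r,\sqrt[4]{r})$-neighborhood of the zeros sits inside the non-rectangular branches, so $d(R_i, Z_{X,i})$ is bounded below independently of $i$. Passing to the limit, $A$ stays at positive distance from the singular set; hence any horizontal leaf through a point of $A$ (which lies in $A$ since $\ell_i\to\infty$) must be periodic, because a non-periodic leaf is either a line in a singular leaf or dense in a minimal component and in either case accumulates on singularities. This is exactly Claim~\ref{ClosedLeaf} in the paper, and it replaces your area dichotomy entirely: once the leaves in $A$ are known to be closed, connectedness of $A$ gives the flat-cylinder/closed-leaf alternative directly. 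A minor point on part~(2): in the train-track structure the rectangles $R_{i,j}$ are glued to one another along portions of their \emph{horizontal} edges, not their vertical edges; the vertical edges of each $R_{i,j}$ lie on the boundary of the spiral cylinder, adjacent to polygonal branches (compare \Cref{SpiralCylinderIntoARectangle} and \Cref{fRectanglesInSpiralCylinder}).
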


\proof
(\ref{iRectangleConveringToFlatyCylinder})
Let $R_i$ be a rectangular branch of $T_{X, i}$ such that the horizontal length of $R_i$ diverges to infinity as $i \to \infi$. 
Then, as  $\Area\, E_i = 1$, the vertical length of $R_i$ must limit to zero.
Then, in the universal cover $\ti{E}_i$ of $E_i$, we can pick a lift $\ti{R}_i$ of $R_i$ which converges, uniformly on compact,  to a smooth horizontal leaf of $\ti{H}_X$ or a copy of $\R$ contained in a singular leaf of $\ti{H}_X$. 
Let $\ti\ell$ denote the limit, and let $\ell$ be its projection into a leaf of $H_X$.
\begin{claim}\Label{ClosedLeaf}
$\ell$ is a closed leaf of $H_X$.
\end{claim}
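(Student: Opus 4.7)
The plan is to argue by contradiction: suppose $\ell$ is not a closed leaf of $H_X$. First I would verify that $\ell$ is a \emph{regular} (non-singular) horizontal leaf. By the construction of $T_{X,i} := T_{X,H_{X,i}}^r$ in \S\ref{sEuclideanTrainTracks}, the interior of any rectangular branch $R_i$ is disjoint from a definite (order-$r$) neighborhood of the singular set of $E_{X,i}^1$ in the normalized flat metric, because the 1-skeleton of $T_{X,i}$ passes outside the $r$-vertical and $\sqrt[4]{r}$-horizontal neighborhoods of the zeros. Passing to the Hausdorff limit of a subsequence of $\ti R_i$ in $\ti E_X$, the limit $\ti\ell$ stays at positive distance from every lift of a zero of $q_X$, so $\ell$ is smooth.

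Next I would invoke the classical structure theorem for the horizontal foliation of a holomorphic quadratic differential on a closed Riemann surface: every regular leaf of $H_X$ is either closed or dense in a minimal component $M \subset E_X$ of positive area. Assume, for contradiction, that $\ell$ is dense in some such $M$. Write $L_i,w_i$ for the horizontal length and vertical width of $R_i$; then $L_i\to\infty$, and since the interior of $R_i$ is embedded in $E_{X,i}^1$ with unit area, $L_iw_i\leq 1$, so $w_i\to 0$ at least at rate $1/L_i$. Fix a short vertical transversal $\sigma$ through a point $p\in \ell\cap M$ of length exceeding $\limsup_i w_i$. By density of $\ell$ in $M$, for any $\del>0$ the horizontal orbit of $p$ returns within transverse distance $\del$ of $p$ at some finite time $T(\del)$. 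Taking $\del<w_i/4$ and $L_i>T(\del)$, the $w_i$-collar on each side of the core horizontal segment of $R_i$ must overlap itself, violating the embeddedness of the interior of $R_i$ in $E_{X,i}^1$.

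The main obstacle is that both $w_i$ and the transverse return distances can tend to $0$ simultaneously, so the quantitative comparison above needs to be matched against the rates provided by the train-track construction. A cleaner alternative, which I would likely use in writing, is to take a Hausdorff limit $A\sub E_X$ of the supports $|R_i|$ and, using the semi-continuity of $T_{X,H_X}$ (\Cref{EuclideanTraintarck}, together with the refinement discussion of \S\ref{sOnTheDiagonal}), identify $A$ with the support of a single branch (possibly obtained by gluing finitely many branches across interior vertical edges) of the semi-continuous limit of $T_{X,i}$. Since the branches of $T_{X,H_X}$ are polygons, rectangles of finite horizontal length, or flat cylinders, and since $L_i\to\infty$, the only possibility is that $A$ is a flat cylinder of $H_X$ or collapses to its core; either way $\ell$ is a closed leaf, contradicting the assumption. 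This dichotomy (flat cylinder vs.\ closed leaf inside the union of horizontal edges of $T_{X,H_X}$) is exactly what is asserted in (\ref{iRectangleConveringToFlatyCylinder}), and sets up (\ref{iSpiralCylinderCloseToFlatCylinder}) by identifying the $R_{i,j}$ as the rectangles whose union forms the spiral cylinder converging to $A$.
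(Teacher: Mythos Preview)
Your first paragraph already contains the decisive observation: because each rectangular branch $R_i$ lies outside the $(r,\sqrt[4]{r})$-neighborhood of the singular set of $E_{X,i}^1$, the limit leaf $\ell$ stays at distance at least $r>0$ from the singular set of $E_{X,H_X}$. The paper's proof stops right there and combines this with one further remark: if $\ell$ is not periodic, then $\ell$ is either dense in an irrational minimal component or is a line in a singular leaf, and in either case the distance from $\ell$ to the singular set is zero. That immediately contradicts the uniform $r$-separation, and the claim follows.

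Your second paragraph attempts a more hands-on recurrence argument via overlapping rectangles, but as you yourself point out, matching the decay of $w_i$ against the growth of $T(\delta)$ is delicate and you do not close it. Your third-paragraph alternative, invoking semi-continuity of $T_{X,H_X}$ to identify the Hausdorff limit $A$ directly with the support of a branch of the limiting train track, is essentially circular here: the whole purpose of Claim~\ref{ClosedLeaf} is to establish the dichotomy in Lemma~\ref{LimitOfLongBranches}(\ref{iRectangleConveringToFlatyCylinder}), so appealing to the branch structure of the limit presupposes what you are proving.

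So neither of your two routes is actually completed, but you are one sentence away from the paper's proof. Replace everything after your first paragraph with: a non-closed horizontal leaf of a holomorphic quadratic differential on a closed surface has zero distance to the singular set, contradicting the $r$-separation you already established.
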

\begin{proof}
Suppose, to the contrary, that $\ell$ is not periodic. 
Then $\ell$ is either a leaf of an irrational sublamination or a line embedded in a singular leaf of $H_{X, H_X}$.
Then, the distance from $\ell$ to the singular set of $E_{X, H_X}$ is zero. 

Recall that  the $(r, \sqrt[4]{r})$-neighborhood of the singular set of $E_{X, H_{X, i}}$ is contained in the (non-rectangular) branches of $T_{X, H_{X, i}}$.
Thus, the distance from $R_i$ to the singular set of $E_{X, H_{X,i}}$ is at least $r > 0$ for all $i$. 
This yields a contradiction.  
\end{proof}

By Claim \ref{ClosedLeaf},  as a subset of $E_i$,
the rectangular branch $R_i$ converges to the union of closed leaves $\{\ell_j\}_{j \in J}$ of $H_{X, L_X}$.
Thus the Hausdorff limit $A$ of $R_i$ in $E_{X, H}$ must be a connected subset foliated by closed horizontal leaves.
Therefore, $A$ is either a flat cylinder or a single closed leaf. 

First, suppose that the limit $A$ is a flat cylinder.
Then, the vertical edges of $R_i$ are contained in the vertical edges of non-rectangular branches. 
The limit of the vertical edges of $R_i$ are points on the different boundary components of $A$.
Therefore, each boundary component of $A$ must intersect a non-rectangular branch in its horizontal edge. 
Therefore, the cylinder is a branch of $T_{X, H}$ by the construction of $\tt_{X, H}$.  

If the limit $A$ is a single leaf, similarly, one can show that the vertical one-skeleton of $T_{X, H_X}$, since a loop can be regarded as a degeneration of a flat cylinder.

(\ref{iSpiralCylinderCloseToFlatCylinder})
First assume that the limit $A$ is a flat cylinder. 
Since the $(r, \sqrt[4]{r})$-neighborhood of the singular set is disjoint from the interior of $A$,  we can enlarge $A$  to a maximal flat cylinder $\hat{A}$ in $E_{X, H}$ whose interior contains (the closure of) $A$.
Then, each boundary component of $\hat{A}$ contains at least one singular point. 
Since $A$ is a cylindrical branch, each boundary component $\ell$ of $A$ contains a horizontal edge of a non-rectangular branch $P_\ell$ of $T_{X, H_X}$ which contains a singular point in the boundary of $\hat{A}$.

Let $P_{\ell, 1}, \dots, P_{\ell, n}$ be the non-rectangular branches of $T_{X, H_X}$ whose boundary intersects $\ell$. 
Recall that  $P_{\ell, i}$ is the union of some branches of $\tt_{X, H}$. 
Although $P_{\ell_1, i}$ itself may not be convex,  a small neighborhood of the intersection $P_{\ell, i} \cap \ell$ in $P_{\ell, i}$ is convex. 
Let $P_{i, 1}, \dots, P_{i, k_i}$ be all non-rectangular branches of $T_{X, i}$, such that their union $P_{i, 1} \cup \dots \cup P_{i, k_i}$ converges to the union of all non-rectangular branches of $T_{X, H_X}$ which have horizontal edges contained in the boundary of $A$.
Then, for sufficiently, large $i$, the vertical edges of $R_{i,1}. \dots, R_{i, n_i}$ are contained in  vertical edges of polygonal branches $P_{i, 1}, \dots, P_{i, k_i}$. 
  Then, by the convexity above,  if the union of  $R_{i,1}, \dots, R_{i, n_i}$ intersects $P_{i, j}$, then its intersection is a monotone staircase curve. 
Therefore the union of  $R_{i,1}. \dots, R_{i, n_i}$ is a spiral cylinder.
See Figure \ref{fSprialCylinderCloseToFlatCylinder}.
\begin{figure}[H]
\begin{overpic}[scale=.06%, grid,tics=10
] {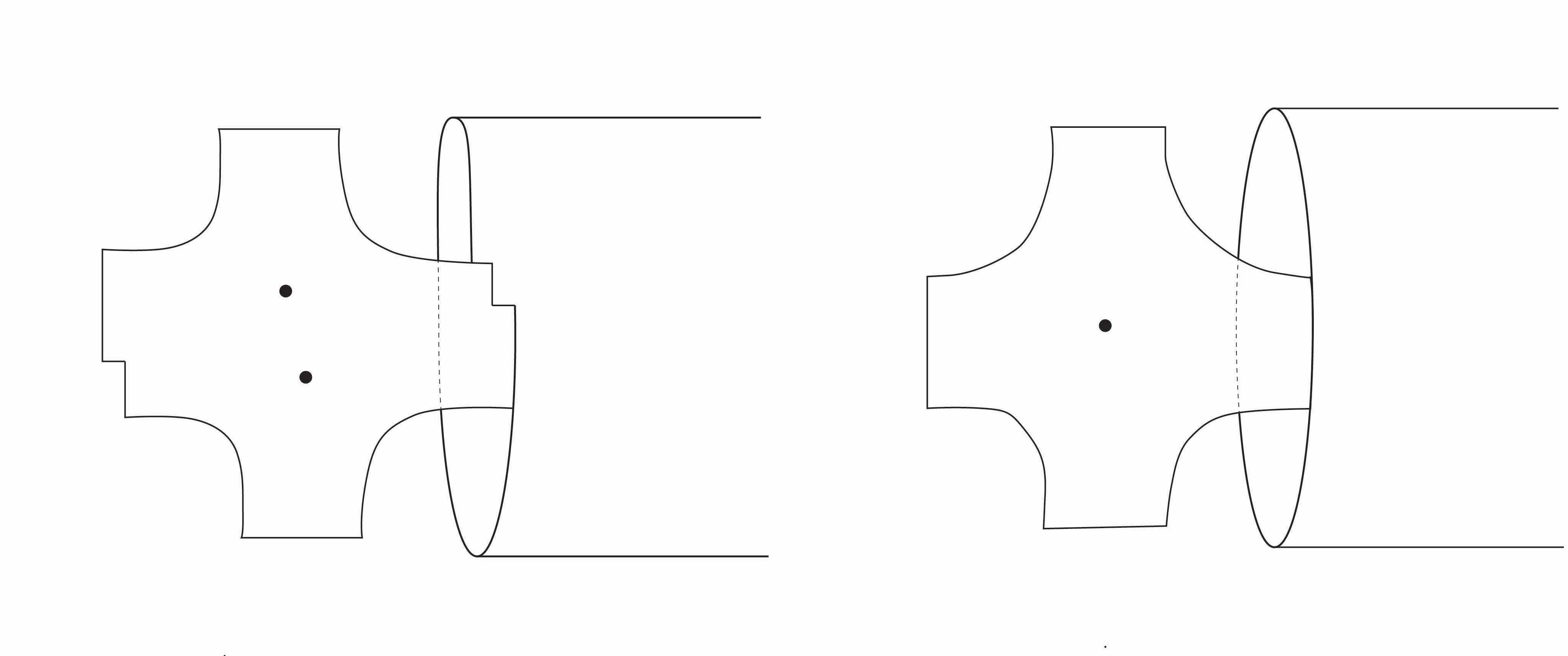} % figure file
 %   \put( , ){\textcolor{}{$$}}  
 %   \put( , ){}  
  \put(68 , 14 ){$P_\ell$}  
  \put(90 , 18 ){$A$}  
    \put(34 , 18 ){$\cup_{j = 1}^{n_i} R_{i, j}$}  
      \end{overpic}
\caption{}\label{fSprialCylinderCloseToFlatCylinder}
\end{figure}
A similar argument holds in the case when the limit is a closed loop in a singular leaf. 
 \Qed{LimitOfLongBranches}
By Lemma  \ref{LimitOfLongBranches} (\ref{iRectangleConveringToFlatyCylinder}), (\ref{iSpiralCylinderCloseToFlatCylinder}), there is a constant $c > 0$, such that, for $H_X \in N$,
if a rectangular branch $R$ of $T_{X, H_X}$ has a horizontal edge of length more than $c$, then $R$ is contained in a unique spiral cylinder, which may contain other rectangular branches. 
The diameter of such spiral cylinders is uniformly bounded from above by a constant depending only on $X$. 
Thus, replace all rectangular branches $R$ of $T_{X, H_X}$ with corresponding spiral cylinders, and we obtain a staircase train track $\tT_{X, H_X}$:

\begin{corollary}\Label{UniformlyBoundedTrainttrackX}
There is $c > 0$, such that, for all $\hH = (H_X, H_Y) \in N$, 
the diameters of the branches of the staircase train track $\tT_{X, H_X}$ are bounded by $c$. 
\end{corollary}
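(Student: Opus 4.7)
The plan is to argue by compactness and contradiction. Since the projection of $N$ to the first factor of $\PML \times \PML$ is compact and $X$ is fixed, by Hubbard--Masur the corresponding unit-area quadratic differentials form a compact subset of $\QD^1(X)$, so the metric invariants (diameter, closed-geodesic length in a fixed homotopy class, etc.) of the flat surfaces $E_{X, H_X}^1$ are uniformly bounded over this projection. The branches of $\tT_{X, H_X}$ are of three kinds: polygonal branches of $T_{X, H_X}$; non-rectangular cylindrical branches of $T_{X, H_X}$; and the newly constructed spiral cylinder branches. For polygonal branches the bound is immediate from \Cref{PolygonalZeroNbhd}: each is a component of the $(r, \sqrt[4]{r})$-neighborhood of one of the (at most $4g-4$) singular points, so its diameter depends only on $r$. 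For non-rectangular cylindrical branches, both the circumference and the height are bounded by the uniformly bounded diameter of $E_{X, H_X}^1$.

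For spiral cylinders I argue by contradiction. Suppose $\hH_i \to \hH$ in $N$ and spiral cylinder branches $B_i$ of $\tT_{X, H_{X,i}}$ satisfy $\mathrm{diam}(B_i) \to \infty$. Each $B_i$ is an embedded open annulus in $E_{X,i}^1$, being a union of pairwise disjoint interiors of rectangular branches of $T_{X,i}$, and it inherits a flat-cylinder metric whose intrinsic height is at most $2r$ (the vertical-length bound on rectangular branches). Writing $c_i$ for the intrinsic circumference---equivalently, the length in $E_{X,i}^1$ of the unique core closed horizontal geodesic of $B_i$---one has $\mathrm{diam}(B_i) \leq \sqrt{(c_i/2)^2 + (2r)^2}$, so the hypothesis forces $c_i \to \infty$. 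Passing to a subsequence, \Cref{LimitOfLongBranches} provides a flat cylinder $\hat{A}$ (or a closed horizontal leaf, which one treats as a degenerate cylinder) in $E_{X, H_X}^1$ such that the supports $|B_i| \subset E_{X,i}^1$ Hausdorff-converge to the support of $\hat{A}$.

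The main step is to derive a uniform bound on $c_i$, contradicting $c_i \to \infty$. Because $B_i$ is embedded and its closure eventually lies in an arbitrarily small tubular neighborhood of $\hat{A}$, the simple core curve of $B_i$ is freely homotopic, within this tubular neighborhood, to the core of $\hat{A}$. Hence for large $i$ the core of $B_i$ is a simple closed geodesic of $E_{X,i}^1$ in a single fixed free homotopy class, namely the class of the core of $\hat{A}$. Since the length of a closed geodesic in a fixed homotopy class varies continuously over the compact family of flat surfaces and $E_{X,i}^1$ converges to $E_{X, H_X}^1$, the numbers $c_i$ converge to the finite length of the core of $\hat{A}$ in $E_{X, H_X}^1$, contradicting $c_i \to \infty$. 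The essential subtlety, and the main obstacle to nail down rigorously, is the stabilization of the free homotopy class of the core of $B_i$ under the Hausdorff convergence: this is guaranteed precisely by the embeddedness of $B_i$, which prevents its core from wrapping around $\hat{A}$ more than once in $E_{X,i}^1$.
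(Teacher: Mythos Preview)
Your contradiction argument for the spiral cylinder branches is correct and is precisely what the paper has in mind: the paper simply asserts that such spiral cylinders have diameter ``uniformly bounded from above by a constant depending only on $X$'' and leaves the justification implicit, and your use of \Cref{LimitOfLongBranches} together with the stabilization of the free homotopy class of the embedded core is the right way to fill this in. Two small corrections are needed but do not affect the logic. A spiral cylinder is a union of several rectangular branches of $T_{X,H_X}$ glued along horizontal edges, so its intrinsic height is not bounded by $2r$ but by $2r$ times the number of constituent rectangles (which is bounded by the combinatorics of the singular set); and its core is a monotone staircase loop rather than a closed horizontal geodesic. Either way the height is uniformly bounded, so $\mathrm{diam}(B_i)\to\infty$ still forces $c_i\to\infty$, and your homotopy-class argument goes through.

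There is a more genuine oversimplification in your treatment of the non-spiral branches. The non-rectangular branches of $T_{X,H_X}$ (and hence of $\tT_{X,H_X}$) are \emph{not} simply connected components of the $(r,\sqrt[4]{r})$-neighborhood of the singular set: between \Cref{PolygonalZeroNbhd} and the final $T_{X,H_X}$ there are two rounds of gluing, first absorbing branches that meet the non-transversal graph $G_Y$ (\S\ref{sTrainTracksForDiagonal}) and then gluing along vertical edges of small $W_Y$-weight (\S\ref{sOnTheDiagonal}), either of which can merge adjacent rectangles into a polygonal branch. The diameter bound still holds, but the reason is that $G_Y$ has uniformly bounded total length (\Cref{UpperBoundForHorizontalSegments}) and the total number of branches of $\tt_{X,H_X}$ is bounded by the topology of $S$, so only boundedly many pieces of bounded individual size can be merged. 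You also omit the short rectangular branches of $T_{X,H_X}$ (horizontal length $\le c$) that survive into $\tT_{X,H_X}$, though these are trivially bounded.
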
 
 
\subsection{Semi-diffeomorphic bounded almost polygonal train-track structures for $Y$ }\Label{sCompatibleBoundedPolygonalTraintrack}

For  $\ep > 0$,  we have constructed,  for  all $\hH = (H_X, H_Y)$ in  some compact neighborhood $N$ of the diagonal  $\Lambda_\infi$ in $(\PML \times \PML) \minus \Delta^\ast$, 
a staircase train-track structure $T_{X, H_X}$ of $E_{X, H_X}$ and an $\ep$-quasi-staircase train-track structure $T_{Y, H_Y}$ of $E_{Y, H}$ , such that $T_{X, H_X}$ is semi-diffeomorphic to $T_{Y, H_Y}$.  
In \S \ref{sBoundedEuclideanTraintrackForX}, we modify $T_{X, H_X}$ and obtain a uniformly bounded train-track $\tT_{X, H_X}$ creating spiral cylinders. 
In this section, we accordingly modify $T_{Y,  H_Y}$ to a bounded $\ep$-quasi-staircase train-track structure. 

\begin{lemma}\Label{CorrespondingSpiralCyliner}
\begin{enumerate}
\item  \Label{iCorrespondingSpiralCylindersSemiDiff}
For every spiral cylinder $A$ of $\tT_{X, H_X}$, letting  $R_{X, 1}, R_{X, 2}, \dots, R_{X, n}$ be the rectangular branches of $T_{X, \hH}$ whose union is $A$, 
 there are  corresponding branches $R_{Y, 1}, R_{Y, 2}, \linebreak[3] \dots, R_{Y, n}$ of $T_{Y, \hH}$, such that
 \begin{itemize}
 \item their union  $R_{Y, 1} \cup R_{Y, 2} \cup \dots \cup R_{Y, n}$ is  a spiral cylinder in $E_{Y, H_Y}$, and
\item  $A$ is semi-diffeomorphic to  $R_{Y, 1} \cup R_{Y, 2} \cup \dots \cup R_{Y, n}$. 
\end{itemize}
\item
 Moreover, there is a constant $c' > 0$, such that, if a rectangular branch of $T_{Y, H_Y}$ has horizontal length more than $c'$, then it is contained in a spiral cylinder as above. \Label{iCorrespondingRectanglesAreQI}
\end{enumerate}
\end{lemma}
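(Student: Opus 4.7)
The plan is to transport the spiral cylinder structure from $X$ to $Y$ via the semi-diffeomorphism of \Cref{ModifiedTTForY}, and then run on the $Y$-side the same Hausdorff-limit argument used in \Cref{LimitOfLongBranches} to identify what a very long rectangular branch must look like. Throughout, the key external input is \Cref{HorizontalFoliationsCoinside}, which guarantees that any limiting horizontal foliation from sequences in $\rchi_X \cap \rchi_Y$ leaving every compact is the same on $X$ and on $Y$; so closed leaves of $H_X^\infty$ correspond canonically to closed leaves of $H_Y^\infty$, and flat-cylinder regions on one side correspond to flat-cylinder regions on the other.

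For part (1), I would start with a spiral cylinder $A$ in $\tT_{X, H_X}$ decomposed into rectangular branches $R_{X, 1}, \ldots, R_{X, n}$ of $T_{X, \hH}$, glued cyclically along vertical edges, each having horizontal length exceeding the threshold $c$ supplied by \Cref{UniformlyBoundedTrainttrackX}. The semi-diffeomorphism $T_{X, H_X} \to T'_{Y, H_Y}$ from \Cref{ModifiedTTForY} assigns to each $R_{X, i}$ a corresponding (trapezoidal) branch $R_{Y, i}$ of $T'_{Y, H_Y}$, carrying vertical edges to vertical edges; hence the $R_{Y, i}$ glue cyclically along their vertical edges in the same combinatorial pattern, so their union $U_Y \coloneqq \bigcup_i R_{Y, i}$ is a topological annulus. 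To see that $U_Y$ is an (almost) spiral cylinder in $E_{Y, H_Y}$, I would argue by contradiction: if not, choose a sequence $\hH_k \to \hH_\infty$ along which $R_{X, i, k}$ has horizontal length diverging and yet the corresponding $U_{Y, k}$ fails to be a spiral cylinder. By the straightening map $\st$ of \S\ref{sStraighteningMap} together with the asymptotic bilipschitz estimate between transversal measures obtained in \Cref{TranslationLengthOfLoops}, the horizontal lengths of the $R_{Y, i, k}$ must likewise diverge. Then the analogue on $Y$ of \Cref{LimitOfLongBranches}(\ref{iRectangleConveringToFlatyCylinder}), (\ref{iSpiralCylinderCloseToFlatCylinder})---whose proof transcribes verbatim since it only uses that the $(r, \sqrt[4]{r})$-neighborhood of singularities is disjoint from rectangular/trapezoidal branches, together with unit-area normalization---forces the Hausdorff limit of each $|R_{Y, i, k}|$ to be either a flat cylinder branch or a closed $H_Y^\infty$-leaf. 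By \Cref{HorizontalFoliationsCoinside} this limit corresponds to the limit of $|R_{X, i, k}|$ on the $X$-side, and hence the $R_{Y, i, k}$ organize themselves into a (quasi-)spiral cylinder for large $k$, contradicting the assumption.

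For part (2), I would use compactness of the neighborhood $N$ of $\Lambda_\infty$ in $(\PML \times \PML) \setminus \Delta^\ast$ to extract a uniform threshold. If no such $c$ existed, one would find a sequence of rectangular branches of $T_{Y, H_{Y, k}}$ with horizontal lengths diverging but not contained in any spiral cylinder coming from $X$. Taking a convergent subsequence in $N$ and applying \Cref{LimitOfLongBranches}-style analysis on the $Y$-side produces a limiting flat cylinder or closed leaf of $H_Y^\infty$; by \Cref{HorizontalFoliationsCoinside} again this pulls back under the semi-diffeomorphism to a corresponding spiral cylinder structure on $X$, which by part (1) realizes the branch as sitting inside such a cylinder for large $k$---a contradiction.

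The principal difficulty will be that $T_{Y, H_Y}$ is only an $\ep$-quasi-staircase decomposition, so the ``spiral cylinder'' on $Y$ has $\ep$-quasi horizontal boundary arcs rather than exact horizontal ones; one must verify that the Hausdorff-limit arguments of \Cref{LimitOfLongBranches} still go through in this quasi-staircase setting, which requires checking that the quasi-horizontal edges straighten in the limit and that no horizontal edge of $R_{X, i}$ collapses under the semi-diffeomorphism (so that the bijection $R_{X, i} \leftrightarrow R_{Y, i}$ is genuinely one-to-one for long rectangles). This last point should follow because a collapsed horizontal edge in a rectangular branch would force $R_{Y, i}$ to degenerate, but then one of its adjacent polygonal branches would absorb a region whose preimage on $X$ has positive horizontal length, contradicting the diameter bound from \Cref{UniformlyBoundedTrainttrackX} propagated to $Y$ via the $\del$-Hausdorff closeness in \Cref{ModifiedTTForY}.
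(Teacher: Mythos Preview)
Your approach is considerably more elaborate than the paper's, and it has both a structural issue and misapplied citations.

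The paper's argument is direct and short. For part (1): since $(H_X, H_Y) \notin \Delta^\ast$, the geodesic representative $[V_Y]_X$ essentially crosses the spiral cylinder $A$ (otherwise its core, a closed horizontal curve, would have zero $V_Y$-intersection), so the realization $W_Y$ has positive weight on every $R_{X,j}$. By the construction of $T_{Y, H_Y}$ in \S\ref{sInducedTraintrack}--\S\ref{sSeimidiffeoTraintrackNearDiagonal}, each $R_{X,j}$ with positive $W_Y$-weight yields a nondegenerate rectangular branch $R_{Y,j}$ whose horizontal length \emph{equals} that weight, and the gluing pattern along horizontal edges is inherited; so $\bigcup_j R_{Y,j}$ is a spiral cylinder immediately, with no limiting argument required. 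For part (2): the horizontal length of $R_Y$ is the $W_Y$-weight of the corresponding $R_X$, and by compactness of $N$ (together with the uniform angle bound of \Cref{Semitransversailty}) this is uniformly quasi-isometric to the horizontal length of $R_X$; hence a long $R_Y$ forces a long $R_X$, which by \S\ref{sBoundedEuclideanTraintrackForX} already sits in a spiral cylinder of $\tT_{X, H_X}$.

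Your contradiction setup for part (1) is logically off: the claim concerns a \emph{fixed} $\hH$ and a fixed $A$, so there is no sequence $\hH_k$ to choose once you assume failure at that single $\hH$. More importantly, \Cref{HorizontalFoliationsCoinside} and \Cref{TranslationLengthOfLoops} are statements about diverging sequences of \emph{holonomies} $\rho_i \in \rchi_X \cap \rchi_Y$; they compare limiting foliations extracted from such sequences. The present lemma is purely about the flat-surface geometry attached to a pair $(H_X, H_Y) \in N \subset \PML \times \PML$, with no holonomy in sight, so those results are not the right tools here. The fact you actually need is already built into the construction of $T_{Y,H_Y}$: the horizontal length on the $Y$-side \emph{is} the $W_Y$-weight on the $X$-side, and that weight is uniformly controlled over $N$ by compactness.
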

\begin{proof}
As $(H_X, H_Y) \cap \Delta^\ast = \emptyset$, the geodesic representative $[V_Y]_X$ essentially intersects $A$.
Thus, the realization $W_Y$ has positive weights on $R_{X, 1}, R_{X, 2}, \dots, R_{X, n}$.
Thus $R_{X, j}$ corresponds to a rectangular branch $R_{Y, j}$ of $T_{Y, H_Y}$, and their union  $\cup_j R_{Y, j}$ is a spiral cylinder in $T_{Y, H_Y}$ ((\ref{iCorrespondingSpiralCylindersSemiDiff})).

Let $R_X$ and $R_Y$ be corresponding rectangular branches of $T_{X, H_X}$ and $T_{Y, H_Y}$, respectively. 
As $(H_X, H_Y)$ varies only in a fixed compact neighborhood of the diagonal,
the horizontal length of $R_X$ is bilipschitz close to the length of the horizontal length of $R_Y$ with a uniform bilipschitz constant for such all $R_X$ and $R_Y$.  
 Therefore, there is $c' > 0$ such that if $R_Y$ is more than $c'$, then the corresponding branch $R_X$ has length more than the constant $c$ (right before \Cref{UniformlyBoundedTrainttrackX}), then $R_X$ is contained in a unique spiral cylinder (\ref{iCorrespondingRectanglesAreQI}).
\end{proof}

For every spiral cylinder $A$ of $\tT_{X, H_X}$, by applying  Lemma \ref{CorrespondingSpiralCyliner}, we replace the branches  $R_{Y, 1}, R_{Y, 2}, \dots, R_{Y, n}$ of $T_{Y, H_Y}$ with the spiral cylinder $R_{X, 1} \cup R_{X, 2} \cup \dots \cup R_{X, n}$ of $T_{Y, H_Y}$.
Then, 
we obtain an $\ep$-quasi-staircase train-track decomposition $\tT_{Y, H_Y}$ without long rectangles:
\begin{proposition}\Label{PolygonalTaintrackY}
For every $\ep > 0$, there are $c > 0$ and a neighborhood $N$ of the diagonal in $(\PML \times \PML) \minus \Delta^\ast$, such that,
for every $\hH = (H_X, H_Y) \in N \sub \PML \times \PML$,
there is an $\ep$-quasi-staircase train-track decomposition $\tT_{Y, H_Y}$ of $E_{Y, H_Y}$, such that 
\begin{enumerate}
\item $\tT_{Y, H_Y}'$ is $\del$-Hausdorff close to  $\tT_{Y, H_Y}$ in $E_{Y, H}^1$; \Label{iHausdorffCloseTY}
\item the diameters of $\tT_{Y, H_Y}$ and $\tT_{Y, H_Y}'$ are less than $c$;  \Label{iDiameterTY}
\item $\tT_{X, H_X}$ is semi-diffeomorphic with $\tT_{Y, H_Y}'$; \Label{iSemiDiffeomoprhictT}
\item  $\tT_{Y, H_Y}$ changes semi-continuously in $(H_X, H_Y)$ and the realization of $[V_Y]_X$ on $\tT_{X, H_X}$. \Label{iSemiContinuoustT} 
\end{enumerate}
\end{proposition}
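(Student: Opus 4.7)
The plan is to construct both $\tT_{Y, H_Y}$ and $\tT_{Y, H_Y}'$ by the same \emph{rectangle-to-spiral-cylinder} recipe that produced $\tT_{X, H_X}$ from $T_{X, H_X}$ in \S\ref{sBoundedEuclideanTraintrackForX}, transporting the operation across the semi-diffeomorphism of \Cref{ModifiedTTForY} via \Cref{CorrespondingSpiralCyliner}. Concretely, for each spiral cylinder $A$ of $\tT_{X, H_X}$ assembled from rectangles $R_{X, 1}, \dots, R_{X, n}$ of $T_{X, H_X}$, part (\ref{iCorrespondingSpiralCylindersSemiDiff}) of \Cref{CorrespondingSpiralCyliner} furnishes corresponding rectangles $R_{Y, 1}, \dots, R_{Y, n}$ in $T_{Y, H_Y}$ whose union is again a spiral cylinder. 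I would glue those rectangles into a single spiral branch simultaneously in $T_{Y, H_Y}$ and in $T_{Y, H_Y}'$, producing $\tT_{Y, H_Y}$ and $\tT_{Y, H_Y}'$, respectively.

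Items (\ref{iHausdorffCloseTY}) and (\ref{iDiameterTY}) should be routine. For (\ref{iHausdorffCloseTY}), the passage to $\tT$ only removes some vertical edges interior to the new spiral cylinders and leaves the remaining $1$-skeleton unchanged, so the $\del$-Hausdorff closeness of $T_{Y, H_Y}'$ to $T_{Y, H_Y}$ supplied by \Cref{ModifiedTTForY} carries over verbatim. For (\ref{iDiameterTY}), non-rectangular branches have diameter controlled uniformly through compactness of $N$ and the choice of $r, \del$; short rectangular branches have horizontal length bounded by the threshold of \Cref{CorrespondingSpiralCyliner}(\ref{iCorrespondingRectanglesAreQI}); and the new spiral-cylinder branches inherit their diameter bound from \Cref{UniformlyBoundedTrainttrackX} on the $X$-side, via the uniform quasi-isometry between corresponding rectangles afforded by $N\cap\Delta^\ast=\emptyset$ and the compactness of $N$. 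For (\ref{iSemiDiffeomoprhictT}), the semi-diffeomorphism $T_{X, H_X}\to T_{Y, H_Y}'$ of \Cref{ModifiedTTForY} restricts to a semi-diffeomorphism between the matching unions $R_{X, 1}\cup\dots\cup R_{X, n}$ and $R'_{Y, 1}\cup\dots\cup R'_{Y, n}$, and since the two fusion operations are performed in parallel on matched data, the semi-diffeomorphism descends to $\tT_{X, H_X}\to\tT_{Y, H_Y}'$.

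The principal obstacle will be (\ref{iSemiContinuoustT}): the combinatorics of which long rectangles fuse into a single spiral cylinder can jump discontinuously, because along a convergent sequence $\hH_i\to \hH$ in $N$, several distinct spiral cylinders of $\tT_{X, H_{X, i}}$ may Hausdorff-limit to one flat cylinder of $\tT_{X, H_X}$, and rectangles that are long for large $i$ can, in the limit, collapse onto horizontal segments in the $1$-skeleton of $\tT_{X, H_X}$. The plan is to control this through \Cref{LimitOfLongBranches}: after passing to a subsequence, each rectangle of $T_{X, H_{X, i}}$ with diverging horizontal length Hausdorff-converges either to a cylindrical branch of $\tT_{X, H_X}$ or to a closed horizontal leaf inside its $1$-skeleton, and two such rectangles share a limit exactly when they come to belong to the same spiral cylinder in $\tT_{X, H_X}$. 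This produces the required refinement relation for the semi-convergence of $\tT_{X, H_X}$. Transporting the conclusion through the parallel construction on the $Y$-side via (\ref{iSemiDiffeomoprhictT}), together with the semi-continuity of $T_{Y, H_Y}$ already provided by \Cref{ModifiedTTForY} (up to the slidings produced when $W_Y$ shifts across vertical slits), will then yield (\ref{iSemiContinuoustT}).
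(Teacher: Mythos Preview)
Your proposal is correct and follows essentially the same approach as the paper: the construction of $\tT_{Y,H_Y}$ (and $\tT_{Y,H_Y}'$) by fusing the rectangles $R_{Y,1},\dots,R_{Y,n}$ corresponding to each spiral cylinder of $\tT_{X,H_X}$ via \Cref{CorrespondingSpiralCyliner} is exactly what the paper does in the paragraph preceding the proposition, and your citations for (\ref{iHausdorffCloseTY})--(\ref{iSemiDiffeomoprhictT}) match the paper's (namely \Cref{ModifiedTTForY} and \Cref{CorrespondingSpiralCyliner}). For (\ref{iSemiContinuoustT}) the paper simply invokes the semi-continuity of $T_{Y,H_Y}$ from \Cref{TforY}, whereas you go further and explain, via \Cref{LimitOfLongBranches}, why the spiral-cylinder fusion step itself preserves semi-continuity; this is a welcome elaboration of a point the paper leaves implicit.
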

\begin{proof}
Assertion  (\ref{iDiameterTY}) follows from \Cref{CorrespondingSpiralCyliner} (\ref{iCorrespondingRectanglesAreQI}).
Assertion (\ref{iHausdorffCloseTY}) follows from \Cref{ModifiedTTForY}.
Assertion (\ref{iSemiDiffeomoprhictT}) follows from  \Cref{ModifiedTTForY} and \Cref{CorrespondingSpiralCyliner} (\ref{iCorrespondingSpiralCylindersSemiDiff}).
Assertion (\ref{iSemiContinuoustT}) holds, by \Cref{TforY}, since $T_{Y, H_Y}$ changes semi-continuously in $(H_X, H_Y)$ and the realization $W_Y$ of $[V_Y]_X$ on $T_{X, H_X}$.
\end{proof}
%%%%% pleated surface bring
   \section{Thurston laminations and vertical foliations}\Label{sLaminationAndFoliations}
  
 \subsubsection{Model Euclidean Polygons and projective circular polygons}\Label{sModelCircularPolygons}
A {\sf polygon with circular boundary} is a projective structure on a polygon such that the development of each edge is contained in a round circle in $\CP^1$.
 Let $\sigma$ be an ideal hyperbolic $n$-gon  $(n \geq 3)$ . 
 Let $L$ be a measured lamination on $\sigma$ except that each boundary geodesic of $\sigma$ is a leaf of weight $\infty$.
 From a view point of the Thurston parameterization, it is natural to add such weight-infinity leaves.
 In fact, there is a unique $\CP^1$-structure $\CCC = \CCC(\sigma, L)$ on the complex plane $\C$ whose Thurston's parametrization is the pair $(\sigma, L)$; see  \cite{GuptaMj21}. 
  Let $\LLL$ be the Thurston lamination on $\CCC$.
Denote, by $\kap\col \CCC \to \sigma$, the collapsing map (\S \ref{sCollapsing}). 
  
For each boundary edge $l$ of $\sigma$, pick a leaf $\ell$ of $\LLL$ which is sent  diffeomorphically onto $l$ by $\kappa$. 
Then,  those circular leaves bound a circular projective $n$-gon $\PPP$ in $\CCC$, called an {\sf ideal projective polygon}.

For each $i = 1, 2, \dots, n$, 
let $v_i$ be an ideal vertex of $\sigma$, and let $l_i$ and $l_{i + 1}$ be the edges of $\sigma$ starting from $v_i$. 
Consider the geodesic $g$ starting from $v_i$ in the middle of $l_i$ and $l_{i+1}$, so that the reflection about $g$ exchanges $l_i$ and $l_{i + 1}$.
Embed $\sigma$ isometrically into a totally geodesic plane in $\H^3$. 
Accordingly  $\PPP$ is embedding in $\CP^1$ so that the restriction of $\kap$ to $\PPP$ is the nearest point projection to $\sigma$ in $\H^3$.

Then, pick a round circle $c_i$ on $\CP^1$ such that the hyperbolic plane, ${\rm Conv}\, c_i$, bounded by $c_i$ is orthogonal to $g$, so that $l_i$ and $l_{i + 1}$ are transversal to  ${\rm Conv}\, c_i$. 

Let $\ell_i$ and $\ell_i$ be the edges of $\PPP$ corresponding to $l_i$ and $l_{i+1}$, respectively.  
If $c_i$ is close to $v_i$ enough, then there is a unique arc $a_i$ in $\PPP$ connecting $\ell_i$ to $\ell_{i + 1}$ which is immersed into $c_i$ by the developing map. 
Then, the region in $\PPP$ bounded by $a_1 \dots a_n$ is called the {\sf truncated ideal projective polygon}. 

 \begin{definition}
Let $C$ be a $\CP^1$-structure on $S$. 
Let $E^1$ be the normalized flat surface of the Schwarzian parametrization of $C$. 
Let $P$ be a staircase polygon in $E^1$. 
Then  $P$ is {\sf $\ep$-close} to 
a truncated ideal projective polygon $\PPP$, if $\PPP$ isomorphically embeds onto a polygon in $C$ which is $\ep$-Hausdorff close to $P$ in the normalized Euclidean metric. 
\end{definition}

For $X \in \TT \sqcup \TT^\ast$, recall that $\rchi_X$ be the holonomy variety of the $\CP^1$-structures on $X$.
For $\rho \in \rchi_X$, let $C_{X, \rho}$ be the $\CP^1$-structure on $X$ with holonomy $\rho$, and let $E_{X, \rho}$ be the flat surface given by the holomorphic quadratic differential of $C_{X, \rho}$. 
Similarly, for $\ep > 0$,  let $N_\ep^1 Z_{X, \rho}$ be the $\ep$-neighborhood of the singular set in the normalized flat surface $E^1_{X, \rho}$.
Let $\LLL_{X, \rho}$ be the Thurston lamination of $C_{X, \rho}$.

Then, by  combining what we have proved,  we obtain the following.
\begin{theorem}\Label{EuclideanAndProjectivePolygons}
Let $X \in \TT \sqcup \TT^\ast$.
Then, for every $\ep > 0$, there is a bounded subset $K = K(X, \ep)$ of $\rchi_X$ satisfying the following:
 Suppose that $\rho$ is in $\rchi_X \minus K$, and that the flat surface  $E_{X, \rho}$ contains
 a staircase polygon $P$ such that
\begin{itemize}
\item $\bdr P$ disjoint from $N^1_\ep Z_{X, \rho}$ and
\item  the diameter of $P$ is less than $\frac{1}{\ep}$. 
\end{itemize}
Then
\begin{enumerate}
\item \Label{iThurstonLaminationAndVerticalFoliation}$\LLL_{X, \rho} | P$ is $(1 + \ep, \ep)$-quasi-isometric to $V_{X, \rho} | P$ up to an isotopy supported on  $N_\ep^1 Z_{X, \rho} \cap P$, such that, in the normalized Euclidean metric $E^1_{X, \rho}$,  
\begin{enumerate}
\item  on $P$, each leaf of $V$ is $\ep$-Hausdorff-close to a leaf of $\LLL$, and \Label{iLeavesAreClose}
\item the transversal measure of $V$ is $\ep$-close to the transversal measure of $\LLL$ for all transversal arcs whose lengths are less than one.  \Label{iTransversalMeasureForShortArcs}
\end{enumerate}
\item  In the  (unnormalized) Euclidean metric, $P$ is $\ep$-close to a truncated circular polygon of the hyperbolic surface in the Thurston parameters. \Label{CloseToModelCircularPolygon}
\end{enumerate}
\end{theorem}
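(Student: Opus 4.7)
The plan is to derive both assertions by assembling the geometric estimates from Sections \ref{sEpsteinSurfaces} and \ref{sLaminationAndFoliations}. Part (1) is essentially Theorem \ref{ThurstonLaminationAndVerticalFoliationOnPolygons} translated from the unnormalized metric $E_{X,\rho}$ into the normalized metric $E^1_{X,\rho}$, while part (2) follows by comparing the developing map restricted to $P$ with that of a truncated ideal projective polygon built from the Thurston parameters of $C_{X,\rho}$, using the exponential approximation of Lemma \ref{ApproximationByExponentialMap}.

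For part (1), first enlarge $K=K(X,\ep)\subset\rchi_X$ using the properness of $\Hol\vert\PP_X$ (Theorem \ref{HolonomyVariety}), so that $\|q_{X,\rho}\|$ can be taken arbitrarily large for $\rho\notin K$. Since the $E^1$-metric is the $E$-metric divided by $\sqrt{\|q_{X,\rho}\|}$, an $E^1$-distance of $\ep$ from $\bdr P$ to $Z_{X,\rho}$ translates to $E$-distance at least $\ep\sqrt{\|q_{X,\rho}\|}$, which exceeds any prescribed constant $r$. Theorem \ref{ThurstonLaminationAndVerticalFoliationOnPolygons} then yields the $(1+\ep,\ep)$-quasi-isometry between $V_{X,\rho}$ and $\LLL_{X,\rho}$ on $P$, with the correcting isotopy supported in an $r/2$-neighborhood of $Z$ in $E$, i.e.\ an $\frac{r}{2\sqrt{\|q\|}}$-neighborhood in $E^1$, sitting inside $N^1_\ep Z_{X,\rho}\cap P$. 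To obtain the stronger (1)(a), cover $P$ by $E^1$-disks of radius proportional to $1/\sqrt{\|q_{X,\rho}\|}$ disjoint from $N^1_\ep Z_{X,\rho}$; on each such disk the hypothesis of Lemma \ref{ApproximationByExponentialMap} holds, so $\dev$ is pointwise close to $\exp\sqrt{2}$. On the model $\CP^1$-structure $C_0$ with developing map $\exp\sqrt{2}$, the Thurston lamination \emph{equals} the vertical foliation; pulling this identification back places each leaf of $V_{X,\rho}$ within $\ep$-Hausdorff distance of a leaf of $\LLL_{X,\rho}$. Statement (1)(b) is then obtained by integrating transversal measures along short transverse arcs using the Hausdorff closeness.

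For part (2), let $(\tau, L)$ be the Thurston parameters of $C_{X,\rho}$ and $\kap\col C_{X,\rho}\to\tau$ the collapsing map. By (1)(a), each of the $n$ vertical edges of $P$ is $\ep$-close to a leaf of $\LLL_{X,\rho}$; these leaves are mapped by $\kap$ to $n$ pairwise disjoint geodesics of $L$, whose complete bi-infinite extensions in $\tau$ bound an ideal hyperbolic $n$-gon $\sigma$. Form the canonical model $\CCC(\sigma, 0)$ on $\C$ together with its ideal projective polygon $\PPP$ (\S \ref{sModelCircularPolygons}). For each ideal vertex $v_i$ of $\sigma$, choose the truncation circle $c_i$ so that the resulting arc $a_i$ develops onto the image of the $i$-th horizontal edge of $P$. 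After normalizing the developing maps of $C_{X,\rho}$ and of $\CCC(\sigma,0)$ by a single element of $\PSL(2,\C)$ so that a chosen interior base point of $P$ and the corresponding point of $\PPP$ agree with matching frames, Lemma \ref{ApproximationByExponentialMap} applied on a cover of $P$ by small flat sub-disks forces the two developed images to agree pointwise within $\ep$; pulling this back by $\dev$ gives the $\ep$-Hausdorff closeness of $P$ to the truncated ideal projective polygon in the unnormalized Euclidean metric.

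The main obstacle is the uniform matching at the corners of $P$. The exponential approximation holds only uniformly away from $Z_{X,\rho}$ and with small $E$-diameter, so the sub-disk cover must be chosen carefully; more subtly, each truncation circle $c_i$ must be chosen with enough precision that the resulting truncated polygon is genuinely $\ep$-Hausdorff close to $P$, not merely homotopy equivalent to it, and this must be uniform in $\rho\in\rchi_X\minus K$ and in the position of $P$ inside $E^1_{X,\rho}$. Once the corner matching is secured, the global Hausdorff closeness and the correct cyclic pairing of the $2n$ boundary arcs of $\PPP$ with the $2n$ edges of $P$ are formal consequences of part (1) and the local exponential approximation.
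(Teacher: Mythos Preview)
Your argument for part~(1) is essentially the paper's: the $(1+\ep,\ep)$-quasi-isometry is Theorem~\ref{ThurstonLaminationAndVerticalFoliationOnPolygons} after rescaling, and (1)(a)--(1)(b) come from Lemma~\ref{VerticalLeafInFoliationAndLamination} and the quasi-isometry respectively. That part is fine.

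Part~(2), however, has a real gap. You build the model as $\CCC(\sigma,0)$, i.e.\ with \emph{zero} bending on the interior of the ideal polygon, and then try to match developing maps via Lemma~\ref{ApproximationByExponentialMap}. But the restriction of $C_{X,\rho}$ to $P$ carries the Thurston lamination $\LLL_{X,\rho}|P$, which by your own part~(1)(b) has transverse measure comparable to $V_{X,\rho}|P$ and is therefore \emph{not} small on horizontal arcs crossing $P$. The correct model is $\CCC(\sigma, L|_\sigma)$ with $L$ the bending lamination of $C_{X,\rho}$; using $L=0$ produces a genuinely different projective structure whose developing map will not be $\ep$-close to $\dev C_{X,\rho}|P$ once the accumulated bending exceeds~$\ep$. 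Relatedly, Lemma~\ref{ApproximationByExponentialMap} compares $\dev C_{X,\rho}$ with $\exp\sqrt{2}$ on a \emph{single} small disk; it does not let you glue such comparisons across a cover of~$P$ into a global identification with $\dev\CCC(\sigma,0)$, precisely because the M\"obius normalizations change from disk to disk by amounts governed by the bending.

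The paper avoids this entirely by working on the hyperbolic side: it analyzes the image $\kappa(P)\subset\tau$ via the collapsing map rather than comparing projective developing maps. Using the Epstein-surface estimates (Lemma~\ref{Dumas}) and the closeness of $\Ep_{X,\rho}$ to $\hat\beta_{X,\rho}$, it shows each vertical edge $v$ collapses to a near-geodesic segment $s_v$ of length $\sqrt{2}\,\length v$ with $L(s_v)<\ep$ (this last from (1)(b)), while each horizontal edge collapses to something of length $<\ep$ on~$\tau$ (from Lemma~\ref{Dumas}(\ref{iHoriozontalDerivative})). Hence $\kappa(P)$ is $\ep$-close to a truncated ideal polygon in~$\tau$, and pulling back gives the truncated ideal projective polygon in $C_{X,\rho}$ with the \emph{inherited} bending. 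If you want to salvage your route, replace $\CCC(\sigma,0)$ by $\CCC(\sigma,L|_\sigma)$ and argue closeness through $\kappa$ and the Thurston parametrization rather than through $\exp\sqrt{2}$.
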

\proof
The assertion 
(\ref{iLeavesAreClose}) follows from Lemma \ref{VerticalLeafInFoliationAndLamination}.
The assertion (\ref{iTransversalMeasureForShortArcs}) is given by Proposition \Cref{ThurstonLaminationAndVerticalFoliationOnPolygons}.

We shall prove (\ref{CloseToModelCircularPolygon}).
Set $C_{X, \rho} = (\tau, L) \in \TT \times \ML$ be the $\CP^1$ structure on $X$ with holonomy $\rho \in \rchi \minus K$ in Thurston coordinates, and let $\kap\col C_{X, \rho} \to \tau$ be the collapsing map.
Since sufficiently away from the zero, the developing map is well-approximated by the exponential map (\Cref{ApproximationByExponentialMap}). 

If $K$ is sufficiently large, then
for every vertical edge $v$ of $P$, the restriction $\Ep_{X, \rho} |  v$  is
 a $(1+ \ep)$-bilipschitz embedding on the Epstein surface.
Therefore,  by the closeness of $\Ep_{X, \rho}$ and $\hat\beta_{X, \rho}$, $\kap (v)$ is $\ep$-close to a geodesic segment $s_v$ of length $\sqrt{2} \length v$.
By (\ref{iTransversalMeasureForShortArcs}), if $K$ is large enough,  $L (s_v) < \ep$.  

Every horizontal edge $h$ of $P$ is very short on the Epstein surface (\Cref{Dumas}). 
As the developing map is approximated by the Exponential map and $\operatorname{Area} \tau = 2\pi \rvert \rchi(S) \lvert$,  
 it follows that, if $\kap(h)$ has length less than $\ep$ on $\tau$.
Therefore, the image of $P$ on the hyperbolic surface is $\ep$-close to a truncated ideal polygon. 

\begin{figure}
\begin{overpic}[scale=.06%, grid,tics=10
] {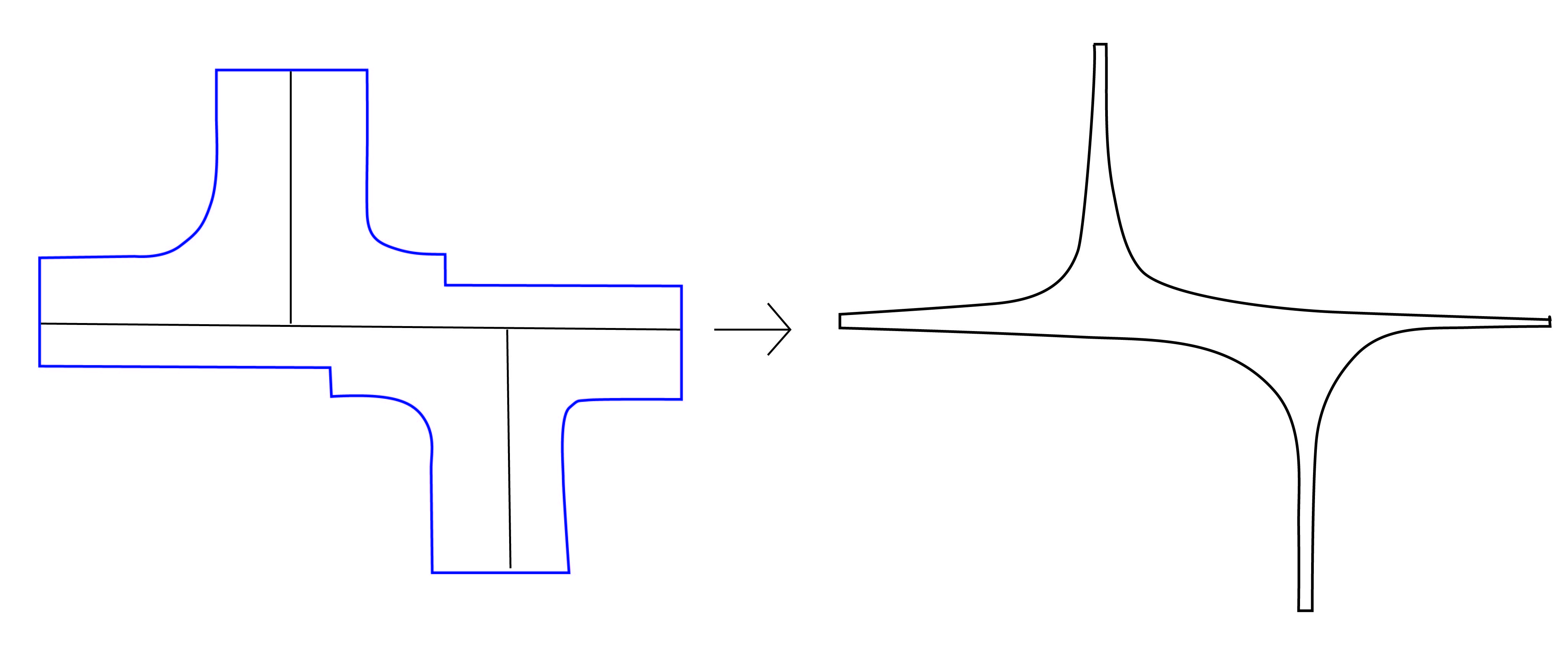} % figure file
 %   \put( , ){\textcolor{}{$$}}  
 %   \put( , ){}  
      \end{overpic}
\caption{}\label{fEuclideanPolygonToHyperbolicPolygon}
\end{figure}

\Qed{EuclideanAndProjectivePolygons}

\subsection{Equivariant  circle systems}
For $\rho \in \rchi_X$, we shall pick a system of a $\rho$-equivalent round circles on $\CP^1$, which will be used to construct a circular train-track structure of $C_{X, \rho}$. 
Let $\ti\tT_{X, \rho}$ be the $\pi_1(S)$-invariant train-track structure on $\ti{E}_{X, \rho}$ obtained by lifting the train-track structure $\tT_{X, \rho}$ on $E_{X, \rho}$.
Let  $\Ep^\ast_{X, \rho}\col T \ti{E}_{X, \rho} \to T \H^3$ be the differential of $\Ep_{X, \rho}\col \ti{E}_{X, \rho} \to \H^3$.
\begin{lemma}\Label{EquivariantCircles}
For every $\ep > 0$, there is a bounded subset  $K_\ep$ of $\rchi$ such that, if $\rho\col \pi_1(S) \to \PSL(2, \C)$  belongs to  $\rchi_X \minus K_\ep$,  then, we can assign a round circle $c_h$ to every minimal horizontal edge $h$ of $\ti{\tT}_{X, \rho}$ with the following properties:  

\begin{enumerate}
\item The assignment $h \mapsto c_h$ is $\rho$-equivariant. 
\item The hyperbolic plane bounded by $c_h$ is $\ep$-almost orthogonal to the $\Ep^\ast_{X, \rho}$-images of  the vertical tangent vectors along $h$. \Label{iAlmostOrthogonal}
\item If $h_1, h_2$ are horizontal edges of $\ti\tT_{X, \rho}$ connected by a vertical edge $v$ of length at least $\ep$, then the round circles $c_{h_1}$ and $c_{h_2}$ are disjoint.   \Label{iDisjointCircles}
\item If $h_1, h_2, h_3$ are ``vertically consecutive" horizontal edges, such that
\begin{itemize}
\item $h_1$ and $h_2$ are connected by a vertical edge $v_1$ of $E_{X, \rho}$-length at least $\ep$; 
\item $h_2$ and $h_3$ are connected by a vertical edge $v_3$ of length at least $\ep$;
\item $h_1$ and $h_3$ are on the different sides of $h_2$, i.e. the normal vectors of $h_2$ in the direction of $v_1$ and $v_3$ are opposite,
\end{itemize}
  then $c_{h_1}$ and $c_{h_3}$ are disjoint, and they bound a round cylinder whose interior contains $c_{h_2}$. \Label{iPreservingOrder}
\end{enumerate}
\end{lemma}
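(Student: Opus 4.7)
\proof[Proof plan]
The plan is to choose, for each minimal horizontal edge $h$ of $\ti\tT_{X,\rho}$, a basepoint $z_h \in h$ and then set $c_h$ to be the ideal boundary in $\CP^1$ of the totally geodesic hyperbolic plane $\Pi_h \subset \H^3$ that passes through $\Ep_{X,\rho}(z_h)$ and is orthogonal to the image $\Ep^\ast_{X,\rho} v'(z_h)$ of the unit vertical tangent vector at $z_h$. For Property (1), I first pick basepoints on the finitely many minimal horizontal edges meeting a compact fundamental domain of $\pi_1(S)$ acting on $\ti{E}_{X,\rho}$, and then extend to all edges by the deck group action. Since $\Ep_{X,\rho}$ is $\rho$-equivariant and the plane $\Pi_h$ is determined by $\Ep_{X,\rho}(z_h)$ together with the tangent data, the resulting assignment $h \mapsto c_h$ is $\rho$-equivariant.

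Property (2) is deduced from Theorem \ref{HorizontalAndVerticalEstimate}(\ref{iCloseVerticalTangentVectors}). By Corollary \ref{UniformlyBoundedTrainttrackX}, the $E^1_{X,\rho}$-diameters of branches of $\tT_{X,\rho}$, and hence the normalized lengths of minimal horizontal edges, are bounded above by a constant depending only on $X$. By the construction of $\tT_{X,\rho}$ (Lemma \ref{PolygonalTraintrackX}), the one-skeleton stays at normalized Euclidean distance at least $r > 0$ from the singular set. When $K_\ep$ is chosen large enough, the cited theorem shows that for any minimal horizontal $h$, all vectors $\Ep^\ast_{X,\rho} v'(z)$ with $z \in h$ are $\ep$-close in $T^1 \H^3$ to $\Ep^\ast_{X,\rho} v'(z_h)$, which is exactly the $\ep$-almost orthogonality of $\Pi_h$ asserted in (2).

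For Properties (3) and (4), the key geometric input is that when the connecting vertical edge $v$ has $E^1_{X,\rho}$-length at least $\ep$, its Epstein image approximates a nondegenerate hyperbolic geodesic. Indeed, by Lemma \ref{Dumas}(\ref{iVerticalDerivative}) the hyperbolic length of $\Ep_{X,\rho}\circ v$ is close to $\sqrt{2}\cdot \length v \geq \sqrt{2}\ep$; by Lemma \ref{AlmostGeodesic} this image stays close to a hyperbolic geodesic $\alpha_v \subset \H^3$; and by Proposition \ref{TotalTorsionBound} the parallel transport along $\alpha_v$ of the normal direction rotates by an amount that tends to $0$ as $K_\ep$ grows. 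Combined with Property (2) at both endpoints, this forces $\Pi_{h_1}$ and $\Pi_{h_2}$ to be nearly perpendicular to the common almost-geodesic $\alpha_v$ at two points separated by a definite positive hyperbolic distance, so these planes are disjoint in $\H^3$; passing to ideal boundaries gives Property (3). For Property (4), the hypothesis that $h_1, h_3$ lie on opposite sides of $h_2$ is saying exactly that the two vertical edges $v_1, v_3$ leave $h_2$ into opposite half-spaces of $\Pi_{h_2}$. Applying the preceding argument twice yields a triple of nearly parallel planes $\Pi_{h_1}, \Pi_{h_2}, \Pi_{h_3}$ stacked in this order along a common almost-geodesic, so $c_{h_1}$ and $c_{h_3}$ are disjoint in $\CP^1$ and sandwich $c_{h_2}$.

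The main difficulty is uniformity in $\rho$: a single exceptional set $K_\ep \subset \rchi_X$ must suffice for all minimal horizontal edges of $\ti\tT_{X,\rho}$ simultaneously and for every $\rho \in \rchi_X \minus K_\ep$. This requires assembling the uniform combinatorial and geometric bounds on $\tT_{X,\rho}$ from Section \ref{sEuclideanTrainTracks} (bounded diameters of branches, positive minimum distance from the zero set, semicontinuity in $\rho$) with the Dumas-type Epstein-surface estimates of Section \ref{sEpsteinSurfaces}, and then verifying that the implicit constants chain together so that a single choice of $K_\ep$ uniformizes the argument across all edges and all admissible $\rho$.
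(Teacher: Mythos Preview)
Your proposal is correct and follows essentially the same approach as the paper: choose, for each horizontal edge, the totally geodesic plane orthogonal to the Epstein image of the vertical tangent at a point of the edge (equivariantly), derive Property~(2) from the closeness of vertical tangent images along a horizontal edge, and then use the Epstein estimates on vertical segments together with elementary hyperbolic geometry to obtain the disjointness and nesting in (3) and (4). The paper's proof is terser and makes one quantitative refinement you leave implicit: it first arranges the circles to be $\ep^2$-almost orthogonal (i.e., applies your Property~(2) argument with parameter $\ep^2$), so that the almost-orthogonality error is negligible compared to the lower bound $\ep$ on the length of the connecting vertical segment; this clean separation of scales is what makes the ``elementary hyperbolic geometry'' step go through uniformly.
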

\begin{proof}
Without loss of generality, we can assume that $\ep > 0$ is sufficiently small. 
With respect to the normalized Euclidean metric $E_{X, \rho}^1$, the lengths of minimal horizontal edges of $T_{X, \rho}$ are uniformly bounded from above by Corollary \ref{UniformlyBoundedTrainttrackX}, and the distances of the horizontal edges from the singular set of $E_{X, \rho}^1$ are uniformly bounded from below. 
 Then, by Lemma \ref{Dumas}, for every $\ep > 0$, if a bounded subset $K_\ep$ in $\chi$ is sufficiently large, then for every minimal horizontal edge $h$ of $T_{X, \rho}$, the vertical tangent vectors along $h$ on $E_{X, \rho}$ of unit length map to  $\ep$-close tangent vectors of  $\H^3$.

 Therefore,  if $K_\ep$ is large enough,  for each minimal horizontal edge $h$ of $\ti\tT_{Y, \rho}$, we pick a round circle $c_h$, such that 
 the assignment of $c_h$ is holonomy equivariant and that the images of vertical tangent vectors along $h$ are $\ep^2$-orthogonal to the hyperbolic plane bounded by $c_h$.

Then, if $v$ is a vertical edge sharing an endpoint with $h$, then  $\Ep_{X, \rho} (v)$ is $\ep^2$-almost orthogonal to the hyperbolic plane bounded by $c_h$. 
For every sufficiently small $\ep > 0$, if $K > 0$ is sufficiently large,  then the geodesic segment of length, at least, $\ep$ connects the hyperbolic planes bounded by $c_{h_1}$ and $c_{h_2}$, and the geodesic segment is $\ep^2$-almost orthogonal to both hyperbolic planes. 
Therefore, if $\ep > 0$ is sufficiently small, then, by elementary hyperbolic geometry,  the hyperbolic planes are disjoint, and (3) holds. 
By a similar argument, (\ref{iPreservingOrder}) also holds.
\end{proof}

The circle system in Lemma \ref{EquivariantCircles} is not unique, but unique up to  an appropriate isotopy:
\begin{proposition}\Label{IsotopeCircleSystem}
For every $\ep_1 > 0$, there is $\ep_2 > 0$, such that, for every $\rho \in \rchi_X \minus K_{\ep_2}$
given two systems of round circles $\{c_h\}$ and $\{c_h'\}$ realizing Lemma \ref{EquivariantCircles}  for $\ep_2 > 0$, there is a one-parameter family of equivalent circles systems $\{c_{t, h}\} ~(t \in [0, 1])$  realizing Lemma \ref{EquivariantCircles}  for $\ep_1 > 0$ which continuously connects $\{c_h\}$ to $\{c_h'\}$. 
\end{proposition}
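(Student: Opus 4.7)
The plan is to interpolate the two circle systems by continuous paths in the space of round circles in $\CP^1$ and then verify that each interpolated system continues to satisfy conditions (\ref{iAlmostOrthogonal})--(\ref{iPreservingOrder}) of Lemma \ref{EquivariantCircles} with the weaker tolerance $\ep_1$. I would identify the space of unoriented round circles in $\CP^1$ with the smooth three-manifold $\mathcal{H}$ of totally geodesic hyperbolic planes in $\H^3$, on which $\PSL(2,\C)$ acts transitively. For each minimal horizontal edge $h$ of $\ti\tT_{X,\rho}$, let $H_h, H_h' \in \mathcal{H}$ denote the planes bounded by $c_h$ and $c_h'$. Choose a set of $\pi_1(S)$-orbit representatives of such edges, interpolate $H_h$ to $H_h'$ by a short smooth path $H_{t,h}$ in $\mathcal{H}$ on each representative, and extend $\rho$-equivariantly; let $c_{t,h}$ be the bounding circle of $H_{t,h}$.

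For condition (\ref{iAlmostOrthogonal}), the key input is Theorem \ref{HorizontalAndVerticalEstimate}(\ref{iCloseVerticalTangentVectors}): once $\rho$ lies outside a sufficiently large compact subset of $\rchi_X$, the $\Ep^\ast_{X,\rho}$-images of vertical tangent vectors along any single horizontal edge $h$ are pairwise $\ep_2$-close in $T^1 \H^3$. I would fix a representative vertical tangent vector $w_h$ at the center of $h$ and let $H_h^{\mathrm{ideal}}$ be the unique plane through the footpoint of $w_h$ that is exactly orthogonal to $\Ep^\ast w_h$. Then both $H_h$ and $H_h'$ lie in an $O(\ep_2)$-neighborhood $U_h$ of $H_h^{\mathrm{ideal}}$ in $\mathcal{H}$. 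Since the condition of being $\ep_1$-almost orthogonal to a fixed unit tangent vector is open and contains a ball around $H_h^{\mathrm{ideal}}$ of radius comparable to $\ep_1$, for $\ep_2 \ll \ep_1$ I may choose the path $H_{t,h}$ to stay inside $U_h$ and hence to satisfy (\ref{iAlmostOrthogonal}) for all $t$.

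For conditions (\ref{iDisjointCircles}) and (\ref{iPreservingOrder}), the observation is that disjointness of two planes and the separation/nesting of three planes are open conditions in $\mathcal{H}^2$ and $\mathcal{H}^3$. The ideal planes $H_{h_1}^{\mathrm{ideal}}, H_{h_2}^{\mathrm{ideal}}, H_{h_3}^{\mathrm{ideal}}$ for vertically consecutive edges satisfy these conditions with a definite gap, by the elementary hyperbolic geometry already used in the proof of Lemma \ref{EquivariantCircles}. Since $H_{t,h_i}$ stays within $O(\ep_2)$ of $H_{h_i}^{\mathrm{ideal}}$ by the preceding paragraph, both open conditions persist along the entire interpolation provided $\ep_2$ is small enough relative to these gaps.

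The main obstacle is ensuring that $\ep_2 = \ep_2(\ep_1)$ can be chosen uniformly over $\rho \in \rchi_X \setminus K_{\ep_2}$ and over the (infinitely many) horizontal edges $h$ of $\ti\tT_{X,\rho}$. This uniformity comes from two inputs: first, the branches of $\tT_{X, H_X}$ fall into finitely many $\pi_1(S)$-orbits with uniformly bounded diameter (Corollary \ref{UniformlyBoundedTrainttrackX}), so only finitely many local configurations of vertically consecutive edges have to be controlled; second, the proof of Lemma \ref{EquivariantCircles} produces a uniform positive gap in conditions (\ref{iDisjointCircles}) and (\ref{iPreservingOrder}) for the endpoint systems $\{c_h\}, \{c_h'\}$ themselves, and $\rho$-equivariance then transports any good choice of $\ep_2$ to the entire orbit. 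Granted this uniformity, the family $\{c_{t,h}\}_{t\in[0,1]}$ constructed above is the desired continuous one-parameter family connecting $\{c_h\}$ to $\{c_h'\}$.
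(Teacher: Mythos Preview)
The paper does not actually prove this proposition: its entire proof reads ``The proof is left for the reader.'' So there is nothing to compare against, and your task is simply to supply a correct argument.

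Your approach is sound and is almost certainly what the author had in mind. Identifying round circles with totally geodesic planes, interpolating on a finite set of $\pi_1(S)$-orbit representatives, and extending equivariantly is the natural thing to do. Your verification of (\ref{iAlmostOrthogonal}) is fine: both $H_h$ and $H_h'$ lie within $O(\ep_2)$ of the canonical plane $H_h^{\mathrm{ideal}}$ orthogonal to $\Ep^\ast w_h$, and the space of planes $\ep_1$-almost orthogonal to a fixed unit vector is a convex-like open neighborhood of $H_h^{\mathrm{ideal}}$ of definite size depending only on $\ep_1$, so any short path joining $H_h$ to $H_h'$ inside the $O(\ep_2)$-ball stays inside it.

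For (\ref{iDisjointCircles}) and (\ref{iPreservingOrder}) your reasoning is also correct, but it is worth making one point explicit. The relevant quantitative fact, implicit in the proof of Lemma \ref{EquivariantCircles}, is this: if two planes are each $\delta$-almost orthogonal to the endpoints of a geodesic segment of length at least $\ep_1$ in $\H^3$, then for $\delta$ small compared to $\ep_1$ the planes are disjoint, and moreover the hyperbolic distance between them is bounded below by a constant depending only on $\ep_1$. This lower bound is what gives you the ``definite gap'' you need, and it is uniform in $\rho$ because it is a statement purely in the model space $\H^3$. The finiteness of $\pi_1(S)$-orbits of edges (coming from the bounded train-track $\tT_{X,\rho}$) then reduces the uniformity over all $h$ to finitely many local checks, exactly as you say.
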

\begin{proof}
The proof is left for the reader.
\end{proof}
\subsection{Pleated surfaces are close}
The following gives a measure-theoretic notion of almost parallel measured laminations.
\begin{definition}[Quasi-parallel]\Label{almostParallel}
Let $L_1, L_2$ be two measured geodesic laminations on a hyperbolic surface $\tau$. 
Then, $L_1$ and $L_2$ are  $\ep$-quasi parallel, if  a leaf $\ell_1$ of $L_1$ and a leaf $\ell_2$ of $L_2$ intersect at a point $p$ and $\angle_p (\ell_1, \ell_2) > \ep$, then 
letting $s_1$ and $s_2$ be the unit length segments in $\ell_1$ and $\ell_2$ centered at $p$, 
$$\min( L_1(s_2), L_2(s_1) ) < \ep.$$
\end{definition}

\begin{proposition}\Label{AlmostParalellLaiminationAndFoliation}
For every $\ep  > 0$, if a bounded subset $K  \sub \rchi_X \cap \rchi_Y$ is sufficiently large, then 
$L_{Y, \rho}$ is $\ep$-quasi-parallel to $L_{X, \rho}$ on $\tau_{X, \rho}$ away from the non-transversal graph $G_Y$.
\end{proposition}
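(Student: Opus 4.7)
The plan is to transfer the comparison from the hyperbolic surface $\tau_{X,\rho}$ back to the flat surface $E_{X,\rho}$ via the collapsing map $\kap_X$, work branch-by-branch on the compatible staircase train-track decomposition $\tT_{X,H_X}$ of \S\ref{sCompatibleBoundedPolygonalTraintrack}, and use Theorem \ref{EuclideanAndProjectivePolygons} to convert flat-geometric statements about $V_X$ and $[V_Y]_X$ into hyperbolic statements about $L_X$ and the geodesic representative $[L_Y]_X$ of $L_Y$ on $\tau_X$. Since the quasi-parallel condition is vacuous inside $N_\ep^1 Z_{X,\rho}$, the real content lies on the transversal branches of $\tT_{X,H_X}$, which are disjoint from $G_Y$ by definition and, by Proposition \ref{EuclideanTraintarck}, essentially carry $[V_Y]_X$ through a realization $W_Y$ that differs from $[V_Y]_X$ only by a horizontal homotopy across at most one vertical slit.

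First I would fix a small train-track parameter $r>0$ (via Proposition \ref{PolygonalTaintrackY}), a compact neighborhood $N \sub (\PML\times\PML)\minus\Delta^\ast$ of $\Lambda_\infty$, and a bounded $K\sub\rchi_X\cap\rchi_Y$ large enough that Theorem \ref{HorizontalMeasuredLaminations} places $(H_{X,\rho},H_{Y,\rho})\in N$ for $\rho\notin K$, and Theorem \ref{EuclideanAndProjectivePolygons} applies on each polygonal branch of $\tT_{X,H_X}$ away from the zeros. For a transversal branch $B_X$ with corresponding branch $B_Y$ under the semi-diffeomorphism of Proposition \ref{PolygonalTaintrackY}, Theorem \ref{EuclideanAndProjectivePolygons}(2) places $\kap_X(B_X)$ $\ep$-close to a truncated ideal polygon $\sigma_X\sub\tau_{X,\rho}$ on which $L_{X,\rho}$ appears as a parallel bundle of near-vertical geodesics, and likewise $\kap_Y(B_Y)$ close to $\sigma_Y\sub\tau_{Y,\rho}$ carrying $L_{Y,\rho}$. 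Because $W_Y|B_X$ is vertically parallel to $V_X|B_X$ by Lemma \ref{HomotopyToBeCarried}(\ref{iSmallHomotopy}) and the semi-diffeomorphism preserves the horizontal/vertical markings, collapsing $W_Y|B_X$ by $\kap_X$ yields a topological lamination on $\sigma_X$ isotopic rel boundary to a vertical bundle of the same total width as $L_{Y,\rho}|\sigma_Y$. Hyperbolic straightening on $\tau_X$ then places $[L_Y]_X$ inside $\sigma_X$ as a near-vertical bundle, almost parallel to $L_{X,\rho}|\sigma_X$ except within narrow horizontal strips whose total transverse width is controlled by the shift ambiguity in Proposition \ref{UniqueRealizationUpToShifting}.

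Finally, if a leaf $\ell_1$ of $L_{X,\rho}$ and a leaf $\ell_2$ of $[L_{Y,\rho}]_{\tau_X}$ meet at a point $p\in\sigma_X$ with angle greater than $\ep$, the near-parallel bundle description forces at least one of the two leaves to lie inside a sub-bundle of transverse width less than $\ep$, so that the unit-length segment of the other leaf centred at $p$ picks up transverse measure less than $\ep$, yielding $\min(L_{X,\rho}(s_2),[L_{Y,\rho}]_{\tau_X}(s_1))<\ep$. The main technical obstacle I anticipate is controlling the hyperbolic straightening of $W_Y$ across the vertical boundaries between adjacent branches, where an $O(\del)$ shift can in principle tilt the resulting geodesic away from the near-vertical regime; to absorb this I would use the uniform diameter bound of Corollary \ref{UniformlyBoundedTrainttrackX} so that only a bounded number of adjacent branches contribute on any unit-length segment, together with the near-isometry of $\kap_X$ on each branch afforded by Theorem \ref{EuclideanAndProjectivePolygons}(2), which converts small flat horizontal shifts on $E_X$ into small angular errors on $\tau_X$.
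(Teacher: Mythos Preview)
Your approach is sound but much more elaborate than the paper's, which dispatches this in two sentences without any train-track machinery.

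The paper's argument: Lemma \ref{Semitransversailty} gives a uniform lower bound $\angle_{E_{X,\rho}}(H_{X,\rho}, V'_{Y,\rho}) > \delta$ on the angle that $[V_Y]_X$ makes with the horizontal foliation, away from $G_Y$. Then Lemma \ref{Dumas} and Proposition \ref{EuclideanAndProjectivePolygons} finish. The mechanism is that for large $K$ the collapsing map $\kap_X$ (equivalently the Epstein map, via Lemma \ref{Dumas}(\ref{iHoriozontalDerivative})) contracts horizontal length on $E_{X,\rho}$ to nearly zero while scaling vertical length by roughly $\sqrt 2$, so any Euclidean segment at angle $\ge\delta$ from horizontal is carried to something $\ep$-close, on $\tau_{X,\rho}$, to the image of a vertical segment. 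Both $V_X$ (trivially) and $V'_Y$ (by Lemma \ref{Semitransversailty}) satisfy this angle bound, so their $\kap_X$-images are nearly parallel; Proposition \ref{EuclideanAndProjectivePolygons} identifies the first with $L_{X,\rho}$, and the same proposition on the $Y$ side identifies $V_Y$ with $L_{Y,\rho}$.

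Your route first discretizes $[V_Y]_X$ into the realization $W_Y$ on $\tT_{X,H_X}$ and then applies Proposition \ref{EuclideanAndProjectivePolygons} branch by branch. This is not wrong, and it foreshadows the structure needed for the grafting cocycle in \S\ref{sGraftingCocycle}, but here the pointwise angle bound already does the job globally: the branch decomposition and the boundary-matching worry you flag at the end are self-imposed difficulties that the paper's argument never encounters.

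One step in your outline is also underspecified: you pass from ``$\kap_X(W_Y)$ on $\sigma_X$'' to ``$[L_Y]_{\tau_X}$ inside $\sigma_X$ as a near-vertical bundle'' by invoking hyperbolic straightening, but $\kap_X$ is the collapsing map for $C_X$, not $C_Y$, so $\kap_X(W_Y)$ is not a priori close to the geodesic representative of $L_Y$ on $\tau_X$. The missing link is $L_Y\approx V_Y$ as measured laminations on $S$ (Proposition \ref{EuclideanAndProjectivePolygons} applied on the $Y$ side) together with the fact that $\kap_X$ is already nearly geodesic along directions at angle $\ge\delta$ from horizontal --- which is exactly the combination of Lemma \ref{Dumas} and Lemma \ref{Semitransversailty} that the paper invokes directly.
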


\begin{proof}
If $K$ is sufficiently large, 
$\angle_{E_{X, \rho}}(H_{X, \rho}, V_{Y, \rho}')$ is uniformly bounded from below by a positive number by Lemma \ref{Semitransversailty}. 
Then, the assertion follows from Lemma \ref{Dumas} and 
Theorem 
\ref{EuclideanAndProjectivePolygons}.
\end{proof}
In this section, we show that the pleated surfaces for $C_{X, \rho}$ and $C_{Y, \rho}$ are close away from the non-transversal graph.
Recall that $\hat\beta_{X, \rho}\col \ti{C}_{X, \rho} \to \H^3$  denotes the composition of the collapsing map and the bending map for $C_{X, \rho}$, and similarly $\hat\beta_{Y, \rho}\col \ti{C}_{Y, \rho} \to \H^3$  denotes the composition of the collapsing map and the bending map for $C_{Y, \rho}$.
\begin{theorem}\Label{PleatedSurfacesAreClose}
Let $X, Y \in \TT \sqcup \TT^\ast$ with $X \neq Y$. 
For every $\ep > 0$, there is a bounded subset $K_\ep$ in $\rchi_Y \cap \rchi_X$ such that, for every $\rho \in \rchi_X \cap \rchi_Y \minus K_\ep$, 
there are a homotopy equivalence map $\phi\col E_{Y, \rho} \to E_{Y, \rho}$ and a semi-diffeomorphism $\psi \col \tT_{X, \rho} \to \tT_{Y, \rho}'$ given by \Cref{PolygonalTaintrackY} (\ref{iSemiDiffeomoprhictT}) satisfying the following:
\begin{enumerate}
\item $d_{E^1_{Y, \rho}}(\phi(z), z) < \ep$;
\item  the restriction of $\phi$ to  $E_{Y, \rho} \minus N^1_\ep Z_{Y, \rho}$ can be transformed to the identity by a homotopy along vertical leaves of $E_{X, \rho}$;
\item 
$\hat\beta_{X, \rho}(z)$ is $\ep$-close to  $\hat\beta_{Y, \rho} \circ \ti\phi \circ \ti{\psi}(z)$ in $\H^3$ for every point $z \in \ti{E}_{X, \rho}$ which are not in the interior of the non-transversal branches of $\tT_{X, \rho}$.\Label{iPleatedSurfacesPartiallyClose}
\end{enumerate}

\end{theorem}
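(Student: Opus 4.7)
The strategy is twofold: (i) upgrade the semi-diffeomorphism $\psi$ between train-tracks to a homotopy equivalence $\phi\col E_{X,\rho}\to E_{Y,\rho}$ close to the identity under the straightening map, and (ii) prove that on each transversal branch the two collapsing-bending images land near a common truncated totally geodesic ideal polygon in $\H^3$ determined by the equivariant circle system of \Cref{EquivariantCircles}.

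For (i), \Cref{PolygonalTaintrackY}(\ref{iSemiDiffeomoprhictT}) provides a semi-diffeomorphism $\psi\col \tT_{X,\rho}\to \tT_{Y,\rho}'$ matching branches to branches. Since $\tT_{Y,\rho}'$ is $\del$-Hausdorff close to $\tT_{Y,\rho}$ in $E^1_{Y,\rho}$ (\Cref{PolygonalTaintrackY}(\ref{iHausdorffCloseTY})), and since $\tT_{Y,\rho}$ was obtained from $\tT_{X,\rho}$ via the straightening map $\st$, which on the diagonal $H_X=H_Y$ moves points only along horizontal leaves (and for nearby $(H_X,H_Y)$ nearly so, outside singular neighborhoods), I can extend $\psi$ branch-by-branch to a map $\phi\col E_{X,\rho}\to E_{Y,\rho}$ with $d_{E^1_{Y,\rho}}(\phi(z),z)<\ep$, whose restriction to $E_{X,\rho}\minus N^1_\ep Z_{X,\rho}$ is isotopic to the identity along vertical leaves. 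This yields (1) and (2).

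For (ii), fix a transversal branch $B_X$ of $\tT_{X,\rho}$; by construction this is either a staircase polygon (treated directly) or a spiral cylinder, which \Cref{SpiralCylinderIntoARectangle} decomposes into rectangles along horizontal segments, and $\bdr B_X$ is separated from the zero set by at least $\ep$ in $E^1_{X,\rho}$. By \Cref{EuclideanAndProjectivePolygons}(\ref{CloseToModelCircularPolygon}), the $\CP^1$-structure on $B_X$ is $\ep$-close to a truncated ideal projective polygon, so $\hat\beta_{X,\rho}(\ti B_X)$ is $\ep$-close in $\H^3$ to a truncated totally geodesic ideal polygon whose bounding hyperbolic planes are dual to the boundary horizontal edges. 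By \Cref{EquivariantCircles}(\ref{iAlmostOrthogonal}) I may take these planes to be exactly those bounded by the circles $c_h$ attached to the horizontal edges $h$ of $B_X$. Repeating this for $B_Y:=\psi(B_X)$, whose horizontal edges are identified with those of $B_X$ under $\psi$, produces another such truncated ideal polygon bounded by the \emph{same} circles $c_h$, by the $\rho$-equivariance of the system and the uniqueness up to isotopy in \Cref{IsotopeCircleSystem}. Because the bending angles on both sides are controlled by the respective Thurston laminations, which are $\ep$-quasi-parallel by \Cref{AlmostParalellLaiminationAndFoliation}, the two ideal polygon images coincide in $\H^3$ up to $\ep$, giving (3).

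The main obstacle is at non-transversal branches: on these the Thurston laminations of $X$ and $Y$ must genuinely disagree across the horizontal graph $G_Y$, and no common ideal-polygon model exists --- this is exactly why (3) excludes non-transversal branches. A secondary difficulty is gluing the branch-wise comparisons across shared vertical edges in a $\rho$-equivariant fashion; this is handled precisely by \Cref{EquivariantCircles}(\ref{iDisjointCircles})(\ref{iPreservingOrder}), which ensures that adjacent hyperbolic planes in $\H^3$ are disjoint and nested correctly, so that the approximate ideal polygons on neighboring branches fit together consistently on the universal cover.
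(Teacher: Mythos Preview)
Your argument for part (ii) has a genuine circularity. You apply \Cref{EuclideanAndProjectivePolygons}(\ref{CloseToModelCircularPolygon}) separately to $B_X$ and to $B_Y$, obtaining two truncated ideal polygons in $\H^3$, and then assert that these are bounded by the \emph{same} circles $c_h$. But \Cref{EquivariantCircles} assigns $c_h$ to a horizontal edge $h$ of $\ti\tT_{X,\rho}$ using only the $X$-geometry: the plane bounded by $c_h$ is chosen $\ep$-almost orthogonal to $\Ep^\ast_{X,\rho}$-images of vertical tangent vectors along $h$. Nothing in \Cref{EquivariantCircles} or \Cref{IsotopeCircleSystem} tells you that $\Ep^\ast_{Y,\rho}$ applied to the corresponding edge of $\tT_{Y,\rho}'$ is almost orthogonal to the \emph{same} plane. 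The semi-diffeomorphism $\psi$ identifies edges topologically; it does not by itself relate the two Epstein images in $\H^3$. Establishing that relation is exactly the content of (\ref{iPleatedSurfacesPartiallyClose}), so you are assuming what you need to prove.

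The paper's proof closes this gap by a different mechanism: it uses the common holonomy $\rho$ through \emph{closed} staircase curves. A monotone staircase loop $\ell_X$ in $E_{X,\rho}$ (away from singularities, with enough vertical length) is sent by $\Ep_{X,\rho}$ to a $(1+\ep,\ep)$-quasigeodesic (Lemma~\ref{StaircaseCurveAndQuasiGeodesic}), hence fellow-travels the axis of $\rho[\ell_X]$ in $\H^3$. The corresponding staircase loop $\ell_Y$ on $E_{Y,\rho}$ is homotopic to $\ell_X$ via $\psi$, so represents the \emph{same} element of $\pi_1(S)$ and its $\Ep_{Y,\rho}$-image fellow-travels the \emph{same} axis. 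To pin down a specific vertical segment $v_x$ (and its correspondent $v_y$) in $\H^3$, the paper runs two such closed curves $\ell_{X,1},\ell_{X,2}$ through $v_x$ whose lifts share a short common staircase segment; their axes in $\H^3$ intersect in a controlled way, and the same holds on the $Y$-side with the same axes, forcing $\Ep_{X,\rho}(v_x)$ and $\Ep_{Y,\rho}(v_y)$ to lie near a common geodesic (Proposition~\ref{CorrespondingVerticalSegmentsNearHorizontalEdges}). This gives (\ref{iPleatedSurfacesPartiallyClose}) on the $1$-skeleton; the extension to interiors of transversal branches then uses \Cref{AlmostParalellLaiminationAndFoliation}, as you correctly anticipated. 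Your outline is missing this quasi-geodesic/common-axis step, without which there is no link between the two Epstein (or bending) images in $\H^3$.
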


Using \Cref{Dumas}, one can prove the following. 
\begin{lemma}\Label{StaircaseCurveAndQuasiGeodesic}
Let $\ep > 0$ and let $X \in \TT \sqcup \TT^\ast$. 
Then, there is a compact subset $K$ of $\rchi_X$ such that,  for every $\rho \in \rchi_X \minus K$,  
if $\alpha$ is a monotone staircase closed curve in $E_{X, \rho}$, such that 
\begin{itemize}
\item the total vertical length of $\alpha$ is more than   $\ep$ times  the total horizontal length of $\alpha$, and 
\item $\alpha$ is disjoint from the $\ep$-neighborhood of the singular set in the normalized metric $E^1_{X, \rho}$, 
\end{itemize}
then $\Ep_{X, \rho} \ti\alpha$ is a $(1 +\ep, \ep)$-quasi-geodesic with respect to the vertical length. 
\end{lemma}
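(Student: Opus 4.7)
The plan is to decompose the monotone staircase $\alpha$ into its alternating vertical and horizontal segments $v_1,h_1,v_2,h_2,\dots$, control the Epstein image of each piece using the derivative and curvature estimates of \S\ref{sEpsteinSurfaces}, and then use the monotonicity of $\alpha$ to assemble these local estimates into a single quasi-geodesic in $\mathbb{H}^3$.

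First I would fix an auxiliary $\del>0$ much smaller than $\ep$ and enlarge $K$ so that, for all $\rho\in\rchi_X\minus K$ and all $z\in\alpha$, \Cref{Dumas} gives $6/d(z)^2<\del$. This uses the hypothesis that $\alpha$ avoids the normalized $\ep$-neighborhood of the singular set, which translates in the unnormalized metric to $d(z)\geq \ep\sqrt{\operatorname{Area} E_{X,\rho}}\to\infty$ as $\rho$ leaves every compact of $\rchi_X$. Consequently, on each $v_i$ the map $\Ep_{X,\rho}$ has derivative of norm in $[\sqrt{2},\sqrt{2}+\del]$ and principal curvatures at most $\del$, while on each $h_i$ its derivative has norm $<\del$. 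By \Cref{AlmostGeodesic}, the image $\Ep_{X,\rho}(v_i)$ is $\del$-close to a genuine geodesic arc of length $(1+O(\del))\sqrt{2}\,|v_i|$; by \Cref{HorizontalAndVerticalEstimate}\,(\ref{iTotallCurvature}) the total bending along each $v_i$ is at most $\del$; and by \Cref{HorizontalAndVerticalEstimate}\,(\ref{iCloseVerticalTangentVectors}) the forward vertical tangent vectors at the two ends of each $h_i$ agree, in the unit tangent bundle of $\mathbb{H}^3$, up to $\del$.

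The monotonicity hypothesis is the key geometric input: since the interior angles of $\alpha$ alternate between $\pi/2$ and $3\pi/2$, every $v_i$ points in a common vertical direction, so at each corner the forward tangent of $\Ep_{X,\rho}\circ v_{i+1}$ is $\del$-close to the forward tangent of $\Ep_{X,\rho}\circ v_i$, and no backtracking can occur. From here the concatenation argument runs as follows: parametrize $\Ep_{X,\rho}\ti\alpha$ by vertical arclength, bound $d_{\mathbb{H}^3}(\Ep_{X,\rho}\ti\alpha(s),\Ep_{X,\rho}\ti\alpha(t))$ from below by the projection onto the shared forward direction, which is $(1-O(\del))\sqrt{2}\,|s-t|$, and from above by the total path length, which is at most $\sqrt{2}(1+O(\del))|s-t|$ plus a horizontal contribution bounded by $\del\cdot H<(\del/\ep)V$. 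The hypothesis that the vertical length exceeds $\ep$ times the horizontal length is used precisely here, to absorb the cumulative horizontal drift into the $(1+\ep,\ep)$ quasi-geodesic constants.

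The main obstacle is that the number $k$ of corners of $\alpha$ is not a priori bounded, so the $\del$-errors at individual corners cannot simply be summed. The resolution is to phrase the tangent-direction control in an aggregate form, using the integrated curvature and torsion bounds of \Cref{TotalCurvatureBound} and \Cref{TotalTorsionBound} along maximal vertical subarcs, together with the observation that the cumulative angular deviation of $\Ep^{*}_{X,\rho}v'$ across all horizontal corners is controlled in aggregate by $\operatorname{const}\cdot H/d(z)^2\to 0$, rather than by a sum $k\cdot\del$. This aggregate alignment of the Epstein images of the $v_i$ along a single direction is analogous to the nested circle configuration of \Cref{EquivariantCircles}\,(\ref{iPreservingOrder}), and is what makes the final quasi-geodesic constants depend only on $\ep$, not on the combinatorial complexity of $\alpha$.
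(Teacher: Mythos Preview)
Your approach is correct and is precisely the elaboration the paper omits: the paper's entire proof of this lemma is the single sentence ``Using \Cref{Dumas}, one can prove the following.'' Decomposing $\alpha$ into its vertical and horizontal pieces, invoking the derivative and curvature bounds of \Cref{Dumas} together with their consequences \Cref{AlmostGeodesic} and \Cref{HorizontalAndVerticalEstimate}, and using monotonicity to keep all the vertical tangents coherently aligned is exactly what is intended. Your identification of the corner-accumulation issue and its resolution via integrated rather than per-corner error bounds is a genuine addition beyond what the paper supplies; one could alternatively close that gap by noting that $\tilde\alpha$ is invariant under the deck transformation corresponding to $\alpha$, so it suffices to verify the quasi-geodesic estimate over a single period and then invoke equivariance (or the local-to-global principle for quasi-geodesics in $\mathbb{H}^3$).
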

\begin{lemma}
Let $\alpha_X$ be a staircase curve carried by $t_{X, \rho}$ satisfying the conditions in Lemma \ref{StaircaseCurveAndQuasiGeodesic}.
Then, there is a staircase geodesic closed curve $\alpha_Y$ carried by $\tT_{Y, \rho}'$  satisfying the conditions in Lemma \ref{StaircaseCurveAndQuasiGeodesic}, such that 
the image of $\alpha$ by the semi-diffeomorphism $\tT_{X, \rho} \to \tT_{Y, \rho}'$ is homotopic to $\alpha_Y$ in the train-track $\tT_{Y, \rho}'$.
\end{lemma}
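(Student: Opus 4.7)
The plan is to push $\alpha_X$ through the semi-diffeomorphism $\psi\col\tT_{X,\rho}\to\tT_{Y,\rho}'$ branch by branch, and then straighten the resulting quasi-staircase image to an honest staircase curve in $E_{Y,\rho}$. Since $\alpha_X$ is carried by $\tT_{X,\rho}$, it decomposes into arcs $\alpha_X\cap B$, one for each branch $B$ of $\tT_{X,\rho}$ that it traverses: within a rectangular or spiral-cylinder branch such an arc is a single vertical segment, while within a polygonal branch it is a staircase arc joining two boundary edges of $B$. The semi-diffeomorphism $\psi$ sends $B$ to a branch $B'$ of $\tT_{Y,\rho}'$, taking vertical edges diffeomorphically onto vertical subsegments and horizontal edges either onto horizontal edges or collapsing them to points, so $\psi\circ\alpha_X$ crosses each $B'$ between the combinatorially correct boundary edges; in the finitely many cases where a relevant horizontal edge of $B$ is collapsed I first perform a small horizontal isotopy of $\alpha_X$ across the corresponding vertical slit so that the collapse is harmless.

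Within each branch $B'$ of $\tT_{Y,\rho}'$ I then replace $\psi\circ\alpha_X\cap B'$ by a staircase arc with the same pair of boundary endpoints. By Proposition~\ref{PolygonalTaintrackY}, $\tT_{Y,\rho}'$ is $\ep$-quasi-staircase with uniformly bounded branch diameter, so in a rectangular or spiral-cylinder branch the arc is just the vertical segment between the two horizontal edges, while in a polygonal branch one uses the (quasi-)staircase boundary to guide a staircase interior, in the same spirit as the canonical construction of \S\ref{sPolygonalTrainTrack}. A small perturbation of the $\ep$-quasi-horizontal edges to exactly horizontal segments, supported in a thin horizontal neighborhood and preserving the homotopy type of the train-track, upgrades this quasi-staircase arc to an honest staircase arc; the perturbation is uniform in $\rho$. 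Gluing the resulting arcs across vertical edges of $\tT_{Y,\rho}'$ produces a closed staircase curve $\alpha_Y$ on $E_{Y,\rho}$ carried by the (perturbed) $\tT_{Y,\rho}'$, and the branchwise relative homotopies between $\psi\circ\alpha_X\cap B'$ and $\alpha_Y\cap B'$ concatenate to give a homotopy inside $\tT_{Y,\rho}'$ from $\psi\circ\alpha_X$ to $\alpha_Y$.

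Finally I verify the two conditions of Lemma~\ref{StaircaseCurveAndQuasiGeodesic} for $\alpha_Y$. For clearance from $N^1_\ep Z_{Y,\rho}$: the one-skeleton of $\tT_{Y,\rho}'$ lies at definite distance from the singular set of $E^1_{Y,\rho}$ by Proposition~\ref{PolygonalTaintrackY} (inheriting the $\delta/4$ clearance of \Cref{TforY}), and the vertical segments of $\alpha_Y$ lie inside branches between such horizontal edges, so choosing the train-track parameter $r>0$ small enough at the outset gives the required $\ep$-clearance. For the vertical-dominates-horizontal condition: corresponding rectangular and spiral-cylinder branches of $\tT_{X,\rho}$ and $\tT_{Y,\rho}'$ have vertical and horizontal dimensions that are uniformly comparable in the normalized metrics, since both train-tracks have branch diameters bounded by the constant $c$ of Corollary~\ref{UniformlyBoundedTrainttrackX} and Proposition~\ref{PolygonalTaintrackY}, while vertical edges are bounded below by $\sqrt[4]{r}$ in the construction. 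Hence the total vertical and total horizontal lengths of $\alpha_X$ and $\alpha_Y$ differ by bounded multiplicative factors, and after possibly enlarging the bounded set $K$ one preserves the ratio condition, possibly with a smaller constant $\ep'$ in place of $\ep$.

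The main obstacle is the coexistence of the several ``almost'' features of $\tT_{Y,\rho}'$ (only $\ep$-quasi-staircase, only semi-diffeomorphic with possibly collapsed horizontal edges) with the uniformity required as $\rho$ varies over $\rchi_X\cap\rchi_Y\setminus K$. Controlling the multiplicative distortion of vertical and horizontal lengths and the isotopy perturbations uniformly in $\rho$ rests on the uniform diameter and clearance estimates of \S\ref{sCompatibleBoundedPolygonalTraintrack}, and a careful bookkeeping of which horizontal edges are collapsed by $\psi$ is needed to ensure that $\psi\circ\alpha_X$ and $\alpha_Y$ are actually freely homotopic in $\tT_{Y,\rho}'$ rather than merely branchwise close.
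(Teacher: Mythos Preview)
The paper gives no proof of this lemma: it simply reads ``The proof is left for the reader.'' So there is nothing to compare against, and your proposal supplies exactly the kind of argument the author presumably had in mind as routine.

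Your approach --- push $\alpha_X$ through the semi-diffeomorphism $\psi$ branch by branch, straighten each arc to a genuine staircase arc in the corresponding branch of $\tT_{Y,\rho}'$, and then verify the two hypotheses of Lemma~\ref{StaircaseCurveAndQuasiGeodesic} --- is the natural one and is correct in outline. The uniformity arguments you cite (bounded branch diameter from Proposition~\ref{PolygonalTaintrackY}, clearance from the singular set via the $\delta/4$ bound of Proposition~\ref{ModifiedTTForY}) are the right ingredients.

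One point worth making explicit: Lemma~\ref{StaircaseCurveAndQuasiGeodesic} requires a \emph{monotone} staircase closed curve, and you never verify monotonicity for $\alpha_Y$. This is not a serious gap --- since $\psi$ takes vertical edges diffeomorphically to vertical subsegments and either preserves or collapses horizontal edges, the alternating $\pi/2,\,3\pi/2$ pattern of corner angles of $\alpha_X$ is inherited by $\alpha_Y$ (collapsing a horizontal edge merges two vertical segments into one, which cannot destroy monotonicity) --- but it deserves a sentence. Similarly, the phrase ``staircase geodesic closed curve'' in the statement presumably means an $L^\infty$-geodesic (i.e.\ monotone) staircase loop, and your straightened $\alpha_Y$ should be chosen as such.
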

\begin{proof}
The proof is left for the reader. 
\end{proof}
Let $W_Y$ be  a realization of $[V_Y]_X$ on $\tT_{X, \rho}$ (\S \ref{sTrainTracksForDiagonal}).
Let $x$ be a point of the intersection of the realization $W_Y$ and a horizontal edge of $h_X$ of $\tT_{X, \rho}$. 
Let $y$ be a corresponding point of $V_{Y, \rho}$ (on $E_{Y, \rho}$).
Recall that $r$ is the train-track parameter, so that, in particular, horizontal edges are distance, at least, $r$ away from the singular set in the normalized Euclidean metric.
Let $v_x$ be a vertical segment of length $r/2$  on $E^1_{X, \rho}$ such that $x$ is the middle point of $v_x$.
Similarly, let $v_y$ be the vertical segment of length $r/2$ on $E^1_{Y, \rho}$ such that $y$ is the middle point of $v_y$. 
We normalize the Epstein surfaces for $C_{X, \rho}$ and $C_{Y, \rho}$ so that they are $\rho$-equivariant for a fixed representation $\rho\col \pi_1(S) \to \PSL(2, \C)$ (not a conjugacy class).
\begin{proposition}[Corresponding vertical edges are close in $\H^3$]\Label{CorrespondingVerticalSegmentsNearHorizontalEdges}
For every $\ep > 0$, there is a compact subset $K$ in $\rchi$, such that, for every $\rho \in \rchi_X \cap \rchi_Y \minus K$, 
if $v_X$ and $v_Y$ are vertical segments of $E_{X, \rho}$ and  of $E_{Y, \rho}$, respectively, as above, 
then there is a (bi-infinite) geodesic $\ell$ in $\H^3$ satisfying the following:
\begin{itemize}
\item  
$\Ep_{X, \rho} v_X$ is $\ep$-close to a geodesic segment  $\alpha_X$  of $\ell$ in $C^1$-metric;
\item 
$\Ep_{Y, \rho} v_Y$ is $\ep$-close to a geodesic segment  $\alpha_Y$  of $\ell$ in $C^1$-metric; 
\item 
if $p_X$ and $p_Y$ are corresponding endpoints of $\alpha_X$ and $\alpha_Y$, then the distance between $\Ep_{X, \rho} p_X$ and  $\Ep_{Y, \rho} p_Y$  is at most  $\ep$ times the diameters of $E_{X, \rho}$ and $E_{Y, \rho}$. 
\end{itemize}
\end{proposition}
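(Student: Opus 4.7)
The plan is to produce the common bi-infinite geodesic $\ell$ as the axis of a single element of $\rho(\pi_1(S))$ whose axis is fellow-traveled by both $\Ep_{X,\rho}$ and $\Ep_{Y,\rho}$ along corresponding monotone vertical staircase loops containing $v_X$ and $v_Y$ respectively. First, provided $K$ is large enough that the horizontal edges of $\tT_{X,\rho}$ and $\tT_{Y,\rho}'$ lie outside $N^1_\ep Z_{X,\rho}$ and $N^1_\ep Z_{Y,\rho}$ (built into the train-track construction and \Cref{PolygonalTaintrackY}\eqref{iHausdorffCloseTY}), Lemma \ref{Dumas}\eqref{iVerticalDerivative},\eqref{iVerticalCurvature} together with Lemma \ref{AlmostGeodesic} and Theorem \ref{HorizontalAndVerticalEstimate}\eqref{iTotallCurvature} imply that $\Ep_{X,\rho}v_X$ and $\Ep_{Y,\rho}v_Y$ are individually $C^1$-close to geodesic arcs of length $r/(2\sqrt 2)\cdot 2=r/\sqrt 2$ sitting in bi-infinite geodesics $\ell_X$ and $\ell_Y$; the whole content of the proposition is to identify $\ell_X$ with $\ell_Y$ and then compare endpoints.

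Next, using the semi-diffeomorphism $\psi:\tT_{X,\rho}\to\tT_{Y,\rho}'$ provided by \Cref{PolygonalTaintrackY}\eqref{iSemiDiffeomoprhictT}, I would extend $v_X$ to a monotone vertical staircase loop $\alpha\subset\tT_{X,\rho}$ satisfying the hypotheses of Lemma \ref{StaircaseCurveAndQuasiGeodesic} (total vertical length dominating horizontal length and avoiding the $\ep$-neighborhood of zeros). The image $\psi(\alpha)$, perturbed by at most $\del$ into $E_{Y,\rho}$ using \Cref{PolygonalTaintrackY}\eqref{iHausdorffCloseTY}, is a monotone vertical staircase loop $\alpha'$ carried by $\tT_{Y,\rho}'$ that contains $v_Y$ and satisfies the same hypotheses; crucially $\alpha$ and $\alpha'$ are freely homotopic on $S$ and thus share a common holonomy $\gamma:=\rho(\alpha)\in\PSL(2,\C)$. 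Applying Lemma \ref{StaircaseCurveAndQuasiGeodesic} on both sides shows that $\Ep_{X,\rho}\ti\alpha$ and $\Ep_{Y,\rho}\ti\alpha'$ are $(1+\ep,\ep)$-quasi-geodesics for the same translation, so both fellow-travel the axis of $\gamma$; set $\ell:=\mathrm{Axis}(\gamma)$. Then $\Ep_{X,\rho}v_X$ and $\Ep_{Y,\rho}v_Y$, being sub-arcs of these quasi-geodesics (vertical, hence almost isometric with factor $\sqrt 2$), lie $O(\ep)$-close in $C^1$ to sub-arcs $\alpha_X,\alpha_Y$ of $\ell$, proving the first two bullet points.

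For the endpoint bound I would traverse $\alpha$ and $\alpha'$ in tandem from a common anchor, pairing horizontal segments via $\psi$. Each paired horizontal segment has length bounded by the uniform branch diameter from \Cref{UniformlyBoundedTrainttrackX} and \Cref{PolygonalTaintrackY}\eqref{iDiameterTY}, so Theorem \ref{HorizontalAndVerticalEstimate}\eqref{iCloseVerticalTangentVectors} gives that the Epstein-images of the initial and terminal vertical tangent vectors of such a segment are $O(\ep)$-close in the unit tangent bundle; then Lemma \ref{Dumas}\eqref{iVerticalDerivative} shows that the vertical progress accumulated along $\ell$ by corresponding vertical segments of $\alpha$ and $\alpha'$ agrees to within $O(\ep)$. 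Summing the discrepancy over the $O(\mathrm{diam}\,E)$ branches traversed from the anchor to the endpoints of $v_X,v_Y$ produces the required endpoint bound proportional to $\ep\cdot\mathrm{diam}(E_{X,\rho})+\ep\cdot\mathrm{diam}(E_{Y,\rho})$; adjusting $K$ absorbs the constants.

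The main obstacle is the simultaneous construction of the pair $(\alpha,\alpha')$ with uniform control: one needs $\alpha$ to exist with arbitrarily large vertical-to-horizontal ratio (so that Lemma \ref{StaircaseCurveAndQuasiGeodesic} applies on the $X$-side), \emph{and} its $\psi$-image to still satisfy the vertical-dominance and singularity-avoidance hypotheses on the $Y$-side. This is where the compatibility developed in \S\ref{sCompatibleEuclideanTraintracks}--\S\ref{sCompatibleBoundedPolygonalTraintrack} is used: the near-diagonality of the horizontal foliations (\Cref{HorizontalMeasuredLaminations}) together with \Cref{EuclideanTraintarck} and \Cref{PolygonalTaintrackY} guarantee that long vertical excursions in $E_{X,\rho}$ are matched by long vertical excursions in $E_{Y,\rho}$ under $\psi$, uniformly in $\rho\in\rchi_X\cap\rchi_Y\setminus K$ for $K$ sufficiently large.
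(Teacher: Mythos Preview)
Your approach to the first two bullet points is essentially correct and in the same spirit as the paper: both of you produce quasi-geodesic staircase loops via Lemma \ref{StaircaseCurveAndQuasiGeodesic} and let $\ell$ be the axis of the corresponding holonomy element. The difficulty, and the genuine gap in your proposal, is the third bullet.

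A single loop $\alpha$ determines only the axis of $\gamma=\rho(\alpha)$, not a position along it. Your phrase ``traverse $\alpha$ and $\alpha'$ in tandem from a common anchor'' never says what the anchor is; the obvious candidates (a basepoint in a fundamental domain, or the point $x$ itself) presuppose precisely the closeness $\Ep_{X,\rho}(\tilde x)\approx\Ep_{Y,\rho}(\tilde y)$ that the proposition is meant to establish. Your segment-by-segment accounting then invokes Theorem \ref{HorizontalAndVerticalEstimate}\eqref{iCloseVerticalTangentVectors}, but that statement compares vertical tangent vectors along a horizontal segment of a \emph{single} flat surface; it says nothing about matching a horizontal segment of $E_{X,\rho}$ with one of $E_{Y,\rho}$. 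Without an anchor and without a cross-surface estimate, the accumulated discrepancy is not controlled.

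The paper resolves the anchor problem by taking \emph{two} staircase loops $\ell_{X,1},\ell_{X,2}$ through $v_x$ whose lifts meet in a bounded staircase arc (conditions (2)--(4) in the proof). Their holonomies $\gamma_1,\gamma_2$ have distinct axes in $\H^3$, and the region where $\Ep_{X,\rho}\tilde\ell_{X,1}'$ and $\Ep_{X,\rho}\tilde\ell_{X,2}'$ run close together is forced, by the quasi-geodesic property, to lie near the coarse intersection of $\mathrm{Axis}(\gamma_1)$ and $\mathrm{Axis}(\gamma_2)$. That coarse intersection depends only on $\rho$, so the same anchor works on the $Y$-side for the transported loops $\ell_{Y,1},\ell_{Y,2}$. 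Condition (\ref{iCommonstaircaseCurve}) bounds the length of the common arc by a constant times $\mathrm{diam}\,E_{X,\rho}$, which is what produces the $\ep\cdot\mathrm{diam}$ endpoint estimate. If you want to salvage your single-loop argument, you would need an independent way to pin down one point of $\Ep_{X,\rho}\tilde\alpha$ and $\Ep_{Y,\rho}\tilde\alpha'$ to the same location on $\ell$; the two-loop trick is exactly such a mechanism.
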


\begin{proof}
Then, pick a $L^\infty$-geodesic staircase closed curves $\ell_{X,1}, \ell_{X, 2}$ on $E_{X, \rho}$ containing $v_x$
such that, for $i = 1,2$, by taking appropriate lift $\ti\ell_{X, 1}$ and $\ti\ell_{X, 2}$ to $\ti{E}_{X, \rho}$,

\begin{enumerate}
\item  $\ell_{X, i}$ is carried by $\tT_{X, \rho}$;
\item  $\ti\ell_{X, 1} \cap \ti\ell_{X, 2}$ is a single staircase curve connecting singular points of $\ti{E}_{X, \rho}$, and the projection of  $\ti\ell_{X, 1} \cap \ti\ell_{X, 2}$ to $E_{X, \rho}$ does not meet a branch of $\tT_{X, \rho}$ more than  twice; \Label{iCommonstaircaseCurve}
\item if a branch $B$ of $\ti{T}_{X, H_X}$ intersects both  $\ti\ell_{X, 1}$ and $\ti\ell_{X, 2}$, then $B$ intersects   $\ti\ell_{X, 1} \cap \ti\ell_{X, 2}$;
\item  $\ti\ell_{X, 1}$ and $\ti\ell_{X, 2}$ intersect, in the normalized metric of $\ti{E}^1_{X, \rho}$, the $\ep$-neighborhood of the singular set only in the near the endpoints of  $\ti\ell_{X, 1} \cap \ti\ell_{X, 2}.$ 
\end{enumerate}
Then, there are homotopies of $\ell_{X, 1}, \ell_{X, 2}$ to staircase vertically-geodesic closed curves $\ell'_{X,1}, \ell'_{X, 2}$ carried by $\tT_{X, \rho}$, such that the homotopies are supported on the $2\ep$-neighborhood of the singular set of $E^1_{X, \rho}$ and that $\ell'_{X,1}, \ell'_{X, 2}$ are  disjoint from the $\ep$-neighborhood of the zero set. 
Then $\Ep_{X, \rho} \ti\ell_{X, 1}'$ and $\Ep_{X, \rho} \ti\ell_{X,2 }'$ are $(1 + \ep, \ep)$-quasi-geodesics which are close only near the segment corresponding to $\ti\ell_{X, 1} \cap \ti\ell_{X, 2}$. 

Pick closed geodesic staircase-curves $\ell_{Y, 1} , \ell_{Y, 2}$ on $E_{Y, \rho}$, such that
\begin{itemize}
\item $\ell_{Y, i}$ contains $v_y$;
\item the semi-diffeomorphism $\tT_{X, \rho} \to \tT_{Y, \rho}$ takes $\ell_{X, i}'$ to a  curve homotopic to $\ell_{Y, i}$ on $\tT_{Y, \rho}$;
\item  $\ell_{Y, i}$  is carried by $\tT_{Y, \rho}$;
\item $\ell_{Y, i}$ is disjoint from $N_\ep^1 Z_{X, \rho}$.
\end{itemize}
Let $\alpha$ be the geodesic such that a bounded neighborhood of $\alpha$ contains the quasi-geodesic $\Ep_{X, \rho} \ti\ell_{X, i}'$. 
Let $\ti\ell_{Y, i}$ be a lift of $\ell_{Y, i}$ to $\ti{E}_{Y, \rho}$ corresponding to $\ti\ell_{X, i}'$ (connecting the same pair of points in the ideal boundary of $\ti{S}$).

\begin{lemma}
For every $\ep > 0$, if a compact subset $K$ of $\rchi$ is sufficiently large and $\upsilon > 0$ is sufficiently small, then, for all $\rho \in \rchi_X \cap \rchi_Y \minus K$, 
$\Ep_{Y, \rho} \ti\ell_{Y, i}$ is $(1 + \ep, \ep)$-quasi-isometric with respect to the vertical length for both $i = 1, 2$. 
\end{lemma}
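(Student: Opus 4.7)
The plan is to reduce the claim to a direct application of \Cref{StaircaseCurveAndQuasiGeodesic} on the flat surface $E_{Y, \rho}$. That lemma gives a $(1+\ep, \ep)$-quasi-geodesic conclusion, with respect to vertical length, for any monotone staircase loop on $E_{Y, \rho}$ whose total vertical length dominates its total horizontal length by a factor at least $1/\ep$ and which is disjoint from the $\ep$-neighborhood of $Z_{Y, \rho}$ in the normalized metric $E^1_{Y, \rho}$. So the task reduces to verifying these two geometric hypotheses for $\ell_{Y, i}$, with constants slightly weaker than $\ep$.

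First I would check the distance-from-singularity condition. By construction $\ell_{Y, i}$ is carried by $\tT_{Y, \rho}$, and \Cref{PolygonalTaintrackY} (together with \Cref{ModifiedTTForY}) places the one-skeleton of $\tT_{Y, \rho}'$ at distance at least $\del/4$ from $Z_{Y, \rho}$ in $E^1_{Y, \rho}$. After the small Hausdorff perturbation in \Cref{PolygonalTaintrackY} (\ref{iHausdorffCloseTY}), the staircase $\ell_{Y, i}$ can be taken to lie outside $N^1_{\ep'} Z_{Y, \rho}$ for some $\ep' = \ep'(\ep) > 0$ tending to $0$ as $\ep \to 0$.

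Next I would transfer the vertical-dominance from $\ell_{X, i}'$ to $\ell_{Y, i}$. Here I use that the semi-diffeomorphism $\tT_{X, \rho} \to \tT_{Y, \rho}'$ from \Cref{PolygonalTaintrackY} (\ref{iSemiDiffeomoprhictT}) sends vertical edges diffeomorphically to vertical edges (or to their subsegments), and that by \Cref{UniformlyBoundedTrainttrackX}, \Cref{CorrespondingSpiralCyliner}, and \Cref{PolygonalTaintrackY} (\ref{iDiameterTY}) branch diameters of both $\tT_{X, \rho}$ and $\tT_{Y, \rho}'$ are bounded uniformly in $\rho$. Consequently corresponding vertical edges of $\tT_{X, \rho}$ and $\tT_{Y, \rho}'$ have comparable $E^1$-lengths, with multiplicative distortion tending to $1$ as the train-track parameter $r$ and the neighborhood of the diagonal shrink. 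Since $\ell_{X, i}'$ was arranged to be vertically-geodesic on $\tT_{X, \rho}$ with total vertical length exceeding its total horizontal length by at least a factor $1/\ep$, the combinatorial image $\ell_{Y, i}$ inherits the same dominance on $E^1_{Y, \rho}$, after absorbing a small multiplicative error coming from the bilipschitz comparison above and from the fact that the horizontal edges of $\tT_{Y, \rho}'$ are only $\ep$-quasi-horizontal rather than exactly horizontal.

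Applying \Cref{StaircaseCurveAndQuasiGeodesic} on $E_{Y, \rho}$ with the adjusted parameter then produces the desired $(1+\ep, \ep)$-quasi-geodesic conclusion for $\Ep_{Y, \rho}\ti\ell_{Y, i}$, provided we enlarge $K$ to contain both the compact subset of $\rchi_Y$ supplied by that lemma and the compact subset of $\rchi$ coming from \Cref{PolygonalTaintrackY}. The main obstacle will be the last estimate in the previous paragraph: since the horizontal edges of $\tT_{Y, \rho}'$ may not be truly horizontal, a naive transfer of horizontal length can drift, so one must control the angular defect uniformly. This is exactly what the hypothesis $(H_X, H_Y) \in N \subset (\PML \times \PML) \minus \Delta^\ast$ with $N$ close to the diagonal gives, via \Cref{Semitransversailty} and \Cref{PolygonalTaintrackY}, so after shrinking $N$ and $\upsilon$ the distortion can be made smaller than any prescribed $\ep$.
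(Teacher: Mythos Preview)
The paper does not supply an explicit proof of this lemma; it is stated inside the proof of \Cref{CorrespondingVerticalSegmentsNearHorizontalEdges} and used immediately. Your reduction to \Cref{StaircaseCurveAndQuasiGeodesic} (applied with $Y$ in place of $X$) is exactly the intended argument: the unnamed lemma just before the construction of $\ell_{Y,i}$, whose proof the paper ``leaves for the reader'', asserts precisely that a staircase curve carried by $\tT_{X,\rho}$ satisfying the hypotheses of \Cref{StaircaseCurveAndQuasiGeodesic} admits a homotopic staircase curve carried by $\tT_{Y,\rho}'$ also satisfying those hypotheses. Note too that the fourth bullet in the paper's choice of $\ell_{Y,i}$ already imposes disjointness from $N^1_\ep Z_{Y,\rho}$, so your first verification step is given by construction.

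One small correction: you state the vertical-length hypothesis of \Cref{StaircaseCurveAndQuasiGeodesic} as ``total vertical length exceeds total horizontal length by a factor at least $1/\ep$'', but the paper's hypothesis is the weaker ``total vertical length is more than $\ep$ times the total horizontal length''. This only makes your transfer step easier, since a mild lower bound on the vertical-to-horizontal ratio suffices, and the uniform branch-diameter bounds you cite from \Cref{UniformlyBoundedTrainttrackX} and \Cref{PolygonalTaintrackY} are more than enough.
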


Then  $\Ep_{X, \rho} \ti\ell_{X, 1}' \cup \ti\ell_{X, 2}'$ and  $\Ep_{Y, \rho} \ti\ell_{Y, 1} \cup \ti\ell_{Y, 2}$ are both $\ep$-close in the Hausdorff metric of $\H^3$. 
Therefore, corresponding endpoints  of  $\Ep_{Y, \rho}  \ti\ell_{X, 1}' \cap \ti\ell_{X, 2}'$ and  $\Ep_{Y, \rho} \ti\ell_{Y, 1}' \cap \ti\ell_{Y, 2}'$ have distance, at most, $\ep$ times the diameters of $E_{X, \rho}$ and $E_{Y, \rho}$.
By (\ref{iCommonstaircaseCurve}), the length of  $\ti\ell_{X, 1}' \cap \ti\ell_{X, 2}'$  can not be too long relative to the diameter of $E_{X, \rho}$.
Letting $\ell$ be the geodesic in $\H^3$ fellow-traveling with $\Ep_{X, \rho} \ti\ell_{X, 1}'$ (or $\Ep_{X, \rho} \ti\ell_{X, 2}'$), 
the vertical segment $v_x$ and $v_y$ have the desired property.
\end{proof}

Finally \Cref{PleatedSurfacesAreClose} follows from the next proposition.
\begin{proposition}
Suppose that a branch  $B_Y'$ of  the train track $\tT_{Y, \rho}'$ corresponds transversally to a branch $B_X$ of  $\tT_{X, \rho}$. 

Then, there is an $\ep$-small isotopy of  $B_Y'$ in the normalized surface $E_{Y, \rho}^1$ such that 
\begin{itemize}
\item in the complement of the $\frac{r}{2}$-neighborhood of the zero set, every point of $B_Y'$ moves along the vertical foliation $V_{Y, \rho}$, and 
\item after the isotopy $\hat\beta_{X, \rho} | B_X $ and $\hat\beta_{Y, \rho} \vert B_Y'$ are $\ep$-close pointwise  by a diffeomorphism $\psi\col B_Y' \to B_X$.
\end{itemize}
\end{proposition}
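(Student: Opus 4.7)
The plan is to build the isotopy branch-by-branch, using the semi-diffeomorphism $\psi\col \tT_{X,\rho} \to \tT_{Y,\rho}'$ to match boundary data along vertical edges, and then propagate the matching into the interior using the model truncated ideal projective polygons from \S\ref{sModelCircularPolygons}. Throughout, I would choose auxiliary parameters $\ep'\ll \ep''\ll \ep$ and a compact $K\sub\rchi$ large enough that every quantitative statement below applies. Since the branches have uniformly bounded diameter by \Cref{UniformlyBoundedTrainttrackX} and \Cref{PolygonalTaintrackY}(\ref{iDiameterTY}), errors of the form ``$\ep'\cdot\mathrm{diam}(E_{Y,\rho})$'' can be absorbed into $\ep$.

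First I would handle the boundary. For every minimal vertical edge $v$ of $B_X$, let $v'$ be the vertical edge of $B_Y'$ that $\psi$ identifies with $v$ (possibly after collapsing adjacent horizontal edges). Applying \Cref{CorrespondingVerticalSegmentsNearHorizontalEdges} at each endpoint where $v$ meets a horizontal edge of $\tT_{X,\rho}$, I would produce a common bi-infinite geodesic $\ell_v\sub\H^3$ such that both $\Ep_{X,\rho}\ti v$ and $\Ep_{Y,\rho}\ti v'$ are $\ep'$-close in $C^1$ to segments of $\ell_v$, with corresponding endpoints within $\ep'$ of each other. Since $\hat\beta_{X,\rho}$ (resp.\ $\hat\beta_{Y,\rho}$) is the nearest point projection of $\Ep_{X,\rho}$ (resp.\ $\Ep_{Y,\rho}$) to the hyperbolic plane bounded by the circle $c_v$ associated with the adjacent horizontal edge, the equivariant circle system of \Cref{EquivariantCircles} together with \Cref{IsotopeCircleSystem} yields a common choice of bounding circle $c_v$ for $X$ and $Y$. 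Hence $\hat\beta_{X,\rho}\ti v$ and $\hat\beta_{Y,\rho}\ti v'$ are $\ep''$-close in $\H^3$ after a preliminary vertical isotopy of $v'$.

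Next I would treat the interior. By \Cref{EuclideanAndProjectivePolygons}(\ref{CloseToModelCircularPolygon}), the image $\hat\beta_{X,\rho}(B_X)$ lies $\ep''$-close to a truncated ideal hyperbolic polygon (or cylindrical analogue) $\sigma_X\sub\H^3$, whose truncating circles are precisely the $c_v$ constructed above; similarly, $\hat\beta_{Y,\rho}(B_Y')$ lies $\ep''$-close to $\sigma_Y$ with the same truncating data. Since the truncating circles agree to within $\ep''$ and the ideal boundary behavior is determined by these circles, $\sigma_X$ and $\sigma_Y$ coincide up to an $\ep''$-perturbation in $\H^3$. To compare the pleated surfaces themselves, I would use \Cref{AlmostParalellLaiminationAndFoliation}, which says that the Thurston laminations $L_{X,\rho}$ and $L_{Y,\rho}$ are $\ep$-quasi-parallel on corresponding branches away from $G_Y$. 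The isotopy on $B_Y'$ is then defined outside the $r/2$-neighborhood of the zero set by sliding each point along its vertical $V_{Y,\rho}$-leaf by the (small) amount needed to align the Thurston-lamination weight with that prescribed by $\psi^{-1}$ and $\sigma_X=\sigma_Y$; by \Cref{EuclideanAndProjectivePolygons}(\ref{iTransversalMeasureForShortArcs}), the required vertical displacement is at most $O(\ep)$. Inside the $r/2$-neighborhood of the singular set, I would extend the isotopy arbitrarily, relying on \Cref{ModifiedTTForY} to keep $\bdr B_Y'$ at distance at least $\del/4$ from the zero set so the extension does not exceed $\ep$.

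The hard part will be the interior closeness: boundary agreement of vertical edges alone does not force pointwise closeness of pleated surfaces. The rigidity must come from the model polygons. Concretely, one needs to show that once the equivariant circle system $\{c_h\}$ is fixed and the collapsed hyperbolic polygons $\sigma_X,\sigma_Y$ agree up to $\ep''$, the collapsing-bending composition $\hat\beta$ is determined on $B_X$ (respectively $B_Y'$) up to $\ep$, uniformly across the branch. This is where \Cref{EuclideanAndProjectivePolygons}(\ref{CloseToModelCircularPolygon}) and conditions (\ref{iAlmostOrthogonal})--(\ref{iPreservingOrder}) of \Cref{EquivariantCircles} are essential, and where the quantitative dependence of the constants on $X,Y$ (through the diameters in \Cref{UniformlyBoundedTrainttrackX} and \Cref{PolygonalTaintrackY}(\ref{iDiameterTY})) must be tracked carefully.
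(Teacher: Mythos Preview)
Your proposal is essentially correct and follows the same two-step skeleton as the paper: first match the one-skeleton using \Cref{CorrespondingVerticalSegmentsNearHorizontalEdges}, then propagate to the interior using the quasi-parallelism of the Thurston laminations from \Cref{AlmostParalellLaiminationAndFoliation}. The paper's own proof is just these two sentences.

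The main difference is that you import extra machinery---the equivariant circle system of \Cref{EquivariantCircles}, \Cref{IsotopeCircleSystem}, and the model truncated polygons of \Cref{EuclideanAndProjectivePolygons}(\ref{CloseToModelCircularPolygon})---to argue that the two pleated surfaces over $B_X$ and $B_Y'$ share a common truncated ideal polygon $\sigma_X\approx\sigma_Y$. The paper does not do this here: it argues directly that once the boundary data agree (via \Cref{CorrespondingVerticalSegmentsNearHorizontalEdges}) and the bending laminations $L_{X,\rho}$, $L_{Y,\rho}$ are $\ep$-quasi-parallel on the branch (via \Cref{AlmostParalellLaiminationAndFoliation} together with \Cref{ThurstonLaminationAndVerticalFoliationOnPolygons}), the bending maps $\hat\beta_{X,\rho}|B_X$ and $\hat\beta_{Y,\rho}|B_Y'$ must be $\ep$-close, because a pleated surface is determined by its boundary and its pleating lamination. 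The circle system and model polygons are reserved for \S\ref{sProjectiveTraintrackForX}--\S\ref{sProjectiveTraintrackX}, where circular train tracks are actually constructed. Your route is not wrong, but it front-loads tools the paper introduces later, and your ``hard part'' (rigidity from the model polygons) is in fact handled more simply by the quasi-parallelism statement you already cite. One small inaccuracy: $\hat\beta$ is not literally the nearest-point projection of the Epstein map to a single plane bounded by $c_v$; it is the collapsing map composed with the bending map, and the relevant closeness comes from the $\ep$-closeness of $\Ep$ and $\hat\beta$ away from zeros rather than from projecting to a fixed plane.
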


\begin{proof}
By Proposition \ref{CorrespondingVerticalSegmentsNearHorizontalEdges},  there is an $\ep$-small isotopy of the boundary of $B_Y'$ satisfying the conditions on the boundaries of the branches. 
Since the branches are transversal, by Theorem \ref{ThurstonLaminationAndVerticalFoliationOnPolygons}, if $K$ is sufficiently large, then the restriction of $L_{X, \rho}$ to $B_X$ and $L_{Y, \rho}$ on $B_Y$ are $\ep$-quasi parallel on the hyperbolic surface $\tau_{X, \rho}$ (\Cref{AlmostParalellLaiminationAndFoliation}). 
Therefore
we can extend to the interior of the branch by taking an appropriate diffeomorphism $\psi\col B'_Y \to B_X$.
\end{proof}

\section{Compatible circular train-tracks}\Label{sCircularTraintracks}
In \S \ref{sCompatibleTraintrackDecompositions}, for every $\rho$ in $\rchi_X \cap \rchi_Y$ outside a large compact $K$, we constructed semi-diffeomorphic train-track structures $\tT_{X, \rho}$ and $\tT_{Y, \rho}'$ of the flat surfaces $E_{X,\rho}$ and $E_{Y, \rho}$, respectively. 
In this section, as $E_{X,\rho}$ and $E_{Y, \rho}$ are  the flat structures on $C_{X, \rho}$ and $C_{Y, \rho}$, using Theorem \ref{PleatedSurfacesAreClose},
we homotope $\tT_{X, \rho}$ and $\tT_{Y, \rho}'$ to make them circular in a compatible manner. 

\subsection{Circular rectangles}
A {\sf round cylinder}  is a cylinder on $\CP^1$ bounded by two disjoint round circles. 
Given a round cylinder $A$,  the boundary components of $A$ bound unique (totally geodesic) hyperbolic planes in $\H^3$, and 
 there is a unique geodesic $\ell$ orthogonal to both hyperbolic planes. 
Moreover $A$ is foliated by round circles which, in $\H^3$, bound hyperbolic planes orthogonal to $\ell$\, --- we call this foliation the {\sf horizontal foliation}. 
In addition, $A$ is also foliated by circular arcs which are contained in round circles bounding hyperbolic planes, in $\H^3$, containing $\ell$\,
--- we call this foliation the {\sf vertical foliation}.
Clearly, the horizontal foliation is orthogonal to  the vertical foliations of $A$.

\begin{definition}\Label{dCircularRectangle}
Let $\RRR$ be a $\CP^1$-structure on a marked rectangle $R$, and let $f\col R \to \CP^1$ be its developing map.
Then $\RRR$ is {\sf circular} if there is a round cylinder $A$ on $\CP^1$ such that
\begin{itemize}
\item the image of $f$ is contained in $A$;
\item  the horizontal edges of $R$ are immersed into different boundary circles of $A$;
\item for each vertical edge $v$ of $R$, its development  $f(v)$ is a simple arc on $A$  transverse to the horizontal foliation.
\end{itemize}
\end{definition}

Given a circular rectangle $\RRR$, the {\sf support} of $\RRR$ consists of the round cylinder $A$ and the simple arcs on $A$ which are the developments of the vertical edges of $\RRR$ in \Cref{dCircularRectangle}. 
We denote the support by $\Supp \RRR$.
We can pull-back the horizontal foliation on $A$ to a foliation on $\RRR$ by the developing map, and call it the {\sf horizontal foliation} of $\RRR$.

Given projective structures $\RRR$ and $\QQQ$ on a  marked rectangle $R$,  we say that $\PPP$ and $\QQQ$ are {\sf compatible} if $\Supp \RRR = \Supp \QQQ$. 
Let $\RRR$ be a circular rectangle, such that the both vertical edges are supported on the same arc $\alpha$ on a circular cylinder. 
Then, we say that $\RRR$ is  {\sf semi-compatible} with $\alpha$.

\subsubsection{Grafting a circular rectangle}(See \cite{Baba-10}.)
Let $\RRR$ be a  circular $\CP^1$-structure on a marked rectangle $R$. 
Let $A$ be the round cylinder in $\CP^1$ which supports $\RRR$.  
Pick an arc $\alpha$ on $\RRR$, such that  $\alpha$ connects the horizontal edges and it is transversal to the horizontal foliation of $\RRR$. 
Then $\alpha$ is embedded into $A$ by $\dev \RRR$ --- we call such an arc $\alpha$ an {\sf admissible} arc.
By cutting and gluing $A$ and $\RRR$ along $\alpha$ in an alternating manner, we obtain a  new circular $\CP^1$-structure on $R$ whose support still is  $\Supp \RRR$.
This operation is the {\sf grafting} of $\RRR$ along $\alpha$, and the resulting structure on $R$ is denoted by $\Gr_\alpha \RRR$.

One can easily show that $\Gr_\alpha \RRR$ is independent of the choice of the admissible arc $\alpha$, since an isotopy of $\alpha$ preserving its initial conditions does not change $\Gr_\alpha \RRR$. 
    
\subsection{Circular staircase loops}
Let $C = (f, \rho)$ be a $\CP^1$-structure on $S$. 
 A {\sf topological staircase curve} is a piecewise smooth curve, such that
 \begin{itemize}
 \item   its smooth segments are labeled by ``horizontal'' or ``vertical'' alternatively along the curve, and  
\item at every singular point, the horizontal and vertical tangent directions are linearly independent in the tangent space. 
\end{itemize}
Then, a topological staircase curve $s$ on $C$ is {\sf circular}, if the following conditions are satisfied:
 Letting $\ti{s}$ be a lift of $s$ to $\ti{S}$, 
\begin{itemize}
\item every horizontal segment $h$ of $\ti{s}$ is immersed into a round circle in $\CP^1$ by $f$, and
\item for every vertical segment $v$ of $\ti{s}$, letting $h_1, h_2$ be the horizontal edges starting from the endpoints of $v$, 
\begin{itemize}
\item  the round circles $c_1, c_2$ containing $f(h_1)$ and $f(h_2)$ are  disjoint, and
\item $f | v$ is contained in the round cylinder bounded by $c_1, c_2$ and, it is transverse to the horizontal foliation of the round cylinder.
\end{itemize} 
\end{itemize}

\subsection{Circular polygons}
Let $P$ be a marked polygon with even number of edges.
Then, let  $e_1, e_2, \dots, e_{2n}$ denote its edges in the cyclic order so that the edges with odd indices are vertical edges and with even indices horizontal edges. 
Suppose that $c_2, c_4\dots c_{2n}$ are round circles in $\CP^1$ such that, for every $i \in \Z / n \Z$,
\begin{itemize}
\item  $c_{2i}$ and $c_{2(i + 1)}$ are disjoint,  and
\item  $c_{2(i-1)}$ and $c_{2(i + 1)}$ are contained in the same component of $\CP^1 \minus c_{2i}$.
\end{itemize}

Let $\AAA_i$ denote the round cylinder bounded by $c_{2i}$ and $c_{2(i + 1)}$.
A circular $\CP^1$-structure $\PPP$ on $P$ is  {\sf supported} on $\{c_{2i}\}_{i=1}^n$
if
\begin{itemize}
\item $e_{2i}$ is  immersed into the round circles of $c_{2i}$  by $\dev \PPP$ for every $i = 1, \dots n$, and
\item $e_{2i +1}$ is immersed into $\AAA_i$ and its image is transversal to the horizontal foliation of $\AAA_i$ (Figure \ref{fCircularPolygon}) for every $i = 0, 1, \dots, n-1$.
\end{itemize}

\begin{figure}
\begin{overpic}[scale=.07,% grid,tics=10
] {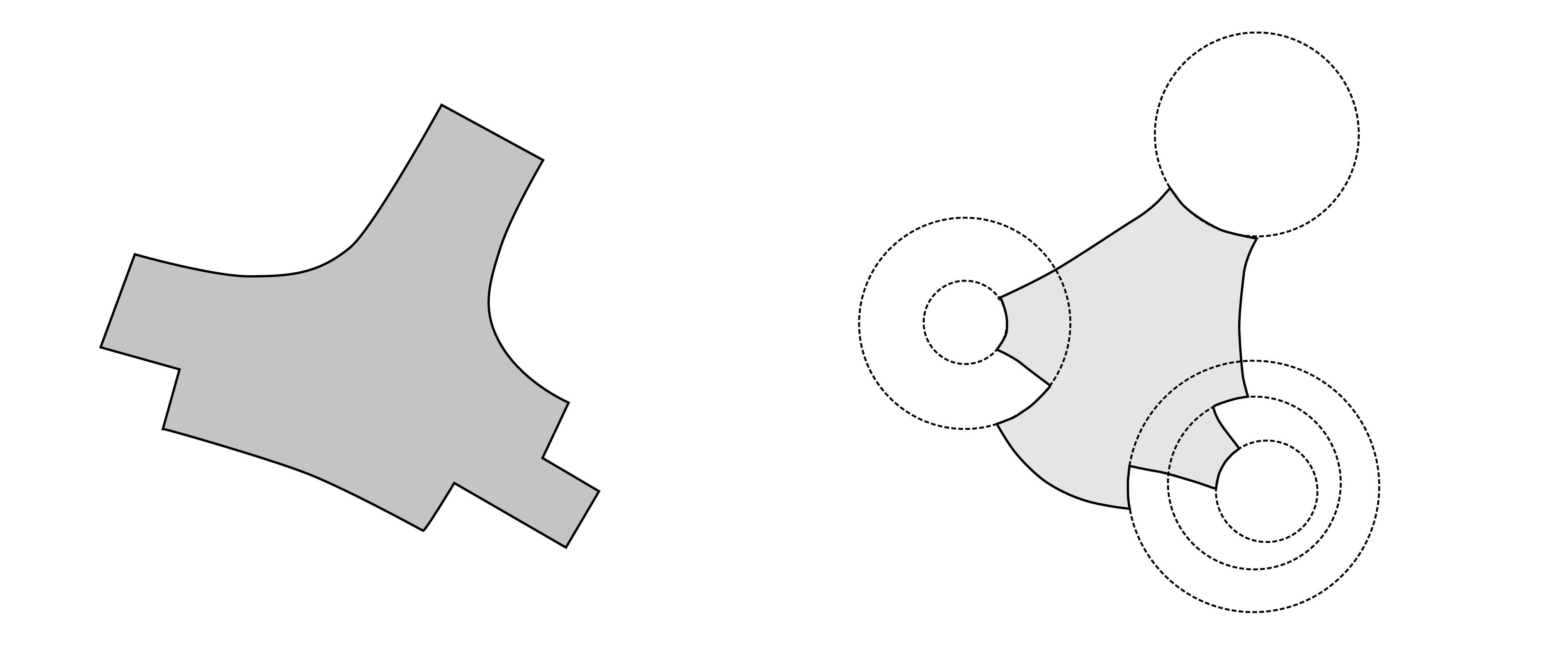} % figure file
\linethickness{1pt}
\put(40, 20){\color{black}\vector(1,0){15}}
   \put(42 , 22){\textcolor{black}{$\dev \PPP$}}  
    \put(65 , 33){\textcolor{black}{$\CP^1$}}  
      \put(18 ,18 ){$P$}  
 %   \put( , ){\textcolor{}{$$}}  
 %   \put( , ){}  
      \end{overpic}
\caption{A development of a projective polygon supported on round circles (when the developing map is injective).}\Label{fCircularPolygon}
\end{figure}

Let $\PPP$ be a circular $\CP^1$-structure on a polygon $P$ supported on a circle system $\{ c_{2i}\}_i^n$.
For $\ep > 0$,  $\PPP$ is {\sf $\ep$-circular}, if 
\begin{itemize}
\item for every vertical edge $v_i$ is $\ep$-parallel to the vertical foliation $\VVV$ of the support cylinder $\AAA_i$, and
\item the total transversal measure of $v$ given by the vertical foliation $\VVV$ is less than $\ep$. 
\end{itemize} 
 (Here, by the ``total'' transversal measure,  we mean that if $v$ intersects a leaf of $\VVV$ more than once, and the measure is counted with multiplicity.)
 
Let $\PPP_1, \PPP_2$ be  circular $\CP^1$-structures on a $2n$-gon $P$.
Then  $\PPP_1$ and $\PPP_2$ are {\sf compatible} if,  for each $i = 1, \dots, n$,  $\dev \PPP_1$ and $\dev \PPP_2$ take $e_{2 i}$ to the same round circle  and  the arcs  $f_1( v_{2i - 1} )$  and $f_2 ( v_{2 i - 1})$ are the same.

Let $A$ be a flat cylinder with geodesic boundary; then its universal cover $\ti{A}$ is an infinite Euclidean strip. 
A projective structure $(f, \rho)$ on $A$ is {\sf circular}, if the developing map $f\col \ti{A} \to \CP^1$ is a covering map onto a round cylinder in $\CP^1$. 

Next, let $A$ be a spiral cylinder. 
Then each boundary component $b$ of $A$ is a monotone staircase loop.  
Let $\ti{b}$ be the lift of $b$ to the universal cover $\ti{A}$.
Let $\{e_i \}_{i \in \Z}$ be the segments of $\ti{b}$ linearly indexed  so that $e_i$ with an odd index is a vertical edge and with an even index is a horizontal edge; clearly
  $\ti{b} = \cup_{i \in \Z} e_i$. 
Then, a $\CP^1$-structure $(f, \rho)$ on $A$ is {\sf circular}, if, for each boundary staircase loop $b$ of $A$  and each $i  \in \Z$, 
\begin{itemize}
\item the horizontal edge $e_{2i}$ is immersed into a round circle $c_i$ on $\CP^1$; 
\item $c_{i -1}$, $c_i$ and $c_{i + 1}$ are disjoint, and the round annulus bounded by $c_{i-1}$ and $c_{i +1}$ contains $c_i$ in its interior;
\item $f$ embeds  $v_i$  in the round cylinder $\AAA_i$ bounded  by $c_i$ and $c_{i +1}$, and $f(v_i)$ is transverse to the circular foliation of $\AAA_i$. 
\end{itemize}

Two circular $\CP^1$-structures $\AAA_1 = (f_1, \rho_1), \AAA_2 = (f_2, \rho_2)$ on  a spiral cylinder $A$ are {\sf compatible}  
\begin{itemize}
\item $\rho_1$ is equal to $\rho_2$ up to conjugation by an element of $\PSL(2, \C)$ (thus we can assume $\rho_1 = \rho_2$);
\item for each boundary component $h$  of $\ti{A}$,  $f_1$ and $f_2$ take $h$ to the same round circle;
\item  for each vertical edge $v$ of $\ti{A}$, $f_1 |v = f_2 | v$. 
\end{itemize}

More generally, let $\FFF = (f_1, \rho_1)$ and $\FFF' = (f_2, \rho_2)$ be two circular  $\CP^1$-structures on stair-case surfaces  $F$ and $F'$.
First suppose that there is a diffeomorphism  $\phi \col F \to F'$, which takes the vertices of $F$ bijectively to those of $F'$.
Then $\FFF$ is {\sf compatible} with $\FFF'$ if
\begin{itemize}
\item $\rho_1$ is conjugate to $\rho_2$ (thus we can assume that $\rho_1 = \rho_2$);
\item for every vertex $p_1$ of $\FFF_1$, the development of $p_1$ coincides with the development of $\phi (p_2)$;
\item  for every a horizontal edge $h$ of $\FFF_1$, letting $h'$ be its corresponding horizontal edge of $\FFF'$,  then the developments of $h$ and $h'$ are contained in the same round circle;
\item for every  vertical edge $v$ of $\FFF$, letting $v'$ be its corresponding edge $v'$ of $\FFF'$, then the developments of $v'$ and $v$  coincide. 
\end{itemize}

Next, instead of a diffeomorphism, we suppose that there is a semi-diffeomorphism $\phi\col F \to F'$.
Then $\FFF$ is {\sf semi-compatible} with $\FFF'$ if
\begin{itemize}
\item $\rho_1$ is conjugate to $\rho_2$ (thus we can assume that $\rho_1 = \rho_2$);
\item for every vertex $p_1$ of $\FFF_1$, the development of $p_1$ coincides with the development of $\phi (p_2)$;
\item  if a horizontal edge $h$ of $\FFF$ corresponds to a horizontal edge  $h'$ of $\FFF'$, then $h$ and $h'$ are supported on the same round circle on $\CP^1$;
\item for every  vertical edge $v$ of $\FFF$, letting $v'$ be its corresponding vertical edge (segment) of $\FFF'$, then the developments of $v'$ and $v$  coincide. 
\end{itemize}
\subsection{Construction of circular train tracks $\TTT_{Y,\rho}$}\Label{sProjectiveTraintrackForX}
~\\

In this section, if $\rho$ is in $\rchi_X \cap \rchi_Y$ minus a large compact subset, we construct a circular train-track structure of $C_{Y, \rho}$ related to the polygonal train-track decomposition $\tT'_{Y, \rho}$. 

Two train-track structures $T_1, T_2$  on a flat surface $E$ is $(p, q)$-{\sf quasi-isometric} for $p > 1$ and $q > 0$ if there is a continuous $(p, q)$-quasi-isometry $\phi\col E \to E$ homotopic to the identity such that $\phi(T_1) = T_2$ and  the restriction of $\phi$ to $T_1$ is a homotopy equivalence between $T_1$ and  $T_2$.
   
\begin{theorem}\Label{ProjectiveTraintrackForY}
For every $\ep > 0$, there is a bounded subset $K = K_\ep$ in $\rchi_X \cap \rchi_Y$, such that, 
for every $\rho \in \rchi_X \cap \rchi_Y \minus K$,
there is an $\ep$-circular surface train track decomposition $\TTT_{Y, \rho}$ of $C_{Y, \rho}$ with the following properties: 
\begin{enumerate}
\item $\TTT_{Y, \rho}$ is diffeomorphic to $\tT_{Y, \rho}'$, and it is  $(1 + \ep, \ep)$-quasi-isometric to both $\tT_{Y, \rho}$ and $\tT_{Y, \rho}'$ in the normalized metric $E^1_{Y, \rho}$.  \Label{iQIperturbationOfTraintrack}
\item
For every vertical edge $v$ of $\tT_{Y, \rho}'$, its corresponding edge of $\TTT_{Y, \rho}$ is contained in the leaf of the vertical foliation $V_{Y, \rho}$.
 \Label{iVerticalEdgesInLeaves}
\item 
For a branch $B_X$ of $\tT_{X, \rho}$, letting  $B_Y$ be its corresponding branch of  $\tT_{Y, \rho}'$ and letting $\BBB_Y$ be the branch of $\TTT_{Y, \rho}$ corresponding to $B_Y$, 
the restriction of $\hat\beta_{X, \rho}$ to $ \bdr \ti{B}_X$ is $\ep$-close to the restriction of $\hat\beta_{Y, \rho}$ to $\bdr \ti\BBB_Y$ pointwise; 
moreover, if $B_X$ is a  transversal branch, then $\hat\beta_{X, \rho} | \ti{B}_X$ is $\ep$-close to $\hat\beta_{Y, \rho} | \ti\BBB_Y$ pointwise.
\Label{iHomotopyMaksBranchesAreCloseInH}
\end{enumerate}
\end{theorem}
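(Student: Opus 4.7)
The plan is to construct $\TTT_{Y,\rho}$ by bending the horizontal edges of $\tT_{Y,\rho}'$ onto a $\rho$-equivariant system of round circles and replacing its vertical edges with segments of leaves of $V_{Y,\rho}$, transporting the circular data from the $X$-side via the semi-diffeomorphism $\psi\col\tT_{X,\rho}\to\tT_{Y,\rho}'$ furnished by Theorem \ref{PleatedSurfacesAreClose}. Fix nested tolerances $\ep_2\ll\ep_1\ll\ep$. First, applying Lemma \ref{EquivariantCircles} to $\tT_{X,\rho}$, one obtains, for $\rho\in\rchi_X\minus K_{\ep_2}$, a $\rho$-equivariant assignment $h\mapsto c_h$ of round circles to minimal horizontal edges of $\ti\tT_{X,\rho}$; the disjointness and nesting clauses \ref{iDisjointCircles}--\ref{iPreservingOrder} of that lemma will be used crucially below. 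Transport this data through the lift of $\psi$, assigning to each minimal horizontal edge $h'$ of $\ti\tT_{Y,\rho}'$ the round circle $c_{h'} := c_{\psi^{-1}(h')}$.

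Next, I would homotope each horizontal edge $h'$ to a curve on $C_{Y,\rho}$ whose developing image lies on $c_{h'}$. By Theorem \ref{PleatedSurfacesAreClose}(\ref{iPleatedSurfacesPartiallyClose}), $\hat\beta_{Y,\rho}(\ti h')$ is $\ep$-close to $\hat\beta_{X,\rho}(\ti h)$ along every boundary horizontal segment, and by Lemma \ref{Dumas}(\ref{iHoriozontalDerivative}) the Epstein derivative in the horizontal direction is small; combined with Lemma \ref{EquivariantCircles}(\ref{iAlmostOrthogonal}) this forces $\dev C_{Y,\rho}|h'$ to be already near $c_{h'}$, so that a small horizontal homotopy suffices. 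I would then replace each vertical edge $v'$ of $\tT_{Y,\rho}'$ by the segment of the leaf of $V_{Y,\rho}$ joining the new endpoints produced in the previous step. Theorem \ref{HorizontalAndVerticalEstimate}(\ref{iCloseVerticalTangentVectors}) together with Lemma \ref{EquivariantCircles}(\ref{iAlmostOrthogonal}) ensures that the developing image of such a leaf segment lies inside the round cylinder bounded by the two adjacent circles $c_{h_1'},c_{h_2'}$ and is transverse to its horizontal foliation; $\ep$-circularity of the resulting $\TTT_{Y,\rho}$ then follows from Corollary \ref{VerticalSegmentAlmostZeroMeasure} applied to these vertical leaf segments, whose lengths are uniformly bounded thanks to Corollary \ref{UniformlyBoundedTrainttrackX} and Proposition \ref{PolygonalTaintrackY}(\ref{iDiameterTY}), which bounds the total $\VVV$-transversal measure on each vertical edge by $\ep$.

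Verification of the three conclusions then proceeds as follows. Property (\ref{iQIperturbationOfTraintrack}) holds because both modifications are $O(\ep)$-small in the normalized metric $E^1_{Y,\rho}$ (horizontal homotopy by Theorem \ref{PleatedSurfacesAreClose}(\ref{iPleatedSurfacesPartiallyClose}), vertical replacement by Proposition \ref{PolygonalTaintrackY}(\ref{iHausdorffCloseTY}) combined with the fact that $V_{Y,\rho}$-leaves between the new endpoints stay close to the original vertical edges), yielding a $(1+\ep,\ep)$-quasi-isometry of $E^1_{Y,\rho}$ homotopic to the identity and diffeomorphic on branch interiors. Property (\ref{iVerticalEdgesInLeaves}) is immediate by construction. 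Property (\ref{iHomotopyMaksBranchesAreCloseInH}) for $\bdr\ti B_X$ follows directly from Theorem \ref{PleatedSurfacesAreClose}(\ref{iPleatedSurfacesPartiallyClose}); for transversal branches, the same theorem gives the required interior closeness of the bending maps, transported through the diffeomorphism of branches.

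The main obstacle lies in the non-transversal branches, where $\hat\beta_{X,\rho}$ and $\hat\beta_{Y,\rho}$ need not be uniformly close on the interior, so the circle system transported via $\psi$ can only be matched along the boundary. At each vertex of $\tT_{Y,\rho}'$ where several branches meet, the round circles attached to adjacent horizontal edges must \emph{jointly} satisfy the cylinder-containment and vertex-coherence constraints so that the modified vertical edges coming from different branches agree on developing image. Achieving this requires a simultaneous small perturbation of the circle system, for which Proposition \ref{IsotopeCircleSystem} is the key tool; carrying it out $\rho$-equivariantly while preserving the $\ep$-quasi-staircase structure of $\tT_{Y,\rho}'$ and the transversal-measure bounds needed for $\ep$-circularity is the technical heart of the argument.
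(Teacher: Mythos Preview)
Your overall strategy---transport the circle system from $X$ via $\psi$, then modify the $Y$-train-track so that horizontal edges land on these circles and vertical edges lie in $V_{Y,\rho}$---matches the paper. However, you have misplaced where the perturbation happens and hence misidentified the ``technical heart.''

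The paper does \emph{not} perturb the circle system. The circles $c_h$ from Lemma~\ref{EquivariantCircles} are kept fixed, and Proposition~\ref{IsotopeCircleSystem} plays no role in this proof. Instead, the key step (isolated as Proposition~\ref{CircularEdges}) is to perturb the \emph{vertices} of $\tT_{Y,\rho}'$ \emph{along leaves of $V_{Y,\rho}$} so that their developing images land on the prescribed circles. This works because, by Lemma~\ref{EquivariantCircles}(\ref{iAlmostOrthogonal}) together with Theorem~\ref{PleatedSurfacesAreClose}, the $\hat\beta_{Y,\rho}$-image of a short vertical leaf through a vertex is a near-geodesic $\ep$-almost orthogonal to the hyperbolic plane bounded by $c_h$; a small slide along that leaf therefore crosses the plane, so some nearby point maps to $c_h$. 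Your phrase ``a small horizontal homotopy suffices'' is at best ambiguous and at worst wrong: moving in the horizontal direction has negligible effect on the developing image (Lemma~\ref{Dumas}(\ref{iHoriozontalDerivative})), so it cannot push a point onto a prescribed circle. The perturbation must be in the \emph{vertical} direction, and once this is done the vertical edges automatically remain in $V_{Y,\rho}$-leaves, yielding (\ref{iVerticalEdgesInLeaves}) for free rather than as a separate replacement step.

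Your ``main obstacle'' paragraph is also off-target. The construction of $\TTT_{Y,\rho}$ is uniform over all branches; non-transversal branches receive no special treatment here. The vertex-coherence issue you describe is handled trivially in the paper: when a vertex $u$ is shared by two horizontal edges $h_1,h_2$ with different circles, the two vertical perturbations $u_1',u_2'$ may differ, and one simply inserts a short vertical edge between them. No joint adjustment of circles is required. The distinction between transversal and non-transversal branches matters only for the \emph{statement} of (\ref{iHomotopyMaksBranchesAreCloseInH})---interior closeness is asserted only for transversal branches because Theorem~\ref{PleatedSurfacesAreClose}(\ref{iPleatedSurfacesPartiallyClose}) gives no more---not for the construction.
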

We fix a metric on the unit tangent bundle of $\H^3$ which is left-invariant under $\PSL(2, \C)$.
\begin{proposition}\Label{VerticalTangentsAlognHorizontalEdges}
For every $\epsilon > 0$, if a bounded subset $K_\ep$ of $\rchi_X$ is sufficiently large, then, for every $\rho \in \rchi_X \minus K_\ep$ and every horizontal edge $h$ of $\tT_{X, \rho}$, the $\Ep^\ast_{X, \rho}$-images of the vertical unit tangent vectors of  along $h$ are $\ep$-close. 
\end{proposition}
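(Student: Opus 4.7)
The plan is to derive Proposition~\ref{VerticalTangentsAlognHorizontalEdges} as an essentially direct corollary of Theorem~\ref{HorizontalAndVerticalEstimate}(\ref{iCloseVerticalTangentVectors}), which already delivers the $\ep$-closeness of the $\Ep^\ast_{X,\rho}$-images of vertical unit tangent vectors along any horizontal segment $h\subset E^1_{X,\rho}$ that is simultaneously short (length less than $1/\ep$) and far from the singular set $Z^1_{X,\rho}$ (distance at least $\ep$). The only thing to check is that horizontal edges of $\tT_{X,\rho}$ meet both constraints uniformly in $\rho$, after replacing the target $\ep$ by a sufficiently small $\ep'$.

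First I would take care of the length bound. By Corollary~\ref{UniformlyBoundedTrainttrackX}, the branches of $\tT_{X,\rho}$ have diameter bounded above by a constant $c>0$ that depends only on $X$ and on the fixed compact neighborhood $N$ of the diagonal. Every horizontal edge of $\tT_{X,\rho}$ lies in the boundary of some branch, so its length in $E^1_{X,\rho}$ is at most $c$.

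Next I would handle the distance bound by tracing back through the construction of $\tT_{X,\rho}$. In $t^r_{X,\rho}$ (\S\ref{sPolygonalTrainTrack}) each polygonal branch absorbs both the vertical $r$-neighborhood $n_i$ of a zero $z_i$ and its horizontal $\sqrt[4]{r}$-thickening $P^r_i$, while the remaining horizontal edges are maximal horizontal segments or loops $\hat h$ whose interiors avoid $P^r$ altogether. Lemma~\ref{SmallZeroNbhd} ensures that for $r$ small each component of $N_r$ is a contractible tree whose extremal endpoints are regular points at $E^1_{X,\rho}$-distance exactly $r$ from $Z^1_{X,\rho}$, so every horizontal edge of $t^r_{X,\rho}$ sits at normalized distance at least $r$ from the zero set. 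The later reorganizations (gluing branches across low-weight vertical edges in \S\ref{sCompatibleEuclideanTraintracks} and replacing long rectangles by spiral cylinders in \S\ref{sBoundedEuclideanTraintrackForX}) only delete or rearrange existing horizontal edges and never push a horizontal edge inside the $r$-neighborhood of $Z^1_{X,\rho}$; hence this lower bound persists for $\tT_{X,\rho}$.

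With these two uniform estimates, I would conclude by choosing $\ep':=\min(\ep,\,1/c,\,r)$ and setting $K_\ep$ to be the bounded subset of $\rchi_X$ produced by Theorem~\ref{HorizontalAndVerticalEstimate} for parameter $\ep'$. For every $\rho\in\rchi_X\setminus K_\ep$ and every horizontal edge $h$ of $\tT_{X,\rho}$, the two hypotheses of Theorem~\ref{HorizontalAndVerticalEstimate}(\ref{iCloseVerticalTangentVectors}) are satisfied, and the conclusion follows. The only mildly delicate step I foresee is certifying the distance lower bound in the polygonal branches when $N_r$ has a nontrivial tree structure (several zeros clustered within $r$ of each other), but this is precisely what Lemma~\ref{SmallZeroNbhd} is designed to absorb.
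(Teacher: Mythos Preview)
Your proposal is correct and follows the same approach as the paper: both derive the result from Theorem~\ref{HorizontalAndVerticalEstimate}(\ref{iCloseVerticalTangentVectors}). The paper's proof is a single sentence citing that theorem, while you have spelled out in more detail why the horizontal edges of $\tT_{X,\rho}$ satisfy the length and distance hypotheses needed to invoke it.
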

\begin{proof}
The assertion immediately follows from Theorem \ref{HorizontalAndVerticalEstimate} (\ref{iCloseVerticalTangentVectors}).
\end{proof}

Recall that we have constructed a system of equivariant circles for horizontal edges of $\ti\tT_{X, \rho}$ in Lemma \ref{EquivariantCircles}. 
Let $h = [u, w]$ denote the horizontal edge of $\tT_{X, \rho}$ where $u, w$ are the endpoints.
We shall perturb the endpoints of each horizontal edge of $\tT_{Y, \rho}$ so that the endpoints map to the corresponding round circle.  
\begin{proposition}\Label{CircularEdges}
For every $\ep > 0$, there are  sufficiently small $\del > 0$  and a (large) bounded subset  $K_\ep$ of $\rchi_X \cap \rchi_Y$ satisfying the following: 
For every $\rho \in \rchi_X \cap \rchi_Y \minus K_\ep$, if  $\cc = \{c_h\}$ is a circle system for horizontal edges $h$ of $\tT_{X, \rho}$ given by Lemma \ref{EquivariantCircles} for $\del$, then,
for every horizontal edge $h= [u, w]$ of $\ti\tT_{Y, \rho}$, there are, with respect to the normalized metric $E_{Y, \rho}^1$,  $\ep$-small perturbations $u'$ and $w'$ of  $u$ and $w$ along $V_{Y, \rho}$, respectively, such that $f_{Y, \rho}(u')$ and $f_{Y, \rho}(w')$ are contained in the round circle $c_h$. \Label{iVerticesOnCircles}
\end{proposition}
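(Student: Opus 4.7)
The plan is to identify, for each horizontal edge $h = [u, w]$ of $\ti\tT_{Y, \rho}$, the correct circle $c_h$ by pushing $h$ through the semi-diffeomorphism of Theorem~\ref{PleatedSurfacesAreClose} to a horizontal edge $h_X$ of $\ti\tT_{X, \rho}$, show that the developed endpoints $f_{Y, \rho}(u)$ and $f_{Y, \rho}(w)$ already lie $O(\del)$-close to $c_h = c_{h_X}$, and then exploit transversality of the developed vertical leaf to $c_h$ to extract the required small vertical perturbations by an intermediate-value argument.

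For the identification step, I would combine Proposition~\ref{PolygonalTaintrackY} (which gives that $\tT_{Y, \rho}$ and $\tT_{Y, \rho}'$ are $\del$-Hausdorff close in $E^1_{Y, \rho}$) with the semi-diffeomorphism $\psi\colon \tT_{X, \rho} \to \tT_{Y, \rho}'$ of Theorem~\ref{PleatedSurfacesAreClose} to attach to each horizontal edge $h$ of $\ti\tT_{Y, \rho}$ a unique minimal horizontal edge $h_X$ of $\ti\tT_{X, \rho}$; set $c_h := c_{h_X}$ and $H_h := H_{h_X}$. At any $u_X \in h_X$, the $\del$-almost orthogonality of $H_h$ to $\Ep^\ast_{X, \rho} v'(u_X)$ from Lemma~\ref{EquivariantCircles}, together with the smallness of the horizontal derivative $\|\Ep^\ast_{X, \rho} h'(u_X)\|$ (Lemma~\ref{Dumas}) and the fact that $h'$ and $v'$ are principal directions of the Epstein surface, forces the Epstein normal $\Ep^\perp_{X, \rho}(u_X)$ to be $O(\del)$-close to a vector tangent to $H_h$. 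Since $f_{X, \rho}(u_X)$ is the endpoint at infinity of the geodesic ray from $\Ep_{X, \rho}(u_X)$ in direction $\Ep^\perp_{X, \rho}(u_X)$, and $\Ep_{X, \rho}(u_X)$ is itself $\del$-close to $H_h$, this ray stays $O(\del)$-close to $H_h$ throughout, so $f_{X, \rho}(u_X)$ lies $O(\del)$-close to $c_h = \bdr_\infi H_h$.

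To transfer the conclusion to $Y$, I would apply Proposition~\ref{CorrespondingVerticalSegmentsNearHorizontalEdges} to short vertical segments of $E_{X, \rho}$ and $E_{Y, \rho}$ centered at the corresponding points $u_X$ and $u$: both Epstein images are $C^1$-close to a common geodesic of $\H^3$, so $\Ep_{Y, \rho}(u)$ is $O(\del)$-close to $\Ep_{X, \rho}(u_X)$ and the Epstein frame at $u$ is $O(\del)$-close to that at $u_X$. Hence $\Ep^\perp_{Y, \rho}(u)$ is also $O(\del)$-close to tangent to $H_h$, and the previous paragraph gives $f_{Y, \rho}(u)$ within $O(\del)$ of $c_h$. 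For the final step, parametrize the leaf of $V_{Y, \rho}$ through $u$ by $v(t)$ in $E^1_{Y, \rho}$ with $v(0) = u$. By Lemma~\ref{AlmostGeodesic} and Corollary~\ref{VerticalSegmentAlmostZeroMeasure}, the developed curve $t \mapsto f_{Y, \rho}(v(t))$ is $O(\del)$-close and almost parallel to a leaf of the foliation $\mathcal{V}$ of a round cylinder whose axis is almost perpendicular to $H_h$; such leaves meet $c_h$ transversally. Combined with the $O(\del)$ proximity of $f_{Y, \rho}(u)$ to $c_h$, the intermediate-value theorem produces $t^\ast \in (-\ep, \ep)$ with $f_{Y, \rho}(v(t^\ast)) \in c_h$, so we may take $u' := v(t^\ast)$; the symmetric argument at $w$ yields $w'$. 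Equivariance of the assignment follows by carrying out the construction on a $\rho$-fundamental set of horizontal edges and extending by $\rho$-equivariance of $\cc$ and $V_{Y, \rho}$.

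The main obstacle is the transfer step from $X$ to $Y$: the available closeness of the pleated surfaces $\hat\beta_{X, \rho}$ and $\hat\beta_{Y, \rho}$ from Theorem~\ref{PleatedSurfacesAreClose} is $C^0$ along the one-skeleton, whereas locating $f_{Y, \rho}(u) \in \CP^1$ near $c_h$ requires controlling the Epstein position and normal frame simultaneously. Because $\hat\beta$ differs from the developing map by collapsing and then projecting onto pleating planes, the pleated-surface closeness alone does not directly control $f_{Y, \rho}(u)$; Proposition~\ref{CorrespondingVerticalSegmentsNearHorizontalEdges}, which promotes positional closeness along corresponding vertical segments to $C^1$-closeness of Epstein arcs, is precisely the upgrade that makes the strategy go through.
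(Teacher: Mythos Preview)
Your proposal is correct and follows the same approach as the paper, whose entire proof is the single sentence ``This follows from Theorem~\ref{PleatedSurfacesAreClose} and Lemma~\ref{EquivariantCircles}~(\ref{iAlmostOrthogonal}).'' Your detailed unpacking---in particular the explicit invocation of Proposition~\ref{CorrespondingVerticalSegmentsNearHorizontalEdges} (itself a lemma in the proof of Theorem~\ref{PleatedSurfacesAreClose}) to upgrade the $C^0$ closeness of $\hat\beta$ to the Epstein-frame control needed for the intermediate-value step---correctly fills in what the paper leaves implicit.
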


\begin{proof}
This follows from Theorem \ref{PleatedSurfacesAreClose} and Lemma \ref{EquivariantCircles}  (\ref{iAlmostOrthogonal}).
\end{proof}
  
\proof[Proof of Theorem \ref{ProjectiveTraintrackForY}]

By Proposition \ref{CircularEdges}, for each horizontal edge $h = [u, w]$ of $\tT_{Y, \rho}'$, there is an $\ep$-homotopy of $h$ to the circular segment $h'$ the perturbations $u', w'$ such that, letting $\ti{h}$ be a lift of $h$ to $\ti{E}_{Y, \rho}$, 
the corresponding lift $\ti{h}'$ of $h'$ is immersed into the round circle $c_{\ti{h}}$. 
For each vertical edge $v$ of $\tT_{Y, \rho}'$,  at each endpoint of $v$, there is a horizontal edge of $\tT_{Y, \rho}'$ starting from the point; then the round circles corresponding to the horizontal edges bound a round cylinder.

Note that a vertex $u$ of $\tT_{Y, \rho}'$ is often an endpoint of different horizontal edges $h_1$ and $h_2$. 
Thus, if the perturbations $u_1'$ and $u_2'$ of $u$ are different for $h_1$ and $h_2$, then $\TTT_{Y, \rho}$ has a new short vertical edge connecting  $u_1'$ and $u_2'$, and $\TTT_{Y, \rho}$ is non-diffeomorphic to $\tT_{Y, \rho}'$. 

Recall that the $\del /4$-neighborhood of the singular points of $E_{Y, \rho}^1$ is disjoint from the one-skeleton of $\tT_{Y, \rho}$ by \Cref{ModifiedTTForY}. 
Thus, every vertical edge $v$ of $\tT_{Y, \rho}'$ is $\ep$-circular with respect to the round cylinder by \Cref{VerticalSegmentAlmostZeroMeasure}.
Thus we have (\ref{iVerticalEdgesInLeaves}).
Thus we obtained an $\ep$-circular train-track decomposition $\TTT_{Y, \rho}$ of $E_{Y, \rho}$.

As the applies homotopies are $\ep$-small, $\TTT_{Y, \rho}$ are $\ep$-close to $\tT_{Y, \rho}'$ (\ref{iQIperturbationOfTraintrack}).
Thus we may, in addition, assume that $\TTT_{Y, \rho}$ is $\ep$-close to $\tT_{Y, \rho}'$ by \Cref{ModifiedTTForY}. 
Moreover,
Theorem 
\ref{PleatedSurfacesAreClose} give (\ref{iHomotopyMaksBranchesAreCloseInH}).
 \Qed{ProjectiveTraintrackForY}

\subsection{Construction of $\TTT_{X, \rho}$}\Label{sProjectiveTraintrackX}
 Given a train-track structure on a surface, the union of the edges of its branches is a locally finite graph embedded on the surface.  
 An {\sf edge} of a train-track decomposition is an edge of the graph, which contains no vertex in its interior  (whereas an edge interior of a branch may contain a vertex of the train track).  
\begin{definition}
Let $C$, $C'$ be $\CP^1$-structures on $S$ with the same holonomy $\rho\col \pi_1(S) \to \PSL(2, \C)$, so that $\dev C$ and $\dev C'$ are $\rho$-equivariant.  
A circular train-track decomposition $\TTT = \cup_i \BBB_i$ of $C$  is {\sf semi-compatible} with a circular train-track decomposition $\TTT' = \cup \BBB_j'$ of $C'$ if there is a marking-preserving continuous map $\Theta \col C \to C'$ such that, 
for each branch $\BBB$ of $\TTT$, $\Theta$ takes $\BBB$ to a branch of $\BBB'$ of $\TTT'$, and that $\BBB$ and $\BBB'$ are compatible by $\Theta$.
\end{definition}

\begin{boxedlaw}{13cm}
\begin{theorem}\Label{CircularTraintrackX}
For every $\ep > 0$, if a bounded subset $K_\ep$ in $\rchi_X \cap \rchi_Y$ is sufficiently large, then,  for every $\rho \in  \rchi_X \cap \rchi_Y \minus K_\ep$, there is an $\ep$-circular train track decomposition $\TTT_{X, \rho}$ of $C_{X, \rho}$, such that 
\begin{enumerate}
\item $\TTT_{X, \rho}$ is semi-compatible with $\TTT_{Y, \rho}$, and 
\item $\TTT_{X,\rho}$ additively $2\pi$-Hausdorff-close to $\tT_{X,\rho}$ with respect to the (unnormalized) Euclidean metric $E_{X, \rho}$: More precisely,  in the vertical direction, $\TTT_{X, \rho}$ is $\ep$-close to $\tT_{X, \rho}$, and in the horizontal direction, $2\pi$-close in the Euclidean metric of $E_{X, \rho}$ for all $\rho \in \rchi_X \minus K_\ep$. \Label{iClosenessInHorizontalAndVerticalDirections}
\end{enumerate}
\end{theorem}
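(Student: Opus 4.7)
\proof[Proof proposal]
The plan is to build $\TTT_{X,\rho}$ by deforming the staircase train track $\tT_{X,\rho}$, using the equivariant round circle system from Lemma \ref{EquivariantCircles}, and then reconciling the construction with the already constructed $\TTT_{Y,\rho}$ so that the resulting structures are semi-compatible.

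First, I would set up the local data. Fix a small auxiliary parameter $\delta \ll \epsilon$ and enlarge $K_\epsilon$ so that all the preceding asymptotic results (\Cref{HorizontalAndVerticalEstimate}, \Cref{Dumas}, \Cref{PleatedSurfacesAreClose}, \Cref{ProjectiveTraintrackForY}, \Cref{CircularEdges}) hold with parameter $\delta$. Use the equivariant assignment $h\mapsto c_h$ from Lemma \ref{EquivariantCircles} for minimal horizontal edges of $\tilde{\tT}_{X,\rho}$; by construction, the Epstein image of the vertical unit vectors along $h$ is $\delta$-almost orthogonal to the hyperbolic plane spanned by $c_h$, so for each endpoint $u$ of $h$ a perturbation $u'$ of size $O(\delta)$ along the vertical leaf through $u$ has developed image on $c_h$. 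Because the circle system is $\rho$-equivariant and does not depend on the surface $E_{X,\rho}$ itself, these perturbations can be made $\rho$-equivariantly.

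Second, I would enforce semi-compatibility with $\TTT_{Y,\rho}$. Recall from \Cref{PolygonalTaintrackY}(\ref{iSemiDiffeomoprhictT}) that $\tT_{X,\rho}$ is semi-diffeomorphic to $\tT'_{Y,\rho}$. Each minimal horizontal edge $h$ of $\tilde{\tT}_{X,\rho}$ either corresponds to a minimal horizontal edge $h'$ of $\tilde{\tT}'_{Y,\rho}$ or else collapses to a point. When $h\leftrightarrow h'$, the same circle $c_h$ was used in the perturbation of $h'$ inside $\TTT_{Y,\rho}$ (by \Cref{VerticalTangentsAlognHorizontalEdges} and the pointwise closeness of the two Epstein surfaces along the 1-skeleton, \Cref{PleatedSurfacesAreClose}(\ref{iPleatedSurfacesPartiallyClose})). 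Hence at each vertex $u$ that has a counterpart $u'$ in $\tT'_{Y,\rho}$ we may choose the perturbation of $u$ so that its developed image equals the developed image of the already-perturbed $u'$ in $\CP^1$; if $h$ collapses, choose any admissible perturbation. This simultaneously promotes every horizontal edge of $\tT_{X,\rho}$ to a circular arc on its $c_h$ and arranges the vertices to sit above the vertices of $\TTT_{Y,\rho}$.

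Third, I would upgrade the vertical edges. For a vertical edge $v$ of $\tT_{X,\rho}$ whose counterpart in $\tT'_{Y,\rho}$ is a vertical edge $v'$, the endpoints of $v$ and $v'$ now develop to the same pair of points in $\CP^1$, and by Lemma \ref{EquivariantCircles}(\ref{iDisjointCircles}) and Lemma \ref{AlmostGeodesic} both arcs live in the same round cylinder transversal to its horizontal foliation. Using \Cref{IsotopeCircleSystem} (uniqueness up to isotopy), and \Cref{VerticalSegmentAlmostZeroMeasure} (vertical segments carry almost no transversal measure for the cylinder's vertical foliation), I would homotope $v$, keeping endpoints fixed, to an arc whose development coincides with that of $v'$ in $\CP^1$. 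This homotopy moves $v$ only in the horizontal direction, and the resulting train track $\TTT_{X,\rho}$ is by construction $\epsilon$-circular and semi-compatible with $\TTT_{Y,\rho}$ via the diffeomorphism induced by the semi-diffeomorphism $\tT_{X,\rho}\to \tT'_{Y,\rho}$.

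Finally I would verify the closeness estimate (\ref{iClosenessInHorizontalAndVerticalDirections}). In the vertical direction the endpoint perturbations are of size $O(\delta)<\epsilon$, so $\TTT_{X,\rho}$ is $\epsilon$-close to $\tT_{X,\rho}$. In the horizontal direction, the main obstruction, and what I expect to be the hardest part of the argument, is that when $v$ is deformed to track $v'$ across a round cylinder, the developed arc may wind around the cylinder one additional (or one fewer) time compared to the straight developed image of the original $v$; each extra revolution adds $2\pi$ to the horizontal length on the flat surface $E_{X,\rho}$ (equivalently, the deformation can be realized by a single grafting/degrafting along an admissible transversal arc in a round cylinder). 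Since the circle system was constructed equivariantly and the two developed vertical arcs are pointwise close in the cylinder, the winding difference is at most one, yielding exactly the claimed additive $2\pi$ bound. Care is needed to control this globally on the spiral cylinders produced in \S\ref{sBoundedEuclideanTraintrackForX}, which is where I would spend most of the careful bookkeeping.
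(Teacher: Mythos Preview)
Your approach is essentially the paper's: perturb horizontal edges onto the shared equivariant circle system $\cc$ (giving an intermediate $\ep$-circular track $\tT_{X,\rho}'$), then slide vertical edges horizontally so their developments match those of $\TTT_{Y,\rho}$; semi-compatibility then follows from the semi-diffeomorphism $\tT_{X,\rho}\to\tT'_{Y,\rho}$ of Proposition~\ref{PolygonalTaintrackY}(\ref{iSemiDiffeomoprhictT}).

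One point deserves sharpening. Your justification of the $2\pi$ bound via winding number is correct for \emph{why} a horizontal displacement of at most $2\pi$ suffices to match the developed vertical arc to the prescribed one; but you have not checked that such a displacement is \emph{feasible} on the flat surface, i.e.\ that sliding a vertical edge horizontally by up to $2\pi$ does not push it into an adjacent vertical edge or out of its branch. The paper isolates this as Lemma~\ref{EnoughHorizontalSpace}: since every rectangular branch of $\tT_{X,\rho}$ has normalized horizontal width at least $\sqrt[4]{r}$ (Lemma~\ref{PolygonalTraintrackX}), in the unnormalized metric $E_{X,\rho}$ the horizontal separation of adjacent vertical edges can be made larger than any $R$ once $K_\ep$ is large enough, so a $2\pi$ slide is harmless. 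Also, in your step~3 the phrase ``keeping endpoints fixed'' is misleading: the endpoints must slide along the (already circular) horizontal edges together with the rest of the vertical edge; what is fixed is their developed image on $c_h$, not their position on $E_{X,\rho}$. With these two clarifications your argument matches the paper's.
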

\end{boxedlaw}
\proof
First, we transform $\tT_{X, \rho}$ by perturbing horizontal edges so that horizontal edges are circular. 
Recall that, the branches of $\TTT_{Y, \rho}$ are circular with respect to a fixed system $\cc$ of equivariant circles given by Lemma \ref{EquivariantCircles}. 
Thus, the $\beta_{X, \rho}$-images of vertical tangent vectors along $h$ are $\ep$-close to a single vector orthogonal to the hyperbolic plane bounded by $c_h$.
Therefore, similarly to Theorem \ref{ProjectiveTraintrackForY}, we can modify the train-track structure $\tT_{X, \rho}$ so that horizontal edges are circular and $\ep$-Hausdorff close to the original train-track structure in the Euclidean metric of $E_{X, \rho}$ (this process may create new short vertical edges). 
Thus we obtained an $\ep$-circular train track $\tT_{X, \rho}'$ whose horizontal edges map to their corresponding round circles of $\cc$.

Next, we make the vertical edges compatible with $\TTT_{Y, \rho}$.
Recall that $\tT_{X, H_X}$ has no rectangles with short vertical edges (Lemma \ref{PolygonalTraintrackX}).
Therefore, we have the following. 
\begin{lemma}\Label{EnoughHorizontalSpace}
For every $R > 0$, if the bounded subset $K$ of $\rchi$ is sufficiently large, then, for each vertical edge of $\tT_{X, \rho}'$,   the horizontal distance to adjacent vertical edges is at least $R$.
\end{lemma}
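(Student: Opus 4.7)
The plan is to trace how the minimum horizontal distance between adjacent vertical edges rescales when we pass from the normalized flat metric $E^1_{X,\rho}$ to the unnormalized metric $E_{X,\rho}$, and to exploit the properness of $\Hol | \PP_X$ from Theorem \ref{HolonomyVariety} to make this distance as large as desired.

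First I would extract a normalized lower bound. By Lemma \ref{PolygonalTraintrackX}, every rectangular branch of the original polygonal train track $t^r_{X,\rho}$ has horizontal width at least $\sqrt[4]{r}$ in $E^1_{X,\rho}$. The passages $t^r_{X,\rho} \leadsto T_{X,H_X} \leadsto \tT_{X,H_X}$ only glue branches along vertical edges and absorb maximal families of long rectangles into spiral cylinders (\S\ref{sCompatibleEuclideanTraintracks}, \S\ref{sBoundedEuclideanTraintrackForX}); every vertical edge of $\tT_{X,\rho}$ that survives these modifications is a vertical edge of $t^r_{X,\rho}$, and it borders a rectangular or polygonal region that still has horizontal width at least $\sqrt[4]{r}$ on each side. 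Thus there is a constant $c_0 = c_0(X,r) > 0$ (we may take $c_0 = \sqrt[4]{r}$) such that adjacent vertical edges of $\tT_{X,\rho}$ are separated by horizontal distance at least $c_0$ in $E^1_{X,\rho}$, uniformly in $\rho$.

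Next I would account for the small perturbation that produces $\tT_{X,\rho}'$ in \S\ref{sProjectiveTraintrackX}. That perturbation slides each endpoint of each horizontal edge of $\tT_{X,\rho}$ along a vertical leaf by an amount $\ep$-small in the $E^1_{X,\rho}$-metric. Choosing $\ep \leq c_0/4$ in the construction, the horizontal distance between adjacent vertical edges of $\tT_{X,\rho}'$ is still at least $c_0/2$ in $E^1_{X,\rho}$.

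The final step is to translate this normalized bound into an unnormalized one and invoke properness. Since $E^1_{X,\rho} = E_{X,\rho}/\sqrt{\Area E_{X,\rho}}$ and $\Area E_{X,\rho} = \|q_{X,\rho}\|$, horizontal distances in $E_{X,\rho}$ are obtained from those in $E^1_{X,\rho}$ by multiplying by $\sqrt{\|q_{X,\rho}\|}$. Hence adjacent vertical edges of $\tT_{X,\rho}'$ are separated by at least $(c_0/2)\sqrt{\|q_{X,\rho}\|}$ in $E_{X,\rho}$. By Theorem \ref{HolonomyVariety}, $\Hol | \PP_X$ is a proper embedding onto $\rchi_X$, so $\|q_{X,\rho}\| \to \infty$ as $\rho$ leaves every compact subset of $\rchi_X$. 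Consequently, given $R>0$, the bounded set
\[
K \;=\; \{\rho \in \rchi_X \cap \rchi_Y : \|q_{X,\rho}\| \leq (2R/c_0)^2\}
\]
is bounded (it is contained in the preimage of a closed ball under the proper map $\Hol | \PP_X$ restricted to $\rchi_X \cap \rchi_Y$), and for any $\rho \notin K$ the required lower bound $R$ holds. The only mild subtlety is verifying that the construction of $\tT_{X,\rho}'$ is compatible with the choice $\ep \leq c_0/4$, which is harmless since $\ep$ was always taken arbitrarily small; the rest is just bookkeeping of scales.
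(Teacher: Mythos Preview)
Your argument is correct and is essentially the same approach the paper takes, just spelled out in full: the paper simply points to Lemma~\ref{PolygonalTraintrackX} (the $\sqrt[4]{r}$ lower bound on horizontal widths in $E^1_{X,\rho}$) and states the lemma without further justification, leaving the rescaling by $\sqrt{\|q_{X,\rho}\|}$ and the properness of $\Hol|\PP_X$ implicit. One small simplification: the perturbation producing $\tT_{X,\rho}'$ moves endpoints only along vertical leaves, so horizontal distances between vertical edges are in fact unchanged rather than merely approximately preserved, and the $c_0/2$ adjustment is unnecessary.
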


Thus, by Lemma \ref{EnoughHorizontalSpace}, there is enough room to move vertical edges, less than $2\pi$, so that the train-track is compatible with $\TTT_{Y, \rho}$ along vertical edges as well. 

Since $\tT_{X, \rho}$  is semi-diffeomorphic to $\tT_{Y, \rho}$ (Proposition \ref{PolygonalTaintrackY} (\ref{iSemiDiffeomoprhictT})), $\TTT_{X, \rho}$ is semi-compatible with $\TTT_{Y, \rho}$.
\Qed{CircularTraintrackX}

\section{Grafting cocycles and intersection of holonomy varieties}\Label{sGraftingCocycle}
   In this section, given a pair of  $\CP^1$-structures on $S$ with the same holonomy, we shall construct a $\Z$-{\sf valued cocycle} under the assumption that the holonomy is outside of an appropriately large compact subset of the character variety $\chi$. 
Namely, we will construct a train-track graph with compatible $\Z$-valued weights on the branches and its immersion into $S$ (see \cite{Penner-Harer-92} for train-track graphs). 
This embedding captures, in a way,  the ``difference'' of the $\CP^1$-structures sharing holonomy. 
If a smooth arc on $S$ is transversal to the immersed train-track graph, then the sum of the $\Z$-weights at the transversal intersection points is an integer--- this functional defined on transversal arcs is called a {\sf transversal cocycle}. 
Note that this cocycle value does not change under the regular homotopy of the arc if it retains the transversality. 
In particular, given a simple closed curve on a surface, we first homotopy the loop so that it has a minimal geometric intersection with the immersed train-track graph, and then consider its transversal cycle with the train-track graph. 
In this manner, we obtain a functional on the set of homotopy classes on the simple closed curves, which we call a {\sf grafting cocycle}. 

Goldman showed that every $\CP^1$-structure with Fuchsian holonomy $\pi_1(S) \to \PSL_2\C$ is obtained by grafting the hyperbolic structure with the Fuchsian holonomy along a $\Z$-weighted multi-loop on $S$ (\cite{Goldman-87}). 
The grafting cocycles that we construct in this paper can be regarded as a generalization of such weighted multiloops.
\subsection{Relative degree of rectangular $\CP^1$-structures}  \Label{sGrafting}
Let $a < b$ be real numbers. 
Then let $f, g\col [a,b] \to \s^1$ be orientation preserving immersions or constant maps, such that $f(a) = g(a)$ and $f(b) = g(b)$.  
\begin{deflemma}\Label{dRelativeDegree}
The integer
$\sharp f^{-1}(x) - \sharp g^{-1}(x)$  is independent on $x \in \s^1 \minus \{f(a), f(b)\}$, where $\sharp$ denotes the cardinality.
 We call this integer the {\sf degree of $f$ relative to $g$},  or simply, the {\sf relative degree}, and denote it by $\deg (f, g)$. 
\end{deflemma}
Clearly, it is not important that $f$ and $g$ are defined on the same interval as long as corresponding endpoints map to the same point on $\s^1$.
Moreover, the degree is additive in the following sense.
\begin{lemma}[Subdivision of relative degree]\Label{RelativeDegreeSubdivision}
Suppose in addition that  $f(c) = g(c)$ for some $c \in (a,b)$.
Then $$\deg(f, g) = \deg (f |_{[a,c]}, g|_{[a,c]} ) + \deg(f|_{[a,c]}, g|_{[a,c]} ).$$
\end{lemma}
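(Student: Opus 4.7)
The plan is to deduce the subdivision formula directly from the well-definedness in Definition-Lemma \ref{dRelativeDegree}, by computing all three relative degrees using a single generic target point.

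First I would observe that the three values $f(a)$, $f(b)$, $f(c) = g(c)$ lie in $\s^1$, so the complement $\s^1 \minus \{f(a), f(b), f(c)\}$ is non-empty. Pick any $x$ in this complement. Then, because $x \neq f(c) = g(c)$, the preimages of $x$ under $f$ (respectively $g$) are partitioned cleanly by the point $c$, giving
\[
\sharp f^{-1}(x) = \sharp (f|_{[a,c]})^{-1}(x) + \sharp (f|_{[c,b]})^{-1}(x),
\]
and analogously for $g$. Subtracting these two identities yields
\[
\sharp f^{-1}(x) - \sharp g^{-1}(x) = \bigl[\sharp (f|_{[a,c]})^{-1}(x) - \sharp (g|_{[a,c]})^{-1}(x)\bigr] + \bigl[\sharp (f|_{[c,b]})^{-1}(x) - \sharp (g|_{[c,b]})^{-1}(x)\bigr].
\]

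Next I would invoke Definition-Lemma \ref{dRelativeDegree} three times: the left-hand side equals $\deg(f,g)$ (since $x \notin \{f(a), f(b)\}$), the first bracket on the right equals $\deg(f|_{[a,c]}, g|_{[a,c]})$ (since $x \notin \{f(a), f(c)\}$), and the second bracket equals $\deg(f|_{[c,b]}, g|_{[c,b]})$ (since $x \notin \{f(c), f(b)\}$). Reading the identity gives exactly the desired subdivision formula.

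There is essentially no obstacle here — the work has already been done in proving that the relative degree is well-defined. The only mild point worth articulating carefully is the choice of a single generic $x$ that avoids all three obstruction points simultaneously, which is what makes the counting additivity apply uniformly. If desired, one could also present the argument more conceptually by noting that $\deg(f,g)$ is the winding number around $x$ of the loop obtained by concatenating $f$ with the orientation-reversal of $g$, and winding numbers are additive under concatenation; but the elementary counting argument above is shorter and self-contained within the framework the paper has set up.
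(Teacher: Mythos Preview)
Your argument is correct and is exactly the kind of elementary counting the paper has in mind; in fact the paper does not give a proof at all, merely stating ``The proofs of the lemmas above are elementary.'' Your write-up supplies the details cleanly and stays within the framework of Definition-Lemma~\ref{dRelativeDegree}.
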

The proofs of the lemmas above are elementary.
Let $R, Q$ be circular projective structures  on a marked rectangle, and suppose that $R$ and $Q$ are compatible:
By their developing maps,  corresponding horizontal edges of $R$ and $Q$ are immersed into the same round circle on $\CP^1$, and the corresponding vertices map to the same point. 

Then,  the {\sf degree} of $R$ relative to $Q$ is the degree of a horizontal edge of $R$ relative to its corresponding horizontal edge of $Q$\, --- 
we similarly denote the degree by $\deg(R, Q) \in \Z$. 
Although $R$ has two horizontal edges, this degree is well-defined:
\begin{lemma}[c.f. Lemma 6.2 in \cite{Baba-15gt}]%
The degree $\deg(R, Q)$ is independent of the choice of the horizontal edge. 
\end{lemma}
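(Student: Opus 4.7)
The plan is to lift everything to the universal cover of the supporting round cylinder and track the horizontal displacements of the four boundary edges. Let $A$ be the round cylinder on $\CP^1$ supporting both $R$ and $Q$, with boundary circles $c_1,c_2$ (containing the images of the horizontal edges $h_1,h_2$). Choose a parametrization of the universal cover $\widetilde{A}\cong \R\times[0,1]$ such that $c_i$ is covered by $\R\times\{i-1\}$ with common period $L$ (one common deck translation $z\mapsto z+L$), with the horizontal foliation of $A$ lifting to horizontal lines. Since $R$ is simply connected, the developing map $f_R\col R\to A$ lifts uniquely to $\widetilde f_R\col R\to\widetilde A$ once a basepoint is fixed, and likewise for $Q$.

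I would normalize by picking the lifts so that $\widetilde f_R$ and $\widetilde f_Q$ agree at the initial vertex $p$ of $h_1$, say at $(a_1,0)\in\widetilde A$. Let $\ell_R^{(i)}$ and $\ell_Q^{(i)}$ denote the horizontal (signed) displacements along $h_i$ in $\widetilde A$ for $R$ and $Q$, respectively, so that the monotone immersions $\widetilde f_R|h_1$ and $\widetilde f_Q|h_1$ end at $(a_1+\ell_R^{(1)},0)$ and $(a_1+\ell_Q^{(1)},0)$. Because $f_R$ and $f_Q$ send the endpoints of $h_1$ to the same pair of points in $c_1$, we have $\ell_R^{(1)}-\ell_Q^{(1)}\in L\Z$; write $\ell_R^{(1)}-\ell_Q^{(1)}=n_1 L$. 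A direct count of preimages in the lift (applying \Cref{dRelativeDegree}) shows $\deg(R,Q)$ computed via $h_1$ equals $n_1$. Analogously one defines $n_2$ using $h_2$, and the goal becomes $n_1=n_2$.

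The key is the behavior of the two vertical edges. By compatibility, $f_R|v_1=f_Q|v_1$ as arcs in $A$; since both lifts start at the common point $(a_1,0)$, uniqueness of path-lifting gives $\widetilde f_R(v_1)=\widetilde f_Q(v_1)$, so the initial vertex of $h_2$ has the \emph{same} lift for $R$ and $Q$, say $(a_2,1)$, and the horizontal displacements along $v_1$ agree: $\Delta_R^{v_1}=\Delta_Q^{v_1}$. For $v_2$ the two lifts start at different points $(a_1+\ell_R^{(1)},0)$ and $(a_1+\ell_Q^{(1)},0)$, which differ by the horizontal translation by $n_1 L$; but $n_1 L$ is a deck transformation of $\widetilde A\to A$, so $\widetilde f_R(v_2)$ is the $n_1 L$-horizontal translate of $\widetilde f_Q(v_2)$, and in particular $\Delta_R^{v_2}=\Delta_Q^{v_2}$. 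Closing up $\partial R=h_1\cdot v_2\cdot h_2^{-1}\cdot v_1^{-1}$ in the simply connected $\widetilde A$ yields the identity $\ell_*^{(1)}-\ell_*^{(2)}=\Delta_*^{v_1}-\Delta_*^{v_2}$ for both $*=R,Q$. Subtracting these, the vertical contributions cancel and one obtains $\ell_R^{(1)}-\ell_Q^{(1)}=\ell_R^{(2)}-\ell_Q^{(2)}$, i.e.\ $n_1 L=n_2 L$, hence $n_1=n_2$.

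I expect the only real subtlety to be setting up the universal cover $\widetilde A$ with a single common deck translation covering both boundary circles (rather than two independent rotations), so that ``horizontal displacement along $c_1$'' and ``horizontal displacement along $c_2$'' live in the same copy of $\R$ and can be compared. Once this standard normalization of the round cylinder is in place, the remaining argument is a bookkeeping computation in $\widetilde A$, and no further geometry is needed.
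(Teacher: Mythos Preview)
Your argument is correct, but it takes a different route from the paper's proof. The paper foliates the round cylinder $A$ by its horizontal circles and pulls this back to foliations $\FFF_R,\FFF_Q$ of $R,Q$; for each leaf $c$ of $\FFF_A$ the corresponding leaves $\ell_R,\ell_Q$ immerse into $c$ with matching endpoints (since the vertical edges of $R$ and $Q$ have identical development), so $\deg(\ell_R,\ell_Q)\in\Z$ is defined for every leaf, varies continuously in $c$, and hence is constant from one boundary edge to the other. Your approach instead lifts to $\widetilde A\cong\R\times[0,1]$ and closes up the boundary loop: because the vertical-edge developments coincide, their horizontal displacements agree for $R$ and $Q$, and subtracting the two closing-up identities gives $\ell_R^{(1)}-\ell_Q^{(1)}=\ell_R^{(2)}-\ell_Q^{(2)}$. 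The paper's proof is shorter and sidesteps all orientation bookkeeping via the ``integer-valued continuous function is locally constant'' trick; your version is more explicit and makes transparent exactly where compatibility along the vertical edges is used, at the cost of tracking signs and basepoints carefully. Your worry about the common deck translation is unfounded: $A$ is a genuine cylinder, so $\pi_1(A)\cong\Z$ and a single generator covers both boundary circles.
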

\begin{proof}
Let $A$ be the round cylinder on ${\CP}^1$ supporting both $R$ and $Q$.
Then, the horizontal foliation $\FFF_A$ of $A$ by round circles $c$ induces foliations $\FFF_R$ and $\FFF_Q$ on $R$ and $Q$, respectively.
Then, for each leaf $c$ of $\FFF_A$, the corresponding leaves $\ell_R$ and $\ell_Q$ of $\FFF_R$ and $\FFF_Q$, respectively, are immersed into $c$, and the endpoints of $\ell_R$ and $\ell_Q$ on the corresponding vertical edges of $R$ and $Q$ map to the same point on $c$. 
The degree of $\ell_R$ relative to $\ell_Q$ is an integer, and it changes continuously in the leaves $c$ of $\FFF_A$. 
Thus, the assertion follows immediately.  
\end{proof}

\begin{lemma}[cf. Lemma 6.2 in \cite{Baba-15gt}]\Label{GraftingRectangles}
Let $R, Q$ be $\CP^1$-structures on a marked rectangle with $\Supp R = \Supp Q$. 
Then
\begin{itemize}
\item 
  if $\deg(R, Q) > 0$, then $R$ is obtained by grafting $Q$ along an admissible arc $\deg(R, Q)$ times; 
  \item 
    if $\deg(R, Q) < 0$, then $Q$ is obtained by grafting $R$ along an admissible arc $ - \deg(R, Q)$ times; 
    \item 
    if  $\deg(R, Q) = 0$, then $R$ is isomorphic to $Q$ (as $\CP^1$-structures).  
\end{itemize}
\end{lemma}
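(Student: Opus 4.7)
The plan is to handle the three cases by symmetry, a base case, and induction. First, the second bullet follows immediately from the first by swapping $R$ and $Q$, since $\deg(Q,R) = -\deg(R,Q)$ by \Cref{dRelativeDegree}. So I need only handle $\deg(R,Q) \geq 0$. Two auxiliary facts will do almost all the work. The first is a cocycle property: for three pairwise compatible circular rectangles $R_1, R_2, R_3$ with common support, $\deg(R_1, R_3) = \deg(R_1, R_2) + \deg(R_2, R_3)$, which is immediate from the definition $\deg(f,g) = \#f^{-1}(x) - \#g^{-1}(x)$. The second is that a single grafting raises the relative degree by one, i.e., $\deg(\Gr_\alpha Q, Q) = 1$ for any admissible arc $\alpha$. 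This follows from tracing through the cut-and-paste definition of $\Gr_\alpha$: a horizontal edge of $\Gr_\alpha Q$ is obtained from the corresponding horizontal edge of $Q$ by cutting at the unique point where $\alpha$ meets it and splicing in a path that traverses the full boundary circle of $\Supp Q$ once, so each generic point on the circle acquires exactly one extra preimage.

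For the base case $\deg(R,Q) = 0$ I will build an explicit isomorphism. Fix corresponding horizontal edges $h_R$ of $R$ and $h_Q$ of $Q$ with developments $f, g\colon [0,1] \to c$ onto the same boundary round circle $c$ of $\Supp R = \Supp Q$, satisfying $f(0) = g(0)$ and $f(1) = g(1)$. Lift to the universal cover $\R \to c$ using a common choice of initial lift; the orientation-preserving immersion hypothesis forces $\tilde f, \tilde g$ to be strictly increasing, and $\deg(f,g) = 0$ forces $\tilde f(1) = \tilde g(1)$, so there is a unique orientation-preserving diffeomorphism $\phi\colon [0,1] \to [0,1]$ with $f = g \circ \phi$. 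Performing such a matching on both horizontal edges and extending along leaves of the horizontal foliation --- which on both $R$ and $Q$ is the $\dev$-pullback of the canonical horizontal foliation of the supporting round cylinder --- yields a diffeomorphism $\Phi\colon R \to Q$ intertwining the developing maps. Along the two vertical edges, the compatibility $\Supp R = \Supp Q$ forces $\Phi$ to identify two identically developed arcs, so $\Phi$ is continuous across the whole rectangle and is an isomorphism of $\CP^1$-structures.

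For $\deg(R,Q) = n > 0$, induct on $n$. Pick any admissible arc $\alpha$ in $Q$. Since $\Supp \Gr_\alpha Q = \Supp Q = \Supp R$, the structure $\Gr_\alpha Q$ is compatible with $R$. By the cocycle property and the grafting-shifts-degree-by-one fact,
\[
\deg(R, \Gr_\alpha Q) \;=\; \deg(R, Q) - \deg(\Gr_\alpha Q, Q) \;=\; n - 1.
\]
By the inductive hypothesis, $R$ is obtained from $\Gr_\alpha Q$ by grafting along an admissible arc $n-1$ times, hence from $Q$ by grafting $n$ times in total, giving the first bullet.

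The main obstacle I anticipate is the base case: verifying that the edge-wise reparametrizations extend to a genuine isomorphism of $\CP^1$-structures on the interior of the rectangle. This reduces to two continuity checks --- that the leaf-by-leaf reparametrization varies continuously in the horizontal parameter, and that it matches the already-fixed identification along the vertical edges. Both checks ultimately hinge on the fact that $\dev R$ and $\dev Q$ are local diffeomorphisms whose horizontal foliations both arise as $\dev$-pullbacks of the single horizontal foliation of the supporting round cylinder, together with the elementary fact that relative degree is locally constant under continuous deformation of $(f,g)$, so that the hypothesis $\deg(f,g) = 0$ on the bottom edge propagates leafwise.
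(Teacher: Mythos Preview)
Your argument is correct. The paper does not actually supply its own proof of this lemma; it simply records the statement with a pointer to Lemma~6.2 of \cite{Baba-15gt}, so there is nothing to compare against line by line. Your induction on $\deg(R,Q)$, driven by the cocycle identity and the observation that a single grafting increments the relative degree by one, is exactly the natural proof, and your base case is sound: the leafwise degree-zero condition you need is precisely what the proof of the preceding lemma (independence of $\deg(R,Q)$ from the choice of horizontal edge) already establishes, so you may simply cite that rather than reproving it. The only place worth a sentence more of care is the continuity of the leaf-by-leaf reparametrization and its agreement with the fixed identification along the vertical edges, but both follow from the smoothness of the developing maps and the fact that on each leaf the endpoints are pinned by $\operatorname{Supp} R = \operatorname{Supp} Q$.
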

By \Cref{GraftingRectangles}, the ``difference'' of $\CP^1$-rectangles $R$ and $Q$ can be represented by an arc $\alpha$ with weight $\deg(R, Q)$ such that $\alpha$ sits on the base rectangle connecting the horizontal edges.
\begin{lemma}\Label{DegreeAndDecomposiiton}
Let $R$ and $Q$ be  circular projective structures on a marked rectangle such that $\Supp R = \Supp Q$.
Let $A$ be the round cylinder on $\CP^1$ supporting $R$ and $Q$. 
Suppose that there are admissible arcs $\alpha_R$ on $R$ and $\alpha_Q$ on $Q$ which develop onto the same arc on $A$ (transversal to the horizontal foliation), so that
 the arcs decompose  $R$ and $Q$  into two circular rectangles $R_1, R_2$ and  $Q_1, Q_2$, respectively and  $\Supp R_1 = \Supp Q_1$ and $\Supp R_2 = \Supp Q_2$.
Then $$\deg (R,  Q) = \deg (R_1, Q_1)  + \deg (R_2, Q_2).$$
\end{lemma}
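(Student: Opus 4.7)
The plan is to reduce the statement directly to its one-dimensional precursor, \Cref{RelativeDegreeSubdivision}. By the preceding lemma the number $\deg(R,Q)$ may be computed on either horizontal edge, so let $c^{+}$ denote the top boundary circle of the round cylinder $A = \Supp R = \Supp Q$, and let $h_R^{+} \subset \bdr R$ and $h_Q^{+} \subset \bdr Q$ be the top horizontal edges. Both immerse onto $c^{+}$ with matching endpoint developments at the two upper vertices, and by definition
$$\deg(R,Q) \;=\; \deg(h_R^{+},\, h_Q^{+}).$$

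The admissible arc $\alpha_R$ is transversal to the horizontal foliation on $R$, so it meets $h_R^{+}$ in a single point $p_R^{+}$; likewise $\alpha_Q$ meets $h_Q^{+}$ in a single point $p_Q^{+}$. Since by hypothesis $\dev R|_{\alpha_R}$ and $\dev Q|_{\alpha_Q}$ trace the same transversal arc $\alpha \subset A$, which crosses $c^{+}$ in a unique point $p^{+}$, we have $\dev R(p_R^{+}) = p^{+} = \dev Q(p_Q^{+})$. The decompositions $R = R_1 \cup R_2$ and $Q = Q_1 \cup Q_2$ therefore split $h_R^{+}$ into sub-edges $h_{R_1}^{+}, h_{R_2}^{+}$ meeting at $p_R^{+}$, and split $h_Q^{+}$ into $h_{Q_1}^{+}, h_{Q_2}^{+}$ meeting at $p_Q^{+}$, with the corresponding endpoints all aligned over the same three points of $c^{+}$.

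Applying \Cref{RelativeDegreeSubdivision} to the pair $(h_R^{+}, h_Q^{+})$ with subdivision point projecting to $p^{+}$ then yields
$$\deg(h_R^{+}, h_Q^{+}) \;=\; \deg(h_{R_1}^{+}, h_{Q_1}^{+}) \;+\; \deg(h_{R_2}^{+}, h_{Q_2}^{+}).$$
Each summand on the right equals $\deg(R_i, Q_i)$ by applying the preceding lemma inside each of the sub-rectangles, whose supports agree by hypothesis. Combining these identities gives the asserted additivity.

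The argument is essentially bookkeeping, and I do not expect a serious obstacle. The one point to be careful about is that the labeling of $R_1, R_2$ is consistent with that of $Q_1, Q_2$: this is forced by the matching of supports $\Supp R_i = \Supp Q_i$, because the two round sub-cylinders into which the transversal arc $\alpha$ cuts $A$ are distinguishable, so the pair $(R_i, Q_i)$ on each side of $\alpha$ is determined. Everything else is transported from the interval case by the horizontal foliation on the supporting cylinder.
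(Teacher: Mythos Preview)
Your proof is correct and follows exactly the same approach as the paper, which simply states that the result follows from \Cref{RelativeDegreeSubdivision}. You have spelled out the details of how the reduction works (passing to a horizontal edge, identifying the common subdivision point as the development of the endpoints of the admissible arcs, and invoking the one-dimensional additivity), but the underlying idea is identical.
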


\begin{proof}
This follows from Lemma \ref{RelativeDegreeSubdivision}.
\end{proof}

\subsection{Train-track graphs for planar polygons}\Label{sGraphsOnPlanarPolygons}
Let $P$ be a  $L^\infty$-convex  staircase polygon in $\mathbb{E}^2$, which contains no singular points. 
We can decompose $P$ into finitely many rectangles $P_1, P_2, \dots, P_n$ by cutting $P$ along $n-1$ horizontal arcs each connecting a vertex and a point on a vertical edge.
Let $\PPP, \QQQ$ be compatible circular projective structures on $P$  such that the round circles supporting horizontal edges are all disjoint.
The decomposition $P$ into $P_1, P_2, \dots P_n$ gives  decompositions of $\PPP$  into $\PPP_1, \PPP_2, \dots, \PPP_n$ and $\QQQ$ into $\QQQ_1, \QQQ_2, \dots, \QQQ_n$ such that $\Supp \PPP_i = \Supp \QQQ_i$ for $i = 1, 2,\dots, n$.
As in \S \ref{sGrafting}, for each $i$, we obtain an arc $\alpha_i$ connecting horizontal edges of  $P_i$ with weight $\deg(\PPP_i, \QQQ_i)$.
Then, by splitting and combining $\alpha_1, \alpha_2, \dots, \alpha_n$ appropriately, we obtain a $\Z$-valued train-track graph   $\Gamma(\PPP, \QQQ)$  on $P$ transversal to the decomposition (\Cref{fTraintrackGraphOnPolygon}).
\begin{figure}
\begin{overpic}[scale=.15%, grid,tics=10
] {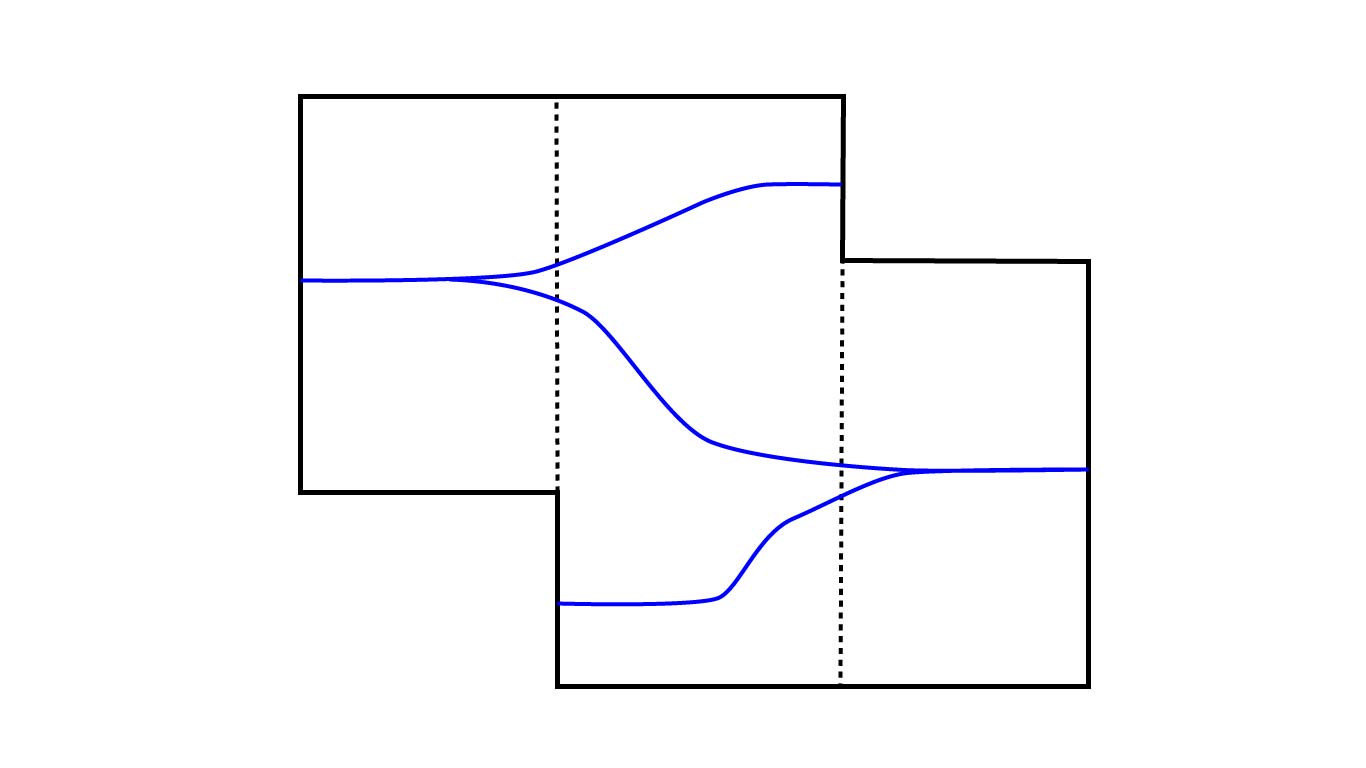} % figure file
   \put( 29, 25){\small $P_1$}  
      \put( 50, 33){\small $P_2$}  
  \put( 67, 11){\small $P_3$}  
   \put(30 , 38 ){\textcolor{blue}{\small $1$}} 
   \put(44 , 42 ){\textcolor{blue}{\small $2$}} 
      \put(54 , 24 ){\textcolor{blue}{\small $-1$}} 
         \put(44 ,14 ){\textcolor{blue}{\small $-2$}} 
    \put(68 , 24 ){\textcolor{blue}{\small $-3$}} 
 %   \put( , ){\textcolor{}{$$}}  
 %   \put( , ){}  
      \end{overpic}
\caption{An example of a weighted train track on an $L^\infty$-convex polygon in $\mathbb{E}^2$.}\label{fTraintrackGraphOnPolygon}
\end{figure}

\subsection{Train-track graphs for cylinders}\Label{sTrainTracksOnCylinders}
Let $A_X$ be a cylindrical branch of $\tT_{X, \rho}$, and let $A_Y$ be the corresponding cylindrical branch of $\tT_{Y, \rho}$. 

Pick a monotone staircase curve $\alpha$ on $A_X$, such that 
\begin{enumerate}
\item $\alpha$ connects different boundary components of $A_X$, and its endpoints are on horizontal edges (\Cref{fStairCaseCurveOnSpiralCylinder}), 
\item the restriction of $[V_{Y, \rho}]_X$  to $A_X$ has a leaf disjoint from $A_X$.
\end{enumerate}
\begin{figure}
\begin{overpic}[scale=.15%, grid,tics=10
] {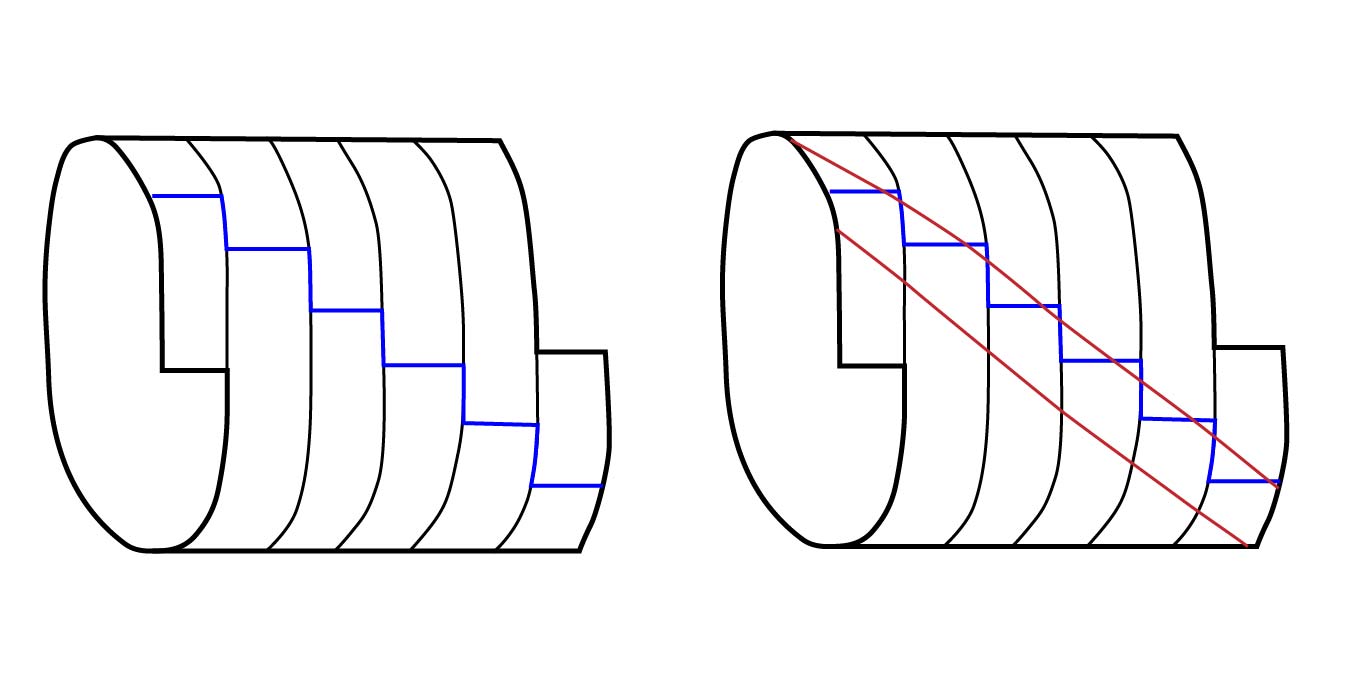} % figure file
 \put(23.5 ,23 ){\textcolor{blue}{$\alpha$}}  
 \put(95 ,  12 ){\textcolor{red}{\small $[V_Y]_X$}}  
 %   \put( , ){\textcolor{}{$$}}  
 %   \put( , ){}  
      \end{overpic}
\caption{ }\label{fStairCaseCurveOnSpiralCylinder}
\end{figure}
Then $[V_{Y, \rho}]_X$ is essentially carried by $A_X$.
Then, one can easily show that the choice of $\alpha$ is unique through an isotopy preserving the properties. 
\begin{lemma}\Label{IsotopicStaircaseCurves}
Suppose there are two staircase curves $\alpha_1, \alpha_2$ on $A_X$ satisfying Conditions (1) and (2).
Then, $\alpha_1$ and $\alpha_2$ are isotopic through staircase curves $\alpha_t$ satisfying Conditions (1) and (2).
\end{lemma}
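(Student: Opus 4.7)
The plan is to reduce to a combinatorial statement about monotone staircase paths on a rectangular decomposition of $A_X$, using the fact that monotone staircases are $L^\infty$-geodesics and therefore admit essentially unique crossing patterns.

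First, I would invoke Lemma \ref{SpiralCylinderIntoARectangle} to write $A_X$ as a finite union of rectangles $R_1, \dots, R_k$ obtained by cutting along horizontal segments emanating from vertices of the boundary staircase loops (the case when $A_X$ is a flat Euclidean cylinder is a degenerate version, where the result is trivial as any monotone staircase connecting the two boundaries is isotopic to a vertical segment). Passing to the universal cover of $A_X$ (an infinite strip in $\mathbb{E}^2$ tiled by copies of the $R_j$), I would lift each $\alpha_i$ to a monotone staircase $\tilde\alpha_i$ connecting the two boundary lines of the strip. By monotonicity (alternating interior angles $\pi/2$ and $3\pi/2$), $\tilde\alpha_i$ has no backtracks in either the horizontal or vertical direction, so it is completely encoded, up to isotopy of staircases, by its finite sequence of traversed rectangle copies together with the side of each rectangle on which it enters and exits.

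Next I would show that condition (2) forces both $\tilde\alpha_1$ and $\tilde\alpha_2$ to traverse the same sequence of rectangles. The essential carrying property of $[V_{Y,\rho}]_X$ by $\tT_{X,\rho}$ (Proposition \ref{EuclideanTraintarck}, (\ref{iRealzationUpToShifting})) gives a realization $W_Y$ whose restriction to $A_X$ is a union of arcs transverse to the rectangles. Condition (2) — the existence of a leaf of $[V_{Y,\rho}]_X|A_X$ disjoint from (the isotopy class of) $\alpha_i$ — prevents $\alpha_i$ from wrapping further than necessary around the spiral: any extra wrap would force $\alpha_i$ to cross every leaf of $[V_{Y,\rho}]_X|A_X$. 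Combined with the constraint in (1) that both endpoints lie on horizontal edges of $\tT_{X,\rho}$, this pins down the homotopy class of $\tilde\alpha_i$ in the strip, so $\tilde\alpha_1$ and $\tilde\alpha_2$ have identical combinatorial type.

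Once the two lifts are in the same combinatorial class, the isotopy is constructed segment by segment in the standard staircase fashion: each horizontal segment of $\tilde\alpha_1$ can be slid along the vertical leaves until it aligns with the corresponding horizontal segment of $\tilde\alpha_2$, and symmetrically for vertical segments. Because the two curves cross exactly the same rectangles in the same order, these slides can be performed within the interiors of the rectangles $R_j$ without ever creating a non-monotone vertex; monotonicity is preserved throughout. Finally, the same leaf of $[V_{Y,\rho}]_X|A_X$ which witnesses (2) for $\alpha_1$ witnesses it for every intermediate $\alpha_t$, since the isotopy stays inside the rectangles and therefore cannot cross that leaf. The main technical point to verify carefully is this last step: one must choose the slide so that at every time $t$ the intermediate staircase $\alpha_t$ still connects horizontal edges of $\tT_{X,\rho}$ and does not acquire a singular point, which is where the disjointness of the cylindrical branch $A_X$ from the singular set of $E_{X,\rho}$ is used.
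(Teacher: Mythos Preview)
The paper does not actually prove this lemma. The sentence preceding it reads ``one can easily show that the choice of $\alpha$ is unique through an isotopy preserving the properties,'' and the lemma is then stated without a proof block. So there is nothing to compare your argument against; you are supplying the details the author omits.

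Your plan is correct. The essential topological point --- that condition~(2) forces the isotopy class (winding number) of $\alpha$ in the cylinder, because an arc with a different winding number must cross every leaf of $[V_{Y,\rho}]_X|A_X$ --- is the right observation, and once $\alpha_1,\alpha_2$ are known to be homotopic rel boundary components, the rectangle decomposition from Lemma~\ref{SpiralCylinderIntoARectangle} gives a clean way to build the isotopy through monotone staircases.

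One point to repair: your claim that ``the same leaf of $[V_{Y,\rho}]_X|A_X$ which witnesses (2) for $\alpha_1$ witnesses it for every intermediate $\alpha_t$'' is not correct as stated. If $\alpha_1$ and $\alpha_2$ happen to lie on opposite sides of a given leaf $\ell$, any isotopy between them must cross $\ell$. What \emph{is} true, and sufficient, is that every monotone staircase in the correct isotopy class is disjoint from \emph{some} leaf: in the universal cover, any lift of a leaf whose two endpoints both lie to one side of those of $\tilde\alpha_t$ is unlinked with $\tilde\alpha_t$ and hence disjoint from it, and such lifts always exist by periodicity under the deck transformation. So condition~(2) holds for each $\alpha_t$, but with a witnessing leaf that may vary with~$t$.
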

In Proposition \ref{HomotopyToBeCarried}, 
pick a realization of $[V_Y]_X$ on the decomposition $(A_X, \alpha_X)$ by a homotopy of $[V_Y]_X$ sweeping out triangles.
This induces an $\ep$-almost staircase curve $\alpha_Y$.  
Similarly to Lemma \ref{EquivariantCircles}, pick a system of round circles $\cc = \{c_h\}$ corresponding to horizontal edges $h$ of $\alpha_X$ so that the $\Ep_{X, \rho}$-images of vertical tangent vectors along $h$ are $\ep$-close to a single vector orthogonal to the hyperbolic plane bounded by $c_h$. 

Then, as in \S\ref{sProjectiveTraintrackForX} we can accordingly isotope the curve $\alpha_Y$ so that the horizontal edges are supported on their corresponding circle of $\cc$ and vertical edges remain vertical---
let $\alpha_Y^\cc$ denote the curve after this isotopy. 
Then $\AAA_Y \minus \alpha_Y^\cc$ is a circular projective structure on a staircase polygon in $\mathbb{E}^2$. 

Then, (similarly to Theorem \ref{CircularTraintrackX}), we can isotope $\alpha_X$ to an $\ep$-almost circular staircase curve $\alpha^\cc_X$ so that 
\begin{itemize}
\item $\alpha_X$ is $2\pi$-Hausdorff close to $\alpha_X^\cc$; 
\item the horizontal edge $h$ of $\alpha_X^\cc$ is  supported on $c_h$; 
\item $\AAA_X \minus \alpha_X^\alpha$ is an $\ep$-almost circular staircase polygon compatible with $\AAA_Y \minus \alpha_Y^\cc$. 
\end{itemize}

As in \S \ref{sGraphsOnPlanarPolygons}, $\AAA_X \minus \alpha_X^\cc$ and $\AAA_Y \minus \alpha_Y^\cc$ yield a $\Z$-valued weighted train track $\Gamma_{A \minus \alpha_X}$ on the polygon $A \minus \alpha_X$ such that $\Gamma_{A_X \minus \alpha_X}$ is transversal to the horizontal foliation. 
Up to a homotopy preserving endpoints on the horizontal edges,  the endpoints of $\Gamma_{A_X \minus \alpha_X}$ match up along $\alpha_X$ as $\Z$-weighted arcs. 
Thus, we obtain a weighted train-track graph $\Gamma_{A_X}$ on $A_X$. 

Consider the subset of the boundary of the circular cylinder $\AAA_X$ which is the union of the vertical boundary edges and the vertices of $\TTT_{X, \rho}$ contained in $\bdr \AAA_X$.
Let $\AA$ be the homotopy class of arcs in $\AAA_X$ connecting different points in this subset. 
Then $[\Gamma_{A_X}] \col \AA \to \Z$ be the map which takes an arc to its total signed intersection number with $\Gamma_{A_X}$.

Then Lemma \ref{IsotopicStaircaseCurves} gives a uniqueness of $[\Gamma_{A_X}]$: 
\begin{proposition}
$[\Gamma_{A_X}] \col \AA \to \Z$ is independent on the choice of the staircase curve $\alpha$ and the realization of $[V_Y]_X$ on $(\AAA_X, \alpha_X)$.
\end{proposition}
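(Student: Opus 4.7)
The plan is to reduce the independence claim to the intrinsic character of the relative degree $\deg(\cdot,\cdot)$ from \S\ref{sGrafting}, combined with the isotopy uniqueness of Lemma \ref{IsotopicStaircaseCurves}. The key observation is that for a fixed choice of staircase curve $\alpha_X$ and realization, $[\Gamma_{A_X}]$ is assembled from the integers $\deg(\PPP_i,\QQQ_i)$ produced by any rectangle subdivision of the staircase polygon $\AAA_X\minus\alpha_X^\cc$ and its compatible counterpart $\AAA_Y\minus\alpha_Y^\cc$; by Lemma \ref{DegreeAndDecomposiiton} these integers add correctly under further subdivision, so the signed intersection with a fixed arc $\gamma\in\AA$ does not depend on how the polygons were cut into rectangles.

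First I would handle independence of the realization of $[V_Y]_X$. By Proposition \ref{UniqueRealizationUpToShifting}, any two realizations on the same decomposition $(\AAA_X,\alpha_X)$ are related by shifts across vertical slits. Such a shift slides pieces of $W_Y$ horizontally across a slit in $\tT_{X,\rho}$, and correspondingly slides the vertical edges of the induced structure on $\AAA_Y$ (as in \Cref{fShiftingAndSliding}). The supports $\Supp\PPP_i = \Supp\QQQ_i$ of the compatible circular rectangles are unchanged, hence every $\deg(\PPP_i,\QQQ_i)$ is unchanged, and so is $[\Gamma_{A_X}]$ as a map $\AA\to\Z$.

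Next I would handle the choice of staircase curve. Given two admissible staircase curves $\alpha_0,\alpha_1$ on $A_X$, Lemma \ref{IsotopicStaircaseCurves} furnishes a continuous isotopy $\alpha_t$ through admissible staircase curves satisfying conditions (1) and (2). Along this family, one can continuously vary the circle system $\cc_t$ (using Proposition \ref{IsotopeCircleSystem} to interpolate in a $\ep$-controlled way) and the associated $\ep$-circular structures $\AAA_{X,t}^{\cc_t}$, $\AAA_{Y,t}^{\cc_t}$, so that the compatibility of Theorem \ref{CircularTraintrackX} and Theorem \ref{ProjectiveTraintrackForY} is preserved at each $t$. The resulting weighted train-track graph $\Gamma_{A_X}^t$ then varies by an ambient isotopy of $A_X$, and for any fixed arc $\gamma\in\AA$ the signed intersection $[\Gamma_{A_X}^t](\gamma)\in\Z$ depends continuously on $t$; being integer-valued and continuous, it is constant, so $[\Gamma_{A_X}^0](\gamma)=[\Gamma_{A_X}^1](\gamma)$.

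The main obstacle will be carrying out the continuous interpolation across $\alpha_t$ while preserving compatibility: one has to perform the perturbations that make horizontal edges $\cc_t$-circular (as in the proofs of Theorem \ref{ProjectiveTraintrackForY} and Theorem \ref{CircularTraintrackX}) in a one-parameter family, and argue that the transient creation of short vertical edges near vertices does not affect the signed count against arcs representing classes in $\AA$. Once this continuity is in place, together with the subdivision-additivity of relative degree and the invariance under shifting, the integer-valued continuity argument closes the proof.
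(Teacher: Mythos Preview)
Your overall strategy matches the paper's: the paper simply records ``Lemma \ref{IsotopicStaircaseCurves} gives a uniqueness of $[\Gamma_{A_X}]$'' and states the proposition without further proof, so your integer-valued continuity argument along the isotopy $\alpha_t$ is exactly the intended mechanism, just spelled out in more detail than the paper bothers with.

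There is one genuine slip in your treatment of realization-independence. You say that after a shift the supports $\Supp\PPP_i=\Supp\QQQ_i$ are unchanged and hence each $\deg(\PPP_i,\QQQ_i)$ is unchanged. That is not right: a shift of $W_Y$ across a vertical slit slides the induced vertical edges on the $Y$-side (you even say this, citing \Cref{fShiftingAndSliding}), and since $\alpha_X^{\cc}$ is chosen to be compatible with $\alpha_Y^{\cc}$, the vertical edges on the $X$-side move with them. So the supports of the circular rectangles do change, and the individual degrees $\deg(\PPP_i,\QQQ_i)$ can change. What stays fixed is the cocycle $[\Gamma_{A_X}]$ evaluated on arcs in $\AA$: the shift transfers an integer amount of weight from the rectangles on one side of the slit to those on the other, and since both sides abut the same vertical segment this is exactly a weight-shift across a bigon region, which pairs to zero against any arc class in $\AA$. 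This is the same mechanism the paper uses later for the global cocycle (the discussion of choices (\ref{iChoiceOfRealization}) and (\ref{iChoiceOfVerticalEdges}) in the proof of Theorem \ref{WeightedTraintrack}(\ref{iWellDefinedCocycle})). Replace your ``supports unchanged'' sentence with this cancellation argument and the proof goes through.
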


\subsection{Weighted train tracks  and $\CP^1$-structures with the same holonomy}\Label{sWeightedTraintarck}
In this section, we suppose that Riemann surfaces $X, Y$ have the same orientation. 
Let $\CC$ be the set of the homotopy classes of closed curves on $S$ (which are not necessarily simple). 
Given a weighted train-track graph immersed on $S$, it gives a cocycle taking  $\gamma \in \CC$  to its weighted intersection number with the graph.
\begin{theorem}\Label{WeightedTraintrack}
For all distinct $X, Y \in \TT$,
there is a bounded subset $K$ in $\rchi_X \cap \rchi_Y$, such that 
\begin{enumerate}
\item for each $\rho \in \rchi_X \cap \rchi_Y \minus K$, the semi-compatible train-track decompositions $\TTT_{X, \rho}$ of $C_{X, \rho}$ and $\TTT_{Y, \rho}$ of $C_{Y, \rho}$  in Theorem \ref{CircularTraintrackX} yield a $\Z$-weighted train track graph $\Gamma_\rho$ carried by $\TTT_{X, \rho}$ (immersed in $S$); \Label{iWeightedTraintrack}
\item the grafting cocycle $[\Gamma_\rho] \col \CC \to \Z$ is independent on the choices for the construction of $\TTT_{X, \rho}$ and $\TTT_{Y, \rho}$;  \Label{iWellDefinedCocycle}
\item $[\Gamma_\rho] \col\CC \to \Z $ is continuous in $\rho \in \rchi_X \cap \rchi_Y \minus K$.\Label{iContinuousTraintrack}
\end{enumerate}

\end{theorem}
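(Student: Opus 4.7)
The plan is to assemble $\Gamma_\rho$ from local weighted train-track graphs on branches of $\TTT_{X,\rho}$, and then verify well-definedness and continuity by analyzing how the auxiliary choices propagate.

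First, for each branch $B_X$ of $\TTT_{X,\rho}$ with its semi-compatible counterpart $B_Y \subset \TTT_{Y,\rho}$ (Theorem \ref{CircularTraintrackX}), I would construct a $\Z$-weighted graph $\Gamma_{B_X}$ on $B_X$ as follows. If $B_X$ is a polygonal branch, decompose it into a finite union of compatible circular rectangles by horizontal cuts from vertices as in \S\ref{sGraphsOnPlanarPolygons}; each rectangle $R$ pairs with a compatible circular rectangle $R'$ in $C_{Y,\rho}$ supported on the same round cylinder determined by the circle system $\cc$ (Lemma \ref{EquivariantCircles}), and Definition-Lemma \ref{dRelativeDegree} produces the integer weight $\deg(R,R')$ on a transverse arc. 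Concatenating these arcs across the subdivision yields $\Gamma_{B_X}$. If $B_X$ is a cylindrical (spiral) branch, apply the construction of \S\ref{sTrainTracksOnCylinders}: a staircase curve $\alpha$ satisfying conditions (1)--(2) there cuts $B_X$ into a polygon, the preceding polygonal construction yields a weighted graph on the cut-open polygon, and the endpoints close up along $\alpha$ to form $\Gamma_{B_X}$.

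Next, I would glue. Adjacent branches of $\TTT_{X,\rho}$ share horizontal edges, and along any shared edge both sides are supported on the same round circle of $\cc$. The endpoints of the transverse arcs composing $\Gamma_{B_X}$ are determined by partial degrees between consecutive vertices along that horizontal edge, and additivity of relative degree (Lemma \ref{DegreeAndDecomposiiton}) guarantees these endpoint weights agree from both sides. Thus the local graphs concatenate into a globally defined $\Z$-weighted train-track graph $\Gamma_\rho$ carried by $\TTT_{X,\rho}$ and immersed in $S$; the intersection cocycle $[\Gamma_\rho]\col\CC\to\Z$ is then the total signed weighted intersection number with closed curves, giving (\ref{iWeightedTraintrack}). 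For (\ref{iWellDefinedCocycle}), the construction used auxiliary data: the parameters $(r,\del)$, the circle system $\cc$, the realization $W_Y$ of $[V_Y]_X$, and the cutting staircase $\alpha$ in each cylindrical branch. Two admissible circle systems are joined by a continuous family of admissible systems (Proposition \ref{IsotopeCircleSystem}), under which each relative degree $\deg(R,R')$ stays integer-valued and hence constant. Two realizations $W_Y$, $W_Y'$ differ by shifts across vertical slits (Proposition \ref{UniqueRealizationUpToShifting}); a shift only moves endpoints of transverse arcs along a horizontal edge, inducing an isotopy of $\Gamma_\rho$ in $S$ that fixes the cocycle. Two admissible staircase curves $\alpha$ in a spiral branch are isotopic through admissible curves (Lemma \ref{IsotopicStaircaseCurves}), preserving $[\Gamma_{A_X}]$. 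Hence $[\Gamma_\rho]$ depends only on $\rho$.

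The main obstacle is (\ref{iContinuousTraintrack}), because the underlying train-tracks $\TTT_{X,\rho}$ and $\TTT_{Y,\rho}$ change only semi-continuously (Propositions \ref{EuclideanTraintarck}, \ref{PolygonalTaintrackY}). The plan is to localize: for $\rho_0 \in \rchi_X \cap \rchi_Y \minus K$, semi-continuity provides a neighborhood $U$ of $\rho_0$ in which every $\TTT_{X,\rho}$ (resp.\ $\TTT_{Y,\rho}$) for $\rho \in U$ is a refinement (resp.\ a small sliding modification) of $\TTT_{X,\rho_0}$ (resp.\ of $\TTT_{Y,\rho_0}$). A refinement subdivides polygonal branches and spiral cylinders into smaller pieces, and by additivity of relative degree (Lemma \ref{DegreeAndDecomposiiton}) the graph computed from the refined structures is homotopic in $S$ to the one computed on the coarser structures, so their intersection cocycles agree on $\CC$. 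Combined with the well-definedness from (\ref{iWellDefinedCocycle}) and the continuous dependence of the circle system on $\rho$ given by Proposition \ref{IsotopeCircleSystem}, this shows $[\Gamma_\rho](\gamma)$ is locally constant on $U$ for each $\gamma \in \CC$, hence continuous in $\rho$ on $\rchi_X \cap \rchi_Y \minus K$.
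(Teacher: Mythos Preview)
Your approach captures the right skeleton for rectangular and cylindrical branches, but there is a genuine gap for the remaining branches. You treat every non-rectangular, non-cylindrical branch uniformly as a ``polygonal branch'' amenable to the cut-into-rectangles procedure of \S\ref{sGraphsOnPlanarPolygons}. That procedure, however, is stated only for $L^\infty$-convex polygons in $\mathbb{E}^2$ containing no singular points; the branches of $\TTT_{X,\rho}$ are built precisely to enclose the singular set, and more seriously, the paper isolates a class of \emph{non-transversal} branches---those containing a component of the horizontal graph $G_Y$ where $[V_Y]_X$ runs tangent to $H_X$. For such a branch Theorem~\ref{PleatedSurfacesAreClose}(\ref{iPleatedSurfacesPartiallyClose}) only controls the pleated surfaces along $\partial B_X$, not in the interior, so there is no reason interior horizontal cuts in $\PPP_X$ and $\PPP_Y$ would land on the same round circles; you therefore have no compatible pairs of circular rectangles on which to read off a relative degree. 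The paper handles these branches by an entirely different mechanism (\S\ref{sTrainTracksForNontransversalBranches}, Proposition~\ref{WeaklyCompatibleTraintrack}): it approximates $\PPP_X,\PPP_Y$ by pleated surfaces $\sigma_X,\sigma_Y$ in $\H^3$ sharing crown-shaped boundary, connects their ideal triangulations by a bounded flip sequence (Lemma~\ref{RealizingFlipSequence}), and tracks the change in bending cocycle through the tetrahedra swept out by the flips to produce an \emph{immersed} $\Z$-weighted graph. Without this, your $\Gamma_\rho$ is simply undefined on part of $S$.

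A secondary issue: even for transversal polygonal branches, the paper does not invoke \S\ref{sGraphsOnPlanarPolygons} directly but instead approximates by truncated ideal circular polygons (Theorem~\ref{EuclideanAndProjectivePolygons}) and takes the difference of their Thurston laminations (Proposition~\ref{IntersectionNumberAndFoliationOnPolygons}); this is what guarantees the graph is transversal to $H_{X,\rho}$ and satisfies the quasi-isometry estimate (\ref{iApproximateWeightsOnPolygons}) needed later in Theorem~\ref{VerticalFoliationsAndGraftingFunction}. Your simpler rectangle decomposition might be salvageable in the transversal case since the pleated surfaces are close on the whole branch there, but you would still need to explain how interior horizontal cuts in $\PPP_X$ and $\PPP_Y$ can be chosen with common supporting circles.
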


Since $[\Gamma_\rho]$ takes values in $\Z$, the continuity immediately implies the following.
\begin{corollary}\Label{ConstantIntersectionNumber}
For sufficiently large boundary subset $K$ of $\rchi_X \cap \rchi_Y$,
 $[\Gamma_\rho]$ is well-defined and constant on each connected component of $\rchi_X \cap \rchi_Y \minus K$. 
\end{corollary}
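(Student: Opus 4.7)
The plan is to deduce the corollary directly from the three clauses of Theorem~\ref{WeightedTraintrack}. The well-definedness of $[\Gamma_\rho]\col \CC \to \Z$ at each individual $\rho \in \rchi_X \cap \rchi_Y \minus K$ is exactly clause (\ref{iWellDefinedCocycle}) of Theorem~\ref{WeightedTraintrack}: although the compatible circular train-tracks $\TTT_{X,\rho}$ and $\TTT_{Y,\rho}$ depend on auxiliary choices (the circle system $\cc$ of Lemma~\ref{EquivariantCircles}, the realization of $[V_Y]_X$ on $\tT_{X,\rho}$ up to shifting, and the staircase curves $\alpha_X$ used to unroll cylindrical branches), the relative-degree computation in \S\ref{sGrafting} together with the isotopy uniqueness in Lemma~\ref{IsotopicStaircaseCurves} and Proposition~\ref{IsotopeCircleSystem} show that the induced intersection cocycle does not see these choices.

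For the constancy, I would argue as follows. Fix $\gamma \in \CC$ and consider the function $\rho \mapsto [\Gamma_\rho](\gamma)$ on $\rchi_X \cap \rchi_Y \minus K$. By clause (\ref{iContinuousTraintrack}) of Theorem~\ref{WeightedTraintrack} this function is continuous, and by clause (\ref{iWeightedTraintrack}) it takes values in $\Z$. A continuous integer-valued function on a topological space is locally constant; therefore its restriction to any connected component of $\rchi_X \cap \rchi_Y \minus K$ is constant. Taking the conjunction over all $\gamma \in \CC$ then yields that the cocycle $[\Gamma_\rho] \col \CC \to \Z$ itself is constant on each connected component.

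There is essentially no obstacle once Theorem~\ref{WeightedTraintrack} is in hand; the corollary is a formal consequence of continuity plus discreteness of the target. The only subtlety worth flagging in the write-up is that ``continuous in $\rho$'' here is meant pointwise in $\gamma \in \CC$, which is all that is needed for the argument: we are not claiming uniformity in $\gamma$, only that each coordinate function is locally constant. Thus the proof itself consists of two lines invoking Theorem~\ref{WeightedTraintrack}(\ref{iWellDefinedCocycle}) and (\ref{iContinuousTraintrack}), together with the standard fact that $\Z$ is totally disconnected.
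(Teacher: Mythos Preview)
Your proof is correct and matches the paper's approach exactly: the paper derives the corollary in one line from Theorem~\ref{WeightedTraintrack} by noting that since $[\Gamma_\rho]$ takes values in $\Z$, the continuity in clause~(\ref{iContinuousTraintrack}) immediately forces it to be locally constant. Your write-up simply unpacks this observation and explicitly cites clause~(\ref{iWellDefinedCocycle}) for well-definedness, which is entirely appropriate.
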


%%%% INVARIANTS
We first construct a weighted train-track in (\ref{iWeightedTraintrack}). 
Let $h_{X, 1} \dots h_{X, n}$ be the  horizontal edges of branches of  $\TTT_{X, \rho}$. 

Since  $\TTT_{X, \rho}$ and $\TTT_{Y, \rho}$ are semi-compatible (Proposition \ref{CircularTraintrackX}), for each $i = 1, 2, \dots, n$, letting $h_{Y, i}$ be its corresponding edge of a branch of $\TTT_{Y, \rho}$ or a vertex of $\TTT_{Y, \rho}$. 
Then, $h_{X, i}$ and $h_{Y, i}$ develop into the same round circle on $\CP^1$, and their corresponding endpoints map to the same point by the semi-compatibility. 
Thus we have the degree of $h_{X, i}$ relative to $h_{Y, i}$ taking a value in $\Z$ (Definition \ref{dRelativeDegree}) for each horizontal edge:
$$\gamma_\rho \col \{h_{X, 1}, \dots, h_{X, n}\} \to \Z.$$

We will construct a $\Z$-weighted train track $\Gamma_\rho$  carried by $\TTT_{X, \rho}$ so that the intersection number with $h_{X, i}$ is $\gamma_\rho(h_{X, i})$.
The  train-track graph $\Gamma_\rho$ will be constructed on each branch of $\TTT_{X, \rho}$:
\begin{itemize}
\item For each rectangular branch $R$ of $\TTT_{X, \rho}$, we will construct  a $\Z$-train track graph embedded in $R$ (Proposition \ref{IntersectionNumberAndFoliationOnRectangles}). 
\item For each cylinder  $A$ of $\TTT_{X, \rho}$, we have obtained a $\Z$-weighted train track graph embedded in $A$  (\S \ref{sTrainTracksOnCylinders}).
\item For each transversal branch,  we will construct a $\Z$-weighted train track graph embedded in the branch (Proposition \ref{IntersectionNumberAndFoliationOnPolygons}).
\item For each non-transversal branch of $\TTT_{X, \rho}$,  we will construct a $\Z$-weighted train track immersed in the branch (Lemma \ref{WeaklyCompatibleTraintrack}).
\end{itemize}
\subsubsection{Train tracks for rectangular branches}
Given an ordered pair of compatible $\CP^1$-structures on a rectangle,  Lemma \ref{GraftingRectangles} gives a $\Z$-weighted arc connecting the horizontal edges of the rectangle. 
Since the horizontal edges of a rectangular branch of $\TTT_{X, \rho}$ may contain a vertex, we transform the weighted arc to a  weighted train-track graph so that it matches with $\gamma_\rho$.  
\begin{proposition}\Label{IntersectionNumberAndFoliationOnRectangles}
For every $\ep > 0$, there is a bounded subset $K$ of $\rchi_X \cap \rchi_Y$, such that, for every $\rho \in \rchi_X \cap \rchi_Y \minus K$, 
for each rectangular branch $\RRR_X$ of $\TTT_{X, \rho}$, there is a $\Z$-weighted train track graph $\Gamma_{\rho, \RRR_X}$ embedded in $\RRR_X$ satisfying the following:
\begin{itemize}
\item $\Gamma_{\RRR_X}$ induces $\gamma_\rho$; 
\item $\Gamma_{\RRR_X}$ is transversal to the horizontal foliation on ${\RRR_X}$;
\item each horizontal edge of $\TTT_{X, \rho}$ in $\bdr \RRR_X$ contains, at most, one endpoint of $\Gamma_{\rho, \RRR_X}$;
\item As cocycles,  $\Gamma_R$ is $(1 + \ep, \ep)$-quasi-isometric to $(V_{X, \rho} | R_X) - (V_{Y, \rho} | R_Y)$, where $R_X$ is a branch of $\tT_{X, \rho}$ corresponding to $\RRR_X$ and $R_Y$ is the branch of $\tT_{Y, \rho}'$ corresponding to $R_X$ .
\end{itemize}
\begin{figure}
\begin{overpic}[scale=.05,% grid,tics=10
] {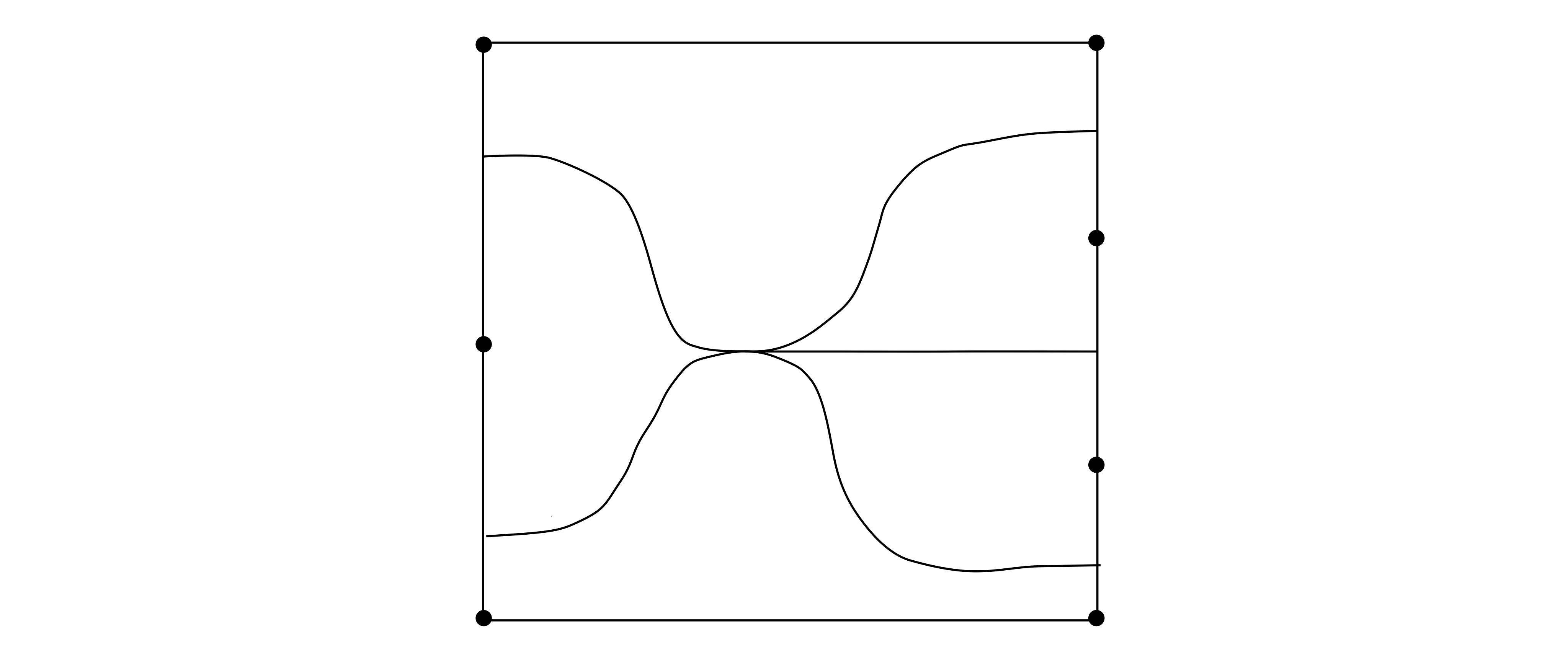} % figure file
  \put(33 ,12 ){$n_4$}
  \put(33 ,33 ){$n_5$}
  \put(62 ,35 ){$n_3$}  
  \put( 62, 21){$n_2$}  
    \put( 62, 8 ){$n_1$}  
   \put( 45, 21 ){$n$}  
 %   \put( , ){\textcolor{}{$$}}  
 %   \put( , ){}  
      \end{overpic}
\caption{A $\Z$-weight train-track graph for a rectangle. Here $n = n_1 + n_2 + n _3 = n_4 + n_5.$ The black dots are vertices of $\TTT_{X, \rho}$.}\label{fTraintrackOnRectangle}
\end{figure}
\end{proposition}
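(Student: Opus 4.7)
The plan is to reduce the proposition to Lemma \ref{GraftingRectangles} and Lemma \ref{DegreeAndDecomposiiton} through a two-stage construction: first produce a single weighted admissible arc recording the grafting difference between $\RRR_X$ and $\RRR_Y$, then refine this arc into a $\Z$-weighted train-track graph that respects the subdivision of $\partial \RRR_X$ by interior vertices of $\TTT_{X,\rho}$.

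First, I would fix the compatible pair $(\RRR_X,\RRR_Y)$ of circular $\CP^1$-structures on the base marked rectangle (compatibility is given by Theorem \ref{CircularTraintrackX}). By Lemma \ref{GraftingRectangles}, there is a single admissible arc $\alpha$ in the base rectangle of weight $n=\deg(\RRR_X,\RRR_Y)$ which represents $\RRR_X$ as a (signed) iterated grafting of $\RRR_Y$. Next, the top horizontal edge of $\RRR_X$ as a branch of $\TTT_{X,\rho}$ may contain intermediate vertices (arising from the gluings in the constructions of \S\ref{sCompatibleEuclideanTraintracks} and \S\ref{sProjectiveTraintrackForX}), subdividing it into graph-edges $h_1^+,\ldots,h_k^+$, and similarly the bottom into $h_1^-,\ldots,h_m^-$. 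Through each such intermediate vertex I would draw an admissible arc transversal to the horizontal foliation going to the opposite horizontal edge; by compatibility of the developments of corresponding vertices on the supporting round cylinder, these admissible arcs cut $\RRR_X$ and $\RRR_Y$ simultaneously into matching sub-rectangles. Lemma \ref{DegreeAndDecomposiiton} applied iteratively yields integer degrees on each sub-rectangle whose partial sums across any graph-edge of $\partial\RRR_X$ recover the corresponding value of $\gamma_\rho$.

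Then, placing a weighted admissible arc of the appropriate integer weight in each sub-rectangle and recombining weights across the interior cut arcs (shared vertical sides) produces a $\Z$-weighted train-track graph $\Gamma_{\rho,\RRR_X}$ embedded in $\RRR_X$ which is transversal to the horizontal foliation, induces the prescribed cocycle $\gamma_\rho$ on every horizontal edge of a branch, and places at most one endpoint on each graph-edge of $\TTT_{X,\rho}$ in $\partial\RRR_X$. The embedding can be arranged by choosing the admissible cut arcs in generic position with respect to one another.

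The last and most delicate step is the quasi-isometric comparison with $V_{X,\rho}|R_X-V_{Y,\rho}|R_Y$. Each grafting along an admissible arc in a circular rectangle corresponds, in the Thurston parametrization, to adding a measure-$2\pi$ atom on the associated circular leaf of the Thurston lamination; hence the weight $\deg(\RRR_X,\RRR_Y)$ equals, up to the universal normalization $2\pi$, the signed difference of the Thurston-lamination transverse measures of $\LLL_{X,\rho}|\RRR_X$ and $\LLL_{Y,\rho}|\RRR_Y$ across the rectangle. Combining Theorem \ref{EuclideanAndProjectivePolygons}(\ref{iThurstonLaminationAndVerticalFoliation}) (applied on both $C_{X,\rho}$ and $C_{Y,\rho}$, which is legitimate since the staircase rectangular branches of $\tT_{X,\rho}$ and $\tT'_{Y,\rho}$ are bounded and at definite distance from the singular set by Lemma \ref{PolygonalTraintrackX} and Proposition \ref{ModifiedTTForY}) gives the asymptotic relations $\LLL_{X,\rho}|\RRR_X\approx \sqrt{2}\,V_{X,\rho}|R_X$ and $\LLL_{Y,\rho}|\RRR_Y\approx \sqrt{2}\,V_{Y,\rho}|R_Y$ up to a $(1+\ep,\ep)$-quasi-isometry. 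Taking differences yields the claimed comparison.

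The main obstacle will be this last quantitative step: one must control not the individual Thurston laminations (each of which is huge in the regime $\rho\notin K$) but their \emph{difference}, and show that the integer-valued grafting cocycle $\Gamma_{\rho,\RRR_X}$ reproduces this difference with only $(1+\ep,\ep)$ distortion. The subtle point is that the additive errors from Theorem \ref{EuclideanAndProjectivePolygons} on each of $\LLL_{X,\rho}$ and $\LLL_{Y,\rho}$ a priori need not cancel; one has to exploit that the common holonomy forces the bending data on corresponding circular rectangles to be close in $\H^3$ (Theorem \ref{PleatedSurfacesAreClose} and Theorem \ref{CircularTraintrackX}), so that the errors are of the same side and partially cancel, leaving a multiplicative distortion of $1+\ep$ and an additive slack $\ep$.
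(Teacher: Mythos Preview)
Your proposal is correct and follows essentially the same approach as the paper. The only minor difference is that you invoke Lemma~\ref{DegreeAndDecomposiiton} (decomposing the rectangle itself via admissible arcs) whereas the paper works one dimension lower, applying Lemma~\ref{RelativeDegreeSubdivision} directly to the horizontal edges $h_X = h_{X,1}\cup\cdots\cup h_{X,m}$ and their counterparts $h_{Y,i}$ to obtain $n = n_1+\cdots+n_m$; both routes give the same integers and the same train-track graph pictured in Figure~\ref{fTraintrackOnRectangle}. For the quasi-isometric comparison the paper simply cites Lemma~\ref{ThurstonLaminationAndVerticalFoliationOnDisks} and Theorem~\ref{PleatedSurfacesAreClose}, which is exactly the mechanism you unpack in your last two paragraphs.
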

\begin{proof}
Since $\RRR_X$ and $\RRR_Y$ share their support, let $n = \deg (\RRR_X, \RRR_Y)$ as seem  in Lemma \ref{GraftingRectangles}. 
Let $h_X$ and $h_Y$ be corresponding horizontal edges of $\RRR_X$ and $\RRR_Y$.
Then $h_X = h_{X, 1} \cup \dots \cup h_{X, m}$ be  the decomposition of $h_X$ into  horizontal edges of $\TTT_X$, and let $h_Y = h_{Y, 1} \cup \dots \cup h_{Y, m}$ be the corresponding decomposition into horizontal edges and vertices of $\TTT_X$ compatible with the semi-diffeomorphism $\TTT_{X, \rho} \to \TTT_{Y, \rho}$.
Let $n_i \in \Z$ be $\deg (h_{X,i }, h_{Y, i} )$.
Then, by Lemma \ref{RelativeDegreeSubdivision}, $n = n_1 + \dots + n_m$.
Then it is easy to construct a desired $\Z$-weighted train track realizing such decomposition for both pairs of corresponding horizontal edges (see Figure \ref{fTraintrackOnRectangle}).

The last assertion follows from Lemma \ref{ThurstonLaminationAndVerticalFoliationOnDisks} and Theorem \ref{PleatedSurfacesAreClose}.
\end{proof}

\subsubsection{Train tracks for cylinders}
For each cylindrical branch $\AAA_X$ of $\TTT_{X, \rho}$, in \S\ref{sTrainTracksOnCylinders}, we have constructed a train-track graph $\Gamma_{\rho, \AAA_X}$ on $\AAA_X$, representing the difference between $\AAA_X$ and its corresponding cylindrical branch $\AAA_Y$ of $\TTT_{Y, \rho}$.
\begin{proposition}\Label{FoalitionAndLaminationOnCylinders} 
For every $\ep > 0$, if a bounded subset $K$ of $\rchi_X \cap \rchi_Y$ is sufficiently large, then, 
for each cylindrical branch $\AAA_X$ of $\TTT_{X, \rho}$, 
 the induced cocycle $[\Gamma_{\AAA_X}]\col \AA \to \Z$ times $2\pi$ is $(1+ \ep, \ep)$-quasi-isometric to $V_{Y,\rho} | A_X - V_{X, \rho} | A_Y$, where $A_X$ and $A_Y$ are the corresponding cylindrical branches of $\tT_{X, \rho}$ and $\tT_{Y, \rho}'$.
\end{proposition}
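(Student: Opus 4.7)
The plan is to reduce to the rectangle case (Proposition \ref{IntersectionNumberAndFoliationOnRectangles}) by decomposing the cylinder along the staircase cut used to build $\Gamma_{\AAA_X}$, and then to account for the extra factor of $2\pi$ which arises from the topological winding of the cylinder around the axis of its supporting round cylinder in $\CP^1$.

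First I would unpack the construction from \S\ref{sTrainTracksOnCylinders}. The monotone staircase curve $\alpha_X$ cuts $\AAA_X$ into a staircase polygon, which further decomposes into circular rectangles $\RRR_{X,1}, \dots, \RRR_{X,n}$ by horizontal segments; similarly $\AAA_Y$ yields rectangles $\RRR_{Y,1}, \dots, \RRR_{Y,n}$ with matching supports. By Lemma \ref{DegreeAndDecomposiiton} and the coherent choice of $\alpha_X^\cc, \alpha_Y^\cc$, the value of $[\Gamma_{\AAA_X}]$ on an arc $\delta \in \AA$ is a signed sum over rectangles crossed by $\delta$ of the relative degrees $\deg(\RRR_{X,j}, \RRR_{Y,j})$, together with boundary contributions at crossings of $\alpha_X$. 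Using the uniqueness of $\alpha_X$ up to admissible isotopy (Lemma \ref{IsotopicStaircaseCurves}), I would absorb the boundary contributions into the interior sum with an error of $O(\ep)$.

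Next I would convert each relative degree into vertical transversal measure. By Theorem \ref{CircularTraintrackX}(\ref{iClosenessInHorizontalAndVerticalDirections}), $\TTT_{X,\rho}$ is $2\pi$-Hausdorff close to $\tT_{X,\rho}$ in the unnormalized metric $E_{X,\rho}$, and by Lemma \ref{ApproximationByExponentialMap} the developing map of $C_{X,\rho}$ approximates the exponential $\exp\sqrt{2}$ far from the singular set. Under this approximation, the supporting round cylinder carries a circular $\mathcal{V}$-foliation of total angular mass $2\pi$, and one full angular winding around the axis corresponds to horizontal length $2\pi$ in $E_{X,\rho}$, hence to $2\pi$ of $V_{X,\rho}$-transversal measure across an arc crossing the cylinder. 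Each unit of $\deg(\RRR_{X,j}, \RRR_{Y,j})$ represents, by Lemma \ref{GraftingRectangles}, one full grafting of the support round cylinder into $\RRR_{Y,j}$ to produce $\RRR_{X,j}$, adding exactly one such winding; this explains and justifies the $2\pi$ factor in the cylinder case that is absent in the rectangle formulation.

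Finally I would assemble the estimates. The per-rectangle version (analog of \Cref{IntersectionNumberAndFoliationOnRectangles}) controls the interior contributions, while the vertical segments near $\alpha_X$ contribute negligibly because they are $\ep$-circular with small $\mathcal{V}$-measure by \Cref{VerticalSegmentAlmostZeroMeasure}. Since by \Cref{UniformlyBoundedTrainttrackX} the diameter of $A_X$ is uniformly bounded in $\rho \in \rchi_X \cap \rchi_Y \minus K$, the finitely many rectangle-level quasi-isometric relations assemble, up to an additive error $O(\ep)$ and a multiplicative factor $(1+\ep)$, into the desired cylinder-level quasi-isometry between $2\pi\,[\Gamma_{\AAA_X}]$ and $V_{Y,\rho}|A_X - V_{X,\rho}|A_Y$. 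The main obstacle I anticipate is a careful bookkeeping of the contribution across the cut $\alpha_X$ so that the rectangular train-track pieces glue into a well-defined cocycle on $\AA$, together with verifying that the semi-diffeomorphism $A_X \to A_Y$ — which can collapse horizontal edges and so need not be a homeomorphism — is compatible with the identification of transversal measures needed to compare $V_{X,\rho}|A_Y$ with $V_{Y,\rho}|A_X$ on corresponding arcs.
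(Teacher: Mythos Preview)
Your approach is essentially the paper's: decompose the spiral cylinder along the staircase cut into rectangles and invoke the rectangle estimate (Proposition~\ref{IntersectionNumberAndFoliationOnRectangles}) on each piece, then sum. The paper's proof is exactly this, in three sentences.

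One point of difference worth noting: to assemble the per-rectangle $(1+\ep,\ep)$-estimates into a single $(1+\ep,\ep)$-estimate on the cylinder, you need a \emph{uniform} bound on the number $n$ of rectangles, independent of $\rho$. You cite the diameter bound (Corollary~\ref{UniformlyBoundedTrainttrackX}) for this, but bounded diameter alone does not bound $n$ --- in principle a bounded spiral cylinder could be cut into many thin rectangles. The paper instead observes that the decomposition of $\AAA_X \setminus \alpha_X$ into rectangles is along horizontal arcs issuing from vertices of the staircase boundary, and the number of such vertices is controlled by the number of singular points of the flat structure, hence by the genus of $S$ alone. This topological bound is what makes the summation go through with uniform constants; you should replace the diameter argument with it. Your extra discussion of the $2\pi$ factor and the bookkeeping across the cut $\alpha_X$ is more explicit than the paper's treatment, but is not needed once you have the uniform bound on $n$ and simply sum the rectangle contributions.
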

\begin{proof}
Recall that $\Gamma_{\AAA_X}$ is obtained from $\Z$-weighted train-track graphs on the rectangles. 
There is a uniform upper bound,  which depends only on $S$, for the number of the rectangles used to define $[\Gamma_{\AAA_X}]$, since the decomposition was along horizontal arcs starting from singular points.
Then, on each rectangle,  the weighted graph is $(1 + \ep, \ep)$-quasi-isometric to the difference of $V_{X, \rho}$ and $V_{Y, \rho}$ by Proposition \ref{IntersectionNumberAndFoliationOnRectangles}.
Thus $[\Gamma_{\AAA_X}]$ is also $(1 + \ep, \ep)$-quasi-isometric to  the difference of $V_{X, \rho}$ and $V_{Y, \rho}$ if $K$ is sufficiently large. 
\end{proof}

\subsubsection{Train-track graphs for transversal polygonal branches}\Label{sTrainTracksForNontransversalBranches}
Recall that all transversal branches are polygonal or cylindrical (\Cref{TransversalBranches}), i.e. their Euler characteristics are non-negative.
\begin{proposition}\Label{IntersectionNumberAndFoliationOnPolygons}
For every $\ep > 0$, there is a bounded subset $K$ in $\rchi_X \cap \rchi_Y$ such that, 
for each transversal polygonal branch $\PPP_X$ of $\TTT_{X, \rho}$, there is a $\Z$-weighted train track $\Gamma_{\rho, \PPP_X}$ embedded in $\PPP_X$, letting  $P_X$ and $P_Y$ be the branches of $\tT_{X, \rho}$ and $\tT_{Y, \rho}$, respectively, corresponding to $\PPP_X$, respectively, such that 
\begin{enumerate}
\item each horizontal edge $h$ of $\PPP_X$ contains, at most,  one endpoint of $\Gamma_{\rho, P}$; \Label{iOneEndpointOnOneEdge}
\item $[\Gamma_{\PPP_X}]$ agrees with  $\gamma_\rho$ on the horizontal edges of $\TTT_{X, \rho}$ contained in $\bdr \PPP_X$; \Label{iExtensionOfPolygonBoundary}
\item $\Gamma_P$ is transversal to the horizontal foliation $H_{X, \rho}$ on $P_X$; \Label{iTansversalGraph} 
\item $2\pi[\Gamma_P]$ is $(1 + \ep, \ep)$-quasiisometric to  $(V_{X, \rho} | P_X - V_{Y, \rho} | P_Y)$. \Label{iApproximateWeightsOnPolygons}
\end{enumerate}
\end{proposition}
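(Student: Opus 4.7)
The plan is to reduce Proposition \ref{IntersectionNumberAndFoliationOnPolygons} to the already-treated rectangular case (Proposition \ref{IntersectionNumberAndFoliationOnRectangles}) by decomposing the polygonal branch into circular rectangles along admissible horizontal cutting arcs, applying the rectangle statement on each piece, and then gluing the resulting $\Z$-weighted graphs compatibly across the cuts.

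First I would decompose the Euclidean staircase polygon $P_X$ into rectangles $R_{X,1},\dots,R_{X,k}$ by cutting along horizontal segments, each starting at a vertex of $P_X$ or at a singular point of $E_{X,\rho}$ that lies in $\Int P_X$ and ending on a vertical edge. Since $P_X$ is $L^\infty$-convex and the combinatorics of $\tT_{X,\rho}$ are uniformly bounded by the topology of $S$, the number $k$ is uniformly bounded. Each cutting segment lifts via the circular structure to an admissible arc $a_j$ in $\PPP_X$. Using the semi-compatibility of $\TTT_{X,\rho}$ and $\TTT_{Y,\rho}$ (\Cref{CircularTraintrackX}), the arcs $a_j$ transport under the developing maps to admissible arcs on $\PPP_Y$, giving a compatible decomposition $\PPP_Y=\RRR_{Y,1}\cup\dots\cup\RRR_{Y,k}$ with $\Supp\RRR_{X,j}=\Supp\RRR_{Y,j}$ for every $j$.

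Second, Proposition \ref{IntersectionNumberAndFoliationOnRectangles} applied to each rectangle pair $(\RRR_{X,j},\RRR_{Y,j})$ produces a $\Z$-weighted train-track graph $\Gamma_{\rho,\RRR_{X,j}}$ inside $\RRR_{X,j}$ which is transversal to the horizontal foliation, has at most one endpoint on each horizontal edge of $\TTT_{X,\rho}$ contained in $\bdr\RRR_{X,j}$, realizes $\gamma_\rho$ there, and is $(1+\ep,\ep)$-quasi-isometric to $V_{X,\rho}|R_{X,j}-V_{Y,\rho}|R_{Y,j}$. On each interior cutting arc $a_j$, both adjacent rectangular graphs deposit a finite collection of weighted endpoints. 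By the additivity of relative degree (\Cref{DegreeAndDecomposiiton}) the total signed weight on $a_j$ from the two sides agrees, so I can pair up the two endpoint sets and join them by short arcs in a neighborhood of $a_j$, making combinations or splits as needed to preserve transversality. The result is a single weighted train-track graph $\Gamma_{\rho,\PPP_X}$ embedded in $\PPP_X$. Properties (\ref{iOneEndpointOnOneEdge}) and (\ref{iExtensionOfPolygonBoundary}) are inherited from the rectangular pieces since the gluing occurs only along the interior arcs $a_j$, and (\ref{iTansversalGraph}) is guaranteed because the gluing arcs are themselves horizontal, so small horizontal perturbations at the gluing points keep the graph transversal to $H_{X,\rho}$.

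The main obstacle is assertion (\ref{iApproximateWeightsOnPolygons}), because the polygonal branch $P_X$ may contain singular points of $E_{X,\rho}$ in its interior where the foliation comparison of \S\ref{sComoparingMeasuredFoliations} does not directly apply, and because the analogous quasi-isometry for $P_Y$ must be transferred through the straightening map. My approach is to choose the cutting arcs $a_j$ so that, except in an $\ep$-small neighborhood of the zero set $Z_{X,\rho}$, each arc has controlled length in $E_{X,\rho}^1$; then Theorem \ref{ThurstonLaminationAndVerticalFoliationOnPolygons} and Theorem \ref{EuclideanAndProjectivePolygons}(\ref{iThurstonLaminationAndVerticalFoliation}) give that the vertical foliations $V_{X,\rho}$ and $V_{Y,\rho}$ on corresponding rectangles agree, up to $(1+\ep,\ep)$ error and an isotopy in $N_\ep^1 Z$, with $\sqrt{2}$ times the respective Thurston laminations. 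Since the Thurston laminations of $C_{X,\rho}$ and $C_{Y,\rho}$ on the two corresponding branches are compared directly by the bending maps through \Cref{PleatedSurfacesAreClose}, the difference measured by $\Gamma_{\rho,\PPP_X}$ is, up to $\ep$-error, precisely $V_{X,\rho}|P_X - V_{Y,\rho}|P_Y$. As the number $k$ of rectangular pieces is uniformly bounded and the additive $\ep$-errors combine linearly, taking the bounded set $K\subset \rchi_X\cap\rchi_Y$ large enough absorbs all losses into the original $\ep$, yielding the required $(1+\ep,\ep)$-quasi-isometry on all of $P_X$.
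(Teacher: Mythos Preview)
Your reduction to the rectangular case is not the route the paper takes, and as written it has a real gap. The polygonal branches $P_X$ of $\tT_{X,\rho}$ are, by construction, neighborhoods of the singular set of $E_{X,\rho}$ (see \S\ref{sPolygonalTrainTrack} and Lemma \ref{PolygonalZeroNbhd}); after the gluing in \S\ref{sCompatibleEuclideanTraintracks} they are in general neither $L^\infty$-convex nor free of singular points. To cut such a $P_X$ into honest rectangles you must cut from the singular points, but those cuts carry no round circle from the system $\cc$ of Lemma \ref{EquivariantCircles}, so your assertion that ``each cutting segment lifts via the circular structure to an admissible arc $a_j$ in $\PPP_X$'' does not go through. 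You would have to manufacture new equivariant circles for these internal cuts and then check that the resulting sub-rectangles satisfy the hypotheses underlying Proposition \ref{IntersectionNumberAndFoliationOnRectangles} (which is stated only for actual rectangular branches of $\TTT_{X,\rho}$, whose boundary is $\ep$-far from the singular set); none of this is automatic.

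The paper instead avoids cutting altogether: it invokes Theorem \ref{EuclideanAndProjectivePolygons}(\ref{CloseToModelCircularPolygon}) to approximate $\PPP_X$ and $\PPP_Y$ by truncations $\QQQ_X,\QQQ_Y$ of \emph{ideal} circular polygons $\hat\QQQ_X,\hat\QQQ_Y$ (from \S\ref{sModelCircularPolygons}) sharing support, pulls back the Thurston laminations $\hat\LLL_X,\hat\LLL_Y$ of the ambient polynomial $\CP^1$-structures $\ol\QQQ_X,\ol\QQQ_Y$, and builds $\Gamma_{\PPP_X}$ from the weighted graph representing $\hat\LLL_X-\hat\LLL_Y$. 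Property (\ref{iApproximateWeightsOnPolygons}) then comes from Theorem \ref{EuclideanAndProjectivePolygons}(\ref{iThurstonLaminationAndVerticalFoliation}), property (\ref{iTansversalGraph}) from transversality of the branch (via Proposition \ref{AlmostParalellLaiminationAndFoliation}), and (\ref{iExtensionOfPolygonBoundary}) from the $\ep$-closeness of $\QQQ_X,\QQQ_Y$ to $\PPP_X,\PPP_Y$ together with integrality of $\gamma_\rho$. This global model absorbs the singular point into the interior of the ideal polygon and sidesteps the need for any internal circular subdivision.
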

For every $\ep > 0$, if $K$ is sufficiently large, then, by Theorem \ref{EuclideanAndProjectivePolygons} (\ref{CloseToModelCircularPolygon}),
let $\hat\QQQ_X$ be an ideal circular polygon whose truncation ${\QQQ}_X$  is $\ep$-close to $\PPP_X$ in $C_{X, \rho}$.
Similarly,  $\hat\QQQ_Y$ be an ideal circular polygon whose truncation $\QQQ_Y$ is $\ep$-close to $\PPP_Y$.
Since $\Supp \PPP_X = \Supp \PPP_Y$ as circular polygons, we may in addition assume that $\Supp \QQQ_X = \Supp \QQQ_Y$ as truncated idea polygons. 

Let $\ol{\QQQ}_X$ be the canonical polynomial $\CP^1$-structure on $\C$ which contains $\hat\QQQ_X$ (\S \ref{sModelCircularPolygons}). 
Let $\hat\LLL_{X}$ be the restriction of the Thurston lamination of $\ol\QQQ_X$ to $\hat\QQQ_X$.
Similarly, 
let $\ol{\QQQ}_Y$ be the canonical polynomial $\CP^1$-structure on $\C$ which contains $\QQQ_Y$. 
Let $\hat\LLL_Y$ be the restriction of the Thurston lamination of $\ol\QQQ_Y$ to $\hat\QQQ_Y$.
As $\Supp \QQQ_X = \Supp \QQQ_Y$, thus $\ol{\QQQ}_X$ and $\ol{\QQQ}_Y$  share their ideal vertices.
Then $\QQQ_X$ and $\QQQ_Y$ are $\ep$-close to $\PPP_X$ and $\PPP_Y$, respectively. 
Thus, since $\gamma_\rho$ takes values in $\Z$,  $\hat\LLL_{X} - \hat\LLL_Y$ satisfies (\ref{iExtensionOfPolygonBoundary}).

Theorem \ref{EuclideanAndProjectivePolygons} (\ref{iThurstonLaminationAndVerticalFoliation}) implies that $\hat\LLL_{X}$ is $(1 + \ep, \ep)$-quasi-isometric to $V_X | P_X$ and $\hat\LLL_{Y}$ is $(1 + \ep, \ep)$-quasi-isometric to $V_Y | P_Y$,  and therefore $\hat\LLL_{X} - \hat\LLL_Y$ satisfies  (\ref{iApproximateWeightsOnPolygons}). 

(\ref{iOneEndpointOnOneEdge}) is easy to be realized by homotopy combining the edges of the train-track graph ending on the same horizontal edge.
We show that 
there is a $\Z$-weighted train track $\Gamma$ which is $\ep$-close to $\hat\LLL_X - \hat\LLL_Y$, satisfying (\ref{iTansversalGraph}).

Let $\Gamma_X^P$ be a weighted train track graph on $P$ which represents $\hat\LLL_X$a
Let $\Gamma_Y^P$ be the weighted train-track graph which represents  $\hat\LLL_Y$.
By Theorem \ref{PleatedSurfacesAreClose}, the pleated surface of $\PPP_X$ is $\ep$-close to the pleated surface of $\PPP_Y$,
because of the quasi-parallelism in Proposition \ref{AlmostParalellLaiminationAndFoliation}.
Let $\ch\Gamma_X^P$ be the subgraph of $|\Gamma_X^P|$ obtained by eliminating the edges of weights less than a sufficiently small $\ep$. 
Similarly, let $\ch\Gamma_Y^P$ be the subgraph of $\Gamma_Y^P$ obtained by eliminating the edges of weight less than $\ep$. 

Then, there is a minimal train-track graph $\Gamma^P$ containing both $\ch\Gamma_Y^P$ and $\ch\Gamma_X^P$ and satisfying (\ref{iOneEndpointOnOneEdge}). 
Since the pleated surfaces are sufficiently close,  by approximating the weights of $\Gamma_X^P -  \Gamma_Y^P$  by integers, we obtain a desired $\Z$-weighted train-track graph supported on $\Gamma^P$.

Since $\PPP_X$ is a transversal branch,  $\Gamma_X^P$ and $\Gamma_Y^P$ are both transversal to $H_{X, \rho} | P$,  thus $\Gamma^P$ is transversal to  $H_{X, \rho} | P$ (\ref{iTansversalGraph}).
\Qed{IntersectionNumberAndFoliationOnPolygons}

\subsubsection{Train-track graphs for non-transversal branches} 
Let $P_X$ and $P_Y$ be the corresponding  branches of $\tT_{X, \rho}$ and $\tT_{Y, \rho}$, respectively, which are non-transversal. 
Let $\PPP_X$ and $\PPP_Y$ be the branches of $\TTT_{X, \rho}$ and $\TTT_{Y, \rho}$ corresponding to $P_X$ and $P_Y$, respectively.
Then, by  Theorem \ref{ProjectiveTraintrackForY}, $\hat\beta_{X, \rho} | \bdr \PPP_X$ is $\ep$-close to  $\hat\beta_{Y, \rho} | \bdr \PPP_Y$ in the $C^0$-metric and $C^1$-close along the vertical edges. 
Let $\sigma_X$ be a pleated surface with crown-shaped boundary whose truncation approximates $\hat\beta_X |  \PPP_X$, and  let $\sigma_Y$ be the pleated surfaced boundary with crown-shaped boundary whose truncation approximates $\hat\beta_Y |  \PPP_Y$, such that $\sigma_X$ and $\sigma_Y$ share their boundary (in $\H^3$).
In particular, the base hyperbolic surfaces for $\sigma_X$ and $\sigma_Y$ are diffeomorphic preserving markings and spikes. 

Let $\nu_X$ and $\nu_Y$ be the bending measured laminations for  $\sigma_X$ and $\sigma_Y$, respectively; then $\nu_X$ and $\nu_Y$ contain only finitely many leaves whose connect ideal points. 

In Thurston coordinates, the developing map and the pleated surface of a $\CP^1$-structure are related by the nearest point projection to the supporting planes of the pleated surface (\cite{Kullkani-Pinkall-94, Baba20ThurstonParameter}).
For small $\upsilon > 0$, 
let $F_X$, $F_Y$ be the surfaces in $\H^3$ which are at  distance $\upsilon$ from   $\sigma_X$ and $\sigma_Y$ in the direction of the nearest point projections of  $P_X$ and $P_Y$ (\cite[Chapter II.2]{Epstein-Marden-87} ). 

Thus, if necessary, refining $\nu_X$ to $\nu_Y$ to ideal triangulations of the hyperbolic surfaces appropriately,  pick an (irreducible) sequence of flips $w_i$  which connects $\nu_X$ to $\nu_Y$. 
Clearly the sequence $w_i$ corresponds to a sequence of triangulations.

\begin{lemma}
If a bounded subset $K \sub \rchi$ is sufficiently large, for every $\rho \in \rchi_X \cap \rchi_Y \minus K$ and all non-transversal branches $P_X$ and $P_Y$,   there is a uniform upper bound on the length of the flip sequence 
which depends only on $X, Y \in \TT$, or appropriate refinements into triangulations. 
\end{lemma}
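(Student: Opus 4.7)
The plan is to bound the flip sequence length by controlling the combinatorial complexity of the non-transversal branches $P_X, P_Y$, and then exploiting the geometric closeness of the pleated surfaces $\sigma_X, \sigma_Y$ in $\H^3$ guaranteed by Theorem \ref{PleatedSurfacesAreClose} in order to choose the refining ideal triangulations in a compatible manner.

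First, I would show that the underlying hyperbolic surfaces of $\sigma_X$ and $\sigma_Y$ have topological type (genus plus number of spikes) bounded uniformly by a constant depending only on $S$. This follows from the construction of the train-track structures: by Proposition \ref{EuclideanTraintarck} and Corollary \ref{UniformlyBoundedTrainttrackX}, the total combinatorial complexity of $\tT_{X, \rho}$ and $\tT_{Y, \rho}$, including the number of non-transversal branches and the size of the non-transversal graph $G_Y$, is controlled by the number of zeros of the holomorphic quadratic differentials, which equals $4g-4$. Consequently each bending lamination $\nu_X, \nu_Y$ consists of a uniformly bounded number of weighted geodesic leaves, and any ideal triangulation refining it has a uniformly bounded number of edges by a Euler characteristic computation.

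Second, I would use that $\sigma_X$ and $\sigma_Y$ share their crown-shaped boundary in $\H^3$ and, by Theorem \ref{PleatedSurfacesAreClose}, approximate each other pointwise within $\ep$ on the complement of non-transversal parts, so each edge of a refining triangulation $\tau_Y$ of $\nu_Y$ is realized by a short geodesic arc that lies near $\sigma_X$ (and conversely). This allows us to realize both triangulations as geodesic ideal triangulations on one and the same underlying marked surface with a uniformly bounded number of edges.

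The hard part is the final step: bounding the flip distance between $\tau_X$ and $\tau_Y$. In general the flip graph of ideal triangulations has unbounded diameter, so the bound cannot be purely combinatorial. My proposal is to construct a common refinement $\tau$ of $\tau_X$ and $\tau_Y$ by perturbing their edge sets to be transversal and taking their union; the geometric closeness of $\sigma_X$ and $\sigma_Y$ implies that each edge of $\tau_X$ is homotopic into a uniformly small neighborhood on $\sigma_Y$, and hence crosses the edges of $\tau_Y$ at a uniformly bounded number of points. This forces $\tau$ to have a bounded number of edges. Since the flip distance from $\tau_X$ (resp.\ $\tau_Y$) to any common refinement of bounded edge count is itself bounded in terms of the number of added edges, this yields the desired uniform upper bound on the length of the irreducible flip sequence $w_i$, depending only on $X, Y \in \TT$.
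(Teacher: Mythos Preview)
Your first step, bounding the topological complexity of the branch and hence the number of edges in any ideal triangulation, is correct and necessary. But there is a genuine gap in your second and third steps. You invoke Theorem \ref{PleatedSurfacesAreClose} to argue that edges of $\tau_X$ lie in a small neighborhood on $\sigma_Y$ and therefore cross edges of $\tau_Y$ a bounded number of times. However, part (\ref{iPleatedSurfacesPartiallyClose}) of that theorem explicitly excludes the interior of the non-transversal branches from the $\ep$-closeness conclusion: the pleated surfaces $\sigma_X$ and $\sigma_Y$ share their crown boundary in $\H^3$ but can be far apart in the interior of $P_X$ --- this is precisely why the flip sequence is needed at all. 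So the closeness you appeal to does not hold where you need it, and the intersection bound in your third step is unjustified.

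The paper's proof is a single line invoking Proposition \ref{DependentSegmentLengthBound} (and its uniform version, Corollary \ref{UpperBoundForHorizontalSegments}). The point is that a non-transversal branch $P_X$ is built around a component of the horizontal graph $G_Y$, and that proposition bounds, uniformly in $X,Y$ and the foliation data, the length of every horizontal segment of $[V_Y]_X$. Since the vertices of $G_Y$ are singular points and there are only $4g-4$ of them, this bounds the total combinatorial size of $G_Y \cap P_X$, which in turn bounds the geometric intersection number between the leaves of $\nu_X$ (coming from $V_X|P_X$) and those of $\nu_Y$ (coming from $V_Y|P_Y$) on the common underlying surface. The flip distance between two ideal triangulations is controlled by their pairwise edge intersection number, so the bound follows. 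In short: the relevant control is the flat-geometric bound on horizontal segments of $[V_Y]_X$, not the $\H^3$-closeness of the pleated surfaces.
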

\begin{proof}
This follows from the length bound in Proposition \ref{DependentSegmentLengthBound}.
\end{proof}
A triangulation in the sequence given by $w_i$ is {\sf realizable}, if there is an equivariant pleated surface homotopic to $\sigma_X$ (and $\sigma_Y$) relative to the boundary such that the pleating locus agrees with the triangulation.  
In general,  a triangulation in the sequence is not realizable when the endpoints of edges develop to the same point on $\CP^1$. 
However a generic perturbation makes the triangulation realizable:

\begin{lemma}\Label{RealizingFlipSequence}
For almost every perturbation of the holonomy of $P_X$ and holonomy equivariant perturbation of the (ideal) vertices of $\ti{\sigma}_X$ (and $\ti{\sigma}_Y$) in $\CP^1$,  all triangulations in the flip sequence $w_i$  are realizable. 
Moreover, the set of realizable perturbations is connected.
\end{lemma}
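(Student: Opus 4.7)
The plan is to identify the non-realizable locus as a proper complex analytic subset of the parameter space of perturbations, and then invoke the standard fact that the complement of such a subset is both of full measure and connected (because it has real codimension at least two in a connected complex manifold).

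First I would set up the parameter space: holonomy-equivariant perturbations of the ideal vertex configuration on $\CP^1$, together with the holonomy $\rho$ itself, form a complex manifold $M$, and the original data $(\rho, \sigma_X, \sigma_Y)$ sits in it. I would also observe that the combinatorial flip sequence $w_i$ connecting $\nu_X$ to $\nu_Y$ (possibly after refinement) is a fixed finite object — by the bound in the preceding lemma its length is bounded uniformly in terms of $X, Y$ — and stays the same for all sufficiently small perturbations, since the combinatorial data is determined by the labelling of the spikes of the crown rather than by the geometry.

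Next I would spell out the failure mode. A triangulation $\mathcal{T}_i$ in the flip sequence fails to be realizable precisely when one of its edges degenerates, meaning its two labelled ideal endpoints develop to the same point of $\CP^1$. Combinatorially each edge is described by a pair $(\gamma_\alpha v_a,\ \gamma_\beta v_b)$, with $v_a, v_b$ among the marked ideal vertices of the crown and $\gamma_\alpha, \gamma_\beta \in \pi_1$ determined by the combinatorics; the pinching condition is the algebraic equation
\[
\rho(\gamma_\alpha)\, v_a \;=\; \rho(\gamma_\beta)\, v_b.
\]
This is a complex analytic condition on $(\rho, \{v_k\}) \in M$. It is moreover a \emph{proper} condition — i.e.\ cuts out a proper analytic subset $N_{i,e} \subsetneq M$ — because, using the uniform length bound for the flip sequence, one can construct a single small generic perturbation of the vertex configuration (say, a generic rotation of each crown-spike vertex along its horocyclic orbit in $\CP^1$) on which every intermediate flip triangulation has non-degenerate edges; this exhibits at least one point of $M$ off every $N_{i,e}$, so each $N_{i,e}$ is a proper analytic subset.

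Taking the union over the finitely many triangulations in $w_i$ and the finitely many edges in each, the full non-realizable locus $N = \bigcup_{i,e} N_{i,e}$ is a finite union of proper complex analytic subvarieties of $M$, hence itself a proper complex analytic subset. Therefore $N$ has Lebesgue measure zero — giving the almost-everywhere realizability statement — and has real codimension at least $2$ in the complex manifold $M$, so the complement $M \setminus N$ is connected, giving the second assertion. The main obstacle I anticipate is the combinatorial stability step: carefully verifying that the flip sequence and the labeling of edges by pairs $(\gamma_\alpha v_a, \gamma_\beta v_b)$ genuinely extend uniformly over a neighborhood of the basepoint in $M$, so that the genericity argument is applied to a fixed finite analytic system rather than a moving combinatorial target; once that is in hand, the analytic-genericity and codimension-$2$ conclusions are formal.
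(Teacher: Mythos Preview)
Your approach is essentially the paper's: identify the non-realizable locus as a finite union of proper complex analytic subsets of the perturbation space, then use that such a set has measure zero and real codimension at least two, so its complement is full-measure and connected.

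The paper organizes the properness argument slightly differently, and this difference matters for one step you gloss over. The paper separates two cases. When $\sigma_X$ is an ideal polygon the holonomy is trivial, there are only finitely many vertices in $\CP^1$, and generic vertex perturbation suffices. When $\sigma_X$ is not a polygon, the paper singles out the edges that form \emph{loops} (both endpoints at the same spike, so $v_a=v_b$ in your notation); for such an edge your degeneracy equation reads $\rho(\gamma)\,v_a=v_a$ with $\gamma=\gamma_\alpha^{-1}\gamma_\beta$ nontrivial, and this holds for \emph{every} choice of $v_a$ precisely when $\rho(\gamma)=\mathrm{id}$. The paper therefore perturbs in two steps: first perturb the holonomy off the codimension-$\ge 1$ locus where some loop has trivial holonomy, and then, for each such holonomy, perturb the vertices generically. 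Your suggested witness for properness --- ``a generic rotation of each crown-spike vertex along its horocyclic orbit'' --- does not by itself escape $N_{i,e}$ in the loop case with $\rho(\gamma)=\mathrm{id}$, since then every vertex position is bad. Since you do include the holonomy in your parameter space $M$, your overall claim that each $N_{i,e}\subsetneq M$ is still correct; you just need to cite the holonomy perturbation, not only the vertex one, when justifying it. With that adjustment your argument and the paper's coincide.
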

\begin{proof}
If $\sigma_X$ is an ideal polygon, the holonomy is trivial. 
Then, since there are only finitely many vertices and $\CP^1$ has real dimension two,  almost every perturbation is realizable. 

If $\sigma_X$ is not a polygon, 
an edge of a triangulation forms a loop if the endpoints are at the same spike of  $\tau_X$. 
For each loop $\ell$ of $\tau_X$,  the condition that the holonomy of $\ell$ is the identity is a complex codimension, at least, one in the character variety  (and also in the representation variety).
Since the flip sequence is finite, for almost all perturbations of the holonomy, if an edge of a triangulation in the sequence forms a loop, then its holonomy is non-trivial. 
Clearly, such a perturbation is connected. 
Then, for every such perturbation of the holonomy, it is easy to see that, for almost all equivariant perturbations of the ideal points, the triangulations in the sequence are realizable.
 \end{proof}
For every perturbation of the holonomy and the ideal vertices given by Lemma \ref{RealizingFlipSequence}, the flip sequence $w_i$ gives
 the sequence of pleated surfaces $\sigma_X = \sigma_1 \xrightarrow{w_1} \sigma_2  \xrightarrow{w_2} \dots  \xrightarrow{w_{n-1}} \sigma_n = \sigma_Y$ in  $\H^3$ connecting $\sigma_Y$ to $\sigma_X$, such that $\sigma_i$'s share their boundary geodesics and ideal vertices. 

\begin{figure}
\begin{overpic}[scale=.1%, grid,tics=10
] {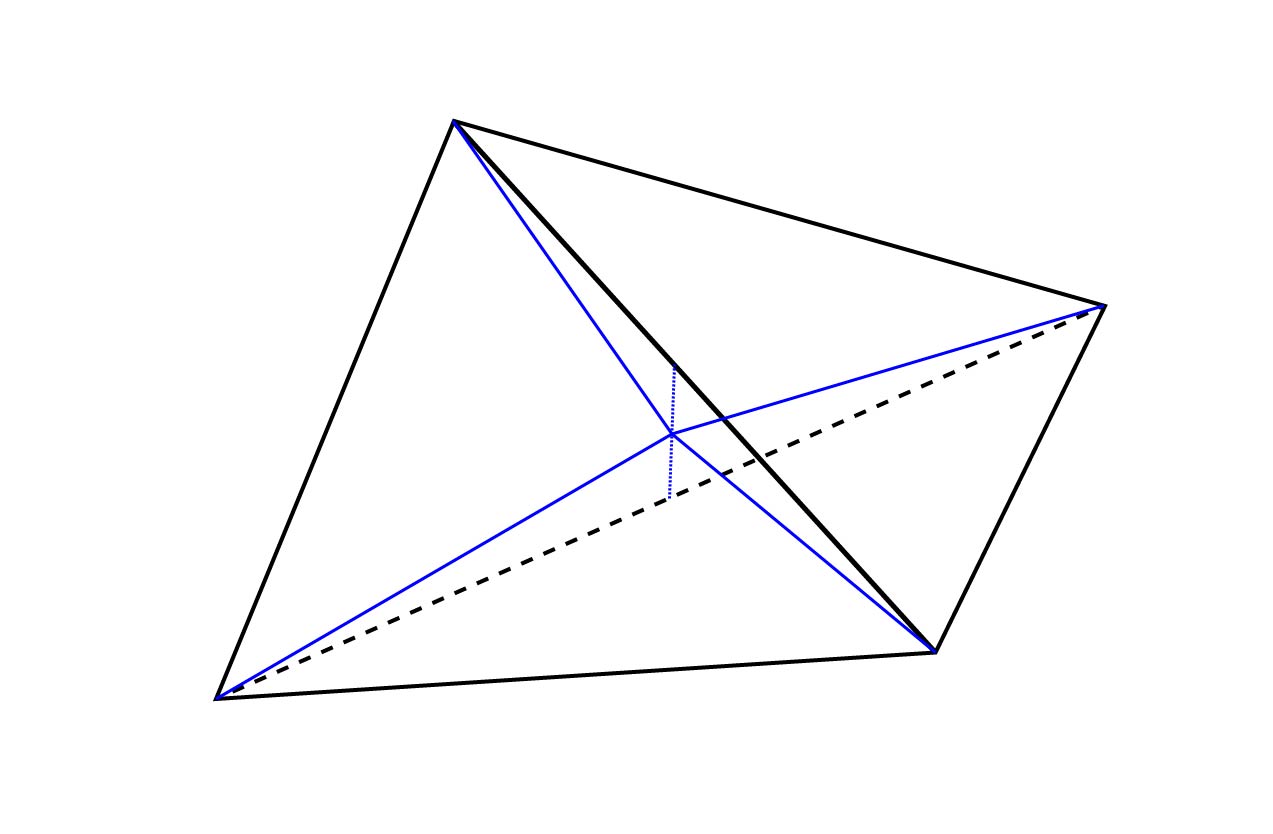} % figure file
 %   \put( , ){\textcolor{}{$$}}  
 %   \put( , ){}  
      \end{overpic}
\caption{The blue lines are the pleating lamination interpolating the pleated surfaces which differ by a flip.}\label{fHomotopyRealizingFlip}
\end{figure}

For each flip $w_i$, the pairs of triangles of the adjacent pleated surfaces $\sigma_i$ and $\sigma_{i + 1}$ bound a tetrahedron in $\H^3$.
To be precise, if the four vertices are contained in a plane, the tetrahedron is collapsed into a quadrangle, but it does not affect the following argument.
The edges exchanged by the flip correspond to the opposite edges of the tetrahedron. 
 Then pick a geodesic segment connecting those opposite edges.  
Then there is a path $\sigma_t ~( i \leq t \leq i + 1)$ of pleated surfaces  with a single cone point of angle more than $2\pi$ such that
\begin{itemize}
\item $\sigma_t$ connects  $\sigma_i$ to $\sigma_{i + 1}$;
\item the pleated surfaces $\sigma$ share their quadrangular boundary, which corresponds to the ideal quadrangle supporting the flip $w_i$;
\item by the homotopy, $\sigma_t$ sweeps out the tetrahedron; 
\item the cone point on the geodesic segment  (see Figure \ref{fHomotopyRealizingFlip}).
\end{itemize}

In this manner, this sequence of pleated surfaces $\sigma_i$ continuously extends to a homotopy of the pleated surfaces with, at most, one singular point of cone angle greater than $2\pi$. 
This interpolation also connects a bending cocycle on $\sigma_i$ to a bending cocycle on $\sigma_{i + 1}$ continuous, although the induced cocycle on $\sigma_{i + 1}$ may correspond to a measured lamination only immersed on the surface, since the edges of the triangulations transversally intersect. 
Thus, $\nu_X$ induces a sequence of the bending (immersed) measured laminations $\nu_i$ of $\sigma_i$ supported a union of the pleating loci of $\sigma_1, \dots, \sigma_i$.
 
 For each $i$, the difference $ \nu_{i + 1 } - \nu_{i}$ of the transverse cocycles is supported on the geodesics corresponding to the edges of the tetrahedron,  so that, on the surface, the edges form an ideal rectangle with both diagonals. 
Let $\mu_i$ be the difference cocycle $ \nu_{i + 1 } - \nu_{i}$. 
From each vertex of the ideal rectangle, there are three leaves of  $ \nu_{i + 1 } - \nu_{i}$ starting, and the sum of their weights is zero by  Euclidean geometry (Figure \ref{fFlipAndBendingAngles}).
\begin{figure}
\begin{overpic}[scale=.05,% grid,tics=10
] {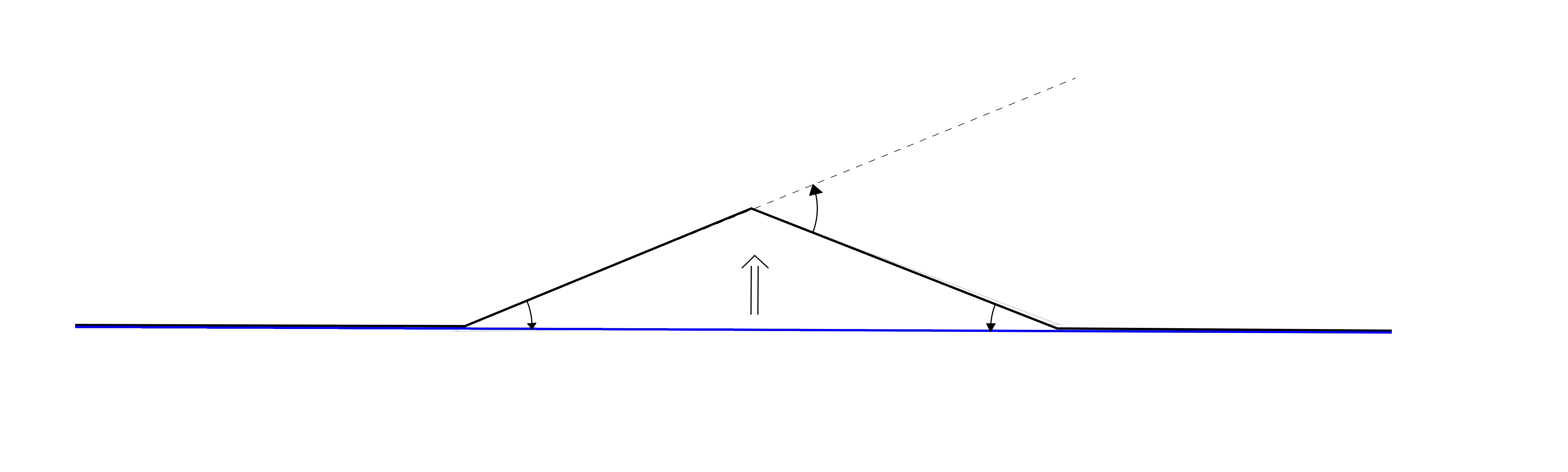} % figure file
  \put(45, 6){\textcolor{blue}{$\sigma_i$}}  
  \put(35, 16){\textcolor{black}{$\sigma_{i + 1}$}}  
 %   \put( , ){\textcolor{}{$$}}  
 %   \put( , ){}  
      \end{overpic}
\caption{The link of a vertex of the ideal rectangle ----- the sum of the indicated singed angles is zero.}\label{fFlipAndBendingAngles}
\end{figure}
Note that $P_X$ can be identified with $\sigma_X$ by collapsing each horizontal edge of $P_X$ to a point.
Hence, for every $i$, if $\alpha$ is a closed curve on $P_X$ or an arc connecting  vertical edges of $P_X$, then $\mu_i (\alpha) = 0$.
By regarding $\nu_j$ is a geodesic lamination on $\sigma_X$, their union 
 $\cup_{j = 1}^i \nu_j$ is a graph on $\sigma_X$ whose vertices are the transversal intersection points of the triangulations. 
 A small regular neighborhood $N$ of $\cup_{j = 1}^i \nu_j$ is decomposed into a small regular neighborhood $N_0$ of the vertices and a small regular neighborhood of the edges minus $N_0$ in $N \minus N_0$.  

Since, after the Whitehead moves, the pleated surface $\sigma_X$ is transformed into a pleated surface $\sigma_Y$. 
Thus $\nu_n - \nu_Y$ gives a $\Z$-valued transversal cocycle.

By the construction of the regular homotopy, we have the following.
\begin{proposition}[Train tracks for non-transversal branches]\Label{WeaklyCompatibleTraintrack}
For every non-transversally compatible branches $\PPP_X$ of $\TTT_{X, \rho}$ and $\PPP_Y$ of $\TTT_{Y, \rho}$,
there is a $\Z$-weighted immersed train-track graph $\Gamma_{P_X}$ representing the transversal cocycle supported on $\cup_{j = 1}^i \nu_j$. 
Moreover, the train-track cocycle is independent of the choice of the flip sequence $w_i$.
\end{proposition}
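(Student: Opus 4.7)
The plan is to assemble $\Gamma_{P_X}$ flip-by-flip from the interpolation $\sigma_1 \xrightarrow{w_1}\sigma_2 \xrightarrow{w_2}\dots\xrightarrow{w_{n-1}}\sigma_n$ constructed just above, and then verify uniqueness via connectedness of the space of realizable flip sequences. After collapsing horizontal edges, identify $P_X$ with $\sigma_X$; this is the surface on which $\Gamma_{P_X}$ will live.

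First, for each flip $w_i$ the difference cocycle $\mu_i := \nu_{i+1}-\nu_i$ is supported on the four boundary edges and two diagonals of the ideal flip-rectangle. The cone-singular interpolation $\sigma_t$, $i\le t \le i+1$, sweeps out the tetrahedron whose opposite edges are the two diagonals, so $\mu_i$ is naturally a signed integer weight on the projection of this six-edge diagram into $\sigma_X\cong P_X$. The key local switch condition—that the three outgoing weights at each ideal vertex of the rectangle sum to zero—was recorded in Figure \ref{fFlipAndBendingAngles} and is forced by the Euclidean geometry of the link of the vertex. Consequently $\mu_i$ is a bona fide $\Z$-weighted (immersed) train-track graph. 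I would take $\Gamma_{P_X}:=\sum_{i=1}^{n-1}\mu_i$, supported on $\cup_{j=1}^{n}\nu_j$; on any arc or loop $\alpha$ transverse to the pleating loci one has the telescoping identity
\[
\Gamma_{P_X}(\alpha)=\sum_{i=1}^{n-1}\mu_i(\alpha)=\nu_n(\alpha)-\nu_1(\alpha)=\nu_Y(\alpha)-\nu_X(\alpha),
\]
which is the transverse cocycle the proposition requires. On the boundary of $\PPP_X$ this agrees with $\gamma_\rho$ by the semi-compatibility of $\TTT_{X,\rho}$ and $\TTT_{Y,\rho}$ together with the $C^1$-closeness of $\hat\beta_{X,\rho}|\bdr\PPP_X$ and $\hat\beta_{Y,\rho}|\bdr\PPP_Y$ from Theorem \ref{ProjectiveTraintrackForY}(\ref{iHomotopyMaksBranchesAreCloseInH}).

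For independence of the choice of flip sequence, I would use the fact that any two flip sequences between the same pair of ideal (refined) triangulations of a hyperbolic surface with crown-shaped boundary are connected by elementary relations: commutations of disjoint flips, and pentagon relations among five ideal triangles sharing a common interior vertex. A commutation of disjoint flips obviously leaves $\sum_i\mu_i$ unchanged as a cocycle, since the two local contributions are supported on disjoint rectangles and their order of summation is irrelevant. For a pentagon relation I would check directly that the superposition of the five local $\mu_i$'s, viewed on the common pentagonal region, is a coboundary: the five vertex-sum-zero conditions impose $\Z$-linear relations whose unique solution vanishes on every arc that does not pass through the central vertex, and arcs through the central vertex can be perturbed away by the realizability result of Lemma \ref{RealizingFlipSequence}. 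Combined with the connectedness of the realizable perturbations guaranteed by that lemma, this shows that the cocycle class of $\Gamma_{P_X}$ is locally constant on the parameter space of choices, hence constant.

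The main obstacle will be the pentagon-relation verification, because the intermediate pleated surfaces have cone singularities of angle exceeding $2\pi$ and the path of the cone point inside each tetrahedron is not canonical; one must check that the sign conventions for $\mu_i$ (which come from orientations on the bending locus and on the flip rectangle) are coherent across the five flips so that the pentagon sum collapses. The technical ingredient that makes this coherent is the vertex-sum-zero identity at each ideal vertex, which is geometric rather than combinatorial and persists under the cone-singular interpolation. Once this is established, the uniform length bound on flip sequences (from Lemma \ref{DependentSegmentLengthBound} via the length bound of Proposition \ref{DependentSegmentLengthBound}) ensures that $\Gamma_{P_X}$ has total weight uniformly bounded in $\rho \in \rchi_X\cap\rchi_Y\setminus K$, which will be needed subsequently for the continuity argument of Theorem \ref{WeightedTraintrack}(\ref{iContinuousTraintrack}).
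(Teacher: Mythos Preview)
There is a genuine gap in your construction of $\Gamma_{P_X}$. The difference cocycles $\mu_i=\nu_{i+1}-\nu_i$ are \emph{not} integer-valued: they are differences of real bending angles along the edges of the flip tetrahedron, and the vertex-sum-zero identity of Figure~\ref{fFlipAndBendingAngles} is a real-linear relation, not an integrality statement. Consequently your telescoping sum $\sum_i\mu_i=\nu_n-\nu_X$ is a real-valued cocycle, not a $\Z$-weighted train-track graph. Your final step, identifying $\nu_n$ with $\nu_Y$, is exactly where the integral content lives and is precisely what fails: $\nu_n$ is the bending cocycle obtained by continuously transporting $\nu_X$ through the flip sequence, while $\nu_Y$ is the intrinsic bending lamination of $\sigma_Y$. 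Both bend the same pleated surface $\sigma_n=\sigma_Y$ in $\H^3$, so they agree modulo $2\pi$, and it is their difference $\nu_n-\nu_Y$ that is the integral transversal cocycle the proposition is after. This is stated immediately before the proposition in the paper. Your object $\nu_Y-\nu_X$ compares bending laminations supported on \emph{different} triangulations and has no reason to be integral.

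Your independence argument is closer to correct, but it is aimed at the wrong target and is more elaborate than necessary. Once the cocycle is correctly identified as $\nu_n-\nu_Y$, independence amounts to showing that the transported cocycle $\nu_n$ depends only on the terminal triangulation, not on the flip path. The paper handles this by citing Penner's theorem that any two flip sequences are connected by involutivity, commutativity, and pentagon relations (you omitted involutivity), and then observes that under a pentagon relation the terminal pleated surface---and hence its bending measure---is unchanged, since each flip preserves total bending at the vertices. This geometric observation replaces your proposed direct combinatorial verification that the five $\mu_i$'s around a pentagon sum to a coboundary; the two are equivalent in content, but the paper's formulation avoids the sign-coherence and cone-point issues you correctly flagged as obstacles.
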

\begin{proof}
Given two flip sequences $(w_i), (w_j')$ connecting the triangulations of $\sigma_X$ to $\sigma_Y$, there are connected by a sequence of sequences $(v_i^k)$ of triangulations connecting $\sigma_X$ to $\sigma_Y$, such that $(v_i^k)$ and $(v_i^{k+1})$ differ by either an involutivity,  a commutativity or a pentagon relation (\cite[Chapter 5, Corollary 1.2]{Penner12TeichmullerTheory}). 
Clearly, the difference by an involutivity and a  commutativity do not affect the resulting cocycle.
Also by the pentagon relation, the pleated surface does not change including the bending measure since each flip preserves the total bending along the vertices. 
Therefore  $(v_i^k)$ and $(v_i^{k+1})$ give the same train-track cocycle. 
\end{proof}
Therefore we obtain $\AA  \to \Z$.
By continuity and the connectedness in Lemma \ref{RealizingFlipSequence}.  we have the following. 

\begin{corollary}
$[\Gamma_P]$ is independent on the choice of the perturbation in Lemma \ref{RealizingFlipSequence}. 
\end{corollary}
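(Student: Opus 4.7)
The plan is to combine three ingredients: $[\Gamma_P]$ takes values in $\Z$, it varies continuously with the perturbation whenever a fixed flip sequence stays realizable, and the set of admissible perturbations is connected by \Cref{RealizingFlipSequence}. Then a continuous $\Z$-valued function on a connected set is constant.

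First I would fix an admissible perturbation and the flip sequence $w_1, \dots, w_n$ used to define $[\Gamma_P]$. In an open neighborhood of this perturbation inside the representation/vertex parameter space, the triangulations appearing in $w_1, \dots, w_n$ remain realizable as pleated surfaces in $\H^3$, because realizability is an open condition (the failure locus is where an edge holonomy becomes trivial, or two ideal vertices collide on $\CP^1$, and both are closed conditions). In this open neighborhood the intermediate pleated surfaces $\sigma_1, \dots, \sigma_n$ and the associated bending transverse cocycles $\nu_i$ depend continuously on the perturbation. Since the train track cocycle $[\Gamma_{P_X}]$ was constructed from the differences $\nu_{i+1} - \nu_i$ supported on $\cup_j \nu_j$, it too is continuous in the perturbation. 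A continuous map into $\Z$ on a connected open set is locally constant, so $[\Gamma_P]$ is constant on each such neighborhood.

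Next I would promote local constancy to global constancy using the connectedness statement in \Cref{RealizingFlipSequence}. Given two admissible perturbations, pick a path in the admissible set joining them. The fixed flip sequence $w_1,\dots,w_n$ need not be realizable along the entire path, but the path can be covered by finitely many open sets, in each of which some flip sequence realizes the passage from $\sigma_X$ to $\sigma_Y$. On overlaps two such flip sequences differ by involutivity, commutativity, or pentagon relations, and \Cref{WeaklyCompatibleTraintrack} says that all such choices yield the same cocycle. Hence the locally constant integer values agree across overlaps, which gives a single constant value of $[\Gamma_P]$ along the path.

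The main obstacle I anticipate is the continuity of $[\Gamma_{P_X}]$ near the walls where the chosen flip sequence fails to be realizable; at such walls one must switch to a neighboring flip sequence without changing the resulting cocycle. The argument hinges precisely on \Cref{WeaklyCompatibleTraintrack} (which gives independence of the flip sequence at a single admissible perturbation) being stable under small deformations, i.e.\ the cocycle computed via sequence $w$ on one side of the wall equals that computed via sequence $w'$ on the other side, because in a common realizable sub-neighborhood both sequences compute the same answer. Once this patching across walls is justified, the proof reduces to the standard principle that a continuous $\Z$-valued function on a connected set is constant, which completes the argument.
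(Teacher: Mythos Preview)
Your core strategy matches the paper's: the cocycle is $\Z$-valued, depends continuously on the perturbation, and the admissible perturbations form a connected set by \Cref{RealizingFlipSequence}, so it is constant. The paper's proof is literally this one line.

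However, you misread what ``connected'' means in \Cref{RealizingFlipSequence}. The lemma fixes a flip sequence $w_1,\dots,w_{n-1}$ in advance and asserts that the set of perturbations for which \emph{all} triangulations in this fixed sequence are realizable is connected. Hence along any path in this set the same flip sequence works throughout; there are no ``walls'' to cross and no need to switch to a neighboring flip sequence. Your entire third and fourth paragraphs---covering the path by open sets, patching across overlaps via \Cref{WeaklyCompatibleTraintrack}, and worrying about stability near walls---are unnecessary. The independence of the flip sequence established in \Cref{WeaklyCompatibleTraintrack} is used only to show that the construction at a single perturbation does not depend on which flip sequence one happened to choose, not to glue different sequences along a deformation.

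So your proof is not wrong, but it solves a harder problem than the one actually posed: once you take the connected set to be exactly the realizable locus for the fixed $w_i$, the argument collapses to continuity of the bending cocycles $\nu_i$ in the perturbation parameters, integrality, and connectedness.
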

There are only finitely many combinatorial types of the train-tracks $\tT_{X, \rho}$. 
We say that a branch $B_X$ of $\tT_{X,\rho}$ and a branch $B_X'$ of $\tT_{X,\rho'}$  are {\sf isotopic} if they are diffeomorphic and isotopic on $S$. 
Then there are only finitely many isotopy classes of branches of $\tT_{X, \rho}$ for all $\rho \in \rchi_X \minus K$.
Let $\alpha$ be an arc $\alpha$ on a branch $B_X$, such that each endpoint of $\alpha$ is at either on a vertical edge or a vertex of $\tT_{X, \rho}$.
\begin{proposition}\Label{TraintrackEstimateNontransversalBranch}
Let $X, Y \in \TT$. 
 For every $\ep > 0$,  there  a compact subset $K$ in $\rchi$ with the following property: 
 For every pair $(B_X, \alpha)$ of an isotopy class of a branch $B_X$ and an arc  $\alpha$ as above, there is a constant $k_\alpha > 0$ such that,  if $B_X$ is a (non-transversal) branch of $\tT_{X, \rho}$ for some $\rho \in \rchi_X \cap \rchi_Y \minus K$, then,   $2\pi[\Gam_{P_X}] (\alpha)$ is $(1 + \ep, k_\alpha)$-quasi-isometric to  $\sqrt{2}(V_X | P_X - V_Y | P_Y) (\alpha)$.
\end{proposition}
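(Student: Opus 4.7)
The plan is to reduce Proposition \ref{TraintrackEstimateNontransversalBranch} to the two ingredients already in hand: the flip interpolation between the two pleated surfaces (which produced $\Gamma_{P_X}$ in Proposition \ref{WeaklyCompatibleTraintrack}), and the quasi-isometry between Thurston laminations and vertical foliations on staircase polygons (Theorem \ref{ThurstonLaminationAndVerticalFoliationOnPolygons}, respectively Theorem \ref{EuclideanAndProjectivePolygons}). The key point to exploit is that, although the non-transversal branch $B_X$ contains horizontal segments of $G_Y$ whose lengths are bounded only in terms of $X,Y$ (Corollary \ref{UpperBoundForHorizontalSegments}), both the number of flips and the combinatorial complexity of the crown-shaped pleated surfaces $\sigma_X,\sigma_Y$ depend only on the isotopy class of $B_X$ (up to the finite list of such classes that can arise for $\rho\in \rchi_X\cap\rchi_Y\minus K$). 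Hence all the additive errors can be absorbed into a constant $k_\alpha$ depending only on $(B_X,\alpha)$, while the multiplicative constant stays $(1+\ep)$.

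First I would fix the non-transversal branch $B_X$ and arc $\alpha$ and write the transverse cocycle $[\Gamma_{P_X}]$ on $\alpha$ as the telescoping sum $\sum_{i=1}^{n-1}\mu_i(\alpha)$ along the flip sequence $\sigma_X=\sigma_1\to\cdots\to\sigma_n=\sigma_Y$ constructed in \S\ref{sTrainTracksForNontransversalBranches}. Since each $\mu_i$ is supported on the four edges of a single tetrahedron and sums to zero around each ideal vertex, $\mu_i(\alpha)$ is controlled by the pair $(\nu_{i+1}-\nu_i)$ evaluated on $\alpha$, and telescoping gives $[\Gamma_{P_X}](\alpha)=(\nu_X-\nu_Y)(\alpha)$ up to an error coming from the identifications of corresponding edges after the flips; this error is bounded by the number of flips $n$, which, by the length bound of Proposition \ref{DependentSegmentLengthBound}/Corollary \ref{UpperBoundForHorizontalSegments} together with compactness of $N_\infi$, admits a bound depending only on the isotopy class of $B_X$.

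Next I would compare the bending measures $\nu_X,\nu_Y$ of the crown-shaped pleated surfaces with the relevant pieces of the Thurston laminations $\LLL_{X,\rho},\LLL_{Y,\rho}$ restricted to $\PPP_X,\PPP_Y$. Theorem \ref{EuclideanAndProjectivePolygons}(\ref{CloseToModelCircularPolygon}) says each such pleated surface is $\ep$-close to a truncated ideal projective polygon whose bending lamination is the restriction of the Thurston lamination; Theorem \ref{ThurstonLaminationAndVerticalFoliationOnPolygons} then says that $\sqrt{2}$ times this lamination is $(1+\ep,\ep)$-quasi-isometric to the vertical foliation on the polygon, up to an isotopy supported in the $\ep$-neighborhood of the singular set. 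Combining these two statements yields $2\pi\nu_X(\alpha)=\sqrt{2}\,V_X|_{P_X}(\alpha)\pm(1+\ep,C_\alpha)$, where $C_\alpha$ absorbs contributions from the (uniformly bounded) number of leaves of $\nu_X$ that are crossed by horizontal subsegments of $\alpha$ passing near $G_Y$. The analogous estimate holds for $\nu_Y$ and $V_Y|_{P_Y}$.

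Subtracting the two estimates gives $2\pi[\Gamma_{P_X}](\alpha)\sim(V_X|_{P_X}-V_Y|_{P_Y})(\alpha)$ with multiplicative constant $(1+\ep)$ and additive constant bounded by a function of $(B_X,\alpha)$ alone; choosing $K$ so large that the error terms from Theorem \ref{HorizontalAndVerticalEstimate}, Theorem \ref{EuclideanAndProjectivePolygons}, and Proposition \ref{AlmostParalellLaiminationAndFoliation} are all dominated by $\ep$ on the finite list of combinatorial types of non-transversal branches completes the argument. The main obstacle will be the non-transversal locus $G_Y$ itself: near each component of $G_Y$ the vertical foliation $V_Y$ collapses onto a horizontal graph, so the difference $V_X-V_Y$ is not literally approximated leaf-by-leaf by $\Gamma_{P_X}$; one has to verify that the flip-induced cocycle and the difference of vertical cocycles agree on every arc $\alpha$ whose endpoints avoid $G_Y$, which is where the finiteness of the combinatorial type of the pair $(B_X,\alpha)$ and the $\ep$-parallelism of $L_{X,\rho}$ and $L_{Y,\rho}$ off $G_Y$ (Proposition \ref{AlmostParalellLaiminationAndFoliation}) are essential.
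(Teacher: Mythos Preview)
Your proposal is correct and follows the same route as the paper's proof, which is extremely terse: the paper simply observes that the bounded length of the flip sequence forces $\nu_n-\nu_X$ to be uniformly bounded as a cocycle, and then says ``the assertion follows.'' Your write-up unpacks exactly this: first $[\Gamma_{P_X}]=\nu_n-\nu_Y=(\nu_X-\nu_Y)+(\nu_n-\nu_X)$ with the second term bounded by the number of flips, and then $\nu_X-\nu_Y$ is compared to $V_X|_{P_X}-V_Y|_{P_Y}$ via the lamination--foliation comparison (Theorem~\ref{EuclideanAndProjectivePolygons}), which the paper leaves implicit. One minor point: your normalization constants ($2\pi$ versus $\sqrt{2}$) are a bit muddled, but since the statement only asks for a $(1+\ep,k_\alpha)$-quasi-isometry and the paper itself is inconsistent about the $2\pi$ factor between Propositions~\ref{IntersectionNumberAndFoliationOnPolygons} and~\ref{TraintrackEstimateNontransversalBranch}, this does not affect correctness.
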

\begin{proof}
Since the length of the flip sequence is bounded from above,  the difference between $\nu_X$ and $\nu_n$ is uniformly bounded in the space of transverse cocycles on $S$. 
Then the assertion follows.
\end{proof}

\subsubsection{Independency of the transverse cocycle}
From the train-track decompositions $\TTT_{X, \rho}$ and $\TTT_{Y, \rho}$, we have constructed a weighted train-track graph $\Gamma_\rho$ (\Cref{WeightedTraintrack} (\ref{iWeightedTraintrack})). 
Next we show its cocycle is independent on  the train-track decompositions $\TTT_{X, \rho}$ and $\TTT_{Y, \rho}$ ( \Cref{WeightedTraintrack}(\ref{iWellDefinedCocycle})). 
%%%%%%

Recall that the train-track decompositions $\TTT_{X, \rho}$ and $\TTT_{Y, \rho}$ are determined by
\begin{enumerate}
\item the holonomy equivariant circle system $\cc = \{c_h\}$ indexed by horizontal edges $h$ of $\ti\tT_{X, \rho}$ (given by Lemma \ref{EquivariantCircles}), \Label{iChoiceOfCircleSystmes}
\item the realization $W_Y$ of  $[V_{Y,\rho}]_{X, \rho}$ on $\tT_{X, \rho}$ (Lemma \ref{HomotopyToBeCarried}), and \Label{iChoiceOfRealization}
\item  the choice of vertical edges of $\TTT_{X, \rho}$ (Theorem \ref{CircularTraintrackX} (\ref{iClosenessInHorizontalAndVerticalDirections})).  \Label{iChoiceOfVerticalEdges} 
\end{enumerate}

\begin{proposition}
The cocycle $[\Gamma_\rho]\col\CC \to \Z$ constructed above is independent of the construction for  $\TTT_{X, \rho}$ and $\TTT_{Y, \rho}$,  i.e. (\ref{iChoiceOfCircleSystmes}), (\ref{iChoiceOfRealization}), (\ref{iChoiceOfVerticalEdges}).
\end{proposition}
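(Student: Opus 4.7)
The plan is to handle the three choices (circle system, realization, vertical edges) separately, in each case exhibiting a one-parameter deformation connecting any two admissible choices and then invoking the fact that $[\Gamma_\rho](\gamma) \in \Z$ for every $\gamma \in \CC$ to promote continuity along the deformation to outright constancy. All branch-level contributions produced by Propositions \ref{IntersectionNumberAndFoliationOnRectangles}--\ref{WeaklyCompatibleTraintrack} are computed from relative degrees of circular rectangles, or, on non-transversal branches, from a flip sequence; both depend only on combinatorial data that vary continuously with the underlying geometric input so long as no branch is created or destroyed and no flip becomes unrealizable.

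First I fix the realization $W_Y$ and the vertical-edge choice and deform the equivariant circle system $\cc$. Proposition \ref{IsotopeCircleSystem} produces a one-parameter family $\cc_t$ of admissible systems connecting any two given choices. Under this deformation both $\TTT_{X,\rho}$ and $\TTT_{Y,\rho}$ move by ambient isotopy: horizontal edges slide along their supporting vertical leaves of $E_{X,\rho}$ and $E_{Y,\rho}$, while the semi-compatibility between corresponding branches is maintained because at each time the pair of branches is supported on the same $\cc_t$-circles. Consequently the relative degrees entering the constructions of Propositions \ref{IntersectionNumberAndFoliationOnRectangles}, \ref{FoalitionAndLaminationOnCylinders}, \ref{IntersectionNumberAndFoliationOnPolygons}, as well as the flip data of Proposition \ref{WeaklyCompatibleTraintrack}, depend continuously on $t$. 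Therefore $[\Gamma_\rho]_t(\gamma)$ is continuous in $t$ and integer-valued, hence constant.

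Second I fix $\cc$ and the vertical-edge choice and vary the realization $W_Y$ of $[V_Y]_X$ on $\tT_{X,\rho}$. By Proposition \ref{UniqueRealizationUpToShifting} any two realizations are related by a finite sequence of shifts across vertical slits, and each shift is itself a continuous one-parameter family of realizations. As recorded near Figure \ref{fShiftingAndSliding}, a shift induces a sliding of a single vertical segment of the induced train track $\tt_{Y,\rho}$, hence of $\tT_{Y,\rho}$, of $\tT_{Y,\rho}'$, and finally of $\TTT_{Y,\rho}$ via the construction of Theorem \ref{ProjectiveTraintrackForY}; semi-compatibility with $\TTT_{X,\rho}$ (Theorem \ref{CircularTraintrackX}) is preserved throughout. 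The same continuity-plus-integrality argument yields invariance of $[\Gamma_\rho]$ under each shift, and by induction under any finite composition of shifts. Third I fix $\cc$ and $W_Y$ and vary the horizontal position of the vertical edges of $\TTT_{X,\rho}$ permitted by Theorem \ref{CircularTraintrackX}. Lemma \ref{EnoughHorizontalSpace} guarantees enough horizontal room for any two admissible choices to be joined by straight-line interpolation; the semi-compatible vertical edges of $\TTT_{Y,\rho}$ can then be chosen to depend continuously on the parameter, and the cocycle again deforms continuously and integer-valuedly, hence is constant.

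The main technical subtlety will be the non-transversal branches, where Proposition \ref{WeaklyCompatibleTraintrack} required a generic perturbation (Lemma \ref{RealizingFlipSequence}) to realize the flip sequence. For each of the three deformations above I must ensure that the one-parameter path can be selected inside the realizable locus, avoiding the complex codimension-at-least-one bad set on which some edge of an intermediate triangulation has trivial holonomy. Since Lemma \ref{RealizingFlipSequence} furnishes a \emph{connected} open set of realizable perturbations, a generic path stays inside it; at isolated exceptional parameters the cocycle extends by continuity from the integer values taken on either side. This removes the last obstruction and completes the plan.
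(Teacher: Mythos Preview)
Your treatment of choice (\ref{iChoiceOfCircleSystmes}) matches the paper exactly: invoke Proposition \ref{IsotopeCircleSystem}, obtain a continuous family $[\Gamma_{t,\rho}]$, and use integrality.

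For choices (\ref{iChoiceOfRealization}) and (\ref{iChoiceOfVerticalEdges}) the paper takes a shorter, more direct route than you do. Rather than setting up a continuous deformation and again appealing to integrality, the paper simply identifies the \emph{effect} of changing the realization or the vertical-edge placement on the weighted graph $\Gamma_\rho$: in both cases the change is a shift of integer weights across a bigon region corresponding to a vertical edge of $T_{Y,\rho}$. Since any closed curve crosses the two sides of a bigon with equal signed multiplicity, such a weight shift leaves the intersection number with every $\gamma\in\CC$ unchanged. This is a one-line combinatorial observation and avoids having to verify that intermediate stages of your interpolation produce well-defined semi-compatible circular train tracks (a point you pass over rather quickly, especially for the vertical-edge interpolation, where moving a vertical edge of $\TTT_{X,\rho}$ must remain compatible with a fixed $\TTT_{Y,\rho}$). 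Your argument is not wrong, but the paper's bigon observation buys a cleaner proof with fewer moving parts.

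Your final paragraph on non-transversal branches is extra caution that the paper does not spell out here; the well-definedness of the non-transversal contribution was already secured in Proposition \ref{WeaklyCompatibleTraintrack} and its corollary, so the continuity of $[\Gamma_{t,\rho}]$ through the non-transversal branches is taken as given.
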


\begin{proof}
(\ref{iChoiceOfCircleSystmes})
 By Proposition \ref{IsotopeCircleSystem}, given two appropriate circle systems $\{c_h\}$ and $\{c'_h\}$, there is an equivariant isotopy of circles systems $\{c_{t, h}\}$ connecting $\{c_h\}$ to $\{c'_h\}$.
 Then accordingly, we obtain a continuous family of cocycles $[\Gamma_{t, \rho}]\col \CC \to \Z$. 
 As it takes discrete values,  $[\Gamma_{t, \rho}]$  must remain the same. 
 
By the different choices for (\ref{iChoiceOfRealization}) and (\ref{iChoiceOfVerticalEdges}), the $\Z$-weights on $\Gamma$ shift across bigon regions corresponding vertical edges of $T_{Y, \rho}$ by integer values. 
These weight shifts clearly preserve the cocycle $[\Gamma_\rho]\col \CC \to \Z$. 
\end{proof}

\subsubsection{Continuity of the transverse cocycle}
Next we prove the continuity of $[\Gamma_\rho]$ in $\rho$ claimed in (\ref{iContinuousTraintrack}).

\begin{definition}[Convergence and semi-convergence of train tracks]
Suppose that $C_i \in \PP$ converges to $C \in \PP$. 
In addition, suppose, for each $i$, there are a train-track structure   $\TTT_i$  of $C_i$ and a train-track structure $\TTT$ for $C$. 
Then,

\begin{itemize}
\item 
 $\TTT_i$ {\sf converges} to $\TTT$ if 
\begin{itemize}
\item for every branch $\PPP$ of $\TTT$, there is a sequence of branches $\PPP_i$ of $\TTT_i$ converging to $\PPP$, and
\item for every sequence of branches $\PPP_i$ of $\TTT_i$, up to a subsequence, converges to either a branch of $\TTT$ or an edge of a branch of $\TTT$.
\end{itemize}
\item  $\TTT_i$ {\sf semi-converges} to $\TTT$ if there is a subdivision of $\TTT$ into another circular train-track structure $\TTT'$ so that $\TTT_i$ converges to $\TTT'$.
\end{itemize}
\end{definition}

\begin{lemma}\Label{SemiconvergenceProjectiveTraintracks}
Let $\rho_i$ be a sequence in $\rchi_X \cap \rchi_Y$ converging to $\rho \in \rchi_X \cap \rchi_Y \minus K$, where $K$ is a sufficiently large compact (as in \Cref{CircularTraintrackX}). 
Pick an equivariant circle system $\cc_i$ for $\tT_{X, \rho_i}$ by Lemma \ref{EquivariantCircles} which converges to a circle system $\cc$ for $\tT_{X, \rho}$.
Then, up to a subsequence, 
\begin{itemize}
\item  the circular train track $\TTT_{X, \rho_i}$ of $C_{X, \rho_i}$ semi-converges to a circular train track  $\TTT_{X, \rho}$ of $C_{X, \rho}$; 
\item the circular train track  $\TTT_{Y, \rho_i}$ of $C_{Y, \rho_i}$ semi-converges to a circular train track  $\TTT_{Y, \rho}$ of $C_{Y, \rho}$;
\item $\TTT_{X, \rho}$ is semi-compatible with $\TTT_{Y, \rho}$.
\end{itemize}
\end{lemma}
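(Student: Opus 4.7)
The plan is to reduce the semi-convergence of the circular train tracks to the semi-continuity of the underlying polygonal train tracks established in \S\ref{sCompatibleTraintrackDecompositions}, and then to use the $\ep$-closeness of $\TTT_{X,\rho}$ to $\tT_{X,\rho}$ (resp.\ of $\TTT_{Y,\rho}$ to $\tT_{Y,\rho}'$) together with the convergence of the circle systems to transfer this semi-continuity to the projective setting. First, since $\Hol\col\PP\sqcup\PP^\ast\to\rchi$ is a local biholomorphism and $\rho_i\to\rho\in\rchi_X\cap\rchi_Y\minus K$, the $\CP^1$-structures $C_{X,\rho_i}$ converge to $C_{X,\rho}$ in $\PP_X$, and similarly $C_{Y,\rho_i}\to C_{Y,\rho}$ in $\PP_Y$. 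Hence the Schwarzian differentials, the flat surfaces $E_{X,\rho_i}^1$, $E_{Y,\rho_i}^1$, their horizontal and vertical foliations, and (by \Cref{HorizontalMeasuredLaminations} applied after subsequencing if necessary) the accumulating pair $\mathbf{H}_i=(H_{X,\rho_i},H_{Y,\rho_i})$ all converge to the corresponding data of $\rho$.

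Next I would extract semi-convergence at the Euclidean level. By \Cref{EuclideanTraintarck}(\ref{iContinuityOfVYinTX}), up to a subsequence $\tt^r_{X,H_{X,\rho_i}}$ semi-converges to $\tt^r_{X,H_X}$, and the realizations $W_{Y,i}$ of $[V_{Y,\rho_i}]_{X,\rho_i}$ on $\tt^r_{X,H_{X,\rho_i}}$ semi-converge, up to shifting across vertical slits, to a realization $W_Y$ on the limit (\Cref{UniqueRealizationUpToShifting}). Applying in order \Cref{TforY}, \Cref{ModifiedTTForY}, \Cref{UniformlyBoundedTrainttrackX}, and \Cref{PolygonalTaintrackY}(\ref{iSemiContinuoustT}), the staircase train tracks $\tT_{X,\rho_i}$ semi-converge to $\tT_{X,\rho}$ and the $\ep$-quasi-staircase train tracks $\tT_{Y,\rho_i}'$ semi-converge to $\tT_{Y,\rho}'$, with the semi-diffeomorphism $\tT_{X,\rho_i}\to\tT_{Y,\rho_i}'$ passing to a semi-diffeomorphism $\tT_{X,\rho}\to\tT_{Y,\rho}'$ in the limit; any loss of diffeomorphism type (for instance, a rectangular branch collapsing to an edge or a pair of adjacent branches merging across a vertical slit) is absorbed by the ``refinement'' clause in the definition of semi-convergence.

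Now I would lift this to the projective train tracks. Since $\cc_i\to\cc$ as equivariant circle systems, the horizontal-edge perturbations used in \Cref{CircularEdges} and \Cref{ProjectiveTraintrackForY} to produce $\TTT_{Y,\rho_i}$ from $\tT_{Y,\rho_i}'$ depend continuously on $(\rho_i,\cc_i)$: each horizontal endpoint is slid along the vertical foliation of $E_{Y,\rho_i}^1$ until it maps into the assigned round circle, and by Proposition \ref{IsotopeCircleSystem} the ambiguity in this choice is resolved by a $\rho_i$-equivariant isotopy that itself can be taken to converge. Hence $\TTT_{Y,\rho_i}$ semi-converges to the circular train track $\TTT_{Y,\rho}$ constructed from $\tT_{Y,\rho}'$ using the limit system $\cc$. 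The same continuity argument, applied via the construction of \Cref{CircularTraintrackX} (which slides horizontal edges by amounts governed by $\cc$, and then slides vertical edges by horizontal amounts bounded by $2\pi$ using the space provided by \Cref{EnoughHorizontalSpace}), yields semi-convergence $\TTT_{X,\rho_i}\to\TTT_{X,\rho}$. Finally, semi-compatibility is a pointwise condition on developing maps along the one-skeleton: each horizontal edge of $\TTT_{X,\rho_i}$ and its corresponding edge of $\TTT_{Y,\rho_i}$ develop into the same circle of $\cc_i$, and each vertical edge of $\TTT_{X,\rho_i}$ develops onto the same arc as its corresponding vertical segment of $\TTT_{Y,\rho_i}$; as developing maps depend holomorphically on holonomy, these coincidences pass to the limit, giving semi-compatibility of $\TTT_{X,\rho}$ with $\TTT_{Y,\rho}$.

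The main obstacle I expect is book-keeping around the ``semi'' in semi-convergence: along the sequence, different branches of $\tT_{X,\rho_i}$ can merge, long rectangular branches can coalesce into spiral cylinders in the limit (\Cref{LimitOfLongBranches}), and horizontal edges can contract to single points when the realization $W_{Y,i}$ shifts across a vertical slit. Each of these phenomena must be compatible across the $X$ and $Y$ sides simultaneously, and the resulting limit must still be refined by the circular train-track structures we constructed intrinsically from the limit $\rho$. The cleanest way is to argue branch-by-branch, using the transversal/non-transversal dichotomy and the uniform diameter bound in \Cref{UniformlyBoundedTrainttrackX} to localize the analysis, and then to observe that every combinatorial collapse that can occur in the limit is exactly of the type allowed in the definition of semi-convergence, so no additional work is needed beyond collecting the semi-continuity statements already proven.
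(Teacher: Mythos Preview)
Your proposal is correct and follows essentially the same route as the paper: first pass semi-convergence from the Euclidean train tracks $\tT_{X,\rho_i}$ and $\tT_{Y,\rho_i}'$ (via \Cref{TforY}, \Cref{EuclideanTraintarck}, and \Cref{PolygonalTaintrackY}) together with convergence of the realizations $W_{Y,i}$ up to shifting, and then lift to the circular train tracks using the $(1+\ep,\ep)$-closeness of $\TTT_{Y,\rho_i}$ to $\tT_{Y,\rho_i}'$ (\Cref{ProjectiveTraintrackForY}) and the additive $2\pi$-closeness of $\TTT_{X,\rho_i}$ to $\tT_{X,\rho_i}$ (\Cref{CircularTraintrackX}), with the given convergence $\cc_i\to\cc$ carrying the horizontal-edge data through. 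Your discussion of the book-keeping obstacles (merging branches, rectangles coalescing into spiral cylinders, edges collapsing) matches exactly what the paper absorbs into the ``refinement'' and ``sliding/subdividing'' clauses; the only superfluous step is invoking \Cref{HorizontalMeasuredLaminations}, since here $\rho_i\to\rho$ in $\rchi$ already forces convergence of the flat structures and foliations directly.
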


\proof
By \Cref{TforY}, $\tT_{X, \rho_i}$ semi-converges to $\tT_{X, \rho}$.
Therefore $\tT_{X, \rho_i}$ converges to  a subdivision $\tT_{X, \rho}'$ of $\tT_{X, \rho}$ as $i \to \infi$.
Then, if $\tT_{X, \rho} \neq \tT_{X, \rho}'$, then $\tT_{X, \rho}$ is obtained from $\tT_{X, \rho}'$ by gluing non-rectangular branches with rectangular branches of small width or replacing long rectangles into spiral cylinders (as in \S \ref{sPolygonalTrainTrack} and \S\ref{sBoundedEuclideanTraintrackForX}).

Recall that7 the realization of $[V_{Y, \rho_i}]_{X,\rho_i}$ in the train track $\tT_{X, \rho_i}$ is unique up to shifting across vertical edges of non-rectangular branches (Proposition \ref{EuclideanTraintarck} (\ref{iRealzationUpToShifting})). 
Therefore, up to a subsequence,  the realization of $[V_{Y, \rho_i}]_{X,\rho_i}$ on  $\tT_{X, \rho_i}$  converges to a realization of  $[V_{Y, \rho}]_X$ on  $\tT_{X, \rho}'$,
Since $\tT_{X, \rho}'$ is a subdivision of $\tT_{X, \rho}$,   the limit can be regarded as also a realization on $\tT_{X, \rho}$. 
Since the realization determines the train-track structure of $E_{Y, \rho}$, up to a subsequence,  $\tT_{Y, \rho_i}$ converges to a bounded train-track $\tT_{Y, \rho}'$.
Then $\tT_{Y,\rho}$ is transformed to $\tT_{Y, \rho}'$ by possibly sliding vertical edges and subdividing spiral cylinders to wide rectangles.
Moreover, by Theorem \ref{ProjectiveTraintrackForY} (\ref{iQIperturbationOfTraintrack}), $\TTT_{Y, \rho}$ is $(1 + \ep, \ep)$-quasiisometric to $\tT_{Y, \rho}$.
Therefore,  up to a subsequence, $\TTT_{Y, \rho_i}$ converges to a circular train-track structure $\TTT_Y'$ of $C_{Y, \rho}$.
If $\TTT_{Y, \rho}$ is different from $\TTT_{Y, \rho}'$, then $\TTT_{Y, \rho}$ can be transformed to $\TTT_{Y, \rho}'$ by sliding vertical edges and subdividing spiral cylinders into rectangles. 

By Theorem \ref{CircularTraintrackX},  $\TTT_{X, \rho_i}$ is additively $2\pi$-close to $\tT_{X, \rho_i}$ in the Hausdorff metric of $E^1_{X, \rho}$.
Therefore, up to a subsequence  $\TTT_{X, \rho_i}$ converges to a circular train track decomposition $\TTT_{X, \rho}'$ semi-diffeomorphic to $\TTT_{Y, \rho}$.
Moreover  $\TTT_{X, \rho}$ can be transformed to $\TTT_{X, \rho}'$  possibly by subdividing and sliding by $2\pi$ or $4\pi$.  

We have already shown that  $\TTT_{X, \rho}$ is semi-diffeomorphic to $\TTT_{Y, \rho}$ (Theorem \ref{CircularTraintrackX}).
\Qed{SemiconvergenceProjectiveTraintracks}

Finally we have the continuity (\ref{iContinuousTraintrack}).
\begin{corollary}
$[\Gamma_{\rho_i}]\col \CC \to \Z$ converges to $[\Gamma_\rho]\col \CC \to \Z$ as $i \to \infty$.
\end{corollary}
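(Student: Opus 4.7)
The plan is to fix an arbitrary closed curve $\gamma \in \CC$ and show that the integer sequence $[\Gamma_{\rho_i}](\gamma)$ eventually equals $[\Gamma_\rho](\gamma)$. Since the cocycles take values in $\Z$, continuity is equivalent to this eventual constancy. First, I would pass to a subsequence so that \Cref{SemiconvergenceProjectiveTraintracks} applies: up to subdivision and sliding, $\TTT_{X, \rho_i}$ converges to a semi-compatible pair of circular train-track decompositions $\TTT_{X, \rho}'$ and $\TTT_{Y, \rho}'$ of $C_{X, \rho}$ and $C_{Y, \rho}$. By the independence of the cocycle from the choices made in the construction (proved just above the corollary), I may compute $[\Gamma_\rho]$ using $\TTT_{X, \rho}'$ and $\TTT_{Y, \rho}'$; thus it suffices to compare $[\Gamma_{\rho_i}]$ and the cocycle built from these limit train tracks.

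Second, realize $\gamma$ by a smooth closed curve on $S$ transversal to the edge graph of $\TTT_{X, \rho}'$ and avoiding its vertices. For all sufficiently large $i$, the same curve is transversal to the edge graph of $\TTT_{X, \rho_i}$, and the ordered sequence of branches crossed by $\gamma$ stabilizes (any subdivisions introduced in the limit contribute zero crossings). Then $[\Gamma_{\rho_i}](\gamma)$ decomposes as a finite sum of contributions, one per branch crossing, and the same is true for $[\Gamma_\rho](\gamma)$. It remains to show that each branchwise contribution stabilizes as $i \to \infty$.

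Third, on each rectangular or cylindrical branch, the contribution is expressed through the relative degree $\deg(\RRR, \QQQ)$ of a pair of compatible circular rectangles (\Cref{dRelativeDegree}, \Cref{DegreeAndDecomposiiton}, \S\ref{sTrainTracksOnCylinders}). The relative degree is given by $\sharp f^{-1}(x) - \sharp g^{-1}(x)$ at a generic point of the supporting circle, and this cardinality is locally constant under small perturbations of the developing data that preserve the endpoints on the supporting round circles; hence $\deg(\RRR_{X, \rho_i}, \RRR_{Y, \rho_i})$ stabilizes to $\deg(\RRR_{X, \rho}, \RRR_{Y, \rho})$. On transversal polygonal branches, the same stability is obtained by combining \Cref{IntersectionNumberAndFoliationOnPolygons}(\ref{iApproximateWeightsOnPolygons}) with \Cref{ProjectiveTraintrackForY}(\ref{iHomotopyMaksBranchesAreCloseInH}) and the integrality of the cocycle.

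The main obstacle is on non-transversal branches, where the cocycle is constructed via a flip sequence $w_i$ of triangulations interpolating the bending laminations of the two pleated surfaces (\Cref{WeaklyCompatibleTraintrack}). Here I would argue that, along the convergent sequence $\rho_i \to \rho$, the combinatorial type of the flip sequence can be chosen constant for all large $i$: the length of the sequence is uniformly bounded (\Cref{DependentSegmentLengthBound}), and by applying \Cref{RealizingFlipSequence} continuously in $\rho$ one obtains an open set of realizable perturbations whose connectedness allows the same combinatorial sequence to serve for both $\rho_i$ and $\rho$. Given a common combinatorial flip sequence, each $\mu_j = \nu_{j+1} - \nu_j$ is supported on the edges of a fixed ideal rectangle and its weights depend continuously on the holonomy; since $[\Gamma_{\PPP_X}]$ is a finite $\Z$-linear combination of these cocycles, it is eventually constant. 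Summing over all branches crossed by $\gamma$ yields $[\Gamma_{\rho_i}](\gamma) = [\Gamma_\rho](\gamma)$ for large $i$, and since the argument was along an arbitrary subsequence, the full sequence converges.
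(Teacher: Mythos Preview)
Your proposal is correct and follows essentially the same route as the paper. The paper's proof is much briefer: it simply asserts that since $\TTT_{X,\rho_i}$ and $\TTT_{Y,\rho_i}$ converge (after subsequence) to subdivisions $\TTT'_{X,\rho}$ and $\TTT'_{Y,\rho}$, the weighted train-track graphs $\Gamma_{\rho_i}$ converge to the weighted graph $\Gamma'_\rho$ built from these subdivisions, and then invokes the independence result to conclude $[\Gamma'_\rho] = [\Gamma_\rho]$. Your argument unpacks the clause ``by the convergence of the train tracks, $\Gamma_{\rho_i}$ converges to $\Gamma'_\rho$'' branch by branch, which the paper leaves implicit; in particular, your careful treatment of the non-transversal branches via a fixed combinatorial flip sequence fills in detail the paper does not spell out.
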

\begin{proof}     
Since $\TTT_{X, \rho_i}$ semi-converges to $\TTT_{X, \rho}$,   
up to taking a subsequence,  $\TTT_{X,i}$ converges to a subdivision $\TTT'_{X, \rho}$ of $\TTT_{X, \rho}$.
Accordingly, there is a subdivision  $\TTT'_{Y, \rho}$  of $\TTT_{Y, \rho}$\,, such that,  to up a subsequence,  $\TTT_{Y,i}$ converges to $\TTT'_{Y, \rho}$ and that  $\TTT'_{X, \rho}$  is semi-diffeomorphic  to $\TTT'_{Y, \rho}$.

Let $\Gamma_{\rho_i}$ be the $\Z$-weighted train-track  given by $\TTT_{X, \rho_i}$ and $\TTT_{Y, \rho_i}$. 
Let $\Gamma_{\rho}'$ be the $\Z$-weighted train track given $\TTT'_{X, \rho}$ and $\TTT'_{Y, \rho}$.
Then, by the convergence of the train tracks, $\Gamma_{\rho_i}$  converges to $\Gamma_{\rho}'$ as $i \to \infty$.
Since  $\TTT'_{X, \rho}$ and  $\TTT'_{Y, \rho}$  are obtained  by  sliding and subdividing $\TTT_{X, \rho}$ and   $\TTT_{Y,  \rho}$ respectively, thus $\Gamma_{\rho}'$ and $\Gamma_{\rho}$ yield the same cocycle $\CC \to \Z$. 
\end{proof}

\subsection{Approximation of the grafting cocycle $[\Gamma_\rho]$ by vertical foliations}
Suppose that $X, Y$ be distinct marked Riemann surfaces homeomorphic to $S$ such that $X$ and $Y$ have the same orientation.
For a branch $B_X$ of $\tT_{X, \rho}$, let $\AA(B_X)$ be the homotopy class of arcs $\alpha$ on $\RRR_X$ such that every endpoint of $\alpha$ is either on a vertical edge or a vertex of $\tT_{X, \rho}.$

\begin{theorem}\Label{VerticalFoliationsAndGraftingFunction}
Let $c_1, \dots, c_n$ be essential closed curves on $S$. 
Then, for every $\ep > 0$, there is a  bounded subset $K_\ep$ of $\rchi_X \cap \rchi_Y$  such that, for every $\rho \in \rchi_X \cap \rchi_Y \minus K$,
the grafting cocycle $[\Gamma_\rho]$ times $2\pi$ is $(1 + \ep, q)$-quasi-isometric to $\sqrt{2} (V_{X, \rho} - V_{Y, \rho})$  along $c_1, \dots, c_n$. That is,
\begin{equation}
(1 - \ep) 2\pi \Gamma_\rho (c_i)  - q < \sqrt{2}(V_{X, \rho}(c_i) - V_{Y, \rho} (c_i) ) <  (1 + \ep) 2\pi \Gamma_\rho (c_i)  + q \Label{QIintersection}
\end{equation}
 for all $i = 1,2, \dots, n$.
\end{theorem}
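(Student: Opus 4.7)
The plan is to prove the inequality \eqref{QIintersection} by decomposing each curve $c_i$ into arcs, each lying in a single branch of the train-track $\tT_{X,\rho}$, then applying the branch-wise quasi-isometry estimates already established (\Cref{IntersectionNumberAndFoliationOnRectangles}, \Cref{FoalitionAndLaminationOnCylinders}, \Cref{IntersectionNumberAndFoliationOnPolygons}, \Cref{TraintrackEstimateNontransversalBranch}), and summing. The multiplicative $(1+\ep)$ constant is preserved under this summation, while the additive error accumulates into the single constant $q$ that appears in the theorem.

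First, for each $\rho \in \rchi_X \cap \rchi_Y \minus K$, replace each $c_i$ by a minimal piecewise-geodesic representative (with respect to the flat structure $E_{X,\rho}$) that is transverse to the one-skeleton of $\tT_{X,\rho}$ and intersects the vertical edges minimally within its homotopy class. Decompose $c_i$ into arcs $\alpha_{i,1},\dots,\alpha_{i,N_i(\rho)}$ where each $\alpha_{i,j}$ sits in a single branch $B_{i,j}$ of $\tT_{X,\rho}$ and has endpoints on vertical edges or vertices. A key preliminary observation is that, since there are only finitely many isotopy classes of branches of $\tT_{X,\rho}$ as $\rho$ varies outside a compact subset of $\rchi_X \cap \rchi_Y$ (cf.\ the discussion preceding \Cref{TraintrackEstimateNontransversalBranch}), the number $N_i(\rho)$ is bounded uniformly in $\rho$ by some $N_i$ depending only on $c_i$ and the pair $(X,Y)$.

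Second, for each arc $\alpha_{i,j}$ apply the relevant branch-wise estimate: if $B_{i,j}$ is rectangular, use \Cref{IntersectionNumberAndFoliationOnRectangles}; if cylindrical, use \Cref{FoalitionAndLaminationOnCylinders}; if transversal polygonal, use \Cref{IntersectionNumberAndFoliationOnPolygons}; if non-transversal polygonal, use \Cref{TraintrackEstimateNontransversalBranch}. In each case the estimate has the form
\[
\bigl| [\Gamma_\rho](\alpha_{i,j}) - \bigl(V_{X,\rho} - V_{Y,\rho}\bigr)(\alpha_{i,j}) \bigr| \;\leq\; \ep \cdot \bigl(V_{X,\rho} - V_{Y,\rho}\bigr)(\alpha_{i,j}) + k_{B_{i,j}},
\]
with $k_{B_{i,j}}$ depending only on the isotopy class of the branch (and $\ep$), not on $\rho$.

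Third, sum the above inequalities over $j = 1,\dots, N_i(\rho)$. Since $[\Gamma_\rho]$ is additive on the decomposition $c_i = \bigsqcup_j \alpha_{i,j}$, and likewise for the cocycle $V_{X,\rho} - V_{Y,\rho}$, the multiplicative $(1+\ep)$ constant is preserved, and the additive error is bounded by $q_i := \sum_{B} N_i \cdot \max_B k_B$, a finite quantity uniform in $\rho$. Setting $q := \max_i q_i$ yields \eqref{QIintersection} simultaneously for $c_1,\dots,c_n$.

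The main obstacle is the bookkeeping in the decomposition step: verifying that the piecewise-geodesic representative of $c_i$ can be made to cross each branch only $O(1)$ times (uniformly in $\rho$). This uses the uniform diameter bound on branches from \Cref{UniformlyBoundedTrainttrackX} and \Cref{PolygonalTaintrackY}\eqref{iDiameterTY}, together with the fact that the combinatorial complexity of $\tT_{X,\rho}$ is bounded by the topology of $S$; a minor technicality is that when the geodesic representative of $c_i$ passes through a singular point of $E_{X,\rho}$, one must perturb it slightly without changing the counting. A smaller subtlety is the factor of $2\pi$ appearing in the cylindrical and polygonal estimates: this is absorbed into the constant $q$ for any fixed $\ep$, since by \Cref{UniformlyBoundedTrainttrackX} the total contribution of cylindrical and polygonal branches to each $c_i$ is uniformly bounded.
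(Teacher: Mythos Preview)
Your proposal is correct and follows essentially the same approach as the paper: isotope each $c_i$ so it decomposes into boundedly many arcs, one per branch of $\tT_{X,\rho}$, apply the four branch-wise quasi-isometry estimates (\Cref{IntersectionNumberAndFoliationOnRectangles}, \Cref{FoalitionAndLaminationOnCylinders}, \Cref{IntersectionNumberAndFoliationOnPolygons}, \Cref{TraintrackEstimateNontransversalBranch}), and sum. The paper's proof is terser but structurally identical; your only slip is the remark that the $2\pi$ factor in the cylindrical and polygonal estimates is ``absorbed into $q$'' --- that factor is multiplicative on $[\Gamma_\rho]$, not additive, so it cannot be absorbed into an additive constant (this is a normalization inconsistency already present in the paper's statements of those propositions, and the paper's own proof simply ignores it).
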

\begin{proof}

Let $H \in \PML$. 
Recall that $E^1_{X, H}$ is the flat surface conformal to $X$, such that the horizontal foliation is $H$ and $\Area E^1(X, H) = 1$.
Let $\tT_{X, H}$ be the bounded train-track structure of $E_{X, H}$.

Then, every closed curve $c$ can be isotoped to a closed curve $c'$ so that, for each branch of $B$ of $\tT_{X, \rho}$, $c' | B$ is an arc connecting different vertices. 
Let $c'_1, \dots, c'_m$ be the decomposition into sub-arcs. 
By the finiteness of possible train-tracks,  the number $m$ of the subarcs is bounded from above for all $\rho$.
Then  $2\pi\Gamma_\rho | c'_j $ is, if $B$ is a transversal branch,  $(1 + \ep, \ep)$-quasi-isometric to  $\sqrt{2}(V_{X, \rho} | B  - V_{Y, \rho}  | B) c_k'$ by Proposition \ref{IntersectionNumberAndFoliationOnPolygons}(\ref{iApproximateWeightsOnPolygons}), \Cref{FoalitionAndLaminationOnCylinders}, Proposition \ref{IntersectionNumberAndFoliationOnRectangles}, and, if non-transversal,  $(1 + \ep, q)$-quasi-isometric  by Proposition \ref{TraintrackEstimateNontransversalBranch}.
As the number of subarcs is bounded, the assertion follows. 
 \end{proof}

\section{The discreteness}\Label{sDiscreteness}
\subsection{The discreteness of the intersection of holonomy varieties}

\begin{theorem}\Label{Bounded-Intersection}
Suppose that $X, Y$ are marked Riemann surface structures on $S$ with the same orientation.
Then 
every (connected) component  of $\rchi_X \cap  \rchi_Y$ is bounded. 
\end{theorem}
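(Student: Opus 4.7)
The plan is to derive a contradiction from the assumption that some connected component $Q \subset \rchi_X \cap \rchi_Y$ is unbounded. Fix a compact $K \subset \rchi$ large enough that the circular train-track machinery of \S\ref{sCircularTraintracks} and the grafting cocycle $[\Gamma_\rho]\col\CC\to\Z$ of \Cref{WeightedTraintrack} are defined and continuous on $\rchi_X \cap \rchi_Y \minus K$, and also large enough for \Cref{VerticalFoliationsAndGraftingFunction} and \Cref{HorizontalMeasuredLaminations} to apply. By \Cref{ConstantIntersectionNumber}, $[\Gamma_\rho]$ is constant on each connected component of $(\rchi_X \cap \rchi_Y) \minus K$. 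Since $Q$ is unbounded and connected, some component $Q_0$ of $Q \minus K$ contains a sequence $\rho_i$ leaving every compact of $\rchi$, and $[\Gamma_{\rho_i}] \equiv N$ for some fixed integer cocycle $N\col\CC\to\Z$.

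By \Cref{VerticalFoliationsAndGraftingFunction}, for every finite set of closed curves $c_1,\dots,c_n$ and every $\ep>0$, we have $|V_{X,\rho_i}(c_j) - V_{Y,\rho_i}(c_j) - N(c_j)| < \ep|N(c_j)| + q_\ep$ for all large $i$. Thus $V_{X,\rho_i}(c_j) - V_{Y,\rho_i}(c_j)$ stays uniformly bounded on each fixed curve as $i\to\infi$.

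Next I normalize using the Morgan--Shalen scalars from \Cref{HorizontalMeasuredLaminations}: choose $k_i,k_i'>0$ with $k_i/k_i'\to 1$ and $k_iH_{X,\rho_i}\to L$, $k_i'H_{Y,\rho_i}\to L$ in $\ML$ for some nonzero $L$. Because, on a given flat surface, the horizontal and vertical measured foliations scale identically under rescalings of the quadratic differential, the rescaled differentials $\tilde{q}_{X,i}:=k_i^2\,q_{X,\rho_i}$ on $X$ and $\tilde{q}_{Y,i}:=k_i'^2\,q_{Y,\rho_i}$ on $Y$ satisfy $\tilde H_{X,i}\to L$, $\tilde H_{Y,i}\to L$, and the corresponding verticals $\tilde V_{X,i}=k_iV_{X,\rho_i}$, $\tilde V_{Y,i}=k_i'V_{Y,\rho_i}$ are bounded in $\ML$ (their pairing with $\tilde H$ computes area, which remains bounded). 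After passing to a subsequence, Hubbard--Masur continuity on each of $\QD(X)$ and $\QD(Y)$ gives $\tilde q_{X,i}\to \tilde q_{X,\infi}\in\QD(X)$ and $\tilde q_{Y,i}\to \tilde q_{Y,\infi}\in\QD(Y)$, with horizontal foliation $L$ for both. Multiplying the bound $|V_{X,\rho_i}(c)-V_{Y,\rho_i}(c)|\le C(c)$ by $k_i$ and using $k_i/k_i'\to 1$ and $k_i\to 0$, I obtain $|\tilde V_{X,i}(c)-\tilde V_{Y,i}(c)|\to 0$ for every curve $c$, so the two limiting verticals coincide: $\tilde V_{X,\infi}=\tilde V_{Y,\infi}=:V$.

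The flat surfaces $(X,\tilde q_{X,\infi})$ and $(Y,\tilde q_{Y,\infi})$ therefore carry identical transverse pairs of measured foliations $(L,V)$, with $i(L,V)>0$ since $L,V\ne 0$. By the uniqueness portion of Hubbard--Masur/Gardiner--Masur theory, a transverse filling pair of measured foliations determines the underlying flat surface (hence the underlying Riemann surface) uniquely. Thus $X=Y$, contradicting the hypothesis $X\ne Y$, and the component $Q$ must be bounded. The main obstacle is the bookkeeping in the third step: one has to show that the normalizations $k_i$ dictated by the Morgan--Shalen limit (controlling horizontal foliations) are the \emph{same} normalizations that keep the verticals bounded, which rests on the fact that $H$ and $V$ of a single quadratic differential scale identically, together with the asymptotic equality $k_i/k_i'\to 1$ of \Cref{HorizontalMeasuredLaminations}.
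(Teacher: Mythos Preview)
Your proof is correct and follows essentially the same approach as the paper: both arguments combine the constancy of the grafting cocycle $[\Gamma_\rho]$ (Corollary~\ref{ConstantIntersectionNumber}) with Theorem~\ref{VerticalFoliationsAndGraftingFunction} to control $V_{X,\rho}-V_{Y,\rho}$, and with Theorem~\ref{HorizontalMeasuredLaminations} to force the limiting horizontal foliations to agree. The paper phrases the contradiction as ``since $X\neq Y$, the normalized vertical foliations cannot agree in the limit, so the unnormalized difference diverges on some curve,'' whereas you run the same implication in the contrapositive direction and make the underlying Gardiner--Masur uniqueness (a transverse pair $(H,V)$ determines the Riemann surface) explicit rather than implicit.
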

\proof
Let $K$ be a component of $\rchi_X \cap \rchi_Y$. 
Suppose, to the contrary, that $V$ is unbounded in $\rchi$. 
Then, there is a path $\rho_t$ in  $\rchi_X \cap \rchi_Y$  which leaves every compact subset. 
Then, by \Cref{HorizontalMeasuredLaminations}, by taking a diverging sequence $t_1 < t_2 < \dots$,  there are $k_i, k_i' \in \R_{> 0}$ such that $\frac{k_i}{k_i'} \to 1$ as $i \to \infi$ and   $$\lim_{i \to \infi} k_i H_{X, \rho_{t_i}} = \lim_{i \to \infi }k'_i H_{Y, \rho_{t_i}} \in \ML.$$
By taking a subsequence, we may,  in addition, assume that their vertical foliations $[V_{X, \rho_{t_i}}]$ and $[V_{Y, \rho_{t_i}}]$ converge in $\PML$.
Thus let $[V_{X, \infi}]$ and $[V_{Y, \infi}]$ be their respective limits in $\PML$. 
Since $X \neq Y$,  $V_{X, \infi}$  and $V_{Y, \infi}$ can not be asymptotically the same, in comparison to their horizontal foliations.
Then $V_{X, \rho_{t_i}} - V_{Y, \rho_{t_i}}$  ``diverges to $\infty$''. 
That is, there is a closed curve $\alpha$ on $S$, such that  $$ | V_{X, \rho_{t_i}}(\alpha) - V_{Y, \rho_{t_i}} (\alpha) | \to \infi$$
as $i \to \infi$.

Let $[\Gamma_{\rho_t}]\col\CC  \to \Z$ be the function given by Theorem \ref{WeightedTraintrack}.
As $[\Gamma_{\rho_t}]$ is continuous (Theorem \ref{WeightedTraintrack} (\ref{iContinuousTraintrack})),  $[\Gamma_{\rho_t}]\col \CC \to \Z$ is a constant function (for $t \gg 0$).
On the other hand, by Theorem \ref{VerticalFoliationsAndGraftingFunction}, there is $q > 0$  such that 
$\sqrt{2} (V_{X, \rho_{t_i}} - V_{Y, \rho_{t_i}})(\alpha)$  is $(1 + \ep_i, q)$-quasi-isometrically close to  $2\pi [\Gamma_{\rho_{t_i}}] (\alpha)$, and $\ep_i \to 0$ as $i \to \infi$.
 We thus obtain a contradiction. 
\Qed{Bounded-Intersection}

Since $\rchi_X$ and $\rchi_X$ are complex analytic, thus their intersection is also a complex analytic set  (Theorem 5.4 in \cite{Fritzsche_Hans_02_HolomorphicFunactionsComplexManifolds}). 
Therefore, since every bounded connected analytic set is a singleton  (Proposition \ref{BoundedAnalyticSet}), Theorem \ref{Bounded-Intersection} implies the following.
\begin{corollary}\Label{DicreteIntersection}
$\rchi_X \cap \rchi_Y$ is a discrete set. 
\end{corollary}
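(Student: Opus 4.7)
The plan is to deduce the corollary by combining the boundedness of each connected component (just established in Theorem \ref{Bounded-Intersection}) with the general complex-analytic fact recorded in Proposition \ref{BoundedAnalyticSet}, that every connected bounded analytic set in $\mathbb{C}^n$ is a single point. This reduces the task to checking that $\rchi_X \cap \rchi_Y$ is genuinely analytic, and then the discreteness is automatic.

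First I would verify the analyticity. By Theorem \ref{HolonomyVariety}, for each $Z \in \{X, Y\}$ the set $\rchi_Z = \Hol(\PP_Z)$ is properly and biholomorphically embedded in $\rchi$, so $\rchi_X$ and $\rchi_Y$ are closed smooth complex analytic subvarieties of the affine variety $\rchi$ of (complex) dimension half that of $\rchi$. Locally in an affine chart of $\rchi \subset \mathbb{C}^N$ each of them is cut out by the vanishing of finitely many holomorphic functions, so the intersection $\rchi_X \cap \rchi_Y$ is locally the common zero set of finitely many holomorphic functions. Thus $\rchi_X \cap \rchi_Y$ is an analytic subset of $\rchi$; in particular every connected component is an analytic subset of $\rchi$.

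Next I would apply the two ingredients in sequence. Fix a connected component $K$ of $\rchi_X \cap \rchi_Y$. Theorem \ref{Bounded-Intersection} gives that $K$ is bounded in $\rchi$ (and hence in any affine chart, identifying it with a bounded analytic subset of $\mathbb{C}^N$). Proposition \ref{BoundedAnalyticSet} then forces $K$ to be a single point: a bounded connected analytic subset of $\mathbb{C}^n$ has no positive-dimensional irreducible component, since any such component would be a positive-dimensional bounded analytic set, contradicting the maximum principle. Since this holds for every component, $\rchi_X \cap \rchi_Y$ is a discrete subset of $\rchi$.

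There is essentially no obstacle left, as all the hard work has already been done: the analyticity follows from Theorem \ref{HolonomyVariety}, the boundedness is Theorem \ref{Bounded-Intersection}, and the passage from ``bounded'' to ``discrete'' is the general fact cited in Proposition \ref{BoundedAnalyticSet}. The only minor subtlety is that Proposition \ref{BoundedAnalyticSet} is stated for subsets of $\mathbb{C}^n$, so one needs to pass to a local affine chart of $\rchi$, but this is harmless since boundedness in $\rchi$ is preserved under these charts on any compact piece, and discreteness is a local property.
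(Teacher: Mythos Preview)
Your proof is correct and follows essentially the same approach as the paper: the paper also argues that $\rchi_X$ and $\rchi_Y$ are complex analytic (so their intersection is analytic), then combines Theorem~\ref{Bounded-Intersection} with Proposition~\ref{BoundedAnalyticSet} to conclude discreteness. The only cosmetic difference is that the paper cites an external reference for the analyticity of the intersection rather than spelling out the local-chart argument you give.
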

We will, moreover, show that this intersection is non-empty in \S \ref{sCompleteness}.
\subsection{A weak simultaneous uniformization theorem}
In this section, using Corollary \ref{DicreteIntersection}, we prove a weak version of a simultaneous uniformization theorem for general representations. 
Let $\rho \col \pi_1(S) \to \PSL(2, \C)$ be any non-elementary representation which lifts to ${\rm SL}(2, \C)$.
Let  $C,  D$ be $\CP^1$-structures on  $S^+$ with the  holonomy $\rho$. 
Then, if a neighborhood $U_\rho$ of $\rho$ in $\rchi$ is sufficiently small, then there are (unique) neighborhoods $V_C$ and $W_D$ of $C$ and $D$ in $\PP$, respectively, which are biholomorphic to $U_\rho$  by $\Hol \col \PP \to \rchi$. 
Then, for every $\eta \in U_\rho$, there are unique $\CP^1$-structures $C_\eta$ in $V_C$ and $D_\eta$ in $W_D$ with holonomy $\eta$.  
Let $\Phi_{\rho, U} = \Phi\col U_\rho \to \TT \times \TT$ be the map which takes $\eta  \in U_\rho$ to the pair of the marked Riemann surface structures of $C_\eta$ and $D_\eta$.
\begin{boxedlaw}{14cm}
\begin{theorem}\Label{LocalUniformization}
$\Phi_{\rho, U}$ is a finite-to-one open mapping.  
\end{theorem}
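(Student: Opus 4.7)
The plan is to reduce the theorem to the discreteness result Corollary \ref{DicreteIntersection} and the openness principle Theorem \ref{DiscreteFiberThenOpen}. First I would observe that $\Phi_{\rho, U}$ is holomorphic: by construction it equals $(\psi \circ (\Hol|_{V_C})^{-1},\ \psi \circ (\Hol|_{W_D})^{-1})$, a composition of the local biholomorphisms supplied by $\Hol$ with the holomorphic projection $\psi \colon \PP \to \TT$ from the cotangent bundle. Both $U_\rho$ and $\TT \times \TT$ have complex dimension $6g-6$, so Theorem \ref{DiscreteFiberThenOpen} will apply once the discreteness of fibers is verified.

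Next I would check that $\psi(C) \neq \psi(D)$. Indeed, were $\psi(C) = \psi(D) = X_0$, then $C$ and $D$ would both lie in $\PP_{X_0}$ with the same holonomy $\rho$; since $\Hol|_{\PP_{X_0}}$ is injective by Theorem \ref{HolonomyVariety}, this forces $C = D$, contradicting the condition $(C, D) \in \BB$ (which requires $C \neq D$). After shrinking $U_\rho$ if necessary, $\Phi_{\rho, U}$ therefore takes values in $(\TT \times \TT) \setminus \Delta$, and the neighborhoods $V_C$, $W_D$ can be taken to be disjoint.

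To see that each fiber of $\Phi_{\rho, U}$ is discrete, fix $(X, Y) \in \Phi_{\rho, U}(U_\rho)$; by the previous step $X \neq Y$. If $\eta \in \Phi_{\rho, U}^{-1}(X, Y)$, then $C_\eta \in \PP_X$ and $D_\eta \in \PP_Y$ both have holonomy $\eta$, so $\eta \in \rchi_X \cap \rchi_Y$. By Corollary \ref{DicreteIntersection}, this intersection is discrete in $\rchi$, hence $\Phi_{\rho, U}^{-1}(X, Y)$ is a discrete subset of $U_\rho$. Openness of $\Phi_{\rho, U}$ then follows from Theorem \ref{DiscreteFiberThenOpen}. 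Finally, by shrinking $U_\rho$ once more so that its closure in $\rchi$ is compact, each fiber becomes a discrete subset of a compact set, hence finite; this yields the finite-to-one property and completes the proof.

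The substantive input is Corollary \ref{DicreteIntersection}, which in turn rests on the boundedness argument of Theorem \ref{Bounded-Intersection} and the grafting cocycle $[\Gamma_\rho]$ constructed via compatible circular train-track decompositions. With that discreteness in hand, the remaining argument is formal complex geometry together with the injectivity of $\Hol$ on each Poincar\'e holonomy slice $\PP_X$.
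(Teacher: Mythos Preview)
Your proof is correct and follows the same route as the paper: discreteness of fibers via Corollary~\ref{DicreteIntersection}, then openness via Theorem~\ref{DiscreteFiberThenOpen}. You are simply more explicit than the paper, which omits the verification that $\psi(C)\neq\psi(D)$ (needed to invoke Corollary~\ref{DicreteIntersection}) and the finite-to-one conclusion from compactness of a shrunken $U_\rho$.
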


\end{boxedlaw}

\begin{proof}
By Corollary \ref{DicreteIntersection}, the fiber of $\Phi$ is discrete.
In addition, $\Phi$ is holomorphic and $\dim U_\rho = 2 \dim \TT$.
Therefore, by Theorem \ref{DiscreteFiberThenOpen}, $\Phi$ is an open map. 
\end{proof}

\section{Opposite orientations}\Label{sOppositeOrientation} 
In this section, when the orientations of the Riemann surfaces are opposite, we show the discreteness of $\rchi_X \cap \rchi_Y$  analogous to Theorem \ref{NonemptyDiscreteInteresectionOppositelyOriented} and the local uniformization theorem analogous to Theorem \ref{LocalUniformization}. 
\begin{boxedlaw}{13cm}
\begin{theorem}\Label{NonemptyDiscreteInteresectionOppositelyOriented}
Fix $X \in \TT$ and $Y \in \TT^\ast$ .
Then, $\rchi_X \cap \rchi_Y$ is a non-empty discrete set. 
\end{theorem}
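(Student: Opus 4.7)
The strategy will parallel the same-orientation case (Theorem \ref{Bounded-Intersection} and Corollary \ref{DicreteIntersection}), with Bers' classical theorem supplying non-emptiness and a modified grafting-cocycle argument supplying discreteness. First, non-emptiness is immediate: Bers' simultaneous uniformization theorem applied to the pair $(X, Y) \in \TT \times \TT^\ast$ produces a quasi-Fuchsian representation $\rho_0\col \pi_1(S) \to \PSL(2,\C)$ whose two conformal quotients realize $X$ and $Y$ with their prescribed orientations, and this $\rho_0$ lies in $\rchi_X \cap \rchi_Y$.

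For discreteness, by Proposition \ref{BoundedAnalyticSet} applied to the analytic intersection $\rchi_X \cap \rchi_Y$, it suffices to prove that every connected component is bounded in $\rchi$. I would argue by contradiction: suppose some component admits a sequence $\rho_i$ leaving every compact subset of $\rchi$. Theorem \ref{HorizontalMeasuredLaminations}, whose proof nowhere used the orientations of $X$ and $Y$, then produces positive reals $k_i, k_i'$ with $k_i/k_i' \to 1$ and $k_i H_{X,\rho_i}, k_i' H_{Y,\rho_i}$ converging to a common limit $H \in \ML$. Since $X \neq Y$ in $\TT \sqcup \TT^\ast$ and the Hubbard-Masur correspondence is injective, the vertical foliations cannot projectively coincide in the limit, so there exists a simple closed curve $\alpha$ on $S$ with $\lvert V_{X,\rho_i}(\alpha) - V_{Y,\rho_i}(\alpha)\rvert \to \infty$.

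To convert this divergence into a contradiction, I would transport the machinery of \S\ref{sCircularTraintracks}--\S\ref{sGraftingCocycle} to the opposite-orientation setting: the compatible circular train-track decompositions $\TTT_{X,\rho}$ and $\TTT_{Y,\rho}$, together with the $\Z$-valued grafting cocycle $[\Gamma_\rho]\col \CC \to \Z$. The only orientation-sensitive ingredient is the relative-degree construction of \S\ref{sGrafting}. I would fix a $\rho$-equivariant orientation on each supporting round circle $c_h \sub \CP^1$ using the outward normal (in $\H^3$) of its bounding hyperbolic plane relative to the Epstein surface $\Ep_{X,\rho}$; this orientation is intrinsic to $\rho$ and independent of the ambient orientation of $S$. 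With that convention, Definition-Lemma \ref{dRelativeDegree} and the ensuing cocycle construction apply verbatim, Theorem \ref{WeightedTraintrack}(\ref{iContinuousTraintrack}) forces $\rho \mapsto [\Gamma_\rho](\alpha)$ to be locally constant on the component, while Theorem \ref{VerticalFoliationsAndGraftingFunction} yields $(1-\ep)[\Gamma_{\rho_i}](\alpha) - q \leq V_{X,\rho_i}(\alpha) - V_{Y,\rho_i}(\alpha) \leq (1+\ep)[\Gamma_{\rho_i}](\alpha) + q$, forcing $|[\Gamma_{\rho_i}](\alpha)| \to \infty$ and contradicting local constancy.

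The principal obstacle is checking that the proof of Theorem \ref{VerticalFoliationsAndGraftingFunction}, which is stated only for same-orientation $X, Y$, survives the orientation reversal. The underlying comparison is entirely local on branches of $\tT_{X,\rho}$ and $\tT_{Y,\rho}$: the estimates on rectangles (Proposition \ref{IntersectionNumberAndFoliationOnRectangles}), on cylinders (Proposition \ref{FoalitionAndLaminationOnCylinders}), and on polygonal branches (Propositions \ref{IntersectionNumberAndFoliationOnPolygons} and \ref{TraintrackEstimateNontransversalBranch}) only use the Schwarzian and Thurston data of $C_{X,\rho}$ and $C_{Y,\rho}$ respectively, together with the Epstein-surface approximations of \S\ref{sEpsteinSurfaces}, all of which are orientation-insensitive once the circle-orientation convention above is fixed. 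Verifying this carefully for each type of branch is the technical heart of the argument, after which the contradiction proceeds exactly as in \S\ref{sDiscreteness}.
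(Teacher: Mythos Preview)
Your proposal has the right overall architecture but contains one genuine gap and one circularity, and these are exactly the points where the paper's treatment diverges from yours.

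\textbf{Circularity in non-emptiness.} Invoking Bers' simultaneous uniformization to produce a point of $\rchi_X \cap \rchi_Y$ is fine as an external fact, but it is circular within this paper: one of the stated goals is to \emph{reprove} Bers' theorem from \Cref{GeneralizedQF} without the measurable Riemann mapping theorem (\S\ref{SimultaneousUniformization}). The paper instead obtains non-emptiness \emph{a posteriori} from the completeness of $\Psi$ (Theorem \ref{Surjectivity}, Corollary \ref{NontrivialINtersection}), which applies uniformly to both orientation cases. So in \S\ref{sOppositeOrientation} only the boundedness/discreteness is argued directly.

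\textbf{Sign error in the cocycle approximation.} Your key technical claim is that, after fixing an equivariant orientation on each supporting circle $c_h$, the relative-degree construction of \S\ref{sGrafting} goes through verbatim and the resulting cocycle $[\Gamma_\rho]$ is $(1+\ep,q)$-quasi-isometric to $V_{X,\rho}-V_{Y,\rho}$. This is not what happens. Because $X$ and $Y$ carry opposite orientations, the developments of corresponding horizontal edges $h_{X,i}$ and $h_{Y,i}$ traverse $c_i$ in \emph{opposite} senses. Under any fixed orientation of $c_i$, one of the two maps in Definition--Lemma \ref{dRelativeDegree} is orientation-reversing, so that definition does not apply ``verbatim''; and if you push the signed count through anyway you obtain the \emph{sum} of the two winding numbers, not their difference. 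The paper makes exactly this modification explicit: instead of a relative degree it uses a \emph{covering degree}---identifying the matching endpoints of $h_{X,i}$ and $h_{Y,i}$ produces a circle covering $c_i$, and $\gamma_\rho(h_{X,i})\in\Z_{>0}$ is the degree of that cover. The resulting cocycle is then shown (Theorem \ref{PropoeriesOfIntersectionFunctionOpppositeOrientation}) to be $(1+\ep,q_\alpha)$-quasi-isometric to $V_{X,\rho}(\alpha)+V_{Y,\rho}(\alpha)$, not to the difference.

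This also changes, and in fact simplifies, the divergence step: you no longer need the Hubbard--Masur argument that the limiting vertical foliations cannot coincide. Since $\|q_{X,\rho_i}\|\to\infty$ and $\|q_{Y,\rho_i}\|\to\infty$ along any escaping sequence, one finds a closed curve $\alpha$ with $V_{X,\rho_i}(\alpha)+V_{Y,\rho_i}(\alpha)\to\infty$ directly, and the contradiction with local constancy of $[\Gamma_\rho]$ follows as before (Theorem \ref{BoundedComponentsOppositelyOriented}). Your branch-by-branch transport of \S\ref{sCircularTraintracks}--\S\ref{sGraftingCocycle} is otherwise on the right track; it is only the degree convention and the sign of the approximated quantity that must change.
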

\end{boxedlaw}
Since the proof is similar to the case when the orientations coincide,  we simply outline the proof, yet explain how some parts are modified. 
We leave the details to the reader.

Recall that we have constructed compatible train track decomposition regardless of the orientation of $X Y$ (\S \ref{sProjectiveTraintrackForX}, \S \ref{sProjectiveTraintrackX}). 
In summary, we have the following (in the case of opposite orientataions): 
\begin{proposition}\Label{CircuarTraintrackDecompositionsOppositeOrientataion}
Fix $X \in \TT$ and $Y \in \TT^\ast$. 
For every $\ep > 0$, there is a bounded subset $K_\ep$ in $\rchi_X \cap \rchi_Y$, such that, 
if $\rho \in \rchi_X \cap \rchi_Y \minus K_\ep$, then 
there are circular polygonal train-track decompositions $\TTT_{X, \rho}$  of $C_{X, \rho}$ and $\TTT_{Y, \rho}$ of $C_{Y, \rho}$,  such that
\begin{itemize}
\item $\TTT_{X, \rho}$ and $\TTT_{Y, \rho}$ are semi-diffeomorphic, and 
\item $\TTT_{X, \rho}$ and $\TTT_{Y, \rho}$  are  $(1 + \ep, \ep)$-quasi-isometric to the train track decompositions $\tT_{X, \rho}$ of the flat surface $E_{X, \rho}^1$  and $\tT_{Y, \rho}$ of the flat surface $E_{Y, \rho}^1$, respectively, with respect to the normalized metrics. 
\end{itemize}
\end{proposition}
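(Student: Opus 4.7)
\textbf{Proof proposal for Proposition \ref{CircuarTraintrackDecompositionsOppositeOrientataion}.}

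The plan is to observe that nearly every construction in Sections \ref{sCompatibleTraintrackDecompositions}–\ref{sCircularTraintracks} was carried out for arbitrary distinct $X,Y \in \TT \sqcup \TT^\ast$, so the proof reduces to verifying that each ingredient still applies when $X \in \TT$ and $Y \in \TT^\ast$, and then assembling the pieces. The only structural input that needs to be audited for orientation-sensitivity is the asymptotic coincidence of horizontal foliations, on which the rest of the construction is built.

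First I would verify that Theorem \ref{HorizontalMeasuredLaminations} and Proposition \ref{HorizontalFoliationsCoinside} hold in the opposite-orientation setting. Their proofs compare the dual $\R$-trees $T_X, T_Y$ of the horizontal foliations $\ti{H}_X, \ti{H}_Y$ via the straight maps $\psi_X, \psi_Y$ into the Morgan–Shalen limit tree $T$; the resulting rough commutativity (Lemma \ref{RoughlyIsometric}) and the hyperbolic-segment argument (Lemma \ref{HyperbolicSegments}) make no use of an orientation of $S$. Hence $|H_X|=|H_Y|$ as unoriented measured foliations, and Theorem \ref{HorizontalMeasuredLaminations} gives the asymptotic equality $\lim k_i H_{X,\rho_i} = \lim k_i' H_{Y,\rho_i}$ in $\ML$ exactly as before.

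With this in hand, the staircase train-track $\tT_{X,\rho}$ on $E_{X,\rho}^1$ is produced by Lemma \ref{PolygonalTraintrackX} (which depends only on $X$), and the diagonal neighborhood construction of \S\ref{sTrainTracksForDiagonal}–\S\ref{sCompatibleBoundedPolygonalTraintrack} outputs the $\ep$-quasi-staircase train-track $\tT_{Y,\rho}$ on $E_{Y,\rho}^1$ together with its semi-diffeomorphic perturbation $\tT'_{Y,\rho}$; the key input here is just the straightening map $\st\col E_{Y,H_Y}\to E_{X,H_X}$ of \S\ref{sStraighteningMap} and the quasi-transversality bound of Lemma \ref{Semitransversailty}, both of which are insensitive to orientations. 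I would invoke Proposition \ref{PolygonalTaintrackY} to record that $\tT_{X,\rho}$ is semi-diffeomorphic to $\tT'_{Y,\rho}$ and that both are uniformly bounded in diameter.

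Next I would apply Theorem \ref{ProjectiveTraintrackForY}, which is stated already for $X,Y \in \TT \sqcup \TT^\ast$, to obtain the $\ep$-circular train-track $\TTT_{Y,\rho}$ of the $\CP^1$-structure $C_{Y,\rho}$, diffeomorphic to $\tT'_{Y,\rho}$ and $(1+\ep,\ep)$-quasi-isometric to $\tT_{Y,\rho}$ in the normalized metric of $E_{Y,\rho}^1$. Then Theorem \ref{CircularTraintrackX}, again stated for $X,Y \in \TT\sqcup\TT^\ast$, yields the $\ep$-circular $\TTT_{X,\rho}$ on $C_{X,\rho}$ that is semi-compatible (hence, via Proposition \ref{PolygonalTaintrackY}(\ref{iSemiDiffeomoprhictT}), semi-diffeomorphic) with $\TTT_{Y,\rho}$. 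The $(1+\ep,\ep)$-quasi-isometry of $\TTT_{X,\rho}$ to $\tT_{X,\rho}$ in the normalized metric follows from Theorem \ref{CircularTraintrackX}(\ref{iClosenessInHorizontalAndVerticalDirections}): vertical edges move by at most $\ep$ and horizontal edges by at most $2\pi$ in the \emph{unnormalized} Euclidean metric, and once $K_\ep$ is large the area of $E_{X,\rho}$ diverges so that the $2\pi$ shift is $\ep$-small after normalization.

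The only place in the construction that is a priori orientation-sensitive is the circle system of Lemma \ref{EquivariantCircles} and its isotopy class (Proposition \ref{IsotopeCircleSystem}); this is the point I expect to be the mildest obstacle. These constructions are purely three-dimensional, chosen on $\CP^1 = \partial_\infty\H^3$ and made $\rho$-equivariant through the developing pair $(f,\rho)$ of $C_{X,\rho}$. Reversing the orientation on $S$ changes only the marking on $S$, not the ambient $\PSL(2,\C)$-equivariant geometry in $\H^3$, and the hyperplane-orthogonality condition (\ref{iAlmostOrthogonal}) of Lemma \ref{EquivariantCircles} is phrased directly in terms of $\Ep^\ast_{X,\rho}$ of vertical tangent vectors, which is orientation-symmetric. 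Collecting these observations yields the proposition.
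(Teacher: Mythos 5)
Your proposal is correct and follows essentially the same route as the paper, which simply observes that the constructions of \S\ref{sCompatibleTraintrackDecompositions}--\S\ref{sCircularTraintracks} were carried out for arbitrary distinct $X, Y \in \TT \sqcup \TT^\ast$ and therefore apply verbatim in the opposite-orientation case. Your audit is in fact more careful than the paper's one-line justification --- in particular the observation that the additive $2\pi$ horizontal displacement of Theorem \ref{CircularTraintrackX}(\ref{iClosenessInHorizontalAndVerticalDirections}) becomes $\ep$-small in the normalized metric because $\Area\, E_{X,\rho}$ diverges as $\rho$ leaves compact sets is a detail worth making explicit.
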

In the case when the orientation of $X$ and $Y$ are the same, in Theorem \ref{WeightedTraintrack}, we constructed a $\Z$-weighted train-track graph representing the  ``difference'' of projective structures on $X$ and $Y$ with the same holonomy. 
As the orientations of $X$ and $Y$ are different,  we shall construct a $\Z$-weighted train track graph representing, in this case, the ``sum'' of the $\CP^1$-structures on $X$ and $Y$ with the same holonomy.

Let $\TTT _{X, \rho}$ and $\TTT_{Y, \rho}$ be circular train track decompositions of $C_{X, \rho}$ and $C_{Y, \rho}$ given by Proposition.
\ref{CircuarTraintrackDecompositionsOppositeOrientataion}
Let $\{h_{X, 1},  h_{X_2}, \dots, h_{X_n}\}$ be the horizontal edges of $\TTT_{X, \rho}$. 
Similarly to \S\ref{sWeightedTraintarck}, we first define the $\Z$-valued function on the set of horizontal edges.
For each $i = 1, \dots, n$,  let $c_i$ be the round circle on $\CP^1$ supporting the development of $h_{X, i}$. 
First, suppose that $h_{X, i}$ corresponds to an edge $h_{Y, i}$ of $\TTT_{Y, \rho}$ by the collapsing map $\TTT_{X, \rho} \to \TTT_{Y, \rho}$. 
Since $\TTT_{X, \rho}$ and $\TTT_{Y, \rho}$ are compatible, the corresponding endpoints of $h_{X, i}$ and $h_{Y, i}$ map to the same point on $c_i$  by their developing maps.
Thus, by identifying the endpoints, we obtain a covering map from a circle $h_{X, i} \cup h_{Y, i}$ onto  $c_i$. 
Then, define  $\gamma_{X, i}(h_{X, i}) \in \Z_{> 0}$ to be the covering degree.

Next, suppose that $h_{X, i}$ corresponds to a vertex of $\TTT_{Y, i}$. 
Then the endpoints of $h_{X, i}$ develop to the same point on $c_i$.
 The  circle obtained by identifying endpoints of $h_{X, i}$ covers $c_i$. 
Thus, let  $\gamma_{X, i}(h_{X, i}) \in \Z_{> 0}$ be the covering degree.

Similarly to \S \ref{sWeightedTraintarck}, we shall construct a $\Z$-weighted train-track graph $\Gamma_\rho$ immersed in $\TTT_{X, \rho}$. 
On each branch $\PPP_X$ of $\TTT_{X, \rho}$,
we construct a $\Z$-weighted train-track graph  $\Gamma_{\PPP_X}$ on  $\PPP_X$ such that, for every $\ep > 0$, there is a compact subset $K$ of $\rchi$ satisfying the following: 
\begin{itemize}
\item The endpoints of $\Gamma_{\PPP_X}$ are on horizontal edges of $\PPP_X$. 
\item  They agree with $\gamma_{X, \rho}$ along the horizontal edges.
\item If $\PPP_X$ is a transversal branch, then, for $\rho \in \rchi_X \cap \rchi_Y \minus K$, then $2\pi \Gamma_{\PPP_X} (\alpha)$ is $( 1+ \ep, \ep)$-quasi-isometric to $\sqrt{2}(V_{X, \rho} | P_X) (\alpha) +\sqrt{2} (V_{Y, \rho} | P_Y)(\alpha)$ for all $\alpha \in \AAA(\PPP_X)$. 
\item For every smooth isotopy class of a staircase surface $B$ on a flat surface homeomorphic to $S$ and every arc $\alpha$ on $B$ connecting points on horizontal edges or vertices, there is a positive constant $q(B, \alpha)$ such that, if $\TTT_{X, \rho}$ contains a non-transversal branch $\BBB_X$ smoothly isotopic to $B$ on $S$, then 
  $2\pi\Gamma_{\BBB_X} (\alpha)$ is $( 1+ \ep, q(B, \alpha))$-quasi-isometric to $\sqrt{2}\, V_{X, \rho} | B_X  (\alpha)+ \sqrt{2}\, V_{Y, \rho} | B_Y (\alpha)$.
  \end{itemize}
   
\begin{theorem}\Label{PropoeriesOfIntersectionFunctionOpppositeOrientation}
Let $X \in \TT$ and $Y \in \TT^\ast$.
For every $\ep > 0$, there is a bounded subset $K_\ep \sub \rchi_X \cap \rchi_Y$ such that,
for every $\rho \in \rchi_X \cap \rchi_Y \minus K_\ep$, there is a $\Z$-weighted graph $\Gamma_\rho$ carried in $\TTT_{X, \rho}$ such that 

\begin{enumerate}
\item the induced cocycle $[\Gamma_\rho]\col \CC \to \Z$ changes continuously in $\rho \in \rchi_X \cap \rchi_Y \minus K$, \Label{iContinuityCocycle}
\item for every loop $\alpha$ on $S$, there is  $q_\alpha > 0$, such that  $2\pi \Gamma_\rho (\alpha)$ is $(1 + \ep, q_\alpha)$-quasi-isometric to $\sqrt{2}( V_{X, \rho} (\alpha)+ V_{Y, \rho} (\alpha)) $ for all $\rho \in \rchi_X \cap \rchi_Y \minus K$. \Label{iTraintrackWeight}
\end{enumerate}
\end{theorem}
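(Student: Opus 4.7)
The plan is to mirror the same-orientation construction of \S\ref{sGraftingCocycle}--\S\ref{sWeightedTraintarck} nearly verbatim, with one structural change: orientation reversal replaces the signed \emph{relative degree} of \Cref{dRelativeDegree} by the positive \emph{covering degree} of the cycle $h_X \cup h_Y$ over its supporting round circle, and correspondingly replaces the difference $V_{X,\rho} - V_{Y,\rho}$ by the sum $V_{X,\rho} + V_{Y,\rho}$. The input data are the semi-diffeomorphic circular train tracks $\TTT_{X,\rho}, \TTT_{Y,\rho}$ from \Cref{CircuarTraintrackDecompositionsOppositeOrientataion}, together with the horizontal-edge function $\gamma_{X,\rho}$ defined above via covering degrees of the developed circles $c_i$.

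First, I would construct the $\Z$-weighted train-track graph $\Gamma_{\PPP_X}$ on each branch $\PPP_X$ of $\TTT_{X,\rho}$ so that its endpoint-count on each horizontal edge agrees with $\gamma_{X,\rho}$. On a rectangular branch, the proof of \Cref{IntersectionNumberAndFoliationOnRectangles} adapts directly once ``relative degree'' is replaced by the covering degree; the additive estimate follows from \Cref{ThurstonLaminationAndVerticalFoliationOnDisks} applied separately to $X$ and $Y$ and then summed. On a cylindrical branch, the construction of \S\ref{sTrainTracksOnCylinders} carries over unchanged. On a transversal polygonal branch, use \Cref{EuclideanAndProjectivePolygons}(\ref{CloseToModelCircularPolygon}) to approximate $\PPP_X$ and $\PPP_Y$ by truncated ideal projective polygons $\hat\QQQ_X, \hat\QQQ_Y$ sharing ideal vertices; the desired graph is an integer approximation of the \emph{sum} $\hat\LLL_X + \hat\LLL_Y$ of the model Thurston laminations (because the $Y$-contribution now adds to, rather than cancels against, the $X$-contribution along each spike). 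For non-transversal branches, the flip-sequence construction of \Cref{WeaklyCompatibleTraintrack} applies after reversing orientation on $\sigma_Y$; the resulting transverse cocycle is again well-defined modulo the pentagon relation.

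The quasi-isometric comparison (\ref{iTraintrackWeight}) then follows by summing branchwise estimates along a fixed loop $\alpha$, as in the proof of \Cref{VerticalFoliationsAndGraftingFunction}. Each transversal branch contributes a $(1+\ep, \ep)$-quasi-isometric comparison between $2\pi [\Gamma_{\PPP_X}]$ and $V_{X,\rho}|_{P_X} + V_{Y,\rho}|_{P_Y}$, and each non-transversal branch a $(1+\ep, q(B,\alpha))$-comparison in which the additive constant depends only on the smooth isotopy class of the branch and of the arc, by the analog of \Cref{TraintrackEstimateNontransversalBranch}. Finiteness of combinatorial types of $\tT_{X,\rho}$ uniformly bounds the number of branch-arcs of $\alpha$, yielding the required constant $q_\alpha$ depending only on $\alpha$. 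The continuity (\ref{iContinuityCocycle}) then follows from the opposite-orientation analog of \Cref{SemiconvergenceProjectiveTraintracks}, combined with the fact that $[\Gamma_\rho]$ takes values in the discrete set $\Z$.

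The hard step will be the non-transversal case. In the same-orientation setting the pleated surfaces $\hat\beta_{X,\rho}$ and $\hat\beta_{Y,\rho}$ are $\ep$-close in $\H^3$ along the non-transversal branch boundary (\Cref{PleatedSurfacesAreClose}), which is what makes the flip-sequence interpolation between their bending cocycles meaningful. Under orientation reversal the two pleated surfaces are generically \emph{not} close in $\H^3$; only their horizontal graphs on $S$ agree asymptotically, via \Cref{HorizontalMeasuredLaminations} and \Cref{HorizontalFoliationsCoinside}. Consequently, I expect to need an orientation-corrected replacement of \Cref{PleatedSurfacesAreClose} in which $\hat\beta_{Y,\rho}$ is first reflected through the hyperbolic plane supporting the relevant face of $\sigma_X$ before the closeness comparison is performed. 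Once this is in place, the bending angles add along common edges of the interpolating triangulations, and the remainder of the construction proceeds verbatim.
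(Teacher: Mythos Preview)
Your overall approach matches the paper's: the paper's proof is literally ``The proof is similar to Theorem~\ref{WeightedTraintrack}~(\ref{iContinuousTraintrack}) and Theorem~\ref{VerticalFoliationsAndGraftingFunction},'' relying on the preceding text that replaces relative degree by covering degree and $V_{X,\rho}-V_{Y,\rho}$ by $V_{X,\rho}+V_{Y,\rho}$. Your branch-by-branch reconstruction and the continuity argument via semi-convergence are exactly what the paper intends.

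However, your final paragraph introduces a concern that is not actually present. Theorem~\ref{PleatedSurfacesAreClose} is stated for arbitrary distinct $X,Y\in\TT\sqcup\TT^\ast$, and the paper explicitly notes at the start of \S\ref{sOppositeOrientation} that the compatible circular train-track construction of \S\ref{sProjectiveTraintrackForX}--\S\ref{sProjectiveTraintrackX} (which rests on that theorem) was carried out ``regardless of the orientation.'' The argument behind \Cref{CorrespondingVerticalSegmentsNearHorizontalEdges} is orientation-insensitive: the Epstein-surface estimates of \Cref{Dumas} and the asymptotic coincidence of horizontal foliations in \Cref{HorizontalMeasuredLaminations} both hold for $X,Y$ in either component, and the closeness of $\Ep_{X,\rho}$ and $\Ep_{Y,\rho}$ along corresponding vertical segments comes from both being $\rho$-equivariant quasi-geodesics near axes of the \emph{same} holonomy elements. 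So the pleated surfaces $\hat\beta_{X,\rho}$ and $\hat\beta_{Y,\rho}$ are genuinely $\ep$-close in $\H^3$ along the branch boundaries, and no reflection is needed. The orientation reversal manifests only at the level of $\CP^1$: the two developing maps land on opposite sides of the (common, nearby) pleated surface, which is precisely why the horizontal-edge degrees add to give the covering degree rather than subtract to give the relative degree. Once you drop the proposed reflection step, the non-transversal flip-sequence argument of \Cref{WeaklyCompatibleTraintrack} goes through as in the same-orientation case.
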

\begin{proof}
The proof is similar to 
Theorem 
\ref{WeightedTraintrack} (\ref{iContinuousTraintrack}) and Theorem \ref{VerticalFoliationsAndGraftingFunction}.%
\end{proof}

Then, Theorem \ref{PropoeriesOfIntersectionFunctionOpppositeOrientation} implies, similarly to Theorem \ref{Bounded-Intersection},  the following:
\begin{theorem}\Label{BoundedComponentsOppositelyOriented}
Each connected component of 
$\rchi_X \cap \rchi_Y$ is bounded. 
\end{theorem}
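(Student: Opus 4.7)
The plan is to mimic the proof of \Cref{Bounded-Intersection}, with the cocycle $[\Gamma_\rho]$ now measuring a \emph{sum} rather than a difference of vertical foliations (\Cref{PropoeriesOfIntersectionFunctionOpppositeOrientation}(\ref{iTraintrackWeight})). Namely, suppose for contradiction that some connected component $K$ of $\rchi_X \cap \rchi_Y$ is unbounded, and choose a continuous path $\rho_t$ in $K$ leaving every compact subset of $\rchi$. Outside a sufficiently large compact $K_\ep$, the cocycle $[\Gamma_{\rho_t}] \col \CC \to \Z$ is defined and continuous by \Cref{PropoeriesOfIntersectionFunctionOpppositeOrientation}(\ref{iContinuityCocycle}); since it takes values in $\Z$, it must be constant along $\rho_t$ for $t$ large enough.

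Next I would use that the Poincar\'e embeddings $\PP_X \hookrightarrow \rchi$ and $\PP_Y \hookrightarrow \rchi$ are proper (\Cref{HolonomyVariety}): since $\rho_t$ escapes every compact in $\rchi$, the corresponding $\CP^1$-structures $C_{X,\rho_t}$ and $C_{Y,\rho_t}$ leave every compact in $\PP_X$ and $\PP_Y$ respectively, so the norms $\|q_{X,\rho_t}\|$ and $\|q_{Y,\rho_t}\|$ of their Schwarzian differentials diverge to $\infty$. Consequently the total $L^1$-masses of the vertical measured foliations $V_{X,\rho_t}$ and $V_{Y,\rho_t}$ in $\ML$ diverge as well. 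By compactness of $\PML$, after passing to a subsequence $t_i \to \infty$ the normalized foliations $V_{X,\rho_{t_i}}/\|V_{X,\rho_{t_i}}\|$ converge to some $V_X^\infty \in \PML$; choosing any essential closed curve $\alpha$ on $S$ with $V_X^\infty(\alpha) > 0$ gives $V_{X,\rho_{t_i}}(\alpha) \to \infty$, and since $V_{Y,\rho_{t_i}}(\alpha) \geq 0$ we obtain
\[ \big(V_{X,\rho_{t_i}} + V_{Y,\rho_{t_i}}\big)(\alpha) \to \infty. \]

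Finally, applying \Cref{PropoeriesOfIntersectionFunctionOpppositeOrientation}(\ref{iTraintrackWeight}) along this sequence, there is a constant $q_\alpha > 0$ and, for any $\ep > 0$, a tail of the sequence for which $[\Gamma_{\rho_{t_i}}](\alpha)$ is $(1+\ep,q_\alpha)$-quasi-isometric to $(V_{X,\rho_{t_i}} + V_{Y,\rho_{t_i}})(\alpha)$. Hence $|[\Gamma_{\rho_{t_i}}](\alpha)| \to \infty$, contradicting the fact that $[\Gamma_{\rho_t}](\alpha)$ is constant in $t$ for large $t$. This contradiction proves the theorem.

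The main obstacle, as in the same-orientation case, lies not in the present argument itself but in the preparatory result \Cref{PropoeriesOfIntersectionFunctionOpppositeOrientation}, which packages all the work of \S\ref{sEpsteinSurfaces}--\S\ref{sGraftingCocycle} adapted to opposite orientations. Once the integer-valued continuous grafting cocycle is available and known to be quasi-isometric to the \emph{sum} $V_{X,\rho} + V_{Y,\rho}$, the rest is a clean rerun of \Cref{Bounded-Intersection}; the only genuinely new input needed here beyond \Cref{PropoeriesOfIntersectionFunctionOpppositeOrientation} is the properness observation that ensures $V_{X,\rho_t}$ (or $V_{Y,\rho_t}$) must diverge on some closed curve, and this requires no appeal to the condition $X \ne Y$ that was essential in the same-orientation case.
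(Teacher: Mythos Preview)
Your proposal is correct and follows the approach the paper itself indicates: the paper gives no explicit proof, stating only that \Cref{PropoeriesOfIntersectionFunctionOpppositeOrientation} implies the result ``similarly to \Cref{Bounded-Intersection}''. Your argument fills in these details cleanly.

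In fact, you make a worthwhile observation that goes slightly beyond what the paper says. In the same-orientation proof of \Cref{Bounded-Intersection}, the paper invokes \Cref{HorizontalMeasuredLaminations} and the hypothesis $X \neq Y$ to force the \emph{difference} $V_{X,\rho_{t_i}} - V_{Y,\rho_{t_i}}$ to diverge on some curve. You correctly note that in the opposite-orientation case this step is unnecessary: since the cocycle is quasi-isometric to the \emph{sum} $V_{X,\rho} + V_{Y,\rho}$ and transversal measures are nonnegative, it is enough that $V_{X,\rho_{t_i}}(\alpha) \to \infty$ for a single curve $\alpha$, which follows directly from properness of $\Hol|_{\PP_X}$ (\Cref{HolonomyVariety}) and compactness of $\PML$. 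So while the paper's ``similarly'' is accurate, your route is if anything a bit more direct than a literal transcription of the same-orientation argument would be.
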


%%%
\section{The completeness}\Label{sCompleteness}
In this section, we prove the completeness in \Cref{GeneralizedQF}.
Let $Q$ be a connected component of the Bers' space $\BB$; then $Q$ is a complex submanifold of $(\PP \sqcup \PP^\ast) \times (\PP \sqcup \PP^\ast)$, and   $\dim_\C Q = 6g-6$. 
We call that $\psi\col \PP \sqcup \PP^\ast \to \TT \sqcup \TT^\ast$ is the uniformization map and  $\Psi\col Q \to (\TT \sqcup \TT^\ast)^2$ is defined by $\Psi(C, D) = (\psi(C), \psi(D))$.
Then, by Theorem \ref{LocalUniformization}, $\Psi$ is an open holomorphic map. 
In this section, we prove the completeness of $\Psi$.

\begin{lemma}\Label{LocalPathLifting}
The open map $\Psi\col Q \to (\TT \sqcup \TT^\ast)^2$ has a {\sf local path lifting property}.
That is, for every $z \in Q$, there is a neighborhood $W$ of $\Psi(z)$ such that if  path $\alpha_t, \, 0\leq t \leq 1$ in $W$ satisfies $\zeta(z) = \alpha_0$, then there is a lift $\ti\alpha_t$ of $\alpha_t $ to $Q$ with $\ti\alpha_ 0 = z$.
\end{lemma}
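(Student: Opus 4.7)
The plan is to reduce this to the local branched covering property already established in Theorem~\ref{LocalUniformization}, and then invoke a standard properness-plus-openness argument to carry out the path lifting. Fix $z = (C, D) \in Q$, let $\rho = \Hol(C) = \Hol(D)$, and set $(X_0, Y_0) = \Psi(z)$. By Theorem~\ref{LocalUniformization} there is an open neighborhood $V$ of $z$ in $\BB$ (contained in $Q$, since $Q$ is a connected component) such that $\Hol$ embeds $V$ biholomorphically onto a neighborhood of $\rho$ in $\rchi$, and the restriction $\Psi|V$ is a finite branched covering onto its image in $\TT_C \times \TT_D$.

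Next I would arrange properness. Using the discreteness of fibers (Corollary~\ref{DicreteIntersection}), I shrink $V$ so that $z$ is the unique preimage of $(X_0, Y_0)$ in $V$, and then choose a small open polydisk $W$ around $(X_0, Y_0)$ contained in $\Psi(V)$ with $W$ small enough that the connected component $V' \subset V$ of $\Psi^{-1}(W)$ containing $z$ is relatively compact in $V$ with $\Psi(\overline{V'}\setminus V') \cap W = \emptyset$. Such a $W$ exists because $\Psi|V$ is a finite branched covering and its fiber above $(X_0, Y_0)$ consists of finitely many points in $V$, of which only $z$ lies in the chosen component. The restriction $\Psi|V' \colon V' \to W$ is then a proper, surjective, open, finite-to-one holomorphic map between equidimensional complex manifolds, hence itself a finite branched covering of $W$.

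To lift a path $\alpha_t\colon [0,1] \to W$ with $\alpha_0 = (X_0, Y_0)$, I use the standard open/closed argument. Let $T \subset [0,1]$ be the set of $t$ for which a continuous lift $\tilde\alpha|_{[0,t]}$ starting at $z$ exists. Then $0 \in T$. The set $T$ is closed: if $t_n \nearrow t_\infty$ with lifts $\tilde\alpha^{(n)}$, then properness of $\Psi|V'$ forces $\tilde\alpha^{(n)}(t_n)$ to subconverge to some $p \in V'$ with $\Psi(p) = \alpha(t_\infty)$, and the uniform boundedness of the lifts combined with local finite-to-one structure gives a continuous lift on $[0,t_\infty]$. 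The set $T$ is also open in $[0,1]$: at $\tilde\alpha(t) \in V'$, the map $\Psi|V'$ is a finite branched covering, and any such map admits local path lifting through any preimage of the base point — outside the ramification locus this is immediate, and at a ramification point it follows from the local normal form of a finite holomorphic branched cover (continuous sections exist by openness and discreteness of fibers). Hence $T = [0,1]$.

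The main technical point is the last step, extending the lift through the ramification locus of $\Psi|V'$. I would handle it by observing that a proper finite open holomorphic map between complex manifolds is a finite branched covering in the sense that over a polydisk neighborhood of any point, the preimage decomposes into finitely many irreducible components on each of which the map is proper and finite; on each such component, a standard normalization or an application of the local Riemann extension theorem provides the required continuous local section through any chosen preimage point.
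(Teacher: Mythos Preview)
Your argument is essentially correct, but the paper takes a different and somewhat slicker route. Rather than running an open/closed argument on $[0,1]$, the paper exploits the local quotient structure of a finite branched cover: near $z$ it writes $\Psi|_V$ as the quotient map $V \to V/G_z \cong \Psi(V)$ for a finite group $G_z$ acting biholomorphically on $V$, and identifies the ramification locus with the union $F = \bigcup_{g\neq \mathrm{id}} F_g$ of fixed-point sets. Since $\Psi(F)$ has complex codimension at least one (hence real codimension at least two), the given path $\alpha$ can be deformed through a one-parameter family $\alpha_t$, $t\in[0,1)$, of paths in $W$ that avoid $\Psi(F)$ entirely; these lift via the honest covering structure on $V\setminus F$, and the lifts converge to a lift of $\alpha$ as $t\to 1$.

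The trade-off: the paper's perturbation trick cleanly bypasses the two places where your argument is hand-wavy --- the local path lifting \emph{through} a ramification point (your appeal to a ``local normal form'' in higher dimensions is not as immediate as in dimension one), and the closedness step (your partial lifts $\tilde\alpha^{(n)}$ on $[0,t_n]$ need not be mutually consistent when lifts are non-unique, so extracting a single limit lift requires more care than ``uniform boundedness''). On the other hand, your properness setup is self-contained and does not invoke the group-quotient description of the local branched cover, which is itself a nontrivial structural fact. Either approach can be made rigorous; the paper's is shorter once one accepts the $V/G_z$ model.
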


\begin{proof}
Since $\Psi$ is an open map and $\dim Q = \dim (\TT \sqcup \TT^\ast)^2$, $\Psi$ is a locally branched covering map. 
Then, for every $z \in Q$, there is an open neighborhood $V$ of $z$ in $Q$ and a finite group $G_z$ biholomorphically acting $V$, such that $\Psi$ is $G_z$-invariant, and $\Psi$ induces the biholomorphic map $V/ G_z \to  \Psi(V)$.

For $g \in G_z \minus \{id\}$, let $F_g \sub W$ be the (pointwise) fixed point set of $g$. 
Clearly $F_g$ is a proper analytic subset, and thus $F \coloneqq \cup_{g \in G_z \minus \{id\}} F_g$ is an analytic subset strictly contained in $W$.

For every path $\alpha\col[0,1] \to V$ with $\alpha(0) = \Psi(z)$,
we can take a one-parameter family  of paths $\alpha_t \,(t \in [0,1])$ in $W$ with $\alpha_t(0) = \Psi(z)$
such that  $\alpha_1= \alpha$ and, for $t < 1$, $\alpha_t$ is disjoint from $\Psi(
F)$ (since $\Psi (F)$ has complex codimension, at least,  one.)

Then, for $t < 1$,  $\alpha_t$ continuously lifts a path $\ti\alpha_t\col [0,1] \to Q$, and $\alpha_t$ converges to a desired lift of $\alpha_1$ as $t \to 1$. 
\end{proof}

Now we are ready to prove the completeness. 
\begin{theorem}\Label{Surjectivity}
$\Psi\col Q \to (\TT \sqcup \TT^\ast)^2 \minus \Delta$ is a complete map, where $\Delta$ is the diagonal set.
\end{theorem}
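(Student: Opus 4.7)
The plan is a standard continuation argument, with the main work going into a uniform \emph{a priori} bound on the holonomy. Let $\alpha\col [0, 1] \to (\TT \sqcup \TT^\ast)^2 \minus \Delta$ be a path with $\alpha(0) = \Psi(C_0, D_0)$ for some $(C_0, D_0) \in Q$, and let $T$ be the supremum of times $t^\ast \in [0,1]$ for which $\alpha|_{[0, t^\ast]}$ admits a continuous lift $\ti\alpha|_{[0, t^\ast]}$ to $Q$ starting at $(C_0, D_0)$. By Lemma \ref{LocalPathLifting}, $T > 0$. I will show that the partial lift extends continuously to $t = T$; then Lemma \ref{LocalPathLifting} applied at $\ti\alpha(T)$ forces $T = 1$. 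First, $X_t \coloneqq \psi(C_t)$ and $Y_t \coloneqq \psi(D_t)$ converge to the distinct points $(X_T, Y_T) = \alpha(T)$, so $(X_t, Y_t)$ stays in some compact subset $N$ of $(\TT \sqcup \TT^\ast)^2 \minus \Delta$. If in addition the common holonomy $\rho_t \coloneqq \Hol(C_t) = \Hol(D_t) \in \rchi_{X_t} \cap \rchi_{Y_t}$ stays in a bounded subset of $\rchi$, then the properness of $\Hol | \PP_X$ (Theorem \ref{HolonomyVariety}), together with continuity of $\PP_X$ in $X$, confines $(C_t, D_t)$ to a compact subset of $(\PP \sqcup \PP^\ast)^2$; any accumulation point $(C_T, D_T)$ lies in $Q$ by closedness of $Q$ (Lemma \ref{ProjectiveQuasiFuchsianManifold}) and projects to $\alpha(T)$, so it provides the desired extension of the lift.

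The substantive content is thus to prove boundedness of $\rho_t$ as $t \to T$. Suppose, for contradiction, that a subsequence $\rho_{t_n}$ leaves every compact of $\rchi$. The entire train-track/grafting-cocycle machinery of \S\ref{sCompatibleTraintrackDecompositions}–\S\ref{sGraftingCocycle} applies to the triples $(X_{t_n}, Y_{t_n}, \rho_{t_n})$, producing semi-compatible circular train-track decompositions $\TTT_{X_{t_n}, \rho_{t_n}}, \TTT_{Y_{t_n}, \rho_{t_n}}$ and a $\Z$-valued grafting cocycle $[\Gamma_{\rho_t}]\col \CC \to \Z$ for all $t$ close to $T$. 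This cocycle varies continuously along $\ti\alpha|_{[0, T)}$ by the same argument as in Theorem \ref{WeightedTraintrack}(\ref{iContinuousTraintrack}) (resp. Theorem \ref{PropoeriesOfIntersectionFunctionOpppositeOrientation}(\ref{iContinuityCocycle}) in the opposite-orientation case), and being integer valued, it is eventually constant in $t$. On the other hand, Theorem \ref{HorizontalMeasuredLaminations} (applied to a subsequence of $\rho_{t_n}$) gives that the horizontal foliations of the corresponding $\CP^1$-structures become projectively asymptotic, while $X_T \neq Y_T$ forces the vertical foliations $V_{X_{t_n}, \rho_{t_n}}$ and $V_{Y_{t_n}, \rho_{t_n}}$ to have projectively distinct limits; hence there is a closed curve $\gamma$ on which $|V_{X_{t_n}, \rho_{t_n}}(\gamma) \mp V_{Y_{t_n}, \rho_{t_n}}(\gamma)| \to \infty$. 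Combined with the quasi-isometric approximation of Theorem \ref{VerticalFoliationsAndGraftingFunction} (resp. Theorem \ref{PropoeriesOfIntersectionFunctionOpppositeOrientation}(\ref{iTraintrackWeight})), this contradicts the eventual constancy of $[\Gamma_{\rho_{t_n}}](\gamma) \in \Z$, finishing the proof.

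The hardest part of the argument is ensuring the uniformity of the grafting-cocycle construction over the compact subset $N \subset (\TT \sqcup \TT^\ast)^2 \minus \Delta$, rather than merely at a fixed pair $(X, Y)$. Concretely, the compact subsets $K_\ep(X,Y) \subset \rchi$ produced at each stage (Lemma \ref{LocalProductStructure}, Proposition \ref{EuclideanTraintarck}, Proposition \ref{PolygonalTaintrackY}, Theorem \ref{CircularTraintrackX}, Theorem \ref{WeightedTraintrack}) must be shown to admit a common enlargement $K_N$ that works uniformly for $(X,Y) \in N$, and the semi-continuity statements for $\tT_{X, H_X}$, $\tT'_{Y, H_Y}$, $\TTT_{X, \rho}$, $\TTT_{Y, \rho}$, and the cocycle $[\Gamma_\rho]$ must be upgraded to joint semi-continuity in the triple $(X, Y, \rho)$. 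Each of these upgrades is routine — the constructions only use the pair $(X, Y)$ through the neighborhood structure of $\Delta \subset (\PML \times \PML) \minus \Delta^\ast$, which is manifestly continuous in $(X, Y)$ — but the bookkeeping is where the technical work lies. Once this uniform version of Theorem \ref{Bounded-Intersection} is in place, the continuation argument above closes.
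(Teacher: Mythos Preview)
Your approach is essentially the paper's own: a continuation argument reduced to showing the holonomy $\rho_t$ stays bounded, with boundedness obtained by contradiction via the grafting cocycle $[\Gamma_{\rho_t}]$ (continuous hence constant in $\Z$, yet quasi-isometric to the diverging quantity $V_{X_t,\rho_t} \mp V_{Y_t,\rho_t}$). Your identification of the uniformity over the compact $N \subset (\TT \sqcup \TT^\ast)^2 \setminus \Delta$ as the only new bookkeeping is exactly right, and the paper handles it the same way (``essentially the same as the proof of Theorem~\ref{Bounded-Intersection}'').

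One point you should make explicit: once you know $\{\rho_t\}_{t<T}$ is bounded, having \emph{an} accumulation point $(C_T, D_T) \in Q$ over $\alpha(T)$ does not by itself yield a continuous extension of the lift---you need the limit to be unique. The paper closes this by observing that every accumulation point of $\rho_t$ lies in the discrete set $\rchi_{X_T} \cap \rchi_{Y_T}$ (Corollary~\ref{DicreteIntersection} / Theorem~\ref{NonemptyDiscreteInteresectionOppositelyOriented}), while the accumulation set of a continuous path is connected; hence $\rho_t$ genuinely converges, and so does $(C_t, D_t)$. This is a one-line fix, but without it the sentence ``any accumulation point \ldots\ provides the desired extension'' is a gap.
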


The completeness of Theorem \ref{Surjectivity} immediately implies the following:
\begin{corollary}\Label{NontrivialINtersection}
 $\Psi\col Q \to (\TT \sqcup \TT^\ast)^2 \minus \Delta$ is surjective onto a connected component of $(\TT \sqcup \TT^\ast)^2 \minus \Delta$.
\end{corollary}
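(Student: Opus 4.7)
The plan is to deduce completeness of $\Psi$ from the local path-lifting property of Lemma \ref{LocalPathLifting} by ruling out escape-to-infinity of a partial lift. Given a continuous path $\alpha\col [0,1] \to (\TT \sqcup \TT^\ast)^2 \minus \Delta$ with $\alpha(0) = \Psi(z_0)$ for some $z_0 \in Q$, I would let $T^\ast \in (0,1]$ be the supremum of times $t$ for which a continuous lift $\ti\alpha\col [0,t] \to Q$ starting at $z_0$ exists. Lemma \ref{LocalPathLifting} gives $T^\ast > 0$, and a standard maximality argument (combining local path lifting with the fact that $\Psi$ is locally a branched cover, Theorem \ref{LocalUniformization}) produces a continuous lift $\ti\alpha\col [0, T^\ast) \to Q$. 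It then suffices to show that $\ti\alpha$ extends continuously to $t = T^\ast$: applying Lemma \ref{LocalPathLifting} at the extended endpoint contradicts maximality unless $T^\ast = 1$.

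Take any sequence $t_n \nearrow T^\ast$ and set $(C_n, D_n) := \ti\alpha(t_n)$, $X_n := \psi(C_n)$, $Y_n := \psi(D_n)$, and $\rho_n := \Hol(C_n) = \Hol(D_n)$. By continuity of $\alpha$, $(X_n, Y_n) \to (X, Y) := \alpha(T^\ast)$ with $X \neq Y$. Since $\BB$ is closed in $(\PP \sqcup \PP^\ast)^2$ (Lemma \ref{ProjectiveQuasiFuchsianManifold}) and $\ti\alpha([0, T^\ast))$ is connected and contained in $Q$, any subsequential limit of $(C_n, D_n)$ in $(\PP \sqcup \PP^\ast)^2$ lies in $Q$ and provides the desired continuous extension. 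Because the Poincar\'e embedding $\Hol\col \PP_{X_n} \hookrightarrow \rchi$ is proper (Theorem \ref{HolonomyVariety}) and depends continuously on $X_n$, subconvergence of $(C_n, D_n)$ reduces to showing that $\{\rho_n\}$ remains in a compact subset of $\rchi$.

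The main obstacle is precisely this uniform boundedness of $\{\rho_n\}$, which I would establish by extending the grafting-cocycle argument of Theorem \ref{Bounded-Intersection} to the two-parameter family $(X_n, Y_n) \to (X, Y)$. Assume toward contradiction that $\rho_n \to \infty$ in $\rchi$. The constructions of Sections \ref{sCompatibleTraintrackDecompositions}--\ref{sGraftingCocycle} are already formulated to depend semi-continuously on the pair $(H_X, H_Y)$ over compact neighborhoods of the diagonal in $(\PML \times \PML) \minus \Delta^\ast$ (Proposition \ref{EuclideanTraintarck}, Proposition \ref{PolygonalTaintrackY}, Theorem \ref{CircularTraintrackX}, Lemma \ref{SemiconvergenceProjectiveTraintracks}). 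Extending that semi-continuity uniformly as $(X_n, Y_n)$ varies over a compact subset of $(\TT \sqcup \TT^\ast)^2 \minus \Delta$ yields an integer-valued continuous grafting cocycle $[\Gamma_{X_n, Y_n, \rho_n}]\col \CC \to \Z$ along $\ti\alpha$, together with the quasi-isometry estimate of Theorem \ref{VerticalFoliationsAndGraftingFunction} relating $[\Gamma_{X_n, Y_n, \rho_n}](c)$ to $(V_{X_n, \rho_n} - V_{Y_n, \rho_n})(c)$ for each fixed closed curve $c$ on $S$. By continuity and integrality, $[\Gamma_{X_n, Y_n, \rho_n}](c)$ remains bounded along the path.

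On the other hand, the proof of Theorem \ref{HorizontalMeasuredLaminations} rests only on Epstein-surface estimates uniform in compact regions of $\TT \sqcup \TT^\ast$ and the dual-$\R$-tree straightening of Corollary \ref{EquivairantlyHomeomorphicTrees} in the Morgan--Shalen limit, so it adapts to the varying case and produces a common projectivized horizontal foliation class $[H] \in \PML$ on $S$ as $\rho_n \to \infty$. Since $X \neq Y$, the Hubbard--Masur vertical foliations $V_X$ (on $X$) and $V_Y$ (on $Y$) corresponding to $H$ are distinct elements of $\ML$\,---\,a transverse pair of measured foliations determines its underlying complex structure uniquely\,---\,so there is a closed curve $c$ with $V_X(c) \neq V_Y(c)$. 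Combined with the rescaling estimate $\|q_{X_n, \rho_n}\|/\|q_{Y_n, \rho_n}\| \to 1$ from Theorem \ref{HorizontalMeasuredLaminations}, this forces $|V_{X_n, \rho_n}(c) - V_{Y_n, \rho_n}(c)| \to \infty$, contradicting boundedness of the integer-valued cocycle. Hence $\{\rho_n\}$ is bounded, the subconvergence follows, and the lift extends, proving completeness.
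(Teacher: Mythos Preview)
Your proposal is correct and follows essentially the same route as the paper: the paper derives this corollary in one line from Theorem~\ref{Surjectivity} (completeness of $\Psi$), and what you have written is, in substance, a proof of Theorem~\ref{Surjectivity} via the same maximal-lift argument and the same grafting-cocycle contradiction extended to varying $(X_t,Y_t)$. One small omission: when the orientations of $X$ and $Y$ differ, the cocycle is quasi-isometric to $V_{C_t}+V_{D_t}$ rather than the difference (Theorem~\ref{PropoeriesOfIntersectionFunctionOpppositeOrientation}), and the divergence argument uses that this \emph{sum} goes to infinity, which is immediate since each term does.
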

\proof[Proof of Theorem \ref{Surjectivity}]
By Lemma \ref{LocalPathLifting},  $\Psi$ has a local path lifting property.
Thus, suppose that $(X_t, Y_t) \col [0,1] \to (\TT \sqcup \TT^\ast)^2 \minus \Delta ~(0 \leq t \leq 1)$ be a path and there is a (partial) lift $(C_t, D_t) \col [0,1) \to Q$  of the path $(X_t, Y_t)$.
For each $t \in [0,1]$, let $\rho_t \in \rchi$ denote the common holonomy of $C_t$ and $D_t$. 
By the continuity, the orientations of Riemann surfaces in the pairs obviously remain the same along $\rho_t$ for all $t > 0$. 

First we, in addition, suppose that there is an increasing sequence $0 \leq t_1 < t_2 < \dots$ converging to $1$, such that  $\rho_{t_i}$  converges to a representation in $\rchi$.  
By Corollary \ref{DicreteIntersection} or, in the case of opposite orientations, Theorem \ref{NonemptyDiscreteInteresectionOppositelyOriented},  $\rchi_{X_1} \cap \rchi_{Y_1}$ is a discrete subset of $\rchi$. 
Then, since $(X_t, Y_t)$ converges to a point $(X_1, Y_1)$ in $\TT \times \TT$, and $\rchi_{X_t}$ and $\rchi_{Y_t}$ change continuously in  $t \in [0,1]$,  every neighborhood $\rchi_{X_1} \cap \rchi_{Y_1}$ contains $\rho_{t_i}$ for all sufficiently large $i$. 
Thus the sequence $\rho_{t_i}$ converges to a point in $\rchi_{X_1} \cap \rchi_{Y_1}$.
Since $\rchi_{X_t} \cap \rchi_{Y_t}$ is a discrete set in $\rchi$ which continuously changes in $t \in [0,1]$, 
we indeed have a genuine convergence. 
\begin{lemma}
$\rho_t$ converges to $\rho_1$ as $t \to 1$. 
\end{lemma}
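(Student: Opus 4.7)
The plan is as follows. Set $\rho_1 := \lim_i \rho_{t_i}$, which lies in $\rchi_{X_1} \cap \rchi_{Y_1}$ by the preceding discussion. Since $\Hol$ restricted to $\PP_{X_1}$ and to $\PP_{Y_1}$ are injective proper embeddings (Theorem \ref{HolonomyVariety}) and the holonomy varieties $\rchi_{X_t}, \rchi_{Y_t}$ depend continuously on their base Riemann surfaces, there exist unique $C_1 \in \PP_{X_1}$ and $D_1 \in \PP_{Y_1}$ with $\Hol(C_1) = \Hol(D_1) = \rho_1$, and the subsequence $(C_{t_i}, D_{t_i})$ converges to $(C_1, D_1)$ in $Q$.

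The next step is to show that the full lifted path $(C_t, D_t)$ stays in a compact subset of $Q$ as $t \to 1$. Suppose, for contradiction, that some subsequence $\rho_{s_k}$ escapes every compact of $\rchi$. By \Cref{HorizontalMeasuredLaminations} adapted to the varying pairs $(X_{s_k}, Y_{s_k}) \to (X_1, Y_1)$ with $X_1 \neq Y_1$, the vertical foliation differences $V_{X_{s_k}, \rho_{s_k}} - V_{Y_{s_k}, \rho_{s_k}}$ (respectively sums, in the opposite-orientation case) would have to diverge on some closed curve $\alpha$, by the same reasoning as in Theorem \ref{Bounded-Intersection}. On the other hand, the $\Z$-valued cocycle $[\Gamma_{\rho_t}]\colon\CC \to \Z$ of \Cref{WeightedTraintrack} (or \Cref{PropoeriesOfIntersectionFunctionOpppositeOrientation} for the opposite-orientation case) is defined and continuous for $\rho_t$ outside a large compact set, and by \Cref{VerticalFoliationsAndGraftingFunction} it is quasi-isometric to the relevant combination of vertical foliations; being $\Z$-valued and continuous, it is locally constant along the connected arc of the continuous path lying outside the compact set, contradicting divergence on $\alpha$. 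Therefore $\rho_t$ remains bounded, and uniform properness of $\Hol|\PP_X$ for $X$ in the compact path $\{X_t : t \in [0,1]\}$ then gives $(C_t, D_t)$ bounded in $Q$.

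Now let $L \subset Q$ be the accumulation set of the continuous path $(C_t, D_t)$ as $t \to 1$. Because the path is eventually in a compact set, $L$ is non-empty, compact, and connected. By continuity $\Psi(L) = \{(X_1, Y_1)\}$, so $L \subset \Psi^{-1}(X_1, Y_1)$; but by \Cref{DicreteIntersection} (same-orientation case) or \Cref{NonemptyDiscreteInteresectionOppositelyOriented} (opposite-orientation case) together with the injectivity of $\Hol$ on $\PP_{X_1}$ and $\PP_{Y_1}$, the fiber $\Psi^{-1}(X_1, Y_1)$ is discrete in $Q$. A connected subset of a discrete set is a singleton, so $L = \{(C_1, D_1)\}$, giving $(C_t, D_t) \to (C_1, D_1)$ in $Q$ and hence $\rho_t \to \rho_1$ in $\rchi$.

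The main obstacle is the boundedness of $\rho_t$ in the second paragraph: the subsequence hypothesis only constrains $\rho_t$ along $t_i$, and ruling out stray escaping subsequences requires extending the cocycle arguments of \S\ref{sCircularTraintracks}--\S\ref{sGraftingCocycle}, which are formulated for fixed $(X, Y)$, to the setting where $(X_t, Y_t)$ varies continuously within a compact family. This extension should be straightforward since the constructions of compatible circular train-tracks $\TTT_{X, \rho}, \TTT_{Y, \rho}$ and the grafting cocycle $[\Gamma_\rho]$ all depend continuously on $(X, Y)$ as well as on $\rho$, but the uniform statement must be verified.
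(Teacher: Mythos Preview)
Your proof is correct, but it does considerably more work than the paper needs for this particular lemma. In the paper, the lemma is stated \emph{under the standing assumption} that some subsequence $\rho_{t_i}$ already converges in $\rchi$; the paper then disposes of the lemma in one sentence: ``Since $\rchi_{X_t} \cap \rchi_{Y_t}$ is a discrete set in $\rchi$ which continuously changes in $t \in [0,1]$, we indeed have a genuine convergence.'' Unpacked, the argument is purely topological: choose a compact neighborhood $K$ of $\rho_1$ with $K \cap \rchi_{X_1} \cap \rchi_{Y_1} = \{\rho_1\}$; by continuity of the holonomy varieties, $\partial K \cap \rchi_{X_t} \cap \rchi_{Y_t} = \emptyset$ for $t$ near $1$; once $\rho_{t_i}$ enters $\operatorname{int} K$, the continuous path $\rho_t \in \rchi_{X_t} \cap \rchi_{Y_t}$ cannot cross $\partial K$, so it stays in $K$, and every accumulation point lies in $K \cap \rchi_{X_1} \cap \rchi_{Y_1} = \{\rho_1\}$. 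No cocycle machinery is invoked.

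Your second paragraph --- showing that no subsequence of $\rho_t$ escapes to infinity via the grafting cocycle --- is precisely the content of the \emph{Proposition that follows the lemma} in the paper, which is there to establish the subsequential-convergence hypothesis in the first place. So you have effectively merged the lemma and the proposition into a single argument. That is fine logically, and your accumulation-set argument in the third paragraph is equivalent to the paper's topological one; but the paper's separation is cleaner because the lemma itself needs only discreteness and continuity, while the heavier cocycle input is isolated in the proposition. Your stated concern about extending the train-track and cocycle constructions to varying $(X_t, Y_t)$ is legitimate, and the paper faces the same issue in its proposition --- it too asserts the extension (``similarly to Theorem~\ref{WeightedTraintrack}\dots we can construct a $\Z$-weighted train track $\Gamma_t$'') without a detailed verification. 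For the lemma alone, however, that concern does not arise.
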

By this lemma, $(C_t, D_t)$ converges in $(\PP \sqcup \PP^\ast) \times (\PP \sqcup \PP^\ast)$ as $t \to 1$, so that the partial lift $(C_t, D_t)$ extends to $t = 1$. 

Thus it is suffices  to show the addition assumption always holds: 
\begin{proposition}
There is a compact subset $K$ in $\rchi$, such that, for every $t > 0$, there is $t' > 0$ such that $\rho_{t'} \in K$.
\end{proposition}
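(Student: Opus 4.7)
The plan is to argue by contradiction, upgrading the discreteness/boundedness arguments of \Cref{Bounded-Intersection} and \Cref{BoundedComponentsOppositelyOriented} from the ``fixed $(X,Y)$'' setting to the ``varying $(X_t,Y_t)$'' setting along the path in $(\TT \sqcup \TT^\ast)^2 \minus \Delta$. Suppose, to the contrary, that for every compact $K \sub \rchi$ there exists $t_K < 1$ such that $\rho_t \notin K$ for all $t \in (t_K, 1)$. Because $(X_t, Y_t)$ is a continuous path in the open set $(\TT \sqcup \TT^\ast)^2 \minus \Delta$, its closure is a compact subset $\mathbf{N}$ lying at positive distance from the diagonal $\Delta$. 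Crucially, the two orientation components of the path image must each stay in one component, so along the whole path either $X_t, Y_t$ have the same orientation or opposite orientations; the two cases are handled in parallel.

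First, I would establish a uniform version of the train-track cocycle construction: for every $\ep > 0$ there is a bounded subset $K_\ep \sub \rchi$ (now depending only on $\mathbf{N}$, not on an individual pair) such that for every $t \in [0,1]$ and every $\rho \in \rchi_{X_t} \cap \rchi_{Y_t} \minus K_\ep$ the circular train-track decompositions $\TTT_{X_t, \rho}$ and $\TTT_{Y_t, \rho}$ of \Cref{CircularTraintrackX} (respectively \Cref{CircuarTraintrackDecompositionsOppositeOrientataion}) are defined and semi-compatible, yielding a $\Z$-valued intersection cocycle $[\Gamma_{t, \rho}]\col \CC \to \Z$. This uniformity is obtained by revisiting each estimate in \S \ref{sEpsteinSurfaces}--\S \ref{sCircularTraintracks}: all of them are uniform on compact sets in the Riemann surface parameter by the compactness of the unit-area quadratic differentials and continuity of the Epstein-Schwarz apparatus. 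Moreover, the semi-continuity statements (\Cref{EuclideanTraintarck}, \Cref{PolygonalTaintrackY}, \Cref{SemiconvergenceProjectiveTraintracks}) extend to joint continuity in $(X, Y, \rho)$ up to sliding and refinement, so the argument of \Cref{WeightedTraintrack}\,(\ref{iContinuousTraintrack}) shows that $t \mapsto [\Gamma_{t, \rho_t}]$ is continuous on any subinterval $[t_0, 1)$ along which $\rho_t \notin K_\ep$. Since the cocycle takes values in $\Z$, it is constant on such a subinterval.

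Next, pick a sequence $t_i \nearrow 1$; by the hypothesis $\rho_{t_i}$ leaves every compact in $\rchi$. Passing to a subsequence, $\rho_{t_i}$ converges in the Morgan-Shalen compactification to a $\pi_1(S)$-tree, and $X_{t_i} \to X_1$, $Y_{t_i} \to Y_1$ with $X_1 \ne Y_1$. Applying \Cref{HorizontalMeasuredLaminations} (whose proof only requires that $\rho_{t_i}$ leaves every compact and that both limits $X_1, Y_1$ are fixed) shows that $k_i H_{X_{t_i}, \rho_{t_i}}$ and $k_i' H_{Y_{t_i}, \rho_{t_i}}$ tend to the same measured lamination in $\ML$ with $k_i / k_i' \to 1$. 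Because $X_1 \ne Y_1$, the corresponding vertical foliations $V_{X_{t_i}, \rho_{t_i}}$ and $V_{Y_{t_i}, \rho_{t_i}}$ of these asymptotically common horizontal data cannot be asymptotically equal: there exists a closed curve $\alpha$ on $S$ such that either
\[
\bigl|V_{X_{t_i},\rho_{t_i}}(\alpha) - V_{Y_{t_i},\rho_{t_i}}(\alpha)\bigr| \to \infty
\quad\text{(same orientation)}
\]
or $V_{X_{t_i},\rho_{t_i}}(\alpha) + V_{Y_{t_i},\rho_{t_i}}(\alpha) \to \infty$ (opposite orientation), exactly as in the proof of \Cref{Bounded-Intersection}.

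Finally, combine the two. By \Cref{VerticalFoliationsAndGraftingFunction}, respectively \Cref{PropoeriesOfIntersectionFunctionOpppositeOrientation}\,(\ref{iTraintrackWeight}), the integer quantity $[\Gamma_{t_i, \rho_{t_i}}](\alpha)$ is $(1+\ep_i, q_\alpha)$-quasi-isometric to that diverging vertical expression, so it must diverge as $i \to \infty$. On the other hand, continuity and integer-valuedness force $t \mapsto [\Gamma_{t, \rho_t}](\alpha)$ to be constant on the interval $(t_0, 1)$ on which $\rho_t \notin K_\ep$, which by assumption contains all $t_i$ for $i$ large. This contradiction proves the proposition. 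The main obstacle, and the most delicate point, is the uniform construction of the cocycle across varying Riemann surface pairs and the verification of joint semi-continuity in $(X, Y, \rho)$; once that is in place, the rest follows the template already used for fixed $(X, Y)$ in \S \ref{sDiscreteness} and \S \ref{sOppositeOrientation}.
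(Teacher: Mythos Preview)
Your proposal is correct and follows essentially the same approach as the paper's own proof: argue by contradiction, construct the $\Z$-valued grafting cocycle $[\Gamma_{t,\rho_t}]$ along the path for $t$ close to $1$, use continuity to conclude it is constant, and contradict this with the quasi-isometry against $V_{C_t}\pm V_{D_t}$, which diverges along some curve $\alpha$ because $X_1\neq Y_1$. The paper states this more tersely (``the proof is essentially the same as the proof of \Cref{Bounded-Intersection} or \Cref{BoundedComponentsOppositelyOriented}''), while you correctly and explicitly isolate the main technical point---that the train-track construction and its semi-continuity must be carried out uniformly over the compact image of the path $(X_t,Y_t)$ rather than for a fixed pair---which the paper leaves implicit.
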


\begin{proof}
The proof is essentially the same as the proof of 
 Theorem \ref{Bounded-Intersection} or Theorem \ref{NonemptyDiscreteInteresectionOppositelyOriented}, which states that each component of $\rchi_X \cap \rchi_Y$ is a bounded subset of $\rchi$. 

For $0 \leq t < 1$,
let $V_{C_t}$ and $V_{D_t} $ be the vertical measured foliations of $C_t$ and $D_t$, respectively.
Suppose, to the contrary, that $\rho_t$ leaves every compact subset of $\rchi$. 
As $(X_t, Y_t)$ converges to $(X, Y) \in (\TT \sqcup \TT^\ast)^2 \minus \Delta$ and $\Hol (C_t) = \Hol(D_t)$,  similarly to Theorem \ref{WeightedTraintrack} or \Cref{PropoeriesOfIntersectionFunctionOpppositeOrientation}, for $t$ close to $1$, we can construct a $\Z$-weighed train track $\Gamma_t$ on $S$, such that 
\begin{itemize}
\item the intersection function $[\Gamma_t]\col \CC \to \Z$  is continuous in $t$, and
\item for every closed curve $\alpha$ on $S$, there is a constant $q_\alpha > 0$ and a function $\ep_t > 0$ converging to $0$, such that, for all sufficiently large $t > 0$,  $[\Gamma _t] (\alpha)$ is $(1 + \ep_t, q_\alpha)$-quasi-isometric to $V_{C_t} (\alpha) - V_{D_t} (\alpha)$ if the orientation of $X$ and $Y$ are the same and to $V_{C_t} (\alpha) + V_{D_t} (\alpha)$ if the orientation of $X$ and $Y$ are different. 
\end{itemize}
The first condition implies that the intersection number is constant in $t$, whereas the second condition implies that the intersection number with some closed curve $\alpha$ diverges to infinity as $t \to 1$. 
This is a contradiction.
\end{proof}

\Qed{Surjectivity}
%%%

  Last we remark the behavior of $\Hol$ near the diagonal $\Delta$. 
\begin{proposition}\Label{NearDiagonal}
Let $(C_t, D_t),  \,0 \leq  t < 1$ be a path in $\BB$, such that $\Psi (C_t, D_t)$ converges to a diagonal point $(X, X)$  of  $(\TT \sqcup \TT^\ast)^2$. 
Then $\Hol C_t = \Hol D_t$ leaves every compact set in $\rchi$ as $t \to 1$. 
\end{proposition}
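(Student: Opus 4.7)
The plan is to deduce this proposition directly from Lemma \ref{LocalProductStructure}, which is the local product structure near a Poincar\'e holonomy variety. First I would set $X_t := \psi(C_t)$ and $Y_t := \psi(D_t)$. By hypothesis $(X_t, Y_t) \to (X, X)$ in $(\TT \sqcup \TT^\ast)^2$, and since $\TT$ and $\TT^\ast$ are disjoint open pieces of $\TT \sqcup \TT^\ast$, for $t$ close to $1$ both $X_t$ and $Y_t$ lie in whichever component contains $X$. In particular $X_t$ and $Y_t$ eventually sit inside a common bounded subset $B$ of $\TT \sqcup \TT^\ast$ (a compact neighborhood of $X$).

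Next I would argue $X_t \neq Y_t$ for all $t$ close to $1$. By definition of $\BB$, we have $C_t \neq D_t$ but $\Hol(C_t) = \Hol(D_t)$. If we had $X_t = Y_t$, then $C_t$ and $D_t$ would both lie in $\PP_{X_t}$, and Theorem \ref{HolonomyVariety} (injectivity of $\Hol$ on each fixed $\PP_{X_t}$) would force $C_t = D_t$, a contradiction. Hence $\rho_t := \Hol(C_t) = \Hol(D_t)$ lies in $\rchi_{X_t} \cap \rchi_{Y_t}$ for distinct $X_t, Y_t \in B$.

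Now suppose, toward a contradiction, that $\rho_t$ does not leave every compact subset of $\rchi$ as $t \to 1$. Then there are a compact subset $K \subset \rchi$ and a sequence $t_i \to 1$ with $\rho_{t_i} \in K$. Apply Lemma \ref{LocalProductStructure} to $B$ and $K$ to obtain $\ep > 0$ such that any two distinct points $W, W' \in B$ with $d(W, W') < \ep$ satisfy $\rchi_W \cap \rchi_{W'} \cap K = \emptyset$. But $d(X_{t_i}, Y_{t_i}) \to 0$, so for $i$ sufficiently large $\rchi_{X_{t_i}} \cap \rchi_{Y_{t_i}} \cap K = \emptyset$, contradicting $\rho_{t_i} \in \rchi_{X_{t_i}} \cap \rchi_{Y_{t_i}} \cap K$. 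This completes the argument.

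There is no real obstacle here beyond invoking Lemma \ref{LocalProductStructure} correctly; the only subtle point is verifying $X_t \neq Y_t$, which rules out the possibility that the diagonal convergence is trivially realized by equal underlying Riemann surfaces. Note that this statement is qualitative: it asserts divergence of the holonomy in $\chi$, but gives no quantitative rate in terms of $d(X_t, Y_t)$. A quantitative version would require substantially more work, but is not needed for the cited applications.
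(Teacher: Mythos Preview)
Your proof is correct and follows essentially the same approach as the paper: both arguments use the local product structure coming from the local biholomorphicity of $\Hol$ (you cite Lemma~\ref{LocalProductStructure} explicitly, while the paper re-derives its content in one line) to conclude that $\rchi_{X_t}\cap\rchi_{Y_t}$ avoids any given compact once $X_t$ and $Y_t$ are close enough. Your explicit verification that $X_t\neq Y_t$ via Theorem~\ref{HolonomyVariety} is a nice clarification that the paper leaves implicit (it is built into the claim that $\Psi$ lands in $(\TT\sqcup\TT^\ast)^2\setminus\Delta$).
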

\begin{proof}
Fix arbitrary $X \in \TT \sqcup \TT^\ast$ and a bounded open subset $U$ in  $\chi$.
Recall that $\Hol \col \PP \sqcup \PP^\ast \to \rchi$ is a locally biholomorphic map, 
and that,  for all $Y \in \TT \sqcup \TT^\ast$,  the set $\PP_Y$ of $\CP^1$-structures on $Y$ is properly embedded in $\chi$ by $\Hol$.
Therefore, if an open neighborhood $V$ of $X$ in $\TT \sqcup \TT^\ast$  is sufficiently small, then, letting  $\PP_V$ denotes all $\CP^1$-structure on Riemann surfaces in $V$,  $\PP_V \cap \Hol^{-1} (U)$ embeds into $U$ by the holonomy map $\Hol$.  
In particular, for every $Y, Z \in V$, $\rchi_Y \cap \rchi_Z$ is disjoint from $U$. 
Then the assertion is immediate. 
\end{proof}
\subsection{Cardinalities of the intersections}
By the surjectivity of \Cref{NontrivialINtersection}  and the existence of non-quasi-Fuchsian components of $\BB$ in $\PP \sqcup \PP^\ast$ in \Cref{ProjectiveQuasiFuchsianManifold}, we immediately have the following: 
\begin{corollary}\Label{Cardinality}
Let $X, Y$ be distinct marked Riemann surface structures on $S$ with any orientations. 
If the orientations of $X$ and $Y$ are opposite, the intersection  $\rchi_X \cap \rchi_Y$ contains, at least, two points,  if the orientations of $X$ and $Y$ are the same, $\rchi_X \cap \rchi_Y$ contains, at least, one point.
\end{corollary}

\section{A proof of the simultaneous uniformization theorem}\Label{SimultaneousUniformization}
In this section, using Theorem \ref{GeneralizedQF}, we give a new proof of the simultaneous uniformization theorem without using the measurable Riemann mapping theorem. 
Recall $\QF$ is the quasi-Fuchsian space, and it is embedded in $\BB/\Z_2$.  

  Given $(C, D)$ in $\QF$, the universal covers $\ti{C}, \ti{D}$ are the connected components of $\CP^1$ minus its equivariant Jordan curve equivariant via $\Hol C = \Hol D$.  
 
    \begin{lemma}\Label{OpenAndClosed}
$\QF$ is a union of connected components of $\BB/\Z_2$.
\end{lemma}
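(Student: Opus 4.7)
The plan is to prove that $\QF$ is both open and closed in $\BB$; since $\BB$ is locally connected, this is equivalent to $\QF$ being a union of connected components. The opening sentence of the proof records the key characterization: $(C,D)\in\QF$ precisely when $\dev C$ and $\dev D$ are univalent with disjoint images whose union is the complement of a $\Hol(C)$-invariant Jordan curve in $\CP^1$.

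For openness, I would argue as follows. Let $(C_0,D_0)\in\QF$ with holonomy $\rho_0$. The quasi-Fuchsian locus is open in $\rchi$ (a classical fact independent of the measurable Riemann mapping theorem), so there is a neighborhood $U$ of $\rho_0$ in $\rchi$ consisting of quasi-Fuchsian representations. The standard assignment $\rho\mapsto(\Omega_\rho^+/\rho,\Omega_\rho^-/\rho)$ from $U$ to $\PP\times\PP^\ast$ is continuous. Since $\Hol$ is a local biholomorphism, I pick neighborhoods $V\ni C_0$ in $\PP$ and $W\ni D_0$ in $\PP^\ast$ mapped biholomorphically by $\Hol$ onto $U$. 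After shrinking $U$, the standard pair over every $\rho\in U$ lies in $V\times W$, and then for any $(C,D)\in(V\times W)\cap\BB$ the uniqueness of the $\Hol$-lift in $V\times W$ forces $(C,D)$ to coincide with that standard pair; hence $(C,D)\in\QF$.

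For closedness, let $(C_i,D_i)\in\QF$ converge to $(C,D)\in\BB$. Normalize so that $\rho_i=\Hol(C_i)=\Hol(D_i)$ converges, within a fixed orbit of $\PSL(2,\C)$, to the common holonomy $\rho$ of $C$ and $D$. Then the developing maps $f_i:=\dev C_i$ and $g_i:=\dev D_i$ converge, uniformly on compact subsets of the universal cover, to $f=\dev C$ and $g=\dev D$. Each $f_i,g_i$ is univalent onto a simply-connected component of the domain of discontinuity. By the meromorphic Hurwitz theorem, $f$ and $g$ are each either univalent or constant. The $\rho$-equivariance of $f$ together with non-elementarity of $\rho$ (automatic for the holonomy of a $\CP^1$-structure on a closed surface of genus $g>1$) precludes the constant case. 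Hence $f$ and $g$ are univalent with disjoint simply-connected $\rho$-invariant images, and by the planar topology of two complementary simply-connected domains, the common complement is a Jordan curve; so $\rho$ is quasi-Fuchsian and $(C,D)\in\QF$.

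The main obstacle I anticipate is the Hurwitz dichotomy step together with the topological conclusion. Ruling out constant limits is delicate if one wants to avoid invoking strong structure theorems on Kleinian groups, but equivariance against a non-elementary $\rho$ suffices since a constant equivariant map would provide a global fixed point of $\rho$ on $\CP^1$. The ``two complementary simply-connected $\rho$-invariant domains imply a Jordan curve complement'' step likewise rests on a direct planar-topology argument, which should go through cleanly once the Hurwitz dichotomy is resolved.
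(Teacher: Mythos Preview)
Your openness argument is correct, though more elaborate than the paper's one-line observation that ``being a quasi-isometric embedding is an open condition.'' Both routes are fine.

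The closedness argument, however, has a real gap precisely where you flag it. Knowing that $f=\dev C$ and $g=\dev D$ are univalent with disjoint simply-connected $\rho$-invariant images is \emph{not} enough, by ``direct planar topology,'' to force $\CP^1\setminus(f(\tilde S)\cup g(\tilde S))$ to be a Jordan curve. Two disjoint simply-connected domains in $\CP^1$ can have a complement that is far from a simple closed curve; there is no purely topological principle that rules this out. What is missing is control of the boundary behaviour of $f$ and $g$.

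The paper supplies this control as follows. Each $f_i$ extends continuously to $\tilde C_i\cup\s^1$ by Carath\'eodory's theorem, and likewise for $g_i$; gluing gives homeomorphisms $h_i\colon\s^2\to\CP^1$. The crucial step is that these \emph{boundary extensions converge}: the paper argues this via the convergence of the associated convex pleated surfaces in $\H^3$, which pins down the limiting boundary map equivariantly. One then obtains a continuous limit $h\colon\s^2\to\CP^1$ that is injective on $\tilde C\sqcup\tilde D$. Finally, if $h|\s^1$ failed to be a Jordan curve, some point $z$ would have $h^{-1}(z)$ a nondegenerate arc of $\s^1$; equivariance under the minimal $\pi_1(S)$-action on $\s^1$ then forces $h^{-1}(z)=\s^1$, making the image a wedge of spheres, a contradiction.

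So the structure of your proof is right, but the step you hoped would be routine planar topology in fact requires the Carath\'eodory extension together with a geometric convergence argument (pleated surfaces) and an equivariance/minimality contradiction. Without these, the Jordan-curve conclusion does not follow.
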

\begin{proof}
 As being a quasi-isometric embedding is an open condition, $\QF$ is an open subset of $\BB$. 
 Thus, it suffices to show that $\QF$ is closed.

Let $(C_i, D_i) \in \PP \times \PP^\ast$ be a sequence in $\QF$ which converges to $(C, D) \in \PP \times \PP^\ast$.
Let $\rho_i\col \pi_1(S) \to \PSL(2, \C)$ be the   quasi-Fuchsian representation of $C_i$ and $D_i$. 
We show that the  holonomy $\rho\col \pi_1(S) \to \PSL(2, \C)$ of the limits $C$ and $D$ is also quasi-Fuchsian.  
Let  $\ti{C}_i$ and $\ti{D}_i$ be the universal covers of $C_i$ and $D_i$, respectively. 
Then $\ti{C}_i$ and $\ti{D}_i$ are the components of  $\CP^1$ minus the $\rho_i$-equivariant Jordan curve.
Let $f_i \col \ti{C_i} \cup \s^1 \to \CP^1$ and $g_i \col  \ti{D_i} \cup \s^1 \to \CP^1$ be the extensions of the embeddings to their boundary circles by a theorem of  Carath\'eodory. 
Let $h_i$ be the homeomorphism $ \ti{C}_i \cup \s^1 \cup \ti{D}_i \to \CP^1$.

Since embeddings $\dev C_i$ converge to $\dev C$ uniformly on compact as $i \to \infty$,  the limit  $\dev C$ is also an embedding. 
(Suppose, to the contrary, that  $\dev C\col \ti{C} \to \CP^1$ is not embedding. 
 Then there are distinct open subsets in $\ti{C}$ which homeomorphically map onto the same open subset in $\CP^1$. Then  $\dev C_i$ is also not embedding for all sufficiently large $i$ against the assumption.)
Thus, by the convergence of corresponding convex pleated surfaces in $\H^3$,   the equivariant property implies that $\dev C$  extends to the boundary circle continuously and equivariantly. 
Similarly, since the embedding $\dev D_i$ converges to $\dev D$, then $\dev D$ is also an embedding,  and $\dev D$  extends to the boundary circle continuously and equivariantly. 
 Therefore $h_i$ converges to a continuous map $$h\col \s^2 \cong \ti{C} \cup \s^1 \cup \ti{D} \to \CP^1$$ such that the restriction of $h$ to $\ti{C} \sqcup \ti{D}$ is an embedding. 

The domain and the target of $h$ are both homeomorphic to $\s^2$. 
Therefore, if $h | \s^1$ is not a Jordan curve on $\CP^1$, then there is a point $z \in \CP^1$, such that $h^{-1}(z)$ is a single segment of $\s^1$. 
By the equivariant property, $h^{-1} (z) = \s^1$, and $\Im\, h$ is a  wedge of two copies of $\s^2$, which is a contradiction. 
\end{proof}
The following asserts that the diagonal of $\TT \times \TT^\ast$ corresponds to the Fuchsian representations. 
\begin{lemma}\Label{DegreeOneOverFuchsian}
Let $X \in \TT$. 
Let $\eta\col \pi_1(S) \to \PSL(2, \C)$ be a quasi-Fuchsian representation, such that the ideal boundary of $\H^3 / \Im \rho$ realizes the marked Riemann surface $X$ and its complex conjugate $X^\ast \in \TT^\ast$.
Then $\eta$ is the Fuchsian representation $\pi_1(S) \to \PSL(2, \R)$ such that $X = \H^2/ \Im \eta$. 
\end{lemma}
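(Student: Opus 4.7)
The plan is to construct from the hypothesis a global anti-holomorphic involution of $\CP^1$ whose fixed locus is the limit set $\Lambda_\eta$. Since $X \in \TT$ is complex-conjugate to $X^\ast \in \TT^\ast$ as marked Riemann surfaces, the marking-preserving identity $\iota\col X \to X^\ast$ is an anti-holomorphic involution; it lifts to a $\pi_1(S)$-equivariant anti-holomorphic involution $\ti\iota\col \ti{X} \sqcup \ti{X^\ast} \to \ti{X^\ast} \sqcup \ti{X}$ on the universal covers. Under the quasi-Fuchsian hypothesis, the developing maps identify $\ti{X}$ with the component $\Omega^+$ of $\CP^1 \minus \Lambda_\eta$ and $\ti{X^\ast}$ with $\Omega^-$. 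Transporting $\ti\iota$ through these identifications produces an anti-holomorphic $\eta$-equivariant diffeomorphism $\Phi_0\col \Omega^+ \sqcup \Omega^- \to \Omega^- \sqcup \Omega^+$ swapping the two components, with $\Phi_0 \circ \Phi_0 = \mathrm{id}$.

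Next I would extend $\Phi_0$ across $\Lambda_\eta$ to a continuous involution $\Phi$ of $\CP^1$. Exactly as in the proof of Lemma \ref{OpenAndClosed}, Carath\'eodory's theorem gives continuous extensions of $\dev X$ and $\dev X^\ast$ to the ideal boundary circles, and these extensions glue $\pi_1(S)$-equivariantly along $\Lambda_\eta$ (which is a Jordan curve by the quasi-Fuchsian assumption). Passing these continuous extensions through $\ti\iota$ shows that $\Phi_0$ extends continuously to $\Phi\col \CP^1 \to \CP^1$.

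Now I claim $\Phi$ is anti-holomorphic on all of $\CP^1$. The quasi-Fuchsian limit set $\Lambda_\eta$ is a quasi-circle and in particular has two-dimensional Lebesgue measure zero; a bounded continuous map that is anti-holomorphic off a closed set of measure zero is anti-holomorphic everywhere, by the standard Painlev\'e/Morera removable-singularity argument (applied locally in charts). Hence $\Phi$ is an anti-holomorphic bijection of $\CP^1$, i.e.\ a map of the form $z \mapsto (a\bar z + b)/(c\bar z + d)$, and is an involution.

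Finally, the fixed locus of an anti-holomorphic involution of $\CP^1$ is either empty or a round circle. Since $\Phi$ swaps $\Omega^+$ with $\Omega^-$, its fixed set lies in $\Lambda_\eta$; conversely, by construction every point of $\Lambda_\eta$ is fixed by $\Phi$ (the two extensions agree there). Thus $\Lambda_\eta$ is a round circle, so $\eta$ is Fuchsian; since $\Omega^+/\Im\eta = X$ by hypothesis, $\eta$ is the Fuchsian uniformization of $X$. The only delicate point is the passage from anti-holomorphic off $\Lambda_\eta$ to anti-holomorphic on all of $\CP^1$, which is where I use that $\Lambda_\eta$ is a quasi-circle and hence removable; everything else is a direct translation of the hypothesis through the identifications supplied by the developing maps.
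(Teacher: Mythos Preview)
Your overall strategy is sound and is a genuine alternative to the paper's argument, but the removability step is not justified by what you wrote. The claim that ``a bounded continuous map that is anti-holomorphic off a closed set of measure zero is anti-holomorphic everywhere'' is false in general: there exist Jordan curves of zero area (even of Hausdorff dimension arbitrarily close to $1$) that are \emph{not} conformally removable, i.e.\ for which there are self-homeomorphisms of $\CP^1$ conformal off the curve yet not M\"obius. So ``measure zero plus Morera'' is not the right invocation. What you actually need is the theorem that quasicircles are conformally removable (e.g.\ via the John-domain criterion of Jones--Smirnov). That theorem is true and finishes your argument, but it lies in quasiconformal analysis --- exactly the toolkit the paper is trying to avoid when reproving Bers' theorem without the measurable Riemann mapping theorem.

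The paper sidesteps this entirely by reversing the direction of the construction. Instead of building an involution of $\CP^1$ whose singular set is $\Lambda_\eta$, it uniformizes $\ti X$ and $\ti{X}^\ast$ as the upper and lower half-planes and glues them along $\R\cup\{\infty\}$ to form a ``model'' sphere; the developing maps then give an $\eta$-equivariant homeomorphism $\phi$ from this model sphere to $\CP^1$, biholomorphic off $\R\cup\{\infty\}$. Now the removable set is a \emph{round circle}, where the elementary Morera argument applies directly, and one concludes $\phi\in\PSL(2,\C)$. So both proofs hinge on the same removability mechanism; the paper's trick is to arrange that the exceptional set is smooth rather than the a priori wild limit set $\Lambda_\eta$, making the analytic input genuinely elementary. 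Your approach buys a slightly more conceptual picture (the anti-holomorphic symmetry is intrinsic to the hypothesis $Y=X^\ast$), at the cost of importing a nontrivial removability theorem.
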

\begin{proof}
By the Riemann uniformization theorem, the universal covers  $\ti{X}$ and $\ti{X}^\ast$  are the upper and the lower half planes. 
Then, by identifying their ideal boundaries equivariantly, we obtain $\CP^1$ so that the universal covers   $\ti{X}$ and  $\ti{X}^\ast$ are round open disks. 
Let $(C, D) \in \PP \times \PP^\ast$ be the pair corresponding to $\eta$, such that the complex structure of $C$ is  $X$ and the complex structure of  $D$ is on $X^\ast$.    

On the other hand, the universal covers $\ti{C}$ and $\ti{D}$ are connected components of $\CP^1 \minus \Lambda(\eta)$, where $\Lambda(\eta)$ is the $\eta$-equivariant Jordan curve in $\CP^1$.  
Thus, there is a $\eta$-equivariant homeomorphism $\phi \col \CP^1 \to \CP^1$, such that
$\phi$ restricts to a  biholomorphism from $\CP^1 \minus {\R \cup \{\infi\}} \to \CP^1 \minus \Lambda(\eta)$.

Then, by Morera's theorem for the line integral along triangles (see \cite{Stein03} for example), $\phi$ is a genuine  biholomorphic map $\CP^1 \to \CP^1$. 
Therefore $\phi$ is a Möbius transformation, and therefore $\eta$ is conformally conjugate to the Fuchsian representation uniformizing $X$.
\end{proof}
\begin{proposition}\Label{ConnectedQF}
$\QF$ is a single connected component of $\BB/\Z_2$.
\end{proposition}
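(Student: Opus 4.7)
The plan is to upgrade \Cref{OpenAndClosed}, which presents $\QF$ as a disjoint union of connected components of $\BB$, to the assertion that this union contains exactly one component. The key step will be to exhibit a point in $\TT \times \TT^\ast$ whose total preimage inside $\QF$ under $\Psi$ is a single point; combined with the surjectivity of $\Psi$ on each component, this will force the index set of components to be a singleton. To begin, write $\QF = \bigsqcup_\alpha Q_\alpha$ for the decomposition afforded by \Cref{OpenAndClosed}. Each $(C, D) \in \QF$ has $C$ uniformising $\Omega^+/\Im\rho$ and $D$ uniformising $\Omega^-/\Im\rho$, so $\Psi(\QF) \subseteq \TT \times \TT^\ast$; since the latter is a connected component of $(\TT \sqcup \TT^\ast)^2 \setminus \Delta$, \Cref{NontrivialINtersection} applied to each $Q_\alpha$ yields $\Psi(Q_\alpha) = \TT \times \TT^\ast$.

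Next I would fix $X_0 \in \TT$, let $X_0^\ast \in \TT^\ast$ denote its complex conjugate, and analyse the fiber of $\Psi|_\QF$ above $(X_0, X_0^\ast)$. For any $(C, D)$ in this fiber, the common holonomy $\rho = \Hol(C) = \Hol(D)$ is quasi-Fuchsian with the two quotient surfaces $\Omega^\pm/\Im\rho$ realising $X_0$ and $X_0^\ast$ respectively; \Cref{DegreeOneOverFuchsian} then forces $\rho$ to be the Fuchsian representation uniformising $X_0$, which is unique up to conjugation in $\PSL(2, \C)$. Since the developing map of any quasi-Fuchsian $\CP^1$-structure is the equivariant Riemann map onto a prescribed component of $\CP^1 \setminus \Lambda_\rho$, the pair $(C, D)$ is uniquely reconstructed from $\rho$. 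Hence $(\Psi|_\QF)^{-1}(X_0, X_0^\ast)$ is a singleton.

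Combining the two observations, surjectivity forces every $Q_\alpha$ to contribute a point to this fiber, yet the fiber itself contains only one point, so the indexing set $\{\alpha\}$ is a singleton and $\QF$ is one connected component of $\BB$. The substantive content of the argument is concentrated in \Cref{DegreeOneOverFuchsian}, which rigidifies the Fuchsian locus inside the full quasi-Fuchsian space by ruling out non-Fuchsian quasi-Fuchsian representations with ideal datum $(X_0, X_0^\ast)$; once that lemma is in hand, the remaining step is a purely topological counting argument over the anti-diagonal Fuchsian slice, and I do not anticipate any technical obstacle beyond the careful invocation of the already established \Cref{OpenAndClosed} and \Cref{NontrivialINtersection}.
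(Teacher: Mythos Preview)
Your proposal is correct and follows essentially the same route as the paper's proof: decompose $\QF$ into components via \Cref{OpenAndClosed}, use surjectivity (the paper cites \Cref{GeneralizedQF}, you cite its corollary \Cref{NontrivialINtersection}) to ensure each component hits the anti-diagonal $\{(X, X^\ast)\}$, then invoke \Cref{DegreeOneOverFuchsian} to see the fiber over $(X_0, X_0^\ast)$ is a single point, forcing a single component. Your write-up is slightly more explicit about reconstructing $(C,D)$ from the Fuchsian $\rho$, but the logical skeleton is identical.
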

\begin{proof} 
By Lemma \ref{OpenAndClosed}, $\QF$ is the union of some connected components of $\BB$. 
 By Theorem \ref{GeneralizedQF}, for every component  $Q$ of $\QF$, the image $\Psi (Q)$ contains the diagonal $\{ (X, X^\ast)\}$ of $\TT \times \TT^\ast$.
Then, by Lemma \ref{DegreeOneOverFuchsian}, every diagonal pair $(X, X^\ast) \in \TT \times \TT^\ast$ corresponds to a unique point in $\QF$. 
Therefore $\QF$ is connected. 
   \end{proof}

Last we reprove the simultaneous uniformization theorem.
 \begin{theorem}\Label{QuasifuchsianComponent}
  $\QF$ is biholomorphic to $ \TT \times \TT^\ast$ by $\Psi$. 
\end{theorem}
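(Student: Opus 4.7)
The plan is to deduce the theorem directly from the structural results already in hand. By Proposition~\ref{ConnectedQF}, $\QF$ is a single connected component of $\BB$, and since every $(C,D)\in\QF$ has $C\in\PP$ and $D\in\PP^\ast$, the restriction $\Psi|_{\QF}$ takes values in the component $\TT\times\TT^\ast$ of $(\TT\sqcup\TT^\ast)^2\setminus\Delta$. By Theorem~\ref{GeneralizedQF} and the surjectivity of Theorem~\ref{Surjectivity}, $\Psi|_{\QF}\colon\QF\to\TT\times\TT^\ast$ is a surjective complete local branched covering map, and the remaining task is to upgrade this to a biholomorphism.

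The main step will be to show that the generic degree of $\Psi|_{\QF}$ equals one, exploiting the Fuchsian locus. Let $R\subset\QF$ be the ramification locus of $\Psi|_{\QF}$ and $B=\Psi(R)\subset\TT\times\TT^\ast$ the branch locus; both are proper complex analytic subsets (indeed $R\neq\QF$ because a nonconstant holomorphic map between complex manifolds of the same dimension has full rank on a dense open set). The Fuchsian locus $F=\{(X,X^\ast):X\in\TT\}$ is a real-analytic submanifold of $\TT\times\TT^\ast$ of real dimension $6g-6$; because $\TT^\ast$ is antiholomorphic to $\TT$, a direct check with the complex structure $(J,-J)$ on $\TT\times\TT^\ast$ shows $F$ is maximal totally real, i.e.\ $T_{(X,X^\ast)}F\cap J\cdot T_{(X,X^\ast)}F=0$ at every point. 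A closed proper complex analytic subset cannot contain a maximal totally real submanifold, by the identity theorem applied to local holomorphic defining functions in a chart where $F$ becomes $\R^{6g-6}\subset\C^{6g-6}$; hence $F\not\subset B$.

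Then, picking $q_0=(X_0,X_0^\ast)\in F\setminus B$, Lemma~\ref{DegreeOneOverFuchsian} gives $(\Psi|_\QF)^{-1}(q_0)=\{p_0\}$ with $p_0\notin R$. The restriction
$$\Psi|_\QF\colon \QF\setminus\Psi^{-1}(B)\longrightarrow(\TT\times\TT^\ast)\setminus B$$
is a surjective local biholomorphism with path lifting inherited from completeness, between connected manifolds (each side is connected because the removed set is complex analytic, hence of real codimension at least two), and so it is an ordinary covering map. Its fiber over $q_0$ consisting of a single point forces the covering degree to equal one, so the generic degree of $\Psi|_\QF$ is one. Since the local degrees at the preimages of any point sum to the generic degree, every local degree equals one, and therefore $R=\emptyset$. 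Thus $\Psi|_\QF$ is an everywhere local biholomorphism which is surjective and complete, hence a covering map onto the simply connected $\TT\times\TT^\ast$ from the connected $\QF$, which must be a biholomorphism. The only step requiring genuine verification is the maximal totally real claim for $F$, which is a short direct computation; everything else reduces to Theorem~\ref{GeneralizedQF}, Proposition~\ref{ConnectedQF}, and Lemma~\ref{DegreeOneOverFuchsian} together with standard facts from several complex variables.
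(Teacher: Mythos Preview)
Your proof is correct and follows essentially the same route as the paper's own argument: both use Theorem~\ref{GeneralizedQF} to get a complete local branched covering, invoke the fact that the Fuchsian locus is a maximal totally real submanifold (hence cannot sit inside a proper analytic subset such as the branch/ramification locus), apply Lemma~\ref{DegreeOneOverFuchsian} to obtain degree one over a Fuchsian point, and conclude biholomorphism. Your version is somewhat more explicit about why the totally real condition forces $F\not\subset B$ and about passing from generic degree one to global biholomorphism, but the underlying strategy is identical.
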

\begin{proof}
By Theorem \ref{GeneralizedQF}, $\Psi$ is a complete local branched covering map. 
Since $\Psi$ is surjective, 
by Lemma \ref{DegreeOneOverFuchsian}, $\Psi$ is a degree-one  over the diagonal $\{(X, \ol{X} ) \mid  X \in \TT \}$, and the diagonal corresponds to the Fuchsian space. 

The set of ramification points of $\Psi$ is an analytic set. 
The Fuchsian space is a totally real subspace of dimension $6g -6$. 
Therefore, if the ramification locus contains the Fuchsian space, then the locus must have the complex dimension $6g -6$, the full dimension. 
This is a contradiction as $\Psi$ is a locally branched covering map. 
Therefore, $\QF \to \TT \times \TT^\ast$ has degree one, and thus it is biholomorphic.  
\end{proof} 

\bibliography{SimultaneousUniformizationAndHolonomyVarieties.bbl}
\bibliographystyle{alpha}

\end{document}